\documentclass[11pt,a4paper]{amsart}

\usepackage{comment}
\usepackage{amsmath}
\usepackage{amssymb}
\usepackage{enumitem}
\usepackage{hyperref}
\usepackage[capitalise]{cleveref}
\usepackage{tikz}
\usepackage{subcaption}
\usepackage{mathtools}

\usetikzlibrary{calc}
\usetikzlibrary{intersections}

\newtheorem{theorem}{Theorem} [section]
\newtheorem{lemma}[theorem]{Lemma}

\newtheorem{corollary}[theorem]{Corollary}

\DeclareMathOperator{\cl}{cl}
\DeclareMathOperator{\si}{si}
\DeclareMathOperator{\co}{co}
\DeclarePairedDelimiter\ceil{\lceil}{\rceil}
\DeclarePairedDelimiter\floor{\lfloor}{\rfloor}

\setenumerate{label=\rm(\roman*),midpenalty=1000}

\parskip=.6em

\sloppy

\title[Detachable pairs in $3$-connected matroids]{Detachable pairs in $3$-connected matroids and simple $3$-connected graphs}

\author[N.\ Brettell]{Nick Brettell}
\address{School of Mathematics and Statistics, Victoria University of Wellington, New Zealand}
\email{nick.brettell@vuw.ac.nz}
\author[C.\ Semple]{Charles Semple}
\address{School of Mathematics and Statistics, University of Canterbury, New Zealand}
\email{charles.semple@canterbury.ac.nz}
\author[G.\ Toft]{Gerry Toft}
\address{School of Mathematics and Statistics, University of Canterbury, New Zealand}
\email{gerrytoft3@gmail.com}

\thanks{The first two authors were supported by the New Zealand Marsden Fund.  The third author was supported by a University of Canterbury Doctoral Scholarship.}

\date{\today}

\keywords{Matroids, $3$-connected, chain theorem, detachable pairs}

\begin{document}
\begin{abstract}
    Let $M$ be a $3$-connected matroid.
    A pair $\{e,f\}$ in $M$ is \emph{detachable} if $M \backslash e \backslash f$ or $M / e / f$ is $3$-connected.
    Williams (2015) proved that if $M$ has at least 13 elements, then at least one of the following holds: $M$ has a detachable pair, $M$ has a $3$-element circuit or cocircuit, or $M$ is a spike.
    We address the case where $M$ has a $3$-element circuit or cocircuit, to obtain a characterisation of when a matroid with at least 13 elements has a detachable pair.
    As a consequence, we characterise when a simple $3$-connected graph $G$ with $|E(G)| \ge 13$ has a pair of edges $\{e,f\}$ such that $G/e/f$ or $G \backslash e\backslash f$ is simple and $3$-connected.
\end{abstract}

\maketitle

\section{Introduction}
Tutte's Wheels-and-Whirls Theorem~\cite{wheelsandwhirls} and Seymour's Splitter Theorem~\cite{seymoursplitter,tansplitter} are fundamental tools in matroid theory.
They have been used to prove several important results, including Seymour's decomposition theorem for regular matroids~\cite{seymoursplitter}, and the excluded-minor characterisations for GF$(4)$-representable matroids~\cite{quaternary} and near-regular matroids~\cite{nearreg}.
Tutte's Wheels-and-Whirls Theorem states that a $3$-connected matroid $M$ has an element $e$ such that either $M \backslash e$ or $M/e$ is $3$-connected, unless $M$ is a wheel or a whirl.
Such a result, which ensures the existence of an element, or elements, that can be removed while preserving a connectivity condition, is known as a \emph{chain theorem}.
A \emph{splitter theorem} additionally ensures that, given a minor~$N$, removing the element or elements also preserves the existence of a minor isomorphic to $N$.
In this paper, the focus is a chain theorem that preserves the property of being $3$-connected after deleting or contracting a pair of elements.

Let $M$ be a $3$-connected matroid.
A pair $\{e,f\} \subseteq E(M)$ is called a \emph{detachable pair} if either $M \backslash e \backslash f$ or $M / e / f$ is $3$-connected. Note that, since $M$ is $3$-connected, if $M \backslash e \backslash f$ is $3$-connected, then $M \backslash e$ and $M \backslash f$ are both $3$-connected. Similarly, if $M / e / f$ is $3$-connected, then $M / e$ and $M / f$ are both $3$-connected.
A \emph{triangle} is a circuit of size three, a \emph{triad} is a cocircuit of size three, and a \emph{spike} is a matroid with a partition into pairs such that the union of any two of these pairs is both a circuit and a cocircuit.
Williams~\cite{williamsdetpairs} (see also \cite[Section~7]{detpairs3}) proved the following:
\begin{theorem}[Williams 2015]\label{detpairs_notriangles}
	Let $M$ be a $3$-connected matroid with $|E(M)| \geq 13$.
    Then at least one of the following holds:
	\begin{enumerate}
		\item $M$ has a detachable pair,
        \item $M$ has a triangle or a triad, or
		\item $M$ is a spike.
	\end{enumerate}
\end{theorem}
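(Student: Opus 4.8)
We argue by contradiction: suppose that $M$ is $3$-connected with $|E(M)|\ge 13$, that $M$ has no triangle and no triad, that $M$ is not a spike, and that $M$ has no detachable pair. Each of these hypotheses, and the property of having a detachable pair, is preserved under duality: a triangle of $M$ is a triad of $M^*$ and vice versa, the spike property is self-dual, and $\{e,f\}$ is detachable in $M$ if and only if it is detachable in $M^*$ (since $M^*\backslash e\backslash f=(M/e/f)^*$). So we may freely replace $M$ by $M^*$. As wheels and whirls contain triangles, $M$ is neither a wheel nor a whirl, so Tutte's Wheels-and-Whirls Theorem gives $e_0\in E(M)$ with $M\backslash e_0$ or $M/e_0$ $3$-connected; dualising if necessary, assume $N:=M\backslash e_0$ is $3$-connected. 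Every circuit of $N$ is a circuit of $M$, so $N$ has no triangle and hence is not a wheel or whirl. Since $\{e_0,f\}$ is not a detachable pair, $N\backslash f=M\backslash e_0\backslash f$ is not $3$-connected for every $f\in E(N)$; that is, $N$ has \emph{no deletable element}. Symmetrically, if $M$ has a contractible element $e_1$, then $P:=M/e_1$ is $3$-connected and triad-free with no contractible element (and the case where $M$ has no contractible element is treated separately).

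The core of the proof is a structural analysis of $N$, supplemented by that of $P$. Since $N$ is $3$-connected and not a wheel or whirl, Tutte's theorem yields a non-essential element of $N$; as $N$ has no deletable element, this element is contractible. More usefully, for each $f\in E(N)$ the failure of $N\backslash f$ to be $3$-connected is, by Bixby's Lemma and the standard description of how deleting an element of a $3$-connected matroid can create a $2$-separation, caused either by a triad of $N$ through $f$ or by a $3$-separation of $N$ that $f$ alone destroys. Feeding this back into the $3$-connectivity of $N$ for every $f$, and running the dual analysis on the contraction side through $P$, forces $N$ (and $P$) into one of a short list of configurations: broadly, either $N$ resembles a tipless spike with one leg-element deleted, or $N$ is close to a sparse matroid such as a cycle matroid $M(K_{3,n})$ or one of boundedly many small sporadic matroids. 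The triads of $N$ that appear correspond to the $4$-element cocircuits of $M$ meeting $e_0$, and dually for $P$.

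It then remains to treat each configuration. In the spike-like case, the constraint that reattaching $e_0$ to $N$ create neither a triangle nor a detachable pair pins down the circuits and cocircuits of $M$ so tightly that $E(M)$ must partition into pairs the union of any two of which is simultaneously a circuit and a cocircuit; so $M$ is a spike, contrary to assumption. In each of the other configurations one exhibits an explicit detachable pair of $M$, of the form $M\backslash a\backslash b$ or $M/a/b$ with $\{a,b\}$ read off from the structure found above — again a contradiction. Every sporadic matroid that arises has at most $12$ elements, so $|E(M)|\ge 13$ removes it from consideration. This final part — enumerating the structures of $N$ and $P$ compatible with having no deletable, respectively no contractible, element, and then verifying configuration by configuration that each forces a spike or produces a detachable pair — is the main obstacle: it is long and delicate, and it is precisely there that the bound $13$ is needed.
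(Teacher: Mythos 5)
The paper does not prove this statement at all: Theorem~\ref{detpairs_notriangles} is quoted as Williams' result (see \cite{williamsdetpairs} and \cite[Section~7]{detpairs3}) and is used as a black box (e.g.\ in the proof of Theorem~\ref{no_4_element_fans}), so there is no in-paper argument to compare yours against. Judged on its own terms, your proposal has a genuine gap. The opening reductions are fine: duality preserves all hypotheses, $M$ is neither a wheel nor a whirl since it has no triangles, Tutte's Wheels-and-Whirls Theorem gives $e_0$ with, say, $N=M\backslash e_0$ $3$-connected, the absence of detachable pairs means $N$ has no deletable element, and triads of $N$ correspond to $4$-element cocircuits of $M$ through $e_0$. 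But everything after that is asserted rather than proved. The claim that the ``no deletable element'' condition on $N$ (and its dual on $P$) forces $N$ into ``a short list of configurations'' --- spike-like, $M(K_{3,n})$-like, or boundedly many sporadic matroids --- is precisely the substance of Williams' theorem, and you give no argument for this classification, none for the assertion that the spike-like case forces $M$ itself to be a spike, none exhibiting a detachable pair in the remaining configurations, and no bound showing the sporadic examples have at most $12$ elements (which is the only place the hypothesis $|E(M)|\ge 13$ enters, and you never derive it). You acknowledge this part is ``long and delicate''; as written it is a plan for a proof, not a proof.

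Two smaller points. First, after dualising so that a deletable element $e_0$ exists, you cannot simultaneously assume a contractible element $e_1$ exists; you flag that ``the case where $M$ has no contractible element is treated separately'' but never treat it, so even the framework of the intended case analysis is incomplete. Second, the step ``the failure of $N\backslash f$ to be $3$-connected is caused either by a triad of $N$ through $f$ or by a $3$-separation that $f$ alone destroys'' is too vague to carry weight: the correct statement (cf.\ Lemma~\ref{basic_vertsep} and its dual) is that $\co(N\backslash f)$ fails to be $3$-connected exactly when there is a cyclic $3$-separation through $f$, and converting ``no deletable element'' into usable structure requires the kind of cocircuit/certificate bookkeeping that occupies Sections~\ref{lemmas}--\ref{no_fans} of this paper even in the presence of triangles and triads. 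Without that work the proposal does not establish the theorem.
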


For a $3$-connected matroid to have an element~$e$ such that $M \backslash e$ or $M/e$ is $3$-connected, one potential obstacle is the presence of triangles or triads: after contracting an element in a triangle, or deleting an element in a triad, the resulting matroid is not $3$-connected.
It is for this reason that wheels and whirls are exceptional in Tutte's Wheels-and-Whirls Theorem: for a wheel or whirl, there is a cyclic ordering on the ground set such that the sets formed by three consecutive elements alternate between triangles and triads, and so every element is in both a triangle and a triad.
Similarly, for a $3$-connected matroid to have a detachable pair, triangles and triads can again be problematic.
This issue is bypassed by case~(ii) of \cref{detpairs_notriangles}.
In this paper, we describe precisely the matroids with at least 13 elements that have no detachable pairs, including those with triangles or triads.
In particular, we prove the following:
\begin{theorem}\label{detachable_main}
	Let $M$ be a $3$-connected matroid with $|E(M)| \geq 13$. Then precisely one of the following holds:
	\begin{enumerate}
		\item $M$ has a detachable pair,
		\item $M$ is a wheel or a whirl,
		\item $M$ is an accordion,
        \item $M$ is an even-fan-spike, or an even-fan-spike with tip and cotip,
        \item $M$ or $M^*$ is an even-fan-paddle,
        \item $M$ or $M^*$ is a triad-paddle or a hinged triad-paddle,
        \item $M$ is a tri-paddle-copaddle, or
        \item $M$ or $M^*$ is a quasi-triad-paddle with
            \begin{enumerate}[label=\rm(\alph*)]
                \item an augmented-fan petal,
                \item a co-augmented-fan petal,
                \item a quad petal, or
                \item a near-quad petal.
            \end{enumerate}
    \end{enumerate}
\end{theorem}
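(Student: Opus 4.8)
The plan is to build on \cref{detpairs_notriangles}. Assume $|E(M)| \ge 13$ and that $M$ has no detachable pair; then Williams's theorem says $M$ has a triangle or a triad, or $M$ is a spike. Spikes are treated first and essentially by hand: for a spike with legs $\{x_i,y_i\}$, deleting $x_i$ turns $\{y_i, x_j, y_j\}$ into a triad and hence $\{y_i,y_j\}$ into a series pair of $M\backslash x_i\backslash x_j$, and dually $\{y_i,y_j\}$ becomes a parallel pair of $M/x_i/x_j$; a short check of the remaining pairs (those meeting a tip or cotip) shows exactly which spikes survive, and these are collected into the even-fan-spike families of (iv), where a pure spike is the degenerate case of an empty fan. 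For the remaining cases, duality lets us assume throughout that $M$ has a triangle $T=\{a,b,c\}$; this is why items (v)--(viii) are phrased for ``$M$ or $M^*$''. The substance of the proof is then a structural analysis organised around $T$ and, more broadly, around the fans of $M$.

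The first tool is Tutte's Triangle Lemma and the theory of maximal fans. For each $x\in T$, either $M\backslash x$ is $3$-connected or $x$ lies in a triad. If some $M\backslash x$ is $3$-connected, I would look for a second element whose deletion keeps the matroid $3$-connected---a chain-theorem step that tracks how the $3$-separations created by removing $x$ constrain the rest of $M$---and show that the failure of this search forces one of the listed structures. Otherwise $T$ grows to a maximal fan $F=(f_1,\dots,f_k)$, and the governing dichotomy is whether $F$ nearly exhausts $E(M)$. If $M$ is, up to a few elements, a single long fan, then $M$ is a wheel or a whirl (item (ii)) or an accordion (item (iii)), the accordions being precisely the fans whose two ends are ``capped'' in a way that leaves every pair non-detachable. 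If instead $F$ is a proper substructure, each end of $F$ meets the rest of $M$ across a $3$-separation; analysing those $3$-separations, and in particular when an element of $F$ can be slid to one end to create a detachable pair, pins $M$ down to a fan attached to a spike (even-fan-spike, (iv)) or to a side of a paddle (even-fan-paddle, (v)). The parity conditions record exactly when such a sliding move is obstructed.

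When $M$ has triangles and triads but no long fan, its triangles and triads must cluster over a common line or coline, and the problem becomes one of decomposing $M$ as a paddle or copaddle of small pieces. Here I would invoke the standard theory of $3$-separations: take a maximal collection of disjoint $3$-separators (the petals of the paddle), classify each petal---using the $13$-element bound and local connectivity, only a short list of petals is possible---and then show that the only combinations with no detachable pair are those of (vi), (vii) and (viii). In every other configuration one locates a pair of elements, typically one in each of two petals or straddling a petal boundary, whose deletion or contraction is $3$-connected. The four petal types in (viii) are precisely the ``borderline'' petals for a quasi-triad-paddle: each is just large enough, or just cross-connected enough, that no detachable pair arises inside it or across its boundary.

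I expect two things to be the main obstacles. First, showing the list is \emph{exhaustive} requires a long and delicate analysis of how $3$-separations propagate under a single deletion or contraction, with many near-miss configurations to eliminate; the hypothesis $|E(M)|\ge 13$ has to be deployed carefully to rule out small sporadic matroids. Second, showing the list is \emph{exact}---that each family in (ii)--(viii) genuinely has no detachable pair and that the families are mutually exclusive, so that ``precisely one'' holds---requires, for every such structure, checking every pair $\{e,f\}$ and exhibiting a $1$- or $2$-separation of both $M\backslash e\backslash f$ and $M/e/f$ (usually a series or parallel pair, or a small induced $2$-separation). The quasi-triad-paddle cases of (viii), where the outcome depends sensitively on which petal is present, should be the most demanding part of both directions.
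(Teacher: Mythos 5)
Your outline correctly frames the start (assume no detachable pair, invoke \cref{detpairs_notriangles}, dispose of spikes, and pass to the case of a triangle or triad), but beyond that point it is a plan rather than a proof: every decisive step is a placeholder. ``I would look for a second element whose deletion keeps the matroid $3$-connected \dots and show that the failure of this search forces one of the listed structures'', ``analysing those $3$-separations \dots pins $M$ down'', and ``only a short list of petals is possible'' are precisely the statements that need proof, and in the paper they occupy the entire machinery of Sections 5--9: the deletion and contraction certificates (\cref{deletable_collection}--\cref{no_other_elements}), the orthogonality analysis of how $4$-element circuits and cocircuits attach to fan ends (\cref{deletable_cocircuit}, \cref{circuit_so_deletable}, \cref{disjoint_triads}), and the four case theorems \cref{disjoint_fans_with_like_ends}, \cref{intersecting_fans_detachable}, \cref{single_fans}, \cref{no_4_element_fans}. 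Nothing in the proposal substitutes for this; in particular you give no mechanism for proving exhaustiveness, which is the hard direction.

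Moreover, the dichotomy you propose to organise the argument --- ``$M$ is, up to a few elements, a single long fan'' versus ``the fan is a proper substructure, hence a spike or paddle attachment'', with all of (vi)--(viii) relegated to the no-long-fan branch --- mis-sorts the exceptional families and would not generate the right case analysis. Hinged triad-paddles and quasi-triad-paddles with augmented-fan or co-augmented-fan petals contain $4$- or $5$-element fans; in the paper they arise exactly from the analysis of a long fan together with a disjoint triad or a second fan (\cref{even_fan_k3m}, \cref{augmented_fan_petal}, \cref{coaugmented}), not from the fan-free case, whereas the fan-free case (\cref{no_4_element_fans}) is what produces triad-paddles, tri-paddle-copaddles, and quad/near-quad petals. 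Conversely, accordions are not characterised by the fan ``nearly exhausting'' $E(M)$: their ends are specific $2$-, $4$-, or $5$-element configurations tied to explicit circuits and cocircuits, and separating them from even-fan-spikes with tip and cotip and from even-fan-paddles requires the analysis of two maximal fans meeting in one or both ends (\cref{tip_cotip}, \cref{even_fan_paddle}, \cref{accordion4}) --- a case your sketch never isolates. So the proposal, as it stands, has both a structural flaw in its governing case split and a wholesale omission of the lemmas that would make any such split work.
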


The matroids in this theorem are illustrated as geometric representations in Figures \ref{efs_figs}--\ref{accordion_figs}. While formal definitions of these matroids are deferred until \Cref{exceptionalmatroids}, we make some initial observations.
Each family described in one of (ii)--(viii) contains only matroids that have no detachable pairs, and these matroids can be arbitrarily large.
Fans feature prominently in many of these families (a fan is a subset with an ordering such that the sets formed by three consecutive elements alternate between triangles and triads).
A reader may wonder why spikes do not explicitly appear in \cref{detachable_main}; we view a spike as an example of an even-fan-spike (where each even fan has size two).

The notion of a flower can be used to describe matroids with crossing $3$-separations~\cite{flowers}.
Let $\Phi = (P_1,P_2,\ldots,P_m)$ be a partition of the ground set of a $3$-connected matroid $M$.
Then $\Phi$ is a \emph{flower} in $M$ if each $P_i$ consists of at least two elements and is $3$-separating, and each $P_i \cup P_{i+1}$ is $3$-separating, where all subscripts are interpreted modulo $m$.
A flower $\Phi$ is an \emph{anemone} if $\bigcup_{s\in S} P_s$ is $3$-separating for every subset $S$ of $\{1,2,\dotsc,m\}$.
The matroids described in (iv)--(viii) can be viewed as anemones where, with a few particular exceptions, each petal is either a triad, or a fan with even length. 
The matroids described in (iii) have \emph{path-width three}; that is, there is an ordering $(e_1,e_2,\dotsc,e_n)$ of $E(M)$ such that $\{e_1,\dotsc,e_i\}$ is $3$-separating for each positive integer $i \le n$.

Now let $G$ be a simple $3$-connected graph.
A pair $\{e,f\} \subseteq E(G)$ is called a \emph{detachable pair} if either $G \backslash e \backslash f$ or $G / e / f$ is simple and $3$-connected.
As a consequence of \cref{detachable_main}, we obtain the following chain theorem for simple $3$-connected graphs:
\begin{theorem}\label{detachable_graph}
	Let $G$ be a simple $3$-connected graph with $|E(G)| \geq 13$. Then precisely one of the following holds:
	\begin{enumerate}
		\item $G$ has a detachable pair,
        \item $G$ is a wheel,
        \item $G$ is a mutant wheel, 
        \item $G$ is a twisted wheel or a warped wheel, 
        \item $G$ is a multi-wheel,
        \item $G$ is a stretched wheel,
        \item $G$ is isomorphic to $K_{3,m}$, for some $m \geq 5$, or 
        \item $G$ is isomorphic to $K^a_{3,m}$ or $K^b_{3,m}$, for some $m \geq 3$.
    \end{enumerate}
\end{theorem}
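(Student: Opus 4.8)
The plan is to derive \cref{detachable_graph} from \cref{detachable_main} by passing to the cycle matroid $M=M(G)$. I would use two standard facts about graphic matroids: first, for a simple graph $H$ with at least four vertices, $M(H)$ is $3$-connected if and only if $H$ is $3$-connected; and second, a $3$-connected graph with at least four vertices is, up to isomorphism, the unique graph with a given cycle matroid (a consequence of Whitney's $2$-isomorphism theorem). Since $G$ is simple and $3$-connected with $|E(G)|\ge 13$, it has at least six vertices, so $M$ is a $3$-connected matroid with $|E(M)|\ge 13$; recall also that any $3$-connected matroid with at least four elements is simple and cosimple. Applying \cref{detachable_main}, precisely one of its eight outcomes holds for $M$.

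First I would check that a detachable pair of $M$ corresponds exactly to a detachable pair of $G$, in both directions. If $\{e,f\}$ is a pair of edges and $M\backslash e\backslash f=M(G\backslash e\backslash f)$ is $3$-connected, then, as $G\backslash e\backslash f$ is simple with at least four vertices, it is $3$-connected; conversely, if $G\backslash e\backslash f$ is simple and $3$-connected then $M\backslash e\backslash f$ is $3$-connected. For contraction, note that $M(G)/e/f=M(G/e/f)$, that $G/e/f$ is simple precisely when the matroid $M(G/e/f)$ has no loops or parallel elements (contracting edges of a simple graph can fail to keep it simple only by producing a loop or a parallel pair, which appears as a loop or parallel pair of the matroid), and that a $3$-connected matroid with at least four elements is simple; combining these gives that $M/e/f$ is $3$-connected if and only if $G/e/f$ is simple and $3$-connected, the requisite vertex counts being immediate since $|E(G)|\ge 13$ forces $|V(G)|\ge 6$. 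Hence $M$ has a detachable pair if and only if $G$ does, which settles outcome~(i) and, with the next step, gives exclusivity.

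It remains to translate outcomes (ii)--(viii) of \cref{detachable_main}. For each exceptional matroid family $\mathcal F$ appearing there I would determine its graphic members, and for the families stated up to duality I would additionally determine which graphs have bond matroid in $\mathcal F$ (equivalently, the graphic members of $\{N^\ast : N\in\mathcal F\}$). Tutte's excluded-minor characterisation of graphic matroids, applied to the structural descriptions in \cref{exceptionalmatroids}, together with Whitney uniqueness, should make each such determination routine: whirls and generic spikes are non-graphic and drop out, while the surviving matroids are cycle matroids of the wheel-like and complete-bipartite-like graphs of \cref{detachable_graph}. I expect the correspondence to run roughly as follows: wheel matroids give wheels; graphic accordions give mutant, twisted and warped wheels; graphic even-fan-spikes (with or without tip and cotip) give multi-wheels; even-fan-paddles, up to duality, give stretched wheels; triad-paddles and hinged triad-paddles, up to duality, give $K_{3,m}$ and $K'_{3,m}$ (indeed $M(K_{3,m})$ is the anemone whose petals are the triads at the degree-three vertices); and quasi-triad-paddles with the four distinguished petal types give $K''_{3,m}$ and the remaining variants of $K'_{3,m}$, with any family that is neither graphic nor cographic (such as tri-paddle-copaddles, should that be the case) simply discarded. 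Conversely, each graph in (ii)--(viii) has cycle matroid among (ii)--(viii) of \cref{detachable_main}, hence no detachable pair by the previous paragraph, and pairwise disjointness of these classes is inherited from \cref{detachable_main}.

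The main obstacle is precisely this last step: the family-by-family analysis of graphicness and cographicness for the matroids of \cref{exceptionalmatroids} and, crucially, recognising the exact graph attached to each graphic (or cographic) matroid — for instance unfolding a graphic accordion into the correct mutant, twisted or warped wheel, or a graphic even-fan-spike into the correct multi-wheel. Secondary care is needed at the boundary $|E(G)|=13$, where some exceptional matroids have few elements and the named graph families are near their smallest members, to confirm that the correspondence and the vertex-count arguments still go through.
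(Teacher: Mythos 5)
Your reduction framework is the same as the paper's: pass to $M=M(G)$, apply \cref{detachable_main}, and check that detachable pairs of $M(G)$ correspond exactly to detachable pairs of $G$; that translation (including the contraction case via simplicity of $3$-connected matroids and the vertex counts forced by $|E(G)|\ge 13$) is fine. The genuine gap is the step you yourself flag as the ``main obstacle'': the family-by-family determination of which exceptional matroids are graphic or cographic and of the graphs they realise. This is not routine bookkeeping --- it is the entire content of the paper's proof in \cref{graph_proof} --- and your guessed dictionary is wrong in several places, which shows it cannot be waved through. Concretely: graphic accordions yield only mutant wheels (the paper must exclude triangle-type and quad-type ends by exhibiting $U_{2,4}$- and $F_7$-type minors, and only the fan-type/fan-type case survives); twisted and warped wheels do not come from accordions but from even-fan-spikes with, respectively without, tip and cotip, and non-degenerate even-fan-spikes are not graphic at all (they contain spike minors), so even-fan-spikes do not give multi-wheels; multi-wheels come from even-fan-paddles, and only the dual of a \emph{degenerate} even-fan-paddle is graphic, giving the stretched wheel; hinged triad-paddles are neither graphic nor cographic (they have a $U_{2,4}$-minor), so they do not produce $K'_{3,m}$ --- instead $K'_{3,m}$ and $K''_{3,m}$ arise from quasi-triad-paddles with a co-augmented-fan and an augmented-fan petal, while the quad and near-quad petal variants, like tri-paddle-copaddles, are excluded by $U_{2,4}$-, $F_7$-, or $M(K_{3,3})$/$M^*(K_{3,3})$-minors.

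These mis-assignments are not cosmetic: if, say, hinged triad-paddles were graphic as you assert, the graph list in \cref{detachable_graph} would have to contain an additional family, so the statement itself would be in doubt until each exclusion and each identification is actually verified. The paper does this by explicit minor-finding (Tutte's excluded minors for graphic matroids) combined with a reconstruction lemma (\cref{build_fan_graph}) that unfolds fans edge-by-edge to recover the mutant, warped and twisted wheels, plus direct checks for the paddle families. Your proposal contains none of these arguments, so as it stands it establishes only the easy equivalence in outcome~(i) and leaves outcomes (ii)--(viii) unproven.
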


\noindent
These graphs are illustrated in \cref{exceptionalgraphsfig}; definitions are given in \cref{exceptionalgraphs}.

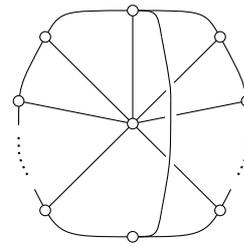
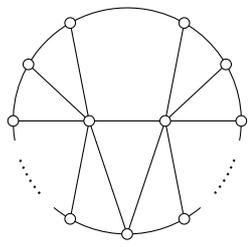
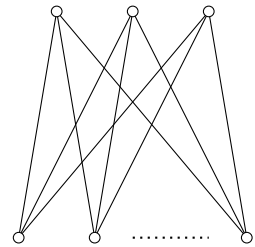
\begin{figure}
	\centering
	\begin{subfigure} {0.45\textwidth}
		\centering
		\begin{tikzpicture}
		\draw (160:1.5) arc (160:-145:1.5);
		\draw[thick, dotted] (-155:1.5) arc (-155:-190:1.5);
		\draw (0:0) -- (90:1.5);
		\draw (0:0) -- (35:1.5);
		\draw (0:0) -- (-20:1.5);
		\draw (0:0) -- (-75:1.5);
		\draw (0:0) -- (-130:1.5);
		\draw (0:0) -- (145:1.5);

		\draw[fill=white] (0:0) circle (2pt);
		\draw[fill=white] (90:1.5) circle (2pt);
		\draw[fill=white] (35:1.5) circle (2pt);
		\draw[fill=white] (-20:1.5) circle (2pt);
		\draw[fill=white] (-75:1.5) circle (2pt);
		\draw[fill=white] (-130:1.5) circle (2pt);
		\draw[fill=white] (145:1.5) circle (2pt);
		\end{tikzpicture}
	\subcaption{A wheel.}
	\end{subfigure}
	\begin{subfigure} {0.45\textwidth}
		\centering
		\begin{tikzpicture}
		\draw (160:1.5) arc (160:-145:1.5);
		\draw[thick, dotted] (-155:1.5) arc (-155:-190:1.5);
		\draw (0:0) -- (90:1.5);
		\draw (0:0) -- (35:1.5);
		\draw (0:0) -- (-20:1.5);
		\draw (0:0) -- (-75:1.5);
		\draw (0:0) -- (-130:1.5);
		\draw (0:0) -- (145:1.5);
		
		\draw (90:0.75) -- (35:1.5);
		\draw (35:0.75) -- (-20:1.5);
		
		\draw[fill=white] (0:0) circle (2pt);
		\draw[fill=white] (90:1.5) circle (2pt);
		\draw[fill=white] (35:1.5) circle (2pt);
		\draw[fill=white] (-20:1.5) circle (2pt);
		\draw[fill=white] (-75:1.5) circle (2pt);
		\draw[fill=white] (-130:1.5) circle (2pt);
		\draw[fill=white] (145:1.5) circle (2pt);
		
		\draw[fill=white] (90:0.75) circle (2pt);
		\draw[fill=white] (35:0.75) circle (2pt);
		\end{tikzpicture}
	\subcaption{A mutant wheel.}
	\end{subfigure}
	\begin{subfigure} {0.45\textwidth}
		\centering
		\begin{tikzpicture}
		\coordinate (a) at (-1.5,0);
		\coordinate (b) at (1.5,0);
		\coordinate (c) at (0,3);
		\coordinate (d) at (0,0.5);
		
		\coordinate (ac1) at ($(c)!0.2!(d)$);
		\coordinate (ac2) at ($(c)!0.4!(d)$);
		\coordinate (ac3) at ($(c)!0.8!(d)$);
		
		\coordinate (bd1) at ($(a)!0.2!(b)$);
		\coordinate (bd2) at ($(a)!0.4!(b)$);
		\coordinate (bd3) at ($(a)!0.8!(b)$);
		
		\draw (c) -- ($(ac2)+(0,-0.2)$);
		\draw ($(ac3)+(0,0.2)$) -- (d);
		
		\draw (d) -- (a);
		\draw (d) -- (bd1);
		\draw (d) -- (bd2);
		\draw (d) -- (bd3);
		\draw (d) -- (b);
		
		\draw (a) -- ($(bd2)+(0.2,0)$);
		\draw ($(bd3)+(-0.2,0)$) -- (b);
		
		\draw (a) -- (c);
		\draw (a) -- (ac1);
		\draw (a) -- (ac2);
		\draw (a) -- (ac3);
		\draw (c) -- (b);
		
		\draw[thick, dotted] ($(bd2)+(0.325,0)$) -- ($(bd3)+(-0.325,0)$);
		\draw[thick, dotted] ($(ac2)+(0,-0.325)$) -- ($(ac3)+(0,0.325)$);
		
		\draw[fill=white] (a) circle (2pt);
		\draw[fill=white] (b) circle (2pt);
		\draw[fill=white] (c) circle (2pt);
		\draw[fill=white] (d) circle (2pt);
		
		\draw[fill=white] (ac1) circle (2pt);
		\draw[fill=white] (ac2) circle (2pt);
		\draw[fill=white] (ac3) circle (2pt);
		
		\draw[fill=white] (bd1) circle (2pt);
		\draw[fill=white] (bd2) circle (2pt);
		\draw[fill=white] (bd3) circle (2pt);
		\end{tikzpicture}
	\subcaption{A twisted wheel.}
	\end{subfigure}
	\begin{subfigure} {0.45\textwidth}
		\centering
		\begin{tikzpicture}
		\coordinate (a) at (-1.5,0);
		\coordinate (b) at (0,0);
		\coordinate (c) at (1.5,0);
		
		\coordinate (d1) at (0,1.5);
		\coordinate (d2) at (0,1.25);
		\coordinate (d3) at (0,1);
		\coordinate (d4) at (0,0.25);
		
		\coordinate (e1) at (0,-1.5);
		\coordinate (e2) at (0,-1.25);
		\coordinate (e3) at (0,-1);
		\coordinate (e4) at (0,-0.25);
		
		\draw (d1) -- ($(d3)+(0,-0.15)$);
		\draw ($(d4)+(0,0.15)$) -- (b);
		\draw[thick,dotted] ($(d3)+(0,-0.25)$) -- ($(d4)+(0,0.25)$);
		\draw (e1) -- ($(e3)+(0,0.15)$);
		\draw ($(e4)+(0,-0.15)$) -- (b);
		\draw[thick,dotted] ($(e3)+(0,0.25)$) -- ($(e4)+(0,-0.25)$);
		
		\draw (a) -- (c);
		
		\draw (a) -- (d1);
		\draw (a) -- (d2);
		\draw (a) -- (d3);
		\draw (a) -- (d4);
		\draw (a) -- (e1);
		
		\draw (c) -- (d1);
		\draw (c) -- (e1);
		\draw (c) -- (e2);
		\draw (c) -- (e3);
		\draw (c) -- (e4);
		
		\draw[fill=white] (a) circle (2pt);
		\draw[fill=white] (b) circle (2pt);
		\draw[fill=white] (c) circle (2pt);
		\draw[fill=white] (d1) circle (2pt);
		\draw[fill=white] (d2) circle (2pt);
		\draw[fill=white] (d3) circle (2pt);
		\draw[fill=white] (d4) circle (2pt);
		\draw[fill=white] (e1) circle (2pt);
		\draw[fill=white] (e2) circle (2pt);
		\draw[fill=white] (e3) circle (2pt);
		\draw[fill=white] (e4) circle (2pt);
		\end{tikzpicture}
	\subcaption{A warped wheel.}
	\end{subfigure}
	\begin{subfigure} {0.45\textwidth}
		\centering
		\begin{tikzpicture}
		\draw (0,1.5) -- (0,0);
		
		\draw[thick,dotted] (-1.5,-0.18) .. controls (-1.5,-0.45) .. (-1.36,-0.73);
		\draw (-1.29,-0.87) .. controls (-0.975,-1.5) .. (0,-1.5);
		
		\draw (-1.5,-0.02) .. controls (-1.5,0.5) .. (-1.15,1.15);
		\draw (-1.15,1.15) .. controls (-0.95,1.5) .. (0,1.5);
		
		\draw[thick,dotted] (1.5,-0.18) .. controls (1.5,-0.45) .. (1.36,-0.73);
		\draw (1.29,-0.87) .. controls (0.975,-1.5) .. (0,-1.5);
		
		\draw (1.5,-0.02) .. controls (1.5,0.5) .. (1.15,1.15);
		\draw (1.15,1.15) .. controls (0.95,1.5) .. (0,1.5);
		
		\draw (0,0) -- (-1.15,1.15);
		\draw (0,0) -- (-1.5,0.3);
		\draw (0,0) -- (-1.15,-1.15);
		
		\draw (0,0) -- (1.15,1.15);
		\draw (0,0) -- (1.5,0.3);
		\draw (0,0) -- (1.15,-1.15);
		
		\draw (0,0) -- (0.38,1.15);
		\draw (0,0) -- (0.5,0.3);
		\draw (0,0) -- (0.38,-1.15);
		
		\filldraw[white] (0.48,0.5) circle (2pt);
		\filldraw[white] (0.5,0.1) circle (2pt);
		\filldraw[white] (0.48,-0.5) circle (2pt);
		\draw[thick,dotted] (0.5,-0.18) .. controls (0.5,-0.45) .. (0.45,-0.7);
		\draw (0.43,-0.855) .. controls (0.32,-1.5) .. (0,-1.5);
		
		\draw (0.5,-0.02) .. controls (0.5,0.5) .. (0.38,1.15);
		\draw (0.38,1.15) .. controls (0.32,1.5) .. (0,1.5);
		
		\draw[thick, dotted] (0.75,0) -- (1.3,0);
		
		\draw[fill=white] (-1.15,1.15) circle (2pt);
		\draw[fill=white] (-1.5,0.3) circle (2pt);
		\draw[fill=white] (-1.15,-1.15) circle (2pt);
		
		\draw[fill=white] (1.15,1.15) circle (2pt);
		\draw[fill=white] (1.5,0.3) circle (2pt);
		\draw[fill=white] (1.15,-1.15) circle (2pt);
		
		\draw[fill=white] (0.38,1.15) circle (2pt);
		\draw[fill=white] (0.5,0.3) circle (2pt);
		\draw[fill=white] (0.38,-1.15) circle (2pt);
		
		\draw[fill=white] (0,1.5) circle (2pt);
		\draw[fill=white] (0,0) circle (2pt);
		\draw[fill=white] (0,-1.5) circle (2pt);
		\end{tikzpicture}
	\subcaption{A multi-wheel.}
	\end{subfigure}
	\begin{subfigure} {0.45\textwidth}
		\centering
		\begin{tikzpicture}
		\draw (0,1.5) -- (0,0);
		
		\draw[thick,dotted] (-1.5,-0.18) .. controls (-1.5,-0.45) .. (-1.36,-0.73);
		\draw (-1.29,-0.87) .. controls (-0.975,-1.5) .. (0,-1.5);
		
		\draw (-1.5,-0.02) .. controls (-1.5,0.5) .. (-1.15,1.15);
		\draw (-1.15,1.15) .. controls (-0.95,1.5) .. (0,1.5);
		
		\draw[thick,dotted] (1.5,-0.18) .. controls (1.5,-0.45) .. (1.36,-0.73);
		\draw (1.29,-0.87) .. controls (0.975,-1.5) .. (0,-1.5);
		
		\draw (1.5,-0.02) .. controls (1.5,0.5) .. (1.15,1.15);
		\draw (1.15,1.15) .. controls (0.95,1.5) .. (0,1.5);
		
		\draw (0,0) -- (-1.15,1.15);
		\draw (0,0) -- (-1.5,0.3);
		\draw (0,0) -- (-1.15,-1.15);
		
		\draw (0,0) -- (1.15,1.15);
		\draw (0,0) -- (1.5,0.3);
		\draw (0,0) -- (1.15,-1.15);
		
		\filldraw[white] (0.48,0.5) circle (2pt);
		\filldraw[white] (0.5,0.1) circle (2pt);
		\filldraw[white] (0.48,-0.5) circle (2pt);
		\draw (0.5,0.3) .. controls (0.5,-0.5) .. (0.38,-1.15);
		\draw (0.38,-1.15) .. controls (0.32,-1.5) .. (0,-1.5);
		
		\draw (0.5,0.3) .. controls (0.5,0.5) .. (0.38,1.15);
		\draw (0.38,1.15) .. controls (0.32,1.5) .. (0,1.5);
		
		\draw[fill=white] (-1.15,1.15) circle (2pt);
		\draw[fill=white] (-1.5,0.3) circle (2pt);
		\draw[fill=white] (-1.15,-1.15) circle (2pt);
		
		\draw[fill=white] (1.15,1.15) circle (2pt);
		\draw[fill=white] (1.5,0.3) circle (2pt);
		\draw[fill=white] (1.15,-1.15) circle (2pt);
		
		\draw[fill=white] (0,1.5) circle (2pt);
		\draw[fill=white] (0,0) circle (2pt);
		\draw[fill=white] (0,-1.5) circle (2pt);
		\end{tikzpicture}
	\subcaption{A degenerate multi-wheel.}
	\end{subfigure}
	\begin{subfigure} {0.45\textwidth}
		\centering
		\begin{tikzpicture}
		\draw (-10:1.5) arc (-10:190:1.5);
		\draw (-130:1.5) arc (-130:-50:1.5);
		\draw[thick,dotted] (-20:1.5) arc (-20:-40:1.5);
		\draw[thick,dotted] (-140:1.5) arc (-140:-160:1.5);
		
		\draw (0:1.5) -- (0.5,0);
		\draw (30:1.5) -- (0.5,0);
		\draw (60:1.5) -- (0.5,0);
		\draw (-60:1.5) -- (0.5,0);
		\draw (-90:1.5) -- (0.5,0);
		
		\draw (120:1.5) -- (-0.5,0);
		\draw (150:1.5) -- (-0.5,0);
		\draw (180:1.5) -- (-0.5,0);
		\draw (-120:1.5) -- (-0.5,0);
		\draw (-90:1.5) -- (-0.5,0);
		
		\draw (-0.5,0) -- (0.5,0);
		
		\draw[fill=white] (0:1.5) circle (2pt);
		\draw[fill=white] (30:1.5) circle (2pt);
		\draw[fill=white] (60:1.5) circle (2pt);
		\draw[fill=white] (120:1.5) circle (2pt);
		\draw[fill=white] (150:1.5) circle (2pt);
		\draw[fill=white] (180:1.5) circle (2pt);
		
		\draw[fill=white] (-0.5,0) circle (2pt);
		\draw[fill=white] (0.5,0) circle (2pt);
		
		\draw[fill=white] (-60:1.5) circle (2pt);
		\draw[fill=white] (-120:1.5) circle (2pt);
		\draw[fill=white] (-90:1.5) circle (2pt);
		\end{tikzpicture}
		\subcaption{A stretched wheel.}
	\end{subfigure}
	\begin{subfigure} {0.45\textwidth}
		\centering
		\begin{tikzpicture}
		\coordinate (y) at (0,3);
		\coordinate (a1) at (-1,0);
		\coordinate (b1) at (0,0);
		\coordinate (c1) at (1,0);
		
		\coordinate (a3) at ($(-1.5,0)-(y)$);
		\coordinate (b3) at ($(-0.5,0)-(y)$);
		\coordinate (c3) at ($(1.5,0)-(y)$);
		
		\draw (a3) -- (a1);
		\draw (a3) -- (b1);
		\draw (a3) -- (c1);
		\draw (b3) -- (a1);
		\draw (b3) -- (b1);
		\draw (b3) -- (c1);
		\draw (c3) -- (a1);
		\draw (c3) -- (b1);
		\draw (c3) -- (c1);
		\draw[thick, dotted] ($(b3)+(0.5,0)$) -- ($(c3)+(-0.5,0)$);
		
		\draw[black,fill=white] (a1) circle (2pt);
		\draw[black,fill=white] (b1) circle (2pt);
		\draw[black,fill=white] (c1) circle (2pt);
		
		\draw[black,fill=white] (a3) circle (2pt);
		\draw[black,fill=white] (b3) circle (2pt);
		\draw[black,fill=white] (c3) circle (2pt);
		\end{tikzpicture}
	\subcaption{$K_{3,m}$.}
	\end{subfigure}
	\begin{subfigure} {0.45\textwidth}
		\centering
		\begin{tikzpicture}
		\coordinate (y) at (0,1.5);
		\coordinate (a1) at (-1,0);
		\coordinate (b1) at (0,0);
		\coordinate (c1) at (1,0);
		
		\coordinate (a2) at (0,0.75);
		\coordinate (b2) at (0,1.5);
		
		\coordinate (a3) at ($(-1.5,0)-(y)$);
		\coordinate (b3) at ($(-0.5,0)-(y)$);
		\coordinate (c3) at ($(1.5,0)-(y)$);
		
		\draw (a3) -- (a1);
		\draw (a3) -- (b1);
		\draw (a3) -- (c1);
		\draw (b3) -- (a1);
		\draw (b3) -- (b1);
		\draw (b3) -- (c1);
		\draw (c3) -- (a1);
		\draw (c3) -- (b1);
		\draw (c3) -- (c1);
		\draw[thick, dotted] ($(b3)+(0.5,0)$) -- ($(c3)+(-0.5,0)$);
		
		\draw (a2) -- (a1);
		\draw (a2) -- (b1);
		\draw (a2) -- (c1);
		\draw (b2) -- (a2);
		\draw (b2) -- (a1);
		\draw (b2) -- (c1);
		
		\draw[black,fill=white] (a1) circle (2pt);
		\draw[black,fill=white] (b1) circle (2pt);
		\draw[black,fill=white] (c1) circle (2pt);
		
		\draw[black,fill=white] (a2) circle (2pt);
		\draw[black,fill=white] (b2) circle (2pt);
		
		\draw[black,fill=white] (a3) circle (2pt);
		\draw[black,fill=white] (b3) circle (2pt);
		\draw[black,fill=white] (c3) circle (2pt);
		\end{tikzpicture}
	\subcaption{$K^a_{3,m}$}
	\end{subfigure}
	\begin{subfigure} {0.45\textwidth}
		\centering
		\begin{tikzpicture}
		\coordinate (y) at (0,1.5);
		\coordinate (a1) at (-1,0);
		\coordinate (b1) at (0,0);
		\coordinate (c1) at (1,0);
		
		\coordinate (a2) at ($(-0.5,0)+(y)$);
		\coordinate (b2) at ($(0.5,0)+(y)$);
		
		\coordinate (a3) at ($(-1.5,0)-(y)$);
		\coordinate (b3) at ($(-0.5,0)-(y)$);
		\coordinate (c3) at ($(1.5,0)-(y)$);
		
		\draw (a3) -- (a1);
		\draw (a3) -- (b1);
		\draw (a3) -- (c1);
		\draw (b3) -- (a1);
		\draw (b3) -- (b1);
		\draw (b3) -- (c1);
		\draw (c3) -- (a1);
		\draw (c3) -- (b1);
		\draw (c3) -- (c1);
		\draw[thick, dotted] ($(b3)+(0.5,0)$) -- ($(c3)+(-0.5,0)$);
		
		\draw (a2) -- (a1);
		\draw (a2) -- (b1);
		\draw (b2) -- (b1);
		\draw (b2) -- (c1);
		\draw (a1) .. controls ($(b1)+(0,0.6)$) .. (c1);
		\draw (a2) -- (b2);
		
		\draw[black,fill=white] (a1) circle (2pt);
		\draw[black,fill=white] (b1) circle (2pt);
		\draw[black,fill=white] (c1) circle (2pt);
		
		\draw[black,fill=white] (a2) circle (2pt);
		\draw[black,fill=white] (b2) circle (2pt);
		
		\draw[black,fill=white] (a3) circle (2pt);
		\draw[black,fill=white] (b3) circle (2pt);
		\draw[black,fill=white] (c3) circle (2pt);
		\end{tikzpicture}
	\subcaption{$K^b_{3,m}$}
	\end{subfigure}
\caption{Simple $3$-connected graphs with no detachable pairs.} \label{exceptionalgraphsfig}
\end{figure}

\Cref{detpairs_notriangles} was an important step towards a splitter theorem for detachable pairs in $3$-connected matroids having no triangles or triads, which was later obtained by Brettell, Whittle, and Williams \cite{detpairs1,detpairs2,detpairs3}.
The initial motivation for these results was as a tool towards proving excluded-minor characterisations for particular classes of representable matroids~\cite{almostfrag1,almostfrag2,2regexminors,BP23}.
For these classes, the excluded minors are closed under \emph{$\Delta$-$Y$ exchange}~\cite{deltay}, an operation that transforms a triangle into a triad.
In this setting, it suffices to be able to obtain a detachable pair after a $\Delta$-$Y$ or $Y$-$\Delta$ exchange,
so an analysis of when matroids with triangles or triads have detachable pairs was unnecessary.
However, we foresee \cref{detachable_main} as a tool towards proving excluded-minor characterisations for classes of matroids that are not closed under $\Delta$-$Y$ exchange.
It is also a step towards a splitter theorem for detachable pairs in $3$-connected matroids (that may have triangles or triads).

We note that \cref{detachable_main} resolves \cite[Conjecture~7.5]{detpairs3} which, although ``correct in spirit'', was missing the exceptional matroids given by cases (iii), (vii), and (viii), and part of (iv) and (vi).
Similarly, \cref{detachable_graph} resolves \cite[Conjecture~7.6]{detpairs3}, which was missing the exceptional graphs given in cases~(iii), (vi), and (viii), and part of case (iv).

This paper is structured as follows.  In \cref{exceptionalmatroids,exceptionalgraphs}, we describe the exceptional matroids and graphs that appear in \cref{detachable_main,detachable_graph}, respectively.
We present some preliminaries in \cref{det_prelims,lemmas}.
The remainder of the paper consists of a proof of \cref{detachable_main}.
In \cref{disjoint_fans,fans_intersecting} we address cases where the matroid $M$ has distinct maximal fans: one of length at least four, and the other of length at least three.
First, in \cref{disjoint_fans}, we assume the fans are disjoint and both start with triangles, or both start with triads.
Then, in \cref{fans_intersecting}, we assume the fans have non-empty intersection.
Next, in \cref{fans}, we consider the remaining cases where $M$ has a fan of length at least four.
In \cref{no_fans}, it remains only to consider the case where $M$ has no $4$-element fans.
Finally, in \cref{graph_proof}, we prove \cref{detachable_graph} by showing that the graphs in this theorem correspond to the matroids in \cref{detachable_main} that are graphic.

\section{Matroids with no detachable pairs} \label{exceptionalmatroids}

We now formally define the $3$-connected matroids with no detachable pairs, appearing in \cref{detachable_main}.
In order to do so, we first recall the notions of flowers and fans.
For a positive integer $m$, we let $[m]$ denote the set $\{1,2,\dotsc,m\}$, and let $[0]=\emptyset$.
Let $M$ be a matroid with ground set $E$.
The \emph{local connectivity} of subsets $X, Y \subseteq E$ is 
\[\sqcap(X,Y) = r(X) + r(Y) - r(X \cup Y).\] 
The \emph{connectivity} of $X$ in $M$ is
\[
    \lambda(X) = \sqcap(X,E-X) = r(X) + r(E-X) - r(M).
\]

Let $M$ be a $3$-connected matroid.
Recall that a partition $\Phi = (P_1,P_2,\ldots,P_m)$ of $E(M)$, for some $m \ge 2$, is a \emph{flower} if, for all $i \in [m]$, we have that $|P_i| \geq 2$, and $\lambda(P_i) \le 2$, and $\lambda(P_i \cup P_{i+1}) \le 2$, where subscripts are interpreted modulo $m$.
The sets $P_i$ are called \emph{petals} of $\Phi$.
The flower~$\Phi$ is an \emph{anemone} if, for all subsets $J$ of $[m]$, we have that $\lambda(\bigcup_{j \in J} P_j) \le 2$.
Furthermore, when $m \ge 3$, we say the anemone $\Phi$ is
\begin{enumerate}
	\item a \emph{paddle} if $\sqcap(P_i,P_j) = 2$ for all distinct $i,j \in [m]$,
    \item \emph{spike-like} if $\sqcap(P_i,P_j) = 1$ for all distinct $i,j \in [m]$, and
    \item a \emph{copaddle} if $\sqcap(P_i,P_j) = 0$ for all distinct $i,j \in [m]$.
\end{enumerate}
Note that if $\Phi$ is a paddle in $M^*$, then it is a copaddle in $M$; whereas if $\Phi$ is spike-like in $M^*$, then it is also spike-like in $M$~\cite[Proposition~4.2]{flowers}.

Let $F$ be a subset of $E(M)$.
If $|F| \ge 3$ and $F$ has an ordering $(e_1,e_2,\ldots,e_{|F|})$ such that
\begin{enumerate}
    \item $\{e_1,e_2,e_3\}$ is a triangle or a triad, and
    \item for all $i \in [|F|-3]$, if $\{e_i,e_{i+1},e_{i+2}\}$ is a triangle, then $\{e_{i+1},e_{i+2},e_{i+3}\}$ is a triad, and if $\{e_i,e_{i+1},e_{i+2}\}$ is a triad, then $\{e_{i+1},e_{i+2},e_{i+3}\}$ is a triangle,
\end{enumerate}
then $F$ is a \emph{fan} of $M$, and we call $(e_1,e_2,\ldots,e_{|F|})$ a \emph{fan ordering} of $F$ with \emph{ends} $e_1$ and $e_{|F|}$.
If $|F|=2$, then we also say that $F$ is a fan (where any ordering is a fan ordering of $F$).
The \emph{length} of a fan $F$ is $|F|$.
A fan is \emph{even} if it has even length, otherwise it is \emph{odd}.
For a fan $F$, we say that $e \in F$ is an \emph{end} of $F$ if there is a fan ordering of $F$ for which $e$ is an end.
Note that when a fan $F$ has length at least $4$, it has a unique pair of ends \cite{oxleywufans}.

The exceptional matroids in \cref{detachable_main} fall roughly into four categories:
firstly, spike-like anemones where each petal is an even fan (\cref{efs_figs});
secondly, paddles where each petal is an even fan (\cref{efp_figs});
thirdly, paddles that can be constructed by attaching particular matroids to $M(K_{3,m})$ for some $m \geq 2$ (\cref{mk3mpaddles_fig});
and finally, a family of matroids with path-width three that we call accordions 
(\cref{accordion_figs}).

Throughout the remainder of this section, $M$ is a $3$-connected matroid.

\subsection*{Even-fan-spikes}
We say that $M$ is a \emph{(tipless) non-degenerate even-fan-spike} with \emph{partition} $\Phi$ if $M$ has a spike-like anemone $\Phi = (P_1,P_2,\ldots,P_m)$, for $m \geq 3$, such that
\begin{enumerate}
    \item for every $i \in [m]$, the petal $P_i$ is an even fan with length at least two, and
    \item for all distinct $i,j \in [m]$, the fans $P_i$ and $P_j$ have orderings $(p_1,p_2,\ldots,p_{|P_i|})$ and $(q_1,q_2,\ldots,q_{|P_j|})$ respectively such that $\{p_1,p_2,p_3\}$ is a triad or $|P_i|=2$, and $\{q_1,q_2,q_3\}$ is a triad or $|Q_i|=2$, and $\{p_1,p_2,q_1,q_2\}$ is a circuit and $\{p_{|P_i|-1},p_{|P_i|},q_{|P_j|-1},q_{|P_j|}\}$ is a cocircuit.
\end{enumerate}
See \cref{efs_fig} for an example with $m  = 4$. Note that $\bigcap_{i \in [m]} \cl(P_i) = \emptyset$ and $\bigcap_{i \in [m]} \cl^*(P_i) = \emptyset$.
We call each $P_i$ a \emph{petal} of the non-degenerate even-fan-spike. If each petal has size two, then $M$ is a (tipless) spike.

\begin{figure}
	\centering
    \begin{subfigure}[t]{0.45\textwidth}
		\centering
		\begin{tikzpicture}
		\coordinate (tip) at (0,0);
		\coordinate (leg1) at (-28:2);
		\coordinate (leg2) at (-67:2);
		\coordinate (leg3) at (-113:2);
		\coordinate (leg4) at (-152:2);
		
		\coordinate (wheel1a) at ($(tip)+(47:1)$);
		\coordinate (wheel1b) at ($(leg2)+(47:1)$);
		
		\coordinate (wheel2a) at ($(tip)+(-257:0.8)$);
		\coordinate (wheel2c) at ($(leg4)+(-227:0.8)$);
		\coordinate (wheel2b) at ($($(wheel2a)!0.5!(wheel2c)$)+(-242:0.8)$);
		
		\draw[white, name path=wheel1] (wheel1a) -- (wheel1b);
		
		\draw[name path=backleg] (tip) -- (leg1);
		\draw (tip) -- (leg2);
		\draw (tip) -- (leg3);
		\draw (tip) -- (leg4);
		
		\draw (tip) -- (wheel1a);
		\draw (wheel1b) -- (leg2);
		
		\filldraw[white, name intersections={of=wheel1 and backleg}=-] (intersection-1) circle (5pt);
		
		\draw (wheel1a) -- (wheel1b);
		
		\draw (tip) -- (wheel2a) -- (wheel2b) -- (wheel2c) -- (leg4);
		
		\filldraw ($(tip)!0.5!(leg1)$) circle (2pt);
		\filldraw (leg1) circle (2pt);
		\filldraw ($(tip)!0.5!(leg3)$) circle (2pt);
		\filldraw (leg3) circle (2pt);
		
		\filldraw ($(tip)!0.5!(wheel1a)$) circle (2pt);
		\filldraw (wheel1a) circle (2pt);
		\filldraw ($(wheel1a)!0.5!(wheel1b)$) circle (2pt);
		\filldraw (wheel1b) circle (2pt);
		\filldraw ($(leg2)!0.5!(wheel1b)$) circle (2pt);
		\filldraw (leg2) circle (2pt);
		
		\filldraw ($(tip)!0.5!(wheel2a)$) circle (2pt);
		\filldraw (wheel2a) circle (2pt);
		\filldraw ($(wheel2a)!0.5!(wheel2b)$) circle (2pt);
		\filldraw (wheel2b) circle (2pt);
		\filldraw ($(wheel2b)!0.5!(wheel2c)$) circle (2pt);
		\filldraw (wheel2c) circle (2pt);
		\filldraw ($(wheel2c)!0.5!(leg4)$) circle (2pt);
		\filldraw (leg4) circle (2pt);
		
		\filldraw[white] ($(tip)+(0,-2.2)$) circle (2pt);
		\end{tikzpicture}
		\subcaption{A non-degenerate even-fan-spike.} \label{efs_fig}
	\end{subfigure}
    \begin{subfigure}[t]{0.45\textwidth}
		\centering
		\begin{tikzpicture}
		\coordinate (tip) at (0,0);
		\coordinate (leg1) at (-28:2);
		\coordinate (leg2) at (-67:2);
		\coordinate (leg3) at (-113:2);
		\coordinate (leg4) at (-152:2);
		
		\coordinate (wheel1a) at ($(tip)+(47:1)$);
		\coordinate (wheel1b) at ($(leg2)+(47:1)$);
		
		\coordinate (wheel2a) at ($(tip)+(-257:0.8)$);
		\coordinate (wheel2c) at ($(leg4)+(-227:0.8)$);
		\coordinate (wheel2b) at ($($(wheel2a)!0.5!(wheel2c)$)+(-242:0.8)$);
		
		\draw[white, name path=wheel1] (wheel1a) -- (wheel1b);
		
		\draw[name path=backleg] (tip) -- (leg1);
		\draw (tip) -- (leg2);
		\draw (tip) -- (leg3);
		\draw (tip) -- (leg4);
		
		\draw (tip) -- (wheel1a);
		\draw (wheel1b) -- (leg2);
		
		\filldraw[white, name intersections={of=wheel1 and backleg}=-] (intersection-1) circle (5pt);
		
		\draw (wheel1a) -- (wheel1b);
		
		\draw (tip) -- (wheel2a) -- (wheel2b) -- (wheel2c) -- (leg4);
		
		\filldraw ($(tip)!0.5!(leg1)$) circle (2pt);
		\filldraw (leg1) circle (2pt);
		\filldraw ($(tip)!0.5!(leg3)$) circle (2pt);
		\filldraw (leg3) circle (2pt);
		
		\filldraw ($(tip)!0.5!(wheel1a)$) circle (2pt);
		\filldraw (wheel1a) circle (2pt);
		\filldraw ($(wheel1a)!0.5!(wheel1b)$) circle (2pt);
		\filldraw (wheel1b) circle (2pt);
		\filldraw ($(leg2)!0.5!(wheel1b)$) circle (2pt);
		\filldraw (leg2) circle (2pt);
		
		\filldraw ($(tip)!0.5!(wheel2a)$) circle (2pt);
		\filldraw (wheel2a) circle (2pt);
		\filldraw ($(wheel2a)!0.5!(wheel2b)$) circle (2pt);
		\filldraw (wheel2b) circle (2pt);
		\filldraw ($(wheel2b)!0.5!(wheel2c)$) circle (2pt);
		\filldraw (wheel2c) circle (2pt);
		\filldraw ($(wheel2c)!0.5!(leg4)$) circle (2pt);
		\filldraw (leg4) circle (2pt);
		
		\filldraw (tip) circle (2pt) node[left] {$x$};
		\filldraw ($(tip)+(0,-2.2)$) circle (2pt) node[above] {$y$};
		\end{tikzpicture}
		\subcaption{A non-degenerate even-fan-spike with tip~$x$ and cotip~$y$.} \label{efs_tip_fig}
	\end{subfigure}
    \begin{subfigure}[t]{0.45\textwidth}
		\centering
		\begin{tikzpicture}
		\coordinate (wheel1a) at (-0.75,0);
		\coordinate (wheel1e) at (1.25,0);
		\coordinate (wheel2a) at (-1.25,0);
		\coordinate (wheel2d) at (1.25,0);
		
		\coordinate (wheel1b) at ($(wheel1a)+(105:0.8)$);
		\coordinate (wheel1d) at ($(wheel1e)+(75:0.8)$);
		\coordinate (wheel1c) at ($($(wheel1b)!0.5!(wheel1d)$)+(0,0.8)$);
		
		\coordinate (wheel2b) at ($(wheel2a)+(1,-0.7)$);
		\coordinate (wheel2c) at ($(wheel2d)+(1,-0.7)$);
		
		\draw (wheel1a) -- (wheel1b) -- (wheel1c) -- (wheel1d) -- (wheel1e);
		\draw (wheel2a) -- (wheel2b) -- (wheel2c) --(wheel2d);
		\draw ($(wheel2a)+(-0.2,0)$) -- ($(wheel2d)+(0.2,0)$);
		
		\filldraw (wheel1a) circle (2pt);
		\filldraw ($(wheel1a)!0.5!(wheel1b)$) circle (2pt);
		\filldraw (wheel1b) circle (2pt);
		\filldraw ($(wheel1b)!0.5!(wheel1c)$) circle (2pt);
		\filldraw (wheel1c) circle (2pt);
		\filldraw ($(wheel1c)!0.5!(wheel1d)$) circle (2pt);
		\filldraw (wheel1d) circle (2pt);
		\filldraw ($(wheel1d)!0.5!(wheel1e)$) circle (2pt);
		
		\filldraw (wheel2a) circle (2pt);
		\filldraw ($(wheel2a)!0.5!(wheel2b)$) circle (2pt);
		\filldraw (wheel2b) circle (2pt);
		\filldraw ($(wheel2b)!0.5!(wheel2c)$) circle (2pt);
		\filldraw (wheel2c) circle (2pt);
		\filldraw ($(wheel2c)!0.5!(wheel2d)$) circle (2pt);
		\end{tikzpicture}
		\subcaption{A degenerate even-fan-spike.} \label{deg_efs_fig}
	\end{subfigure}
    \begin{subfigure}[t]{0.45\textwidth}
		\centering
		\begin{tikzpicture}
		\coordinate (leg1) at (-48:2.1);
		\coordinate (leg2) at (-132:2.1);
		
		\coordinate (wheel1a) at ($(0,0)!0.8!(leg2)$);
		\coordinate (wheel1b) at ($(wheel1a)+(-207:0.8)$);
		\coordinate (wheel1d) at ($(0,0)+(-237:0.8)$); 
		\coordinate (wheel1c) at ($($(wheel1b)!0.5!(wheel1d)$)+(-222:0.8)$);
		
		\coordinate (wheel2a) at ($(0,0)!0.8!(leg1)$);
		\coordinate (wheel2b) at ($(wheel2a)+(66:1)$);
		\coordinate (wheel2c) at ($(0,0)+(66:1)$);
		
		\draw (0,0) -- (leg1) -- (leg2) -- (0,0);
		\draw (wheel1a) -- (wheel1b) -- (wheel1c) -- (wheel1d) -- (0,0);
		\draw (wheel2a) -- (wheel2b) -- (wheel2c) -- (0,0);
		
		\filldraw (wheel1a) circle (2pt);
		\filldraw ($(wheel1a)!0.5!(wheel1b)$) circle (2pt);
		\filldraw (wheel1b) circle (2pt);
		\filldraw ($(wheel1b)!0.5!(wheel1c)$) circle (2pt);
		\filldraw (wheel1c) circle (2pt);
		\filldraw ($(wheel1c)!0.5!(wheel1d)$) circle (2pt);
		\filldraw (wheel1d) circle (2pt);
		\filldraw ($(wheel1d)!0.5!(0,0)$) circle (2pt);
		\filldraw (0,0) circle (2pt) node[left] {$x$};
		
		\filldraw (wheel2a) circle (2pt);
		\filldraw ($(wheel2a)!0.5!(wheel2b)$) circle (2pt);
		\filldraw (wheel2b) circle (2pt);
		\filldraw ($(wheel2b)!0.5!(wheel2c)$) circle (2pt);
		\filldraw (wheel2c) circle (2pt);
		\filldraw ($(wheel2c)!0.5!(0,0)$) circle (2pt);
		
		\filldraw ($(leg1)!0.5!(leg2)$) circle (2pt) node[above] {$y$};
		\end{tikzpicture}
		\subcaption{A degenerate even-fan-spike with tip~$x$ and cotip~$y$.} \label{deg_efs_tip_fig}
	\end{subfigure}
	\caption{Examples of even-fan-spikes.} \label{efs_figs}
\end{figure}
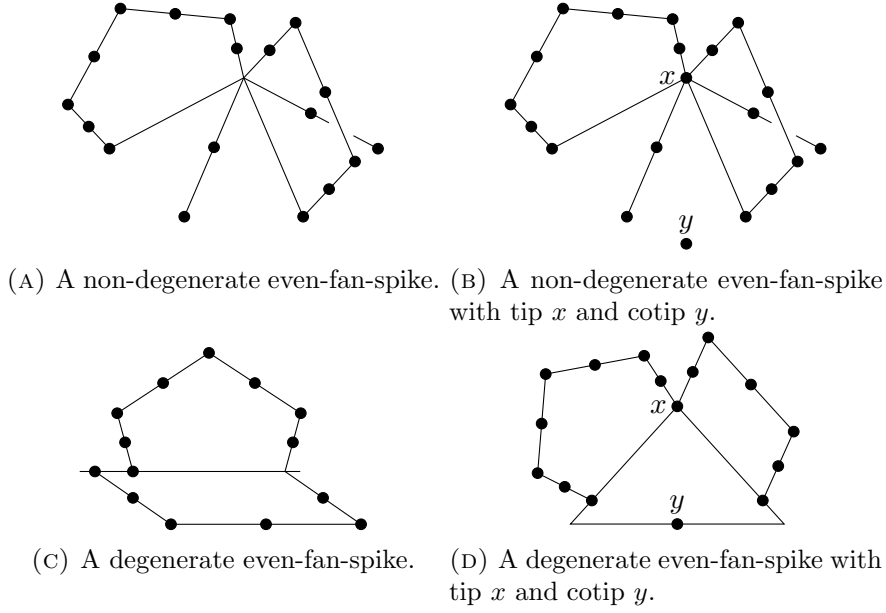

We say that $M$ is an \emph{non-degenerate even-fan-spike with tip and cotip} if
\begin{enumerate}
    \item $M$ has a spike-like anemone $\Phi = (P_1,P_2,\ldots,P_m)$ for $m \geq 3$, and
    \item there are distinct elements $x, y \in E(M)$ such that, for every $i \in [m]$, the petal $P_i \cup \{x,y\}$ is an even fan with length at least four, having ends $x$ and $y$.
\end{enumerate}
See \cref{efs_tip_fig} for an example with $m = 4$. We call $\Phi$ a \emph{partition} of the non-degenerate even-fan-spike with tip and cotip. Note that, up to swapping $x$ and $y$, we have $\bigcap_{i \in [m]} \cl(P_i) = \{x\}$ and $\bigcap_{i \in [m]} \cl^*(P_i) = \{y\}$; in this case, we call $x$ the \emph{tip}, and $y$ the \emph{cotip}.
If $|P_i \cup \{x,y\}| = 4$ for all $i \in [m]$, then $M$ is a spike with tip~$x$ and cotip~$y$.

We now consider the degenerate case, where 
$M$ has a flower $(P,Q)$ such that $P$ and $Q$ are even fans.  Note that we view these as ``even-fan-spikes'' even though $\sqcap(P,Q)=2$.
We say that $M$ is a \emph{(tipless) degenerate even-fan-spike} if $E(M)$ has a partition $(P,Q)$ such that
\begin{enumerate}
    \item $P$ and $Q$ are even fans with length at least four, and
    \item the fans $P$ and $Q$ have orderings $(p_1,p_2,\ldots,p_{|P|})$ and $(q_1,q_2,\ldots,q_{|Q|})$ respectively such that $\{p_1,p_2,p_3\}$ and $\{q_1,q_2,q_3\}$ are triads, $\{p_1,p_2,q_1,q_2\}$ is a circuit, and $\{p_{|P|-1},p_{|P|},q_{|Q|-1},q_{|Q|}\}$ is a cocircuit.
\end{enumerate}
An example is shown in \cref{deg_efs_fig}. We call $P$ and $Q$ the two \emph{petals}, and $(P,Q)$ the \emph{partition}, of the degenerate even-fan-spike.

Additionally, $M$ is a \emph{degenerate even-fan-spike with tip and cotip} if $E(M)$ has a partition $(P,Q,\{x,y\})$ such that $P \cup \{x,y\}$ and $Q \cup \{x,y\}$ are even fans of length at least four, with ends $x$ and $y$ (see \cref{deg_efs_tip_fig} for an example).
Note that, up to swapping $x$ and $y$, we have $\cl(P) \cap \cl(Q) = \{x\}$ and $\cl^*(P) \cap \cl^*(Q) = \{y\}$; in this case, we call $x$ the \emph{tip}, and $y$ the \emph{cotip}. We also call $(P,Q,\{x,y\})$ the \emph{partition} of the degenerate even-fan-spike with tip and cotip.

We say that $M$ is an \emph{even-fan-spike} (with tip and cotip) if $M$ is either a non-degenerate or degenerate even-fan-spike (with tip and cotip, respectively).
It is easily checked that even-fan-spikes and even-fan-spikes with tip and cotip have no detachable pairs.
We also note that if $M$ is an even-fan-spike (with tip and cotip) having partition $\Phi$, then $M$ is self-dual, and $M^*$ also has partition $\Phi$.

\subsection*{Even-fan-paddles}
The matroid $M$ is an \emph{even-fan-paddle} with \emph{partition} $(P_1,P_2,\ldots,P_m)$ if $(P_1,P_2,\ldots,P_m)$ is a paddle, for some $m \geq 3$, and there is an element $x \in P_m$, such that
\begin{enumerate}
    \item for all $i \in [m-1]$, the set $P_i \cup \{x\}$ is an even fan of length at least four with $x$ as an end;
    \item $P_m$ is an even fan of length at least two, and if $|P_m|=2$, then $m=3$; and
    \item for all distinct $i, j \in [m]$, there is a fan ordering $(p_1^i,p_2^i,\ldots,p^i_{|P_i|-1},x)$ of $P_i \cup \{x\}$ and a fan ordering $(p_1^j,p_2^j,\ldots,p_{|P_j|-1}^j,x)$ of $P_j \cup \{x\}$ such that the set $\{p^i_1,p^i_2,p^j_1,p^j_2\}$ is a circuit.
\end{enumerate}

An even-fan-paddle with partition $(P_1,P_2,\ldots,P_m)$ is \emph{degenerate} if $m=3$ and $|P_m|=2$, otherwise it is \emph{non-degenerate}. An example of a degenerate even-fan-paddle is shown in \cref{deg_efp_fig}, and examples of non-degenerate even-fan-paddles are shown in \cref{efp_fig,efp_four}.
For a non-degenerate even-fan-paddle with partition $(P_1,P_2,\ldots,P_m)$, we have $\bigcap_{i \in [m]} \cl(P_i) = \{x\}$ and $\bigcap_{i \in [m]} \cl^*(P_i) = \emptyset$; whereas for a degenerate even-fan-paddle with partition $(P_1,P_2,P_3)$, where $P_3=\{x,y\}$, we have $\bigcap_{i \in [m]} \cl(P_i) = \{x,y\}$ and $\bigcap_{i \in [m]} \cl^*(P_i) = \emptyset$.

\begin{figure}
	\centering
\begin{subfigure}[t]{0.45\textwidth}
\centering
\begin{tikzpicture}
\coordinate (wheel1a) at (-1,0);
\coordinate (wheel1e) at (1,0);
\coordinate (wheel1b) at ($(wheel1a)+(105:0.8)$);
\coordinate (wheel1d) at ($(wheel1e)+(75:0.8)$);
\coordinate (wheel1c) at ($($(wheel1b)!0.5!(wheel1d)$)+(0,0.8)$);

\draw ($(wheel1a)+(-0.2,0)$) -- ($(wheel1e)+(0.2,0)$);
\draw (wheel1a) -- (wheel1b) -- (wheel1c) -- (wheel1d) -- (wheel1e);

\coordinate (wheel2a) at (wheel1a);
\coordinate (wheel2d) at (wheel1e);
\coordinate (wheel2b) at ($(wheel2a)+(1,-0.7)$);
\coordinate (wheel2c) at ($(wheel2d)+(1,-0.7)$);

\draw (wheel2a) -- (wheel2b);
\draw[white, name path=line1] (wheel2b) -- (wheel2c);
\draw (wheel2c) -- (wheel2d);

\coordinate (wheel3a) at (wheel1a);
\coordinate (wheel3d) at (wheel1e);
\coordinate (wheel3b) at ($(wheel3a)+(-0.5,-1)$);
\coordinate (wheel3c) at ($(wheel3d)+(-0.5,-1)$);

\draw (wheel3a) -- (wheel3b);
\draw (wheel3b) -- (wheel3c);
\draw[name path=line2] (wheel3c) -- (wheel3d);

\filldraw[white, name intersections={of=line1 and line2}=-] (intersection-1) circle (5pt);

\draw (wheel2b) -- (wheel2c);

\filldraw (wheel1a) circle (2pt) node[below, xshift=1] {$x$};
\filldraw ($(wheel1a)!0.5!(wheel1b)$) circle (2pt);
\filldraw (wheel1b) circle (2pt);
\filldraw ($(wheel1b)!0.5!(wheel1c)$) circle (2pt);
\filldraw (wheel1c) circle (2pt);
\filldraw ($(wheel1c)!0.5!(wheel1d)$) circle (2pt);
\filldraw (wheel1d) circle (2pt);
\filldraw ($(wheel1d)!0.5!(wheel1e)$) circle (2pt);

\filldraw ($(wheel2a)!0.5!(wheel2b)$) circle (2pt);
\filldraw (wheel2b) circle (2pt);
\filldraw ($(wheel2b)!0.5!(wheel2c)$) circle (2pt);
\filldraw (wheel2c) circle (2pt);
\filldraw ($(wheel2c)!0.5!(wheel2d)$) circle (2pt);

\filldraw ($(wheel3a)!0.5!(wheel3b)$) circle (2pt);
\filldraw (wheel3b) circle (2pt);
\filldraw ($(wheel3b)!0.5!(wheel3c)$) circle (2pt);
\filldraw (wheel3c) circle (2pt);
\filldraw ($(wheel3c)!0.5!(wheel3d)$) circle (2pt);
\end{tikzpicture}
\subcaption{A non-degenerate even-fan-paddle.} \label{efp_fig}
\end{subfigure}
\begin{subfigure}[t]{0.45\textwidth}
\centering
\begin{tikzpicture}
\coordinate (wheel1a) at (-1,0);
\coordinate (wheel1e) at (1,0);
\coordinate (wheel1b) at ($(wheel1a)+(105:0.8)$);
\coordinate (wheel1d) at ($(wheel1e)+(75:0.8)$);
\coordinate (wheel1c) at ($($(wheel1b)!0.5!(wheel1d)$)+(0,0.8)$);

\draw ($(wheel1a)+(-0.2,0)$) -- ($(wheel1e)+(0.2,0)$);
\draw (wheel1a) -- (wheel1b) -- (wheel1c) -- (wheel1d) -- (wheel1e);

\coordinate (wheel2a) at (wheel1a);
\coordinate (wheel2d) at (wheel1e);
\coordinate (wheel2b) at ($(wheel2a)+(1,-0.7)$);
\coordinate (wheel2c) at ($(wheel2d)+(1,-0.7)$);

\draw (wheel2a) -- (wheel2b);
\draw (wheel2b) -- (wheel2c);
\draw (wheel2c) -- (wheel2d);

\filldraw (wheel1a) circle (2pt) node[below] {$x$};
\filldraw ($(wheel1a)!0.5!(wheel1b)$) circle (2pt);
\filldraw (wheel1b) circle (2pt);
\filldraw ($(wheel1b)!0.5!(wheel1c)$) circle (2pt);
\filldraw (wheel1c) circle (2pt);
\filldraw ($(wheel1c)!0.5!(wheel1d)$) circle (2pt);
\filldraw (wheel1d) circle (2pt);
\filldraw ($(wheel1d)!0.5!(wheel1e)$) circle (2pt);

\filldraw ($(wheel2a)!0.5!(wheel2b)$) circle (2pt);
\filldraw (wheel2b) circle (2pt);
\filldraw ($(wheel2b)!0.5!(wheel2c)$) circle (2pt);
\filldraw (wheel2c) circle (2pt);
\filldraw ($(wheel2c)!0.5!(wheel2d)$) circle (2pt);

\filldraw (0,0) circle (2pt) node[above] {$y$};
\filldraw[white] (0,-1) circle (2pt);
\end{tikzpicture}
\subcaption{A degenerate even-fan-paddle.} \label{deg_efp_fig}
\end{subfigure}
\begin{subfigure}[t]{0.45\textwidth}
	\centering
	\begin{tikzpicture}
	\coordinate (end1) at (-1.2,0);
	\coordinate (middle) at (0,0);
	\coordinate (end2) at (1.5,0);
	\coordinate (end3) at (-2.5,0);
	
	\draw ($(end3)+(-0.2,0)$) -- ($(end2)+(0.2,0)$);
	
	\coordinate (middle2) at (1.5,0.8);
	\coordinate (left2) at ($(end1)!0.6!(middle2)$);

	\coordinate (middle3) at (-1.5,1.5);
	\coordinate (left3) at ($(end3)!0.6!(middle3)$);
	
	\draw (end3) -- (middle3);
	\draw[name path=back1] (end1) -- (middle3);
	\draw[name path=back2] (left3) -- (end2);
	
	\coordinate (middle1) at ($(middle)+(0.5,-1)$);
	\coordinate (left1) at ($(end3)!0.6!(middle1)$);
	
	\draw[white, name path=topfront1] (end3) -- (middle1);
	\draw[name path=topfront2] (left1) -- (end2);
	\draw[name path=topfront3] (middle1) -- (middle);
	
	\coordinate (middle4) at ($(middle)+(-2.3,-1.5)$);
	\coordinate (left4) at ($(end3)!0.6!(middle4)$);
	
	\draw (end3) -- (middle4);
	\draw[name path=topback1] (middle4) -- (end1);
	\draw[name path=topback2] (left4) -- (middle);
	
	\filldraw[white, name intersections={of=topfront1 and topback1}=-] (intersection-1) circle (5pt);
	\filldraw[white, name intersections={of=topfront1 and topback2}=-] (intersection-1) circle (5pt);
	
	\draw (end3) -- (middle1);
	
	\filldraw (left3) circle (2pt);
	\filldraw (middle3) circle (2pt);
	\filldraw[name intersections={of=back1 and back2}=-] (intersection-1) circle (2pt);
	
	\filldraw (end3) circle (2pt) node[above] {$x$};
	\filldraw (middle1) circle (2pt);
	\filldraw (left1) circle (2pt);
	\filldraw[name intersections={of=topfront2 and topfront3}=-] (intersection-1) circle (2pt);
	
	\filldraw (middle4) circle (2pt);
	\filldraw (left4) circle (2pt);
	\filldraw[name intersections={of=topback1 and topback2}=-] (intersection-1) circle (2pt);
	
	\filldraw[white] (0,1.8) circle (2pt);
	\end{tikzpicture}
	\subcaption{A non-degenerate even-fan-paddle in which every fan has length four.} \label{efp_four}
\end{subfigure}
\caption{Examples of even-fan-paddles.} \label{efp_figs}
\end{figure}
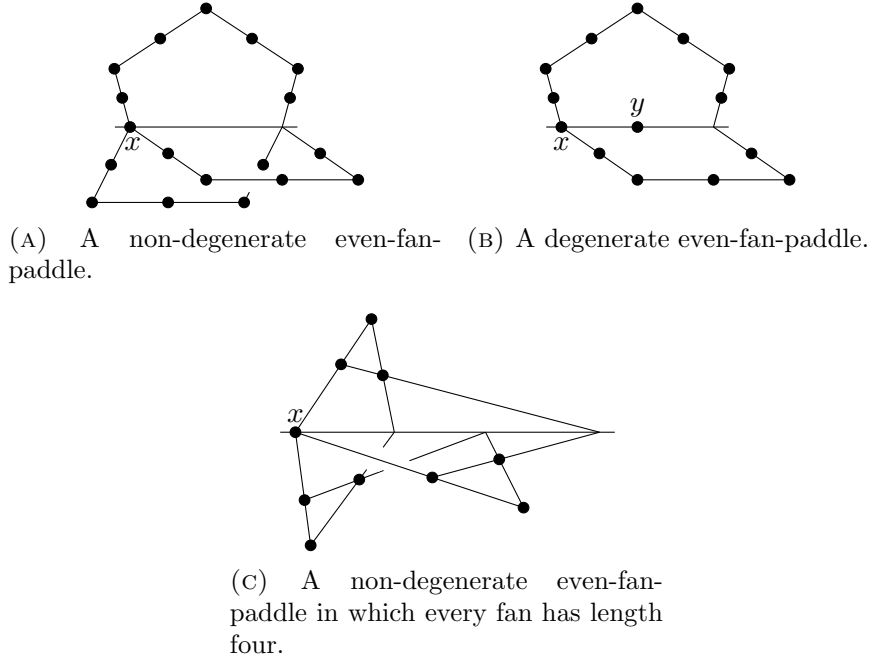

\Cref{efp_four} shows a non-degenerate even-fan-paddle in which $|P_i \cup \{x\}| = 4$ for all $i \in [m]$. Note that, in this instance, the fan ordering of $P_i \cup \{x\}$ in (iii) differs depending on the choice of $j$.  
It is easily checked that an even-fan-paddle has no detachable pairs.

\subsection*{Triad-paddles and related structures}

We say that $M$ is a \emph{triad-paddle} if $M \cong M(K_{3,m})$ for some $m \ge 2$ (see \cref{mk33_fig} for when $m=3$).
Note that $M(K_{3,m})$ has a paddle $(P_1,P_2,\ldots,P_m)$ such that $P_i$ is a triad for all $i \in [m]$, in which case $(P_1,P_2,\ldots,P_m)$ is the \emph{partition} of the triad-paddle.
A triad-paddle has no detachable pairs.

\input{mk3m_figure}

A matroid $M$ is a \emph{quasi-triad-paddle with partition $(P_1,P_2,\ldots,P_m)$} if $M$ has a paddle $(P_1,P_2,\ldots,P_m)$ for some $m \ge 3$ such that $M \backslash P_m$ is a triad-paddle with partition $(P_1,P_2,\ldots,P_{m-1})$.
Next, we define several families of quasi-triad-paddles with no detachable pairs (see \cref{mk3mpaddles_fig}).

First, if $M$ has a partition $(P_1,P_2,\ldots,P_s,Q_1,Q_2,\dotsc,Q_t)$, for $s,t \ge 2$, such that
\begin{enumerate}
    \item $M$ is a quasi-triad-paddle with partition $$(P_1,P_2,\ldots,P_s,Q_1 \cup Q_2 \cup \dotsm \cup Q_t),$$ and
    \item $M^*$ is a quasi-triad-paddle with partition $$(Q_1,Q_2,\ldots,Q_t,P_1 \cup P_2 \cup \dotsm \cup P_t),$$
\end{enumerate}
then we say $M$ is a \emph{tri-paddle-copaddle} with \emph{partition} $$(P_1,P_2,\ldots,P_s, Q_1,Q_2,\dotsc,Q_t)$$ (see \cref{hybrid} for an example with $s=t=2$).
Note that $P_i$ is a triad for each $i \in [s]$, whereas $Q_i$ is a triangle for each $i \in [t]$, and $M$ is both a paddle $(P_1,P_2,\ldots,P_s,Q_1 \cup Q_2 \cup \dotsm \cup Q_t)$ and a copaddle $(P_1 \cup P_2 \cup \dotsm \cup P_s,Q_1,Q_2,\dotsc,Q_t)$.

In what follows, let $X$ and $T^*$ be disjoint subsets of $E(M)$, where $T^*$ is a triad.
	
The set $X$ is a \emph{$4$-element-fan affixed to $T^*$} if
\begin{enumerate}
	\item the set $X$ is a fan of length four with ordering $(x_0,x_1,x_2,x)$ where $\{x_0,x_1,x_2\}$ is a triad,
	\item $x \in \cl(T^*)$, and
	\item for each $i \in \{1,2\}$, there is a $4$-element circuit $C_i$ such that $\{x_0,x_i\} \subseteq C_i \subseteq (\{x_0,x_i\} \cup T^*)$.
\end{enumerate}
The matroid $M$ is a \emph{hinged triad-paddle} with partition $(P_1,P_2,\ldots,P_m,\{x\})$, for some $m \ge 3$, if
\begin{enumerate}
    \item $(P_1,P_2,\ldots,P_m \cup \{x\})$ is a paddle, with $x \notin P_m$,
	\item $P_i$ is a triad for each $i \in [m]$,
	\item $x \in \cl(P_m)$ but $P_m \cup \{x\}$ is not a $4$-element fan, and
    \item for each $i \in [m-1]$, either $P_i \cup \{x\}$ is a $4$-element-fan affixed to $P_m$, or $M | (P_i \cup P_m) \cong M(K_{2,3})$.
\end{enumerate}
\cref{hinged_fig} shows an example of a hinged triad-paddle with $m = 4$, and \cref{hinged_no_fan_fig} shows a hinged triad-paddle with $m = 3$ in which there is no $4$-element-fan affixed to $P_m$. We note that every hinged triad-paddle can be constructed as follows.
Start with $U_{2,4}$ on ground set $\{x,y,z,w\}$.
Repeatedly attach copies of $M(K_4)$ along subsets of $\{x,y,z,w\}$ of size three using generalised parallel connection.
Delete $y$, $z$, and $w$.
If every copy of $M(K_4)$ was attached along a subset of $\{x,y,z,w\}$ containing $x$, then the resulting matroid is an even-fan-paddle.
Otherwise, we see that the matroid is a hinged triad-paddle by taking a partition $(P_1,P_2,\ldots,P_m,\{x\})$ where each $P_i$ consists of the remaining elements from a copy of $M(K_4)$ and $P_m$ has the elements of a copy of $M(K_4)$ that was attached along $\{y,z,w\}$.
It is easily checked that a hinged triad-paddle has no detachable pairs.

Now, suppose $M$ is a quasi-triad-paddle with partition $(P_1,P_2,\ldots,P_m)$.
We describe the other possibilities for the petal $P_m$, when $M$ has no detachable pairs.

We say $X$ is an \emph{augmented fan affixed to $T^*$} if there is some $x \in X$ such that
\begin{enumerate}
    \item $X-\{x\}$ is a fan of length five with ordering $(e_1,e_2,e_3,e_4,e_5)$ where $\{e_1,e_2,e_3\}$ is a triad,
    \item $\{e_1,e_3,e_5,x\}$ is a circuit,
    \item $T^* \cup \{x\}$ is a fan of length four with ends $x$ and $t_1 \in T^*$, and
    \item for some labelling $T^*=\{t_1,t_2,t_3\}$, the sets $\{t_1,t_2,e_1,e_2\}$ and $\{t_1,t_3,e_4,e_5\}$ are circuits.
\end{enumerate}
We say that $M$ is a \emph{quasi-triad-paddle with an augmented-fan petal} if, for each $i \in [m-1]$, the petal $P_m$ is an augmented fan affixed to $P_i$ (see \cref{aug_fig}). Furthermore, $X$ is a \emph{co-augmented fan affixed to $T^*$} if there is some $x \in X$ such that
\begin{enumerate}
    \item $X-\{x\}$ is a fan of length five with ordering $(e_1,e_2,e_3,e_4,e_5)$ where $\{e_1,e_2,e_3\}$ is a triangle,
    \item $\{e_1,e_3,e_5,x\}$ is a cocircuit, and 
    \item for some labelling $T^*=\{t_1,t_2,t_3\}$, the sets $\{t_1,t_2,e_1,x\}$ and $\{t_1,t_3,e_5,x\}$ are circuits.
\end{enumerate}
The matroid $M$ is a \emph{quasi-triad-paddle with a co-augmented-fan petal} if, for each $i \in [m-1]$, the petal $P_m$ is a co-augmented fan affixed to $P_i$ (see \cref{coaug_fig}).

A $4$-element subset $Q$ of $E(M)$ is a \emph{quad} if $Q$ is a circuit and a cocircuit. We say that $X$ is a \emph{quad affixed to $T^*$} if
\begin{enumerate}
    \item $X$ is a quad, and,
    \item for all $x \in X$, there exist distinct $x_1,x_2 \in X-\{x\}$ such that for each $i \in \{1,2\}$, there is a $4$-element circuit~$C_i$ such that $\{x,x_i\} \subseteq C_i \subseteq \{x,x_i\} \cup T^*$.
\end{enumerate}
Furthermore, $X$ is a \emph{near-quad affixed to $T^*$} if
\begin{enumerate}
    \item $X$ is a cocircuit,
    \item there is some $x \in X$ such that $X-\{x\}$ is a triangle, and
    \item there exist distinct $x_1,x_2 \in X-\{x\}$ such that, for each $i \in \{1,2\}$, there is a $4$-element circuit~$C_i$ such that $\{x,x_i\} \subseteq C_i \subseteq \{x,x_i\} \cup T^*$.
\end{enumerate}

The matroid $M$ is a \emph{quasi-triad-paddle with a quad petal} (or a \emph{quasi-triad-paddle with a near-quad petal}) if, for each $i \in [m-1]$, the petal $P_m$ is a quad (or a near-quad, respectively) affixed to $P_i$.
It is not difficult to verify that in a quasi-triad-paddle, there are three different ways that a quad petal can appear, as shown in \cref{typea_fig1,typea_fig2,typea_fig3}, and two different ways that a near-quad petal can appear, as shown in \cref{typeb_fig1,typeb_fig2}.
In each of these four cases, it is easily checked that $M$ has no detachable pairs.

\subsection*{Accordions}
Let $F$ be a maximal fan of $M$ with ordering $(e_1,e_2,\ldots,e_{|F|})$, having even length at least four, such that $\{e_1,e_2,e_3\}$ is a triangle. 
Let $X \subseteq E(M) - F$ such that $|E(M)| \ge |X \cup F| + 2$. 

\input{accordion_figure}

We say that $X$ is a \emph{left-hand fan-type end of $F$} if $X \cup \{e_1\}$ is a maximal fan of length five with ordering $(e_1,g_2,g_3,g_4,g_5)$ such that $\{e_1,g_2,g_3\}$ is a triangle, and $\{e_1,e_2,g_3,g_5\}$ is a cocircuit.
Furthermore, $X$ is a \emph{right-hand fan-type end of $F$} if $X \cup \{e_{|F|}\}$ is a maximal fan of length five with ordering $(e_{|F|},h_2,h_3,h_4,h_5)$ such that $\{e_{|F|},h_2,h_3\}$ is a triad, and $\{e_{|F|-1},e_{|F|},h_3,h_5\}$ is a circuit.

We say that $X$ is a \emph{left-hand quad-type end of $F$} if $X = \{a_1,a_2,b_1,b_2\}$ is a quad such that
\begin{enumerate}
    \item $\{e_1,a_1,a_2\}$ and $\{e_1,b_1,b_2\}$ are triangles, each not contained in a $4$-element fan, and
    \item $\{e_1,e_2,a_1,b_1\}$ and $\{e_1,e_2,a_2,b_2\}$ are cocircuits.
\end{enumerate}
Also, $X$ is a \emph{right-hand quad-type end of $F$} if $X = \{c_1,c_2,d_1,d_2\}$ is a quad such that
\begin{enumerate}
    \item $\{e_{|F|},c_1,c_2\}$ and $\{e_{|F|},d_1,d_2\}$ are triads, each not contained in a $4$-element fan, and
    \item $\{e_{|F|-1},e_{|F|},c_1,d_1\}$ and $\{e_{|F|-1},e_{|F|},c_2,d_2\}$ are circuits.
\end{enumerate}

Lastly, $X$ is a \emph{left-hand triangle-type end of $F$} if $X \cup \{e_1\}$ is a triangle that is not contained in a $4$-element fan, and $X \cup \{e_1,e_2\}$ is a cocircuit; while $X$ is a \emph{right-hand triad-type end of $F$} if $X \cup \{e_{|F|}\}$ is a triad that is not contained in a $4$-element fan, and $X \cup \{e_{|F|-1},e_{|F|}\}$ is a circuit.

The matroid $M$ is an \emph{accordion} if $E(M)$ has a partition $(G,F,H)$ such that 
\begin{enumerate}
    \item $F$ is a maximal fan with even length at least four,
    \item $G$ is a left-hand fan-type, quad-type, or triangle-type end of $F$, and
    \item $H$ is a right-hand fan-type, quad-type, or triad-type end of $F$.
\end{enumerate}

Geometric representations of the nine types of accordion are illustrated in \cref{accordion_figs}. It is easily checked that accordions have no detachable pairs.

Observe that if $G$ is a left-hand fan-type, quad-type, or triangle-type end of $F$ in $M$, then $G$ is a right-hand fan-type, quad-type, or triad-type end of $F$ in $M^*$ respectively.
    Hence, if $M$ is an accordion with partition $(G,F,H)$, then $M^*$ is an accordion with partition $(H,F,G)$. 

The following lemmas further describe the structure of ends in accordions. We defer the proofs to \cref{det_prelims}, as they require preliminary results regarding connectivity seen in that section.

\begin{lemma}
    \label{accordlhfan}
    Let $M$ be an accordion with partition $(G,F,H)$ where $F$ is a maximal fan having even length at least four, and $G$ is a left-hand fan-type end of $F$.
    Suppose that $F$ has ordering $(e_1,e_2,\ldots,e_{|F|})$, where $\{e_1,e_2,e_3\}$ is a triangle, and $G \cup \{e_1\}$ has ordering $(e_1,g_2,g_3,g_4,g_5)$. 
    Then $\sqcap(\{g_2,g_4\}, H) = 1$, and $\sqcap^*(\{g_4,g_5\}, H) = 1$.
\end{lemma}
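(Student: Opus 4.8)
The plan is to reduce each of the two equalities to a rank identity via $\sqcap(X,Y)=r(X)+r(Y)-r(X\cup Y)$ and its dual $\sqcap^{*}$, and then to compute the relevant ranks from the $3$-separations provided by the accordion structure together with the defining triangles, triads, and cocircuit of the left-hand fan-type end. I would use throughout the elementary connectivity facts of \cref{det_prelims}: monotonicity of $\sqcap$ in each argument, submodularity of $r$, self-duality of $\lambda$, the fact that a maximal fan of a $3$-connected matroid is $3$-separating, and --- the ingredient doing the real work --- that for the maximal fan $F$ of an accordion one has $\sqcap(F,H)\le1$ and $\sqcap^{*}(F,G)\le1$ (morally: $F$ cannot be extended into $H$, nor, dually, into $G$).

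\emph{Setup.} Since $G\cup\{e_1\}$ is a maximal fan of length five, $r(G\cup\{e_1\})=3$; and since $\{e_1,g_2,g_3\}$ is a triangle, $r(G)=3$. As $F$, $G$, $H$, and $G\cup F$ are $3$-separating in the $3$-connected matroid $M$, each has $\lambda$ equal to $2$; in particular $r(F\cup H)=r(E(M)-G)=r(M)-1$ and $r(G\cup F)=r(M)+2-r(H)$. The $2$-element sets $\{g_3,g_5\}$ and $\{g_4,g_5\}$ also have $\lambda=2$, so $r(E(M)-\{g_3,g_5\})=r(E(M)-\{g_4,g_5\})=r(M)$. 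From the fan orderings, $g_5\in\cl\{g_3,g_4\}$ and $e_1\in\cl\{g_2,g_3\}$, whence $\sqcap(\{g_2,g_3\},F)\ge\sqcap(\{g_2,g_3\},\{e_1\})=1$; and since $D:=\{e_1,e_2,g_3,g_5\}$ is a cocircuit, $E(M)-D$ is a flat of rank $r(M)-1$ containing both $\{g_2,g_4\}$ and $H$.

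\emph{The second equality.} Using $r^{*}(S)=|S|-r(M)+r(E(M)-S)$, a short computation gives $\sqcap^{*}(\{g_4,g_5\},H)=r(G\cup F)-r\bigl((G\cup F)-\{g_4,g_5\}\bigr)$. Because $g_5\in\cl\{g_3,g_4\}$, deleting $g_5$ from $G\cup F$ does not drop the rank, so this quantity is at most $1$, with equality exactly when $g_4\notin\cl(\{g_2,g_3\}\cup F)$. If $g_4\in\cl(\{g_2,g_3\}\cup F)$, then $\cl(G\cup F)=\cl(\{g_2,g_3\}\cup F)$ (using $g_5\in\cl\{g_3,g_4\}$ once more), which has rank at most $r(F)+1$ since $\sqcap(\{g_2,g_3\},F)\ge1$; combining this with $r(G\cup F)=r(M)+2-r(H)$ and $r(F\cup H)=r(M)-1$ forces $\sqcap(F,H)\ge2$, contradicting $\sqcap(F,H)\le1$. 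So $\sqcap^{*}(\{g_4,g_5\},H)=1$. \emph{The first equality.} Here $\{g_2,g_4\}\cup F\cup H=E(M)-\{g_3,g_5\}$ has rank $r(M)$ while $r(F\cup H)=r(M)-1$, so $\sqcap(\{g_2,g_4\},F\cup H)=1$ and hence $\sqcap(\{g_2,g_4\},H)\le1$ by monotonicity; the reverse inequality amounts to showing that $\{g_2,g_4\}$ is not skew to $H$, which I would obtain by a parallel argument, using the flat $E(M)-D$ to pin down how $F$ attaches to $H\cup\{g_2,g_4\}$ and again appealing to the maximality of $F$.

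The main obstacle is precisely these two reverse inequalities. The cheap monotonicity estimates only yield $\sqcap(\{g_2,g_4\},H)\le1$ and $\sqcap^{*}(\{g_4,g_5\},H)\le1$; to upgrade them to equalities one must exclude the degenerate alternatives, namely that $g_4$ is absorbed into $\cl(\{g_2,g_3\}\cup F)$, or that $\{g_2,g_4\}$ becomes skew to $H$. Each is ruled out via the defining cocircuit $D$ together with the maximality of $F$, as encapsulated in the inequalities $\sqcap(F,H)\le1$ and $\sqcap^{*}(F,G)\le1$; verifying those fan-maximality bounds (if they are not already among the preliminaries) is the substantive part of the argument.
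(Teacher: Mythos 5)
Your reductions are fine as far as they go: the identity $\sqcap^{*}(\{g_4,g_5\},H)=r(G\cup F)-r(F\cup\{g_2,g_3\})$ is correct (using $\lambda(\{g_4,g_5\})=2$), as are the upper bounds $\sqcap(\{g_2,g_4\},H)\le 1$ and $\sqcap^{*}(\{g_4,g_5\},H)\le 1$. But the substance of the lemma is exactly the two reverse directions, and there you lean entirely on the unproved claims $\sqcap(F,H)\le 1$ and $\sqcap^{*}(F,G)\le 1$. These are not among the paper's preliminaries, and they do not follow from ``maximality of $F$'' in any direct way: maximality of a fan constrains which triangles and triads can meet it, not how skew it is to $H$. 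Worse, the claim is essentially circular in strength. Since $\lambda(G)=2$ and $r(G)=3$ give $r(F\cup H)=r(M)-1$, while $\lambda(H)=2$ gives $r(G\cup F)=r(M)+2-r(H)$, one computes $\sqcap(F,H)=r(F)+r(H)-r(M)+1=\sqcap(G,H)$; so your ``key ingredient'' is equivalent to $\sqcap(G,H)\le 1$, i.e.\ to the assertion that the left-hand end attaches to $H$ with local connectivity at most one --- which is precisely the kind of statement the lemma is establishing, and in the paper it is only obtained as a by-product of the proof (via $r(H\cup\{g_4,g_5\})=r(H)+2$). For the first equality you do not even sketch the lower bound beyond ``a parallel argument''. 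So as written the proposal has a genuine gap.

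The mechanism that closes it, and that your outline never invokes, is orthogonality against the four small dependent sets of the end together with a $\lambda\le 3$ bound on the relevant side. Concretely: the cocircuit $\{e_1,e_2,g_3,g_5\}$ gives $g_5\in\cl^{*}(F\cup\{g_3\})$, whence $\lambda(H\cup\{g_2,g_4\})=\lambda(F\cup\{g_3,g_5\})\le 3$; orthogonality with the triangle $\{e_1,g_2,g_3\}$ and the triad $\{g_2,g_3,g_4\}$ shows $g_2\notin\cl(H)\cup\cl^{*}(H)$, so $\lambda(H\cup\{g_2\})=3$; and orthogonality with the triangle $\{g_3,g_4,g_5\}$ shows $g_4\notin\cl^{*}(H\cup\{g_2\})$, which by \cref{clconnectivity} forces $g_4\in\cl(H\cup\{g_2\})$ and hence $r(H\cup\{g_2,g_4\})=r(H)+1$, i.e.\ $\sqcap(\{g_2,g_4\},H)=1$. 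A similar direct computation (cocircuits $\{g_2,g_3,g_4\}$ and $\{e_1,e_2,g_3,g_5\}$ for the rank, the circuit $\{g_3,g_4,g_5\}$ for the corank, and $\lambda(H\cup\{g_4,g_5\})\le 3$) gives $\sqcap^{*}(\{g_4,g_5\},H)=1$. If you want to keep your structure, you must prove $\sqcap(F,H)\le 1$ and $\sqcap^{*}(F,G)\le 1$ by an argument of this type; at that point you have essentially reproduced the paper's proof, so the detour through those bounds buys nothing.
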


\begin{lemma}
    \label{accordlhtri}
    Let $M$ be an accordion with partition $(G,F,H)$ where $F$ is a maximal fan having even length at least four, and $G$ is a left-hand triangle-type end of $F$.
    Suppose that $F$ has ordering $(e_1,e_2,\ldots,e_{|F|})$, where $\{e_1,e_2,e_3\}$ is a triangle.
    Then $\sqcap(G, H) = \sqcap^*(G, H) = 1$.
\end{lemma}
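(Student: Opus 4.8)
The plan is to reduce the statement to the structure theory of flowers and anemones. Write $G=\{g_1,g_2\}$; by the definition of a left-hand triangle-type end, $\{g_1,g_2,e_1\}$ is a triangle and $C^{*}=\{g_1,g_2,e_1,e_2\}$ is a cocircuit of $M$. Since $e_1\in C^{*}$, the hyperplane $E(M)-C^{*}$ together with $e_1$ spans $M$, and as this set is contained in $F\cup H$ it follows that $F\cup H$ spans $M$; since $M$ is $3$-connected, $G$ is independent, so $F\cup H$ also spans $M^{*}$. Substituting the identities $r(A)=|A|-r^{*}(M)+r^{*}(E(M)-A)$ and $r^{*}(A)=|A|-r(M)+r(E(M)-A)$ into the definitions of $\sqcap(G,H)$ and $\sqcap^{*}(G,H)$ and using these two spanning facts, I expect to obtain the clean identities
\[
    \sqcap^{*}(G,H)=r(G\cup F)-r(F)
    \qquad\text{and}\qquad
    \sqcap(G,H)=r^{*}(G\cup F)-r^{*}(F).
\]

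I would then read off the upper bounds $\sqcap^{*}(G,H)\le 1$ and $\sqcap(G,H)\le 1$. Since $\{g_1,g_2,e_1\}$ is a triangle with $e_1\in F$, we have $g_1\in\cl(\{g_2\}\cup F)$, so $r(G\cup F)\le r(F)+1$; dually, $C^{*}$ is a circuit of $M^{*}$ with $e_1,e_2\in F$, so $g_1\in\cl^{*}(\{g_2\}\cup F)$ and $r^{*}(G\cup F)\le r^{*}(F)+1$.

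For the matching lower bounds the plan is to exhibit $(G,F,H)$ as a flower in $M$. One checks $\lambda(G)=r(G)+r(F\cup H)-r(M)=2$ (using that $F\cup H$ spans $M$); $\lambda(F)\le 2$, since $F$ is a fan (each element of a fan ordering beyond the opening triple lies in the closure or coclosure of the preceding elements, and neither operation can increase $\lambda$); and $\lambda(H)=2$ by a brief case analysis on the type of the right-hand end $H$ (if $H$ is fan-type it is itself a $4$-element fan; if quad-type then $\lambda(H)=r(H)+r^{*}(H)-|H|=3+3-4$; if triad-type then the complement of the triad $H\cup\{e_{|F|}\}$ is a hyperplane, so $E(M)-H$ spans $M$). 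A flower with three petals is automatically an anemone, and in an anemone with at least three petals $\sqcap(P_i,P_j)$ has a common value $k\in\{0,1,2\}$ over distinct $i,j$ \cite[Proposition~4.2]{flowers}. Here $\sqcap(G,F)\ge 1$ because $e_1\in\cl(G)\cap F$, so $k\ge 1$, while $\sqcap(G,H)\le 1$ forces $k\le 1$; hence $k=1$ and $\sqcap(G,H)=1$. Finally, $\lambda$ is self-dual, so $(G,F,H)$ is also an anemone in $M^{*}$, and spike-likeness is preserved by duality \cite[Proposition~4.2]{flowers}, giving $\sqcap^{*}(G,H)=\sqcap_{M^{*}}(G,H)=1$ as well; alternatively, one reruns the previous step in $M^{*}$ using $\sqcap^{*}(G,F)\ge 1$ and $\sqcap^{*}(G,H)\le 1$.

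I expect the main obstacle to be the verification that $(G,F,H)$ is a flower --- in particular the uniform treatment of $\lambda(H)=2$ across the three kinds of right-hand end --- together with correctly invoking the connectivity preliminaries alluded to in the statement: that fans are $3$-separating, the rank and corank identities, and the classification of anemones. Once these are in hand, the remaining work is essentially bookkeeping.
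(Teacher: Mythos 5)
Your proposal is correct, but it takes a genuinely different route from the paper's. The paper argues directly on the pair $(G,H)$ (in its labelling $G=\{g_2,g_3\}$): orthogonality with the cocircuit $\{e_1,e_2,g_2,g_3\}$ gives $g_2\notin\cl(H)$, orthogonality with the triangle $\{e_1,g_2,g_3\}$ gives $g_2\notin\cl^*(H)$, and then $\lambda(G\cup H)=\lambda(F)=2$ forces $g_3\in\cl(H\cup\{g_2\})\cap\cl^*(H\cup\{g_2\})$, so $r(G\cup H)=r(H)+1$ and $r^*(G\cup H)=r^*(H)+1$, yielding both conclusions in a few lines. You instead exhibit $(G,F,H)$ as a three-petal flower (hence an anemone), transfer the lower bound from $\sqcap(G,F)\ge 1$ (via $e_1\in\cl(G)$) using constancy of petal-to-petal local connectivity, obtain the upper bound from the identity $\sqcap(G,H)=r^*(G\cup F)-r^*(F)\le 1$, and then dualise. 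Everything you assert checks out: $\lambda(G)=\lambda(F)=\lambda(H)=2$ holds as claimed (indeed any $2$-element set in a $3$-connected matroid on at least four elements is exactly $3$-separating, so your triad-type case needs no hyperplane argument), and although your appeal to \cite[Proposition~4.2]{flowers} for constancy of $\sqcap$ over petal pairs is cited a little loosely (this paper invokes that proposition for the duality of paddles, copaddles and spike-like anemones), for three petals the constancy is a one-line rank computation from $\lambda(P_i)=2$ for all $i$, so no gap results. Two small quibbles: the identities $\sqcap(G,H)=r^*(G\cup F)-r^*(F)$ and $\sqcap^*(G,H)=r(G\cup F)-r(F)$ follow from rank duality together with $r(G)=r^*(G)=|G|$, not from the spanning facts you cite (those are what give $\lambda(G)=2$); and in the dual step the bound $\sqcap^*(G,F)\ge 1$ you need is exactly what your inequality $r^*(G\cup F)\le r^*(F)+1$ already provides, so it is worth saying so explicitly. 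Overall, the paper's approach buys brevity and self-containment from two orthogonality observations plus \cref{clconnectivity}-style bookkeeping, while yours buys a structural viewpoint (the two ends and the fan of an accordion form a spike-like anemone) at the cost of importing flower machinery and a longer chain of reductions.
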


\begin{lemma}
    \label{accordlhquad}
    Let $M$ be an accordion with partition $(G,F,H)$ where $F$ is a maximal fan having even length at least four, and $G$ is a left-hand quad-type end of $F$.
    Suppose that $F$ has ordering $(e_1,e_2,\ldots,e_{|F|})$, and $G = \{a_1,a_2,b_1,b_2\}$, where $\{e_1,e_2,e_3\}$, $\{e_1,a_1,a_2\}$ and $\{e_1,b_1,b_2\}$ are triangles, and $\{e_1,e_2,a_1,b_1\}$ and $\{e_1,e_2,a_2,b_2\}$ are cocircuits.
    Then
    \begin{enumerate}
        \item $\sqcap(\{a_1,b_1\}, H) = \sqcap(\{a_2,b_2\}, H) = 1$, and
        \item $\sqcap^*(\{a_1,a_2\}, H) = \sqcap^*(\{b_1,b_2\}, H) = 1$.
    \end{enumerate}
\end{lemma}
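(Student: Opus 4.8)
The plan is to prove \cref{accordlhquad}; \cref{accordlhfan,accordlhtri} then follow by the same, simpler, argument. First I would reduce the statement. By the observation preceding the lemma, $M^{*}$ is an accordion with partition $(H,F,G)$ in which $G$ is a right-hand quad-type end of $F$, and the triangles and cocircuits of the hypothesis become cocircuits and circuits; so part~(ii) is the ``right-hand'' mirror of part~(i) applied to $M^{*}$, which I would prove by a symmetric argument. Also, the hypothesis is unchanged under simultaneously interchanging the subscripts $1$ and $2$ on $a_{1},a_{2},b_{1},b_{2}$, which swaps the two equalities of part~(i); so it suffices to show $\sqcap(\{a_{1},b_{1}\},H)=1$. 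I would record at the outset that $\lambda(G)=\lambda(H)=2$ (for $G$: it is a $4$-element circuit and cocircuit, so $r(G)=r^{*}(G)=3$; for $H$: a short computation from the end-structure, or the preliminaries of \cref{det_prelims}); hence $\{a_{1},b_{1}\}$ is independent, $r(F\cup H)=r(M)-1$, and $E-\{a_{2},b_{2}\}$ is spanning.

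\emph{Upper bound.} Orthogonality with the cocircuit $C^{*}=\{e_{1},e_{2},a_{1},b_{1}\}$ gives $a_{1}\notin\cl(H)$: a circuit $C$ with $a_{1}\in C\subseteq H\cup\{a_{1}\}$ would meet $C^{*}$ only in $a_{1}$. Likewise $b_{1},a_{2},b_{2},e_{1}\notin\cl(H)$ (using $C^{*}$ or $\{e_{1},e_{2},a_{2},b_{2}\}$). Then $r(\{a_{1},b_{1}\}\cup H)\ge r(H)+1$, so $\sqcap(\{a_{1},b_{1}\},H)\le 2+r(H)-(r(H)+1)=1$.

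\emph{Lower bound (the crux).} From $\sqcap(\{a_{1},b_{1}\},F\cup H)=r(\{a_{1},b_{1}\})+r(F\cup H)-r(E-\{a_{2},b_{2}\})=2+(r(M)-1)-r(M)=1$, the line $\cl(\{a_{1},b_{1}\})$ meets the flat $\cl(F\cup H)$ in a single point $q$, and it remains to show $q\in\cl(H)$ (this forces $\sqcap(\{a_{1},b_{1}\},H)\ge 1$). I would slide $q$ along the fan $F=(e_{1},\dots,e_{|F|})$, where $\{e_{1},e_{2},e_{3}\}$ is a triangle, the triads of $F$ are $\{e_{2i},e_{2i+1},e_{2i+2}\}$, and $\{e_{|F|-2},e_{|F|-1},e_{|F|}\}$ is a triad. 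Using the triangle $\{e_{1},e_{2},e_{3}\}$ to replace $\cl(F\cup H)$ by $\cl(\{e_{2},\dots,e_{|F|}\}\cup H)$, then intersecting with the hyperplane $E-\{e_{1},e_{2},a_{2},b_{2}\}$ (which contains $\{a_{1},b_{1}\}$, hence $q$): since $e_{2}$, and only $e_{2}$, lies in $\cl(\{e_{2},\dots,e_{|F|}\}\cup H)$ but not in that hyperplane, and $e_{2}\notin\cl(\{e_{3},\dots,e_{|F|}\}\cup H)$ by orthogonality, the intersected flat is $\cl(\{e_{3},\dots,e_{|F|}\}\cup H)$, so $q$ lies in it. Iterating for odd $j$ with $3\le j\le |F|-3$: the triangle $\{e_{j},e_{j+1},e_{j+2}\}$ lets one discard $e_{j}$, and intersecting with the hyperplane $E-\{e_{j-1},e_{j},e_{j+1}\}$ (again containing $q$), together with $e_{j+1}\notin\cl(\{e_{j+2},\dots,e_{|F|}\}\cup H)$, gives $q\in\cl(\{e_{j+2},\dots,e_{|F|}\}\cup H)$. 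Finally, from $q\in\cl(\{e_{|F|-1},e_{|F|}\}\cup H)$, intersecting with the hyperplane $E-\{e_{|F|-2},e_{|F|-1},e_{|F|}\}$ and using $r(\{e_{|F|-1},e_{|F|}\}\cup H)=r(H)+1$ (which is where the definitions of a right-hand fan-type, quad-type, or triad-type end of $F$ are used) yields $q\in\cl(H)$, so $\sqcap(\{a_{1},b_{1}\},H)=1$.

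\emph{Main obstacle.} The difficulty is concentrated in the lower bound. Each step is a short rank computation, but closing the induction requires the two non-generic steps — the first, which uses the cocircuit $\{e_{1},e_{2},a_{2},b_{2}\}$ in place of a triad of $F$, and the last, which needs $\sqcap(\{e_{|F|-1},e_{|F|}\},H)=1$ and hence a brief case analysis over the three right-hand end-types — together with careful bookkeeping of which elements lie in which closures throughout. The reductions and the upper bound, and the analogous \cref{accordlhfan,accordlhtri}, are routine.
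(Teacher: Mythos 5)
Your proposal is correct in substance, but for the hard half it takes a genuinely different route from the paper. The paper's proof of \cref{accordlhquad} never enters the fan: for (i) it observes that orthogonality with the two triangles forces $r^*(H \cup \{a_1,b_1\}) = r^*(H)+2$, that the cocircuit $\{e_1,e_2,a_1,b_1\}$ forces $r(H \cup \{a_1,b_1\}) \ge r(H)+1$, and that $\lambda(H \cup \{a_1,b_1\}) = \lambda(F \cup \{a_2,b_2\}) \le 3$ (since $\lambda(F)=2$ and $b_2 \in \cl^*(F \cup \{a_2\})$); these pin $r(H\cup\{a_1,b_1\})=r(H)+1$, and (ii) is the same three-line computation with ranks and coranks exchanged. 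You prove the same easy direction by the same orthogonality observation, but replace the $\lambda$-computation by an induction along the fan: starting from $\sqcap(\{a_1,b_1\},F\cup H)=1$ you discard $e_1,e_2$, then $e_j,e_{j+1}$ for odd $j$, then $e_{|F|-1},e_{|F|}$, at each stage using a triangle to preserve the closure and a cocircuit (first $\{e_1,e_2,a_2,b_2\}$, then the fan triads, finally the circuits from the definition of the right-hand end) to certify the non-closure facts. I checked these certificates and the rank bookkeeping they support; the induction is sound, including the case $|F|=4$ and all three right-hand end types in the last step. Your route is longer and, unlike the paper's, needs the structure of $H$; what it buys is an explicit accounting of where each piece of the accordion is used.

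One point needs repair, and one is a bookkeeping quibble. The ``point $q$'' does not exist in a general matroid: $\sqcap(\{a_1,b_1\},F\cup H)=1$ does not yield an element of $\cl(\{a_1,b_1\}) \cap \cl(F\cup H)$ (two disjoint pairs in $U_{3,4}$ have $\sqcap = 1$ with disjoint closures), so statements of the form ``$q$ lies in the intersected flat'' are not literally meaningful and, as written, presuppose a representable picture. Fortunately every step you describe translates directly into ``$\sqcap(\{a_1,b_1\},Z)\ge 1$'' for the shrinking sets $Z$, with your hyperplane conditions becoming exactly the facts $x \notin \cl(\{a_1,b_1\}\cup Z')$ needed for the rank computation, so this is a matter of phrasing rather than a genuine gap. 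Separately, (ii) is not obtained from (i) by duality: dualizing (i) returns (i) for the mirrored (right-hand) configuration, while (ii) corresponds to the mirrored form of (ii) itself; so, as you in fact anticipate, a second analogous induction (using the triad $\{e_{|F|},d_1,d_2\}$-type certificates on the other side) must be carried out rather than a formal reduction.
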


Note that when $G$ is a left-hand triangle-type end in an accordion, the definition does not allow for an element of $G$ to be the ``tip'' of the fan; when this occurs, however, the matroid is an even-fan-spike with tip and cotip.
    For example, suppose $M$ is a matroid whose ground set has a partition $(G,F,H)$ such that $G=\{x,y\}$ is a left-hand triangle-type end, $F=(e_1,e_2,\dotsc,e_{|F|})$ is even fan with $|F| \ge 4$, and $H$ is a right-hand quad-type end, but $\{x,e_1,e_2\}$ is a triad.
    Then $M$ is a even-fan-spike with tip $y$ and cotip $e_{|F|}$ having two distinct $4$-element fans with ends $y$ and $e_{|F|}$, as well as the even fan $F \cup G$.
    Similarly, in the case that $H$ is instead a right-hand triad-type end or fan-type end, then $M$ is a degenerate even-fan-spike with tip and cotip.

\section{Graphs with no detachable pairs} \label{exceptionalgraphs}

In this section, we define the simple $3$-connected graphs with no detachable pairs, appearing in \cref{detachable_graph}.  These are illustrated in \cref{exceptionalgraphsfig}.

A \emph{wheel} is a simple graph that can be obtained from a cycle by adding a single vertex that is adjacent to all vertices of the cycle.
This dominating vertex is called the \emph{hub} of the wheel.
A \emph{mutant wheel} can be constructed as follows.
Consider a wheel with distinct edges $a_1$, $b_1$, $a_2$, $b_2$, $a_3$ such that $\{a_1,b_1,a_2\}$ and $\{a_2,b_2,a_3\}$ are both triangles, and the edges $b_2$ and $b_3$ are not incident with the hub.
Let $u$ be the vertex incident to both $b_1$ and $a_2$, and let $v$ be the vertex incident to both $b_2$ and $a_3$. Subdivide the edge $a_1$, thus creating a new vertex $x$, and add an edge between $x$ and $u$, and also subdivide the edge $a_2$, creating a new vertex $y$, and add an edge between $y$ and $v$.

Next we define a twisted wheel.
Consider a copy of $K_4$ having non-adjacent edges $e = \{e_1,e_2\}$ and $f = \{f_1,f_2\}$.
A \emph{twisted wheel} is a graph that can be obtained by subdividing $e$ so that $j \ge 0$ new vertices are introduced, adding $j$ edges between each of these vertices and $f_1$; then subdividing $f$ so that $k \ge 0$ new vertices are introduced, and adding $k$ edges between each of the $k$ new vertices and $e_1$, where $j+k \ge 1$.

A warped wheel can be obtained from a twisted wheel by deleting the edge between $e_1$ and $f_1$, and contracting the edge between $e_2$ and $f_2$.
Alternatively, let $W_4$ be a wheel on five vertices with hub $h$, whose remaining vertices $v_1,v_2,v_3,v_4$ are such that $v_i$ is adjacent to $v_{i+1}$ for each $i$ when indices are interpreted modulo $4$.
A \emph{warped wheel} is a graph that can be obtained from $W_4$ by subdividing $hv_1$ so that $j \ge 1$ new vertices are introduced, adding $j$ edges between each of these vertices and $v_2$, and then subdividing $hv_3$ so that $k \ge 1$ new vertices are introduced, adding $k$ edges between each of these vertices and $v_4$.

A \emph{multi-wheel} is a graph that can be constructed as follows. Begin with a $3$-vertex path on vertices $u,h,v$, and add $k \ge 3$ parallel edges between $u$ and $v$.
If $k \ge 4$, then for each of the $k$ parallel edges, subdivide it at least once and join each of the resulting new vertices to $h$. If $k=3$, then for at least two of the three parallel edges, subdivide it at least once and join each of the resulting new vertices to $h$.
Finally, remove the edge between $u$ and $h$.
The multi-wheel is \emph{degenerate} if the vertices $u$ and $v$ are adjacent (in the above construction, this corresponds to the case where three parallel edges are added between $u$ and $v$, and one of these edges is not subdivided).
We note that a multi-wheel is referred to in \cite{detpairs3} as an ``unhinged multi-dimensional wheel''.

A \emph{stretched wheel} is the geometric dual of a degenerate multi-wheel. Alternatively, it can be constructed as follows.
Consider a wheel with hub $x$, let $y$ be any other vertex, and let $e$ be an edge incident to $y$ but not to $x$. Add a new vertex $z$ that is adjacent to $x$ and $y$. Subdivide $e$ so that $k \geq 1$ new edges are introduced, and add an edge between each new vertex and $z$.

Finally, we define $K^a_{3,m}$ and $K^b_{3,m}$.
Consider a copy of the complete bipartite graph $K_{3,m}$ with parts $\{u_1,u_2,u_3\}$ and $\{v_1,v_2,\ldots,v_m\}$.
The graph $K^a_{3,m}$ can be constructed from $K_{3,m}$ by adding a vertex $a$ that is adjacent to $u_1,u_2,u_3$, then adding a vertex $b$ that is adjacent to $a,u_1,u_3$.
The graph $K^b_{3,m}$ can be constructed from $K_{3,m}$ by adding a vertex $a$ that is adjacent to $u_1,u_2$, then adding a vertex $b$ that is adjacent to $a,u_2,u_3$, then finally adding an edge between $u_1$ and $u_3$.

The correspondence between these graphs and the matroids listed in \cref{detachable_main} is as follows (for full details, refer to the proof of \cref{detachable_graph} in \cref{graph_proof}).
A mutant wheel corresponds to a graphic accordion; such an accordion has left- and right-hand fan-type ends.
A twisted wheel corresponds to a graphic even-fan-spike with tip and cotip.
A warped wheel corresponds to a graphic even-fan-spike (that is tipless and cotipless).
Note that, in both of these cases, such an even-fan-spike is degenerate.
A multi-wheel corresponds to a graphic even-fan-paddle.
The graphs $K^a_{3,m}$ and $K^b_{3,m}$ are quasi-triad-paddles with a co-augmented-fan petal and with an augmented-fan petal respectively.

\section{Preliminaries} \label{det_prelims}

Our notation and terminology follows Oxley~\cite{oxley!!!}, except where we specify otherwise.  We say that a set $X$ \emph{meets} a set $Y$ if $X \cap Y \neq \emptyset$.

\subsection*{Connectivity}

Recall that the \emph{connectivity} of $X$ in $M$ is
\[
    \lambda_M(X) = r(X) + r(E-X) - r(M).
\]
Equivalently,
\[
\lambda_M(X) = r(X) + r^*(X) - |X|.
\]
When it is clear that we are referring to the matroid $M$, we will often write $\lambda(X)$ instead of $\lambda_M(X)$. 
It follows from the definition that $\lambda_M(X) = \lambda_M(E-X)$ and $\lambda_{M^*}(X) = \lambda_M(X)$.

The next two lemmas are straightforward to prove (see, for example, \cite[Corollary 8.2.6, Proposition 8.2.14]{oxley!!!}). They will be applied freely throughout the proof of \Cref{detachable_main}. 

\begin{lemma} \label{delconnectivity}
	Let $M$ be a matroid, and let $X \subseteq E(M)$ and $e \in E(M)-X$. Then
	\[
	\lambda_{M / e}(X) = \begin{cases}
	\lambda_M(X)-1 & \mbox{if $e \in \cl(X)$ and $e$ is not a loop}, \\
	\lambda_M(X) & \mbox{otherwise}.
	\end{cases}
	\]
	Dually,
	\[
	\lambda_{M \backslash e}(X) = \begin{cases}
	\lambda_M(X)-1 & \mbox{if $e \in \cl^*(X)$ and $e$ is not a coloop}, \\
	\lambda_M(X) & \mbox{otherwise}.
	\end{cases}
	\]
\end{lemma}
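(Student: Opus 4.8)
The plan is to establish the contraction formula directly from the standard rank formulas, and then deduce the deletion formula by duality. Recall that for every $Y \subseteq E(M) - e$ we have $r_{M/e}(Y) = r_M(Y \cup e) - r_M(\{e\})$, and that $r(M/e) = r(M) - r_M(\{e\})$. Suppose first that $e$ is not a loop, so $r_M(\{e\}) = 1$. Applying these identities with $Y = X$ and with $Y = E(M) - e - X$, and noting that $E(M/e) - X = E(M) - e - X$, we obtain
\[
\lambda_{M/e}(X) = r_{M/e}(X) + r_{M/e}(E(M) - e - X) - r(M/e) = r_M(X \cup e) + r_M(E(M) - X) - r(M) - 1.
\]
Now $r_M(X \cup e) = r_M(X)$ precisely when $e \in \cl(X)$, and $r_M(X \cup e) = r_M(X) + 1$ otherwise. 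Substituting into the line above and comparing with $\lambda_M(X) = r_M(X) + r_M(E(M) - X) - r(M)$ gives $\lambda_{M/e}(X) = \lambda_M(X) - 1$ when $e \in \cl(X)$, and $\lambda_{M/e}(X) = \lambda_M(X)$ when $e \notin \cl(X)$, as required.

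It remains to treat the degenerate case in which $e$ is a loop. Then $M / e = M \backslash e$, deleting $e$ leaves all ranks unchanged, and $e \in \cl(X)$ since loops lie in the closure of every set; hence both $\lambda_{M/e}(X)$ and $\lambda_M(X)$ equal $r_M(X) + r_M(E(M) - X) - r(M)$, so they agree, consistent with the ``otherwise'' branch of the statement. Combined with the previous paragraph, this proves the contraction formula.

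For the deletion formula, use $\lambda_{M \backslash e}(X) = \lambda_{(M \backslash e)^*}(X) = \lambda_{M^* / e}(X)$, where the first equality is the duality invariance of $\lambda$ recorded above and the second is the identity $(M \backslash e)^* = M^* / e$. Applying the contraction formula to $M^*$, together with $\lambda_{M^*}(X) = \lambda_M(X)$, $\cl_{M^*}(X) = \cl^*_M(X)$, and the fact that $e$ is a loop of $M^*$ if and only if $e$ is a coloop of $M$, yields exactly the stated dichotomy for $\lambda_{M \backslash e}(X)$.

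There is no genuine obstacle in this argument; the only point requiring a moment's care is verifying that the degenerate cases — $e$ a loop for contraction, $e$ a coloop for deletion — fall into the ``otherwise'' branch rather than the first branch, which the computation above confirms.
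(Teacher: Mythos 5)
Your proof is correct, and it is essentially the standard argument: the paper omits the proof of this lemma, citing \cite[Corollary 8.2.6, Proposition 8.2.14]{oxley!!!}, and those results are established by exactly the rank computation for $M/e$ followed by duality that you give, with the loop/coloop degeneracies checked as you do. Nothing further is needed.
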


\begin{lemma} \label{clconnectivity}
    Let $M$ be a matroid, let $X \subseteq E(M)$, and let $e \in E(M)-X$. Then
	\[
	\lambda(X \cup \{e\}) = \begin{cases}
	\lambda(X) - 1 & \mbox{if $e \in \cl(X)$ and $e \in \cl^*(X)$}, \\
	\lambda(X) & \mbox{if $e \in \cl(X)$ and $e \notin \cl^*(X)$}, \\
	\lambda(X) & \mbox{if $e \notin \cl(X)$ and $e \in \cl^*(X)$}, \\
	\lambda(X) + 1 & \mbox{if $e \notin \cl(X)$ and $e \notin \cl^*(X)$}.
	\end{cases}
	\]
\end{lemma}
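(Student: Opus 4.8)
The plan is to reduce everything to the identity $\lambda(X) = r(X) + r^*(X) - |X|$ recorded just above the statement, so that the only thing to understand is how the rank and the corank of $X$ behave when the single element $e$ is adjoined.

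First I would record two elementary facts. Adjoining one element to a set raises its rank by at most one, and by the very definition of the closure operator, $r(X \cup \{e\})$ equals $r(X)$ when $e \in \cl(X)$ and equals $r(X) + 1$ otherwise. Applying this same statement in $M^*$, whose rank function is $r^*$ and whose closure operator is $\cl^*$, gives $r^*(X \cup \{e\}) = r^*(X)$ when $e \in \cl^*(X)$, and $r^*(X \cup \{e\}) = r^*(X) + 1$ otherwise.

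Next I would substitute these into
\[
\lambda(X \cup \{e\}) = r(X \cup \{e\}) + r^*(X \cup \{e\}) - |X \cup \{e\}| = r(X \cup \{e\}) + r^*(X \cup \{e\}) - |X| - 1,
\]
using $|X \cup \{e\}| = |X| + 1$ since $e \notin X$, and then split into the four cases according to whether $e$ lies in $\cl(X)$ and whether it lies in $\cl^*(X)$. When $e$ is in both closures, neither rank changes and we are left with $\lambda(X) - 1$; when $e$ is in exactly one of them, the corresponding rank increases by one and cancels the $-1$, giving $\lambda(X)$; and when $e$ is in neither, both ranks increase by one, giving $\lambda(X) + 1$. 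These four cases exhaust all possibilities and match the statement.

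There is no genuine obstacle here: the argument is a short piece of bookkeeping, and its only subtle point is correctly invoking the rank-change characterisations of membership in $\cl$ and $\cl^*$. Note also that the statement is self-dual — passing to $M^*$ (using $\lambda_{M^*} = \lambda_M$) interchanges the middle two cases and fixes the outer two — so one could instead prove the first two cases and deduce the remaining two, but writing all four out directly is just as quick.
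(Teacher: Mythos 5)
Your proof is correct: the paper omits a proof of this lemma, citing Oxley, and your computation via $\lambda(X) = r(X) + r^*(X) - |X|$ together with the rank characterisations of membership in $\cl(X)$ and $\cl^*(X)$ (the latter obtained by dualising) is exactly the standard argument being referenced. Nothing is missing; the four-case bookkeeping is complete and the self-duality remark is a nice check.
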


For a matroid $M$ and $X \subseteq E(M)$, we say that $X$ is \emph{$k$-separating} if $\lambda(X) < k$, and $X$ is a \emph{$k$-separation} if $\lambda(X) = k-1$ and $|X| \geq k$ and $|E(M)-X| \geq k$. A matroid is $k$-connected if it contains no $k'$-separations, for all $k' < k$. 

Recall that a \emph{triangle} is a circuit of size three, a \emph{triad} is a cocircuit of size three, and a \emph{quad} is a $4$-element set that is both a circuit and a cocircuit.
If $M$ is $3$-connected, then a $3$-separation of $M$ of size three is either a triangle or a triad, while a $3$-separation of $M$ of size four either contains a triangle or a triad, or it is a quad.

The next two well-known lemmas are useful for identifying elements that may be deleted or contracted while retaining $3$-connectivity. The first is commonly referred to as Bixby's Lemma \cite[Theorem 1]{bixby}.

\begin{lemma}
	Let $M$ be a $3$-connected matroid and let $e \in E(M)$. Then either $\si(M / e)$ is $3$-connected or $\co(M \backslash e)$ is $3$-connected.
\end{lemma}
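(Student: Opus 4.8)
The plan is to recognise this statement as Bixby's Lemma \cite{bixby}, so the most direct route is simply to cite that reference (it is also recorded in standard sources such as \cite{oxley!!!}). Should one prefer an argument built only from the tools already introduced, here is the shape of the standard uncrossing proof.

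First I would dispose of the cases $|E(M)| \le 5$ by direct inspection, so assume $|E(M)| \ge 6$; then $M$ has no loops, coloops, parallel pairs, or series pairs, so $M/e$ has no loops and $M \backslash e$ has no coloops. Suppose, for a contradiction, that neither $\si(M/e)$ nor $\co(M \backslash e)$ is $3$-connected. Unwinding the definitions of simplification and cosimplification, $M/e$ then has a $2$-separation $(X,Y)$ with $r_{M/e}(X), r_{M/e}(Y) \ge 2$, and $M \backslash e$ has a $2$-separation $(P,Q)$ with $r^*_{M\backslash e}(P), r^*_{M\backslash e}(Q) \ge 2$. Since $e \notin E(M/e) = E(M)-e = E(M \backslash e)$ and $M$ is $3$-connected, combining \cref{delconnectivity} and \cref{clconnectivity} with the observation that $\lambda_M$ is at least $2$ on any set whose complement also has at least two elements forces $\lambda_M(X) = \lambda_M(Y) = \lambda_M(X \cup e) = \lambda_M(Y \cup e) = 2$, with $e \in \cl_M(X) \cap \cl_M(Y)$ and $e \notin \cl^*_M(X) \cup \cl^*_M(Y)$; dually, $\lambda_M(P) = \lambda_M(Q) = \lambda_M(P \cup e) = \lambda_M(Q \cup e) = 2$, with $e \in \cl^*_M(P) \cap \cl^*_M(Q)$ and $e \notin \cl_M(P) \cup \cl_M(Q)$. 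The rank hypotheses moreover give $r_M(X), r_M(Y) \ge 3$ and $r^*_M(P), r^*_M(Q) \ge 3$.

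Next I would show that the two $2$-separations cross, that is, each of $X \cap P$, $X \cap Q$, $Y \cap P$, $Y \cap Q$ is nonempty: for instance, $X \cap P = \emptyset$ would give $X \subseteq Q$ and hence $e \in \cl_M(X) \subseteq \cl_M(Q)$, contradicting $e \notin \cl_M(Q)$, and the other three cases are symmetric. Since $|E(M)-e| \ge 5$, some corner has at least two elements. With the corners nonempty, I would then uncross, using submodularity of the connectivity function (and $\lambda_M = \lambda_{M^*}$) on suitable pairs among $X, Y, P, Q, X\cup e, \ldots, Q \cup e$ to bound the connectivities of the corner sets and their unions with $e$; combining these bounds with the fact that $\lambda_M(Z) \ge 2$ for every corner $Z$ of size at least two, and with the rank inequalities above (which restrict the corank of the corners), produces a numerical contradiction.

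I expect the main obstacle to be exactly this last step: choosing the right combination of submodularity inequalities and carefully tracking when a corner set, or its union with $e$, is large enough to force $\lambda_M \ge 2$. This case analysis is why, for the purposes of this paper, we will simply invoke \cite{bixby}.
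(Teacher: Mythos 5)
Your primary route—simply citing Bixby's result \cite{bixby}—is exactly what the paper does, as it states the lemma without proof and attributes it to that reference. The additional uncrossing sketch is a reasonable outline of the standard argument but is not needed (and is left incomplete at the final uncrossing step), so the citation suffices.
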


The next lemma is called Tutte's Triangle Lemma \cite[7.2]{wheelsandwhirls}.

\begin{lemma}
	Let $M$ be a $3$-connected matroid such that $|E(M)| \geq 4$. Let $T = \{e,e',e''\}$ be a triangle of $M$ such that neither $M \backslash e$ nor $M \backslash e'$ are $3$-connected. Then there exists a triad of $M$ containing either $\{e,e'\}$ or $\{e,e''\}$.
\end{lemma}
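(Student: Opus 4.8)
The plan is to analyse the $2$-separations that $M\backslash e$ and $M\backslash e'$ inherit from $M$, and to pin down the required triad by applying circuit--cocircuit orthogonality to the triangle $T$.

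\emph{Setting up a $2$-separation of $M\backslash e$.} Since $M$ is $3$-connected and $|E(M)|\ge 4$, it has no coloop and no $2$-element cocircuit, so $e$ is not a coloop, $r(M\backslash e)=r(M)$, and $M\backslash e$ has no loop or coloop. As $M\backslash e$ is not $3$-connected, there is thus a partition $(X,Y)$ of $E(M)-e$ with $|X|,|Y|\ge 2$ and $\lambda_{M\backslash e}(X)\le 1$. First I would observe that $e'$ and $e''$ lie in different parts: if, say, $\{e',e''\}\subseteq X$, then $e\in\cl(\{e',e''\})\subseteq\cl(X)$ because $T$ is a triangle, hence $\lambda_M(X\cup\{e\})=r(X)+r(Y)-r(M)=\lambda_{M\backslash e}(X)\le 1$, contradicting $3$-connectivity of $M$ since $|X\cup\{e\}|\ge 3$ and $|Y|\ge 2$. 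So I may assume $e'\in X$ and $e''\in Y$. Then $\lambda_M(X)\ge 2$ by $3$-connectivity while $\lambda_{M\backslash e}(X)\le 1$, so \cref{delconnectivity} forces $e\in\cl^{*}(X)$ and $\lambda_M(X)=2$; symmetrically $e\in\cl^{*}(Y)$ and $\lambda_M(Y)=2$, and also $e\notin\cl(X)$ (else $\lambda_M(X\cup\{e\})=\lambda_{M\backslash e}(X)\le 1$, as above).

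\emph{Finding a cocircuit through $\{e,e'\}$.} As $e\in\cl^{*}(X)$, there is a cocircuit $C^{*}$ of $M$ with $e\in C^{*}\subseteq X\cup\{e\}$. Since $T$ is a circuit and $C^{*}$ a cocircuit, $|T\cap C^{*}|\ne 1$; and $e''\in Y$ gives $e''\notin X\cup\{e\}\supseteq C^{*}$, so $T\cap C^{*}=\{e,e'\}$ and $\{e,e'\}\subseteq C^{*}\subseteq X\cup\{e\}$. The same argument on the $Y$-side, and applied to $M\backslash e'$ (using that $\{e,e''\}$ spans $e'$), yields a cocircuit through $\{e,e''\}$ inside $Y\cup\{e\}$ and, for a suitable $2$-separation $(U,V)$ of $M\backslash e'$ with $e\in U$, $e''\in V$ and $\lambda_M(U)=\lambda_M(V)=2$, a cocircuit through $\{e,e'\}$ inside $U\cup\{e'\}$. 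Now $M$ has no cocircuit of size at most two. Hence if $(X,Y)$ can be chosen with $|X|=2$, say $X=\{e',z\}$, then $C^{*}\subseteq\{e,e',z\}$ forces $C^{*}=\{e,e',z\}$, a triad containing $\{e,e'\}$; likewise $|Y|=2$, $|U|=2$, or $|V|=2$ would each produce a triad containing $\{e,e'\}$ or $\{e,e''\}$.

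\emph{The main obstacle.} It remains to arrange one of these small sides, and this is the delicate step; here the hypothesis that $M\backslash e'$ is also not $3$-connected is needed essentially. I would take $(X,Y)$ with $|X|$ minimum subject to $e'\in X$, take $(U,V)$ with $|U|$ minimum subject to $e\in U$, and suppose for contradiction that $|X|,|Y|,|U|,|V|\ge 3$. Since $X,Y,U,V$ all have $\lambda_M$-value $2$, I would uncross them in $M$ via submodularity of $\lambda_M$: writing $A=X\cap U$, $B=X\cap V$, $C=Y\cap U$, $D=Y\cap V$, so $(A,B,C,D,\{e\},\{e'\})$ partitions $E(M)$ with $e''\in D$, the inequalities $\lambda_M(X\cap U)+\lambda_M(X\cup U)\le 4$, $\lambda_M(Y\cap V)+\lambda_M(Y\cup V)\le 4$, and their two companions -- combined with the lower bounds on $\lambda_M$ from $3$-connectivity and with the minimality of $|X|$ and $|U|$ -- should force a contradiction: either some element of $X-\{e'\}$ can be shifted into $Y$ while keeping $\lambda_{M\backslash e}\le 1$, contradicting the minimality of $|X|$, or one of the cocircuits above shrinks to a triad. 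Pushing this bookkeeping through cleanly is the hard part of Tutte's original argument; once it yields that one of $X,Y,U,V$ has size two, the previous paragraph completes the proof.
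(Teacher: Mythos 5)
Your setup is sound and matches the standard first steps: in any $2$-separation $(X,Y)$ of $M\backslash e$ the elements $e'$ and $e''$ must lie on opposite sides (else $e\in\cl(X)$ gives a $2$-separation of $M$), and then $e\in\cl^{*}(X)\cap\cl^{*}(Y)$ with $\lambda_M(X)=\lambda_M(Y)=2$, so orthogonality with $T$ produces cocircuits through $\{e,e'\}$ inside $X\cup\{e\}$ and through $\{e,e''\}$ inside $Y\cup\{e\}$, and a side of size two on the $M\backslash e$ side does yield the desired triad. But the proof stops exactly where Tutte's Triangle Lemma actually lives: the final paragraph only asserts that minimality of $|X|$ and $|U|$ together with submodular uncrossing of $X,Y,U,V$ ``should force a contradiction''. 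No inequalities are actually derived, no case analysis is carried out, and it is not shown that the minimal choices you made are the right ones to uncross (the small side guaranteed by the truth of the lemma might contain $e''$ rather than $e'$, so minimizing $|X|$ over separations with $e'\in X$ need not detect it). Since the statement is essentially equivalent to ``some $2$-separation of $M\backslash e$ has a $2$-element side meeting $\{e',e''\}$'', deferring that step means the proposal restates the lemma rather than proving it. (For reference, the paper itself offers no proof either; it cites Tutte's wheels-and-whirls paper, where this bookkeeping occupies the bulk of the argument.)

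There is also a concrete error in the terminal cases you list: for the separation $(U,V)$ of $M\backslash e'$ with $e\in U$ and $e''\in V$, the case $|V|=2$ gives a cocircuit through $\{e',e''\}$ contained in $V\cup\{e'\}$, i.e.\ a triad of the form $\{e',e'',w\}$. This triad does not contain $e$, so it satisfies neither $\{e,e'\}\subseteq T^{*}$ nor $\{e,e''\}\subseteq T^{*}$, and the conclusion of the lemma is genuinely asymmetric in this respect. So even granting that uncrossing forces one of $|X|,|Y|,|U|,|V|$ to equal two, your paragraph does not close the argument in that case; the asymmetry has to be handled inside the uncrossing step, which is further evidence that the omitted ``bookkeeping'' is the real content and cannot be waved through.
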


Applying Tutte's Triangle Lemma to $M^*$ rather than $M$ gives the following corollary, which we also refer to as Tutte's Triangle Lemma.

\begin{corollary}
	Let $M$ be a $3$-connected matroid such that $|E(M)| \geq 4$. Let $T^* = \{e,e',e''\}$ be a triad of $M$ such that neither $M / e$ nor $M / e'$ are $3$-connected. Then there exists a triangle of $M$ containing either $\{e,e'\}$ or $\{e,e''\}$.
\end{corollary}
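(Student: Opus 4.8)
The plan is to deduce this corollary from Tutte's Triangle Lemma (the preceding lemma) by a routine application of duality; there is no substantive obstacle, and the only care needed is in correctly translating the hypotheses and conclusion between $M$ and $M^*$.

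First I would record the standard facts that $M^*$ is $3$-connected whenever $M$ is, and that $|E(M^*)| = |E(M)| \ge 4$. Next, since $T^* = \{e,e',e''\}$ is a triad of $M$, it is a cocircuit of $M$, hence a circuit of $M^*$ of size three, i.e.\ a triangle $T$ of $M^*$. Then I would invoke the identity $(M/x)^* = M^* \backslash x$ for each $x \in E(M)$: the matroid $M/e$ is $3$-connected if and only if its dual $M^* \backslash e$ is $3$-connected, and likewise for $e'$. Thus the hypothesis ``neither $M/e$ nor $M/e'$ is $3$-connected'' is exactly the hypothesis ``neither $M^* \backslash e$ nor $M^* \backslash e'$ is $3$-connected'' needed to apply Tutte's Triangle Lemma to the $3$-connected matroid $M^*$ and its triangle $T = \{e,e',e''\}$.

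Applying that lemma yields a triad of $M^*$ — a cocircuit of $M^*$ of size three — containing either $\{e,e'\}$ or $\{e,e''\}$. Finally, a cocircuit of $M^*$ is a circuit of $M$, so this set is a triangle of $M$ containing $\{e,e'\}$ or $\{e,e''\}$, which is the desired conclusion. The entire argument is bookkeeping with the definitions of triangle, triad, and matroid duality, together with the fact that contraction and deletion are interchanged under dualization; the ``hard part'' is merely to state the translation cleanly enough that the reader sees the hypotheses match up, which is why the authors present it simply as a corollary.
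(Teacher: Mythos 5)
Your proposal is correct and is exactly the paper's argument: the paper obtains this corollary by applying Tutte's Triangle Lemma to $M^*$, using the same duality translations (triad of $M$ is a triangle of $M^*$, $(M/x)^* = M^*\backslash x$, and a triad of $M^*$ is a triangle of $M$) that you spell out.
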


One consequence of Tutte's Triangle Lemma is the following.
If $T$ is a triangle of $M$ that does not meet a triad, then there are at least two elements of $T$ that can be deleted while retaining $3$-connectivity. Dually, a triad that does not meet a triangle contains at least two elements that can be contracted while retaining $3$-connectivity.

For a proof of the following lemma, see, for example, \cite[Lemma~8.8.2]{oxley!!!}.
\begin{lemma} \label{segment_deletable}
Let $M$ be a $3$-connected matroid, and let $X \subseteq E(M)$ such that $r(X) = 2$ and $|X| \geq 4$. Then $M \backslash e$ is $3$-connected for all $e \in X$.
\end{lemma}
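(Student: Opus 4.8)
The plan is to fix an element $e \in X$ and prove that $M \backslash e$ is $3$-connected by contradiction, transferring a putative small separation of $M \backslash e$ back to~$M$. First I would record two preliminary observations. Since $M$ is $3$-connected with $|E(M)| \ge |X| \ge 4$, it is both simple and cosimple; in particular $e$ is not a coloop, so \cref{delconnectivity} applies without its exceptional case. Moreover, since $M$ is simple and $r(X) = 2$, any two distinct elements $f_1, f_2 \in X$ satisfy $r(\{f_1,f_2\}) = 2$, so $\cl_M(\{f_1,f_2\})$ is a rank-$2$ flat contained in the rank-$2$ flat $\cl_M(X)$, whence $\cl_M(\{f_1,f_2\}) = \cl_M(X) \supseteq X \ni e$.

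Now suppose $M \backslash e$ is not $3$-connected, so it has a $k$-separation $(A,B)$ for some $k \in \{1,2\}$; thus $\lambda_{M \backslash e}(A) = k-1 \le 1$, $|A| \ge k$, and $|B| \ge k$. Since $|X - e| \ge 3$ and $(A,B)$ partitions $E(M) - e$, one of $A$, $B$ — say $A$, by symmetry — contains two distinct elements $f_1, f_2$ of $X$. By the observation above, $e \in \cl_M(\{f_1,f_2\}) \subseteq \cl_M(A)$. The argument now splits on whether $e \in \cl^*_M(A)$.

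If $e \notin \cl^*_M(A)$, then \cref{delconnectivity} gives $\lambda_M(A) = \lambda_{M\backslash e}(A) = k-1$, and since $e \in \cl_M(A)$ but $e \notin \cl^*_M(A)$, \cref{clconnectivity} gives $\lambda_M(A \cup \{e\}) = \lambda_M(A) = k-1$. As $|A \cup \{e\}| \ge 3 \ge k$ and $|E(M) - (A \cup \{e\})| = |B| \ge k$, the pair $(A \cup \{e\}, B)$ is a $k$-separation of $M$, contradicting $3$-connectivity. If instead $e \in \cl^*_M(A)$, then \cref{delconnectivity} gives $\lambda_M(A) = \lambda_{M\backslash e}(A) + 1 = k$, and since $e$ lies in both $\cl_M(A)$ and $\cl^*_M(A)$, \cref{clconnectivity} gives $\lambda_M(A \cup \{e\}) = \lambda_M(A) - 1 = k-1$; as $\lambda_M$ is invariant under complementation, $\lambda_M(B) = k-1$, and since $|B| \ge k$ and $|E(M) - B| = |A \cup \{e\}| \ge 3 \ge k$, the pair $(B, A \cup \{e\})$ is a $k$-separation of $M$ — again a contradiction. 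Hence $M \backslash e$ is $3$-connected, as required.

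The proof is short and I anticipate no real obstacle: the content is entirely in combining \cref{delconnectivity} and \cref{clconnectivity}. The one point requiring mild care is the size bookkeeping for the transferred separation (verifying that both sides have at least $k$ elements and that $A \cup \{e\}$ is a proper nonempty subset of $E(M)$), together with keeping the two cases $e \in \cl^*_M(A)$ and $e \notin \cl^*_M(A)$ straight; the pigeonhole that forces two elements of $X$ onto one side of $(A,B)$ — and hence forces $e \in \cl_M(A)$ — is what makes everything go through.
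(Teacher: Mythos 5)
Your proof is correct: the pigeonhole placing two elements of the rank-$2$ set $X$ on one side of a putative $k$-separation of $M\backslash e$, forcing $e$ into the closure of that side, and then transferring the separation to $M$ via \cref{delconnectivity} and \cref{clconnectivity} (splitting on whether $e \in \cl^*_M(A)$) is sound, and the size bookkeeping checks out. The paper itself gives no proof, citing Oxley's Lemma~8.8.2 instead, and your argument is essentially the standard one found there, so there is nothing substantive to compare.
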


The next lemma is a special case of \cite[Lemma~3.8]{whittlestabs}.
\begin{lemma} \label{quad_detachable}
	Let $M$ be a $3$-connected matroid, let $X \subseteq E(M)$ be a quad, and let $e \in X$.
	If $e$ is not contained in a triad, then $M \backslash e$ is $3$-connected.
\end{lemma}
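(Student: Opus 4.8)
The plan is to assume $M\backslash e$ is not $3$-connected and derive a contradiction. First I would show $M\backslash e$ is connected: a $1$-separation of $M\backslash e$ would have to come from a single element $z$ that is a coloop of $M\backslash e$ (it cannot be a loop, and bigger sides are ruled out by \cref{delconnectivity} together with $3$-connectivity of $M$), and $z$ being a coloop of $M\backslash e$ forces $\{e,z\}$ to contain a cocircuit of $M$, contradicting $3$-connectivity since $|E(M)|\ge 6$. So $M\backslash e$ has a $2$-separation $(A,B)$, i.e.\ $\lambda_{M\backslash e}(A)=1$ with $|A|,|B|\ge 2$. If $|B|=2$, then in the connected matroid $M\backslash e$ the pair $B$ is a parallel pair or a series pair; a parallel pair of $M\backslash e$ is a $2$-element circuit of $M$, and a series pair of $M\backslash e$ is of the form $D^*\setminus e$ for a cocircuit $D^*$ of $M$ with $|D^*|\le 3$, hence $D^*=B\cup\{e\}$ is a triad of $M$ through $e$ --- either way a contradiction. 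So from now on $|A|,|B|\ge 3$, and in particular $|E(M)|\ge 7$.

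Next I would pin down the position of $e$. By \cref{delconnectivity}, since $e$ is not a coloop of $M$, either $\lambda_M(A)=1$ (impossible, as $(A,E(M)-A)$ would then be a $2$-separation of $M$) or $\lambda_M(A)=2$ and $e\in\cl^*_M(A)$; likewise $\lambda_M(B)=2$ and $e\in\cl^*_M(B)$. Now apply \cref{clconnectivity} to $A$ and $e$: since $e\in\cl^*_M(A)$, if also $e\in\cl_M(A)$ then $\lambda_M(A\cup e)=1$, again a $2$-separation of $M$; hence $e\notin\cl_M(A)$ and $\lambda_M(A\cup e)=2$. Symmetrically $e\notin\cl_M(B)$ and $\lambda_M(B\cup e)=2$. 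Writing the quad as $X=\{e,f,g,h\}$, recall $X$ is a circuit, so $e\in\cl_M(X-e)=\cl_M(\{f,g,h\})$; therefore $\{f,g,h\}$ cannot lie entirely in $A$ (else $e\in\cl_M(A)$) nor entirely in $B$. So $X-e$ is split across the separation, and after relabelling $A\cap X=\{f,g\}$, $B\cap X=\{h\}$.

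Finally I would uncross. Since $X$ is both a circuit and a cocircuit of size four, $r(X)=r^*(X)=3$, so $\lambda_M(X)=2$; combined with $\lambda_M(A\cup e)=2$, submodularity of $\lambda_M$ gives
\[
\lambda_M\bigl((A\cup e)\cap X\bigr)+\lambda_M\bigl((A\cup e)\cup X\bigr)\le \lambda_M(A\cup e)+\lambda_M(X)=4,
\]
where $(A\cup e)\cap X=\{e,f,g\}$ and $(A\cup e)\cup X=A\cup\{e,h\}$. The set $\{e,f,g\}$ is not a triangle, being a proper subset of the circuit $X$, and not a triad, since $e$ is in no triad; as $|E(M)-\{e,f,g\}|\ge 4$ and $M$ is $3$-connected, this forces $\lambda_M(\{e,f,g\})=3$. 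Hence $\lambda_M(A\cup\{e,h\})\le 1$, while $|A\cup\{e,h\}|\ge 5$ and $|E(M)-(A\cup\{e,h\})|=|B|-1\ge 2$, contradicting the $3$-connectivity of $M$. Therefore $M\backslash e$ is $3$-connected.

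\textbf{Main obstacle.} The delicate part is not any single computation but the bookkeeping that makes the uncrossing bite: one must first establish that $e$ lies outside \emph{both} $\cl_M(A)$ and $\cl_M(B)$ (forcing the quad to meet the separation in a $2$\,--\,$1$ pattern), then choose exactly the sets $A\cup e$ and $X$ to uncross, and separately dispose of the degenerate case where a side of the $2$-separation has size two --- this last case is genuinely different and is handled through series/parallel pairs rather than submodularity.
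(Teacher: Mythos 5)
Your proof is correct, but there is nothing internal to compare it against: the paper does not prove this lemma, it simply quotes it as a special case of Lemma~3.8 of the cited reference (\texttt{whittlestabs}). Your argument is a valid self-contained substitute. The structure is sound: a $1$-separation of $M\backslash e$ is killed because a coloop of $M\backslash e$ would yield a cocircuit of $M$ inside $\{e,z\}$; a $2$-separation with a side of size two is killed via parallel/series pairs, and the series case is exactly where the no-triad hypothesis is used; for a $2$-separation $(A,B)$ with both sides of size at least three, \cref{delconnectivity} and \cref{clconnectivity} correctly place $e$ in $\cl^*_M(A)\cap\cl^*_M(B)$ but outside $\cl_M(A)\cup\cl_M(B)$, so the circuit $X$ forces the $2$--$1$ split $A\cap X=\{f,g\}$, $B\cap X=\{h\}$; and the uncrossing of $A\cup\{e\}$ with $X$ works because $\{e,f,g\}$ is independent (a proper subset of the circuit $X$) and is not a triad, so $\lambda_M(\{e,f,g\})=3$ and hence $\lambda_M(A\cup\{e,h\})\le 1$ with both sides of size at least two, contradicting $3$-connectivity. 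Two small points of bookkeeping you should make explicit: the claim $|E(M)|\ge 6$ is true but unproved (note $E(M)\ne X$ since $r(M)+r^*(M)=|E(M)|$ would fail, and $|E(M)-X|=1$ is impossible since $\lambda_M(X)=2$ while $\lambda$ of a singleton is at most $1$); in fact for the step in question you only need that $M$ is cosimple, which already follows from $3$-connectivity and $|E(M)|\ge 4$. Also, you only treat the case $|B|=2$ of a small side; say ``without loss of generality'' so the symmetric case $|A|=2$ is covered.
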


\subsection*{Fans}
Recall that a \emph{fan} of a matroid $M$ is a subset $F$ of $E(M)$ such that either $|F|=2$, or $|F| \ge 3$ and there is an ordering $(e_1,e_2,\ldots,e_{|F|})$ of $F$ such that $\{e_1,e_2,e_3\}$ is a triangle or a triad, and, for all $i \in [|F|-3]$, if $\{e_i,e_{i+1},e_{i+2}\}$ is a triangle then $\{e_{i+1},e_{i+2},e_{i+3}\}$ is a triad, and if $\{e_i,e_{i+1},e_{i+2}\}$ is a triad, then $\{e_{i+1},e_{i+2},e_{i+3}\}$ is a triangle.
A fan $F$ is \emph{maximal} if there is no fan $F'$ such that $F$ is a proper subset of $F'$.

Let $F$ be a fan of length $k \geq 3$ with ordering $(e_1,e_2,\ldots,e_k)$.
Note that if $k$ is even, then one of $\{e_1,e_2,e_3\}$ and $\{e_{k-2},e_{k-1},e_k\}$ is a triangle and the other is a triad. Similarly, if $k$ is odd, then $\{e_1,e_2,e_3\}$ and $\{e_{k-2},e_{k-1},e_k\}$ are either both triangles or both triads.

If $F$ is a fan with ordering $(e_1,e_2,\ldots,e_{|F|})$, then $(e_{|F|},e_{|F|-1},\ldots,e_1)$ is also a fan ordering of $F$. When exploiting this symmetry, we use the phrase ``up to reversing the ordering of $F$".
If $F$ has length at least five, then, up to reversing the ordering, $F$ has a unique ordering \cite{oxleywufans}.
However, if $F$ has length four and $(e_1,e_2,e_3,e_4)$ is an ordering of $F$, then $(e_1,e_3,e_2,e_4)$ is also an ordering of $F$.
Moreover, if $F$ has length three, then the ordering of $F$ is arbitrary.
Although a fan $F$ can have different orderings, it is often convenient to refer to $F$ by an ordering of $F$;
for example, we say ``$(e_1,e_2,\ldots,e_{|F|})$ is a fan'' as a shorthand for ``the set $\{e_1,e_2,\ldots,e_{|F|}\}$ is a fan with ordering $(e_1,e_2,\ldots,e_{|F|})$''.

The next four lemmas provide some properties of fans in $3$-connected matroids.
We omit the straightforward proofs.

\begin{lemma} \label{fan_rank}
	Let $M$ be a $3$-connected matroid, and let $F = (e_1,e_2,\ldots,e_{|F|})$ be a fan of $M$ such that $|E(M)| \geq |F| + 2$. Then
	\begin{align*}
	r(F) &= \begin{cases}
	\floor*{\frac{|F|}{2}}+1, & \mbox{if $\{e_1,e_2,e_3\}$ is a triangle}; \\[6pt]
	\ceil*{\frac{|F|}{2}}+1, & \mbox{if $\{e_1,e_2,e_3\}$ is a triad},
	\end{cases}\\
	\end{align*}
	and
	\begin{align*}
	r^*(F) &= \begin{cases}
	\ceil*{\frac{|F|}{2}}+1, & \mbox{if $\{e_1,e_2,e_3\}$ is a triangle}; \\[6pt]
	\floor*{\frac{|F|}{2}}+1, & \mbox{if $\{e_1,e_2,e_3\}$ is a triad}.
	\end{cases}\\
	\end{align*}
	In particular,
	\begin{align*}
	\lambda(F) &= 2.
	\end{align*}
\end{lemma}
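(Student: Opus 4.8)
The plan is to induct on $|F|$, establishing the rank and corank formulas simultaneously. For the base case $|F| = 3$, if $\{e_1,e_2,e_3\}$ is a triangle then $r(F) = 2 = \lfloor 3/2\rfloor + 1$, and since $M$ is $3$-connected with $|E(M)| \ge 5$, the set $F$ is not a series class, so $r^*(F) = 2 = \lceil 3/2\rceil + 1$. (Here I would use $\lambda(F) = r(F) + r^*(F) - |F| = 2+2-3 = 1$; but wait, a triangle in a $3$-connected matroid has $\lambda(F) = 2$ unless... actually I need to recompute: a triangle has $r(F)=2$, and $\lambda(F) = 2$ forces $r^*(F) = |F| + 2 - r(F) = 3$. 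Let me redo this: since $|E(M)| \ge |F|+2 = 5$ and $M$ is $3$-connected, $F$ is $3$-separating so $\lambda(F) \le 2$; and $F$ cannot be $2$-separating since $M$ is $3$-connected and $|F|, |E(M)-F| \ge 3$, wait $|E(M)-F|\ge 2$ only — but $M$ $3$-connected with $|E(M)|\ge 5$ means $F$ with $|F|=3$ has $\lambda(F)\ge 2$. Hence $\lambda(F)=2$, giving $r^*(F) = 3 = \lceil 3/2\rceil+1$.) The triad case is dual. So the base case holds and already gives $\lambda(F)=2$.

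For the inductive step, suppose the result holds for fans of length $k-1$ and let $F = (e_1,\dots,e_k)$ be a fan of length $k$ with $|E(M)| \ge k+2$. Consider $F' = (e_1,\dots,e_{k-1})$, which is a fan of length $k-1$ starting with the same type of triple, and $|E(M)| \ge k+2 > (k-1)+2$, so the inductive hypothesis applies to $F'$. The key observation is how $e_k$ attaches to $F'$: the triple $\{e_{k-2},e_{k-1},e_k\}$ is a triangle or a triad, so $e_k \in \cl(F')$ or $e_k \in \cl^*(F')$ respectively. Moreover, since $F$ is a fan, $e_k$ is \emph{not} in both closures: if $\{e_{k-2},e_{k-1},e_k\}$ is, say, a triangle, then $\{e_{k-3},e_{k-2},e_{k-1}\}$ is a triad, and one checks (using that $M$ has no small cocircuits meeting this configuration, via orthogonality) that $e_k \notin \cl^*(F')$ — otherwise $e_k$ would lie in a triad with two elements of $F'$, forcing a longer fan relation that contradicts the alternating structure or creates a circuit/cocircuit of size $\le 2$. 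Then I apply Lemma \ref{clconnectivity}: adding $e_k$ to $F'$ keeps $\lambda$ unchanged (since $e_k$ is in exactly one of the two closures), so $\lambda(F) = \lambda(F') = 2$. For the individual ranks: when $\{e_1,e_2,e_3\}$ is a triangle, a parity argument on $k$ determines whether $\{e_{k-2},e_{k-1},e_k\}$ is a triangle or triad (it is a triangle iff $k$ is odd), hence whether $e_k \in \cl(F')$ or $e_k \in \cl^*(F')$; combining $r(F) = r(F')$ or $r(F')+1$ accordingly with the inductive formula for $r(F')$ yields $r(F) = \lfloor k/2\rfloor + 1$, and then $r^*(F) = |F| + \lambda(F) - r(F) = k + 2 - (\lfloor k/2\rfloor+1) = \lceil k/2\rceil + 1$. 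The triad case is dual.

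The main obstacle is the claim that $e_k$ lies in exactly one of $\cl(F')$, $\cl^*(F')$ — equivalently, ruling out $e_k \in \cl(F') \cap \cl^*(F')$, which would make $\lambda(F) = \lambda(F') - 1 = 1$ and contradict $3$-connectedness (given $|F|, |E(M)-F| \ge 2$, actually we need $|E(M)-F| \ge 2$ which holds). Actually this last point makes it easy: since $M$ is $3$-connected and $|E(M)-F| \ge 2$ and $|F| \ge 3$, we have $\lambda(F) \ge 2$; combined with $F$ being $3$-separating ($\lambda(F)\le 2$, from $|E(M)| \ge |F|+2$ — hmm, need $F$ or a subset to be $3$-separating; this follows because a fan of length $\ge 3$ in a $3$-connected matroid is always $3$-separating, a standard fact one can cite from the fan structure or prove directly via the triangle/triad at the start generating a $3$-separation that $e_4, e_5,\dots$ successively preserve by Lemma \ref{clconnectivity}). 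So $\lambda(F) = 2$ is forced, and then whether $e_k$ is in one or the other closure is pinned down by parity as above; the formulas follow by the arithmetic in the inductive step. Since the paper explicitly says these proofs are omitted as straightforward, I will keep the writeup brief, emphasizing the induction, the use of Lemmas \ref{delconnectivity} and \ref{clconnectivity} to track how rank changes as the fan is built up one element at a time, and the parity bookkeeping.
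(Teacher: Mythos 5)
Your proof is correct, and since the paper omits this proof as ``straightforward'', your induction is essentially the intended argument: peel off the last fan element, note that the final triangle/triad places $e_k$ in $\cl(F')$ or $\cl^*(F')$, use $3$-connectedness (via Lemma~\ref{clconnectivity} and $|E(M)|\ge|F|+2$) to rule out $e_k$ lying in both closures, and then the parity bookkeeping gives $r(F)$, with $r^*(F)=|F|+\lambda(F)-r(F)$ and $\lambda(F)=\lambda(F')=2$. In a final writeup you should delete the hand-wavy orthogonality aside in the inductive step --- the $\lambda$-based argument you give at the end is the correct and complete justification that $e_k$ lies in exactly one closure --- and clean up the base case, where you momentarily assert $r^*(F)=2$ before correctly concluding $r^*(F)=3=\ceil*{3/2}+1$.
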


\begin{lemma} \label{not_just_fan}
	Let $M$ be a $3$-connected matroid, and let $F$ be a fan of $M$ such that $|F| \geq 4$. Then either $M$ is a wheel or a whirl, or $|E(M)| \geq |F| + 2$.
\end{lemma}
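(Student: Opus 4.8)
The plan is to prove the contrapositive: if $M$ is not a wheel or a whirl, then $|E(M)| \ge |F|+2$. Equivalently, I assume $|E(M)| \le |F|+1$ and deduce that $M$ is a wheel or a whirl, proceeding by induction on $|E(M)|$. Write $k = |F| \ge 4$ and fix a fan ordering $(e_1,e_2,\ldots,e_k)$ of $F$; replacing $M$ by $M^*$ if necessary (which is $3$-connected, has $F$ as a fan, and is a wheel or whirl exactly when $M$ is, since wheels and whirls are self-dual), I may assume $\{e_1,e_2,e_3\}$ is a triangle. The base case $k=4$ is handled directly: if $|E(M)|=4$ then $M \cong U_{2,4}$, which is a whirl, while $|E(M)|=5$ cannot occur, since a short rank analysis shows that $M$ would then have a cocircuit of size at most two.

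The reduction is to Tutte's Wheels-and-Whirls Theorem: it suffices to show that every element of $M$ lies in both a triangle and a triad. Indeed, an element lying in a triad forces its deletion to have a cocircuit of size at most two, hence (as $|E(M)| \ge 5$) to be non-$3$-connected, and dually an element lying in a triangle has non-$3$-connected contraction; so if every element lies in a triangle and a triad, then no single-element deletion or contraction preserves $3$-connectedness, and $M$ is a wheel or a whirl. For an internal element $e_i$ with $2 \le i \le k-1$ this is immediate, since $e_i$ lies in two or three consecutive triples of the fan ordering and consecutive triples alternate between triangles and triads. The remaining elements are the ends $e_1$ and $e_k$ and, when $|E(M)|=|F|+1$, the unique element $g \in E(M)-F$.

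To handle $e_1$ (which lies in the triangle $\{e_1,e_2,e_3\}$), suppose it lies in no triad. Since $M\backslash e_2$ is not $3$-connected ($e_2$ is in the triad $\{e_2,e_3,e_4\}$), the contrapositive of Tutte's Triangle Lemma applied to $\{e_1,e_2,e_3\}$ gives that $M\backslash e_1$ is $3$-connected; and, as $e_1$ lies in no triad, $(e_2,\ldots,e_k)$ remains a fan of $M\backslash e_1$, whose ground set has size at most $|F|$. By the inductive hypothesis, $M\backslash e_1$ is a wheel or a whirl, and its cyclic fan structure then provides a triangle $\{w,e_2,e_3\}$ with $w \ne e_1$, so that $\{e_1,w,e_2,e_3\}$ is a rank-$2$ set of size four; by \cref{segment_deletable}, $M\backslash e_2$ is $3$-connected, contradicting that $e_2$ lies in a triad. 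Hence $e_1$ lies in a triangle and a triad, and by a symmetric argument so does $e_k$ (the ``obvious'' containment at $e_k$ depends on the parity of $k$, and the other is obtained exactly as for $e_1$). If $E(M)=F$, we are done.

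When $E(M)=F\cup\{g\}$, one argues similarly that $g$ lies in a triangle and a triad. If, say, $g$ lies in no triad, then $M\backslash g$ has no series pair, so Bixby's Lemma forces $M\backslash g$ to be $3$-connected, and the inductive hypothesis makes it a wheel or a whirl containing the fan $F$; an orthogonality argument on $g$ together with a parity count (wheels and whirls have an even number of elements, whereas $|E(M)|=k+1$) then yields a contradiction, and dually when $g$ lies in no triangle. Thus every element of $M$ lies in a triangle and a triad, and $M$ is a wheel or a whirl. The main obstacle is this final step together with the second end: one must track carefully how the triangles and triads of $M$ restrict under the reductions $M\backslash e_1$, $M\backslash g$, and $M/g$ — using $3$-connectedness to rule out small circuits and cocircuits — and then exploit orthogonality and the parity of wheels and whirls to pin down exactly where $g$ sits in the fan.
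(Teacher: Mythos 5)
Your overall plan (reduce to showing every element lies in both a triangle and a triad, then invoke Tutte's Wheels-and-Whirls Theorem; handle the fan ends via Tutte's Triangle Lemma, induction, and \cref{segment_deletable}) is sound, and the base case and the treatment of the ends $e_1,e_k$ are essentially correct. The genuine gap is in the final case $E(M)=F\cup\{g\}$, which is exactly where the content of the lemma lies and which you yourself flag as ``the main obstacle''. First, the step ``$g$ lies in no triad, so Bixby's Lemma forces $M\backslash g$ to be $3$-connected'' is a non sequitur: Bixby's Lemma only gives that $\si(M/g)$ or $\co(M\backslash g)=M\backslash g$ is $3$-connected, and nothing you have assumed rules out the first alternative (in this subcase $g$ may well lie in a triangle, and $M/g$ may be $3$-connected while $M\backslash g$ is not). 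Second, even granting that $M\backslash g$ is a wheel or whirl on the ground set $F$, the promised ``orthogonality argument together with a parity count'' is not an argument: the parity observation is vacuous, since at that point $M$ itself is not known to be a wheel or whirl, so there is nothing contradictory about $|E(M)|=k+1$ being odd. A real contradiction does exist, but it needs work: one must use the hypothesis that the fan triads of $F$ are cocircuits of $M$ itself (not merely of $M\backslash g$), which forces $g$ into the closure of every hyperplane of the wheel/whirl complementary to a fan triad, and then argue (via orthogonality, or the modularity of these hyperplanes) that this pins $g$ onto a single element, contradicting simplicity; alternatively one can analyse by orthogonality which triads of $M$ can contain $e_1$ and which triangles can contain $e_k$ and derive a $4$-element segment or cosegment. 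As written, the subcase is asserted rather than proved.

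A second, smaller gap: after deducing that $M\backslash e_1$ is a wheel or whirl containing the fan $(e_2,\ldots,e_k)$, you claim ``its cyclic fan structure provides a triangle $\{w,e_2,e_3\}$''. This is not automatic: in a whirl (or a wheel of rank at least four) the two rim elements of a triad lie in no common triangle, so the claim requires showing that $e_3$, not $e_2$, is the spoke of the triad $\{e_2,e_3,e_4\}$. That does follow — from the triangle $\{e_3,e_4,e_5\}$ together with the next fan triad when $k\ge 6$, and by direct inspection of $U_{2,4}$ and $M(K_4)$ in the small cases — but the verification should be given, since the statement as used is false for an arbitrary pair inside a triad of a whirl. (For reference, the paper treats this lemma as one of several whose ``straightforward proofs'' are omitted, so there is no proof in the text to compare against; the expectation is precisely that these two steps be carried out in full.)
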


\begin{lemma} \label{fan_ends}
	Let $M$ be a $3$-connected matroid that is not a wheel or a whirl, and let $F = (e_1,e_2,\ldots,e_{|F|})$ be a maximal fan of $M$ such that $|F| \geq 3$.
	\begin{enumerate}
		\item If $\{e_1,e_2,e_3\}$ is a triad, then $e_1$ is not contained in a triangle.
		\item If $\{e_1,e_2,e_3\}$ is a triangle, then $e_1$ is not contained in a triad.
	\end{enumerate}
\end{lemma}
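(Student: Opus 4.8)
The plan is to prove part~(i) and then obtain part~(ii) by applying~(i) to the dual matroid: the notions of fan, maximal fan, $3$-connectivity, and ``wheel or whirl'' are all preserved under duality, while ``triangle'' and ``triad'' are interchanged. So assume $\{e_1,e_2,e_3\}$ is a triad and, for a contradiction, that $e_1$ lies in some triangle $T$. I would first record the standing fact that in a $3$-connected matroid that is not a whirl, no $3$-element set $Y$ is both a triangle and a triad (else $\lambda(Y)=r(Y)+r^*(Y)-|Y|=2+2-3=1$, forcing $|E(M)|\le 4$ and hence $M\cong U_{2,4}$). By orthogonality, $|T\cap\{e_1,e_2,e_3\}|\in\{0,2\}$, and since $e_1$ lies in the intersection, this size is~$2$; so $T=\{e_1,e_2,t\}$ or $T=\{e_1,e_3,t\}$ for some $t\notin\{e_1,e_2,e_3\}$ (were $t$ equal to $e_2$ or $e_3$, then $T=\{e_1,e_2,e_3\}$, contradicting the standing fact).

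In the case $T=\{e_1,e_2,t\}$, I would observe that $(t,e_1,e_2,e_3,\dots,e_{|F|})$ satisfies the fan alternation conditions: so if $t\notin F$, this exhibits a fan properly containing~$F$, contradicting maximality. Hence $t=e_j$ for some $j\ge 4$. Applying orthogonality between the triangle $\{e_1,e_2,e_j\}$ and each fan-triad of $F$ other than $\{e_1,e_2,e_3\}$ (none of which contains $e_1$ or $e_2$, since these lie only in the first fan-triple), I would conclude that $e_j$ lies in no fan-triad; a routine inspection of the fan ordering shows the only such element is $e_{|F|}$, and only when $|F|$ is even. But then $(e_{|F|},e_1,e_2,\dots,e_{|F|-1})$ is another fan ordering of~$F$, now with ends $e_{|F|}$ and $e_{|F|-1}$, contradicting the uniqueness of the ends of a fan of length at least four.

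In the case $T=\{e_1,e_3,t\}$ with $|F|\ge 5$, I would use orthogonality between $T$ and the triad $\{e_3,e_4,e_5\}$ to force $t\in\{e_4,e_5\}$, and then argue by cases: if $t=e_4$, then $\{e_1,e_3,e_4\}$ and $\{e_2,e_3,e_4\}$ are both triangles, so submodularity gives $r(\{e_1,e_2,e_3,e_4\})\le 2$, contradicting the value $3$ furnished by \cref{fan_rank} (which applies since \cref{not_just_fan} ensures $|E(M)|$ is large enough); if $t=e_5$, orthogonality with $\{e_5,e_6,e_7\}$ forces $|F|\le 6$, and then $\{e_1,e_2,e_3,e_4,e_5\}=T\cup\{e_2,e_3,e_4\}$ has rank at most~$3$ by submodularity but rank~$4$ by \cref{fan_rank}. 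The remaining small cases $|F|\in\{3,4\}$ are handled directly: if $t\notin F$, the reordering $(t,e_1,e_3,e_2,\dots)$ extends $F$, contradicting maximality; and if $t\in F$, then $t=e_{|F|}$ and a short submodularity computation again collapses the rank of a fan.

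The hard part will be the case $T=\{e_1,e_3,t\}$: the naive ``prepend~$t$'' move used in the first case fails, because reattaching the triangle at~$e_1$ forces $e_2$ and~$e_3$ into the wrong relative order for~$F$, so one cannot simply enlarge the fan. The remedy is to pin~$t$ down to $e_4$ or~$e_5$ using orthogonality against the \emph{second} triad of~$F$, and then play the submodular upper bound on the rank of a short initial segment of~$F$ against the exact value from \cref{fan_rank}; the fiddly bookkeeping is keeping track of which short sub-collections are genuinely fans, so that \cref{fan_rank} is applicable.
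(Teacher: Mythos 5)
The paper omits this proof (it is one of the ``straightforward proofs'' the authors skip), so there is nothing to compare against; judged on its own, your argument is correct and is the natural one. The case analysis is complete: orthogonality pins $T$ to $\{e_1,e_2,t\}$ or $\{e_1,e_3,t\}$; the ``prepend $t$'' move (using the swapped ordering $(e_1,e_3,e_2,e_4)$ when $|F|=4$) correctly rules out $t\notin F$ via maximality; and when $t\in F$ your orthogonality bookkeeping and the rank comparisons against \cref{fan_rank} (which applies because \cref{not_just_fan} guarantees $|E(M)|\ge|F|+2$, hence the relevant initial segments satisfy the size hypothesis) are all accurate. Two small points worth tightening. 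First, the intersection of a circuit with a cocircuit can have size $0$, $2$ or $3$, not just $\{0,2\}$ as you first write; your parenthetical does dispose of the size-$3$ case via the ``standing fact'', but state it that way, and note that the standing fact's conclusion ``$|E(M)|\le 4$ hence $M\cong U_{2,4}$'' silently assumes $|E(M)|=4$ (the case $|E(M)|=3$ is killed instead by a rank count, since a $3$-set cannot be both a circuit and a cocircuit of the same matroid). Second, the only non-elementary ingredient is the case $T=\{e_1,e_2,e_{|F|}\}$ with $|F|$ even, where you invoke the uniqueness of the pair of ends of a fan of length at least four; this is legitimate here because the paper states that fact (citing Oxley and Wu) as background \emph{before} this lemma and because your fan is maximal in a $3$-connected matroid that is not a wheel or a whirl, which is the setting in which the cited uniqueness holds -- but it is worth saying explicitly that this is where the ``not a wheel or a whirl'' hypothesis is being consumed, since for fans inside wheels the ends are not unique.
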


A consequence of \Cref{fan_ends} is that if $F = (e_1,e_2,\ldots,e_{|F|})$ is a maximal fan of a $3$-connected matroid $M$ that is not a wheel or a whirl, and $|F| \geq 4$, then, by Tutte's Triangle Lemma, either (i) holds and $M / e_1$ is $3$-connected, or (ii) holds and $M \backslash e_1$ is $3$-connected. Of course, analogous outcomes also hold for $e_{|F|}$.

\begin{lemma} \label{unique_triangle}
    Let $M$ be a $3$-connected matroid that is not a wheel or a whirl, and let $F = (e_1,e_2,\ldots,e_{k})$ be a maximal fan of $M$ with length $k \ge 4$. Then, for all $i \in [k-1]$, both of the following hold:
	\begin{enumerate}
		\item if $\{e_i,e_{i+1}\}$ is contained in a triangle $T$, then either $T = \{e_{i-1},e_i,e_{i+1}\}$ or $T = \{e_i,e_{i+1},e_{i+2}\}$, and
		\item if $\{e_i,e_{i+1}\}$ is contained in a triad $T^*$, then either $T^* = \{e_{i-1},e_i,e_{i+1}\}$ or $T^* = \{e_i,e_{i+1},e_{i+2}\}$.
	\end{enumerate}
\end{lemma}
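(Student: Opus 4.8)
The plan is to prove (i) directly and then deduce (ii) by applying (i) to $M^*$: a fan of $M$ is a fan of $M^*$ with exactly the same fan orderings (the triangles and triads swap roles, but the alternating pattern is unchanged), maximal fans of $M$ are maximal fans of $M^*$, and $M$ is a wheel or whirl precisely when $M^*$ is. So I focus on (i). Two elementary facts will be used repeatedly. First, since $M$ is $3$-connected and, by \cref{not_just_fan}, $|E(M)| \ge |F| + 2 \ge 6$, the matroid $M$ is simple; in particular $\{e_i,e_{i+1}\}$ is independent and spans a rank-$2$ flat. Second, if an element $z$ lies in a triad $\{z,b,c\}$ of $M$, then $M \backslash z$ is not $3$-connected: $\{b,c\}$ is a $2$-element cocircuit of $M \backslash z$, and $E(M \backslash z)$ has at least five elements, so this $2$-cocircuit yields a $2$-separation.

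Now suppose $T$ is a triangle with $\{e_i,e_{i+1}\} \subseteq T$, say $T = \{e_i,e_{i+1},f\}$ with $f \notin \{e_i,e_{i+1}\}$. Write $C_j = \{e_j,e_{j+1},e_{j+2}\}$ for the consecutive triples of the fan, $j \in [k-2]$; these alternate between triangles and triads. The pair $\{e_i,e_{i+1}\}$ is contained in $C_{i-1}$ when $i \ge 2$ and in $C_i$ when $i \le k-2$, and I claim at least one of these is a triangle. For $2 \le i \le k-2$ both triples exist and are consecutive in the alternating sequence, so exactly one is a triangle. For $i = 1$ only $C_1 = \{e_1,e_2,e_3\}$ is relevant, and it cannot be a triad, since by \cref{fan_ends} that would force $e_1$ to lie in no triangle, contradicting $e_1 \in T$; hence $C_1$ is a triangle. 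The case $i = k-1$ is symmetric, applying \cref{fan_ends} to the reversed fan ordering. Fix a triangle $T_0$ among these triples; then $T_0 \in \{\{e_{i-1},e_i,e_{i+1}\},\, \{e_i,e_{i+1},e_{i+2}\}\}$, so it suffices to prove $T = T_0$.

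Suppose $T \ne T_0$ and write $T_0 = \{e_i,e_{i+1},g\}$, so $f \ne g$ and both $f$ and $g$ lie in $\cl(\{e_i,e_{i+1}\})$. Then $Z := \{e_i,e_{i+1},f,g\}$ is a rank-$2$ set with exactly four elements, so by \cref{segment_deletable} the matroid $M \backslash z$ is $3$-connected for every $z \in Z$. On the other hand, $T_0$ equals one of the consecutive triples $C_j$, and since $k - 2 \ge 2$ this $C_j$ has a neighbour $C_{j'}$ with $j' \in \{j-1,j+1\}$; as $C_j = T_0$ is a triangle, $C_{j'}$ is a triad and $|C_j \cap C_{j'}| = 2$. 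Any $z \in C_j \cap C_{j'}$ therefore lies in the triad $C_{j'}$, so $M \backslash z$ is not $3$-connected; but $z \in C_j = T_0 \subseteq Z$, a contradiction. Hence $T = T_0$, which proves (i), and (ii) follows by duality. The argument is short; the one point that needs care is the bookkeeping at the two ends of the fan (the cases $i = 1$ and $i = k-1$, where only one of $C_{i-1}$ and $C_i$ is defined), and this is exactly where \cref{fan_ends}, hence the maximality of $F$, enters.
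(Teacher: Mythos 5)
The paper does not actually give a proof of this lemma — it is one of the four fan lemmas whose proofs are declared ``straightforward'' and omitted — so there is nothing to compare against line by line; what matters is whether your argument is sound, and it is. Your reduction of (ii) to (i) by duality is legitimate (fans, their maximality, and the wheel/whirl hypothesis are all self-dual), and the proof of (i) is correct: by \cref{not_just_fan} you get $|E(M)| \ge |F|+2 \ge 6$, so $M$ is simple and deleting an element of a triad destroys $3$-connectivity; the alternation in the fan ordering together with \cref{fan_ends} (handling the boundary cases $i=1$ and $i=k-1$, which is indeed where maximality and the non-wheel hypothesis are needed) produces a triangle $T_0$ among the two consecutive triples through $\{e_i,e_{i+1}\}$; and if $T \ne T_0$ then $\{e_i,e_{i+1},f,g\}$ is a four-element rank-$2$ set, so \cref{segment_deletable} makes every element of it, in particular the two elements of $T_0$ lying in the adjacent triad of the fan (which exists because $k \ge 4$), deletable while keeping $3$-connectivity — contradicting the fact that no element of a triad is so deletable. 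The only cosmetic imprecision is the claim that $\{b,c\}$ is literally a $2$-element cocircuit of $M\backslash z$; a priori it only contains a cocircuit of size at most two, but since $M$ is cosimple neither $b$ nor $c$ can become a coloop, and in any case either outcome already gives a $1$- or $2$-separation, so the conclusion you need is unaffected.
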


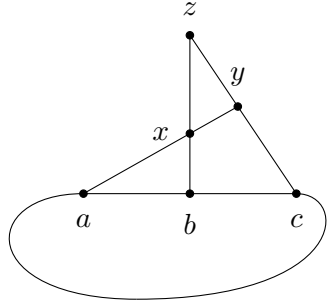
\begin{figure}
	\centering
	\begin{tikzpicture}[scale=.7]
	\coordinate (a) at (0,0);
	\coordinate (b) at (2,0);
	\coordinate (c) at (4,0);
	\coordinate (z) at (2,3);
	\coordinate (y) at ($(z)!0.45!(c)$);
	
	\draw (a) -- (c);
	\draw (z) -- (c);
	\draw[name path=line1] (y) -- (a);
	\draw[name path=line2] (z) -- (b);
	
	\filldraw[name intersections={of=line1 and line2}=-] (intersection-1) circle (2pt) node[label=left:$x$] {};
	\filldraw (a) circle (2pt) node[label=below:$a$] {};
	\filldraw (b) circle (2pt) node[label=below:$b$] {};
	\filldraw (c) circle (2pt) node[label=below:$c$] {};
	\filldraw (y) circle (2pt) node[label=above:$y$] {};
	\filldraw (z) circle (2pt) node[label=above:$z$] {};
	
	\draw (a) .. controls ($(a)+(-2,0)$) and ($(a)+(-2,-2)$) .. ($($(a)!0.5!(b)$)+(0,-2)$);
	\draw (c) .. controls ($(c)+(1,0)$) and ($(c)+(1,-2)$) .. ($($(a)!0.5!(b)$)+(0,-2)$);
	\end{tikzpicture}
	\caption{An $M(K_4)$-separator.} \label{mk4}
\end{figure}

An \emph{$M(K_4)$-separator} of a matroid $M$, pictured in \Cref{mk4}, is a set $\{a,b,c,x,y,z\} \subseteq E(M)$ such that $\{x,y,z\}$ is a triad, and $\{a,b,c\}$, $\{a,x,y\}$, $\{b,x,z\}$, and $\{c,y,z\}$ are all triangles.
It is well known that two distinct maximal fans of length at least four intersect in only their ends unless they form an $M(K_4)$-separator; we provide a proof of this result for completeness, as \cref{intersecting_fans}.
We first require the following lemma:

\begin{lemma} \label{intersecting_fans_helper}
	Let $M$ be a $3$-connected matroid and let $F_1 = (e_1,e_2,\ldots,e_{|F_1|})$ and $F_2 = (f_1,f_2,\ldots,f_{|F_2|})$ be distinct maximal fans of $M$ such that $|F_1| \geq 4$ and $|F_2| \geq 3$. Let $e \in F_1 \cap F_2$. Then $e \in \{e_2,e_3,\ldots,e_{|F_1|-1}\}$ if and only if $|F_2| \geq 4$ and $e \in \{f_2,f_3,\ldots,f_{|F_2|-1}\}$.
\end{lemma}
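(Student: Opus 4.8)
The plan is to characterise the ``internal'' elements of a maximal fan --- those of the form $e_i$ with $2 \le i \le |F|-1$ --- in terms of whether they lie in both a triangle and a triad, and then read off the equivalence.

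First I would dispose of the case where $M$ is a wheel or a whirl. In that case $E(M)$ itself admits a fan ordering (a linearisation of the cyclic ordering supplied by Tutte's Wheels-and-Whirls Theorem), so $E(M)$ is a fan; since every fan is contained in $E(M)$, it is the unique maximal fan of $M$. But $|F_1| \ge 4 \ge 3$, so this would force $F_1 = F_2 = E(M)$, contradicting that $F_1$ and $F_2$ are distinct. Hence we may assume $M$ is not a wheel or a whirl, and then \cref{not_just_fan} gives $|E(M)| \ge |F_1| + 2 \ge 6$.

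Next I would establish two facts. \emph{Fact 1}: if $F$ is a maximal fan with $|F| \ge 4$ of a $3$-connected matroid that is not a wheel or a whirl, then an element $e \in F$ is internal in $F$ if and only if $e$ lies in both a triangle and a triad. The forward direction is immediate from the fan structure: the (two or three) consecutive triples of $F$ that contain an internal element $e$ alternate between triangles and triads, so in particular $e$ lies in both a triangle and a triad. For the converse I argue the contrapositive: if $e$ is an end of $F$, then, as $|F| \ge 4$, \cref{fan_ends} says that $e$ lies in no triangle (when the fan ordering begins at $e$ with a triad) or in no triad (when it begins with a triangle). \emph{Fact 2}: if $F$ is a maximal fan with $|F| = 3$ of a $3$-connected matroid with $|E(M)| \ge 5$, then no element of $F$ lies in both a triangle and a triad. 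By duality (being a fan is a self-dual notion) it suffices to treat the case that $F = \{f_1,f_2,f_3\}$ is a triangle and some $f \in F$ lies in a triad $T^*$. Orthogonality forces $|T^* \cap F| \in \{2,3\}$. If $|T^* \cap F| = 3$, then $F$ is both a circuit and a cocircuit, so $\lambda(F) = r(F) + r^{*}(F) - |F| = 1$, contradicting $3$-connectivity since $|F| \ge 2$ and $|E(M) - F| \ge 2$. If $|T^* \cap F| = 2$, say $T^* = \{f_i, f_j, g\}$ with $g \notin F$, then $(f_k, f_i, f_j, g)$, where $\{f_k\} = F - \{f_i,f_j\}$, is a fan of length four, contradicting the maximality of $F$.

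Finally I would combine these. Since $|F_1| \ge 4$, the condition $e \in \{e_2,\dots,e_{|F_1|-1}\}$ says exactly that $e$ is internal in $F_1$. If $e$ is internal in $F_1$, then Fact~1 applied to $F_1$ gives that $e$ lies in both a triangle and a triad; Fact~2 (with $|E(M)| \ge 6$) then forces $|F_2| \ge 4$; and Fact~1 applied to $F_2$ gives that $e$ is internal in $F_2$. Conversely, if $|F_2| \ge 4$ and $e$ is internal in $F_2$, then Fact~1 applied to $F_2$ gives that $e$ lies in both a triangle and a triad, whence Fact~1 applied to $F_1$ gives that $e$ is internal in $F_1$. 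The main obstacle is handling the degenerate cases cleanly --- in particular, excluding wheels and whirls so that \cref{fan_ends} is available, and excluding a three-element set that is simultaneously a circuit and a cocircuit (possible only when $|E(M)| \le 4$); the rest is a routine synthesis of the fan structure, orthogonality, and \cref{fan_ends,not_just_fan}.
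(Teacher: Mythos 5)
Your proof is correct and follows essentially the same route as the paper: internal fan elements lie in both a triangle and a triad, \cref{fan_ends} (after excluding wheels and whirls via distinctness of the maximal fans) rules out ends, and the $|F_2|=3$ case is killed because a maximal $3$-element fan meeting both a triangle and a triad would extend to a $4$-element fan. Your Facts 1 and 2 just make explicit the orthogonality and connectivity details that the paper's shorter proof leaves implicit.
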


\begin{proof}
    First, assume $e \in \{e_2,e_3,\ldots,e_{|F_1|-1}\}$. Thus, $e$ is contained in both a triangle and a triad of $F_1$. Noting that $M$ has distinct maximal fans, and is therefore not a wheel or a whirl, \Cref{fan_ends} implies that $e \notin \{f_1,f_{|F_2|}\}$. Furthermore, if $|F_2| = 3$, then, as $F_2$ meets both a triangle and a triad, $F_2$ is contained in a $4$-element fan, contradicting the maximality of $F_2$. Thus, $|F_2| \geq 4$ and $e \in \{f_2,f_3,\ldots,f_{|F_2|-1}\}$, as desired. Conversely, if $|F_2| \geq 4$ and $e \in \{f_2,f_3,\ldots,f_{|F_2|-1}\}$, then $e$ is contained in a triangle and a triad, so \Cref{fan_ends} implies that $e \notin \{e_1,e_{|F_1|}\}$, which completes the proof.
\end{proof}

\begin{lemma} \label{intersecting_fans}
    Let $M$ be a $3$-connected matroid such that $|E(M)| \geq 8$. Let $F_1$ and $F_2$ be distinct maximal fans of $M$ such that $|F_1| \geq 4$ and $|F_2| \geq 3$, and $F_1 = (e_1,e_2,\ldots,e_{|F_1|})$. Then either $F_1 \cap F_2 \subseteq \{e_1,e_{|F_1|}\}$, or $F_1 \cup F_2$ is an $M(K_4)$-separator in either $M$ or $M^*$.
\end{lemma}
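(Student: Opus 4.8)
The plan is to assume the first alternative fails, that is, $F_1\cap F_2\not\subseteq\{e_1,e_{|F_1|}\}$, and to deduce that $F_1\cup F_2$ is an $M(K_4)$-separator of $M$ or of $M^*$. By \cref{intersecting_fans_helper}, the failure of the first alternative forces $|F_2|\ge 4$ together with an element $e\in F_1\cap F_2$ that is an end of neither fan. Two preliminary reductions make the rest cleaner. First, $M$ has two distinct maximal fans of length at least four, so it is not a wheel or whirl; hence \cref{not_just_fan} gives $|E(M)|\ge |F_i|+2$, and so $\lambda(F_i)=2$ for $i\in\{1,2\}$ by \cref{fan_rank}. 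Submodularity of the connectivity function then yields $\lambda(F_1\cup F_2)+\lambda(F_1\cap F_2)\le 4$; a short orthogonality argument (using that $e$ being interior to $F_1$ puts $e$ on a triangle and a triad of $F_1$) rules out $|F_1\cap F_2|=1$, so $\lambda(F_1\cup F_2)=\lambda(F_1\cap F_2)=2$ and both sets are exactly $3$-separating. Second, the conclusion is self-dual: passing to $M^*$ interchanges triangles and triads in each of $F_1$ and $F_2$, and since $M(K_4)$ is self-dual, ``$F_1\cup F_2$ is an $M(K_4)$-separator of $M$ or $M^*$'' is unchanged, so I may dualise freely.

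Next I would carry out a local analysis at $e$. Since $e$ is interior to $F_1$ it lies in two consecutive fan-triples of $F_1$ of opposite type, giving a triangle $T_1$ and a triad $T_1^*$ with $T_1,T_1^*\subseteq F_1$ and $|T_1\cap T_1^*|=2$; likewise $F_2$ supplies $T_2,T_2^*$ through $e$. Here I would record the obstructions that drive the argument: no $3$-element set is both a circuit and a cocircuit (such a set $S$ has $\lambda(S)=1$, impossible as $|E(M)-S|\ge 5$); two distinct triangles meeting in two elements span a $U_{2,4}$-restriction, which by \cref{segment_deletable} and orthogonality contains no triad (dually for two triads); and a $4$-element line contains no triad. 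By orthogonality, $|T_1\cap T_2^*|\ge 2$ and $|T_2\cap T_1^*|\ge 2$, and feeding the above obstructions into a case analysis on how $T_2\cup T_2^*$ meets $T_1\cup T_1^*$ shows that, up to duality, either $T_1=T_2$, or $T_1^*=T_2^*$ (with the other pair agreeing on all but one element), or $T_1\cup T_1^*\cup T_2\cup T_2^*$ is a $5$-element ``crossing'' configuration carrying two triangles and two triads through $e$. In each case $F_1$ and $F_2$ agree on a common sub-fan $Y$ near $e$ (of length $3$ or $4$), and I would then use \cref{unique_triangle} together with the maximality of both fans to show that $Y$ must be an end-segment of each of $F_1$ and $F_2$.

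To finish, I would propagate this agreement outward. If $F_1$ and $F_2$ agreed on an element beyond $Y$, repeating the local analysis at the next interior common element would — since both fans are finite and maximal — eventually force $F_1=F_2$, contradicting their distinctness. Moreover the two extensions of $Y$ must occur at the same end of $Y$: extending at opposite ends produces a triangle or triad containing an end of one of the maximal fans, which \cref{fan_ends} forbids; and a second extension on either side is ruled out by orthogonality with the fan-triple at the shared end of $Y$. Hence, after orientation, $F_1$ and $F_2$ have length five, they differ only in one terminal element each, and $F_1\cup F_2$ has exactly six elements; a final bookkeeping step (orthogonality, one circuit-elimination, and the $3$-separating property of $F_1\cup F_2$) verifies that these six elements carry the four-triangles–one-triad pattern of an $M(K_4)$-separator in $M$ or $M^*$. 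I expect the main obstacle to be the middle stretch: the orthogonality case analysis at $e$, and especially the proof that $F_1\cap F_2$ is forced to be an end-segment of both fans — ruling out a long ``middle'' overlap and disposing of the crossing sub-cases — which is where the genuine work lies; the concluding verification that the six-element union is an $M(K_4)$-separator is then routine.
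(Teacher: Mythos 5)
Your plan follows the same route as the paper -- reduce via \cref{intersecting_fans_helper} to a common element interior to both fans, analyse the triangles and triads through it, and conclude the six-element $M(K_4)$ configuration -- and several of your local observations are sound (orthogonality does force $|F_1\cap F_2|\ge 2$; no $3$-set is both a circuit and a cocircuit when $|E(M)|\ge 8$; a four-point line meets no triad). But there is a genuine gap: the middle of the argument, which is essentially the whole proof, is asserted rather than carried out. You state that a case analysis ``shows'' the triples through $e$ form one of three configurations, that $F_1\cap F_2$ is a common sub-fan $Y$ which is an end-segment of both fans, and that the overlap cannot be longer -- and you yourself flag exactly these steps as ``where the genuine work lies.'' In the paper this is the detailed element-by-element identification using orthogonality and \cref{unique_triangle}: ruling out the middle triad of $F_2$ through $e_i$, then forcing $e_{i+1}=f_{j+2}$, $e_{i+2}=f_{j+1}$ and $e_{i+3}=f_{j+3}$, with \cref{fan_ends} used at each stage to show these elements are not fan ends, so both fans must continue to the common element. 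None of that work appears in your proposal.

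Moreover, even granting your intermediate assertions, the stated endgame does not follow from them. You allow $Y$ to have length three; then single extensions of both fans at the same end produce a five-element union, which is not an $M(K_4)$-separator, and none of the reasons you list (opposite-end extensions, a second extension on one side) excludes this case -- one needs the additional observation that the far ends of the two fans would then be distinct elements of $Y$, each interior to the other fan, contradicting \cref{fan_ends}. Likewise, ``repeating the local analysis \dots eventually forces $F_1=F_2$'' is not an argument that the overlap terminates. The paper terminates everything at once by showing that none of the three outer elements $a$, $b$, $c$ lies in a triad: such a triad would be trapped inside the six-element set by orthogonality with the three triangles, giving $\lambda\le 1$ and contradicting $3$-connectivity with $|E(M)|\ge 8$. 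This device, which is what actually bounds the fan lengths and pins down $F_1\cup F_2$, is absent from your plan. (A minor point: you may only claim $\lambda(F_1\cup F_2)\le 2$, not $=2$, since the complement of $F_1\cup F_2$ could have fewer than two elements; the inequality is all that is needed.)
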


\begin{proof}
    Suppose $F_1 \cap F_2 \not\subseteq \{e_1,e_{|F_1|}\}$, that is, there exists $i \in \{2,3,\ldots,|F_1|-1\}$ such that $e_i \in F_2$. Since $F_1$ and $F_2$ are distinct, $F_1$ has an element that is not contained in $F_2$ so, up to reversing the ordering of $F_1$, we may assume that $e_{i-1} \notin F_2$. The set $\{e_{i-1},e_i,e_{i+1}\}$ is either a triangle or a triad. Up to duality, we may assume that $\{e_{i-1},e_i,e_{i+1}\}$ is a triangle, in which case $\{e_i,e_{i+1},e_{i+2}\}$ is independent. Let $F_2 = (f_1,f_2,\ldots,f_{|F_2|})$. By \Cref{intersecting_fans_helper}, we have that $|F_2| \geq 4$, and $e_i = f_j$ for some $j \in \{2,3,\ldots,|F_2|-1\}$. Now, $e_i$ is contained in a triad of $F_2$, and this triad is one of $\{f_{j-2},f_{j-1},e_i\}$, $\{f_{j-1},e_i,f_{j+1}\}$, or $\{e_i,f_{j+1},f_{j+2}\}$.
	
	First, suppose $\{f_{j-1},e_i,f_{j+1}\}$ is a triad. Then, by orthogonality with the triangle $\left\{e_{i-1},e_i,e_{i+1}\right\}$, and since $e_{i-1} \notin F_2$, we have that $e_{i+1} \in \{f_{j-1},f_{j+1}\}$. Now, $e_{i+1}$ is contained in both a triangle and  a triad, which implies, by \Cref{fan_ends}, that $e_{i+1} \notin \{f_1,f_{|F_2|}\}$. Therefore, if $e_{i+1} = f_{j-1}$, then $\{f_{j-2},f_{j-1},f_j\}$ is a triangle containing both $e_i$ and $e_{i+1}$, and if $e_{i+1} = f_{j+1}$, then $\{f_j,f_{j+1},f_{j+2}\}$ is a triangle containing both $e_i$ and $e_{i+1}$. But since $e_{i-1} \notin F_2$, this triangle is distinct from the triangle $\{e_{i-1},e_i,e_{i+1}\}$, contradicting \Cref{unique_triangle}.
	
	Therefore, either $\{f_{j-2},f_{j-1},e_i\}$ or $\{e_i,f_{j+1},f_{j+2}\}$ is a triad. Up to reversing the ordering of $F_2$, we may assume that $\{e_i,f_{j+1},f_{j+2}\}$ is a triad. By orthogonality with $\{e_{i-1},e_i,e_{i+1}\}$, this triad contains $e_{i+1}$. Suppose $e_{i+1} = f_{j+1}$. Since $e_i \notin \{f_1,f_{|F_2|}\}$, we have that $\{f_{j-1},f_j,f_{j+1}\}$ is a triangle containing both $e_i$ and $e_{i+1}$. This contradiction to \Cref{unique_triangle} implies that $e_{i+1} = f_{j+2}$. Now, $e_{i+1}$ is contained in both a triangle and a triad, so $e_{i+1} \notin \{f_1,f_{|F_2|}\}$. Therefore, $M$ has triangles $\{f_{j-1},e_i,f_{j+1}\}$ and $\{f_{j+1},e_{i+1},f_{j+3}\}$. Similarly, $e_{i+1} \notin \{e_1,e_{|F_1|}\}$, so $M$ has a triad  $\{e_i,e_{i+1},e_{i+2}\}$. By orthogonality, $e_{i+2} = f_{j+1}$. Furthermore, $e_{i+2}$ is contained in both a triangle and a triad, so $e_{i+2} \notin \{e_1,e_{|F_1|}\}$, which means $\{e_{i+1},e_{i+2},e_{i+3}\}$ is a triangle. Now, $\{f_{j+1},f_{j+2},f_{j+3}\}$ is also a triangle containing $\{e_{i+1},e_{i+2}\}$. \Cref{unique_triangle} implies that these are the same triangle, so $e_{i+3} = f_{j+3}$.
	
    We label these elements in the following way: $a = e_{i-1}$, $b = f_{j-1}$, $c = e_{i+3} = f_{j+3}$, $x = e_i = f_j$, $y = e_{i+1} = f_{j+2}$, $z = e_{i+2} = f_{j+1}$. Now, $\{x,y,z\}$ is a triad, and $\{a,x,y\}$, $\{b,x,z\}$, $\{c,y,z\}$ are all triangles. We complete the proof of the lemma by showing that $F_1 \cup F_2 = \{a,b,c,x,y,z\}$ is an $M(K_4)$-separator in $M$. It remains to prove that none of $a$, $b$, or $c$ are contained in triads and that $\{a,b,c\}$ is a triangle.
	
	First, assume that one of $a$, $b$, or $c$ is contained in a triad $T^*$. Orthogonality with the triangles $\{a,x,y\}$, $\{b,x,z\}$, and $\{c,y,z\}$ implies that $T^* \subseteq \{a,b,c,x,y,z\}$. But then $\lambda(\{a,b,c,x,y,z\}) \leq 1$, a contradiction since $|E(M)| \geq 8$. Hence, no such triad exists, so $F_1 \cup F_2 = \{a,b,c,x,y,z\}$. Now, we show that $\{a,b,c\}$ is a triangle. Since $\{x,y,z\}$ is a triad, by submodularity we have
    \begin{align*}
        r(\{a,b,c\}) + r(M) &\leq r(\{a,b,c,x,y,z\}) + r(E(M) - \{x,y,z\}) \\
        &= r(\{a,b,c,x,y,z\}) + r(M) - 1 \,,
    \end{align*}
	so $r(\{a,b,c\}) \leq 2$. Therefore, $r(\{a,b,c\}) = 2$, and $\{a,b,c,x,y,z\}$ is an $M(K_4)$-separator of $M$.
\end{proof}

We note also that if $X$ is an $M(K_4)$-separator in a matroid $M$, then any maximal fan contained in $X$ has length five. 
This can often be used to rule out the latter possibility in an application of \cref{intersecting_fans}.

\subsection*{Vertical and cyclic separations}
Let $M$ be a matroid. A \emph{vertical $k$-separation} of $M$ is a partition $(X,\{e\},Y)$ of $E(M)$ such that $\lambda(X) = k-1$ and  $\lambda(Y) = k-1$, and $e \in \cl(X) \cap \cl(Y)$, and $r(X) \geq k$ and $r(Y) \geq k$. A partition $(X,\{e\},Y)$ is a \emph{cyclic $k$-separation} if $(X,\{e\},Y)$ is a vertical $k$-separation of $M^*$.
The importance of vertical $3$-separations is illustrated by the following lemma (see \cite[Lemma 3.5]{whittlestabs}).

\begin{lemma} \label{basic_vertsep}
    Let $M$ be a $3$-connected matroid and let $e \in E(M)$. Then $M$ has a vertical $3$-separation of the form $(X,\{e\},Y)$ if and only if $\si(M/e)$ is not $3$-connected.
\end{lemma}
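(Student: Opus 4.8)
The plan is to prove both directions via the standard correspondence between $3$-separations of $M$ and $3$-separations of $\si(M/e)$. Recall that $\si(M/e)$ is obtained from $M/e$ by deleting all but one element from each parallel class (and removing loops); since $M$ is $3$-connected with $|E(M)|\ge 4$, the loops of $M/e$ are exactly the elements parallel to $e$ in $M$ together with $e$ itself, and parallel pairs in $M/e$ correspond to triangles of $M$ through $e$. Throughout, I would use the rank formula $\lambda_{M/e}(X)=r_M(X\cup\{e\})-1+r_M(E-X)-r(M)$ together with \cref{delconnectivity}, which tells us $\lambda_{M/e}(X)=\lambda_M(X)$ if $e\notin\cl_M(X)$, and $\lambda_{M/e}(X)=\lambda_M(X)-1$ if $e\in\cl_M(X)$.

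For the forward direction, suppose $(X,\{e\},Y)$ is a vertical $3$-separation of $M$. Then $r(X),r(Y)\ge 3$, and $e\in\cl(X)\cap\cl(Y)$, so by \cref{delconnectivity} we get $\lambda_{M/e}(X)=\lambda_M(X)-1=1$, i.e. $(X,Y)$ is a $2$-separating partition of $M/e$. The conditions $r(X)\ge 3$ and $r(Y)\ge 3$ (which pass to $M/e$, giving $r_{M/e}(X)\ge 2$ and $r_{M/e}(Y)\ge 2$) ensure that neither side of this partition consists entirely of loops and parallel elements, so that after simplification both sides of the induced partition of $\si(M/e)$ still have at least two elements. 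Hence $\si(M/e)$ has a $2$-separation, so it is not $3$-connected (one must also check $\si(M/e)$ has at least $4$ elements, which follows since $r_{M/e}(X),r_{M/e}(Y)\ge 2$ force each side of the simplified partition to have size at least two).

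For the converse, suppose $\si(M/e)$ is not $3$-connected. Since $M$ is $3$-connected and $|E(M)|\ge 4$, the matroid $M/e$ has no coloops and $r(M/e)=r(M)-1\ge 2$, so $\si(M/e)$ is loopless, has rank at least $2$, and is connected (a standard fact: contracting a non-loop from a connected matroid and simplifying preserves connectedness). Therefore $\si(M/e)$ having no $3$-connectivity means it has a $2$-separation $(X',Y')$ with $|X'|,|Y'|\ge 2$ and $\lambda_{\si(M/e)}(X')=1$. Pull this back to a partition $(X,Y)$ of $E(M)-\{e\}$ by replacing each element of $X'$ by its full parallel class in $M/e$ (assigning loops arbitrarily, say to $X$); then $\lambda_{M/e}(X)=1$ as well, and $r_{M/e}(X),r_{M/e}(Y)\ge 2$ since $|X'|,|Y'|\ge 2$ in the simple matroid. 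Now $\lambda_M(X)\ge 2$ because $M$ is $3$-connected, so \cref{delconnectivity} forces $e\in\cl_M(X)$ and $\lambda_M(X)=2$; by the symmetric argument applied to $Y$ (noting $\lambda_M(Y)=\lambda_M(X)=2$ and $\lambda_{M/e}(Y)=\lambda_{M/e}(X)=1$), we also get $e\in\cl_M(Y)$. Finally $r_M(X)=r_{M/e}(X)+1\ge 3$ and similarly $r_M(Y)\ge 3$, and $|X|,|Y|\ge 2$, so $(X,\{e\},Y)$ is a vertical $3$-separation of $M$.

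I expect the main obstacle to be the careful bookkeeping around the simplification map: one must verify that the rank bounds $r_{M/e}(X)\ge 2$ genuinely guarantee the simplified side has at least two elements (and conversely), and that loops and parallel classes of $M/e$ are handled consistently when passing between $M/e$ and $\si(M/e)$. The connectivity facts used ($M/e$ has no coloops, $\si(M/e)$ is connected) are routine consequences of $3$-connectivity of $M$ but should be invoked explicitly. Everything else is a direct application of \cref{delconnectivity} together with the definitions.
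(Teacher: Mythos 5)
You should first note that the paper itself does not prove this lemma; it simply cites Whittle (\cite[Lemma~3.5]{whittlestabs}), so the comparison is with the standard argument, which is exactly the route you take. Your converse direction is essentially complete. The genuine gap is in the forward direction, at precisely the step you flagged but did not resolve: from $r_{M/e}(X)\ge 2$ and $r_{M/e}(Y)\ge 2$ you conclude that ``both sides of the induced partition of $\si(M/e)$ still have at least two elements.'' That inference is not valid as stated. The ground set of $\si(M/e)$ consists of one representative from each parallel class of $M/e$, and a parallel class may meet both $X$ and $Y$; its unique representative then lies on only one side. So a side of rank $2$ in $M/e$ can in principle retain one (or even no) representatives -- the rank bound says nothing about where the representatives sit. (The inequality $\lambda_{\si(M/e)}(X\cap D)\le\lambda_{M/e}(X)$, for $D$ the representative set, is fine, since ranks can only drop; it is only the size of the traces that is at issue.) The claim you need is true, but proving it requires a further appeal to the $3$-connectivity of $M$, which your write-up never invokes at that point. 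A standard repair: replace the trace partition by $\bigl(\cl_{M/e}(X)\cap D,\; D-\cl_{M/e}(X)\bigr)$. The first side spans $\cl_{M/e}(X)$, so it has rank $r_{M/e}(X)\ge 2$ and hence at least two elements, and $\lambda$ does not increase because the second side is contained in $Y$. If the second side had at most one element, then every point of $\si(M/e)$ except possibly one would lie in the proper flat $\cl_{M/e}(X)$, so that point would be a coloop of $\si(M/e)$; equivalently $M/e$ would have a $1$-separation, which lifts to a $2$-separation of $M$ or forces a coloop or parallel pair in $M$, contradicting $3$-connectivity. (Alternatively, fix the separation first and then choose representatives judiciously, checking that $\si(M/e)$ has at least four points.)

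A secondary inaccuracy: the ``standard fact'' you invoke in the converse -- that contracting a non-loop element of a connected matroid and simplifying preserves connectedness -- is false; for the cycle matroid of two triangles sharing an edge $e$, the matroid $M/e$ is $U_{1,2}\oplus U_{1,2}$. What you actually need is that $\si(M/e)$ is connected when $M$ is $3$-connected with at least four elements, and this is proved by the same lifting argument as above (a $1$-separation of $M/e$ with both sides of size at least two gives a $2$-separation of $M$; a singleton side would be a loop or coloop of $M/e$, hence a parallel pair or coloop of $M$). Your small remark that $\lambda_M(Y)=\lambda_M(X)$ is also not an a priori identity, since the complement of $Y$ in $M$ is $X\cup\{e\}$, but this is harmless: all you need is $\lambda_M(Y)\ge 2$ from $3$-connectivity, after which \cref{delconnectivity} forces $e\in\cl_M(Y)$ and $\lambda_M(Y)=2$ exactly as you argue for $X$. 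With these repairs your proof is correct and coincides with the standard one.
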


The following two lemmas about vertical separations will be useful.
We omit the proofs: the first is similar to \cite[Lemmas~4.4 and 4.5]{BS14}, and the second is a straightforward consequence of the first.

\begin{lemma} \label{fcl_vert_sep}
    Let $M$ be a $3$-connected matroid, let $(X,\{e\},Y)$ be a vertical $3$-separation of $M$, and let $y \in Y$.
    \begin{enumerate}
        \item If $y \in \cl(X)$, then $(X \cup \{y\}, \{e\}, Y-\{y\})$ is a vertical $3$-separation of $M$.
        \item If $y \in \cl^*(X)$ and $e$ is not contained in a triangle of $M$, then $(X \cup \{y\},\{e\},Y-\{y\})$ is a vertical $3$-separation of $M$.
    \end{enumerate}
\end{lemma}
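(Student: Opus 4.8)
The plan is to verify, in each of the two cases, the four defining properties of a vertical $3$-separation for the triple $(X \cup \{y\}, \{e\}, Y - \{y\})$: that $\lambda(X \cup \{y\}) = \lambda(Y - \{y\}) = 2$, that $e \in \cl(X \cup \{y\}) \cap \cl(Y - \{y\})$, and that $r(X \cup \{y\}) \ge 3$ and $r(Y - \{y\}) \ge 3$. The two tools doing the work are \cref{clconnectivity}, which says exactly how $\lambda$ moves when one element is absorbed into a set (according to its membership in the closure and the coclosure), and $3$-connectivity, which guarantees $\lambda(Z) \ge 2$ whenever $|Z| \ge 2$ and $|E(M) - Z| \ge 2$. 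I will also use at the outset that $|E(M)| = |X| + |Y| + 1 \ge r(X) + r(Y) + 1 \ge 7$, so $M$ is simple, and the standard identity that $e \notin \cl^*(A)$ if and only if $e \in \cl(E(M) - A - \{e\})$.

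For part (i): since $y \in \cl(X)$, \cref{clconnectivity} gives $\lambda(X \cup \{y\}) \le \lambda(X) = 2$, and since the complement $\{e\} \cup (Y - \{y\})$ has size $|Y| \ge 3$, $3$-connectivity gives $\lambda(X \cup \{y\}) \ge 2$; hence $\lambda(X \cup \{y\}) = 2$. As $e \in \cl(X) \subseteq \cl(X \cup \{y\})$, applying \cref{clconnectivity} to absorbing $e$ gives $\lambda(Y - \{y\}) = \lambda((X \cup \{y\}) \cup \{e\}) \le \lambda(X \cup \{y\}) = 2$, with equality again by $3$-connectivity; moreover, since this $\lambda$ does not decrease while $e \in \cl(X \cup \{y\})$, \cref{clconnectivity} forces $e \notin \cl^*(X \cup \{y\})$, and the identity above then yields $e \in \cl(Y - \{y\})$. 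For the ranks, $r(X \cup \{y\}) \ge r(X) \ge 3$; and expanding $\lambda(X) = 2$ (using $e \in \cl(Y)$, so the complementary rank is $r(Y)$) and $\lambda(Y - \{y\}) = 2$ (using $y, e \in \cl(X)$, so the complementary rank is $r(X)$) as rank equations and subtracting yields $r(Y - \{y\}) = r(Y) \ge 3$.

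Part (ii) proceeds in the same way, with one genuine difference. Here $y \in \cl^*(X)$ forces $y \notin \cl(X)$ — otherwise \cref{clconnectivity} would give $\lambda(X \cup \{y\}) = \lambda(X) - 1 = 1$, contradicting $3$-connectivity — and then \cref{clconnectivity} gives $\lambda(X \cup \{y\}) = \lambda(X) = 2$. The arguments that $e \in \cl(X \cup \{y\}) \cap \cl(Y - \{y\})$ and $\lambda(Y - \{y\}) = 2$ are unchanged, and $r(X \cup \{y\}) = r(X) + 1 \ge 3$. The sticking point is the bound $r(Y - \{y\}) \ge 3$: now the rank bookkeeping (with $y \notin \cl(X)$) only gives $r(Y - \{y\}) = r(Y) - 1$, so one must exclude $r(Y) = 3$, and this is exactly where the hypothesis that $e$ lies in no triangle enters. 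If $r(Y) = 3$, then $r(Y - \{y\}) = 2$, so $\cl(Y - \{y\})$ is a rank-$2$ flat; it contains $Y - \{y\}$ (which has at least two elements, as $|Y| \ge r(Y) = 3$) and also $e$ (since $e \in \cl(Y - \{y\})$, established as in part (i)); picking distinct $p, q \in Y - \{y\}$, the set $\{e, p, q\}$ is then a triangle of $M$ containing $e$, a contradiction. Hence $r(Y) \ge 4$ and $r(Y - \{y\}) = r(Y) - 1 \ge 3$.

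The $\lambda$- and closure-bookkeeping in both parts is essentially forced by \cref{clconnectivity} together with $3$-connectivity and should present no difficulty; the one genuinely delicate point — and the main obstacle — is the last step, excluding $r(Y) = 3$ in part (ii). One has to first pin down that $e \in \cl(Y - \{y\})$, and only then can this be combined with the rank count to manufacture the forbidden triangle through $e$, which is the whole reason the triangle-freeness hypothesis on $e$ is needed.
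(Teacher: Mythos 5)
Your proof is correct, and it checks exactly the conditions in the paper's definition of a vertical $3$-separation via the same standard bookkeeping (Lemma~\ref{clconnectivity} plus $3$-connectivity for the $\lambda$-computations, the identity $e \notin \cl^*(A) \Leftrightarrow e \in \cl(E(M)-A-\{e\})$ for the closure condition, and rank counting for $r(Y-\{y\})$); note the paper omits its own proof, citing the analogous lemmas of [BS14]. In particular, you correctly isolate the only place the no-triangle hypothesis is needed in (ii): ruling out $r(Y)=3$, where $r(Y-\{y\})=r(Y)-1=2$ together with $e \in \cl(Y-\{y\})$ and simplicity would force a triangle through $e$.
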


\begin{lemma} \label{fan_vert_sep}
	Let $M$ be a $3$-connected matroid, and let $F$ be a fan of $M$ such that $|F| \geq 3$. Let $(X,\{e\},Y)$ be a vertical $3$-separation of $M$ such that $e \notin F$ and $e$ is not contained in a triangle. Then $M$ has a vertical $3$-separation $(X',\{e\},Y')$ such that $F \subseteq X'$.
\end{lemma}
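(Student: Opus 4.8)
The plan is to prove this by induction on $|F|$, using \cref{fcl_vert_sep} to move a single fan element from $Y$ to $X$ at each stage. The only extra input beyond \cref{fcl_vert_sep} is that the conditions defining a vertical $3$-separation are symmetric in the two large parts, so $(Y,\{e\},X)$ is a vertical $3$-separation whenever $(X,\{e\},Y)$ is; this lets us relabel $X$ and $Y$ freely, so that the fan triple we want to exploit has at least two of its elements in $X$.

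For the base case $|F|=3$, write $F=\{e_1,e_2,e_3\}$; this is a triangle or a triad, and $e\notin F$. If $F\subseteq X$ or $F\subseteq Y$ we are done after possibly relabelling. Otherwise $F$ meets both parts, so after relabelling two elements of $F$ lie in $X$ and the remaining element $e_c$ lies in $Y$. If $F$ is a triangle then $e_c\in\cl(X)$ and \cref{fcl_vert_sep}(i) moves $e_c$ into $X$; if $F$ is a triad then $e_c\in\cl^*(X)$ and, since $e$ lies in no triangle, \cref{fcl_vert_sep}(ii) moves $e_c$ into $X$. Either way we obtain a vertical $3$-separation with $F$ in its first part. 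For the inductive step, suppose $|F|=k\ge 4$ with fan ordering $(e_1,\dots,e_k)$. Then $F_0=\{e_1,\dots,e_{k-1}\}$ is a fan with $|F_0|=k-1\ge 3$ and $e\notin F_0$, so the inductive hypothesis supplies a vertical $3$-separation $(X',\{e\},Y')$ with $F_0\subseteq X'$. If $e_k\in X'$ we are done; otherwise $e_k\in Y'$ (as $e_k\ne e$), and the triple $\{e_{k-2},e_{k-1},e_k\}$ is a triangle or a triad with $\{e_{k-2},e_{k-1}\}\subseteq F_0\subseteq X'$, hence $e_k\in\cl(X')$ or $e_k\in\cl^*(X')$. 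Applying \cref{fcl_vert_sep}(i) or (ii) (the latter again using that $e$ is in no triangle) moves $e_k$ into the first part, producing a vertical $3$-separation containing $F=F_0\cup\{e_k\}$, which completes the induction.

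I do not expect a real obstacle; the point to get right is the \emph{choice} of induction. The more obvious strategy — take a vertical $3$-separation maximising $|F\cap X|$ and try to push a "boundary" element of the fan across — is awkward, because a boundary element of $F\cap Y$ may share its fan triple with another element of $F\cap Y$, so no single application of \cref{fcl_vert_sep} visibly reduces $|F\cap Y|$. Shrinking $F$ from one end sidesteps this: the only element we ever need to move is the last fan element $e_k$, and its fan triple $\{e_{k-2},e_{k-1},e_k\}$ automatically has its other two elements in $X'$ by the inductive hypothesis. We will not even need $\lambda(F)=2$ from \cref{fan_rank}. The one bookkeeping point is that \cref{fcl_vert_sep}(ii) requires $e$ to avoid every triangle, which is precisely the standing hypothesis; this is exactly why it cannot be dropped for fans that begin with a triad.
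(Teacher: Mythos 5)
Your proof is correct and follows the route the paper intends: the paper omits the argument, describing \cref{fan_vert_sep} as a straightforward consequence of \cref{fcl_vert_sep}, and your induction along the fan ordering (moving the last fan element across with \cref{fcl_vert_sep}(i) or (ii), the latter using the hypothesis that $e$ lies in no triangle) is exactly that fleshed out. The symmetry relabelling in the base case and the observation that a prefix of a fan ordering is again a fan are both sound, so no gap.
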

Note that we will often apply \cref{fan_vert_sep} in the case where $|F| = 3$, that is, when $F$ is a triangle or a triad of $M$.

Naturally, applying \cref{fcl_vert_sep,fan_vert_sep} to $M^*$ give dual results concerning cyclic $3$-separations.

\subsection*{Accordions}

We now return to the proofs of \cref{accordlhfan,accordlhtri,accordlhquad}. 

\begin{proof}[Proof of \cref{accordlhfan}]
    By definition, $\{e_1,g_2,g_3\}$ is a triangle, and $\{e_1,e_2,g_3,g_5\}$ is a cocircuit.
    Note that $g_5 \in \cl^*(F \cup \{g_3\})$, so $\lambda(F \cup \{g_3,g_5\}) \leq 3$. Hence, 
	\begin{align}
        \lambda(H \cup \{g_2,g_4\}) \leq 3.
	\label{7.4_eqn1}
	\end{align}
    By orthogonality with the triangle $\{e_1,g_2,g_3\}$ and the triad $\{g_2,g_3,g_4\}$, we have that $g_2 \notin \cl^*(H)$ and $g_2 \notin \cl(H)$. Thus $\lambda(H \cup \{g_2\}) = \lambda(H)+1 = 3$. Now, $g_4 \notin \cl^*(H \cup \{g_2\})$ by orthogonality with $\{g_3,g_4,g_5\}$, and so, by (\ref{7.4_eqn1}), $g_4 \in \cl(H \cup \{g_2\})$. It follows that $r(H \cup \{g_2,g_4\}) = r(H) + 1$, so $\sqcap(\{g_2,g_4\},H) = 1$, as desired.
	
    To complete the proof, we show that $\sqcap^*(\{g_4,g_5\},H) = 1$. Orthogonality with the cocircuits $\{g_2,g_3,g_4\}$ and $\{e_2,e_1,g_3,g_5\}$ implies that 
    \[r(H \cup \{g_4,g_5\}) = r(H) + 2,\]
	and orthogonality with the circuit $\{g_3,g_4,g_5\}$ implies that
    \[r^*(H \cup \{g_4,g_5\}) \geq r^*(H) + 1.\]
	But $\lambda(H \cup \{g_4,g_5\}) = \lambda(F \cup \{g_2,g_3\}) \leq 3$, so
    \[r^*(H \cup \{g_4,g_5\}) = r^*(H) + 1,\]
    which means that $\sqcap^*(\{g_4,g_5\},H) = 1$.
\end{proof}

\begin{proof}[Proof of \cref{accordlhtri}]
    Suppose that $F$ has ordering $(e_1,e_2,\ldots,e_{|F|})$, where $\{e_1,e_2,e_3\}$ is a triangle, and $G=\{g_2,g_3\}$.  By definition, $\{e_1,g_2,g_3\}$ is a triangle, so $\lambda((F-\{e_1\}) \cup H)=2$, and $\{e_1,e_2,g_2,g_3\}$ is a cocircuit.
    By orthogonality with this cocircuit, we have that $g_2 \notin \cl(H)$, and by orthogonality with the triangle $\{e_1,g_2,g_3\}$, we have $g_2 \notin \cl^*(H)$. Since $\lambda(G \cup H) = 2$, it follows that $g_3 \in \cl(H \cup \{g_2\})$ and $g_3 \in \cl^*(H \cup \{g_2\})$. Thus $r(G \cup H) = r(H) + 1$ and $r^*(G \cup H) = r^*(H) + 1$. It follows that $\sqcap(G,H) = 1$, and $\sqcap^*(G,H) = 1$.
\end{proof}

\begin{proof}[Proof of \cref{accordlhquad}]
    As $\{e_1,a_1,a_2\}$ and $\{e_1,b_1,b_2\}$ are triangles in $M$, we have $r^*(H \cup \{a_1,b_1\}) = r^*(H) + 2$. Furthermore, as $\{e_1, e_2, a_1, b_1\}$ is a cocircuit, we have $r(H \cup \{a_1,b_1\}) \geq r(H) + 1$. But \[\lambda (H \cup \{a_1,b_1\}) = \lambda(F_1 \cup \{a_2,b_2\}) \leq 3.\]
	Thus, $r(H \cup \{a_1, b_1\}) = r(H) + 1$, so $\sqcap(\{a_1,b_1\},H) = 1$. Similarly, $\sqcap(\{a_2,b_2\},H) = 1$.
	
    Likewise, using the cocircuits $\{e_1, e_2, a_1, b_1\}$ and $\{e_1,e_2,a_2,b_2\}$, and the triangle $\{e_1,a_1,a_2\}$, we get $r(H \cup \{a_1,a_2\}) = r(H) + 2$ and $r^*(H \cup \{a_1,a_2\}) \geq r^*(H) + 1$. Since $\lambda(H \cup \{a_1,a_2\}) =\lambda(F \cup \{b_1,b_2\}) \leq 3$, we have that $r^*(H \cup \{a_1,a_2\}) = r^*(H) +  1$, so $\sqcap^*(\{a_1,a_2\},H) = 1$. Symmetrically, $\sqcap^*(\{b_1,b_2\},H) = 1$, thereby completing the proof of the lemma.
\end{proof}

\section{Connectivity lemmas} \label{lemmas}
In this section, we present some lemmas that will be useful throughout the proof of \Cref{detachable_main}.

\begin{lemma} \label{closure_deletable}
	Let $M$ be a $3$-connected matroid.
    Let $X \subseteq E(M)$ such that $\lambda(X) = 2$ and $|X| \geq 3$ and $|E(M)| \geq |X| + 4$.
    If $e \in \cl(X)-X$, then either $e$ is contained in a triad, or $M \backslash e$ is $3$-connected.
\end{lemma}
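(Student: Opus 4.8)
The plan is to argue by contradiction, assuming that $M \backslash e$ is not $3$-connected and that $e$ lies in no triad of $M$. First I would extract some easy consequences of the hypotheses. Since $e \in \cl(X)$, \cref{clconnectivity} gives $\lambda(X \cup \{e\}) \le \lambda(X) = 2$, with equality unless $e \in \cl^*(X)$; but $|X \cup \{e\}| \ge 4$ and $|E(M) - (X \cup \{e\})| \ge 3$ (here the bound $|E(M)| \ge |X|+4$ enters), so $3$-connectivity forces $\lambda(X \cup \{e\}) \ge 2$, and hence $e \notin \cl^*(X)$ and $\lambda(X \cup \{e\}) = 2$. Next, as $|X| \ge 3$ and $M$ is $3$-connected we have $r(X) \ge 2$; and if $r(X) = 2$ then $r(X \cup \{e\}) = 2$ (since $e \in \cl(X)$) while $|X \cup \{e\}| \ge 4$, so \cref{segment_deletable} would make $M \backslash e$ $3$-connected, a contradiction. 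Thus $r(X) \ge 3$. Finally, $M \backslash e$ has no coloop and no series pair: a coloop, or a series pair, of $M \backslash e$ would produce a cocircuit of $M$ of size at most two, or a triad of $M$ containing $e$, respectively, and both outcomes are impossible. So $M \backslash e$ has no coloop and no series pair, which gives $\co(M \backslash e) = M \backslash e$; since $M \backslash e$ is not $3$-connected, neither is $\co(M \backslash e)$. By Bixby's Lemma, $\si(M/e)$ is then $3$-connected, so by \cref{basic_vertsep} the matroid $M$ has no vertical $3$-separation of the form $(P,\{e\},Q)$.

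Now I would set $Y = E(M) - X - \{e\}$, so that $|Y| \ge 3$, $\lambda(Y) = \lambda(X \cup \{e\}) = 2$, and $\lambda(Y \cup \{e\}) = \lambda(X) = 2$. Applying \cref{clconnectivity} to $Y$ and $e$, exactly one of $e \in \cl(Y)$ and $e \in \cl^*(Y)$ holds. If $e \in \cl(Y)$, I claim $(X,\{e\},Y)$ is a vertical $3$-separation of $M$: indeed $\lambda(X) = \lambda(Y) = 2$, $e \in \cl(X) \cap \cl(Y)$, $r(X) \ge 3$, and $r(Y) \ge 3$ — the last because otherwise $r(Y \cup \{e\}) = r(Y) = 2$ with $|Y \cup \{e\}| \ge 4$, so \cref{segment_deletable} would once more make $M \backslash e$ $3$-connected. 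This contradicts the last sentence of the previous paragraph. If instead $e \in \cl^*(Y)$, then, since $e \in \cl(X) - X$, there is a circuit $C$ of $M$ with $e \in C \subseteq X \cup \{e\}$, and, since $e \in \cl^*(Y) - Y$, there is a cocircuit $D^*$ of $M$ with $e \in D^* \subseteq Y \cup \{e\}$; but $(X \cup \{e\}) \cap (Y \cup \{e\}) = \{e\}$, forcing $C \cap D^* = \{e\}$ and contradicting orthogonality. Either way we reach a contradiction, which completes the proof.

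The argument is short, but the step I expect to need the most care is confirming that the two sub-cases on $e$ relative to $Y$ are genuinely exhaustive — this rests on the identities $\lambda(Y) = \lambda(Y \cup \{e\}) = 2$ feeding into \cref{clconnectivity} — together with verifying the rank conditions $r(X) \ge 3$ and $r(Y) \ge 3$ needed to invoke \cref{basic_vertsep}, for which the hypotheses $|X| \ge 3$ and $|E(M)| \ge |X|+4$ are exactly what make \cref{segment_deletable} applicable. The orthogonality clash in the $e \in \cl^*(Y)$ case is the trick that makes that case collapse immediately.
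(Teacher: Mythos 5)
Your proof is correct, and it is essentially the paper's argument run in the contrapositive direction, so the comparison is worth a brief note. The paper argues directly: when $r(X)>2$ and $r(Y)>2$ it shows $\si(M/e)$ is not $3$-connected (since $\lambda_{M/e}(X)=1$ and at least two elements of each side survive simplification), so Bixby's Lemma yields that $\co(M\backslash e)$ is $3$-connected, which gives the stated dichotomy at once; the case $r(X)=2$ or $r(Y)=2$ is dispatched by \cref{segment_deletable}. You instead assume both conclusions fail, use the no-triad hypothesis up front to show $M\backslash e$ has no coloops or series pairs and hence equals its own cosimplification, apply Bixby's Lemma to get $\si(M/e)$ $3$-connected, and then contradict this through \cref{basic_vertsep}: since $\lambda(Y\cup\{e\})=\lambda(Y)=2$, exactly one of $e\in\cl(Y)$ and $e\in\cl^*(Y)$ holds, and you either produce a vertical $3$-separation $(X,\{e\},Y)$ (with \cref{segment_deletable} supplying $r(X),r(Y)\ge 3$) or an orthogonality clash between a circuit in $X\cup\{e\}$ and a cocircuit in $Y\cup\{e\}$. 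The key lemmas are the same; what your framing buys is that the roles of the triad hypothesis and of the bound $|E(M)|\ge|X|+4$ are made explicit, at the cost of the extra verifications (cosimplification, the closure/coclosure dichotomy on $Y$) that the paper's direct count of surviving elements in $\si(M/e)$ avoids. All of your individual steps check out.
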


\begin{proof}
    Let $Y = E(M) - (X \cup \{e\})$. First, assume that both $r(X) > 2$ and $r(Y) > 2$. Then $\lambda_{M / e}(X) = 1$ and $|E(M / e)| \geq |X| + 3$, so $M / e$ is not $3$-connected. Furthermore, $\lambda_{\si(M/e)}(X) = 1$, and, since $r(X) > 2$ and $r(Y) > 2$, there are at least two elements of $X$ and two elements of $Y$ remaining in $\si(M /e)$. Therefore, $\si(M / e)$ is not $3$-connected, and so, by Bixby's Lemma, $\co(M \backslash e)$ is $3$-connected. It follows that either $M \backslash e$ is $3$-connected, or $e$ is contained in a triad.
	
	Now suppose either $r(X) = 2$ or $r(Y) = 2$. Without loss of generality, assume the former. Then $|X \cup \{e\}| \geq 4$ and $r(X \cup \{e\}) = 2$, so, by \Cref{segment_deletable}, the matroid $M \backslash e$ is $3$-connected. This completes the proof.
\end{proof}

\begin{lemma} \label{deletable_cocircuit} \label{contractable_circuit}
    Let $M$ be a $3$-connected matroid with no detachable pairs. Let $X \subseteq E(M)$ such that $|X| \geq 2$ and $|E(M)| \geq |X| + 4$. Let $e \in E(M) - X$ such that $M \backslash e$ is $3$-connected, and either $\lambda(X) = 2$ or $\lambda(X \cup \{e\}) = 2$. Furthermore, let $f \in \cl(X)-(X \cup e)$ such that $f$ is not contained in a triad of $M$. Then $M$ has a $4$-element cocircuit $\{e,f,g,h\}$ such that $g \in X$ and $h \notin X$.
\end{lemma}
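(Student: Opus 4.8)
The plan is to produce the required cocircuit via Bixby's Lemma, and then use orthogonality inside $M\backslash e$ to place it correctly with respect to $X$.

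First I would record that $|E(M)|\geq 6$, and that, since $M$ has no detachable pair while $M\backslash e$ is $3$-connected, $M\backslash e\backslash f$ is not $3$-connected. Applying Bixby's Lemma to the $3$-connected matroid $M\backslash e$ and the element $f$, either $\co((M\backslash e)\backslash f)$ or $\si((M\backslash e)/f)$ is $3$-connected, and the main task is to rule out the second alternative. Writing $N=M\backslash e$, I would first observe that $\lambda_N(X)=2$: the bound $\lambda_N(X)\leq 2$ follows from the hypothesis that $\lambda_M(X)=2$ or $\lambda_M(X\cup\{e\})=2$, using \cref{delconnectivity,clconnectivity}, while $\lambda_N(X)\geq 2$ since $N$ is $3$-connected. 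Moreover $f\in\cl_N(X)-X$, and $f\notin\cl^*_N(X)$ (otherwise \cref{clconnectivity} would make $X\cup\{f\}$ a $2$-separation of $N$). Then \cref{clconnectivity}, applied to $E(N)-X-\{f\}$ and $f$, shows that exactly one of $f\in\cl_N(E(N)-X-\{f\})$ and $f\in\cl^*_N(E(N)-X-\{f\})$ holds; the latter is impossible by orthogonality, since a cocircuit through $f$ disjoint from $X$ would meet any circuit witnessing $f\in\cl_N(X)$ in the single element $f$. Combining this with the rank bounds $r(X)\geq 3$ and $r(E(N)-X-\{f\})\geq 3$ — a rank-$2$ side would, via \cref{segment_deletable} or Tutte's Triangle Lemma, force $M\backslash e\backslash f$ to be $3$-connected, a contradiction — shows that $(X,\{f\},E(N)-X-\{f\})$ is a vertical $3$-separation of $N$, contradicting \cref{basic_vertsep}. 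This vertical-separation case is the main obstacle; the rest is bookkeeping.

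Hence $\co((M\backslash e)\backslash f)$ is $3$-connected. As $M\backslash e\backslash f$ is not $3$-connected and has no coloop (a coloop would yield a cocircuit of $M$ inside $\{e,f,x\}$, hence a $2$-cocircuit of $M$ — impossible — or a triad of $M$ through $f$ — excluded), it has a series pair $\{g,h\}$. The corresponding $2$-cocircuit of $M\backslash e\backslash f$ extends to a cocircuit $C^*$ of $M$ with $C^*\subseteq\{e,f,g,h\}$, and since none of $\{g,h\}$, $\{e,g,h\}$ (a triad through $e$ would make $M\backslash e$ not $3$-connected), and $\{f,g,h\}$ (a triad through $f$) can be a cocircuit of $M$, we conclude $C^*=\{e,f,g,h\}$, a $4$-element cocircuit.

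Finally I would position $\{e,f,g,h\}$. Since $M\backslash e$ is $3$-connected, $\{f,g,h\}$ is a triad of $M\backslash e$. Picking a circuit $C$ of $M\backslash e$ with $f\in C\subseteq X\cup\{f\}$ (which exists as $f\in\cl_{M\backslash e}(X)$), orthogonality with this triad forces $C$ to contain $g$ or $h$, and that element lies in $C-\{f\}\subseteq X$; after relabelling, $g\in X$. If also $h\in X$, then the triad $\{f,g,h\}$ gives $f\in\cl^*_{M\backslash e}(X)$, so \cref{clconnectivity} together with $\lambda_{M\backslash e}(X)=2$ yields $\lambda_{M\backslash e}(X\cup\{f\})\leq 1$, making $X\cup\{f\}$ a $2$-separation of the $3$-connected matroid $M\backslash e$ — a contradiction. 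Therefore $h\notin X$, and $\{e,f,g,h\}$ is the desired cocircuit. The only substantial step is excluding the $\si((M\backslash e)/f)$ outcome of Bixby's Lemma; everything else reduces to orthogonality and the connectivity lemmas established above.
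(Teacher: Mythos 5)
Your overall strategy (Bixby's Lemma applied to $M\backslash e$, extraction of a series pair of $M\backslash e\backslash f$, then orthogonality against a circuit in $X\cup\{f\}$ to position the cocircuit) is sound, and your second and third paragraphs are essentially correct. The gap is in the justification of the rank bounds needed for your vertical $3$-separation $(X,\{f\},E(N)-X-\{f\})$ of $N=M\backslash e$. The hypotheses allow $|X|=2$ (and also $|E(M)|=|X|+4$, so the other side may have exactly two elements), and these cases really occur in the paper's applications of the lemma. In such a case the relevant side has rank $2$, so your partition is simply not a vertical $3$-separation, and your claimed escape — that ``a rank-$2$ side would, via \cref{segment_deletable} or Tutte's Triangle Lemma, force $M\backslash e\backslash f$ to be $3$-connected'' — is not what Tutte's Triangle Lemma gives. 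When $X\cup\{f\}$ (say) is a triangle of $N$, \cref{segment_deletable} does not apply, and Tutte's Triangle Lemma, combined with the fact that no element of that triangle can be deletable in $N$ (else $M$ has a detachable pair), yields only a triad of $N$ containing $f$ and an element of the triangle; it does not force $N\backslash f$ to be $3$-connected, so there is no contradiction. Indeed in this situation $\si(N/f)$ may well be $3$-connected while $\co(N\backslash f)$ is not, so the main line of your argument (the series-pair extraction) genuinely breaks down there.

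The repair is short and is exactly how the paper handles this boundary: in the triangle case, Tutte's Triangle Lemma (given that $M$ has no detachable pairs) produces a triad of $M\backslash e$ through $f$ directly, and your final paragraph's orthogonality argument with a circuit $C\subseteq X\cup\{f\}$ then pins down the triad as $\{f,g,h\}$ with $g\in X$ and $h\notin X$; since $f$ is in no triad of $M$, adding $e$ gives the required $4$-element cocircuit. So the lemma survives, but as written your proof omits a case that the hypotheses permit, and the cited justification for excluding it is incorrect. For comparison, the paper avoids the vertical-separation detour altogether: it shows directly that $f$ must lie in a triad of $M\backslash e$ (via \cref{closure_deletable} in the large case and the triangle/Tutte argument in the small cases) and then runs the same positioning step you use.
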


\begin{proof}
	We first prove that there is a triad of $M \backslash e$ containing $f$. Suppose this is not the case. Since $M \backslash e$ is $3$-connected and $|X| \geq 2$ and $|E(M)| \geq |X| + 4$, we have that $\lambda_{M \backslash e}(X) \geq 2$. Therefore, if $\lambda_{M}(X) = 2$, \Cref{delconnectivity} implies that $\lambda_{M \backslash e}(X) = 2$. If $\lambda_{M}(X) \neq 2$, then $\lambda_{M}(X \cup \{e\}) = 2$. This implies that $\lambda_M(X) = 3$ and $e \in \cl^*(X)$. Again, \Cref{delconnectivity} implies that $\lambda_{M \backslash e}(X) = 2$. 
	If $|X| \geq 3$ and $|E(M)-(X \cup \{e\})| \geq 4$, then \Cref{closure_deletable} implies that $M \backslash e \backslash f$ is $3$-connected, so $M$ has a detachable pair, a contradiction. Thus, either $|X| = 2$ or $|E(M)-(X \cup \{e,f\})| = 2$. Noting that $f \in \cl_{M \backslash e}(X)$ and $f \in \cl_{M \backslash e}(E(M) - (X \cup \{e\}))$, this implies that $f$ is contained in a triangle $T$ of $M \backslash e$. Since $f$ is not contained in a triad of $M \backslash e$, Tutte's Triangle Lemma implies that there exists $x \in T$ such that $M \backslash e \backslash x$ is $3$-connected, a contradiction. 
	
    We deduce that $f$ is contained in a triad $T^*$ of $M \backslash e$. Since $f \in \cl(X)$, orthogonality implies that there exists $g \in T^* \cap X$. Furthermore, if $T^* \subseteq X \cup \{f\}$, then $f \in \cl_{M \backslash e}(X)$ and $f \in \cl^*_{M \backslash e}(X)$. This implies $\lambda_{M \backslash e}(X \cup \{f\}) < 2$, a contradiction to the $3$-connectivity of $M \backslash e$. Thus, $T^* = \{f,g,h\}$ with $h \notin X$. Since $f$ is not contained in a triad of $M$, we have that $T^* \cup \{e\}$ is a cocircuit of $M$, as required.
\end{proof}

\begin{lemma} \label{circuit_so_deletable}
	Let $M$ be a $3$-connected matroid. Let $C = \{e,f,g,h\}$ be a $4$-element circuit of $M$ such that $\{g,h\}$ is contained in a triad of $M$. If $e$ is not contained in a triad and $M / f$ is $3$-connected, then $M \backslash e$ is $3$-connected.
\end{lemma}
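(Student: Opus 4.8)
The plan is to argue by contradiction: assume $M\backslash e$ is not $3$-connected, and extract a cyclic $3$-separation of $M$ with guts element $e$ from which the hypotheses will force an impossible $2$-separation. First I would note that, since $M$ is $3$-connected and $e$ lies in no triad, $M\backslash e$ has no coloops and no two-element cocircuits, so $\co(M\backslash e)=M\backslash e$ is not $3$-connected; by the dual of \cref{basic_vertsep}, $M$ therefore has a cyclic $3$-separation $(A,\{e\},B)$. The key move is to relocate things so that the triad through $\{g,h\}$ sits on one side of the separation: letting $T^*$ be a triad containing $\{g,h\}$, which is a fan of length three with $e\notin T^*$, the dual of \cref{fan_vert_sep} produces a cyclic $3$-separation $(A',\{e\},B')$ of $M$ with $T^*\subseteq A'$. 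Thus $\{g,h\}\subseteq A'$, and $\lambda_M(A')=\lambda_M(B')=2$, $|A'|,|B'|\ge 3$, and $e\in\cl^*_M(A')$ (these last facts hold because a cyclic $3$-separation is, by definition, a vertical $3$-separation of $M^*$).

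Next I would extract what the circuit $C=\{e,f,g,h\}$ tells us, namely $e\in\cl_M(\{f,g,h\})$ and $f\in\cl_M(\{e,g,h\})$. If $e\in\cl_M(A')$ then, since also $e\in\cl^*_M(A')$, \cref{clconnectivity} gives $\lambda_M(A'\cup\{e\})=\lambda_M(A')-1=1$, hence $\lambda_M(B')=1$, contradicting $3$-connectivity; so $e\notin\cl_M(A')$ and $\lambda_M(A'\cup\{e\})=\lambda_M(A')=2$. Because $e\in\cl_M(\{f,g,h\})$ and $\{g,h\}\subseteq A'$, we cannot have $f\in A'$ (otherwise $\{f,g,h\}\subseteq A'$ forces $e\in\cl_M(A')$), so $f\in B'$. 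Finally, $f\in\cl_M(\{e,g,h\})\subseteq\cl_M(A'\cup\{e\})$, so by \cref{delconnectivity}, $\lambda_{M/f}(A'\cup\{e\})=\lambda_M(A'\cup\{e\})-1=1$; since $|A'\cup\{e\}|\ge 4$ and $|E(M/f)\setminus(A'\cup\{e\})|=|B'|-1\ge 2$, this is a $2$-separation of $M/f$, contradicting that $M/f$ is $3$-connected.

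The step I expect to be the crux is this last one: it is the only place where the hypothesis that $M/f$ is $3$-connected enters, and making it work depends on having forced $f$ onto the far side $B'$ and on the fact that $f\in\cl_M(A'\cup\{e\})$. The remaining delicate points are bookkeeping — keeping the two closure operators $\cl$ and $\cl^*$ straight, and using the definition of cyclic $3$-separation so that the bounds $|A'|,|B'|\ge 3$ are available — and these all drop out once the cyclic $3$-separation with $T^*$ on one side is in hand. Indeed, the real idea of the proof is recognising that one should push the \emph{triad} $T^*$ (rather than the circuit $C$) to one side of a \emph{cyclic} (rather than vertical) $3$-separation, after which the circuit $C$ does the rest of the work automatically.
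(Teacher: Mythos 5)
Your proposal is correct and follows essentially the same route as the paper: extract a cyclic $3$-separation with $e$ as the guts element, use the dual of \cref{fan_vert_sep} to place the triad through $\{g,h\}$ on one side, force $f$ onto the other side via the circuit $C$, and contradict the $3$-connectivity of $M/f$ by contracting $f$ into the closure of the enlarged side. The only cosmetic difference is that the paper dispatches the case $f$ on the triad's side by orthogonality rather than by a connectivity count, and leaves the existence of the cyclic $3$-separation implicit.
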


\begin{proof}
    Suppose $e$ is not contained in a triad and $M / f$ is $3$-connected, but $M \backslash e$ is not $3$-connected. Then $M$ has a cyclic $3$-separation $(P,\{e\},Q)$. By the dual of \cref{fan_vert_sep}, we may assume that the triad containing $\{g,h\}$ is contained in $P$. If $f \in P$, then $C-\{e\} \subseteq P$. This means that $e \in \cl(P) \cap \cl^*(Q)$, a contradiction to orthogonality. Thus, $f \in Q$, and $f \in \cl(P \cup \{e\})$. By \Cref{delconnectivity}, $\lambda_{M / f}(P \cup \{e\}) = \lambda_{M / f}(Q-\{f\}) = 1$. But $|P \cup \{e\}| \geq 4$ and $|Q-\{f\}| \geq 2$, so this contradicts the $3$-connectivity of $M / f$ and completes the proof.
\end{proof}

\begin{lemma} \label{deletable_circuit_gives_cocircuit}
	Let $M$ be a $3$-connected matroid with no detachable pairs. Let $C = \{e,f,g,h\}$ be a $4$-element circuit of $M$ such that $\{g,h\}$ is contained in a triad of $M$, and $e$ is not contained in a triad of $M$, and $f$ is contained in neither a triangle nor a triad of $M$. Let $x \in E(M) - C$ such that $M \backslash x$ is $3$-connected. Then $M$ has a $4$-element cocircuit containing $x$ and either $e$ or $f$.
\end{lemma}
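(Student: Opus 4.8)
The plan is to pass to the $3$-connected matroid $M\backslash x$ and reduce everything to \cref{circuit_so_deletable}. Let $T^*=\{g,h,k\}$ be a triad of $M$ containing $\{g,h\}$. Since $e$ and $f$ lie in no triad of $M$, it follows that $k\notin C$, so $\{e,f,g,h,k\}$ consists of five distinct elements and $|E(M)|\ge 5$. I would first observe that $x\ne k$: if $x=k$, then removing $x$ from the cocircuit $T^*$ leaves $\{g,h\}$ containing a cocircuit of $M\backslash x$ of size at most two, which is impossible since $M\backslash x$ is $3$-connected with at least four elements. Hence $x\notin T^*$, and as $x\notin C$ as well, $C$ is a $4$-element circuit of $M\backslash x$ and $T^*$ is a triad of $M\backslash x$.

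Next I would dispose of the ``good'' case. Suppose $e$ lies in a triad $D$ of $M\backslash x$. Every cocircuit of $M\backslash x$ has the form $C'\setminus\{x\}$ for some cocircuit $C'$ of $M$; if $x\notin C'$ then $D=C'$ is a triad of $M$ containing $e$, contradicting the hypothesis on $e$, so $x\in C'$ and $C'=D\cup\{x\}$ is a $4$-element cocircuit of $M$ containing $x$ and $e$, exactly as required. The same argument applies with $f$ in place of $e$. So from now on I may assume that neither $e$ nor $f$ lies in a triad of $M\backslash x$, and aim for a contradiction.

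Under this assumption, $f$ lies in no triad of $M\backslash x$ and, since every triangle of $M\backslash x$ is a triangle of $M$, also in no triangle of $M\backslash x$; hence $\si((M\backslash x)/f)=(M\backslash x)/f$ and $\co((M\backslash x)\backslash f)=(M\backslash x)\backslash f$. Bixby's Lemma applied to $M\backslash x$ then gives that $(M\backslash x)/f$ or $(M\backslash x)\backslash f$ is $3$-connected. If $(M\backslash x)\backslash f$ is $3$-connected, then $M\backslash x\backslash f$ is $3$-connected, so $\{x,f\}$ is a detachable pair of $M$, a contradiction. If instead $(M\backslash x)/f$ is $3$-connected, then, applying \cref{circuit_so_deletable} to $M\backslash x$ with the $4$-element circuit $C$, the pair $\{g,h\}$ lying in the triad $T^*$, and $e$ in no triad of $M\backslash x$, we get that $M\backslash x\backslash e$ is $3$-connected, so $\{x,e\}$ is a detachable pair of $M$, again a contradiction. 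This completes the proof.

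The argument is short, and the only points needing care are bookkeeping ones: transferring the hypotheses of \cref{circuit_so_deletable} from $M$ to $M\backslash x$ (which is why one must first rule out $x=k$), and the routine fact that every cocircuit of $M\backslash x$ arises from a cocircuit of $M$ by deleting $x$ (which is what pulls a triad of $M\backslash x$ through $e$ or $f$ back to a $4$-element cocircuit of $M$ using $x$). I do not expect any genuine obstacle beyond assembling these observations in the right order; the substantive input is \cref{circuit_so_deletable}, which is already available.
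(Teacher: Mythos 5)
Your proof is correct and follows essentially the same route as the paper: assume neither $e$ nor $f$ lies in a triad of $M\backslash x$, combine Bixby's Lemma with \cref{circuit_so_deletable} applied in $M\backslash x$ to produce a detachable pair $\{x,e\}$ or $\{x,f\}$, and then lift the resulting triad of $M\backslash x$ through $e$ or $f$ to a $4$-element cocircuit of $M$ containing $x$. The only difference is cosmetic (you apply Bixby first and \cref{circuit_so_deletable} in its forward form, and you spell out the transfer of hypotheses to $M\backslash x$, which the paper leaves implicit).
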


\begin{proof}
    Suppose neither $e$ nor $f$ is contained in a triad of $M \backslash x$. Since $M \backslash x \backslash e$ is not $3$-connected, \Cref{circuit_so_deletable} implies that $M \backslash x / f$ is not $3$-connected. Since $f$ is not contained in a triangle of $M$, and thus is also not contained in a triangle of $M \backslash x$, this implies that $\si(M \backslash x / f)$ is not $3$-connected. Hence, by Bixby's Lemma, $\co(M \backslash x \backslash f)$ is $3$-connected. But $f$ is not contained in a triad of $M \backslash x$, so $M \backslash x \backslash f$ is $3$-connected, and $M$ has a detachable pair. This contradiction implies that $M \backslash x$ has a triad $T^*$ containing either $e$ or $f$. Since neither $e$ nor $f$ is contained in a triad of $M$, this means that $T^* \cup \{x\}$ is a $4$-element cocircuit of $M$, completing the proof.
\end{proof}

The following is a consequence of \cite[Proposition~8.2.7]{oxley!!!}.
\begin{lemma} \label{contract_then_delete}
	Let $M$ be a $3$-connected matroid, and let $e$ and $f$ be distinct elements of $E(M)$ such that $M / e \backslash f$ is $3$-connected. Then either $M \backslash f$ is $3$-connected, or $\{e,f\}$ is contained in a triad of $M$. 
\end{lemma}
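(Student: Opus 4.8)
The plan is to prove the contrapositive: assuming $M/e\backslash f$ is $3$-connected but $M\backslash f$ is not, I will produce a triad of $M$ containing $\{e,f\}$. We may assume $|E(M)|\ge 4$, so that $M$ has no loops, coloops, or circuits or cocircuits of size at most two, and $M\backslash f$, being a single-element deletion of a $3$-connected matroid, is connected. Since $M\backslash f$ is not $3$-connected, it therefore has a $2$-separation $(X,Y)$ with $\lambda_{M\backslash f}(X)=1$ and $|X|,|Y|\ge 2$.

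First I would record how $f$ meets this separation back in $M$. As $M$ is $3$-connected and $|X|,|Y|\ge 2$, we have $\lambda_M(X)\ge 2$; by \cref{delconnectivity}, $\lambda_{M\backslash f}(X)$ equals $\lambda_M(X)$ or $\lambda_M(X)-1$, so from $\lambda_{M\backslash f}(X)=1$ we deduce $\lambda_M(X)=2$ and $f\in\cl^*(X)$. Symmetrically, $f\in\cl^*(Y)$. Next I would use the hypothesis on $e$ to force one side to be small. Without loss of generality $e\in X$. By \cref{delconnectivity}, contracting $e$ gives $\lambda_{M\backslash f/e}(Y)\le\lambda_{M\backslash f}(Y)=\lambda_{M\backslash f}(X)=1$. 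But $\lambda_{M\backslash f/e}(Y)=\lambda_{M/e\backslash f}(Y)$, the complement of $Y$ in $M/e\backslash f$ is $X\setminus\{e\}$, and $M/e\backslash f$ is $3$-connected with $|Y|\ge 2$; hence $|X\setminus\{e\}|\le 1$, that is, $X=\{e,g\}$ for some $g\in E(M)$.

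Finally I would extract the triad. With $X=\{e,g\}$ and $M$ having no parallel or series pairs, $r(X)=r^*(X)=2$, and $f\in\cl^*(X)$ gives $r^*(\{e,f,g\})=r^*(\{e,g\})=2$; thus $\{e,f,g\}$ is dependent in $M^*$ and so contains a cocircuit of $M$. Since $M$ has no cocircuit of size at most two, $\{e,f,g\}$ is itself a triad, and it contains $\{e,f\}$, as required.

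The argument is essentially bookkeeping with \cref{delconnectivity}, so I do not expect a substantive obstacle; the one step needing care is the reduction to $|X|=2$, where one must correctly identify $X\setminus\{e\}$ (rather than $Y\setminus\{f\}$ or $Y\cup\{f\}$) as the complement of $Y$ in $M/e\backslash f$ and check the cardinality conditions before invoking its $3$-connectivity. Alternatively, the statement can be quoted directly from \cite[Proposition~8.2.7]{oxley!!!}.
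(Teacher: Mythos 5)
Your argument is correct, and it is genuinely a proof where the paper offers none: the paper simply quotes the statement as a consequence of \cite[Proposition~8.2.7]{oxley!!!}. Your route is the standard direct one — take a $2$-separation $(X,Y)$ of $M\backslash f$, use \cref{delconnectivity} to see $\lambda_M(X)=2$ and $f\in\cl^*(X)\cap\cl^*(Y)$, then contract $e\in X$ and play the resulting $\lambda\le 1$ partition $(X-\{e\},Y)$ off against the $3$-connectivity of $M/e\backslash f$ to force $|X|=2$, so that $X\cup\{f\}$ is a triad. All the individual steps check out, including the one you flag as delicate (the complement of $Y$ in $M/e\backslash f$ is indeed $X-\{e\}$, and the contradiction needs $|Y|\ge 2$ and $|X-\{e\}|\ge 2$, which is exactly what you use). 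The trade-off versus the paper is just self-containedness versus brevity: your proof only uses \cref{delconnectivity} and the simplicity/cosimplicity of $3$-connected matroids on at least four elements, whereas the paper outsources the work to Oxley.

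One small caveat: the opening ``we may assume $|E(M)|\ge 4$'' is not quite a free reduction, because for $|E(M)|=3$ the statement as written can actually fail under Oxley's conventions: take $M=U_{2,3}$, so $M/e\backslash f\cong U_{1,1}$ is ($3$-)connected, $M\backslash f\cong U_{2,2}$ is not, and $M$ has no triads. So the degeneracy lies in the statement for tiny matroids rather than in your argument; it is harmless here since every application in the paper has $|E(M)|$ large, but if you want the write-up airtight you should either add $|E(M)|\ge 4$ to the hypotheses or note explicitly that the remaining cases are the trivial ones you intend to exclude.
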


\begin{lemma} \label{circuit_create_deletable_elements}
	Let $M$ be a $3$-connected matroid with no detachable pairs. Let $C$ be a $4$-element circuit of $M$, and let $e \in C$ such that $M / e$ is $3$-connected and is neither a wheel nor a whirl. Then there is a maximal fan of $M / e$ containing $C - \{e\}$ with ends $e^-$ and $e^+$ such that
	\begin{enumerate}
		\item either $\{e^-,e\}$ is contained in a triad of $M$ or $M \backslash e^-$ is $3$-connected, and
		\item either $\{e^+,e\}$ is contained in a triad of $M$ or $M \backslash e^+$ is $3$-connected.
	\end{enumerate}
\end{lemma}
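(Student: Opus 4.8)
The setup: $C = \{e, f_1, f_2, f_3\}$ is a $4$-element circuit, $M/e$ is $3$-connected and neither a wheel nor a whirl. Since $C$ is a circuit of $M$, the set $C - \{e\} = \{f_1, f_2, f_3\}$ is a triangle of $M/e$ (contracting $e$ turns the $4$-circuit into a $3$-circuit, using that $e$ is not a loop or coloop in a $3$-connected matroid of this size). The plan is to run Tutte's Triangle Lemma on this triangle inside $M/e$, and then pull the conclusion back to $M$.

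First I would set $N = M/e$. Since $N$ is $3$-connected, not a wheel or whirl, and $\{f_1,f_2,f_3\}$ is a triangle of $N$, I want a maximal fan $F$ of $N$ containing this triangle. If some element of the triangle is deletable from $N$ (i.e. $N \backslash f_i$ is $3$-connected), then a short argument is needed, but generically I expect the triangle to meet a triad of $N$ — indeed, if the triangle meets no triad of $N$, then by the consequence of Tutte's Triangle Lemma noted in the excerpt, at least two of $f_1, f_2, f_3$ are deletable from $N = M/e$; I would then need to show this forces a detachable pair of $M$ (via $M/e \backslash f_i$ being $3$-connected together with \cref{contract_then_delete}, which gives that either $M \backslash f_i$ is $3$-connected — a detachable pair would need $M \backslash f_i \backslash f_j$, hmm) — actually the cleaner route is: when the triangle $\{f_1,f_2,f_3\}$ of $N$ meets no triad, it is contained in a fan of length exactly $3$ that is maximal, and one can take $\{e^-, e^+\}$ to be two of the $f_i$; then $M/e \backslash f_i$ is $3$-connected for two choices of $i$, and I must argue that, since $M$ has no detachable pair, each such $f_i$ with $\{e, f_i\}$ not in a triad nonetheless has $M \backslash f_i$ $3$-connected. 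This case-analysis is the routine-but-fiddly part.

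The main case: $\{f_1,f_2,f_3\}$ meets a triad of $N$, so it lies in a maximal fan $F$ of $N$ of length at least $4$, with a unique pair of ends $e^-$ and $e^+$ (by \cite{oxleywufans}). By \cref{fan_ends} applied to $N$, each end of $F$ is either in a triangle only or a triad only; by Tutte's Triangle Lemma (and its dual) as summarised right after \cref{fan_ends}, for each end $g \in \{e^-, e^+\}$ we get that either $N / g$ is $3$-connected or $N \backslash g$ is $3$-connected, depending on whether the fan ordering starts at $g$ with a triad or a triangle. Now I pull back to $M$: if $N \backslash g = M/e \backslash g$ is $3$-connected, then by \cref{contract_then_delete} either $M \backslash g$ is $3$-connected — giving conclusion (i) or (ii) — or $\{e, g\}$ is contained in a triad of $M$, which is the other alternative in the conclusion. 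If instead $N/g = M/e/g$ is $3$-connected, then since $M$ has no detachable pair this is impossible unless... wait — $M$ is only assumed $3$-connected, not detachable-pair-free, in the statement of \cref{circuit_create_deletable_elements}; rechecking, the hypothesis is "with no detachable pairs", so $M/e/g$ being $3$-connected would be a detachable pair $\{e,g\}$, a contradiction. Hence the $N/g$ branch cannot occur at either end, forcing both ends to be "triangle-type ends" of the fan $F$ in $N$, each yielding $N \backslash g$ $3$-connected, and then \cref{contract_then_delete} finishes both (i) and (ii).

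The hard part will be handling the degenerate situation where the maximal fan $F$ of $N$ containing $C - \{e\}$ has length $3$ (equivalently, $\{f_1,f_2,f_3\}$ meets no triad of $N$), and arguing the conclusions still hold — here the "ends" $e^-, e^+$ are two elements of the triangle itself, Tutte's Triangle Lemma gives deletability of two of them in $N$, and I must carefully combine this with \cref{contract_then_delete} and the no-detachable-pairs hypothesis to land in conclusions (i) and (ii); I also need to double-check the edge case where $N \backslash g$ fails $3$-connectivity because $g$ lies in a triad of $N$ that is not a triad of $M$, making sure this still routes into "$\{e^\pm, e\}$ is in a triad of $M$". I would also verify at the outset, using \cref{not_just_fan} or a direct count, that $M/e$ has enough elements for these lemmas (in particular $|E(M/e)| \ge 4$) and that $C-\{e\}$ is genuinely a triangle of $M/e$, not a smaller dependent set.
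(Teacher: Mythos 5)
Your proposal is correct and follows essentially the same route as the paper: split on whether $C-\{e\}$ lies in a $4$-element fan of $M/e$, use Tutte's Triangle Lemma (resp.\ \cref{fan_ends} and the remark following it) to get that the fan's ends are deletable in $M/e$ — with the no-detachable-pairs hypothesis ruling out the contractible-end alternative — and then pull back to $M$ via \cref{contract_then_delete}. Your worry in the length-three case is unnecessary: once $M/e\backslash e^\pm$ is $3$-connected, \cref{contract_then_delete} alone yields the stated disjunction, exactly as in the paper.
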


\begin{proof}
    In $M / e$, the set $C-\{e\}$ is a triangle. If $C - \{e\}$ is not contained in a $4$-element fan of $M / e$, then Tutte's Triangle Lemma implies that there exist distinct $e^-,e^+ \in C-\{e\}$ such that $M / e \backslash e^-$ and $M / e \backslash e^+$ are $3$-connected. By \Cref{contract_then_delete}, either $\{e^-,e\}$ is contained in a triad of $M$, or $M \backslash e^-$ is $3$-connected. Similarly, either $\{e^+,e\}$ is contained in a triad of $M$, or $M \backslash e^+$ is $3$-connected. Thus, the result holds.
	
	Now assume that $M / e$ has a maximal fan of length at least four containing $C-\{e\}$. Let $e^-$ and $e^+$ be the ends of this fan. Since $M / e$ is not a wheel or a whirl, we have that either $e^-$ is contained in a triad and not a triangle, in which case $M / e / e^-$ is $3$-connected, or $e^-$ is contained in a triangle and not a triad, in which case $M / e \backslash e^-$ is $3$-connected. Since $M$ has no detachable pairs, $M / e \backslash e^-$ is $3$-connected. Similarly, $M / e \backslash e^+$ is $3$-connected. The lemma now follows from \Cref{contract_then_delete}.
\end{proof}

The next lemma will be used frequently throughout the proof of \Cref{detachable_main}. We introduce the following terminology. A \emph{deletion certificate} in a matroid $M$ is a triple $(e,X_1,\{X_2,X_3,\ldots,X_k\})$, where $e \in E(M)$, $k \geq 2$, and $X_i \subseteq E(M) - \{e\}$ for each $i \in [k]$, such that
\begin{enumerate}
	\item $X_1 \cap X_2 \cap \cdots \cap X_k = \emptyset$,
	\item either $\lambda(X_1) = 2$, or $X_1 \cup \{e\}$ is a quad,
	\item $e \in \cl(X_i)$ for all $i \in [k]$, and
	\item $e$ is not contained in a triad.
\end{enumerate}
For a set $Z \subseteq E(M)$ and a deletion certificate $\mathcal C = (e,X,\mathcal Y)$, we say that $Z$ \emph{contains} $\mathcal C$ (or $\mathcal C$ is \emph{contained in} $Z$) if $\{e\} \cup X \cup \bigcup_{Y \in \mathcal Y} Y \subseteq Z$. Intuitively, if $M$ is a matroid with a deletion certificate, and $M \backslash x$ is $3$-connected for some element $x$ that is not in the certificate, then $M$ has a detachable pair. We make this precise in what follows.

\begin{lemma} \label{deletable_collection}
	Let $M$ be a $3$-connected matroid with no detachable pairs. Let $X \subseteq E(M)$ such that $\lambda(X) = 2$, and $|E(M)| \geq |X| + 3$. If $X$ contains a deletion certificate, then, for all $x \in E(M) - X$, the matroid $M \backslash x$ is not $3$-connected.
\end{lemma}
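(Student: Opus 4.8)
The plan is a proof by contradiction. Suppose $M \backslash x$ is $3$-connected for some $x \in E(M) - X$, and let $(e, X_1, \{X_2,\dots,X_k\})$ be the deletion certificate contained in $X$; the goal is to show that $M \backslash x \backslash e$ is $3$-connected, contradicting the hypothesis that $M$ has no detachable pairs. First I would record the easy numerical consequences: from condition (ii) we get $|X_1|\ge 2$, whence $|X|\ge 3$ and $|E(M)-\{x\}-X|\ge 2$, so since $M\backslash x$ is $3$-connected, \cref{delconnectivity} forces $\lambda_{M\backslash x}(X)=\lambda_M(X)=2$ and, crucially, $x\notin\cl^*(X)$. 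Also $X_i\subseteq X-\{e\}$ and $e\in\cl_{M\backslash x}(X_i)$ for every $i$.

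The first substantive step is to rule out $e$ lying in a triad $T^*$ of $M\backslash x$. If such a $T^*=\{e,g,h\}$ existed, then since $e$ is in no triad of $M$ (condition (iv)), $T^*\cup\{x\}$ would be a cocircuit of $M$. For each $i$, a circuit $C_i$ with $e\in C_i\subseteq X_i\cup\{e\}$ witnessing $e\in\cl(X_i)$ lies inside $X$, so orthogonality with $T^*\cup\{x\}$ together with $x\notin X$ forces $C_i$ to meet $\{g,h\}$; condition (i) then yields $g,h\in X$, so $T^*\cup\{x\}\subseteq X\cup\{x\}$ is a cocircuit through $x$, contradicting $x\notin\cl^*(X)$.

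Now suppose for contradiction that $M\backslash x\backslash e$ is not $3$-connected. Since $M\backslash x$ is $3$-connected and $e$ lies in no triad of $M\backslash x$, any series class or coloop of $M\backslash x\backslash e$ would produce a series pair of $M\backslash x$ or a triad of $M\backslash x$ through $e$ — both impossible — so $\co(M\backslash x\backslash e)$ is not $3$-connected, and Bixby's Lemma applied to $M\backslash x$ and $e$ gives that $\si((M\backslash x)/e)$ is $3$-connected. I then split on condition (ii). Suppose first that $\lambda(X_1)=2$. Then $\lambda_{M\backslash x}(X_1)=2$, and since $e\in\cl_{M\backslash x}(X_1)$ the pair $(X_1,\,E(M)-\{x,e\}-X_1)$ is a $2$-separation of $(M\backslash x)/e$ with both sides of size at least two; since $(M\backslash x)/e$ is loopless and $\si((M\backslash x)/e)$ is $3$-connected, one side of this $2$-separation must have rank at most one in $(M\backslash x)/e$. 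Translating back to $M\backslash x$, this says that either $X_1\cup\{e\}$ or $E(M)-\{x\}-X_1$ is a rank-$2$ subset of $M\backslash x$ containing $e$ and having at least three elements (in the second case $X_1$ additionally spans $M\backslash x$). When this rank-$2$ set has at least four elements, \cref{segment_deletable} shows $M\backslash x\backslash e$ is $3$-connected; when it has exactly three elements it is a triangle of $M\backslash x$ meeting no triad through $e$, so Tutte's Triangle Lemma gives an element $y$ of it with $(M\backslash x)\backslash y$ $3$-connected. Either way $M$ has a detachable pair, a contradiction. Suppose instead that $Q:=X_1\cup\{e\}$ is a quad of $M$. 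Since $Q\subseteq X$ and $x\notin\cl^*(X)$, no set $D\cup\{x\}$ with $D\subsetneq Q$ is a cocircuit of $M$, so $Q$ remains a cocircuit — hence a quad — of $M\backslash x$; as $e$ lies in no triad of $M\backslash x$, \cref{quad_detachable} gives that $M\backslash x\backslash e$ is $3$-connected, again a contradiction.

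The bookkeeping — the repeated use of \cref{delconnectivity}, orthogonality, and the size bounds — is routine. I expect the real work to be in the case $\lambda(X_1)=2$: one must isolate the correct structural dichotomy for the $2$-separating set $X_1$ of $(M\backslash x)/e$ and, in particular, treat separately the boundary cases in which the rank-$2$ set produced has exactly three elements, where \cref{segment_deletable} no longer applies and one must instead invoke Tutte's Triangle Lemma.
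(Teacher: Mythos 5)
Your proof is correct, and all the results you invoke (\cref{delconnectivity}, Bixby's Lemma, Tutte's Triangle Lemma, \cref{segment_deletable}, \cref{quad_detachable}, and the standard fact behind \cref{basic_vertsep}) precede this lemma in the paper, so there is no circularity. However, your route differs from the paper's. The paper's proof is much shorter: it applies \cref{deletable_cocircuit} (when $\lambda(X_1)=2$) or \cref{quad_detachable} (when $X_1\cup\{e\}$ is a quad) to the $3$-connected matroid $M\backslash x$ to produce a $4$-element cocircuit of $M$ containing $\{e,x\}$, and then the orthogonality argument with circuits in the sets $X_i\cup\{e\}$, together with $X_1\cap\cdots\cap X_k=\emptyset$, forces the remaining two elements of that cocircuit into $X$, giving $x\in\cl^*(X)$ and hence $\lambda_{M\backslash x}(X)=1$, the final contradiction. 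You use exactly that orthogonality-plus-empty-intersection kernel, but only at the outset, to show $e$ lies in no triad of $M\backslash x$; you then establish directly that $e$ (or a triangle-mate $y$) is deletable from $M\backslash x$, via Bixby's Lemma, the vertical-separation dichotomy, \cref{segment_deletable}, Tutte's Triangle Lemma, and \cref{quad_detachable}. In effect you re-prove, inline, the relevant case of \cref{deletable_cocircuit} rather than citing it; your Case A could be replaced wholesale by a single appeal to that lemma, exactly as the paper does. What your version buys is self-containedness (and, incidentally, an explicit justification that the quad $X_1\cup\{e\}$ survives in $M\backslash x$, which the paper asserts without comment); what it costs is length, plus one glossed step — the claim that a $2$-separation of $(M\backslash x)/e$ with both sides of rank at least two yields a vertical $3$-separation $(X_1,\{e\},B)$ of $M\backslash x$ requires checking $e\in\cl(B)$ as well as $e\in\cl(X_1)$, which does follow from $3$-connectivity of $M\backslash x$ but should be said.
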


\begin{proof}
    Let $(\{e\},X_1,\{X_2,X_3,\ldots,X_k\})$ be a deletion certificate contained in $X$. Suppose there exists $x \in E(M)-X$ such that $M \backslash x$ is $3$-connected. If $\lambda(X_1) = 2$, then, as $|E(M)| \geq |X| + 3 \geq |X_1| + 4$, it follows by \Cref{deletable_cocircuit} that $M$ has a $4$-element cocircuit containing $\{e,x\}$. Furthermore, if $X_1 \cup \{e\}$ is a quad, then, as $X_1 \cup \{e\}$ is still a quad in $M \backslash x$ and $M \backslash x \backslash e$ is not $3$-connected, \Cref{quad_detachable} implies that $M \backslash x$ has a triad containing $e$, so $M$ has a $4$-element cocircuit containing $\{e,x\}$. In either case, the matroid $M$ has a $4$-element cocircuit $C^*$ containing $\{e,x\}$. Since $e \in \cl(X_1)$, orthogonality implies that there exists $f \in C^*$ with $f \in X_1$. But $X_1 \cap X_2 \cap \cdots \cap X_k = \emptyset$, so there exists $i \in [k]$ such that $f \notin X_i$. Now, orthogonality implies that $C^*$ contains an element of $X_i$, so $C^* = \{x,e,f,g\}$ with $f \in X_1$ and $g \in X_i$. But now $x \in \cl^*(X)$, so $\lambda_{M \backslash x}(X) = 1$. Since $|E(M \backslash x)| \geq |X| + 2$, this contradicts that $M \backslash x$ is $3$-connected, which completes the proof.
\end{proof}

\begin{lemma} \label{deletable_collection1}
	Let $M$ be a $3$-connected matroid with no detachable pairs. Let $X \subseteq E(M)$ such that $\lambda(X) = 2$ and $|E(M)| \geq |X| + 3$, and suppose that $X$ contains a deletion certificate. If $y \in E(M) - X$ and $y$ is contained in a triangle, then $y$ is contained in a triad.
\end{lemma}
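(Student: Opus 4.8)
The plan is to derive a contradiction from the assumption that $y \in E(M) - X$ lies in a triangle but in no triad. First I would use Tutte's Triangle Lemma: since $M$ has no detachable pairs, Bixby's Lemma gives, for each element $e$ of the triangle $T$ containing $y$, that either $\si(M/e)$ or $\co(M \backslash e)$ is $3$-connected; when $e$ is not in a triad, $\co(M\backslash e)$ is $3$-connected forces $M \backslash e$ itself to be $3$-connected. Applying Tutte's Triangle Lemma to $T$, since $y$ is in no triad, if no element of $T$ could be deleted preserving $3$-connectivity then $T$ would meet a triad through one of its elements, and orthogonality plus the fact that $y$ is in no triad would pin down that triad; either way I expect to extract an element $x \in T$, with $x \neq y$ if necessary, such that $M \backslash x$ is $3$-connected (the precise bookkeeping of which element of $T$ gets deleted is a routine case split using Tutte's Triangle Lemma and \cref{fan_ends}-type observations, noting that $y$ itself is deletable when $y$ lies in no triad).

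Once I have an element $x$ with $M \backslash x$ $3$-connected, the key point is to arrange that $x \notin X$, so that \cref{deletable_collection} applies and yields the contradiction $M \backslash x$ is not $3$-connected. If $x = y$, we are done immediately since $y \in E(M) - X$. The remaining danger is that the only deletable element Tutte's Triangle Lemma hands us lies inside $X$. To handle this, I would observe that the triangle $T$ contains $y \notin X$, and I would try to show directly that $M \backslash y$ is $3$-connected: by Bixby's Lemma applied to $y$, either $\si(M/y)$ or $\co(M \backslash y)$ is $3$-connected, and since $y$ is in no triad, $\co(M \backslash y)$ being $3$-connected gives $M \backslash y$ $3$-connected; so it suffices to rule out the case that $\si(M/y)$ is $3$-connected while $M \backslash y$ is not. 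In that case $y$ is in a triangle $T$ and contracting $y$ (after simplification) stays $3$-connected, but since $M$ has no detachable pairs, $M / y / z$ is not $3$-connected for any $z$, which combined with the triangle structure and Tutte's Triangle Lemma applied within $T$ should again force a triad meeting $T$ — contradicting orthogonality with the triangle once we track where that triad sits relative to $y$.

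The cleanest route, which I would pursue first: since $y$ lies in a triangle $T$ and in no triad, Tutte's Triangle Lemma (in the form stated after its corollary in the excerpt) tells us that at least two elements of $T$ can be deleted while retaining $3$-connectivity. In particular, even if one of those two elements happens to lie in $X$, the other is a second distinct deletable element; and since $|T \cap X|$ cannot be all of $T$ (we could only worry if the two deletable elements both lay in $X$), I would argue that in fact the element $y$ itself is among the deletable elements, or else both deletable elements of $T$ lie in $X$ and then a short orthogonality argument using $\lambda(X) = 2$ and $y \notin X$ shows $T \not\subseteq X$ forces the third element to be deletable too. Concretely: $y \notin X$, and if both other elements of $T$ were in $X$ then $T$ would meet $X$ in exactly two elements, making $y \in \cl(X)$, whence $T$ interacts with the separation $(X, E(M)-X)$; combined with "$M$ has no detachable pairs'' and the deletion certificate in $X$, I can show $M \backslash y$ is $3$-connected. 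Then $y \in E(M) - X$ with $M \backslash y$ $3$-connected contradicts \cref{deletable_collection}, completing the proof.

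The main obstacle I anticipate is the edge case where the deletable elements supplied by Tutte's Triangle Lemma all lie inside $X$: handling it requires showing that $y$ itself (which is outside $X$) can be deleted while retaining $3$-connectivity, which is exactly where I need the hypothesis that $y$ is in a triangle — Bixby's Lemma plus "no triad through $y$'' reduces this to excluding the $\si(M/y)$-$3$-connected-but-$M\backslash y$-not case, and that exclusion is the delicate step, relying on the absence of detachable pairs to propagate the separation structure back through the triangle.
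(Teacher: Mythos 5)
Your overall strategy is the same as the paper's: since $y$ lies in no triad, the triangle $T$ through $y$ yields (via Tutte's Triangle Lemma) two elements whose deletion keeps $M$ $3$-connected; by \cref{deletable_collection} these must lie in $X$, so $y \in \cl(X)$, and one then wants to conclude that $M \backslash y$ is itself $3$-connected, contradicting \cref{deletable_collection} again. The problem is that you never actually establish this last implication, and you yourself flag it as ``the delicate step.'' Your sketch for it --- Bixby's Lemma plus an appeal to ``no detachable pairs'' to ``propagate the separation structure back through the triangle'' --- is not the right mechanism and is not developed into an argument. The correct deduction is a pure connectivity fact that does not use the absence of detachable pairs at all: it is exactly \cref{closure_deletable} (whose proof is your Bixby route made precise: if $y \in \cl(X)-X$ with $\lambda(X)=2$ and both sides of rank at least three, then $\si(M/y)$ is \emph{not} $3$-connected, so $\co(M\backslash y)$ is, and ``no triad through $y$'' upgrades this to $M \backslash y$; the low-rank case is \cref{segment_deletable}). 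Since you never invoke this lemma and supply no substitute, the pivotal claim of your proof is unproven, and that is a genuine gap.

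Two further points you gloss over. First, \cref{closure_deletable} requires $|E(M)| \ge |X|+4$, while the hypothesis only gives $|E(M)| \ge |X|+3$; the paper disposes of the boundary case $|E(M)| = |X|+3$ separately by noting that then $E(M)-X$ is itself a triangle (as $y$ is in no triad) not meeting a triad, so it already contains deletable elements outside $X$, contradicting \cref{deletable_collection}. Your proposal never addresses this case, and your ``$y \in \cl(X)$ hence deletable'' step is not available there. Second, your application of Tutte's Triangle Lemma to get two deletable elements of $T$ needs $T$ to meet no triad; an element of $T$ other than $y$ could a priori lie in a triad. The clean fix (which the paper uses as its opening move) is that such a triad would put $y$ in a $4$-element fan with $y$ as the triangle end, whence $M \backslash y$ is $3$-connected by \cref{fan_ends} and Tutte's Triangle Lemma, giving the contradiction immediately; your parenthetical that ``$y$ itself is deletable when $y$ lies in no triad'' is false in general and only becomes correct through this fan-end observation, which you leave as an unexamined ``routine case split.''
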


\begin{proof}
	Suppose there exists $y \in E(M) - X$ such that $y$ is contained in a triangle, but $y$ is not contained in a triad. If $y$ is contained in a $4$-element fan, then $y$ is an end of this fan since $y$ is not contained in a triad. This implies $M \backslash y$ is $3$-connected, contradicting \Cref{deletable_collection}. Hence, $y$ is not contained in a $4$-element fan. 
	
    Now suppose $|E(M)| = |X| + 3$. Since $\lambda(X) = 2$ and $y$ is not contained in a triad, we have that $E(M)-X$ is a triangle. Furthermore, $y$ is not contained in a $4$-element fan, so Tutte's Triangle Lemma implies that there exist distinct $e,f \in E(M)-X$ such that $M \backslash e$ and $M \backslash f$ are both $3$-connected. This contradiction to \Cref{deletable_collection} implies that $|E(M)| \geq |X| + 4$. 
	
    Let $T$ be a triangle containing $y$. By Tutte's Triangle Lemma, there exist distinct $e,f \in T$ such that $M \backslash e$ and $M \backslash f$ are both $3$-connected. Thus, $e,f \in X$. But now $y \in \cl(X)$ and $|E(M)| \geq |X| + 4$, which implies, by \Cref{closure_deletable}, that $M \backslash y$ is $3$-connected. This again contradicts \cref{deletable_collection}, which completes the proof.
\end{proof}

\begin{lemma} \label{deletable_collection_contractable_el}
	Let $M$ be a $3$-connected matroid with no detachable pairs. Let $X \subseteq E(M)$ such that $\lambda(X) = 2$, and $|E(M)| \geq |X| + 3$, and $X$ contains a deletion certificate. Suppose there exists $Y \subseteq X$ and $y \in X - Y$ such that $\lambda(Y) = 2$, and $y \in \cl^*(Y)$, and $y$ is not contained in a triangle of $M$. Furthermore, suppose, for all $y' \in Y \cup \{y\}$, that $y' \in \cl(X - \{y'\})$. Then every element of $E(M) - X$ is contained in a triad.
\end{lemma}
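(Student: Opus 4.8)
The plan is to argue by contradiction: suppose some $x \in E(M) - X$ is not contained in a triad of $M$, and derive that $M$ has a detachable pair. The first goal is to show that $M/x$ is $3$-connected. Since $x \notin X$ and $X$ contains a deletion certificate, \cref{deletable_collection} gives that $M \backslash x$ is not $3$-connected, and the contrapositive of \cref{deletable_collection1} gives that $x$ lies in no triangle of $M$. Because $x$ lies in no triad, $M \backslash x$ has no coloops and no series pairs, so $\co(M \backslash x) = M \backslash x$ is not $3$-connected; Bixby's Lemma then forces $\si(M/x)$ to be $3$-connected, and since $x$ lies in no triangle, $M/x$ has no parallel pairs or loops, so $M/x$ itself is $3$-connected. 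We may next dispose of the case $|E(M)| = |X| + 3$: then $E(M) - X$ is a $3$-element $3$-separating set of the $3$-connected matroid $M$, hence a triangle or a triad; if it is a triad then every element of $E(M) - X$ is in a triad (so the lemma holds), while if it is a triangle then $x$ lies in a triangle, contradicting the above. Thus we may assume $|E(M)| \ge |X| + 4$, so $|E(M/x)| \ge |X| + 3$.

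The second step is to transfer the hypotheses to $M/x$. Since $M/x$ is $3$-connected, $|X| \ge 3$, and $|E(M/x) - X| \ge 2$, we cannot have $\lambda_{M/x}(X) \le 1$, so by \cref{delconnectivity} we get $x \notin \cl_M(X)$, whence $\lambda_{M/x}(X) = \lambda_M(X) = 2$; as $Y \subseteq X$ this also gives $x \notin \cl_M(Y)$ and $\lambda_{M/x}(Y) = \lambda_M(Y) = 2$. One checks directly that $X$ still contains a deletion certificate in $M/x$: the intersection condition is unchanged; $\lambda_{M/x}(X_1) = \lambda_M(X_1)$ (and $X_1 \cup \{e\}$ remains a quad if it was one) because $x \notin \cl_M(X_1) \subseteq \cl_M(X)$; $e$ remains in each $\cl_{M/x}(X_i)$ since contraction only enlarges closures; and $e$ is in no triad of $M/x$, because every triad of $M/x$ is a triad of $M$ avoiding $x$. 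Likewise $y \in \cl^*_{M/x}(Y)$, and each $y' \in Y \cup \{y\}$ lies in $\cl_{M/x}(X - \{y'\})$.

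The third step is to contract an element of $Y \cup \{y\}$ out of $M/x$. If $|Y| = 2$, then since $\lambda(Y) = 2$ and $M$ is $3$-connected, $Y \cup \{y\}$ must be a triad $T^*$ of $M$ (its coclosure contains $y$, forcing a $3$-element cocircuit); $T^*$ is a triad of $M/x$, and applying Tutte's Triangle Lemma (dual form) to $T^*$ in $M/x$ either produces an element of $T^*$ contractible from $M/x$ while preserving $3$-connectivity — a detachable pair together with $x$ — or produces a triangle of $M/x$ meeting $T^*$ in two elements, a configuration handled as in the next paragraph. If instead $|Y| \ge 3$, then in $M/x$ we have $y \in \cl^*_{M/x}(Y) - Y$, $\lambda_{M/x}(Y) = 2$, $|Y| \ge 3$, and $|E(M/x)| \ge |X| + 3 \ge |Y| + 4$, so the dual of \cref{closure_deletable} applies and yields that either $y$ lies in a triangle of $M/x$ or $M/x/y$ is $3$-connected; in the latter case $\{x,y\}$ is a detachable pair of $M$, a contradiction.

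The main obstacle is to eliminate the possibility that $y$ lies in a triangle of $M/x$. Since $y$ is in no triangle of $M$, such a triangle $\{y,a,b\}$ arises from a $4$-element circuit $C = \{x,y,a,b\}$ of $M$; as $y \in \cl_M(X-\{y\})$ while $x \notin \cl_M(X) \supseteq \cl_M(X-\{y\})$, not both of $a,b$ lie in $X$, say $b \in E(M)-X$. The plan here is first to show that $M/x \backslash y$ and $M/x \backslash b$ are both non-$3$-connected (for $w \in \{y,b\}$, if $M/x\backslash w$ were $3$-connected then, by \cref{contract_then_delete}, $M\backslash w$ would be $3$-connected or $\{x,w\}$ would lie in a triad — the former impossible for $w=b$ by \cref{deletable_collection} and for $w=y$ because $y\in\cl^*(Y)$ forces $\lambda_{M\backslash y}(Y)=1$, and the latter impossible since $x$ is in no triad); then to apply Tutte's Triangle Lemma to the triangle $\{y,a,b\}$ of $M/x$ to obtain a triad $T^*_0$ of $M/x$, and hence of $M$, containing $y$; and finally to exploit orthogonality between $T^*_0$, $C$, and the $4$-element cocircuits supplied by the deletion certificate together with Tutte's Triangle Lemma applied to $T^*_0$ (using that $y$ is in no triangle of $M$) to force either an element whose deletion or contraction produces a detachable pair with $x$, or $\{x,w\}$ in a triad for some $w$, contradicting that $x$ is in no triad. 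This last chain of orthogonality arguments, together with the symmetric bookkeeping depending on which pair $T^*_0$ meets $C$ in, is where the real work lies.
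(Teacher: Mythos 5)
There is a genuine gap: the decisive contradiction is never actually derived. Everything up to the point where you obtain the $4$-element circuit $C=\{x,y,a,b\}$ of $M$ (with $b\notin X$) and, via Tutte's Triangle Lemma in $M/x$, a triad $T_0^*$ of $M$ containing $y$ and one of $a,b$, is sound (modulo choosing $y$ as the distinguished element of the triad in the $|Y|=2$ branch so that the resulting triangle really contains $y$). But the last step --- ``exploit orthogonality between $T_0^*$, $C$, and the $4$-element cocircuits supplied by the deletion certificate \dots this is where the real work lies'' --- is a plan, not an argument, and the configuration you have reached does not obviously close. Concretely: orthogonality of $T_0^*$ with a circuit through $y$ inside $X$ (using $y\in\cl(X-\{y\})$) does force the third element of $T_0^*$ into $X$, hence $b\in\cl^*(X)$ when $T_0^*\supseteq\{y,b\}$; if in addition $a\in X$ one gets $x\in\cl(X\cup\{b\})$ and a counting contradiction with the $3$-connectivity of $M/x$. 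But when $a\notin X$ (or when $T_0^*=\{y,a,c\}$ with $a\notin X$), no contradiction falls out of orthogonality alone: one only learns $\lambda(X\cup\{b\})=2$ (or, at best, $|E(M)|=|X|+4$ after further work), and the remaining cases require substantial new analysis that your sketch does not supply. The root cause is that your circuit $C$ comes from an uncontrolled triangle of $M/x$, so you have no grip on where $a$ lives.

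The paper avoids exactly this by two devices you are missing. First, it shows at the outset that $Y$ and $y$ may be re-chosen (inside the cocircuit $Y\cup\{y\}$ when $|Y|=2$) so that $M/y$ is $3$-connected. Second, instead of passing to $M/x$ and invoking the dual of \cref{closure_deletable}, it applies the dual of \cref{deletable_cocircuit} directly to $M$ with the contractible element $x$, the set $Y$, and the element $y\in\cl^*(Y)$ not in a triangle; this produces a circuit $\{x,y,z,g\}$ with $z\in Y$ and $g\notin Y$, and $3$-connectivity of $M/x$ then forces $g\notin X$. With $M/y$ $3$-connected, \cref{circuit_create_deletable_elements} applied to this circuit shows (using \cref{deletable_collection} to rule out deletable elements outside $X$) that $g$ lies in a triad; orthogonality with circuits inside $X$ through $y$ and $z$ (this is where the hypothesis $y'\in\cl(X-\{y'\})$ for all $y'\in Y\cup\{y\}$ is used, which your argument never exploits beyond $y$ itself) places $g\in\cl^*(X)$, and then $x\in\cl(X\cup\{g\})$ contradicts $|E(M)|\ge|X|+4$ via the $3$-connectivity of $M/x$. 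Without the controlled circuit (third element in $Y$) and the $3$-connectivity of $M/y$, your endgame cannot be completed by ``orthogonality and bookkeeping'' alone, so as it stands the proposal does not prove the lemma.
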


\begin{proof}
    First, we show that we may choose $Y$ and $y$ satisfying the hypothesis such that $M / y$ is $3$-connected. If $|Y| \geq 3$, then the dual of \Cref{closure_deletable} implies that $M / y$ is $3$-connected, as desired. Otherwise, $|Y| = 2$, so $Y \cup \{y\}$ is a triad. If $Y \cup \{y\}$ meets a triangle, then $Y \cup \{y\}$ is contained in a maximal fan of at least four elements. Since $y$ is not contained in a triangle, $y$ is an end of this fan, so $M / y$ is $3$-connected. Thus, we may assume $Y \cup \{y\}$ does not meet a triangle, in which case Tutte's Triangle Lemma implies that there exists $y' \in Y \cup \{y\}$ such that $M / y'$ is $3$-connected. Now, $y' \in \cl^*((Y \cup \{y\}) - \{y'\})$ and $y'$ is not contained in a triangle of $M$, so we may replace $y$ with $y'$ and $Y$ with $(Y \cup \{y\}) - \{y'\}$.
    
    Now suppose there exists $f \in E(M) - X$ such that $f$ is not contained in a triad of $M$. By \Cref{deletable_collection1}, the element $f$ is also not contained in a triangle. Now, Bixby's Lemma implies that either $M / f$ or $M \backslash f$ is $3$-connected. By \Cref{deletable_collection}, the matroid $M \backslash f$ is not $3$-connected, and so $M / f$ is $3$-connected. Since $|E(M)| \geq |X| + 3 \geq |Y| + 4$, the dual of \Cref{contractable_circuit} implies that there is a $4$-element circuit $C=\{f,y,z,g\}$ for some $z \in Y$ and $g \notin Y$. Furthermore, if $g \in X$, then $f \in \cl(X)$. But this contradicts the $3$-connectivity of $M / f$, since $|E(M)| \geq |X| + 3$, so $g \notin X$.
	
    We prove that $g$ is contained in a triad of $M$. Suppose this is not the case. The matroid $M / y$ is $3$-connected, and the set $\{f,g,z\}$ is a triangle of $M / y$. Furthermore, neither $f$ nor $g$ is contained in a triad of $M$, so neither $f$ nor $g$ is contained in a triad of $M / y$. This implies that $\{f,g,z\}$ does not meet a triad of $M / y$, so $\{f,g,z\}$ is a maximal fan.
    By \Cref{circuit_create_deletable_elements}, there exist distinct $y^-, y^+ \in \{f,g,z\}$ such that either $\{y^-,y\}$ is contained in a triad of $M$ or $M \backslash y^-$ is $3$-connected, and either $\{y^+,y\}$ is contained in a triad of $M$ or $M \backslash y^+$ is $3$-connected.
    Now either $y^- \in \{f,g\}$ or $y^+ \in \{f,g\}$. Without loss of generality, assume the former. Neither $f$ nor $g$ is contained in a triad, which implies $M \backslash y^-$ is $3$-connected. But $y^- \notin X$, contradicting \cref{deletable_collection}.
	
    So $g$ is contained in a triad $T^*$ of $M$. By orthogonality with $C$, the triad $T^*$ contains an element in $\{f,y,z\}$. Now, $f$ is not contained in a triad, so $T^*$ contains either $y$ or $z$. We have that $y \in \cl(X - \{y\})$ and $z \in \cl(X - \{z\})$, so orthogonality implies that $g \in \cl^*(X)$, and thus $\lambda(X \cup \{g\}) = 2$. Now, $f \in \cl(X \cup \{g\})$, so $\lambda_{M/f}(X \cup \{g\})=1$, but $M / f$ is $3$-connected, which implies that $|E(M / f)| \leq |X \cup \{g\}| + 1$, that is, $|E(M)| = |X| + 3$. But $\lambda(E(M) - X) = 2$, so $E(M) - X$ is either a triangle or a triad containing $f$, a contradiction. We conclude that $f$ is contained in a triad of $M$.
\end{proof}

Dually, a \emph{contraction certificate} of a matroid $M$ is a triple $(e,X_1,\{X_2,X_3,\ldots,X_k\})$, where $e \in E(M)$, $k \geq 2$, and $X_i \subseteq E(M) - \{e\}$ for each $i \in [k]$, such that
\begin{enumerate}
	\item $X_1 \cap X_2 \cap \cdots \cap X_k = \emptyset$,
	\item either $\lambda(X_1) = 2$, or $X_1 \cup \{e\}$ is a quad,
	\item $e \in \cl^*(X_i)$ for all $i \in [k]$, and
	\item $e$ is not contained in a triangle.	
\end{enumerate}
We will show, loosely speaking, that if a matroid with no detachable pairs has both a deletion and contraction certificate, then any element outside of these certificates is in a fan of length at least four.
First, we apply \cref{deletable_collection,deletable_collection1,deletable_collection_contractable_el} to $M^*$.

\begin{corollary} \label{contractable_collection}
	Let $M$ be a $3$-connected matroid with no detachable pairs. Let $X \subseteq E(M)$ such that $\lambda(X) = 2$, and $|E(M)| \geq |X| + 3$. If $X$ contains a contraction certificate, then, for all $x \in E(M)-X$, the matroid $M / x$ is not $3$-connected.
\end{corollary}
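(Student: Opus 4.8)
The plan is to deduce \cref{contractable_collection} from \cref{deletable_collection} by passing to the dual matroid $M^*$. First I would record that the hypotheses of \cref{deletable_collection} transfer: since $M$ is $3$-connected, so is $M^*$; since $\{e,f\}$ is a detachable pair of $M$ precisely when it is a detachable pair of $M^*$ (as $M\backslash e\backslash f=(M^*/e/f)^*$ and $M/e/f=(M^*\backslash e\backslash f)^*$), the matroid $M^*$ also has no detachable pairs; and $\lambda_{M^*}(X)=\lambda_M(X)=2$ with $|E(M^*)|=|E(M)|\ge|X|+3$.

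The one point requiring care is to check that a contraction certificate of $M$ is exactly a deletion certificate of $M^*$. Let $(e,X_1,\{X_2,\ldots,X_k\})$ be a contraction certificate of $M$ contained in $X$. Condition~(i) is unchanged. Condition~(ii) is self-dual: $\lambda(X_1)=2$ holds in $M$ iff it holds in $M^*$, and a quad of $M$ (a set that is both a circuit and a cocircuit) is a quad of $M^*$. For condition~(iii), I would use the identity $\cl^*_M(Y)=\cl_{M^*}(Y)$ to rewrite $e\in\cl^*_M(X_i)$ as $e\in\cl_{M^*}(X_i)$ for all $i\in[k]$. For condition~(iv), a triad of $M^*$ is a triangle of $M$, so ``$e$ is not contained in a triangle of $M$'' becomes ``$e$ is not contained in a triad of $M^*$''. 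Hence $(e,X_1,\{X_2,\ldots,X_k\})$ is a deletion certificate of $M^*$ contained in $X$.

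Applying \cref{deletable_collection} to $M^*$ then gives that $M^*\backslash x$ is not $3$-connected for every $x\in E(M)-X$. Since $M^*\backslash x=(M/x)^*$ and $3$-connectivity is preserved under duality, it follows that $M/x$ is not $3$-connected, as required. I do not expect any genuine obstacle here; the proof is a routine dualization, and the only thing to be attentive to is the self-duality of each clause in the definition of a certificate (in particular of the quad alternative in condition~(ii)) together with the fact that having no detachable pairs is a self-dual property.

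\begin{proof}
We deduce the result by applying \cref{deletable_collection} to the dual matroid $M^*$. Since $M$ is $3$-connected, so is $M^*$. Since $M\backslash e\backslash f = (M^*/e/f)^*$ and $M/e/f = (M^*\backslash e\backslash f)^*$, a pair $\{e,f\}$ is detachable in $M$ if and only if it is detachable in $M^*$; hence $M^*$ has no detachable pairs. Moreover, $\lambda_{M^*}(X) = \lambda_M(X) = 2$ and $|E(M^*)| = |E(M)| \ge |X| + 3$.

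Let $(e,X_1,\{X_2,X_3,\ldots,X_k\})$ be a contraction certificate of $M$ contained in $X$. We claim it is a deletion certificate of $M^*$ contained in $X$. Condition~(i) does not depend on the matroid. For condition~(ii), observe that $\lambda(X_1) = 2$ in $M$ if and only if $\lambda(X_1) = 2$ in $M^*$, and that $X_1 \cup \{e\}$ is a quad of $M$ if and only if it is a quad of $M^*$, since a set is both a circuit and a cocircuit of $M$ if and only if it is both a circuit and a cocircuit of $M^*$. For condition~(iii), since $\cl^*_M(Y) = \cl_{M^*}(Y)$ for all $Y \subseteq E(M)$, the hypothesis $e \in \cl^*_M(X_i)$ for all $i \in [k]$ says precisely that $e \in \cl_{M^*}(X_i)$ for all $i \in [k]$. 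For condition~(iv), a triad of $M^*$ is a triangle of $M$, so the hypothesis that $e$ is not contained in a triangle of $M$ says that $e$ is not contained in a triad of $M^*$. Hence $(e,X_1,\{X_2,X_3,\ldots,X_k\})$ is a deletion certificate of $M^*$, and it is contained in $X$.

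By \cref{deletable_collection} applied to $M^*$, for all $x \in E(M) - X$ the matroid $M^* \backslash x$ is not $3$-connected. Since $M^* \backslash x = (M / x)^*$ and a matroid is $3$-connected if and only if its dual is, it follows that $M / x$ is not $3$-connected, as required.
\end{proof}
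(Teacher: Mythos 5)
Your proof is correct and is exactly the paper's route: the corollary is obtained by applying \cref{deletable_collection} to $M^*$, noting that a contraction certificate of $M$ is a deletion certificate of $M^*$ and that $3$-connectivity and the absence of detachable pairs are self-dual. Your careful clause-by-clause check of the certificate dualization is fine, just more explicit than the paper, which states the dualization without comment.
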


\begin{corollary} \label{contractable_collection1}
	Let $M$ be a $3$-connected matroid with no detachable pairs. Let $X \subseteq E(M)$ such that $\lambda(X) = 2$ and $|E(M)| \geq |X| + 3$, and suppose that $X$ contains a contraction certificate. If $y \in E(M) - X$ and $y$ is contained in a triad, then $y$ is contained in a triangle.
\end{corollary}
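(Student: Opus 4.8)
The statement is exactly the dual of \cref{deletable_collection1}, so the plan is simply to apply that lemma to $M^*$. First I would record the routine dualizations: since a matroid $N\backslash e\backslash f$ is $3$-connected if and only if $N^*/e/f$ is $3$-connected (and symmetrically), the matroid $M^*$ is $3$-connected with no detachable pairs; moreover $\lambda_{M^*}(X) = \lambda_M(X) = 2$ and $|E(M^*)| = |E(M)| \ge |X| + 3$. Next I would check that a contraction certificate $(e, X_1, \{X_2, X_3, \ldots, X_k\})$ of $M$ contained in $X$ is precisely a deletion certificate of $M^*$ contained in $X$: condition~(i) is unchanged; condition~(ii) is self-dual, since $\lambda$ is invariant under duality and a quad of $M$ is a quad of $M^*$; condition~(iii), namely $e \in \cl^*_M(X_i)$ for all $i$, becomes $e \in \cl_{M^*}(X_i)$ for all $i$; and condition~(iv), that $e$ lies in no triangle of $M$, becomes the statement that $e$ lies in no triad of $M^*$.

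With these translations in hand, an element $y \in E(M) - X$ lies in a triad of $M$ if and only if $y$ lies in a triangle of $M^*$. Applying \cref{deletable_collection1} to $M^*$ (with the same set $X$ and the same element $y$) then yields that $y$ lies in a triad of $M^*$, which is to say that $y$ lies in a triangle of $M$, as required. There is no genuine obstacle here: the only points needing (minimal) care are that having no detachable pair is a self-dual property of a $3$-connected matroid and that the notion of a certificate was deliberately set up so as to dualize cleanly — both are immediate from the definitions. This is precisely the dualization already invoked to deduce \cref{contractable_collection} from \cref{deletable_collection}.
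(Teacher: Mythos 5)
Your proposal is correct and is exactly the paper's approach: the corollary is obtained by applying \cref{deletable_collection1} to $M^*$, after noting that $3$-connectivity, the absence of detachable pairs, $\lambda(X)=2$, and the certificate conditions all dualize as you describe. Nothing further is needed.
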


\begin{corollary} \label{contractable_collection_deletable_el}
	Let $M$ be a $3$-connected matroid with no detachable pairs. Let $X \subseteq E(M)$ such that $\lambda(X) = 2$, and $|E(M)| \geq |X| + 3$, and $X$ contains a contraction certificate. Suppose there exists $Y \subseteq X$ and $y \in X - Y$ such that $\lambda(Y) = 2$, and $y \in \cl(Y)$, and $y$ is not contained in a triad of $M$. Furthermore, suppose, for all $y' \in Y \cup \{y\}$, that $y' \in \cl^*(X - \{y'\})$. Then every element of $E(M) - X$ is contained in a triangle.
\end{corollary}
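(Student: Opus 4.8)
The plan is to deduce \Cref{contractable_collection_deletable_el} directly from \Cref{deletable_collection_contractable_el} by applying that lemma to the dual matroid $M^*$. The first task is to confirm that every hypothesis transfers under duality. Having no detachable pairs is self-dual: $M \backslash e \backslash f$ is $3$-connected if and only if $M^* / e / f$ is $3$-connected, and symmetrically, so $M$ has a detachable pair exactly when $M^*$ does. The condition $\lambda(X) = 2$ is self-dual since $\lambda_{M^*}(X) = \lambda_M(X)$, and $|E(M^*)| = |E(M)| \ge |X| + 3$.

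Next I would check that ``$X$ contains a contraction certificate of $M$'' is literally the statement ``$X$ contains a deletion certificate of $M^*$''. Comparing the two definitions clause by clause: conditions (i) and (ii) are self-dual (in particular, $X_1 \cup \{e\}$ is a quad of $M$ if and only if it is a quad of $M^*$, since being simultaneously a circuit and a cocircuit is a self-dual property); condition (iii) for a contraction certificate of $M$ reads $e \in \cl^*_M(X_i) = \cl_{M^*}(X_i)$, which is condition (iii) for a deletion certificate of $M^*$; and condition (iv) for a contraction certificate of $M$ says that $e$ lies in no triangle of $M$, equivalently in no triad of $M^*$, which is condition (iv) for a deletion certificate of $M^*$. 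The remaining hypotheses on $Y$ and $y$ dualise in the same way: $\lambda(Y) = 2$ is self-dual; $y \in \cl_M(Y)$ means $y \in \cl^*_{M^*}(Y)$; $y$ lies in no triad of $M$ means $y$ lies in no triangle of $M^*$; and, for each $y' \in Y \cup \{y\}$, the assumption $y' \in \cl^*_M(X - \{y'\})$ means $y' \in \cl_{M^*}(X - \{y'\})$.

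Thus \Cref{deletable_collection_contractable_el}, applied with $M^*$ in place of $M$, yields that every element of $E(M^*) - X = E(M) - X$ is contained in a triad of $M^*$, that is, in a triangle of $M$, which is exactly the conclusion sought. There is no genuine obstacle here; the only work is the bookkeeping needed to confirm the dual correspondence of the connectivity, closure, and certificate conditions, which is routine given that $\lambda$, the property of being $3$-connected, the property of having no detachable pairs, and the notion of a quad are all self-dual, while closure/coclosure, triangles/triads, and circuits/cocircuits interchange under passage to $M^*$.
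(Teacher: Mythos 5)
Your proposal is correct and is exactly the paper's argument: the paper obtains \cref{contractable_collection_deletable_el} by applying \cref{deletable_collection_contractable_el} to $M^*$, relying on the same dual correspondences (self-duality of $\lambda$, $3$-connectivity, quads and detachable pairs, and the interchange of closure with coclosure, triangles with triads, and deletion certificates with contraction certificates) that you verify. Your clause-by-clause check of the dualisation is the only content of the proof, and it is accurate.
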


\begin{lemma} \label{no_other_elements}
	Let $M$ be a $3$-connected matroid with no detachable pairs. Let $X \subseteq E(M)$ such that $\lambda(X) = 2$, and $|E(M)| \geq |X| + 3$, and $X$ contains a deletion certificate. Let $Y \subseteq E(M)$ such that $\lambda(Y) = 2$, and $|E(M)| \geq |Y| + 3$, and $Y$ contains a contraction certificate. Then every element of $E(M) - (X \cup Y)$ is contained in a maximal fan of length at least four with ends in $X \cup Y$.
\end{lemma}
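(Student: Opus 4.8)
The plan is to fix an arbitrary element $z \in E(M) - (X \cup Y)$ and show that $z$ lies in a maximal fan of length at least four whose two ends lie in $X \cup Y$.

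The crux is the first step: showing that $z$ is contained in both a triangle and a triad of $M$. Since $z \notin X$, \cref{deletable_collection} gives that $M \backslash z$ is not $3$-connected, and since $z \notin Y$, \cref{contractable_collection} gives that $M / z$ is not $3$-connected. By Bixby's Lemma, one of $\co(M \backslash z)$ and $\si(M/z)$ is $3$-connected. Suppose first that $\co(M\backslash z)$ is $3$-connected. As $M$ is $3$-connected, $M \backslash z$ is connected, and since it is not $3$-connected while its cosimplification is, it has a series pair; because $M$ has no $2$-element cocircuit, this series pair together with $z$ is a triad of $M$. Now $z \notin Y$, so \cref{contractable_collection1} shows that $z$ also lies in a triangle. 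The case in which $\si(M/z)$ is $3$-connected is dual, using $z \notin X$ and \cref{deletable_collection1}. So in every case there are a triangle $T$ and a triad $T^*$ with $z \in T \cap T^*$. By orthogonality $|T \cap T^*| \ne 1$, and $|T \cap T^*| \ne 3$ since a triangle of a $3$-connected matroid on more than three elements is not a triad; hence $|T \cap T^*| = 2$, and writing $T = \{a,b,z\}$, $T^* = \{b,z,c\}$ with $a,b,c,z$ distinct, the ordering $(a,b,z,c)$ exhibits $T \cup T^*$ as a fan of length four. In particular $z$ lies in some maximal fan $F = (e_1,e_2,\dots,e_k)$ with $k \ge 4$.

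It remains to show the ends of $F$ lie in $X \cup Y$. First observe that $M$ is neither a wheel nor a whirl: the element $e$ of the deletion certificate contained in $X$ is not in a triad, whereas every element of a wheel or a whirl is in a triad. Hence \cref{fan_ends} applies, and with it the consequence that (via Tutte's Triangle Lemma) either $\{e_1,e_2,e_3\}$ is a triad and $M / e_1$ is $3$-connected, or $\{e_1,e_2,e_3\}$ is a triangle and $M \backslash e_1$ is $3$-connected. In the first case $e_1 \in Y$ by \cref{contractable_collection}, and in the second $e_1 \in X$ by \cref{deletable_collection}; either way $e_1 \in X \cup Y$. Applying the same argument to the reversed fan ordering gives $e_k \in X \cup Y$, which completes the proof.

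The only delicate point is the first step, and even there the ingredients are standard: Bixby's Lemma, the routine fact that $M\backslash z$ being non-$3$-connected with $3$-connected cosimplification forces $z$ into a triad (together with the dual statement), and then the certificate corollaries \cref{deletable_collection1} and \cref{contractable_collection1} to supply the complementary triangle or triad. After that, passing to a maximal fan and reading off its ends via \cref{fan_ends} is routine.
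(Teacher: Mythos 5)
Your proof is correct and follows essentially the same route as the paper: Bixby's Lemma together with \cref{deletable_collection}, \cref{contractable_collection}, \cref{deletable_collection1}, and \cref{contractable_collection1} to show the element lies in both a triangle and a triad, which is exactly the reduction the paper makes. The only difference is that you spell out the step the paper leaves implicit (that a triangle and triad through $z$ meet in two elements, giving a maximal fan of length at least four whose ends, via \cref{fan_ends} and Tutte's Triangle Lemma, are forced into $X \cup Y$ by \cref{deletable_collection} and \cref{contractable_collection}); just note that ruling out $|T\cap T^*|=3$ should be justified by $\lambda(T)\le 1$ with $|E(M)|\ge |T|+2$ rather than "more than three elements" (cf.\ $U_{2,4}$), which holds here since $|E(M)|\ge |X|+3\ge 6$.
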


\begin{proof}
	Let $e \in E(M) - (X \cup Y)$. To show the result, it is sufficient to prove that $e$ is contained in both a triangle and a triad. If $e$ is contained in neither a triangle nor a triad, then Bixby's Lemma implies that either $M \backslash e$ or $M / e$ is $3$-connected, contradicting either \Cref{deletable_collection} or \Cref{contractable_collection}. By \Cref{deletable_collection1}, if $e$ is contained in a triangle then $e$ is also contained in a triad. Dually, by \Cref{contractable_collection1}, if $e$ is contained in a triad, then $e$ is also contained in a triangle. This completes the proof.
\end{proof}

We now consider specific structures which may arise in $3$-connected matroids with no detachable pairs.

\begin{lemma} \label{three_deletable}
    Let $M$ be a $3$-connected matroid with no detachable pairs. Let $X \subseteq E(M)$ such that $\lambda(X) = 2$, and $|X| \geq 3$, and $|E(M)| \geq |X| + 7$, and, for all $x \in X$, we have $x \in \cl^*(X - \{x\})$. Suppose there exist distinct $a,b,c \in E(M)-X$ such that $\{a,b,c\} \subseteq \cl(X)$ and none of $a$, $b$, and $c$ are contained in a triad. Then there exist distinct $d,e,f \in E(M) - (X \cup \{a,b,c\})$ such that $\{d,e,f\} \subseteq \cl^*(X \cup \{a,b,c\})$ and none of $d$, $e$, and $f$ are contained in a triangle.
\end{lemma}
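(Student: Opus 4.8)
The plan is to prove the statement directly, producing the three required elements as the ``new'' members of three $4$-element cocircuits delivered by \cref{deletable_cocircuit}; write $X' = X \cup \{a,b,c\}$. First I would record some preliminary facts. Since $a,b,c \in \cl(X) - X$ are not contained in triads, and $\lambda(X) = 2$, $|X| \ge 3$, and $|E(M)| \ge |X| + 4$, \cref{closure_deletable} gives that $M \backslash a$, $M \backslash b$, and $M \backslash c$ are all $3$-connected. (If $r(X) = 2$, then $X'$ is a rank-$2$ set with at least six elements, and applying \cref{segment_deletable} to $M$ and then to $M \backslash e$ for some $e \in X'$ produces a detachable pair; so we may assume $r(X) \ge 3$, although this is not strictly needed.) Applying \cref{clconnectivity} as we adjoin $a$, then $b$, then $c$ to $X$, and using that $M$ is $3$-connected while each set arising and its complement has at least two elements, we obtain $\lambda(X \cup \{a\}) = \lambda(X \cup \{a,b\}) = \lambda(X') = 2$. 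Running the same argument over the different orderings of $a,b,c$ shows, in addition, that for each $z \in \{a,b,c\}$ we have $z \notin \cl^*(X' - \{z\})$; equivalently, $M$ has no cocircuit through $a$ (respectively $b$, $c$) contained in $X \cup \{b,c\}$ (respectively $X \cup \{a,c\}$, $X \cup \{a,b\}$).

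Next I would apply \cref{deletable_cocircuit} three times, to the set $X$, with the pairs $(e,f) = (a,b)$, $(a,c)$, and $(b,c)$; the hypotheses hold by the previous paragraph. This produces $4$-element cocircuits $C_1^* = \{a,b,g_1,h_1\}$, $C_2^* = \{a,c,g_2,h_2\}$, and $C_3^* = \{b,c,g_3,h_3\}$ of $M$ with each $g_i \in X$ and each $h_i \notin X$. As $h_i \in \cl^*(C_i^* - \{h_i\})$ and $C_i^* - \{h_i\} \subseteq X'$, we get $h_i \in \cl^*(X')$; and $h_i \notin X'$, because $h_i$ differs from the other three members of $C_i^*$ while, for instance, $h_1 = c$ would force $c \in \cl^*(\{a,b,g_1\}) \subseteq \cl^*(X' - \{c\})$, contradicting the preceding paragraph. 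To see that $h_1, h_2, h_3$ are distinct, suppose $h_1 = h_2$; then $C_1^*$ and $C_2^*$ are distinct, so $C_1^* \triangle C_2^*$ is a nonempty union of cocircuits contained in $\{b,c,g_1,g_2\}$, and since $g_1 = g_2$ would make $C_1^* \triangle C_2^* = \{b,c\}$ (too small to contain a cocircuit) we have $g_1 \ne g_2$, whence $C_1^* \triangle C_2^*$ contains a cocircuit $D^* \subseteq \{b,c,g_1,g_2\} \subseteq X \cup \{b,c\}$; as no cocircuit through $b$ or $c$ lies in $X \cup \{b,c\}$, this gives $D^* \subseteq \{g_1,g_2\}$, a contradiction, and the same argument treats the pairs $\{h_1,h_3\}$ and $\{h_2,h_3\}$. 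Thus $h_1, h_2, h_3 \in \cl^*(X') - X'$ are distinct, and it only remains to show that none of them lies in a triangle; then $d = h_1$, $e = h_2$, $f = h_3$ complete the proof.

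The hard part is this last step. Suppose $h := h_1$ lies in a triangle $T$; I would split on whether $h \in \cl(X)$. If $h \in \cl(X)$, then by \cref{closure_deletable} either $M \backslash h$ is $3$-connected or $h$ lies in a triad; in the latter case $h$ lies in both a triangle and a triad, hence in a maximal fan of length at least four, whose ends are (by \cref{fan_ends} and Tutte's Triangle Lemma) deletable or contractable — a deletable end gives a detachable pair of $M$, while a contractable end is to be ruled out using \cref{fan_ends} together with the fact that $a$ and $b$ are in no triad of $M$, applied via orthogonality of $C_1^*$ with the triangles of that fan; so $M \backslash h$ is $3$-connected, and re-applying \cref{deletable_cocircuit} with $h$ in the role of $a$ and comparing the resulting cocircuit with $C_1^*$ yields a contradiction exactly as in the distinctness argument. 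If $h \notin \cl(X)$, then orthogonality of $T$ with $C_1^*$ forces $T$ to meet $\{a,b,g_1\}$, and since not both non-$h$ elements of $T$ can lie in $\cl(X) \supseteq X'$, we may write $T = \{h,x,y\}$ with $x \in \{a,b,g_1\}$ and $y \notin X'$; choosing $b' \in \{a,b\} \setminus \{x\}$, we have $b' \notin T$, so $T$ is a triangle of the $3$-connected matroid $M \backslash b'$, in which $C_1^* - \{b'\}$ is a triad through $h$, whence $h$ lies in a maximal fan of $M \backslash b'$ of length at least four; unless $M \backslash b'$ is a wheel or whirl, \cref{fan_ends} and Tutte's Triangle Lemma make one end of this fan deletable in $M \backslash b'$ — a detachable pair of $M$ — or contractable, the latter possibility being eliminated using \cref{unique_triangle}, the position of the triad $C_1^* - \{b'\}$ within the fan, and the fact that the remaining element of $\{a,b\}$ is in no triad of $M$. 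Since every branch ends in a contradiction, none of $h_1,h_2,h_3$ lies in a triangle, and the proof is complete. I expect the genuine obstacle to be the verification in this last paragraph that the ``contractable end'' subcases cannot occur, which is where the bulk of the orthogonality bookkeeping lives.
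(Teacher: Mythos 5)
Your set-up follows the paper's proof almost exactly: three applications of \cref{closure_deletable} and \cref{deletable_cocircuit} to the pairs $\{a,b\}$, $\{a,c\}$, $\{b,c\}$, then distinctness of the three new elements via cocircuit elimination, and membership in $\cl^*(X\cup\{a,b,c\})$. One caveat in that portion: you justify distinctness (and $g_1\neq g_2$) by asserting that $C_1^*\triangle C_2^*$ is a union of cocircuits, which is a binary-matroid fact and not available here. This is repairable inside your own framework: eliminate the common element $h_1=h_2$ to get a cocircuit $D^*\subseteq\{a,b,c,g_1,g_2\}$, and then each of $a\in D^*$, $b\in D^*$, $c\in D^*$ contradicts your facts $z\notin\cl^*(X'-\{z\})$, leaving $D^*\subseteq X$ of size at most two, which is impossible. (This is exactly how the paper argues.)

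The genuine gap is the last step, showing $h_1,h_2,h_3$ lie in no triangle; what you give there is a plan, not a proof, and its decisive subcases are not closed. First, the subcase $h\in\cl(X)$ is actually vacuous — since $h\in\cl^*(X')$ and $\lambda(X')=2$ with $|E(M)|\ge|X'\cup\{h\}|+3$, having also $h\in\cl(X')$ would give $\lambda(X'\cup\{h\})\le 1$ — but your treatment of it is flawed: re-applying \cref{deletable_cocircuit} and ``comparing with $C_1^*$'' does not reproduce the distinctness contradiction, because the new cocircuit introduces an element outside $X'$, and your claim that ``a deletable end gives a detachable pair of $M$'' is false when only one removal has been performed. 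Second, in the main subcase you pass to the $3$-connected matroid $M\backslash b'$ and a maximal fan through $h$; a triangle-type end there does give a detachable pair, but the cases in which every end is triad-type (so only contractions are available, which do not pair with the deletion of $b'$), or in which $M\backslash b'$ is a wheel or whirl, are precisely the hard cases, and you only assert they can be ``eliminated'' via \cref{unique_triangle} and orthogonality bookkeeping — no argument is given, and it is not clear one exists along these lines. The missing idea is that the paper never needs fans at all: it intersects the putative triangle $T\ni d$ with all three cocircuits. If $T$ meets $X$ in the element $x$, then $x\in\cl^*(X-\{x\})$ forces a second element of $X$ into $T$, so $d\in\cl(X\cup\{a,b\})\cap\cl^*(X\cup\{a,b\})$, a contradiction; if $a\in T$ (resp.\ $b\in T$), orthogonality with $\{a,c,e,y\}$ (resp.\ $\{b,c,f,z\}$) places $d$ in both the closure and coclosure of a set of size $|X|+4$, contradicting $|E(M)|\ge|X|+7$. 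Your final step uses only $C_1^*$, which is why it gets stuck; incorporating the other two cocircuits as above closes the argument in a few lines.
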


\begin{proof}
    By \Cref{closure_deletable}, each of $M \backslash a$, $M \backslash b$, and $M \backslash c$ is $3$-connected. Hence, by \Cref{deletable_cocircuit}, there is a $4$-element cocircuit $C_1^* = \{a,b,d,x\}$ of $M$, where $x \in X$ and $d \notin X \cup \{a,b\}$. Moreover, $d \neq c$, for otherwise $\lambda(X \cup \{a,b,c\}) \le 1$. Similarly, $M$ has $4$-element cocircuits $\{a,c,e,y\}$ and $\{b,c,f,z\}$ with $y,z \in X$ and $e,f \notin X \cup \{a,b,c\}$. Note that these cocircuits are all distinct.
	
    If $d = e$, then cocircuit elimination implies that $M$ has a cocircuit $C^*$ contained in $\{a,b,c,x,y\}$. The cocircuit $C^*$ contains at least one of $a$, $b$, and $c$. If $a \in C^*$, then $a \in \cl^*(X \cup \{b,c\})$, so $\lambda(X \cup \{a,b,c\}) \le 1$, a contradiction. Similar contradictions are obtained if $b \in C^*$ or $c \in C^*$. Thus, $d \neq e$. By symmetry, $d$, $e$, and $f$ are distinct. Furthermore, $\{d,e,f\} \subseteq \cl^*(X \cup \{a,b,c\})$.
	
	To complete the proof, we show that none of $d$, $e$, and $f$ are contained in a triangle. Suppose $M$ has a triangle $T$ containing $d$. By orthogonality, $T$ contains an element of $\{a,b,x\}$. If $x \in T$, then, since $x \in \cl^*(X - \{x\})$, orthogonality implies that $T$ contains a second element of $X$. But now $d \in \cl(X \cup \{a,b\})$ and $d \in \cl^*(X \cup \{a,b\})$, a contradiction. If $a \in T$, then orthogonality with $\{a,c,e,y\}$ implies that $T$ contains one of $\{c,e,y\}$, so $d \in \cl(X \cup \{a,b,c,e\})$ and $d \in \cl^*(X \cup \{a,b,c,e\})$. This is a contradiction since $|E(M)| \geq |X \cup \{a,b,c,d,e\}| + 2$. Finally, if $b \in T$, then $T$ contains one of $\{c,f,z\}$, so $d \in \cl(X \cup \{a,b,c,f\})$ and $d \in \cl^*(X \cup \{a,b,c,f\})$. This contradiction shows that $d$ is not contained in a triangle, and, similarly, $e$ and $f$ are not contained in triangles.
\end{proof}

The following strengthens \cref{segment_deletable} for matroids with at least 11 elements.

\begin{lemma} \label{no_segment}
    Let $M$ be a $3$-connected matroid such that $|E(M)| \geq 11$. Suppose there exist distinct $a,b,c,d \in E(M)$ such that $r(\{a,b,c,d\}) = 2$. Then $M$ has a detachable pair.
\end{lemma}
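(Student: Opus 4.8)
The plan is to reduce to the case that $L:=\{a,b,c,d\}$ is a closed rank-$2$ flat of $M$ with no element lying in a triad, and then, assuming $M$ has no detachable pair, to derive a contradiction from the fan that $M\backslash e$ must contain for each $e\in L$. Since $M$ is $3$-connected with at least four elements it is simple, so $|L|=4$ and $r(L)=2$; and since $|L|=4$ while $|E(M)-L|\ge 7$, we get $\lambda(L)=r(L)+r(E(M)-L)-r(M)=2$, whence $r^*(L)=|L|=4$ and $L$ is coindependent. By \cref{segment_deletable}, $M\backslash e$ is $3$-connected for every $e\in L$. If $\cl(L)\ne L$, choose $g\in\cl(L)-L$: then $\{b,c,d,g\}$ has rank $2$ and size four in the $3$-connected matroid $M\backslash a$, so $M\backslash a\backslash b$ is $3$-connected by a second application of \cref{segment_deletable}, and $\{a,b\}$ is detachable. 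So I may assume $\cl(L)=L$. A short orthogonality argument then shows that no $e\in L$ lies in a triad of $M$: such a triad is orthogonal to each of the three triangles of $M$ that are $3$-subsets of $L$ containing $e$, which forces it to lie in $L$ — impossible, since a triad inside a rank-$2$ flat has connectivity at most $1$.

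Now suppose, for a contradiction, that $M$ has no detachable pair, and fix $e\in L$. Since $M\backslash e\backslash x$ being $3$-connected would make $\{e,x\}$ detachable, $M\backslash e$ has no deletable element; in particular the triangle $L-\{e\}$ of $M\backslash e$ has no deletable element, so by Tutte's Triangle Lemma it meets a triad of $M\backslash e$. As $L-\{e\}$ is not itself a triad of $M\backslash e$ (that would give $\lambda_{M\backslash e}(L-\{e\})\le 1$), this triad meets $L-\{e\}$ in exactly two elements, so $L-\{e\}$ lies in a maximal fan $F$ of $M\backslash e$ of length at least four. The case that $M\backslash e$ is a wheel or a whirl is treated separately; otherwise, by \cref{fan_ends} and the remark after it, a triangle end-triple of $F$ would yield a deletable element of $M\backslash e$, so both end-triples of $F$ are triads. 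Hence $|F|$ is odd and at least five, $L-\{e\}$ is an interior triple of $F$, and (again by \cref{fan_ends}) neither end of $F$ lies in a triangle of $M\backslash e$. By \cref{not_just_fan} we have $|E(M\backslash e)|\ge|F|+2$, so \cref{fan_rank}, \cref{delconnectivity} and \cref{clconnectivity} together give $\lambda_M(F\cup\{e\})=2$.

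For the endgame, write $F=(f_1,\dots,f_k)$ with $L-\{e\}=\{f_i,f_{i+1},f_{i+2}\}$ and $2\le i\le k-3$. The triples $\{f_{i-1},f_i,f_{i+1}\}$ and $\{f_{i+1},f_{i+2},f_{i+3}\}$ are triads of $M\backslash e$ meeting $L$ in elements lying in no triad of $M$, so $A^*:=\{e,f_{i-1},f_i,f_{i+1}\}$ and $B^*:=\{e,f_{i+1},f_{i+2},f_{i+3}\}$ are cocircuits of $M$. Eliminating $f_{i+1}$ from $A^*$ and $B^*$ gives a cocircuit $C^*$ with $f_{i-1}\in C^*\subseteq\{e,f_{i-1},f_i,f_{i+2},f_{i+3}\}$. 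Orthogonality of $C^*$ with the triangle $L-\{e\}$ and with the three $3$-subsets of $L$ containing $e$, together with cosimplicity of $M$, the absence of triads through elements of $L$, and the fact that any triangle of $M$ through $f_{i-1}$ or $f_{i+3}$ must contain $e$ (since these lie in no triangle of $M\backslash e$), pins $C^*$ to a short list; in all cases save one, either a $2$-element cocircuit or a triad through an element of $L$ appears, or a further cocircuit elimination of $C^*$ against $A^*$ (or $B^*$) produces a nonempty cocircuit contained in $L$, contradicting the coindependence of $L$. The exceptional case is $C^*=\{e,f_{i-1},f_i,f_{i+2},f_{i+3}\}$ (and likewise for the symmetric elimination keeping $f_{i+3}$); here one falls back on the fact that $F\cup\{e\}$ is $3$-separating with $\lambda=2$ and contains the deletion certificate $(e,\{f_i,f_{i+1}\},\{\{f_i,f_{i+2}\},\{f_{i+1},f_{i+2}\}\})$, and completes the argument with \cref{deletable_collection}, the dual (contraction) versions of the lemmas in \cref{lemmas}, and \cref{no_other_elements} — the hypothesis $|E(M)|\ge 11$ being exactly what is needed to meet their size requirements — exploiting also the symmetry among the four elements of $L$, so that $M\backslash f$ carries the same structure for every $f\in L$.

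I expect the two places where genuine work is needed to be: (i) the wheel/whirl subcase, in which $M\backslash e$ is a wheel or whirl and $e$ is a fourth point of $M$ placed on one of its triangles — this must be ruled out (or shown to yield a detachable pair) by a direct analysis, using that no element of $L$ lies in a triad of $M$ and the cyclic fan structure of a wheel or whirl; and (ii) the exceptional $5$-element-cocircuit case, where the elementary orthogonality and cocircuit-elimination arguments run out and one must invoke the heavier connectivity machinery, possibly applying the whole analysis simultaneously to $M\backslash f$ for several $f\in L$. Everything else should be routine bookkeeping with orthogonality and cocircuit elimination.
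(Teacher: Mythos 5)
Your opening reductions are fine (closure of the segment, no triad meeting $\{a,b,c,d\}$, deletability of each element of the segment), but from that point on the proposal does not constitute a proof: it has two admitted holes, and in both places the machinery you gesture at does not obviously apply. First, the case where $M \backslash e$ is a wheel or a whirl is only flagged, never argued. It is a genuine case (e.g.\ $M \backslash e$ a large wheel with $e$ placed on the line of one of its triangles, and with $e$ absorbed into the two triads meeting that triangle so that no element of $L$ lies in a triad of $M$), and it is not routine: every single-element deletion of a wheel or whirl fails to be $3$-connected, so producing a detachable pair there needs a real argument, not a remark. Second, in the endgame the strong cocircuit elimination of $A^*$ and $B^*$ can simply return the $5$-element set $\{e,f_{i-1},f_i,f_{i+2},f_{i+3}\}$, and for that case you offer only a list of lemmas (\cref{deletable_collection}, \cref{no_other_elements}, their duals). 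Listing them is not a proof, and their hypotheses are not verified: they require $|E(M)| \geq |X|+3$ for the $3$-separating set $X$ carrying the certificate, and your only candidate is $X = F \cup \{e\}$, for which \cref{not_just_fan} gives merely $|E(M)| \geq |F|+3 = |X|+2$. Since $F$ is a fan of $M \backslash e$ with no absolute bound on its length, the condition $|E(M)| \geq 11$ does not rescue this; the fan may occupy all but two elements of $M \backslash e$, and then none of the cited lemmas apply. So the two places you yourself identify as ``where genuine work is needed'' are exactly where the proof is missing, and the fallback you sketch for the second one would fail as stated.

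For contrast, the paper's proof never passes through fans or through $M \backslash e$ at all. After the same two observations you make (no triad meets the segment, hence $(a,\{b,c\},\{\{b,d\},\{c,d\}\})$ is a deletion certificate), it uses \cref{segment_deletable} together with \cref{deletable_cocircuit} three times to produce $4$-element cocircuits $\{a,b,c,e\}$, $\{a,b,d,f\}$, $\{a,c,d,g\}$ with $e,f,g$ distinct and outside the segment, checks by orthogonality that $e,f,g$ lie in no triangle, applies the dual of \cref{three_deletable} (this is where $|E(M)| \geq 11 = |X|+7$ enters) to obtain elements of $\cl(\{a,b,c,d,e,f,g\})$ lying in no triad, and then \cref{closure_deletable} yields an element $h$ outside the segment with $M \backslash h$ $3$-connected, contradicting \cref{deletable_collection}. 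If you want to salvage your route, you would need to supply complete arguments for the wheel/whirl case and for the $5$-element cocircuit case, including a replacement for the size hypotheses that your fallback lemmas demand; the paper's construction of the three cocircuits avoids both issues.
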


\begin{proof}
    Suppose that $M$ has no detachable pairs. If $M$ has a triad $T^*$ that meets $\{a,b,c,d\}$, then orthogonality implies that $T^* \subseteq \{a,b,c,d\}$. But now $\lambda(\{a,b,c,d\}) \le 1$, a contradiction. Thus, $\{a,b,c,d\}$ does not meet a triad. It follows that $$(a,\{b,c\},\{\{b,d\},\{c,d\}\})$$ is a deletion certificate. We shall find an element $z \notin \{a,b,c,d\}$ such that $M \backslash z$ is $3$-connected. Since $\lambda(\{a,b,c,d\}) = 2$ and $|E(M)| \geq 7$, this will contradict \Cref{deletable_collection} and complete the proof.
	
    Let $x$ and $y$ be distinct elements in $\{a,b,c,d\}$. By \Cref{segment_deletable}, we have that $M \backslash x$ is $3$-connected. Thus, as $y \in \cl(\{a,b,c,d\} - \{x,y\})$, it follows by \Cref{contractable_circuit} that there is a $4$-element cocircuit of $M$ containing $\{x,y\}$ and another element of $\{a,b,c,d\}$, and an element that is not in $\{a,b,c,d\}$.
	
    In particular, $M$ has a $4$-element cocircuit $C_1^*$ containing $a$ and $b$. Without loss of generality, let $C_1^* = \{a,b,c,e\}$ with $e \notin \{a,b,c,d\}$. Similarly, $M$ has a $4$-element cocircuit containing $a$ and $d$, which we may assume is $C_2^* = \{a,b,d,f\}$ with $f \notin \{a,b,c,d\}$. If $e = f$, then cocircuit elimination implies $M$ has a cocircuit contained in $\{a,b,c,d\}$, in which case $\lambda(\{a,b,c,d\}) = 1$, a contradiction. So $e \neq f$. Similarly, $M$ has a $4$-element cocircuit containing $c$ and $d$, which we may take to be $C_3^* = \{a,c,d,g\}$ with $g \notin \{a,b,c,d,e,f\}$.
	
	We next apply the dual of \Cref{three_deletable} with $X = \{a,b,c,d\}$. Certainly, $\{e,f,g\} \subseteq \cl^*(\{a,b,c,d\})$ and, for all $x \in \{a,b,c,d\}$, we have that $x \in \cl(\{a,b,c,d\}-\{x\})$. Suppose $e$ is contained in a triangle $T$ of $M$. Then, by orthogonality with $C_1^*$, the triangle $T$ contains one of $\{a,b,c\}$. In turn, orthogonality with either $C_2^*$ or $C_3^*$ implies that $T$ contains a second element of $\{a,b,c,d,f,g\}$. But now $e \in \cl(\{a,b,c,d,f,g\})$ and $e \in \cl^*(\{a,b,c,d,f,g\})$, a contradiction. Hence, the element $e$, and symmetrically $f$ and $g$, is not contained in a triangle. Thus, \Cref{three_deletable} implies that $M$ has elements $h,i,j$ such that $\{h,i,j\} \subseteq \cl(\{a,b,c,d,e,f,g\})$ and none of $h$, $i$, and $j$ are contained in a triad. In particular, by \Cref{closure_deletable}, the matroid $M \backslash h$ is $3$-connected, a contradiction which completes the proof.
\end{proof}

\begin{lemma} \label{odd_fan}
	Let $M$ be a $3$-connected matroid with no detachable pairs. Let $F = (e_1,e_2,\ldots,e_{|F|})$ be a maximal fan with odd length at least five such that $\{e_1,e_2,e_3\}$ is a triangle. Then $|F| = 5$, and there exists $z \in E(M) - F$ such that $\{e_1,e_3,e_5,z\}$ is a cocircuit.
\end{lemma}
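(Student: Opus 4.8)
The plan is to delete the end element $e_1$, produce a $4$-element cocircuit of $M$ through $e_1$ by invoking Tutte's Triangle Lemma inside the $3$-connected matroid $M\backslash e_1$, and then use orthogonality to pin down both $|F|=5$ and the shape of that cocircuit.

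First I would clear away the possibility that $M$ is a wheel or whirl: in that case $E(M)$ is itself a fan, so maximality of $F$ gives $F=E(M)$ and hence $|F|=|E(M)|$ is even, contrary to hypothesis. Thus $M$ is neither a wheel nor a whirl, so $|E(M)|\ge |F|+2$ by \cref{not_just_fan}, and \cref{fan_ends,fan_rank} apply to $F$. Since $|F|$ is odd and $\{e_1,e_2,e_3\}$ is a triangle, the opposite end-triple $\{e_{|F|-2},e_{|F|-1},e_{|F|}\}$ is also a triangle; hence \cref{fan_ends} gives that neither $e_1$ nor $e_{|F|}$ lies in a triad of $M$, and the remark following \cref{fan_ends} (via Tutte's Triangle Lemma) gives that $M\backslash e_1$ is $3$-connected. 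I would also record that $e_1\notin\cl^*(F-\{e_1\})$: by \cref{fan_rank} both $\lambda(F)$ and $\lambda(F-\{e_1\})$ equal $2$ (the latter since $F-\{e_1\}$ is a fan of length at least four), while $e_1\in\cl(F-\{e_1\})$ because $\{e_1,e_2,e_3\}$ is a triangle, so \cref{clconnectivity} forces $e_1\notin\cl^*(F-\{e_1\})$. Consequently no cocircuit of $M$ that is contained in $F$ can contain $e_1$.

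Now I would work in $M\backslash e_1$. The triangle $T:=\{e_{|F|-2},e_{|F|-1},e_{|F|}\}$ is a triangle of $M\backslash e_1$ since it avoids $e_1$ (as $|F|\ge 5$), and, because $M$ has no detachable pairs, neither $M\backslash e_1\backslash e_{|F|}$ nor $M\backslash e_1\backslash e_{|F|-2}$ is $3$-connected. So Tutte's Triangle Lemma, applied to $T$ in $M\backslash e_1$, yields a triad $T^*$ of $M\backslash e_1$ with $e_{|F|}\in T^*$. Every triad of $M\backslash e_1$ has the form $D^*\setminus\{e_1\}$ for some cocircuit $D^*$ of $M$, so $D^*\in\{T^*,\,T^*\cup\{e_1\}\}$; the case $D^*=T^*$ is impossible, as $e_{|F|}$ lies in no triad of $M$, so $C^*:=T^*\cup\{e_1\}$ is a $4$-element cocircuit of $M$. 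Write $C^*=\{e_1,e_{|F|},p,q\}$. Orthogonality of $C^*$ with the triangle $\{e_1,e_2,e_3\}$ (which meets $C^*$ in $e_1$) forces $\{p,q\}$ to meet $\{e_2,e_3\}$, and orthogonality with $T$ (which meets $C^*$ in $e_{|F|}$) forces $\{p,q\}$ to meet $\{e_{|F|-2},e_{|F|-1}\}$. Since $e_1\in C^*$, the previous paragraph gives $C^*\not\subseteq F$, and since $e_1,e_{|F|}\in F$ this means $\{p,q\}\not\subseteq F$; together with the fact that $\{p,q\}$ meets $\{e_2,e_3\}\subseteq F$, exactly one of $p,q$ — say $p$ — lies in $F$, with $p\in\{e_2,e_3\}$, while $q\notin F$. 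Then, as $q\notin F$, the second orthogonality relation puts $p\in\{e_{|F|-2},e_{|F|-1}\}$. Hence $p\in\{e_2,e_3\}\cap\{e_{|F|-2},e_{|F|-1}\}$, which for odd $|F|\ge 5$ is nonempty only when $|F|=5$, in which case it is $\{e_3\}$. Therefore $|F|=5$, $p=e_3$, and $C^*=\{e_1,e_3,e_5,q\}$ with $q\in E(M)-F$, giving the desired cocircuit with $z=q$.

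I expect the only points needing care are the small reductions that license Tutte's Triangle Lemma (that the two relevant two-element deletions of $M$ fail to be $3$-connected, which is immediate from the no-detachable-pairs hypothesis) and the computation $e_1\notin\cl^*(F-\{e_1\})$. A more hands-on route — tracking how the even-length fan $F-\{e_1\}$ extends to a maximal fan of $M\backslash e_1$ — also seems workable, but it runs into $M(K_4)$-separator complications via \cref{intersecting_fans}; applying Tutte's Triangle Lemma to the opposite end-triangle instead sidesteps all of that.
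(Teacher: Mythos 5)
Your proof is correct, and it reaches the cocircuit by a different mechanism than the paper, even though the endgame is identical. The paper's proof deletes the \emph{other} end: it notes $M\backslash e_{|F|}$ is $3$-connected (same remark after \cref{fan_ends}) and then invokes \cref{deletable_cocircuit} with $e=e_{|F|}$ and $f=e_1\in\cl(\{e_2,e_3\})$, which in one stroke produces a $4$-element cocircuit containing $\{e_1,e_{|F|}\}$ whose fourth element lies outside the fan; the orthogonality argument with the two end-triangles that forces $|F|=5$ and $C^*=\{e_1,e_3,e_5,z\}$ is then exactly yours. You instead delete $e_1$, apply Tutte's Triangle Lemma to the far triangle $\{e_{|F|-2},e_{|F|-1},e_{|F|}\}$ inside $M\backslash e_1$ (using that neither two-element deletion is $3$-connected), and lift the resulting triad of $M\backslash e_1$ to a $4$-element cocircuit of $M$ via the fact that $e_{|F|}$ lies in no triad of $M$ (\cref{fan_ends}); you then supply the extra observation $e_1\notin\cl^*(F-\{e_1\})$ (via \cref{fan_rank} and \cref{clconnectivity}) to guarantee the fourth element is outside $F$, a point the paper gets packaged inside \cref{deletable_cocircuit} rather than arguing directly. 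All the supporting steps check out: the wheel/whirl exclusion via parity of a maximal fan, $|E(M)|\ge|F|+2$ from \cref{not_just_fan}, the $3$-connectivity of $M\backslash e_1$, the lift of a triad of $M\backslash e_1$ to a cocircuit of $M$, and the final case analysis on $\{e_2,e_3\}\cap\{e_{|F|-2},e_{|F|-1}\}$. The trade-off is that your route is more self-contained (Tutte's Triangle Lemma plus elementary fan and closure facts), at the cost of the explicit $\cl^*$ computation, while the paper's is shorter because its Lemma~\ref{deletable_cocircuit} absorbs the no-detachable-pairs argument.
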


\begin{proof}
    Since $|F|$ is odd, the set $\{e_{|F|-2},e_{|F|-1},e_{|F|}\}$ is also a triangle. Therefore, $M \backslash e_{|F|}$ is $3$-connected. By \Cref{not_just_fan}, and observing that $M$ is not a wheel or a whirl since $M$ has a maximal fan of odd length, we have that $|E(M)| \geq |F| + 2 \geq |\{e_2,e_3\}| + 4$. Thus, as $e_1 \in \cl(\{e_2,e_3\})$, it follows by \Cref{contractable_circuit} that there is a $4$-element cocircuit $C^*$ of $M$ containing $\{e_1,e_{|F|}\}$. There exists $z \in C^*$, with $z \notin F$, as otherwise $e_1 \in \cl^*(F - \{e_1\})$ and $\lambda(F) < 2$. Furthermore, by orthogonality, $C^*$ contains one element of $\{e_2,e_3\}$ and one element of $\{e_{|F|-2},e_{|F|-1}\}$. The only possibility is $|F| = 5$ and $e_3 \in C^*$, which completes the proof.
\end{proof}

\begin{lemma} \label{disjoint_triads}
	Let $M$ be a $3$-connected matroid with no detachable pairs such that $|E(M)| \geq 8$. Let $F = (e_1,e_2,\ldots,e_{|F|})$ be a maximal fan of $M$ such that $|F| \geq 3$ and $\{e_1,e_2,e_3\}$ is a triad, and let $T^*$ be a triad of $M$ that is not contained in a $4$-element fan. Then one of the following holds:
	\begin{enumerate}
        \item $|F| = 3$ and $F \cap T^* \neq \emptyset$,
        \item $e_1 \in T^*$, 
        \item $F$ is a $4$-element-fan affixed to $T^*$, or
        \item $M | (F \cup T^*) \cong M(K_{2,3})$.
	\end{enumerate}
\end{lemma}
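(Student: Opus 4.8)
The plan is to exploit orthogonality between $T^*$ and the triangles and triads along the fan $F$, together with the no-detachable-pairs hypothesis, to pin down exactly how $T^*$ can meet $F$. First I would dispense with the easy case: if $|F|=3$, then $F$ itself is a triad not contained in a $4$-element fan, and either $F=T^*$ or they meet or they are disjoint; when they meet we are in outcome (i), so assume $F\cap T^*\neq\emptyset$ forces (i) and otherwise we may treat $F$ and $T^*$ as disjoint and move on. For $|F|\ge 4$, the fan alternates triad, triangle, triad, $\dots$ starting at $\{e_1,e_2,e_3\}$; note $e_1$ lies in a triad but not (by \cref{fan_ends}, since $M$ is not a wheel or whirl) in a triangle, so by Tutte's Triangle Lemma $M/e_1$ is $3$-connected. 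Since $M$ has no detachable pairs, this already rules out various contractions elsewhere.

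Next I would analyse $F\cap T^*$. If $e_1\in T^*$ we are in outcome (ii), so suppose $e_1\notin T^*$. The core observation is orthogonality: any triangle of $F$ that meets $T^*$ must meet it in at least two elements, and likewise any triad of $F$ that meets $T^*$. Using \cref{unique_triangle} (applied when $|F|\ge 4$) together with these orthogonality constraints, the ways in which $T^*$ can interact with the cocircuits $\{e_{2i-1},e_{2i},e_{2i+1}\}$ and circuits $\{e_{2i},e_{2i+1},e_{2i+2}\}$ of the fan are severely restricted. I expect to show that either $T^*$ is disjoint from $F$, or $T^*$ meets $F$ in exactly one or two elements at the ``triad end'' near $e_1,e_2,e_3$, and in the latter situations one of outcomes (iii), (iv) must hold. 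For the disjoint case, I would invoke \cref{deletable_cocircuit} and \cref{three_deletable}-style arguments: because $e_1\in\cl(\{e_2,e_3\})$ and neither $e_1$ nor the far end of $F$ is in a triad (when $|F|\ge 4$, the end $e_{|F|}$ is a triangle-end by parity if $|F|$ is even, or we use \cref{odd_fan} if $|F|$ is odd), we can produce $4$-element cocircuits linking fan elements to $T^*$, and these force a quad/$M(K_4)$-like gluing, giving a $4$-element-fan affixed to $T^*$ (outcome (iii)) or $M|(F\cup T^*)\cong M(K_{2,3})$ (outcome (iv)).

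More concretely for $|F|=4$ with ordering $(e_1,e_2,e_3,e_4)$ where $\{e_1,e_2,e_3\}$ is a triad and $\{e_2,e_3,e_4\}$ is a triangle: since $M\backslash e_4$ is $3$-connected (as $e_4$ is a triangle-end, not in a triad, by \cref{fan_ends}) and $e_4\in\cl(\{e_2,e_3\})$, \cref{deletable_cocircuit} yields a $4$-element cocircuit $\{e_4,f,g,h\}$; orthogonality with $\{e_2,e_3,e_4\}$ and with $T^*$ (once we know $f\in\cl(\text{something involving }T^*)$) forces the cocircuit to run between $\{e_1,e_2\}$ and $T^*$, which is exactly condition (iii)(c) of ``$4$-element-fan affixed to $T^*$''; the remaining requirement $x\in\cl(T^*)$ (here $x=e_4$) comes from $\sqcap(F,T^*)$-type connectivity, using that $\lambda(F\cup T^*)$ is small because both are $3$-separating. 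For general even $|F|$, I would argue that any chord linking an interior fan element to $T^*$ contradicts either \cref{unique_triangle} or the $3$-connectivity of $M\backslash e_{|F|}$ / the no-detachable-pairs hypothesis via \cref{closure_deletable}, so all the interaction is concentrated at the $e_1,e_2,e_3$ end, forcing $|F|=4$ in outcomes (iii) and (iv). The main obstacle I anticipate is the bookkeeping in the disjoint case: carefully separating (iii) from (iv) requires computing $\sqcap(F,T^*)$ and $r(F\cup T^*)$ precisely (using \cref{fan_rank}), and ruling out larger ``affixed'' configurations by showing they would either produce a detachable pair or violate maximality of $F$; handling the parity of $|F|$ uniformly (invoking \cref{odd_fan} to collapse the odd case to $|F|=5$, then showing $|F|=5$ with a triad-start contradicts the hypotheses or reduces to a listed outcome) is where the argument will need the most care.
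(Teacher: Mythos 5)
There is a genuine gap: your proposal never produces the structures that outcomes (iii) and (iv) actually consist of, namely $4$-element \emph{circuits} of the form $\{e_1,e_2,x,y\}$ and $\{e_1,e_3,x,z\}$ with $x,y,z \in T^*$. The paper's proof gets these by working at the triad end $e_1$: since $M/e_1$ is $3$-connected and each element of $T^*$ lies in $\cl^*(T^*-\{x\})$ and in no triangle (because $T^*$ is in no $4$-element fan, orthogonality forbids a triangle meeting it in one element), the \emph{dual} of \cref{contractable_circuit} yields two such circuits, and circuit elimination plus orthogonality with $\{e_1,e_2,e_3\}$ pins them down. Everything else follows from these circuits: orthogonality with $\{e_3,e_4,e_5\}$ kills $|F|\ge 5$ (which your sketch never actually rules out -- note (iii) and (iv) force $|F|=4$ and $|F|=3$ respectively, so $|F|\ge 5$ must end in (ii) or a contradiction); submodularity against the cocircuit $\{e_1,e_2,e_3\}$ then gives $e_4\in\cl(T^*)$ when $|F|=4$; and when $|F|=3$ a third circuit $\{e_2,e_3,y,z\}$, obtained by contracting a second element of $F$, gives $M|(F\cup T^*)\cong M(K_{2,3})$ -- a case your proposal sets aside (``treat $F$ and $T^*$ as disjoint and move on'') and never returns to concretely. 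Your proposed mechanism -- applying \cref{deletable_cocircuit} at the triangle end $e_4$ -- would at best produce a cocircuit through $e_4$, which is not what the definition of a $4$-element-fan affixed to $T^*$ asks for, and it cannot even be launched, since it needs an element $f\in\cl(X)$ with $X$ tied to $T^*$ and not in a triad, and no such element is known at that stage (you flag this yourself with ``once we know $f\in\cl(\text{something involving }T^*)$'').

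Two supporting claims are also incorrect or unsupported and would derail the write-up. First, you assert ``$e_1\in\cl(\{e_2,e_3\})$ and neither $e_1$ nor the far end of $F$ is in a triad'': here $\{e_1,e_2,e_3\}$ is a \emph{triad}, so $e_1$ lies in a triad and $e_1\in\cl^*(\{e_2,e_3\})$, not $\cl(\{e_2,e_3\})$; the \cref{deletable_cocircuit}/\cref{three_deletable}-style step you build on this therefore does not apply. Second, ``$\lambda(F\cup T^*)$ is small because both are $3$-separating'' is false in general (two disjoint $3$-separating sets can have union of connectivity up to $4$); in the paper the bound $r(T^*\cup F)\le 4$ is a \emph{consequence} of the two circuits found above, and only then does the submodularity step give $e_4\in\cl(T^*)$. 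So the overall flavour (orthogonality, Tutte's Triangle Lemma, $M/e_1$ being $3$-connected) is right, but the engine of the proof -- the contraction-side circuits linking $e_1$ and $e_2,e_3$ to $T^*$ -- is missing, and without it neither (iii), (iv), nor the exclusion of $|F|\ge 5$ can be established.
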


\begin{proof}
    Suppose neither (i) nor (ii) holds. Note that this implies, by \Cref{intersecting_fans}, that the triads $\{e_1,e_2,e_3\}$ and $T^*$ are disjoint. Let $x \in T^*$. If $|F| \geq 4$, then $M / e_1$ is $3$-connected. If $|F| = 3$, then $F$ is a triad not contained in a $4$-element fan, and Tutte's Triangle Lemma implies that at least two of $M / e_1$, $M / e_2$, and $M / e_3$ are $3$-connected, so, without loss of generality, we may assume that $M / e_1$ is $3$-connected. In either case, $x \in \cl^*(T^*-\{x\})$, so the dual of \Cref{contractable_circuit} implies that there is a $4$-element circuit $C_1$ of $M$ containing $\{x,e_1\}$ and another element of $T^*$. By orthogonality, $C_1 = \{e_1,e_i,x,y\}$ with $i \in \{2,3\}$ and $y \in T^*$. Let $z$ be the unique element of $T^* - \{x,y\}$. \Cref{contractable_circuit} again implies that there is a $4$-element circuit $C_2$ of $M$ containing $\{e_1,z\}$, another element of $T^*$, and an element in $\{e_2,e_3\}$. Without loss of generality, let $C_2 = \{e_1,e_j,x,z\}$ with $j \in \{2,3\}$. If $i = j$, then, by circuit elimination, $M$ has a circuit $C$ contained in $\{x,y,z,e_1\}$. By orthogonality with $\{e_1,e_2,e_3\}$, we have that $e_1 \notin C$. Therefore, $T^*$ contains a circuit, a contradiction to the $3$-connectivity of $M$. Hence, $i \neq j$, and so, without loss of generality, $C_1 = \{x,y,e_1,e_2\}$ and $C_2 = \{x,z,e_1,e_3\}$.
	
    If $|F| \geq 5$, then $C_2$ intersects the triad $\{e_3,e_4,e_5\}$ in one element, a contradiction. Therefore, $|F| \leq 4$. Suppose $|F| = 4$. In this case, we show that $F$ is a $4$-element-fan affixed to $T^*$. It suffices to show that $e_4 \in \cl(T^*)$. Since $\{e_1,e_2,e_3\}$ is a triad, submodularity implies that $$r(T^* \cup \{e_4\}) + r(M) \leq r(T^* \cup F)+r(M)-1.$$ Therefore, $r(T^* \cup \{e_4\}) = 3$, so $e_4 \in \cl(T^*)$, and $F$ is a $4$-element-fan affixed to $T^*$.
	
    Finally, suppose $|F| = 3$. Either $M / e_2$ or $M / e_3$ is $3$-connected. Without loss of generality, we may assume $M / e_2$ is $3$-connected. Since $z \in \cl^*(T^*-\{z\})$, the dual of \Cref{contractable_circuit} implies that $M$ has a $4$-element circuit $C_3$ containing $\{e_2,z\}$, one of $e_1$ and $e_3$, and one of $x$ and $y$. If $e_1 \in C_3$, then circuit elimination with $C_1$ implies that $M$ has a circuit contained in $T^* \cup \{e_2\}$, and orthogonality with $\{e_1,e_2,e_3\}$ implies that $M$ has a circuit contained in $T^*$, a contradiction. Similarly, if $x \in C_3$, then circuit elimination with $C_2$ and orthogonality implies that $M$ has a circuit in $\{e_1,e_2,e_3\}$. Therefore, $C_3 = \{e_2,e_3,y,z\}$, which implies that $M | (F \cup T^*) \cong M(K_{2,3})$, completing the proof.
\end{proof}

\begin{lemma} \label{k3m_paddle}
    Let $M$ be a $3$-connected matroid. Let $(P_1,P_2,\ldots,P_m)$ be a partition of $E(M)$, where $m \geq 2$, such that $|P_1| \geq 2$ and, for all $i \in \{2,3,\ldots,m\}$ and $j \in [m]-\{i\}$, the set $P_i$ is a triad and $r(P_i \cup P_j) = r(P_j) + 1$. Then $(P_1,P_2,\ldots,P_m)$ is a paddle of $M$.
\end{lemma}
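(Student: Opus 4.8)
The plan is to verify, one by one, the three conditions in the definition of a paddle: that $\Phi=(P_1,P_2,\ldots,P_m)$ is a flower, that it is an anemone, and that $\sqcap(P_i,P_j)=2$ for all distinct $i,j\in[m]$. Everything is driven by a single rank computation, so after some preliminary observations the bulk of the work is bookkeeping with rank.

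First I would record the basic facts about the triad petals. For $i\in\{2,3,\ldots,m\}$ the set $P_i$ is a cocircuit of size three, so $r^*(P_i)=2$. Since $M$ is $3$-connected and both $P_i$ and $E(M)-P_i$ have at least two elements (as $|E(M)-P_i|=|P_1|+3(m-2)\ge 2$), we have $\lambda(P_i)\ge 2$; as $\lambda(P_i)=r(P_i)+r^*(P_i)-|P_i|=r(P_i)-1\le 2$, it follows that $\lambda(P_i)=2$ and $r(P_i)=3$, i.e.\ each $P_i$ with $i\ge 2$ is an independent cocircuit. Combining $r(P_i)=3$ with the hypothesis $r(P_i\cup P_j)=r(P_j)+1$ gives $\sqcap(P_i,P_j)=r(P_i)+r(P_j)-r(P_i\cup P_j)=2$ for all $i\in\{2,\ldots,m\}$ and $j\ne i$; since $m\ge 2$, every pair of distinct indices contains one index in $\{2,\ldots,m\}$, so $\sqcap(P_i,P_j)=2$ for all distinct $i,j$.

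The key step is the claim: for $i\in\{2,\ldots,m\}$ and any nonempty $S\subseteq[m]-\{i\}$, writing $P_S=\bigcup_{s\in S}P_s$, we have $r(P_S\cup P_i)=r(P_S)+1$. For the upper bound, pick $j\in S$; by monotonicity of local connectivity in its second argument (a one-line consequence of submodularity of $r$) and the previous paragraph, $\sqcap(P_i,P_S)\ge\sqcap(P_i,P_j)=2$, so $r(P_S\cup P_i)\le r(P_S)+r(P_i)-2=r(P_S)+1$. The lower bound is the one genuinely structural point: $P_i$ is a cocircuit, so $E(M)-P_i$ is a hyperplane and hence closed; as $P_S\subseteq E(M)-P_i$ we get $\cl(P_S)\subseteq E(M)-P_i$, so $P_i\not\subseteq\cl(P_S)$, whence $r(P_S\cup P_i)\ge r(P_S)+1$. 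From the claim, a routine induction (adding triad petals one at a time) gives $r(P_S)=r(P_1)+|S|-1$ when $1\in S$, and $r(P_S)=|S|+2$ when $1\notin S\ne\emptyset$; in particular $r(M)=r(P_1)+m-1$.

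Finally I would substitute these formulas into $\lambda(P_J)=r(P_J)+r(P_{[m]-J})-r(M)$ for an arbitrary nonempty proper subset $J$ of $[m]$. Exactly one of $J$ and $[m]-J$ contains the index $1$, and in either case the computation collapses to $\lambda(P_J)=|J|+|[m]-J|+2-m=2$; together with $\lambda(\emptyset)=\lambda(E(M))=0$, this shows that every union of petals is $3$-separating. Since moreover $|P_1|\ge 2$ and $|P_i|=3$ for $i\ge 2$, the partition $\Phi$ is a flower, indeed an anemone; combined with $\sqcap(P_i,P_j)=2$ for all distinct $i,j$, this is exactly the assertion that $\Phi$ is a paddle when $m\ge 3$ (and for $m=2$ the statement reduces to $(P_1,P_2)$ being a flower with $\sqcap(P_1,P_2)=2$, which the same argument yields). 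The only nontrivial obstacle is the lower bound in the key claim --- recognising that cocircuit/hyperplane duality forces each triad petal to add exactly one to the rank of any union of the others --- after which the rank and connectivity computations are purely mechanical.
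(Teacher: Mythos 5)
Your proof is correct. It follows the same overall strategy as the paper's proof (establish that every union of petals is $3$-separating by adding the triad petals one at a time, then compute $\sqcap(P_i,P_j)=2$ from $r(P_i)=3$ and the hypothesis $r(P_i\cup P_j)=r(P_j)+1$), but the implementation of the inductive step differs. The paper bounds the increments of both $r$ and $r^*$ when a triad petal is absorbed — submodularity with the hypothesis gives $r(X)\le r(X-P_i)+1$, and $r^*(P_i)=2$ gives $r^*(X)\le r^*(X-P_i)+2$ — so that $\lambda(X)\le\lambda(X-P_i)$, handles unions containing $P_1$ by complementation ($\lambda(X)=\lambda(E(M)-X)$), and invokes $3$-connectivity only for the reverse inequality $\lambda\ge 2$. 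You instead pin down the primal ranks exactly: the upper bound comes from monotonicity of local connectivity, and the lower bound from the observation that $E(M)-P_i$ is a hyperplane, hence closed, so no triad petal lies in the closure of a union of the others; the exact rank formulas then give $\lambda(P_J)=2$ for every proper nonempty union directly, with no corank bookkeeping and no complementation. Both routes are sound; yours trades the paper's shorter $\lambda$-increment argument for the extra structural fact about cocircuit complements, which also makes the computation of $r(M)$ and of each $\lambda(P_J)$ explicit, and you correctly note that $3$-connectivity is still needed once, to force each triad to be independent so that $r(P_i)=3$.
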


\begin{proof}
	First, we show that $(P_1,P_2,\ldots,P_m)$ is an anemone of $M$. Let $J$ be a proper non-empty subset of $[m]$, and let $X = \bigcup_{i \in J} P_i$. We show that $\lambda(X) = 2$. First, assume that $1 \notin J$. If $|J| = 1$, then $X$ is a triad, so $\lambda(X) = 2$. Otherwise, let $i \in J$, and assume that $\lambda(X-P_i) = 2$. Now, $r(X) \leq r(X-P_i) + 1$, and, since $P_i$ is a triad, $r^*(X) \leq r^*(X-P_i) + 2$. Thus,
	\[
	\lambda(X) \leq (r(X-P_i) + 1) + (r^*(X-P_i) + 2) - (|X-P_i| + 3) = 2
	\]
	Thus, $\lambda(X) = 2$, as desired. Finally, if $1 \in J$, then $1 \notin [m]-J$. Hence, $\lambda(X) = \lambda(\bigcup_{i \in [m]-J} P_i) = 2$.
	
	Let $i,j$ be distinct elements of $[m]$. To complete the proof, we show that $\sqcap(P_i,P_j) = 2$. Suppose, without loss of generality, that $i \neq 1$. Then
	\begin{align*}
	\sqcap(P_i,P_j) &= r(P_i) + r(P_j) - r(P_i \cup P_j) \\
	&= 3 + r(P_j) - (r(P_j) + 1) = 2.
	\end{align*}
	Thus, $(P_1,P_2,\ldots,P_m)$ is a paddle of $M$.
\end{proof}

\section{Disjoint fans} \label{disjoint_fans}
Armed with the lemmas from the previous sections, we begin the proof of \Cref{detachable_main} in earnest. The proof of \Cref{detachable_main} is partitioned into four parts depending on the fans present in the matroid. In this section, we consider the case when the matroid has two disjoint maximal fans $F_1$ and $F_2$, where $F_1$ has length at least four and $F_2$ has length at least three. In particular, we prove the following theorem:

\begin{theorem} \label{disjoint_fans_with_like_ends}
	Let $M$ be a $3$-connected matroid such that $|E(M)| \geq 13$. Let $F_1 = (e_1,e_2,\ldots,e_{|F_1|})$ and $F_2 = (f_1,f_2,\ldots,f_{|F_2|})$ be disjoint maximal fans of $M$ such that $|F_1| \geq 4$ and $|F_2| \geq 3$. If $\{e_1,e_2,e_3\}$ and $\{f_1,f_2,f_3\}$ are both triads, then one of the following holds:
	\begin{enumerate}
		\item $M$ has a detachable pair,
		\item $M$ is an even-fan-spike,
		\item $M$ is a hinged triad-paddle, or
		\item $M$ is a quasi-triad-paddle with an augmented-fan petal.
	\end{enumerate}
\end{theorem}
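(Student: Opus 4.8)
The plan is to assume $M$ has no detachable pair and to prove that $M$ is an even-fan-spike, a hinged triad-paddle, or a quasi-triad-paddle with an augmented-fan petal. Since $F_1\ne F_2$, the matroid $M$ has two distinct maximal fans and so is neither a wheel nor a whirl; hence \cref{fan_ends,not_just_fan,unique_triangle,intersecting_fans} are available, $|E(M)|\ge|F_i|+2$, and $\lambda(F_i)=2$ for $i\in\{1,2\}$ by \cref{fan_rank}.

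\emph{Step 1 (ends of the fans).} As $\{e_1,e_2,e_3\}$ is a triad and $F_1$ is a maximal fan of length at least four, \cref{fan_ends}(i) gives that $e_1$ lies in no triangle, so $M/e_1$ is $3$-connected by Tutte's Triangle Lemma. If $|F_1|$ is even then $\{e_{|F_1|-2},e_{|F_1|-1},e_{|F_1|}\}$ is a triangle, $e_{|F_1|}$ lies in no triad, and $M\backslash e_{|F_1|}$ is $3$-connected; if $|F_1|$ is odd then, applying \cref{odd_fan} to $M^*$ (which has no detachable pair, since $3$-connectivity is self-dual), $|F_1|=5$ and there is a circuit $\{e_1,e_3,e_5,z_1\}$ of $M$ with $z_1\notin F_1$. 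We record the analogous information for $F_2$; in particular, when $|F_2|=3$ the set $F_2$ is a maximal triad not contained in a $4$-element fan.

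\emph{Step 2 (triads relative to $F_1$, and a paddle).} Applying \cref{disjoint_triads} to $F_1$, and to its reverse when $|F_1|$ is odd, we show that every triad $T^*$ of $M$ not contained in a $4$-element fan either contains a triad-end of $F_1$, or else $|F_1|=4$ and $F_1$ is a $4$-element fan affixed to $T^*$. Taking $T^*=F_2$ in the subcase $|F_2|=3$, disjointness excludes the first alternative and forces $|F_1|=4$ with $F_1$ affixed to $F_2$. Using the triad $\{e_1,e_2,e_3\}$, the $3$-separating set $F_1$, and the additional circuits and cocircuits supplied by \cref{odd_fan,disjoint_triads}, we assemble a contraction certificate inside a bounded neighbourhood $X$ of $F_1$; dually, a triangle-end of one of the fans (when present), or the internal triangles of a fan together with the fact that $\lambda(F_i)=2$, give a deletion certificate in a set $Y$. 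Then \cref{no_other_elements} forces every element outside $X\cup Y$ into a maximal fan of length at least four with ends in $X\cup Y$, and combining this with \cref{contractable_collection,contractable_collection1,contractable_collection_deletable_el} and their deletion-duals, and applying \cref{k3m_paddle}, we obtain a partition $(P_1,\dots,P_m)$ of $E(M)$ that is a spike-like anemone with even-fan petals, or a paddle in which $P_i$ is a triad for each $i<m$ and $P_m$ is a distinguished petal.

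\emph{Step 3 (identifying $M$) and the main obstacle.} We read off the structure from the intersections $\bigcap_i\cl(P_i)$ and $\bigcap_i\cl^*(P_i)$ and from the nature of $P_m$. If the petals form a spike-like anemone of even fans, with $\bigcap_i\cl(P_i)$ and $\bigcap_i\cl^*(P_i)$ each empty or a singleton, then $M$ is an even-fan-spike, possibly with tip and cotip; the cases $m=2$ and petals of size two are subsumed here by the conventions of \cref{exceptionalmatroids}. Otherwise exactly one of these intersections, say $\bigcap_i\cl(P_i)=\{x\}$, is a singleton; then $x$ is a hinge, each $P_i$ with $i<m$ is a triad, and according as $P_m$ is a triad-type petal (so each $P_i\cup\{x\}$ is a $4$-element fan affixed to $P_m$, or $M|(P_i\cup P_m)\cong M(K_{2,3})$) or is the length-five fan from Step 1 carrying the circuit $\{e_1,e_3,e_5,z_1\}$, the matroid $M$ is a hinged triad-paddle or a quasi-triad-paddle with an augmented-fan petal. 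The main obstacle is this last step: once $M$ has been forced into an anemone of triads and even fans, certifying — through repeated orthogonality and submodularity arguments and appeals to \cref{closure_deletable,deletable_cocircuit,circuit_so_deletable,deletable_circuit_gives_cocircuit} — that no single element and no pair can be deleted or contracted is precisely what imposes the rigid circuit and cocircuit conditions in the definitions of a hinged triad-paddle and an augmented fan; separating those two outcomes, and disposing of the many degenerate subcases without producing spurious ones, is where essentially all the length of the argument lies.
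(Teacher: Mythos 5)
Your overall architecture does track the paper's: split on the length/parity of the fans, use \cref{disjoint_triads} and the dual of \cref{odd_fan} to pin down the small cases, build deletion/contraction certificates to confine the rest of the ground set, and finish with \cref{k3m_paddle}. But as written the proposal has genuine gaps at exactly the points where the work is. The most serious is the case where $F_1$ and $F_2$ are both even. Your Steps 2--3 treat this as one more instance of ``assemble certificates, apply \cref{no_other_elements}, read off the structure,'' but that case is not a paddle argument at all: one must first produce the linking circuit $\{e_1,e_2,f_1,f_2\}$ and a cocircuit at the other ends (the paper's \cref{even_fan_spike1}), and there the cocircuit can instead be $\{e_2,e_4,f_2,f_4\}$ when $|F_1|=|F_2|=4$; ruling that configuration out requires a separate, substantial argument producing a detachable pair (\cref{even_fan_spike4}), which your proposal never acknowledges. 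Likewise the even-fan-spike conclusion is obtained by iterating: one takes a maximal collection of even fans pairwise joined by such circuits and cocircuits and must handle elements outside it that lie in no triangle or triad (they become $2$-element petals, as in \cref{even_fan_spike3,even_fan_spike}); none of this is visible in your outline, so the ``spike-like anemone of even fans'' outcome of your Step 2 is unsupported.

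Second, your Step 3 both overshoots and defers. It allows the outcome ``even-fan-spike with tip and cotip,'' which is not among (i)--(iv) of the theorem and in fact cannot arise under the disjointness hypothesis (in the paper it only appears when the fans intersect, in the next section); admitting it means you have not actually verified the structure you claim. And the dichotomy between the hinged triad-paddle and the quasi-triad-paddle with an augmented-fan petal is not a matter of inspecting $\bigcap_i\cl(P_i)$ and $\bigcap_i\cl^*(P_i)$: the definitions demand specific circuits and cocircuits (the two affixing circuits of a $4$-element fan affixed to a triad, the circuit $\{f_1,f_3,f_5,e_4\}$ of an augmented fan, or $M|(P_i\cup P_m)\cong M(K_{2,3})$), and establishing these for every petal -- including for triads of $E(M)-(F_1\cup F_2)$ that do or do not lie in $4$-element fans -- is the content of the paper's \cref{even_fan_k3m1,even_fan_k3m2,even_fan_k3m3,augmented_fan_petal1,augmented_fan_petal2}. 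Your own closing sentence concedes that this is ``where essentially all the length of the argument lies''; as it stands that part is asserted rather than proved, so the proposal is an outline of the paper's strategy rather than a proof.
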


\subsection*{\texorpdfstring{$\mathbf{F_2}$}{F2} has length three}
First, we consider the case where $F_2$ is a triad, and show, as \cref{even_fan_k3m}, that either $M$ has a detachable pair, or $M$ is a hinged triad-paddle.
We start with a lemma that shows, in particular, that if $M$ has no detachable pairs, then $F_1$ has length four.

\begin{lemma} \label{even_fan_k3m1}
	Let $M$ be a $3$-connected matroid with no detachable pairs such that $|E(M)| \geq 10$. Let $F_1 = (e_1,e_2,\ldots,e_{|F_1|})$ be a maximal fan of $M$ such that $|F_1| \geq 4$ and $\{e_1,e_2,e_3\}$ is a triad. Let $F_2$ be a triad of $M$ that is disjoint from $F_1$ and not contained in a $4$-element fan. Then
	\begin{enumerate}
        \item $F_1$ is a $4$-element-fan affixed to $F_2$, and
		\item every element of $E(M) - (F_1 \cup F_2)$ is contained in a triad.
	\end{enumerate}
\end{lemma}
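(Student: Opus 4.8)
The plan is to read off (i) directly from \cref{disjoint_triads}, and then to exhibit a deletion certificate inside $F_1\cup F_2$ so that (ii) follows from \cref{deletable_collection_contractable_el}. Throughout I would use that $F_2$ is itself a maximal fan (it lies in no $4$-element fan), so that $F_1$ and $F_2$ are distinct maximal fans of $M$, whence $M$ is neither a wheel nor a whirl.

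For (i): since $M$ has no detachable pairs and $|E(M)|\ge 10\ge 8$, \cref{disjoint_triads} applies to the maximal fan $F_1$ and the triad $F_2$. Its outcomes (i) and (ii) are excluded because $|F_1|\ge 4$ and because $F_1\cap F_2=\emptyset$, respectively, so either $F_1$ is a $4$-element-fan affixed to $F_2$, or $M|(F_1\cup F_2)\cong M(K_{2,3})$. The second alternative cannot occur: as $|F_1|\ge 4$ and $\{e_1,e_2,e_3\}$ is a triad, the fan ordering of $F_1$ makes $\{e_2,e_3,e_4\}$ a triangle of $M$, hence of $M|(F_1\cup F_2)$, whereas $M(K_{2,3})$ has no triangle since $K_{2,3}$ is bipartite and so has no cycle of length three. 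This proves (i), and in particular shows $|F_1|=4$.

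For (ii), write $F_1=(e_1,e_2,e_3,e_4)$ and put $X=F_1\cup F_2$, so $|X|=7$ and $|E(M)|\ge|X|+3$. The one step that needs genuine care is checking $\lambda(X)=2$. For the upper bound: $e_4\in\cl(F_2)$ by the definition of a $4$-element-fan affixed to $F_2$, and the two $4$-element circuits furnished by that definition put $e_2$ and $e_3$ into $\cl(\{e_1\}\cup F_2)$; hence $X\subseteq\cl(\{e_1\}\cup F_2)$ and $r(X)\le 4$. Dually, $\{e_1,e_2,e_3\}$ and $F_2$ are two disjoint cocircuits of $M$ contained in $X$, so $X$ contains two disjoint circuits of $M^{*}$ and $r^{*}(X)\le|X|-2=5$; therefore $\lambda(X)=r(X)+r^{*}(X)-|X|\le 2$, and the reverse inequality holds since $M$ is $3$-connected and $2\le|X|\le|E(M)|-2$. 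Next I would note that $(e_4,F_2,\{\{e_2,e_3\}\})$ is a deletion certificate contained in $X$: $F_2\cap\{e_2,e_3\}=\emptyset$, $\lambda(F_2)=2$, $e_4\in\cl(F_2)\cap\cl(\{e_2,e_3\})$ (the latter because $\{e_2,e_3,e_4\}$ is a triangle), and $e_4$ lies in no triad by \cref{fan_ends} applied to the reversed fan ordering $(e_4,e_3,e_2,e_1)$ of $F_1$, which begins with the triangle $\{e_4,e_3,e_2\}$.

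Finally I would invoke \cref{deletable_collection_contractable_el} with this $X$, with $Y=\{e_2,e_3\}$, and with $y=e_1$. Its remaining hypotheses are immediate: $\lambda(\{e_2,e_3\})=2$; $e_1\in\cl^{*}(\{e_2,e_3\})$ because $\{e_1,e_2,e_3\}$ is a triad; $e_1$ lies in no triangle by \cref{fan_ends}; and each of $e_1,e_2,e_3$ lies in the closure of $X$ with that element deleted (use the triangle $\{e_2,e_3,e_4\}$ for $e_2$ and $e_3$, and the $4$-element circuit contained in $\{e_1,e_2\}\cup F_2$ for $e_1$). The conclusion of \cref{deletable_collection_contractable_el} is precisely statement (ii). I expect no obstacle beyond the rank computation $\lambda(X)=2$ and the routine verification of the certificate conditions.
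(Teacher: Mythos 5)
Your proof is correct and takes essentially the same route as the paper: part (i) is read off from \cref{disjoint_triads} (you exclude the $M(K_{2,3})$ outcome via the absence of triangles rather than by cardinality), and part (ii) comes from a deletion certificate inside $F_1\cup F_2$ together with \cref{deletable_collection_contractable_el} applied with $y=e_1$. Your certificate $(e_4,F_2,\{\{e_2,e_3\}\})$ and choice $Y=\{e_2,e_3\}$ differ only cosmetically from the paper's $(e_4,F_1-\{e_4\},\{F_2\})$ with $Y=F_1-\{e_1\}$, and your explicit verification that $\lambda(F_1\cup F_2)=2$ (and that $M$ is not a wheel or whirl, so \cref{fan_ends} applies) just spells out steps the paper asserts without comment.
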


\begin{proof}
	Since $F_1$ and $F_2$ are disjoint, \Cref{disjoint_triads}(i) and (ii) do not hold. Furthermore, $|F_1| \geq 4$, which means \Cref{disjoint_triads}(iv) does not hold. Therefore, \Cref{disjoint_triads}(iii) holds, and $F_1$ is a $4$-element-fan affixed to $F_2$.
	
    Note that $e_4 \in \cl(F_1 - \{e_4\})$ and, as (i) holds, $e_4 \in \cl(F_2)$. Furthermore, $\lambda(F_1 - \{e_4\}) = 2$, and $e_4$ is not contained in a triad by \Cref{fan_ends}. Therefore, $(e_4,F_1-\{e_4\},\{F_2\})$ is a deletion certificate. Also, $\lambda(F_1 \cup F_2) = 2$ and $|E(M)| \geq |F_1 \cup F_2| + 3 = 10$. Furthermore, $e_1 \in \cl^*(F_1-\{e_1\})$ and, for all $i \in \{1,2,3,4\}$, we have that $e_i \in \cl(F_1 \cup F_2)$. Thus, by \Cref{deletable_collection_contractable_el}, every element of $E(M)-(F_1 \cup F_2)$ is contained in a triad, which completes the proof. 
\end{proof}

The last lemma implies that there is a triad outside of $F_1 \cup F_2$.  The next lemma addresses when the triad is contained in a $4$-element fan, and the subsequent lemma addresses when it is not.

\begin{lemma} \label{even_fan_k3m2}
	Let $M$ be a $3$-connected matroid with no detachable pairs such that $|E(M)| \geq 10$. Let $F_1 = (e_1,e_2,e_3,e_4)$ be a maximal fan of $M$ such that $\{e_1,e_2,e_3\}$ is a triad. Let $F_2$ be a triad of $M$ that is disjoint from $F_1$ and not contained in a $4$-element fan. Furthermore, let $F_3$ be a maximal fan of $M$, distinct from $F_1$ and $F_2$, such that $|F_3| \geq 4$. Then $e_4 \in F_3$, and $F_3$ is a $4$-element-fan affixed to $F_2$.
\end{lemma}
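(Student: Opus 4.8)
The plan is to first read off the structure of $F_1$ and $F_2$ from \cref{even_fan_k3m1}, then localise $F_3$ relative to $F_1\cup F_2$, and finally apply \cref{disjoint_triads} to identify $F_3$ precisely. To begin, I would apply \cref{even_fan_k3m1} (all its hypotheses hold, since $|E(M)|\ge 10$): this yields that $F_1$ is a $4$-element-fan affixed to $F_2$ --- so $e_4\in\cl(F_2)$, and there are $4$-element circuits $C_1,C_2$ with $\{e_1,e_2\}\subseteq C_1\subseteq\{e_1,e_2\}\cup F_2$ and $\{e_1,e_3\}\subseteq C_2\subseteq\{e_1,e_3\}\cup F_2$ --- and that every element of $E(M)-(F_1\cup F_2)$ lies in a triad. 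I would also record: by \cref{fan_ends}, $e_1$ lies in no triangle and $e_4$ in no triad; the elements $e_1,e_2,e_3$ lie in the triad $\{e_1,e_2,e_3\}$; $F_2$ is a maximal fan (by \cref{unique_triangle} a triad inside a fan of length at least four consists of three consecutive elements, hence lies in a $4$-element subfan, so $F_2$ cannot lie in a larger fan); and therefore, applying \cref{fan_ends} to the length-three maximal fan $F_2$, no element of $F_2$ lies in a triangle. Finally, $M$ is not a wheel or a whirl.

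Next I would localise $F_3$. Applying \cref{intersecting_fans} to the distinct maximal fans $F_1$ (length $\ge 4$) and $F_3$ (length $\ge 3$), and using the remark that a maximal fan contained in an $M(K_4)$-separator has length five --- so $F_1$, of length four, cannot lie in one --- gives $F_1\cap F_3\subseteq\{e_1,e_4\}$. Each non-end element of $F_3$ lies in both a triangle and a triad, so it cannot be $e_1$ or $e_4$, and it cannot lie in $F_2$ (whose elements lie in no triangle); hence, by \cref{even_fan_k3m1}, it lies in a triad. An end of $F_3$ at which $F_3$ begins with a triangle lies in no triad by \cref{fan_ends}, so lies in $F_1\cup F_2$, and since all of $e_1,e_2,e_3$ and all of $F_2$ lie in triads, such an end must be $e_4$. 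In particular $F_3$ does not begin with a triangle at both ends, so it has a fan ordering $(g_1,g_2,\ldots,g_{|F_3|})$ whose first triple $\{g_1,g_2,g_3\}$ is a triad; moreover $e_2,e_3\notin F_3$ and the non-end elements $g_2,g_3$ lie in triangles.

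Then I would apply \cref{disjoint_triads} to this ordering of $F_3$ and the triad $T^*:=F_2$. Outcome~(i) fails since $|F_3|\ge 4$, and outcome~(iv) fails since a fan of length at least four contains a triangle whereas $M|(F_3\cup F_2)\cong M(K_{2,3})$ has none. So either $g_1\in F_2$, or $F_3$ is a $4$-element-fan affixed to $F_2$; in the latter case $|F_3|=4$, the distinguished element $x=g_{|F_3|}$ of this affixed fan is the triangle-type end of $F_3$, and that end is $e_4$ by the previous paragraph --- so $e_4\in F_3$ and $F_3$ is a $4$-element-fan affixed to $F_2$, which is the conclusion. It remains to rule out $g_1\in F_2$. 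Write $F_2=\{g_1,f,f'\}$. Since $C_1$ has four elements, $C_1\subseteq\{e_1,e_2\}\cup F_2$, and the triad $\{g_1,g_2,g_3\}$ meets $\{e_1,e_2\}\cup F_2$ only in $g_1$ (as $e_1,e_2\notin\{g_1,g_2,g_3\}$ and $g_2,g_3\notin F_2$), orthogonality forces $g_1\notin C_1$, so $C_1=\{e_1,e_2,f,f'\}$; likewise $C_2=\{e_1,e_3,f,f'\}$. Hence $e_3\in\cl(\{e_1,f,f'\})$, so $\{e_1,e_2,e_3,f,f'\}$ has rank three and $f,f'\in\cl(\{e_1,e_2,e_3\})$; also $e_4\in\cl(\{e_1,e_2,e_3\})$ (because $r(F_1)=r(\{e_1,e_2,e_3\})=3$ by \cref{fan_rank}) and $e_4\in\cl(F_2)$. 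I would then split on $r(\{e_4,f,f'\})$: if it equals $2$ then $\{e_4,f,f'\}$ is a triangle and $(e_4,f,f',g_1)$ is a $4$-element fan containing the triad $F_2$, a contradiction; if it equals $3$ then the rank-three flats $\cl(\{e_1,e_2,e_3\})$ and $\cl(F_2)$ both contain the rank-three set $\{e_4,f,f'\}$ and so coincide, giving $r(\{e_1,e_2,e_3\}\cup F_2)=3$ while $r^*(\{e_1,e_2,e_3\}\cup F_2)\le 4$ (a union of two disjoint triads), hence $\lambda(\{e_1,e_2,e_3\}\cup F_2)\le 1$, contradicting $3$-connectivity since $|E(M)|\ge 10$.

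The main obstacle is precisely this last step --- excluding $g_1\in F_2$ --- since it requires both arranging that \cref{disjoint_triads} can only return the two listed outcomes and mining a contradiction from the spurious one, via the affixing circuits of $F_1$ together with the rank dichotomy above. A secondary point needing care is the parity bookkeeping for $F_3$ in the localisation step: in particular, the case where $F_3$ has odd length with a triad-type end at each end is also disposed of by the $g_1\in F_2$ contradiction, because for such an $F_3$ the application of \cref{disjoint_triads} can only yield $g_1\in F_2$. Everything else --- the orthogonality arguments, the elementary rank inequalities, and the manipulations of $\lambda$ --- is routine.
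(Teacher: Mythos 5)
Your proof is correct, and its skeleton matches the paper's: invoke \cref{even_fan_k3m1}, argue that any triangle-type end of $F_3$ cannot lie in a triad and hence must be $e_4$, so $F_3$ has an ordering beginning with a triad, then apply \cref{disjoint_triads} with $T^*=F_2$, discard outcomes (i) and (iv), and read the conclusion off outcome (iii). The genuine divergence is in how outcome (ii), $g_1\in F_2$, is excluded. The paper takes the affixing circuits in the labelled form $\{e_1,e_2,f_1,f_2\}$ and $\{e_1,e_3,f_1,f_3\}$ (the form produced in the proof of \cref{disjoint_triads}), so every element of $F_2$ lies in one of the two circuits and orthogonality with the triad $\{g_1,g_2,g_3\}$, whose other elements avoid $F_1\cup F_2$, gives an immediate contradiction to $g_1\in F_1\cup F_2$. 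You work only from the bare definition of an affixed fan, so the same orthogonality argument only forces $g_1\notin C_1\cup C_2$, leaving the configuration $C_1=\{e_1,e_2,f,f'\}$, $C_2=\{e_1,e_3,f,f'\}$, which you then kill with the rank dichotomy on $\{e_4,f,f'\}$ (either a $4$-element fan containing $F_2$, or two rank-three flats coinciding and $\lambda(\{e_1,e_2,e_3\}\cup F_2)\le 1$). That dichotomy is sound, and your route has the small advantage of not depending on how the affixing circuits were obtained, at the cost of extra work. Two minor points of justification rather than gaps: \cref{unique_triangle} only concerns triads containing two consecutive fan elements, so it does not literally give your parenthetical claim that a triad inside a longer fan is three consecutive elements; but the facts you actually use (no element of $F_2$ lies in a triangle, hence $F_2$ is a maximal fan) follow in one line, since a triangle meeting the triad $F_2$ would meet it in exactly two elements by orthogonality and their union would be a $4$-element fan containing $F_2$. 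Similarly, your unproved remark that $M$ is not a wheel or a whirl follows because every triad of a wheel or whirl lies in a $4$-element fan, so the existence of $F_2$ rules these out.
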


\begin{proof}
	By \Cref{even_fan_k3m1}, $F_1$	is a $4$-element-fan affixed to $F_2$. This means that $M$ has a $4$-element circuit $C_1$ containing $\{e_1,e_2\}$ and two elements of $F_2$, and a $4$-element circuit $C_2$ containing $\{e_1,e_3\}$ and two elements of $F_2$. Suppose $C_1 \cap F_2 = C_2 \cap F_2$. Circuit elimination and orthogonality imply that $M$ has a circuit contained in $\{e_1,e_2,e_3\}$. But now $\lambda(\{e_1,e_2,e_3\}) < 2$, a contradiction to the $3$-connectivity of $M$. Thus, without loss of generality, take $F_2 = \{f_1,f_2,f_3\}$ such that $C_1 = \{e_1,e_2,f_1,f_2\}$ and $C_2 = \{e_1,e_3,f_1,f_3\}$. Also, let $(g_1,g_2,\ldots,g_{|F_3|})$ be an ordering of $F_3$.
	
    If $\{g_1,g_2,g_3\}$ is a triangle, then $g_1$ is not contained in a triad, and so \Cref{even_fan_k3m1} implies that $g_1 \in F_1 \cup F_2$. The only element of $F_1 \cup F_2$ that is not contained in a triad is $e_4$, so $g_1 = e_4$. Similarly, if $\{g_{|F_3|-2},g_{|F_3|-1},g_{|F_3|}\}$ is a triangle, then $g_{|F_3|} = e_4$. Therefore, as $g_1 \neq g_{|F_3|}$, either $\{g_1,g_2,g_3\}$ or $\{g_{|F_3|-2},g_{|F_3|-1},g_{|F_3|}\}$ is a triad. Without loss of generality, assume the former.
	
    Now, $|F_3| \geq 4$, so \Cref{disjoint_triads}(i) and (iv) do not hold. Suppose $g_1 \in F_1 \cup F_2$. \Cref{fan_ends} implies that $g_1 \neq e_4$, and \Cref{intersecting_fans} implies that $g_2, g_3 \notin F_1 \cup F_2$. Therefore, the triad $\{g_1,g_2,g_3\}$ intersects either the circuit $\{e_1,e_2,f_1,f_2\}$ or the circuit $\{e_1,e_3,f_1,f_3\}$ in one element. This contradiction to orthogonality implies that $g_1 \notin F_1 \cup F_2$, so \Cref{disjoint_triads}(ii) does not hold. Hence, $F_3$ is a $4$-element-fan affixed to $F_2$. This means that $|F_3| = 4$, so, since $F_3$ is maximal, $g_4$ is not contained in a triad, and thus $g_4 = e_4$, thereby completing the proof.
\end{proof}

\begin{lemma} \label{even_fan_k3m3}
    Let $M$ be a $3$-connected matroid with no detachable pairs such that $|E(M)| \geq 11$. Let $F_1 = (e_1,e_2,e_3,e_4)$ be a maximal fan of $M$ such that $\{e_1,e_2,e_3\}$ is a triad. Let $F_2$ be a triad of $M$ that is disjoint from $F_1$ and not contained in a $4$-element fan. Furthermore, let $F_3 \not\subseteq F_1 \cup F_2$ be a triad of $M$ that is not contained in a $4$-element fan. Then $F_1$ is a $4$-element-fan affixed to $F_3$, and $M | (F_2 \cup F_3) \cong M(K_{2,3})$.
\end{lemma}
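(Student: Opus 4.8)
The plan is to use \cref{even_fan_k3m1} to pin down how $F_1$ sits relative to $F_2$, to apply \cref{disjoint_triads} a second time with $F_1$ and $F_3$ to place $F_1$ relative to $F_3$, and then to combine the two ``affixed'' structures by repeated circuit elimination to recognise $M|(F_2\cup F_3)$ as $M(K_{2,3})$. For the setup: since $|F_1|=4$ and $\{e_1,e_2,e_3\}$ is a triad, $\{e_2,e_3,e_4\}$ is a triangle, so by \cref{fan_ends} the element $e_4$ lies in no triad. Applying \cref{even_fan_k3m1} to $F_1$ and $F_2$, the fan $F_1$ is a $4$-element-fan affixed to $F_2$ and every element of $E(M)-(F_1\cup F_2)$ lies in a triad; as in the proof of \cref{disjoint_triads} we may label $F_2=\{f_1,f_2,f_3\}$ so that $e_4\in\cl(F_2)$ and $\{e_1,e_2,f_1,f_2\}$ and $\{e_1,e_3,f_1,f_3\}$ are circuits of $M$. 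As $e_4\in\cl(F_2)$ and $r(F_2)=3$, the four-element set $\{e_4\}\cup F_2$ has rank $3$ and hence contains a unique circuit of $M$; were this circuit a triangle it would be of the form $\{e_4,f_i,f_j\}$, which together with the triad $F_2$ would place $F_2$ in a $4$-element fan, contrary to hypothesis. So $\{e_4,f_1,f_2,f_3\}$ is a circuit of $M$.

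Next I show $F_1$ is a $4$-element-fan affixed to $F_3$. Apply \cref{disjoint_triads} with $F=F_1$ and $T^*=F_3$. Outcome~(i) is impossible as $|F_1|=4$, and outcome~(iv) is impossible since $M(K_{2,3})$ has no $3$-element circuit while $\{e_2,e_3,e_4\}\subseteq F_1$ is a triangle. Suppose outcome~(ii) holds, that is, $e_1\in F_3$. Orthogonality of the triad $F_3$ with the triangle $\{e_2,e_3,e_4\}$, together with $e_4\notin$ any triad, forces $F_3\cap\{e_2,e_3,e_4\}$ to be empty or $\{e_2,e_3\}$; in the latter case $F_3=\{e_1,e_2,e_3\}\subseteq F_1\cup F_2$, which is excluded, so $F_3\cap F_1=\{e_1\}$. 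Orthogonality of $F_3$ with $\{e_1,e_2,f_1,f_2\}$ gives $|F_3\cap\{f_1,f_2\}|=1$, orthogonality with $\{e_1,e_3,f_1,f_3\}$ gives $|F_3\cap\{f_1,f_3\}|=1$, and orthogonality with $\{e_4,f_1,f_2,f_3\}$ gives that $|F_3\cap\{f_1,f_2,f_3\}|$ is even. If $f_1\in F_3$ the first two constraints give $F_3\cap F_2=\{f_1\}$, of odd size; if $f_1\notin F_3$ they give $F_3\cap F_2=\{f_2,f_3\}$, so $F_3\subseteq F_1\cup F_2$; either way this is a contradiction. Hence outcome~(iii) holds, so $F_1$ is a $4$-element-fan affixed to $F_3$ and $F_3\cap F_1=\emptyset$. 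As before we may label $F_3=\{g_1,g_2,g_3\}$ so that $e_4\in\cl(F_3)$, the set $\{e_4\}\cup F_3$ is a circuit, and $M$ has $4$-element circuits $C_1$ and $C_2$ with $\{e_1,e_2\}\subseteq C_1\subseteq\{e_1,e_2\}\cup F_3$ and $\{e_1,e_3\}\subseteq C_2\subseteq\{e_1,e_3\}\cup F_3$. If $C_1$ and $C_2$ met $F_3$ in the same pair, then, using the triangle $\{e_2,e_3,e_4\}$ and the circuit $\{e_4\}\cup F_3$, that pair together with $\{e_1,e_2,e_3,e_4\}$ spans a rank-$3$ flat, so $r(F_1\cup F_3)=3$; but then $M^{*}|(F_1\cup F_3)$ contains the two distinct circuits $\{e_1,e_2,e_3\}$ and $F_3$ of $M^{*}$, hence has corank at least two, so $\lambda(F_1\cup F_3)=r(F_1\cup F_3)+r^{*}(F_1\cup F_3)-7\le 3+5-7=1$, contradicting $3$-connectivity. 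So we may take $C_1=\{e_1,e_2,g_1,g_2\}$ and $C_2=\{e_1,e_3,g_1,g_3\}$.

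It remains to identify $M|(F_2\cup F_3)$. Since $F_2$ and $F_3$ are disjoint triads, the same corank argument applied to $F_2\cup F_3$ gives $r(F_2\cup F_3)\ge 4$. The circuits $\{e_1,e_2,f_1,f_2\}$ and $\{e_1,e_2,g_1,g_2\}$ share $\{e_1,e_2\}$, so by strong circuit elimination some circuit lies in $\{e_1,e_2,f_1,f_2,g_1,g_2\}$ and avoids one of $e_1,e_2$; orthogonality against the triads $\{e_1,e_2,e_3\}$, $F_2$ and $F_3$ forces it to be $\{f_1,f_2,g_1,g_2\}$. Similarly $\{f_1,f_3,g_1,g_3\}$ is a circuit, whence $r(F_2\cup F_3)=4$. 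A further strong circuit elimination on these two circuits, together with orthogonality against $F_2$ and $F_3$ and the rank computation just made (to exclude the remaining candidates), yields that $\{f_2,f_3,g_2,g_3\}$ is a circuit. Finally, any circuit of $M$ contained in $F_2\cup F_3$ meets each of the triads $F_2,F_3$ evenly and contains neither, and no circuit properly contains another; hence the only circuits of $M|(F_2\cup F_3)$ are the three exhibited, so $M|(F_2\cup F_3)$ is the rank-$4$ six-element matroid with this circuit set, namely $M(K_{2,3})$.

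The delicate point is the second paragraph. Both the exclusion of $e_1\in F_3$ and the exclusion of the ``doubly shared circuit'' rely on first manufacturing the auxiliary circuit $\{e_4,f_1,f_2,f_3\}$ (respectively $\{e_4,g_1,g_2,g_3\}$) from ``$e_4\in\cl(F_2)$'' and ``$F_2$ not in a $4$-element fan'', and then running orthogonality carefully against the three triads present; the $M^{*}$-corank estimate is what turns ``$r=3$'' into a genuine contradiction with $3$-connectivity. The circuit eliminations in the last paragraph are then routine, but one must check at each step that the circuit produced has exactly the claimed four elements.
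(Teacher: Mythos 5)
Your first two paragraphs are sound, and your handling of the case $e_1\in F_3$ is in fact a genuine shortcut over the paper: the circuit $\{e_4\}\cup F_2$, which you extract from $e_4\in\cl(F_2)$ together with the hypothesis that $F_2$ lies in no $4$-element fan, kills that case by orthogonality alone, whereas the paper eliminates it with a much longer argument (this is where it uses $|E(M)|\ge 11$). Two problems arise afterwards, one small and one genuine. The small one: you never prove that $F_2$ and $F_3$ are disjoint; this is needed for the final claim and does follow quickly from orthogonality of the triad $F_3$ with the circuits $\{e_1,e_2,f_1,f_2\}$ and $\{e_1,e_3,f_1,f_3\}$ together with $F_3\not\subseteq F_1\cup F_2$, but it has to be said.

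The genuine gap is the last elimination. Eliminating $f_1$ from $\{f_1,f_2,g_1,g_2\}$ and $\{f_1,f_3,g_1,g_3\}$ while retaining $f_2$ only yields a circuit $D\subseteq\{f_2,f_3,g_1,g_2,g_3\}$ with $f_2\in D$; orthogonality with $F_2$ forces $f_3\in D$, and orthogonality with $F_3$ forces $|D\cap F_3|\in\{0,2,3\}$. Your rank computation $r(F_2\cup F_3)=4$ does exclude $\{f_2,f_3,g_1,g_2\}$ and $\{f_2,f_3,g_1,g_3\}$, but it does not exclude $D=\{f_2,f_3,g_1,g_2,g_3\}$: a five-element circuit has rank $4$, meets $F_2$ in two elements and $F_3$ in three, and is consistent with every circuit and cocircuit you have in hand (the only cocircuits you know meeting $F_2\cup F_3$ are $F_2$ and $F_3$; the triad $\{e_1,e_2,e_3\}$, which rescued the earlier eliminations by expelling $e_2$ or $e_3$, is disjoint from this one). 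In that scenario $\{f_2,f_3,g_2,g_3\}$ would be independent and your identification of $M|(F_2\cup F_3)$ collapses. The third $4$-circuit is not a consequence of the local circuit data at all: in the paper it is produced by applying \cref{disjoint_triads} to the disjoint triads $F_2$ and $F_3$, and the proof of that lemma manufactures the missing circuit via Tutte's Triangle Lemma and the dual of \cref{contractable_circuit}, i.e.\ it genuinely uses the no-detachable-pairs hypothesis. So the efficient repair is: prove the disjointness of $F_2$ and $F_3$ as above, then cite \cref{disjoint_triads} for $M|(F_2\cup F_3)\cong M(K_{2,3})$, which makes your third paragraph unnecessary.
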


\begin{proof}
    Since $F_1$ is a $4$-element-fan affixed to $F_2$, by \cref{even_fan_k3m1}, we may assume that $F_2 = \{f_1,f_2,f_3\}$ such that $\{e_1,e_2,f_1,f_2\}$ and $\{e_1,e_3,f_1,f_3\}$ are circuits. Suppose $F_1$ and $F_3$ are disjoint. Then \Cref{even_fan_k3m1} implies that $F_1$ is a $4$-element-fan affixed to $F_3$. Furthermore, orthogonality with the circuits $\{e_1,e_2,f_1,f_2\}$ and $\{e_1,e_3,f_1,f_3\}$ implies that $F_2$ and $F_3$ are disjoint. Therefore, by \Cref{disjoint_triads}, $M | (F_2 \cup F_3) \cong M(K_{2,3})$.
	
    Now suppose that $F_1 \cap F_3 \neq \emptyset$. This implies, by \cref{intersecting_fans,fan_ends}, that $e_1 \in F_3$. Since $F_3 \not\subseteq F_1 \cup F_2$, we have that $|F_3 \cap F_2| \leq 1$. Thus, orthogonality with $\{e_1,e_2,f_1,f_2\}$ and $\{e_1,e_3,f_1,f_3\}$ implies that $T^* = \{e_1,f_1,e\}$, for some $e \notin F_1 \cup F_2$. Now, $(e_1,F_1-\{e_1\},\{\{f_1,e\}\})$ is a contraction certificate, and $\lambda(F_1 \cup F_2 \cup \{e\})=2$. Additionally, $F_1 \cup F_2 \cup \{e\}$ contains a deletion certificate $(e_4,F_1-\{e_4\},\{F_2\})$.
	
    Let $g \notin F_1 \cup F_2 \cup \{e\}$. By \Cref{no_other_elements}, the element $g$ is contained in a maximal fan $G$ of length at least four. \Cref{even_fan_k3m2} implies that $G$ is a $4$-element-fan affixed to $F_2$, and $e_4 \in G$, so $G$ has an ordering $(g_1,g_2,g_3,e_4)$ such that $\{g_1,g_2,g_3\}$ is a triad. Furthermore, as $M / g_1$ is $3$-connected, $g_1 \in F_1 \cup \{e\}$, by \cref{contractable_collection}. But, by orthogonality, $g_1 \notin F_1$ so $g_1 = e$. Note that, since $G$ is a $4$-element-fan affixed to $F_2$, there is a circuit $C$ of $M$ containing $\{e,g_2\}$ and two elements of $F_2$.
	
    Since $|E(M)| \geq 11$, there exists $h \notin F_1 \cup F_2 \cup G$. As before, $h$ is contained in a maximal fan $H$ of length at least four with ordering $(e,h_2,h_3,e_4)$ such that $\{e,h_2,h_3\}$ is a triad and $H$ is disjoint from $F_2$. But this triad intersects the circuit $C$ in one element, a contradiction. Hence $F_1$ and $F_3$ are disjoint, which completes the proof.
\end{proof}

\begin{lemma} \label{even_fan_k3m}
    Let $M$ be a $3$-connected matroid with no detachable pairs such that $|E(M)| \geq 11$. Let $F_1 = (e_1,e_2,\ldots,e_{|F_1|})$ be a maximal fan of $M$ such that $|F_1| \geq 4$ and $\{e_1,e_2,e_3\}$ is a triad.
    If $M$ has a triad that is disjoint from $F_1$ and is not contained in a $4$-element fan, then $M$ is a hinged triad-paddle.
\end{lemma}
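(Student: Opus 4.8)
The plan is to use \cref{even_fan_k3m1,even_fan_k3m2,even_fan_k3m3} to produce a partition of $E(M)$ into triads together with one distinguished $4$-element set, and then to apply \cref{k3m_paddle}. To begin, let $F_2$ be a triad disjoint from $F_1$ that is not contained in a $4$-element fan. By \cref{even_fan_k3m1}, $F_1$ is a $4$-element-fan affixed to $F_2$; in particular $|F_1|=4$, so write $F_1=(e_1,e_2,e_3,e_4)$ with $\{e_1,e_2,e_3\}$ a triad, and set $x:=e_4$ and $P_1:=\{e_1,e_2,e_3\}$. Then $x$ is in no triad (by \cref{fan_ends}), $x\in\cl(F_2)$ since $F_1$ is affixed to $F_2$, and $F_2\cup\{x\}$ is not a $4$-element fan, since otherwise $F_2$ would lie in a $4$-element fan, contrary to hypothesis. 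Moreover, by \cref{even_fan_k3m1}(ii), every element of $E(M)-(F_1\cup F_2)$ lies in a triad.

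Next I would show that, for each $g\in E(M)-(F_1\cup F_2)$, there is a triad $T_g$ with $g\in T_g$ that is disjoint from $F_1\cup F_2$ and satisfies one of: (a)~$T_g\cup\{x\}$ is a $4$-element-fan affixed to $F_2$, or (b)~$M|(F_2\cup T_g)\cong M(K_{2,3})$. If $g$ lies in a $4$-element fan, then $g$ lies in a maximal fan $F_3$ with $|F_3|\ge 4$, distinct from $F_1$ and $F_2$, so \cref{even_fan_k3m2} gives $|F_3|=4$, $x\in F_3$, and $F_3$ affixed to $F_2$; since $x$ is in no triad, $T_g:=F_3-\{x\}$ works, and disjointness from $F_1\cup F_2$ follows by applying \cref{intersecting_fans} to the maximal length-four fans $F_1$ and $F_3$ (the $M(K_4)$-separator alternative being impossible, as such a separator contains only length-five maximal fans), together with \cref{fan_ends} and orthogonality with the circuits witnessing that $F_1$ is affixed to $F_2$. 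If $g$ lies in no $4$-element fan, then a triad $T_g$ through $g$ is contained in neither a $4$-element fan nor $F_1\cup F_2$, so \cref{even_fan_k3m3} applies and yields (b), with disjointness from $F_1\cup F_2$ automatic from its conclusion.

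The heart of the proof is to show that the triads $P_1$, $F_2$, and the $T_g$ are pairwise disjoint, so that they partition $E(M)-\{x\}$ (they certainly cover it, since $P_1\cup\{x\}=F_1$). The central tool is that two distinct triads meeting in two elements yield a $2$-element cocircuit, contradicting $3$-connectivity; so it suffices to rule out distinct triads $T,T'$ on the list with $T\cap T'=\{g\}$ a single element. When $g$ lies in a $4$-element fan, both $T\cup\{x\}$ and $T'\cup\{x\}$ are maximal length-four fans containing the two elements $g$ and $x$, and \cref{intersecting_fans} (again with the $M(K_4)$-separator case excluded) forces $T=T'$. When $g$ lies in no $4$-element fan, one works inside $M|(F_2\cup T\cup T')$, where $F_2\cup T$ and $F_2\cup T'$ each induce $M(K_{2,3})$, and derives $T=T'$ by orthogonality and cocircuit elimination. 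I expect this case analysis to be the main obstacle. It also shows that $|E(M)|\equiv 1\pmod 3$, since the partition has size $3m+1$ where $m-1$ is the number of triads other than $P_1$; hence $|E(M)|\ge 11$ in fact forces $|E(M)|\ge 13$ and at least two triads among the $T_g$. Relabelling these as $P_2,\ldots,P_{m-1}$ gives a partition $(F_2\cup\{x\},P_1,P_2,\ldots,P_{m-1})$ of $E(M)$ into the $4$-element set $F_2\cup\{x\}$ and $m-1$ triads, with $m\ge 4$.

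Finally I would verify the hypotheses of \cref{k3m_paddle} for this partition, namely $r(P_i\cup P_j)=r(P_j)+1$ for every triad $P_i$ and every other part $P_j$. For $P_j=F_2\cup\{x\}$ this follows, via $x\in\cl(F_2)$, from $r(P_i\cup F_2)=r(F_2)+1$, which comes from the affixing circuits in case (a) and from $r(M(K_{2,3}))=4$ in case (b). For two triads $P_i,P_j$, one first obtains $\lambda(P_i\cup P_j)=2$ from the submodularity estimate used in the proof of \cref{k3m_paddle} (since $E(M)-(P_i\cup P_j)$ is a union of $F_1\cup F_2$ with triads), whence $r(P_i\cup P_j)+r^*(P_i\cup P_j)=|P_i\cup P_j|+2$; then \cref{no_segment} applied to $M^*$ excludes $r^*(P_i\cup P_j)=2$, and orthogonality with the affixing circuits excludes $r^*(P_i\cup P_j)=3$, so $r(P_i\cup P_j)=4$. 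Thus \cref{k3m_paddle} shows $(P_1,\ldots,P_{m-1},F_2\cup\{x\})$ is a paddle with $x\notin F_2$. Combined with the facts that each $P_i$ is a triad, that $x\in\cl(F_2)$ while $F_2\cup\{x\}$ is not a $4$-element fan, and that for each $i\in[m-1]$ the petal $P_i\cup\{x\}$ is a $4$-element-fan affixed to $F_2$ (case (a), noting $P_1\cup\{x\}=F_1$) or $M|(P_i\cup F_2)\cong M(K_{2,3})$ (case (b)), this is exactly the statement that $M$ is a hinged triad-paddle with partition $(P_1,\ldots,P_{m-1},F_2,\{x\})$, which completes the proof.
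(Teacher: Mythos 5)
Your overall route is the same as the paper's: apply \cref{even_fan_k3m1} to get $|F_1|=4$ with $F_1$ affixed to $F_2$ and every element outside $F_1\cup F_2$ in a triad, classify each such triad via \cref{even_fan_k3m2} (affixed $4$-fan through $e_4$) or \cref{even_fan_k3m3} ($M(K_{2,3})$ with $F_2$), assemble the partition $(P_1,\dotsc,P_m,\{e_4\})$, and finish with \cref{k3m_paddle}. The paper states the partition step without detail, and your attempt to justify it is where the problems lie.

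Two of your supporting claims are flawed. First, the assertion that two distinct triads meeting in two elements ``yield a $2$-element cocircuit, contradicting $3$-connectivity'' is false: cocircuit elimination on $\{a,b,c\}$ and $\{a,b,d\}$ only produces cocircuits inside $\{b,c,d\}$ and $\{a,c,d\}$, which by cosimplicity are again triads, so you get four elements of corank two --- not a contradiction with $3$-connectivity. The correct tool is the dual of \cref{no_segment} (this needs the no-detachable-pairs hypothesis and $|E(M)|\ge 11$, both available, and is how the paper handles intersecting triads elsewhere, e.g.\ in \cref{intersecting_no_other_triads}). Second, your dichotomy for a one-element intersection $T\cap T'=\{g\}$ does not cover the mixed case: if $T$ is of affixed-fan type but $T'$ is of $M(K_{2,3})$ type, then $g$ does lie in a $4$-element fan, yet $T'\cup\{x\}$ is \emph{not} a fan, so the claim that ``both $T\cup\{x\}$ and $T'\cup\{x\}$ are maximal length-four fans'' fails; and the $(K_{2,3},K_{2,3})$ case is left as a hope. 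All of these cases (including the two-element intersections) in fact follow from one uniform orthogonality argument you already have the ingredients for: each triad $T$ in your collection comes with $4$-element circuits containing exactly two elements of $T$ and two elements of $F_2$ (the affixing circuits in case (a), the $4$-cycles of $M(K_{2,3})$ in case (b)); a second triad $T'$ of the collection that meets $T$ but is not equal to it, being disjoint from $F_2$, must meet one of these circuits in exactly one element, contradicting orthogonality. With that repair, and with your verification of the rank hypotheses of \cref{k3m_paddle} tightened along the lines you sketch (corank subadditivity gives $r^*(P_i\cup P_j)\le 4$, and the same linking circuits rule out a cocircuit of $M$ inside $P_i\cup P_j$ other than $P_i$ and $P_j$, which disposes of $r^*\in\{2,3\}$), your argument does go through and reproduces the paper's proof in expanded form.
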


\begin{proof}
    Let $F_2$ be a triad of $M$ that is disjoint from $F_1$ and is not contained in a $4$-element fan.
    By \Cref{even_fan_k3m1}, $F_1$ is a $4$-element-fan affixed to $F_2$, and there is an element $e \notin F_1 \cup F_2$ that is contained in a triad $T^*$. If $T^*$ is contained in a $4$-element fan, then \Cref{even_fan_k3m2} implies that $T^* \cup \{e_4\}$ is a $4$-element-fan affixed to each of $F_1-\{e_4\}$ and $F_2$. Otherwise, \Cref{even_fan_k3m3} implies that $F_1$ is a $4$-element-fan affixed to $T^*$, and $M | (F_2 \cup T^*) \cong M(K_{2,3})$.
	
    It follows that there is a partition $(P_1,P_2,\ldots,P_m,\{e_4\})$ of $E(M)$ with $m \geq 3$ and $P_m = F_2$ such that $P_i$ is a triad for all $i \in [m]$. Furthermore, for all $i \in [m-1]$, either $P_i \cup \{e_4\}$ is a $4$-element-fan affixed to $P_m$, or $M | (P_i \cup P_m) \cong M(K_{2,3})$. By \Cref{k3m_paddle}, $(P_1,P_2,\ldots,P_m \cup \{e_4\})$ is a paddle of $M$, thereby completing the proof.
\end{proof}

\subsection*{\texorpdfstring{$\mathbf{F_2}$}{F2} is odd}
Next, we consider the case where $F_2$ has odd length at least five, and show, as \cref{augmented_fan_petal}, that either $M$ has a detachable pair, or $M$ is a quasi-triad-paddle with an augmented-fan petal.
We start with a lemma that shows, in particular, that if $M$ has no detachable pairs, then $F_1$ has length four and $F_2$ has length five.

\begin{lemma} \label{augmented_fan_petal1}
    Let $M$ be a $3$-connected matroid with no detachable pairs such that $|E(M)| \geq 12$. Let $F_1 = (e_1,e_2,\ldots,e_{|F_1|})$ and $F_2 = (f_1,f_2,\ldots,f_{|F_2|})$ be disjoint maximal fans of $M$ with length at least four, such that $|F_2|$ is odd. If $\{e_1,e_2,e_3\}$ and $\{f_1,f_2,f_3\}$ are both triads, then
	\begin{enumerate}
		\item $|F_1| = 4$ and $|F_2| = 5$ and $F_2 \cup \{e_4\}$ is an augmented fan affixed to $\{e_1,e_2,e_3\}$, and
		\item every element of $E(M) - (F_1 \cup F_2)$ is contained in a triad.
	\end{enumerate}
\end{lemma}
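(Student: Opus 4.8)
The plan is to read off the structure by applying \cref{odd_fan} dually and then invoking the connectivity machinery of \cref{lemmas}. Since $M$ has two disjoint maximal fans of length at least four it is not a wheel or a whirl, and since detachable pairs are self-dual, $M^{*}$ also has no detachable pairs. Fans, and maximal fans, are preserved under duality, so $F_{2}$ is a maximal fan of $M^{*}$; in $M^{*}$ the set $\{f_{1},f_{2},f_{3}\}$ is a triangle, so $F_{2}$ is a maximal fan of $M^{*}$ of odd length at least five whose first triple is a triangle. Applying \cref{odd_fan} to $M^{*}$ therefore gives $|F_{2}|=5$ together with an element $z\in E(M)-F_{2}$ such that $\{f_{1},f_{3},f_{5},z\}$ is a circuit of $M$. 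This already secures the requirement $|F_{2}|=5$, and it will give condition~(2) of the definition of an augmented fan once we have shown $z=e_{4}$.

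The heart of the argument is to prove $z=e_{4}$ and $|F_{1}|=4$. First, since $\{f_{1},f_{2},f_{3}\}$ is a triad and $F_{2}$ is a maximal fan of length at least four, the consequence of \cref{fan_ends} gives that $M/f_{1}$ is $3$-connected. In $M/f_{1}$ the set $\{f_{3},f_{5},z\}$ is a triangle, and together with the triangle $\{f_{2},f_{3},f_{4}\}$ and the triad $\{f_{3},f_{4},f_{5}\}$ it yields the fan $(f_{2},f_{4},f_{3},f_{5},z)$ of $M/f_{1}$. Next, orthogonality between the circuit $\{f_{1},f_{3},f_{5},z\}$ and the triads of $M$ gives at once that $z\notin\{e_{1},e_{2},e_{3}\}$ and, more generally, that $z$ lies in no triad of $M$ contained in $F_{1}$; by the parity of $F_{1}$ this forces either $z\notin F_{1}$, or $z=e_{|F_{1}|}$ with $|F_{1}|$ even. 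To eliminate the remaining possibilities and conclude $z=e_{4}$, I would study the maximal fan of $M/f_{1}$ that extends $(f_{2},f_{4},f_{3},f_{5},z)$, using \cref{circuit_create_deletable_elements} when $M/f_{1}$ is not a wheel or a whirl, and a direct argument when it is; the no-detachable-pairs hypothesis rules out the configurations that would produce two contractible ends, while \cref{contractable_circuit} and orthogonality place $z$ against the triad $\{e_{1},e_{2},e_{3}\}$ at the start of $F_{1}$ and bound $|F_{1}|$. I expect this to be the main obstacle, requiring a careful case analysis on the triad(s) through $z$ and on the ends of $F_{1}$.

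Once $z=e_{4}$ and $|F_{1}|=4$ are in hand, I would obtain the two remaining circuits of the augmented-fan structure. Since $M/f_{1}$ is $3$-connected and $e_{1}\in\cl^{*}(\{e_{2},e_{3}\})$ with $e_{1}$ in no triangle, applying \cref{contractable_circuit} (the dual of \cref{deletable_cocircuit}) with $X=\{e_{1},e_{2},e_{3}\}$ and the contractible element $f_{1}$, and then with $f_{5}$, produces $4$-element circuits meeting $\{e_{1},e_{2},e_{3}\}$ and the ends of $F_{2}$; orthogonality with the triangles and triads of $F_{2}$ identifies them, after a suitable labelling, as $\{e_{1},e_{2},f_{1},f_{2}\}$ and $\{e_{1},e_{3},f_{4},f_{5}\}$. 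As $F_{1}=(e_{1},e_{2},e_{3},e_{4})$ is a maximal fan of length four with ends $e_{1}$ and $e_{4}$ and $\{e_{1},e_{2},e_{3}\}$ is a triad, this verifies that $F_{2}\cup\{e_{4}\}$ is an augmented fan affixed to $\{e_{1},e_{2},e_{3}\}$, which is conclusion~(1).

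Finally, for conclusion~(2) I would set $X=F_{1}\cup F_{2}$. Because $e_{4}\in\cl(\{f_{1},f_{3},f_{5}\})\subseteq\cl(F_{2})$, $e_{1}\in\cl(\{e_{2},f_{1},f_{2}\})$ and $e_{3}\in\cl(\{e_{1},f_{4},f_{5}\})$, we get $r(X)=r(F_{2}\cup\{e_{2}\})\le r(F_{2})+1=5$, while by subadditivity of corank $r^{*}(X)\le r^{*}(F_{2})+r^{*}(\{e_{1},e_{2},e_{3}\})+r^{*}(\{e_{4}\})=3+2+1=6$, so $\lambda(X)=r(X)+r^{*}(X)-|X|\le 5+6-9=2$, hence $\lambda(X)=2$; moreover $|E(M)|\ge 12=|X|+3$. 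The triple $(e_{4},\{e_{1},e_{2},e_{3}\},\{\{f_{1},f_{3},f_{5}\}\})$ is a deletion certificate contained in $X$, since the two sets are disjoint, $e_{4}\in\cl(\{e_{1},e_{2},e_{3}\})\cap\cl(\{f_{1},f_{3},f_{5}\})$, and $e_{4}$ lies in no triad by \cref{fan_ends}. Taking $Y=\{f_{2},f_{3},f_{4},f_{5}\}$, a fan of length four so that $\lambda(Y)=2$, and $y=f_{1}$, we have $f_{1}\in\cl^{*}(\{f_{2},f_{3}\})$, $f_{1}$ is in no triangle, and every element of $F_{2}$ lies in the closure of $X$ with itself removed, via $\{f_{2},f_{3},f_{4}\}$, $\{e_{1},e_{2},f_{1},f_{2}\}$ and $\{e_{1},e_{3},f_{4},f_{5}\}$. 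Thus \cref{deletable_collection_contractable_el} applies with this $X$, $Y$ and $y$, and its conclusion is exactly that every element of $E(M)-(F_{1}\cup F_{2})$ is contained in a triad.
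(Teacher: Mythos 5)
There is a genuine gap, and it sits exactly at the point you yourself flag as "the main obstacle''. The lemma's content in (i) is the identification $|F_1|=4$ and $z=e_4$ (where $\{f_1,f_3,f_5,z\}$ is the circuit supplied by the dual of \cref{odd_fan}), and your proposal never establishes either fact: you only say you "would study'' the maximal fan of $M/f_1$ extending $(f_2,f_4,f_3,f_5,z)$ and expect "a careful case analysis'' to work out. Everything after that, including your verification of the augmented-fan conditions and your deletion certificate $(e_4,\{e_1,e_2,e_3\},\{\{f_1,f_3,f_5\}\})$ for part (ii), presupposes $z=e_4$ and $|F_1|=4$, so the argument as written is circular in its dependencies and incomplete at its core. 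A second, related gap: you assert that the two circuits through $\{e_1,f_1\}$ and $\{e_1,f_5\}$ can be labelled $\{e_1,e_2,f_1,f_2\}$ and $\{e_1,e_3,f_4,f_5\}$ "after a suitable labelling'', but orthogonality alone only gives $\{e_1,e_i,f_1,f_2\}$ and $\{e_1,e_j,f_4,f_5\}$ with $i,j\in\{2,3\}$; you must rule out $i=j$ (needed for the augmented-fan definition, which uses two distinct elements $t_2,t_3$ of the triad), and no relabelling of a length-four fan can fix this if $i=j$.

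For comparison, the paper's route avoids your obstacle entirely and reverses the order of deductions. It first produces the circuits $C_1=\{e_1,e_i,f_1,f_2\}$ and $C_2=\{e_1,e_j,f_4,f_5\}$ from the dual of \cref{contractable_circuit} (using that $f_1$ and $f_5$ are contractible ends of $F_2$), then shows $i\neq j$ by circuit elimination: if $i=j$ one gets a circuit inside $\{f_1,f_2,f_4,f_5\}$, contradicting $\lambda(F_2)=2$. The same fact immediately yields $|F_1|=4$, since if $|F_1|\geq 5$ the circuit containing $e_3$ meets the triad $\{e_3,e_4,e_5\}$ in one element. Next, $e_4\in\cl(F_2)$ is deduced from $r(F_1\cup F_2)=r(F_2)+1$, $\lambda(F_1\cup F_2)=2$ and orthogonality with $\{e_1,e_2,e_3\}$ — crucially, without knowing $z=e_4$ — which gives the deletion certificate $(e_4,F_1-\{e_4\},\{F_2\})$ and hence conclusion (ii) via \cref{deletable_collection_contractable_el}. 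Only then is $z=e_4$ proved: if $z\neq e_4$ then $z\notin F_1\cup F_2$, so by (ii) it lies in a triad, which by orthogonality with $\{f_1,f_3,f_5,z\}$ and with $C_1$ or $C_2$ must contain $e_1$ and one of $f_1,f_5$, forcing $\lambda(F_1\cup F_2\cup\{z\})\leq 1$, a contradiction. If you want to salvage your outline, you should adopt this ordering (circuits first, then $|F_1|=4$ via $i\neq j$, then (ii), then $z=e_4$) rather than attempting a fan analysis in $M/f_1$, which the paper never needs.
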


\begin{proof}
    The dual of \Cref{odd_fan} implies that $|F_2| = 5$. By the dual of \cref{contractable_circuit}, there is a $4$-element circuit $C_1$ of $M$ containing $\{e_1,f_1\}$. Orthogonality with the triad $\{e_1,e_2,e_3\}$ implies that either $e_2 \in C_1$ or $e_3 \in C_1$, and orthogonality with the triads $\{f_1,f_2,f_3\}$ and $\{f_3,f_4,f_5\}$ implies that $f_2 \in C_1$. Hence, $C_1 = \{e_1,e_i,f_1,f_2\}$ with $i \in \{2,3\}$. Similarly, $M$ has a $4$-element circuit $C_2 = \{e_1,e_j,f_4,f_5\}$ with $j \in \{2,3\}$. If $i = j$, then circuit elimination implies $M$ has a circuit contained in $\{f_1,f_2,f_4,f_5,e_1\}$, and $e_1$ is not contained in this circuit by orthogonality with $\{e_1,e_2,e_3\}$. But now $M$ has a circuit contained in $\{f_1,f_2,f_4,f_5\}$, which means $\lambda(F_2) \leq 1$, a contradiction. Therefore, either $i = 3$ or $j = 3$, which contradicts orthogonality with $\{e_3,e_4,e_5\}$ if $|F_1| \geq 5$. Hence, $|F_1| = 4$.
	
    Note that $r(F_1 \cup F_2) = r(F_2) + 1$ and $\lambda(F_1 \cup F_2) = 2$. Furthermore, by orthogonality with the triad $\{e_1,e_2,e_3\}$, we have that $e_1,e_2,e_3 \notin \cl(F_2 \cup \{e_4\})$. It follows that $e_4 \in \cl(F_2)$, so $(e_4,F_1-\{e_4\},\{F_2\})$ is a deletion certificate. Since $e_1 \in \cl^*(F_1 - \{e_1\})$ and $|E(M)| \geq |F_1 \cup F_2| + 3 = 12$, \Cref{deletable_collection_contractable_el} implies that every element of $E(M) - (F_1 \cup F_2)$ is contained in a triad.
	
	Now, to show that $F_2 \cup \{e_4\}$ is an augmented fan affixed to $\{e_1,e_2,e_3\}$, it remains to show that $\{f_1,f_3,f_5,e_4\}$ is a circuit of $M$. By the dual of \Cref{odd_fan}, there is a $4$-element circuit $\{f_1,f_3,f_5,z\}$, with $z \notin F_2$. Assume, towards a contradiction, that $z \neq e_4$. It follows, by orthogonality, that $z \notin F_1 \cup F_2$, and thus $z$ is contained in a triad $T^*$. Orthogonality with the circuit $\{f_1,f_3,f_5,z\}$ implies that either $f_1 \in T^*$ or $f_5 \in T^*$. Furthermore, orthogonality with either $C_1$ or $C_2$ implies that $e_1 \in T^*$.
	But now $z \in \cl(F_2)$ and $z \in \cl^*(F_1 \cup F_2)$, so $\lambda(F_1 \cup F_2 \cup \{z\}) \leq 1$. This is a contradiction, since $|E(M)| \geq |F_1 \cup F_2 \cup \{z\}| + 2 = 12$, so $z=e_4$, as required.
\end{proof}

\begin{lemma} \label{augmented_fan_petal2}
    Let $M$ be a $3$-connected matroid with no detachable pairs such that $|E(M)| \geq 12$. Let $F_1 = (e_1,e_2,e_3,e_4)$ and $F_2 = (f_1,f_2,f_3,f_4,f_5)$ be disjoint maximal fans of $M$ such that $\{e_1,e_2,e_3\}$ and $\{f_1,f_2,f_3\}$ are both triads. Let $e \in E(M) - (F_1 \cup F_2)$. Then $e$ is contained in a triad $T^*$ such that $F_2 \cup \{e_4\}$ is an augmented fan affixed to $T^*$, and $M | (T^* \cup \{e_1,e_2,e_3\}) \cong M(K_{2,3})$.
\end{lemma}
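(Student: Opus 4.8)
The plan is to build on the structural information already extracted in \cref{augmented_fan_petal1}. Recall from that lemma that $|F_1|=4$, $|F_2|=5$, and that, after possibly interchanging $e_2$ and $e_3$, the matroid $M$ has circuits $C_1=\{e_1,e_2,f_1,f_2\}$, $C_2=\{e_1,e_3,f_4,f_5\}$ and $C_3=\{f_1,f_3,f_5,e_4\}$, while every element of $E(M)-(F_1\cup F_2)$ lies in a triad. In particular $e$ lies in some triad, and the first task is to show that \emph{every} triad $T^*$ containing $e$ is disjoint from $F_1\cup F_2$. This is a routine orthogonality argument: $T^*$ is orthogonal to $C_1$, $C_2$, $C_3$ and to the triangles $\{e_2,e_3,e_4\}$, $\{f_2,f_3,f_4\}$ of the two fans, and $T^*\not\subseteq F_1\cup F_2$ because $e\notin F_1\cup F_2$; combining these with \cref{fan_ends,unique_triangle,intersecting_fans} rules out every nonempty intersection (for instance, $T^*\cap F_2=\{f_1\}$ would force, via orthogonality with $C_1$ and $C_3$, a second element of $F_1$ into $T^*$, which is impossible).

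Assume for the moment that $T^*$ is not contained in a $4$-element fan. Applying \cref{disjoint_triads} to the $4$-element fan $F_1$, which starts with the triad $\{e_1,e_2,e_3\}$, and to $T^*$: outcomes (i) and (ii) fail since $T^*\cap F_1=\emptyset$ and $|F_1|=4$, and outcome (iv) is impossible because $|F_1\cup T^*|=7$ while $M(K_{2,3})$ has six elements. Hence $F_1$ is a $4$-element-fan affixed to $T^*$, so $e_4\in\cl(T^*)$ and there are $4$-element circuits $D_1\subseteq\{e_1,e_2\}\cup T^*$ and $D_2\subseteq\{e_1,e_3\}\cup T^*$, each meeting $T^*$ in exactly two elements.

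Now a sequence of circuit-elimination and orthogonality steps, using $D_1$, $D_2$, $C_1$, $C_2$, $C_3$ and the triad structure of $F_2$, should pin down a labelling $T^*=\{t_1,t_2,t_3\}$ for which $\{t_2,t_3,e_4\}$ is a triangle, $\{t_1,t_2,f_1,f_2\}$ and $\{t_1,t_3,f_4,f_5\}$ are circuits, and $T^*\cup\{e_4\}$ is a $4$-element fan with ends $e_4$ and $t_1$; together with $C_3$ and the triad $\{f_1,f_2,f_3\}$, this is exactly the statement that $F_2\cup\{e_4\}$ is an augmented fan affixed to $T^*$. Eliminating the $F_2$-part of $\{t_1,t_2,f_1,f_2\}$ against $C_1$ and of $\{t_1,t_3,f_4,f_5\}$ against $C_2$ then yields $4$-element circuits inside $T^*\cup\{e_1,e_2,e_3\}$; together with the triads $\{e_1,e_2,e_3\}$ and $T^*$ and a rank computation (via submodularity, as in the proof of \cref{disjoint_triads}, giving $r(T^*\cup\{e_1,e_2,e_3\})=4$), these identify $M|(T^*\cup\{e_1,e_2,e_3\})$ with $M(K_{2,3})$.

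The main obstacle is twofold. First, the bookkeeping in the previous paragraph must be done carefully: one must check that each circuit-elimination step produces a circuit of precisely the required size and shape, since a spurious coincidence of two of the derived circuits would collapse $F_1\cup F_2\cup T^*$ to a set of connectivity at most $1$, contradicting $|E(M)|\geq12$. Second, one must treat the subcase in which the triad $T^*$ is itself contained in a $4$-element fan $G$; there the triad of $G$ is $T^*$ and the remaining element $g$ of $G$ lies in a triangle, and the task is to show $g=e_4$ --- if $g\in F_1\cup F_2$ this follows from orthogonality with $C_1$, $C_2$, $C_3$ and \cref{fan_ends,unique_triangle}, while if $g\notin F_1\cup F_2$ one uses that $g$ then lies in a triad (by \cref{augmented_fan_petal1}) together with a count of $E(M)-(F_1\cup F_2)$ in the spirit of \cref{no_other_elements} to reach a contradiction, after which the argument rejoins the main case with $T^*\cup\{e_4\}=G$.
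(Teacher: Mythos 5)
There is a genuine gap, and it lies in how you have organised the case analysis. Your detailed argument (the circuits $D_1,D_2$ obtained by applying \cref{disjoint_triads} to $F_1$ and $T^*$, followed by the circuit-elimination bookkeeping) is only available under the hypothesis that $T^*$ is \emph{not} contained in a $4$-element fan, because \cref{disjoint_triads} explicitly requires that. But that case is precisely the one that cannot occur: the conclusion you are driving towards includes the statement that $T^*\cup\{e_4\}$ is a $4$-element fan, so your ``main case'' derivation, if it succeeds, contradicts its own case hypothesis (the paper dispatches this case outright: \cref{disjoint_triads} applied to the $5$-element fan $F_2$ forces $T^*=\{f_1,f_5,e\}$, and then \cref{even_fan_k3m} makes $M$ a hinged triad-paddle, which is incompatible with the $5$-element fan $F_2$). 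Consequently all of the substantive content of the lemma must be proved in the other case, where $T^*$ does lie in a $4$-element fan --- and there your plan to ``rejoin the main case'' is not legitimate, since $D_1$ and $D_2$ are exactly what you no longer have: knowing only that $T^*\cup\{e_4\}$ is a $4$-element fan gives neither the circuits $\{t_1,t_2,f_1,f_2\}$ and $\{t_1,t_3,f_4,f_5\}$ of the augmented-fan structure nor the $M(K_{2,3})$ restriction on $T^*\cup\{e_1,e_2,e_3\}$. The paper closes this case differently: it takes the \emph{maximal} fan $F_3$ containing $T^*$, shows (using the circuit $\{f_1,f_3,f_5,e_4\}$, \cref{fan_ends}, and \cref{intersecting_fans}) that $F_3$ is disjoint from $F_2$ with a triad at the appropriate end, and then reapplies \cref{augmented_fan_petal1} to the pair $(F_3,F_2)$ to conclude $|F_3|=4$ and that $F_2\cup\{e_4\}$ is an augmented fan affixed to $T^*$; the $M(K_{2,3})$ statement then follows by circuit elimination and orthogonality from the two ``affixed'' structures. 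Your proposal contains no substitute for this reapplication step, nor for showing that the maximal fan containing $T^*$ has length exactly four.

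Two smaller remarks. Your preliminary claim that every triad containing $e$ is disjoint from $F_1\cup F_2$ is correct and provable by the orthogonality argument you sketch, but it does not repair the gap above; it only removes outcome (ii) of \cref{disjoint_triads} in a case that is vacuous anyway. Also, in your final paragraph the subcase $g\notin F_1\cup F_2$ is waved at with ``a count in the spirit of \cref{no_other_elements}''; as stated this is not an argument, and in the paper the corresponding point is handled inside the $(F_3,F_2)$ application of \cref{augmented_fan_petal1} (where $g_4=e_4$ follows from the uniqueness of the element completing $\{f_1,f_3,f_5\}$ to a circuit), not by a counting argument.
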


\begin{proof}
    By \Cref{augmented_fan_petal1}, $F_2 \cup \{e_4\}$ is an augmented fan affixed to $\{e_1,e_2,e_3\}$, and $e$ is contained in a triad $T^*$. Suppose $T^*$ is not contained in a $4$-element fan. Since $|F_2| = 5$, \Cref{disjoint_triads}(i), (iii), and (iv) do not hold. Thus, $f_1 \in T^*$. Furthermore, by reversing the ordering of $F_2$, we see that $f_5 \in T^*$. Hence, $T^* = \{f_1,f_5,e\}$. But now $F_1$ and $T^*$ are disjoint. This contradicts \Cref{even_fan_k3m}, since $M$ has a $5$-element fan $F_2$.
	
    So $T^*$ is contained in a $4$-element fan. Let $F_3$ be the maximal fan containing $T^*$, and let $(g_1,g_2,\ldots,g_{|F_3|})$ be an ordering of $F_3$. Suppose that $g_1 \in F_2$. Then, by \Cref{fan_ends}, $g_1 \in \{f_1,f_5\}$ and $\{g_1,g_2,g_3\}$ is a triad. Since $F_2 \cup \{e_4\}$ is an augmented fan affixed to $\{e_1,e_2,e_3\}$, orthogonality implies that $F_2 \cup F_3$ is not an $M(K_4)$-separator in $M^*$. Thus, \Cref{intersecting_fans} implies that $g_2,g_3 \notin F_2$, and \Cref{fan_ends} implies that $e_4 \notin \{g_1,g_2,g_3\}$. But now the triad $\{g_1,g_2,g_3\}$ intersects the circuit $\{f_1,f_3,f_5,e_4\}$ in one element, a contradiction. So $g_1 \notin F_2$ and, similarly, $g_{|F_3|} \notin F_2$, which implies $F_2$ and $F_3$ are disjoint. If $\{g_1,g_2,g_3\}$ is a triangle, then $g_1 \in F_1 \cup F_2$, so $g_1 = e_4$. Similarly, if $\{g_{|F_3|-2},g_{|F_3|-1},g_{|F_3|}\}$ is a triangle, then $g_{|F_3|} = e_4$. Therefore, either $\{g_1,g_2,g_3\}$ or $\{g_{|F_3|-2},g_{|F_3|-1},g_{|F_3|}\}$ is a triad, so we may assume that $\{g_1,g_2,g_3\}$ is a triad. Thus, by \Cref{augmented_fan_petal1}, $|F_3| = 4$ and $F_2 \cup \{g_4\}$ is an augmented fan affixed to $T^* = \{g_1,g_2,g_3\}$, with $g_4 = e_4$. Finally, since $F_2 \cup \{e_4\}$ is an augmented fan affixed to both $\{e_1,e_2,e_3\}$ and $T^*$, circuit elimination and orthogonality implies that $M | (T^* \cup \{e_1,e_2,e_3\}) \cong M(K_{2,3})$.
\end{proof}

\begin{lemma} \label{augmented_fan_petal}
    Let $M$ be a $3$-connected matroid with no detachable pairs such that $|E(M)| \geq 12$. Let $F_1 = (e_1,e_2,\ldots,e_{|F_1|})$ and $F_2 = (f_1,f_2,\ldots,f_{|F_2|})$ be disjoint maximal fans of $M$ with length at least four, such that $F_2$ is odd. If $\{e_1,e_2,e_3\}$ and $\{f_1,f_2,f_3\}$ are both triads, then $M$ is a quasi-triad-paddle with an augmented-fan petal.
\end{lemma}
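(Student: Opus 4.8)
The plan is to assemble the partition witnessing the quasi-triad-paddle structure from \cref{augmented_fan_petal1,augmented_fan_petal2}, verify it is a paddle using \cref{k3m_paddle}, and then recognise $M$ with the augmented-fan petal deleted as $M(K_{3,m-1})$. Write $P_1 = \{e_1,e_2,e_3\}$ and $Q = F_2 \cup \{e_4\}$. By \cref{augmented_fan_petal1} we have $|F_1| = 4$ and $|F_2| = 5$, the set $Q$ is an augmented fan affixed to $P_1$, and every element of $E(M) - (F_1 \cup F_2)$ lies in a triad. Since $E(M) - Q = P_1 \cup (E(M) - (F_1 \cup F_2))$ and $|E(M)| \ge 12$, \cref{augmented_fan_petal2} shows each element of $(E(M)-Q) - P_1$ lies in a triad $T^*$ with $T^* \cap Q = \emptyset$, with $T^* \cup \{e_4\}$ a maximal $4$-element fan, with $Q$ an augmented fan affixed to $T^*$, and with $M|(T^* \cup P_1) \cong M(K_{2,3})$.

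First I would show that any two triads $T^*, S^*$ arising this way are equal or disjoint, so that together with $P_1$ they partition $E(M) - Q$ into triads $P_1,\dots,P_{m-1}$ with $m \ge 3$. I expect this disjointness to be the main obstacle of the proof: one assumes $T^* \cap S^* \ne \emptyset$ and derives a contradiction by playing the fan orderings of $T^* \cup \{e_4\}$ and $S^* \cup \{e_4\}$ (both maximal $4$-element fans ending at $e_4$, which lies in no triad by \cref{fan_ends}) against orthogonality with the circuit $\{f_1,f_3,f_5,e_4\}$ and the two circuits of clause~(iv) in the definition of an augmented fan affixed to a triad, invoking \cref{unique_triangle,intersecting_fans} to pin down the overlap. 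Granting the partition, for distinct $i,j \in [m-1]$ one checks $M|(P_i \cup P_j) \cong M(K_{2,3})$: this is immediate when $1 \in \{i,j\}$, and otherwise it follows by re-running \cref{augmented_fan_petal2} with $P_i \cup \{e_4\}$ in place of $F_1$ and an element of $P_j$ in place of $e$, since the triad it returns contains that element of $P_j$ and hence equals $P_j$ by the disjointness just established.

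It then remains to check the rank hypotheses of \cref{k3m_paddle} with $Q$ in the role of its first part. For distinct $i,j \in [m-1]$, the restriction $M|(P_i \cup P_j) \cong M(K_{2,3})$ gives $r(P_i \cup P_j) = 4 = r(P_j) + 1$; and for $i \in [m-1]$, since $e_4 \in \cl(F_2)$ we have $r(P_i \cup Q) = r((P_i \cup \{e_4\}) \cup F_2) = r(F_2) + 1 = 5 = r(Q) + 1$ by the rank computation in the proof of \cref{augmented_fan_petal1} applied with $P_i \cup \{e_4\}$ in place of $F_1$. Hence \cref{k3m_paddle} shows $(P_1,\dots,P_{m-1},Q)$ is a paddle. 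A short orthogonality argument against the circuits contained in $Q$ shows $M$ has no triad meeting both a $P_i$ and $Q$, so each $P_i$ remains a triad in $M \backslash Q$; thus $M \backslash Q$ is a matroid whose ground set partitions into the triads $P_1,\dots,P_{m-1}$ with every pair restricting to $M(K_{2,3})$, from which $M \backslash Q \cong M(K_{3,m-1})$, that is, $M \backslash Q$ is a triad-paddle with partition $(P_1,\dots,P_{m-1})$. Therefore $M$ is a quasi-triad-paddle with partition $(P_1,\dots,P_{m-1},Q)$, and since the petal $P_m = Q$ is an augmented fan affixed to each $P_i$ for $i \in [m-1]$, $M$ is a quasi-triad-paddle with an augmented-fan petal.
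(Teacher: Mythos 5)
Your proposal is correct and follows essentially the same route as the paper's proof: apply \cref{augmented_fan_petal1} and \cref{augmented_fan_petal2} to get that $F_2\cup\{e_4\}$ is an augmented fan affixed to $\{e_1,e_2,e_3\}$ and to a triad through each element outside $F_1\cup F_2$, assemble these triads into a partition with $M\backslash(F_2\cup\{e_4\})\cong M(K_{3,m-1})$, and invoke \cref{k3m_paddle} to certify the paddle. The extra details you supply (disjointness of the triads via \cref{intersecting_fans} and \cref{fan_ends}, the pairwise $M(K_{2,3})$ restrictions, and the rank checks for \cref{k3m_paddle}) are exactly what the paper compresses into its ``it follows that'' step, and they go through.
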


\begin{proof}
    By \Cref{augmented_fan_petal1}, we have that $|F_1| = 4$, $|F_2| = 5$, and $F_2 \cup \{e_4\}$ is an augmented fan affixed to $\{e_1,e_2,e_3\}$. Let $e \notin F_1 \cup F_2$. By \Cref{augmented_fan_petal2}, there exists a triad $T^*$ of $M$ containing $e$ such that $F_2 \cup \{e_4\}$ is an augmented fan affixed to $T^*$, and $M | (\{e_1,e_2,e_3\} \cup T^*) \cong M(K_{2,3})$. It follows that $E(M)$ has a partition $(P_1,P_2,\ldots,P_m)$ such that $P_m = F_2 \cup \{e_4\}$ and $M \backslash P_m \cong M(K_{3,m-1})$ and, for  all $i \in [m-1]$, the set $P_i$ is a triad and $P_m$ is an augmented fan affixed to $P_i$. By \Cref{k3m_paddle}, $(P_1,P_2,\ldots,P_m)$ is a paddle of $M$, completing the proof.
\end{proof}

\subsection*{\texorpdfstring{$\mathbf{F_1}$}{F1} and \texorpdfstring{$\mathbf{F_2}$}{F2} are even}
Finally, we consider the case where both $F_1$ and $F_2$ are even, and show that if $M$ has no detachable pairs, then $M$ is an even-fan-spike.
The next lemma shows that there are two cases to consider.  Subsequently, we prove a series of lemmas that are used in both cases.  \Cref{even_fan_spike4} then addresses the case where \cref{even_fan_spike1}(i) holds, and \cref{even_fan_spike} addresses the case where \cref{even_fan_spike1}(ii) holds.

Notice that, towards proving \cref{disjoint_fans_with_like_ends}, we may assume that $M$ has two disjoint maximal fans, each of which is even with length at least four. However, certain lemmas apply when one of the fans has length two; these lemmas will be useful again later on.

\begin{lemma} \label{even_fan_spike1}
    Let $M$ be a $3$-connected matroid with no detachable pairs. Let $F_1$ and $F_2$ be disjoint maximal fans of $M$, each of which is even with length at least four. Then there exist orderings $(e_1,e_2,\ldots,e_{|F_1|})$ and $(f_1,f_2,\ldots,f_{|F_2|})$ of $F_1$ and $F_2$ respectively such that $\{e_1,e_2,e_3\}$ and $\{f_1,f_2,f_3\}$ are triads, $\{e_1,e_2,f_1,f_2\}$ is a circuit, and either
    \begin{enumerate}
        \item $|F_1| = |F_2| = 4$ and $\{e_2,e_4,f_2,f_4\}$ is a cocircuit, or
        \item $\{e_{|F_1|-1},e_{|F_1|},f_{|F_2|-1},f_{|F_2|}\}$ is a cocircuit.
    \end{enumerate}
\end{lemma}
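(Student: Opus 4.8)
\emph{Orienting the fans.} Since $M$ has two disjoint maximal fans it is neither a wheel nor a whirl, so by \cref{not_just_fan} we have $|E(M)| \ge |F_1| + |F_2| \ge |F_1| + 4$ and $|E(M)| \ge |F_2| + 4$. Each of $F_1$ and $F_2$ is an even fan of length at least four, so exactly one of its two end-triples is a triad; choose fan orderings $(e_1,\dots,e_{|F_1|})$ and $(f_1,\dots,f_{|F_2|})$ with $\{e_1,e_2,e_3\}$ and $\{f_1,f_2,f_3\}$ both triads. By \cref{fan_ends}, $e_1$ lies in no triangle of $M$, and by that lemma together with Tutte's Triangle Lemma, $M/f_1$ is $3$-connected; the analogous facts hold with the roles of $F_1$ and $F_2$ swapped, and dually at the triangle-ends $e_{|F_1|}$ and $f_{|F_2|}$.

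\emph{The circuit.} Apply the dual of \cref{contractable_circuit} with $e = f_1$ (so $M/f_1$ is $3$-connected), with $X = F_1 - \{e_1\}$ (so $\lambda(X) = 2$ by \cref{fan_rank} and $|E(M)| \ge |X|+4$), and with $f = e_1$, noting that $e_1 \in \cl^*(\{e_2,e_3\}) \subseteq \cl^*(X)$ and $e_1$ lies in no triangle. This produces a $4$-element circuit $\{f_1,e_1,g,h\}$ with $g \in F_1-\{e_1\}$ and $h \notin F_1$. Orthogonality with the triad $\{e_1,e_2,e_3\}$ forces $g \in \{e_2,e_3\}$, and if $|F_1| > 4$ then orthogonality with the triad $\{e_3,e_4,e_5\}$ forces $g = e_2$; when $|F_1| = 4$ we may interchange $e_2$ and $e_3$ (which preserves the orientation) so that $g = e_2$. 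Symmetrically, orthogonality with $\{f_1,f_2,f_3\}$, and with $\{f_3,f_4,f_5\}$ when $|F_2| > 4$, forces $h = f_2$ after possibly interchanging $f_2$ and $f_3$. Hence $\{e_1,e_2,f_1,f_2\}$ is a circuit of $M$.

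\emph{The cocircuit, by duality.} The matroid $M^*$ is $3$-connected with no detachable pairs, and $F_1,F_2$ are disjoint maximal fans of $M^*$, each even of length at least four; reversing the two orderings yields fan orderings of $F_1,F_2$ in $M^*$ whose initial triples $\{e_{|F_1|},e_{|F_1|-1},e_{|F_1|-2}\}$ and $\{f_{|F_2|},f_{|F_2|-1},f_{|F_2|-2}\}$ are triads of $M^*$ (being triangles of $M$). Running the argument of the previous paragraph inside $M^*$ therefore yields a circuit of $M^*$, that is, a cocircuit $C^*$ of $M$, of the form $\{\bar e, e_{|F_1|}, \bar f, f_{|F_2|}\}$, where $\bar e = e_{|F_1|-1}$ when $|F_1| > 4$ (the ordering being unique up to reversal) and $\bar e \in \{e_{|F_1|-1},e_{|F_1|-2}\}$ when $|F_1| = 4$, and similarly for $\bar f$.

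\emph{Reconciling the orderings.} Apply orthogonality between the circuit $\{e_1,e_2,f_1,f_2\}$ and the cocircuit $C^*$. Since $|F_1|,|F_2| \ge 4$, the only elements that can lie in both are $e_2$ (which lies in $C^*$ precisely when $\bar e = e_2$) and $f_2$ (which lies in $C^*$ precisely when $\bar f = f_2$); hence the number of the statements ``$\bar e = e_2$'' and ``$\bar f = f_2$'' that hold is even. If $|F_1| > 4$ or $|F_2| > 4$, then one of $\bar e = e_{|F_1|-1} \neq e_2$ or $\bar f = f_{|F_2|-1} \neq f_2$ holds, so neither of the two statements holds; interchanging the two middle elements of whichever of $F_1,F_2$ has length four (which affects neither the orientation nor the circuit $\{e_1,e_2,f_1,f_2\}$) then gives $C^* = \{e_{|F_1|-1},e_{|F_1|},f_{|F_2|-1},f_{|F_2|}\}$, which is conclusion (ii). If $|F_1| = |F_2| = 4$, then either neither statement holds, in which case (after relabelling) $C^* = \{e_3,e_4,f_3,f_4\}$ and we are again in conclusion (ii); or both hold, in which case $C^* = \{e_2,e_4,f_2,f_4\}$ and we are in conclusion (i).

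\emph{Main obstacle.} Steps two and three are routine applications of the certificate lemmas and orthogonality. The delicate part is the bookkeeping in the final paragraph: two length-four fans have interchangeable middle elements, and the applications to $M$ and to $M^*$ each commit one such choice, so one must check via orthogonality between the circuit and cocircuit that these choices can be made simultaneously compatible (conclusion (ii)) except in the degenerate situation (conclusion (i)).
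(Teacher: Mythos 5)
Your proof is correct and takes essentially the same route as the paper's: the circuit is obtained from the dual of \cref{contractable_circuit} together with orthogonality against the triads at the triad-ends of the two fans, the cocircuit is the dual statement, and orthogonality between the circuit and the cocircuit (your parity count; in the paper, $e_2\in C\cap C^*$ forcing $f_2\in C^*$) separates conclusions (i) and (ii). One small remark: the interchange of middle elements in your final paragraph is unnecessary and should be dropped---once neither $\bar e=e_2$ nor $\bar f=f_2$ holds you already have $\bar e=e_{|F_1|-1}$ and $\bar f=f_{|F_2|-1}$, whereas actually performing such a swap would turn the labelled circuit into $\{e_1,e_3,f_1,f_2\}$, contrary to your parenthetical claim that it leaves the circuit unaffected.
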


\begin{proof}
    Let $(e_1,e_2,\ldots,e_{|F_1|})$ and $(f_1,f_2,\ldots,f_{|F_2|})$ be orderings of $F_1$ and $F_2$ respectively such that $\{e_1,e_2,e_3\}$ and $\{f_1,f_2,f_3\}$ are triads, and $\{e_{|F_1|-2},e_{|F_1|-1},e_{|F_1|}\}$ and $\{f_{|F_1|-2},f_{|F_1|-1},f_{|F_1|}\}$ are triangles. By the dual of \Cref{contractable_circuit}, there is a $4$-element circuit $C$ of $M$ containing $\{e_1,f_1\}$. Orthogonality implies that the other two elements of $C$ are $e_2$ or $e_3$, and $f_2$ or $f_3$. If $|F_1| > 4$, then orthogonality with $\{e_3,e_4,e_5\}$ implies that $e_2 \in C$. Furthermore, if $|F_1| = 4$, then, up to the ordering of $F_1$, we may assume that $e_2 \in C$. Similarly, we may assume that $f_2 \in C$. Thus, $C = \{e_1,e_2,f_1,f_2\}$.
	
    By \Cref{deletable_cocircuit}, there is a $4$-element cocircuit $C^*$ of $M$ containing $\{e_{|F_1|},f_{|F_2|}\}$, and, by orthogonality, $e_{|F_1|-2}$ or $e_{|F_1|-1}$, and $f_{|F_2|-2}$ or $f_{|F_2|-1}$. If $C^* = \{e_{|F_1|-1},e_{|F_1|},f_{|F_2|-1},f_{|F_2|}\}$, then (ii) holds. Otherwise, either $e_{|F_1|-2} \in C^*$ or $f_{|F_2|-2} \in C^*$. Without loss of generality, assume the former. If $|F_1| > 4$, then $C^*$ intersects the triangle $\{e_{|F_1|-4},e_{|F_1|-3},e_{|F_1|-2}\}$ in one element, so $|F_1| = 4$. Now, $e_2 \in C \cap C^*$, so orthogonality implies that $f_2 \in C^*$. Thus, $|F_1| = |F_2| = 4$ and $\{e_2,e_4,f_2,f_4\}$ is a cocircuit, so (i) holds, thereby completing the proof of the lemma.
\end{proof}

\begin{lemma} \label{fan_closure}
	Let $M$ be a $3$-connected matroid. Let $F = (e_1,e_2,\ldots,e_{|F|})$ be a maximal fan of $M$ with length at least two such that either $|F| = 2$ or $\{e_1,e_2,e_3\}$ is a triad. Suppose there exists a $4$-element circuit $C = \{e_1,e_i,a,b\}$ of $M$ with $i \in \{2,3\}$ and $a,b \notin F$. Then for all $x \in E(M) - (F \cup C)$, we have that $x \notin \cl^*(F)$.
\end{lemma}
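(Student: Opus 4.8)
The plan is to dispose of the case $|F|=2$ directly and then argue by orthogonality. If $|F|=2$, then, $F=\{e_1,e_2\}$ being a maximal fan, it lies in no triad, so $\cl^*(F)=F\subseteq F\cup C$ and there is nothing to prove. So assume $|F|\ge 3$, so that $\{e_1,e_2,e_3\}$ is a triad. Since $a,b\notin F$ we have $|E(M)|\ge|F|+2$, so $M$ is not a wheel or a whirl and \cref{unique_triangle} applies to $F$. We may take $i=2$: if $|F|\ge 5$ then $\{e_3,e_4,e_5\}$ is a triad, and $i=3$ would give $|C\cap\{e_3,e_4,e_5\}|=1$, contradicting orthogonality; if $|F|=4$ we may replace the fan ordering $(e_1,e_2,e_3,e_4)$ by $(e_1,e_3,e_2,e_4)$; and if $|F|=3$ the ordering of the triad is arbitrary. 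Henceforth $C=\{e_1,e_2,a,b\}$.

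Suppose, for a contradiction, that $x\in\cl^*(F)$ for some $x\in E(M)-(F\cup C)$. Then there is a cocircuit $C^*$ with $x\in C^*\subseteq F\cup\{x\}$; choose one with $|C^*\cap F|$ minimum. Since $M$ is $3$-connected, $|C^*|\ge 3$, so $|C^*\cap F|\ge 2$. As $a,b\notin C^*$ and $x\notin C$, orthogonality of $C^*$ with $C$ gives $C^*\cap C\subseteq\{e_1,e_2\}$ with $|C^*\cap C|\ne 1$; hence either $\{e_1,e_2\}\subseteq C^*$ or $C^*\cap C=\emptyset$.

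Suppose first that $\{e_1,e_2\}\subseteq C^*$. Since $\{e_1,e_2,e_3\}$ is a cocircuit and cocircuits are incomparable, $e_3\notin C^*$. If $|C^*\cap F|\ge 3$, then cocircuit elimination of $C^*$ with $\{e_1,e_2,e_3\}$ on $e_2$, keeping $x$, produces a cocircuit $C'$ with $x\in C'\subseteq F\cup\{x\}$; orthogonality of $C'$ with $C$ forces $e_1\notin C'$, so $|C'\cap F|<|C^*\cap F|$, contradicting minimality. Thus $C^*=\{e_1,e_2,x\}$; but then for $|F|\ge 4$ orthogonality with the triangle $\{e_2,e_3,e_4\}$ is violated, and for $|F|=3$ cocircuit elimination of $C^*$ with $\{e_1,e_2,e_3\}$ on $e_1$ yields the triad $\{e_2,e_3,x\}$, which meets $C$ in the single element $e_2$ — again a contradiction. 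Now suppose instead that $C^*\cap C=\emptyset$, so $e_1,e_2\notin C^*$ and $C^*\cap F\subseteq\{e_3,\dots,e_{|F|}\}$ (so in particular $|F|\ge 4$). Writing $j=\min\{\ell:e_\ell\in C^*\}\ge 3$, orthogonality with the triangle $\{e_{j-2},e_{j-1},e_j\}$ shows $j$ is odd, orthogonality with $\{e_{j-1},e_j,e_{j+1}\}$ shows $e_{j+1}\in C^*$, and a cocircuit-elimination argument with the triad $\{e_j,e_{j+1},e_{j+2}\}$ — again controlled by orthogonality with $C$ and with the fan triangles, and reducing $|C^*\cap F|$ unless it already equals $2$ — forces $C^*=\{e_j,e_{j+1},x\}$ with $j$ odd. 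But then \cref{unique_triangle}(ii), applied to the pair $\{e_j,e_{j+1}\}$, forces the triad $C^*$ to be either $\{e_{j-1},e_j,e_{j+1}\}$ (which is a triangle) or $\{e_j,e_{j+1},e_{j+2}\}$, and neither contains $x$ — a contradiction. This exhausts all cases, so no such $x$ exists.

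The step that requires the most care is the reduction to $|C^*\cap F|=2$: a cocircuit through $x$ inside $F\cup\{x\}$ might a priori meet $F$ in a long alternating pattern, and one has to whittle it down by repeated cocircuit elimination with the triads of the fan, keeping each step under control using orthogonality with the adjacent triangle of the fan and with the circuit $C$. The small cases $|F|\le 4$ (where this takes at most one elimination) are a faithful template for the general argument, and once $|C^*\cap F|=2$ is in hand, \cref{unique_triangle} closes the proof at once.
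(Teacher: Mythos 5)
Your proof is essentially correct, but it takes a genuinely different route from the paper's. The paper never touches cocircuits explicitly: starting from $e\in\cl^*(F)$ it runs a connectivity induction along the fan, using the circuit $C$ and the fan's triangles and triads to show successively that $\lambda((F-\{e_1,\dotsc\})\cup\{e\})=2$, until $\lambda(\{e_{|F|-1},e_{|F|},e\})=2$; that $3$-element set is then a triangle or a triad, and maximality of $F$ together with orthogonality against $C$ finishes the argument in a few lines. You instead take a cocircuit $C^*$ with $x\in C^*\subseteq F\cup\{x\}$, minimal in $|C^*\cap F|$, and whittle it down by orthogonality with the fan triples and cocircuit elimination against the fan triads. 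I checked your reduction to $i=2$, Case 1, and the parity analysis in Case 2; they are sound, and the elimination you only sketch in Case 2 does complete: eliminate $e_j$ between $C^*$ and the triad $\{e_j,e_{j+1},e_{j+2}\}$ keeping $x$; the resulting cocircuit avoids $e_j$, and orthogonality with the triangle $\{e_{j-1},e_j,e_{j+1}\}$ forces it to avoid $e_{j+1}$ too, so $|C^*\cap F|$ strictly drops. Your argument is longer and more case-ridden than the paper's, but it is elementary and makes the cocircuit structure explicit.

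One step, however, is not justified as written: ``since $a,b\notin F$ we have $|E(M)|\ge|F|+2$, so $M$ is not a wheel or a whirl.'' This is the converse of \cref{not_just_fan}, which that lemma does not assert, and as a bare implication it is false (a large wheel has maximal fans of length two, so $|E(M)|\ge|F|+2$ can hold in a wheel). What you actually need --- that a wheel or whirl has no maximal fan of length at least three missing two or more elements, so that \cref{unique_triangle} is applicable --- is true, but it requires its own short argument about fans in wheels and whirls (fans there are cyclically consecutive and any proper one of length at least three extends), which you neither give nor can cite from the paper. The cleanest repair is to avoid \cref{unique_triangle} altogether at the final step: with $C^*=\{e_j,e_{j+1},x\}$ and $j$ odd, orthogonality with the triangle $\{e_{j+1},e_{j+2},e_{j+3}\}$ gives an immediate contradiction when $j\le|F|-3$; when $j=|F|-1$ the triad $C^*$ extends $F$ to the fan $(e_1,\dotsc,e_{|F|},x)$, contradicting maximality; and when $j=|F|-2$ one elimination of $C^*$ against the triad $\{e_{|F|-2},e_{|F|-1},e_{|F|}\}$ produces the triad $\{e_{|F|-2},e_{|F|},x\}$, which meets the triangle $\{e_{|F|-3},e_{|F|-2},e_{|F|-1}\}$ in a single element. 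With that replacement (or with a proved statement about maximal fans in wheels and whirls), your proof is complete.
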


\begin{proof}
    Suppose, to the contrary, that there exists $e \in E(M) - (F \cup C)$ such that $e \in \cl^*(F)$. If $|F| = 2$, then $F \cup \{e\}$ is a triad, which contradicts the maximality of $F$. So we may assume that $|F| \geq 3$. Since $e_1 \in \cl^*(F-\{e_1\})$, we also have that $e \in \cl^*(F - \{e_1\})$, so $\lambda((F - \{e_1\}) \cup \{e\}) = 2$. The circuit~$C$ implies that $e_i \in \cl(E(M) - ((F - \{e_1\}) \cup \{e\}))$, so $\lambda((F - \{e_1,e_i\}) \cup \{e\}) = 2$. In turn, letting $e_j$ be the unique element in $\{e_2,e_3\} - \{e_i\}$, we have $e_j \in \cl^*(E(M) - ((F - \{e_1,e_i\}) \cup \{e\}))$, so $\lambda((F - \{e_1,e_2,e_3\}) \cup \{e\}) = 2$. Repeating in this way, we eventually see that $\lambda(\{e_{|F|-1},e_{|F|},e\}) = 2$, so $\{e_{|F|-1},e_{|F|},e\}$ is either a triangle or a triad. Since $e \in \cl^*(F)$, we have that $\{e_{|F|-1},e_{|F|},e\}$ is a triad. If $\{e_{|F|-2},e_{|F|-1},e_{|F|}\}$ is a triangle, then the fan $F$ is not maximal, a contradiction. Hence, $\{e_{|F|-2},e_{|F|-1},e_{|F|}\}$ is a triad. Orthogonality implies that $|F| = 3$, but now the triad $\{e_{|F|-1},e_{|F|},e\}$ intersects the circuit $C$ in one element, a contradiction.
\end{proof}

\begin{lemma} \label{even_fan_spike_separating}
    Let $M$ be a $3$-connected matroid. Let $F_1,F_2,\ldots,F_k$ be disjoint maximal fans of $M$, each having even length at least two. For all $i \in [k]$, let $F_i = (e^i_1,e^i_2,\ldots,e^i_{|F_i|})$ such that either $|F_i| = 2$ or $\{e^i_1,e^i_2,e^i_3\}$ is a triad. Furthermore, for all distinct $i,j \in [k]$, suppose there is a $4$-element circuit~$C_{i,j}$ containing $\{e^i_1,e^j_1\}$ such that $|C_{i,j} \cap F_i| = |C_{i,j} \cap F_j| = 2$, and a $4$-element cocircuit $C^*_{i,j}$ containing $\{e^i_{|F_i|},e^j_{|F_j|}\}$ such that $|C^*_{i,j} \cap F_i| = |C^*_{i,j} \cap F_j| = 2$. If $|E(M)| \geq |F_1 \cup F_2 \cup \cdots \cup F_k| + 2$, then 
	\begin{enumerate}
		\item $\lambda(\bigcup_{i \in J} F_i) = 2$ for all non-empty subsets $J \subseteq [k]$, and
		\item $\sqcap(F_i,F_j) = 1$ for all distinct $i,j \in [k]$.
	\end{enumerate}
\end{lemma}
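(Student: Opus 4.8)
The plan is to prove both statements by induction on $k$, treating the two fans as a unit and peeling off fans one at a time. For the base case $k=1$, part (i) is exactly \cref{fan_rank} (which gives $\lambda(F_1)=2$, noting $|E(M)| \ge |F_1|+2$), and part (ii) is vacuous. For the inductive step, suppose the result holds for any collection of fewer than $k$ such fans. I would first establish part (i), and then deduce part (ii) from it together with a rank computation.

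\textbf{Part (i).} Fix a non-empty $J \subseteq [k]$. If $|J| < k$, then $\bigcup_{i\in J} F_i$ is the union of fewer than $k$ of the fans, which still satisfy all the hypotheses of the lemma (the circuits $C_{i,j}$ and cocircuits $C^*_{i,j}$ for $i,j \in J$ are inherited, and $|E(M)| \ge |\bigcup_{i\in J}F_i| + 2$ holds a fortiori), so the inductive hypothesis applied to $\{F_i : i \in J\}$ gives $\lambda(\bigcup_{i\in J}F_i)=2$. Since $\lambda(X)=\lambda(E(M)-X)$, the case $J = [k]$ reduces, provided $k \ge 2$, to showing $\lambda(F_1 \cup F_2 \cup \dots \cup F_k) = 2$; here I would argue directly. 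By \cref{fan_rank} each $F_i$ has $\lambda(F_i)=2$, and the circuit $C_{i,j}$ forces $e^j_1 \in \cl(\bigcup_{\ell \ne j} F_\ell \cup (F_j - \{e^j_1\}))$ — more usefully, $C_{1,j}$ shows $e^j_1 \in \cl(F_1 \cup (F_j - \{e^j_1\}))$, so adding $F_j$ to $F_1 \cup \dots \cup F_{j-1}$ raises the rank by at most $r(F_j)-1$ on the "triangle side". Dually, the cocircuit $C^*_{1,j}$ controls $r^*$. Concretely, I would show by a submodularity/closure argument (using \cref{clconnectivity} repeatedly, adding the elements of $F_j$ in fan order) that $r(F_1 \cup \dots \cup F_k) = \sum_i r(F_i) - (k-1)$ and $r^*(F_1 \cup \dots \cup F_k) = \sum_i r^*(F_i) - (k-1)$; combining with $r(F_i)+r^*(F_i) = |F_i| + 2$ gives $\lambda(F_1 \cup \dots \cup F_k) = \sum_i (|F_i|+2) - 2(k-1) - |F_1 \cup \dots \cup F_k| = 2$, as required. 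To run the rank computation cleanly I would add $F_2$ to $F_1$ first: the circuit $C_{1,2}$ and the fact that the triad $\{e^2_1,e^2_2,e^2_3\}$ (or, if $|F_2|=2$, the pair $F_2$) is built from elements in $\cl(F_1 \cup (F_2 - \{e^2_1\}))$ via $C_{1,2}$ shows $r(F_1 \cup F_2) \le r(F_1) + r(F_2) - 1$; reverse $3$-connectivity (no $\le 1$-separations, using $|E(M)| \ge |F_1 \cup F_2|+2$) gives $\sqcap(F_1,F_2) \le 1$, hence $= 1$ since the fans are disjoint and nontrivial, so $r(F_1 \cup F_2) = r(F_1) + r(F_2) - 1$ exactly; iterate.

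\textbf{Part (ii).} For distinct $i,j$, the set $F_i \cup F_j$ is a union of (at most) two of the fans satisfying the hypotheses, and by part (i) we have $\lambda(F_i)=\lambda(F_j)=\lambda(F_i \cup F_j)=2$. Then $\sqcap(F_i,F_j) = \lambda(F_i) + \lambda(F_j) - \lambda(F_i \cup F_j)$ is \emph{not} generally valid — instead I would use $\sqcap(F_i,F_j) = r(F_i) + r(F_j) - r(F_i \cup F_j)$ directly. From the exact rank identity $r(F_i \cup F_j) = r(F_i) + r(F_j) - 1$ established above (via the circuit $C_{i,j}$ and reverse connectivity), this gives $\sqcap(F_i,F_j) = 1$ immediately. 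Alternatively, once $\lambda(F_i \cup F_j) = 2$ and $\lambda(F_i)=2$ are known, $\sqcap(F_i, F_j) \le 1$ follows from the circuit $C_{i,j}$ (an element of $F_j$ lies in $\cl(F_i \cup (F_j \setminus \text{that element}))$) while $\sqcap(F_i,F_j) \ge 1$ follows from the cocircuit $C^*_{i,j}$ (dually, $F_i \cup F_j$ is not "skew"), pinning it to $1$.

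\textbf{Main obstacle.} The delicate point is the rank bookkeeping when $|F_i| = 2$ versus $|F_i| \ge 4$: when $|F_i|=2$, the pair $F_i$ is itself a circuit-or-cocircuit-like object and \cref{fan_rank} does not directly apply, so I must handle $r(F_i)$, $r^*(F_i)$ and the increments by hand (a $2$-element fan $F_i = \{e^i_1, e^i_2\}$ has $r(F_i) = 2 = r^*(F_i)$, consistent with $r + r^* = |F_i| + 2$ and $\lambda = 2$ by $3$-connectivity since $|E(M)| \ge |F_i|+2$). I also need to be careful that adding the second and later fans really does drop the rank by exactly one each time and not more — this is where reverse $3$-connectivity (no small separations, guaranteed by the size hypothesis $|E(M)| \ge |F_1 \cup \dots \cup F_k| + 2$) is essential, since it converts the easy inequality $\sqcap(F_i, \bigcup_{\ell<i}F_\ell) \ge 1$ coming from orthogonality with $C^*_{i,\ell}$'s into the equality needed. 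Orchestrating the induction so that the circuits and cocircuits between a newly-added fan and \emph{all} previously-added fans are simultaneously respected, rather than just between consecutive ones, is the part that will require the most care.
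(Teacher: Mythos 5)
Your proposal is correct and follows essentially the same route as the paper's proof: peel the fans off one at a time, use the circuits $C_{i,j}$ (together with the fan-structure fact $e^j_1 \notin \cl(F_j - \{e^j_1\})$, which is what makes the increment $r(F_j)-1$ rather than $r(F_j)$) to bound the rank, the cocircuits $C^*_{i,j}$ dually to bound the corank, and then $3$-connectivity with the size hypothesis to force $\lambda = 2$, which retroactively forces the exact rank drop and hence $\sqcap(F_i,F_j)=1$. The only blemishes are attributional slips that do not affect your main line: in the ``alternative'' justification of (ii) and in your final paragraph you swap which hypothesis yields which bound --- the circuit $C_{i,j}$ gives $\sqcap(F_i,F_j) \ge 1$, while $\sqcap(F_i,F_j) \le 1$ comes from $\sqcap^*(F_i,F_j) \ge 1$ (via $C^*_{i,j}$) combined with $\lambda(F_i \cup F_j) \ge 2$, not from $3$-connectivity or disjointness alone.
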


\begin{proof}
Suppose $|E(M)| \geq |F_1 \cup F_2 \cup \cdots \cup F_k| + 2$.
Let $J$ be a non-empty subset of $[k]$, and let $X = \bigcup_{i \in J} F_i$. If $|J| = 1$, then $X$ is a fan, so $\lambda(X) = 2$. Otherwise, let $j \in J$, and suppose that $\lambda(X-F_j) = 2$. For some $i \in J-\{j\}$, the circuit $C_{i,j}$ implies that $e^j_1 \in \cl(X - \{e^j_1\})$. But $e^j_1 \notin \cl(F_j - \{e^j_1\})$, and so $r(X) \leq r(X - F_j) + r(F_j) - 1$. Similarly, $r^*(X) \leq r^*(X - F_j) + r^*(F_j) - 1$. Therefore,
\begin{align*}
 \lambda(X) &\leq (r(X-F_j) + r(F_j) - 1) + (r^*(X-F_j) + r^*(F_j) - 1) \\
 & \qquad \qquad \qquad - (|X-F_j| + |F_j|) \\
 &= \lambda(X-F_j) + \lambda(F_j) - 2 = 2.
\end{align*}
Since $M$ is $3$-connected and $|E(M)| \geq |X| + 2$, we have that $\lambda(X) = 2$. Furthermore, if $r(X) < r(X-F_j) + r(F_j) - 1$, then $\lambda(X) < 2$, so $r(X) = r(X - F_j) + r(F_j) - 1$. In particular, when $J = \{i,j\}$, for distinct $i,j \in [k]$, this implies that $r(F_i \cup F_j) = r(F_i) + r(F_j) - 1$, so $\sqcap(F_i, F_j) = 1$.
\end{proof}

\begin{lemma} \label{even_fan_spike2}
    Let $M$ be a $3$-connected matroid. Let $P_1,P_2,\ldots,P_m$ be disjoint maximal fans of $E(M)$, each having even length at least two, for $m \ge 2$. For all $i \in [m]$, let $P_i = (p^i_1,p^i_2,\ldots,p^i_{|P_i|})$ such that either $|P_i| = 2$, or $\{p^i_1,p^i_2,p^i_3\}$ is a triad. Furthermore, for all distinct $i, j \in [m]$, suppose there is a $4$-element circuit~$C_{i,j}$ containing $\{p^i_1,p^j_1\}$ such that $|C_{i,j} \cap P_i| = |C_{i,j} \cap P_j| = 2$, and a $4$-element cocircuit $C^*_{i,j}$ containing $\{p^i_{|P_i|},p^j_{|P_j|}\}$ such that $|C^*_{i,j} \cap P_i| = |C^*_{i,j} \cap P_j| = 2$.
    Suppose $M$ has no detachable pairs and $|E(M)| \geq 9$.
    If $|E(M)| \leq |P_1 \cup P_2 \cup \cdots \cup P_m| + 2$, then either \begin{enumerate}
        \item $E(M) = P_1 \cup P_2 \cup \cdots \cup P_m$ and either
            \begin{enumerate}[label=\rm(\alph*)]
                \item $m \ge 3$ and $M$ is a non-degenerate even-fan-spike with partition $(P_1,P_2,\ldots,P_m)$, or
                \item $m=2$ and $M$ is a degenerate even-fan-spike with partition $(P_1,P_2)$, or
            \end{enumerate}
        \item $E(M) = P_1 \cup P_2 \cup \cdots \cup P_m \cup \{x,y\}$, for distinct $x,y \notin P_1 \cup P_2 \cup \cdots \cup P_m$, and $M$ is a non-degenerate even-fan-spike with partition $(P_1,P_2,\ldots,P_m,\{x,y\})$.
	\end{enumerate}
\end{lemma}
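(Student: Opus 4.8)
The plan is to set $U = P_1 \cup P_2 \cup \cdots \cup P_m$ and $Z = E(M) - U$, so $|Z| \in \{0,1,2\}$ since $9 \le |E(M)| \le |U|+2$, and to treat the three possibilities for $|Z|$ separately, with $|Z|=1$ shown to be impossible. Throughout I would use that, for each $i$ with $|P_i| \ge 3$, the element $p^i_1$ is the triad-end of the maximal fan $P_i$, so $M/p^i_1$ is $3$-connected by \cref{fan_ends} and Tutte's Triangle Lemma, and dually $M\backslash p^i_{|P_i|}$ is $3$-connected. The first step is to establish a connectivity skeleton: the inductive computation in the proof of \cref{even_fan_spike_separating}, applied to a set $X = \bigcup_{i\in J}P_i$, only needs $|E(M)-X|\ge 2$ (not the global bound $|E(M)|\ge |U|+2$), so it gives $\lambda(\bigcup_{i\in J}P_i)=2$ and $r(\bigcup_{i\in J}P_i) = r(\bigcup_{i\in J-\{j\}}P_i)+r(P_j)-1$ for every nonempty proper $J\subsetneq[m]$ and $j\in J$ with $|J|\ge 2$; in particular $\sqcap(P_i,P_j)=1$ for distinct $i,j$ when $m\ge 3$, while if $m=2$ then $\sqcap(P_1,P_2)=2$ when $|Z|\le 1$ and $\sqcap(P_1,P_2)=1$ when $|Z|=2$. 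Next, using orthogonality of $C_{i,j}$ and $C^*_{i,j}$ with the triads and triangles along the fans, \cref{unique_triangle}, and \cref{fan_closure} (which pins down $\cl^*(P_i)$ once one $4$-circuit meeting $P_i$ is known, removing the ambiguity from the two orderings of a length-four fan), I would show the fan orderings can be chosen consistently so that $C_{i,j}=\{p^i_1,p^i_2,p^j_1,p^j_2\}$ and $C^*_{i,j}=\{p^i_{|P_i|-1},p^i_{|P_i|},p^j_{|P_j|-1},p^j_{|P_j|}\}$ for all distinct $i,j$ --- exactly the circuit and cocircuit relations in the definitions of even-fan-spikes.

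With this in hand the cases $|Z|=0$ and $|Z|=2$ are short. If $Z=\emptyset$ and $m\ge 3$, the skeleton shows $(P_1,\ldots,P_m)$ is a spike-like anemone whose petals are even fans of length at least two satisfying the required relations, so $M$ is a non-degenerate even-fan-spike with partition $(P_1,\ldots,P_m)$, giving (i)(a); if $Z=\emptyset$ and $m=2$, then $\sqcap(P_1,P_2)=2$, and a short argument rules out $|P_i|=2$ (if $|P_1|=2$ then $C_{1,2}$ and orthogonality force a triangle or triad meeting $P_1$, contradicting maximality), so $|P_1|,|P_2|\ge 4$ and $M$ is a degenerate even-fan-spike with partition $(P_1,P_2)$, giving (i)(b). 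If $Z=\{x,y\}$, I would verify that the enlarged collection $(P_1,\ldots,P_m,\{x,y\})$, with $m+1\ge 3$ petals, again satisfies the hypotheses of the lemma: the key point is, for each $i$, a $4$-element circuit $\{p^i_1,p^i_2,x,y\}$ and cocircuit $\{p^i_{|P_i|-1},p^i_{|P_i|},x,y\}$, obtained by applying the dual of \cref{contractable_circuit} with $e=p^i_1$ and a suitable $3$-separating set $X$ to get a $4$-circuit through $p^i_1$ not contained in $U$, whose form is then forced by orthogonality with the triads of the fans (dually for the cocircuit); and $\{x,y\}$ is a maximal fan, since $x,y$ in a common triangle or triad would, via orthogonality with these relations, contradict maximality of some $P_i$. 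The skeleton then applies to the enlarged collection (every pair of petals is a proper subset, as $|E(M)|=|U|+2$ and each petal has at least two elements), so $(P_1,\ldots,P_m,\{x,y\})$ is a spike-like anemone and $M$ is a non-degenerate even-fan-spike with partition $(P_1,\ldots,P_m,\{x,y\})$, which is (ii).

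The main obstacle is excluding $|Z|=1$; this is where the absence of detachable pairs is used. Suppose $Z=\{z\}$ and, for concreteness, $m\ge 3$. The skeleton together with \cref{clconnectivity} forces, for each $i$, exactly one of $z\in\cl(P_i)$, $z\in\cl^*(P_i)$; let $A,B$ be the resulting partition of $[m]$. If $A$ and $B$ are both nonempty, then $z\in\cl(\bigcup_{i\in A}P_i)\cap\cl^*(\bigcup_{j\in B}P_j)$, so $M$ has a circuit through $z$ inside $\bigcup_{i\in A}P_i\cup\{z\}$ and a cocircuit through $z$ inside $\bigcup_{j\in B}P_j\cup\{z\}$, meeting only in $z$ --- violating orthogonality. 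So, up to duality, $A=[m]$, i.e.\ $z\in\cl(P_i)$ for all $i$ (and hence $z\notin\cl^*(\bigcup_{i\in J}P_i)$ for every proper $J$). If some $|P_i|=2$, then $\{p^i_1,p^i_2,z\}$ is a triangle, contradicting maximality of $P_i$; so all $|P_i|\ge 4$. Then $z$ lies in no triad (else $z\in\cl^*$ of a union of two petals, contradicting a skeleton $\lambda$-value), and $\si(M/z)$ is not $3$-connected (the partition $(P_1,\{z\},\bigcup_{i\ge 2}P_i)$ is a vertical $3$-separation, using $r(P_1)\ge 3$ and $r(\bigcup_{i\ge 2}P_i)\ge 3$), so by Bixby's Lemma $M\backslash z$ is $3$-connected. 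One checks that each $P_i$ remains a maximal fan of $M\backslash z$: a triad-extension would yield a $4$-cocircuit $\{z\}\cup T$ of $M$ forcing $z\in\cl^*$ of a union of two petals, while a triangle-extension at either end would yield a $4$-circuit of $M$ whose intersection with the appropriate $C^*_{i,k}$ has odd size, each a contradiction. In particular $p^1_{|P_1|}$ is the triangle-end of a maximal fan of $M\backslash z$ of length at least four, so $M\backslash z\backslash p^1_{|P_1|}$ is $3$-connected, i.e.\ $\{z,p^1_{|P_1|}\}$ is a detachable pair of $M$ --- a contradiction. The case $m=2$ is handled analogously, using $\sqcap(P_1,P_2)=2$ to obtain the same closure dichotomy (and a direct detachable-pair argument when $z\in\cl(P_1)\cap\cl(P_2)$). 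This rules out $|Z|=1$ and completes the proof.

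The step I expect to require the most care is the consistent pinning-down of the orderings of the $P_i$ so that all the $C_{i,j}$ and $C^*_{i,j}$ have the stated form simultaneously (the length-four petals each admitting two orderings), and, within the excluded case $|Z|=1$, the verification that the fans $P_i$ survive deletion of $z$ as maximal fans.
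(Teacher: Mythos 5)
Your case split ($|Z|\in\{0,1,2\}$ where $Z=E(M)-(P_1\cup\dotsm\cup P_m)$) and your use of \cref{even_fan_spike_separating} as a connectivity skeleton match the paper, but the proposal has a genuine gap at the step you yourself flagged: the claim that orthogonality, \cref{unique_triangle} and \cref{fan_closure} let you relabel so that $C_{i,j}=\{p^i_1,p^i_2,p^j_1,p^j_2\}$ and $C^*_{i,j}=\{p^i_{|P_i|-1},p^i_{|P_i|},p^j_{|P_j|-1},p^j_{|P_j|}\}$ for \emph{all} pairs. When both $P_i$ and $P_j$ have length four, the hypotheses admit the ``crossed'' configuration, e.g.\ $C_{i,j}=\{p^i_1,p^i_2,p^j_1,p^j_2\}$ together with $C^*_{i,j}=\{p^i_2,p^i_4,p^j_2,p^j_4\}$: this satisfies orthogonality (the intersection has size two), and no choice among the two admissible orderings of a length-four fan converts it to the canonical form, because the same middle element would have to be both the second and the third element of the ordering. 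This is exactly the configuration the paper isolates as outcome (i) of \cref{even_fan_spike1} and disposes of separately in \cref{even_fan_spike4}, using the no-detachable-pair hypothesis and $|E(M)|\ge 13$ -- tools well beyond the ones you cite. The paper's proof is arranged so that the canonical relations are needed only when $E(M)=P_1\cup P_2$ with $m=2$, where $|E(M)|\ge 9$ forces a petal of length at least six and \cref{even_fan_spike1} then yields the straight pattern; in cases (i)(a) and (ii) it instead argues via $\lambda$-values and local connectivity (deducing $\sqcap(P_i,\{x,y\})=1$ from $x\notin\cl(P_i)\cup\cl^*(P_i)$, which comes from \cref{fan_closure} and its dual), never relying on a global relabelling. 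As written, your proofs of (i)(a) and (ii) rest on the unestablished relabelling claim, so they do not go through.

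A second, lesser issue is the exclusion of $|Z|=1$. The paper does this in two lines: $\lambda(P_m\cup\{x\})=2=\lambda(P_m)$ forces $x\in\cl(P_m)$ or $x\in\cl^*(P_m)$ by \cref{clconnectivity}, contradicting the dual of \cref{fan_closure} or \cref{fan_closure} respectively -- precisely the lemmas you invoke elsewhere. Your alternative, building the detachable pair $\{z,p^1_{|P_1|}\}$, is a long detour whose $m\ge 3$ version is plausible but whose $m=2$ subcase is only asserted to be ``analogous''; there the third-petal orthogonality tricks you lean on (to exclude triads and $4$-element cocircuits through $z$, and to show each $P_i$ remains a maximal fan of $M\backslash z$) are unavailable, so that subcase is not actually covered by your sketch.
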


\begin{proof}
	First, assume that $E(M) = P_1 \cup P_2 \cup \cdots \cup P_m$.
    If $m \ge 3$, then, by repeated applications of \cref{even_fan_spike_separating}, $\Phi=(P_1,P_2,\ldots,P_m)$ is a spike-like anemone, and it follows that $M$ is a non-degenerate even-fan-spike with partition $\Phi$, satisfying (i)(a).
    So we may assume that $m = 2$. It remains to show that $M$ is a degenerate even-fan-spike with partition $\Phi$. Suppose $|P_1| = 2$. Since $\lambda(P_2 - \{p^2_1\}) = 2$, we also have that $\lambda(P_1 \cup \{p^2_1\}) = 2$. But now $P_1 \cup \{p^2_1\}$ is either a triangle or a triad, contradicting the maximality of $P_1$. Thus, $|P_1| \geq 4$ and, similarly, $|P_2| \geq 4$. Since $|E(M)| \geq 9$, one of $P_1$ and $P_2$ has length at least six, so, by \cref{even_fan_spike1}, $\{p^1_1,p^1_2,p^2_1,p^2_2\}$ is a circuit and $\{p^1_{|P_1|-1},p^1_{|P_1|},p^2_{|P_2|-1},p^2_{|P_2|}\}$ is a cocircuit, and thus $M$ is a degenerate even-fan-spike with partition $\Phi$, satisfying (i)(b).
	
    Now suppose that $E(M) = P_1 \cup P_2 \cup \cdots \cup P_m \cup \{x\}$ for some $x \notin P_1 \cup P_2 \cup \cdots \cup P_m$. \Cref{even_fan_spike_separating} implies that $\lambda(P_1 \cup P_2 \cup \cdots \cup P_{m-1}) = 2$, so $\lambda(P_m \cup \{x\}) = 2$. Since $\lambda(P_m) = 2$, either $x \in \cl(P_m)$ or $x \in \cl^*(P_m)$. This contradicts either \Cref{fan_closure} or its dual.
	
    The last case to consider is when $E(M) = P_1 \cup P_2 \cup \cdots \cup P_m \cup \{x,y\}$ for distinct $x,y \notin P_1 \cup P_2 \cup \cdots \cup P_m$. For all proper non-empty subsets $J$ of $[m]$, we have that $\lambda(\bigcup_{i \in [m]-J} P_i) = 2$ by \Cref{even_fan_spike_separating}, so $\lambda(\{x,y\} \cup \bigcup_{i \in J} P_i) = 2$. This shows that $\Phi=(P_1,P_2,\ldots,P_m,\{x,y\})$ is an anemone. Also, for all $i \in [m]$, we have that $x \notin \cl(P_i)$ and $x \notin \cl^*(P_i)$ by \Cref{fan_closure} and its dual. Since $\lambda(P_i \cup \{x,y\}) = 2$, this implies that $y \in \cl(P_i \cup \{x\}) \cap \cl^*(P_i \cup \{x\})$. Therefore, $\sqcap(P_i,\{x,y\}) = r(P_i) + 2 - (r(P_i) + 1) = 1$. Hence, $\Phi$ is a spike-like anemone, and it follows that $M$ is a non-degenerate even-fan-spike with partition $\Phi$, satisfying (ii). This completes the proof of the lemma.
\end{proof}

\begin{lemma} \label{even_fan_spike3}
    Let $M$ be a $3$-connected matroid with no detachable pairs such that $|E(M)| \geq 12$. Let $F_1$ and $F_2$ be disjoint maximal fans of $M$, each having even length, with $|F_1| \ge 4$ and $|F_2| \ge 2$.  Let $F_1 = (e_1,e_2,\ldots,e_{|F_1|})$ and $F_2 = (f_1,f_2,\ldots,f_{|F_2|})$ such that $\{e_1,e_2,e_3\}$ is a triad, and either $|F_2| = 2$ or $\{f_1,f_2,f_3\}$ is a triad. Furthermore, suppose $M$ has a $4$-element circuit~$C$ containing $\{e_1,f_1\}$ such that $|C \cap F_1| = 2$ and $|C \cap F_2| = 2$, and a $4$-element cocircuit $C^*$ containing $\{e_{|F_1|},f_{|F_2|}\}$ such that $|C^* \cap F_1| = 2$ and $|C^* \cap F_2| = 2$. Suppose $|E(M)| \ge |F_1 \cup F_2| +3$. If $e \notin F_1 \cup F_2$ and $e$ is contained in a triangle or a triad, then $e$ is contained in a $4$-element fan of $M$.
\end{lemma}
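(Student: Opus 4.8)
The plan is as follows. First, since $|E(M)|\ge |F_1\cup F_2|+3>|F_1\cup F_2|+2$, the circuit $C$ and cocircuit $C^*$ satisfy the hypotheses of \cref{even_fan_spike_separating} (with $k=2$), so $\lambda(F_1\cup F_2)=2$ and $\sqcap(F_1,F_2)=1$. Reading $F_1=(e_1,e_2,\ldots,e_{|F_1|})$ with $\{e_1,e_2,e_3\}$ a triad and $|F_1|$ even, the triple $\{e_{|F_1|-2},e_{|F_1|-1},e_{|F_1|}\}$ is a triangle; as $M$ has two disjoint maximal fans it is neither a wheel nor a whirl, so by \cref{fan_ends} the element $e_1$ lies in no triangle and $e_{|F_1|}$ lies in no triad, and since $M$ has no detachable pairs, both $M/e_1$ and $M\backslash e_{|F_1|}$ are $3$-connected.

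Next I would reduce to a single case. The hypotheses are invariant under replacing $M$ by $M^*$ and reversing the orderings of $F_1$ and $F_2$ (triangles and triads interchange, while $C$ becomes a cocircuit and $C^*$ a circuit with the required intersection sizes), and a set is a $4$-element fan of $M$ exactly when it is a $4$-element fan of $M^*$; hence it suffices to show that if $e\in E(M)-(F_1\cup F_2)$ lies in a triangle then $e$ lies in a $4$-element fan. So suppose $e$ lies in a triangle $T$ but, for a contradiction, in no $4$-element fan; then $T$ is not contained in a $4$-element fan, and Tutte's Triangle Lemma supplies distinct $a,b\in T$ with $M\backslash a$ and $M\backslash b$ both $3$-connected. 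Using orthogonality of $T$ with the triads of $F_1$ together with \cref{fan_ends} and \cref{unique_triangle}, one checks that $T$ meets $F_1$ in at most one element (a triangle containing $e\notin F_1$ cannot contain two elements of $F_1$), so after relabelling we may assume $a\notin F_1$; in particular $a\ne e_{|F_1|}$. Since $\{a,e_{|F_1|}\}$ is not a detachable pair, neither $M\backslash a\backslash e_{|F_1|}$ nor $M/a/e_{|F_1|}$ is $3$-connected, and Bixby's Lemma applied to the $3$-connected matroid $M\backslash e_{|F_1|}$ and the element $a$ yields that $\si(M\backslash e_{|F_1|}/a)$ or $\co(M\backslash e_{|F_1|}\backslash a)$ is $3$-connected. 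Chasing both branches --- in the deletion branch a series pair of $M\backslash e_{|F_1|}\backslash a$ produces a cocircuit of $M$ inside $\{u,v,e_{|F_1|},a\}$, which cannot be a triad through $e_{|F_1|}$ by \cref{fan_ends}, and, when it is the full $4$-cocircuit, is handled by also pairing $a$ (or $b$) with $e_1$ and invoking \cref{contract_then_delete}; in the contraction branch one argues dually using that $e_1$ lies in no triangle --- forces $a$ to lie in a triad $T^*$ of $M$. Then orthogonality between the triangle $T$ and the triad $T^*$, both containing $a$, gives $|T\cap T^*|=2$ (size $1$ is excluded by orthogonality, and size $3$ would give $T=T^*$, impossible since then $\lambda(T)\le 1$), so $T\cup T^*$ is a $4$-element set admitting a fan ordering; as $e\in T\subseteq T\cup T^*$, this contradicts the assumption that $e$ lies in no $4$-element fan.

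The main obstacle is the branch analysis following Bixby's Lemma: absorbing the ``degenerate'' configurations --- the $4$-cocircuit $\{u,v,e_{|F_1|},a\}$ and, dually, a $4$-circuit $\{u',v',e_1,a\}$ --- into the conclusion that $a$ lies in a triad. This requires combining the no-detachable-pairs hypothesis with \cref{contract_then_delete}, orthogonality against the end triangle and triad of $F_1$, and the maximality of $F_1$, and it is also where the parity hypotheses on $F_1$ and $F_2$ and the degenerate case $|F_2|=2$ (in which $F_2$ carries no triad of its own, so the relevant closures must be read off from $C$ and $C^*$ via \cref{fan_closure}) need separate, though routine, treatment; the subcase $e_{|F_1|}\in T$ is likewise handled by instead working with the other deletable element of $T$.
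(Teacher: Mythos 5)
Your setup (the connectivity facts from \cref{even_fan_spike_separating}, the deductions from \cref{fan_ends}, the reduction by duality to the case where $e$ lies in a triangle, and the observation that a triangle through $e$ meeting a triad would immediately give a $4$-element fan through $e$) is fine and matches the paper's framing, which treats the triad case directly and gets the triangle case dually. The gap is at the heart of your argument: the claim that the Bixby/Tutte branch analysis around $M\backslash e_{|F_1|}$ (and, ``dually'', around $M/e_1$) \emph{forces} the deletable element $a\in T$ to lie in a triad of $M$. Note first that under your contradiction hypothesis no element of $T$ can lie in a triad at all (any triad meeting $T$ meets it in two elements by orthogonality, producing a $4$-element fan containing $e$; and the ends $e_{|F_1|}$, and $f_{|F_2|}$ when $|F_2|\ge 4$, are in no triads by \cref{fan_ends}). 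So ``$a$ lies in a triad'' is literally equivalent to the contradiction you are seeking, and all the content is in the branch-chasing you leave unproved. Those branches genuinely leak: in the deletion branch the series pair may yield the full $4$-cocircuit $\{u,v,e_{|F_1|},a\}$, and \cref{contract_then_delete} cannot close this case --- it only converts ``$M/x\backslash y$ is $3$-connected'' into ``$M\backslash y$ is $3$-connected or $\{x,y\}$ is in a triad'', and since $M\backslash a$ is already known to be $3$-connected this gives nothing, while mixed minors such as $M/e_1\backslash a$ being $3$-connected are not excluded by the no-detachable-pairs hypothesis; in the contraction branch, $M\backslash e_{|F_1|}/a$ (or $M/e_1\backslash a$) may itself be $3$-connected, again producing no triad through $a$. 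Worse, the ``degenerate'' configuration you defer is the main case: the paper's proof shows (in dual form) that there really is a $4$-cocircuit containing $\{e,e_{|F_1|}\}$ together with a fan element and a new element $f$, and ruling this out is exactly where the work lies.

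Concretely, the paper's proof is not a local argument around one element: after locating the triad as $\{e_1,f_1,e\}$ it uses the hypothesis $|E(M)|\ge|F_1\cup F_2|+3$, the circuit $C$ and cocircuit $C^*$, the certificate machinery (\cref{deletable_cocircuit}, \cref{contractable_collection}, \cref{deletable_collection_contractable_el}), and, crucially, the structural classifications \cref{even_fan_k3m} (hinged triad-paddles) and \cref{augmented_fan_petal} (quasi-triad-paddles with an augmented-fan petal) --- the latter two both to force the triad/triangle to meet $F_1$ and to dispose of the delicate $|F_2|=2$ subcase, which introduces further elements $g,h$ and a new odd fan $F_3$. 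Your proposal never invokes any of these, uses neither $C$, $C^*$ nor the cardinality hypothesis in its main argument, and dismisses the $|F_2|=2$ case and the $e_{|F_1|}\in T$ case as routine; as written, the proof does not go through and cannot be completed with only the tools you cite.
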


\begin{proof}
    Suppose there exists $e \notin F_1 \cup F_2$ such that $e$ is contained in a triad~$T^*$ and is not contained in a $4$-element fan. If $T^*$ is disjoint from $F_1$, then $M$ is a hinged triad-paddle, by \cref{even_fan_k3m}, which contradicts the existence of two disjoint maximal fans with even length. Thus, $T^* \cap F_1 \neq \emptyset$, and so, by \Cref{intersecting_fans}, $T^* \cap F_1 = \{e_1\}$. Orthogonality with the circuit $C$ implies that $T^* \cap F_2 \neq \emptyset$. If $|F_2| \geq 4$, then \Cref{intersecting_fans} implies that $T^* = \{e_1,f_1,e\}$. On the other hand, if $|F_2| = 2$, then, up to switching the labelling of $f_1$ and $f_2$, we may assume that $T^* = \{e_1,f_1,e\}$. Hence, $T^* = \{e_1,f_1,e\}$ and, in particular, $e \in \cl^*(F_1 \cup F_2)$.
    Since $|E(M)| \ge |F_1 \cup F_2| + 3$, \cref{even_fan_spike_separating} implies that $\lambda(F_1 \cup F_2) = 2$, and hence $\lambda(F_1 \cup F_2 \cup \{e\}) = 2$.

    Suppose $|E(M)| = |F_1 \cup F_2| + 3$. Since $\lambda(E(M) - (F_1 \cup F_2)) = 2$, the set $E(M) - (F_1 \cup F_2)$ is either a triangle or a triad, which is disjoint from $F_1$ and $F_2$. By orthogonality with the circuit $C$ and the cocircuit $C^*$, we have that $E(M) - (F_1 \cup F_2)$ is not contained in a $4$-element fan. But $E(M) - (F_1 \cup F_2)$ is disjoint from $F_1$, contradicting \Cref{even_fan_k3m} or its dual.

    Therefore, we may assume that $|E(M)| \geq |F_1 \cup F_2| + 4$. The matroid $M / e_1$ is $3$-connected and $e \in \cl^*(F_1 \cup F_2) = \cl^*((F_1 - \{e_1\}) \cup F_2)$. Thus, the dual of \cref{contractable_circuit} implies that $M$ has a $4$-element circuit $C'$ containing $\{e,e_1\}$, either $e_2$ or $e_3$, and an element $f$ with $f \notin F_1 \cup F_2 \cup \{e\}$. Suppose $f$ is contained in a triad~$T_2^*$. We show that $e_1 \in T_2^*$. If $T_2^*$ is not contained in a $4$-element fan, then \Cref{even_fan_k3m} implies that $T_2^*$ meets $F_1$. Thus, $e_1 \in T_2^*$, by \Cref{intersecting_fans}. On the other hand, if $T_2^*$ is contained in a $4$-element fan, then $e \notin T_2^*$, since $e$ is not contained in a $4$-element fan. Orthogonality with the circuit $C'$ implies that $e_1 \in T_2^*$. Hence, in either case, $e_1 \in T_2^*$. Now, orthogonality with $C$ implies that $T_2^*$ meets $F_2$. But this means that $f \in \cl^*(F_1 \cup F_2)$ and $f \in \cl(F_1 \cup F_2 \cup \{e\})$, so $\lambda(F_1 \cup F_2 \cup \{e,f\}) = 1$. This is a contradiction, since $|E(M)| \geq |F_1 \cup F_2 \cup \{e,f\}| + 2$, so $f$ is not contained in a triad.
	
    First, suppose $|F_2| \geq 4$. Then $M \backslash f_{|F_2|}$ is $3$-connected, so \cref{deletable_cocircuit} implies that $M$ has a $4$-element cocircuit containing $\{f,f_{|F_2|}\}$, either $f_{|F_2|-2}$ or $f_{|F_2|-1}$, and a second element of $C'$. But now $f \in \cl^*(F_1 \cup F_2 \cup \{e\})$, a contradiction.
    
    Thus, $|F_2| = 2$. Observe that $(e_1,\{e,f_1\},\{F_1-\{e_1\}\})$ is a contraction certificate. Since $M \backslash e_{|F_1|}$ is $3$-connected, \cref{deletable_cocircuit} implies that $M$ has a $4$-element cocircuit $D^*$ containing $\{f,e_{|F_1|}\}$, either $e_{|F_1|-2}$ or $e_{|F_1|-1}$, and an element $g \notin F_1 \cup F_2 \cup \{e,f\}$. Orthogonality with $C'$ implies that $|F_1| = 4$ and $D^* = \{e_i,e_{|F_1|},f,g\}$ such that $i \in \{2,3\}$ and $e_i \in C'$. Now, $g \in \cl^*(F_1 \cup F_2 \cup \{e,f\})$ and $|E(M)| \geq 12 = |F_1 \cup F_2 \cup \{e,f\}| + 4$. By \Cref{contractable_collection}, the matroid $M / g$ is not $3$-connected, and so the dual of \Cref{closure_deletable} implies that $g$ is contained in a triangle $T$. Since $g \notin \cl(F_1 \cup F_2 \cup \{e,f\})$, orthogonality implies that $T = \{f,g,h\}$, where $h \notin F_1 \cup F_2 \cup \{e,f,g\}$. If $T$ is not contained in a $4$-element fan, then \Cref{even_fan_k3m} implies that $M^*$ is a hinged triad-paddle, a contradiction. So there is a maximal fan $F_3$ of $M$ with at least four elements, that contains $T$. Since $f$ is not contained in a triad, $f$ is an end of $F_3$. Let $g^+$ be the other end. Note that $g^+ \notin F_1 \cup F_2$ by orthogonality, and $g^+ \neq e$ since $e$ is not contained in a $4$-element fan. Hence, $M / g^+$ is not $3$-connected by \Cref{contractable_collection}. But this implies that $F_3$ is odd, and $F_3$ is disjoint from $F_1$, so \Cref{augmented_fan_petal} implies that $M^*$ is a quasi-triad-paddle with an augmented fan petal, a contradiction. 
    
    Hence, if $e$ is contained in a triad, then $e$ is contained in a $4$-element fan. A dual argument shows that if $e$ is contained in a triangle, then $e$ is contained in a $4$-element fan, completing the proof of the lemma.
\end{proof}

\begin{lemma} \label{even_fan_spike4}
    Let $M$ be a $3$-connected matroid such that $|E(M)| \geq 13$. Let $F_1 = (e_1,e_2,e_3,e_4)$ and $F_2 = (f_1,f_2,f_3,f_4)$ be disjoint maximal fans of $M$ such that $\{e_1,e_2,e_3\}$ and $\{f_1,f_2,f_3\}$ are triads, $\{e_1,e_2,f_1,f_2\}$ is a circuit, and $\{e_2,e_4,f_2,f_4\}$ is a cocircuit. Then $M$ has a detachable pair.
\end{lemma}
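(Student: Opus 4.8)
The plan is to argue by contradiction: assume $M$ has no detachable pair. Since $M$ has two disjoint maximal fans of length at least four, $M$ is neither a wheel nor a whirl. As $|E(M)| \ge 13 = |F_1 \cup F_2| + 5 \ge |F_1 \cup F_2| + 2$, \cref{even_fan_spike_separating} applies with $k=2$, taking the circuit $\{e_1,e_2,f_1,f_2\}$ and the cocircuit $\{e_2,e_4,f_2,f_4\}$; this gives $\lambda(F_1 \cup F_2) = 2$ and $\sqcap(F_1,F_2) = 1$, and hence (with \cref{fan_rank} and $\sqcap^*(F_1,F_2)=1$) that $r^*(F_1 \cup F_2) = 5$. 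I also record that, because $\{e_1,e_2,e_3\}$ and $\{f_1,f_2,f_3\}$ are triads, the elements $e_1, f_1$ lie in no triangle while $e_4, f_4$ lie in no triad, and $M/e_1$, $M/f_1$, $M\backslash e_4$, $M\backslash f_4$ are all $3$-connected, so in particular $M/e_1/f_1$ and $M\backslash e_4\backslash f_4$ are not $3$-connected.

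The heart of the argument is to exploit the fact that the cocircuit $C^* = \{e_2,e_4,f_2,f_4\}$ is ``anomalous'': it contains $e_2$ and $f_2$ rather than the triangle ends $e_3,e_4$ and $f_3,f_4$. Since $C^*$ and the triad $\{e_1,e_2,e_3\}$ share precisely $e_2$, cocircuit elimination produces a cocircuit $D^* \subseteq \{e_1,e_3,e_4,f_2,f_4\}$. Orthogonality of $D^*$ with the triangles $\{e_2,e_3,e_4\}$ and $\{f_2,f_3,f_4\}$ and with the circuit $\{e_1,e_2,f_1,f_2\}$ forces $D^*$ to be either a $2$-element cocircuit (impossible by $3$-connectivity), or the triad $\{e_1,f_2,f_4\}$, or the whole set $\{e_1,e_3,e_4,f_2,f_4\}$. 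In the triad case, the set $\{e_1,f_2,f_3,f_4\}$ with ordering $(e_1,f_2,f_4,f_3)$ is a fan (triad $\{e_1,f_2,f_4\}$ followed by triangle $\{f_2,f_3,f_4\}$), and it is maximal: it cannot extend past $e_1$ since $e_1$ lies in no triangle, and it cannot extend past $f_3$ since $F_2$ is maximal and so $\{f_3,f_4\}$ lies in no triad. Thus $\{e_1,f_2,f_3,f_4\}$ and $F_2$ are distinct maximal fans of length four sharing the three elements $f_2,f_3,f_4$, contradicting \cref{intersecting_fans} (their union has only five elements, so it cannot be an $M(K_4)$-separator). In the remaining case a further cocircuit elimination of $\{e_1,e_3,e_4,f_2,f_4\}$ against $C^*$, followed by repeated orthogonality and reuse of the analysis above, drives us back to a $2$-element cocircuit or to the forbidden fan $\{e_1,f_2,f_3,f_4\}$ — or, running the same argument from the triad $\{f_1,f_2,f_3\}$, to its mirror image $\{f_1,e_2,e_3,e_4\}$ — producing the required contradiction. (An alternative route is to set up a deletion certificate and a contraction certificate inside a small superset of $F_1\cup F_2$, invoke \cref{no_other_elements}, and then use \cref{even_fan_k3m,augmented_fan_petal,even_fan_spike3} to constrain the outside elements; but the direct orthogonality argument seems cleaner.)

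The step I expect to be the main obstacle is not any single new idea but the bookkeeping in the ``whole set'' branch: controlling the family of new five-element cocircuits that repeated elimination inside $F_1\cup F_2$ can generate, showing that each either recycles a cocircuit already in hand or collapses (via $r^*(F_1\cup F_2)=5$, \cref{clconnectivity}, and the no-detachable-pair hypothesis) to one of the terminal cases. This is also the point at which the bound $|E(M)|\ge 13$ is genuinely used, since it guarantees enough room outside $F_1\cup F_2$ to rule out the degenerate small-matroid possibilities that would otherwise survive.
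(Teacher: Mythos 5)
Your setup (applying \cref{even_fan_spike_separating} to get $\lambda(F_1\cup F_2)=2$, and the trichotomy for the cocircuit $D^*\subseteq\{e_1,e_3,e_4,f_2,f_4\}$ obtained by eliminating $e_2$ from $\{e_2,e_4,f_2,f_4\}$ and $\{e_1,e_2,e_3\}$) is correct, but the proof has a genuine gap, and also a local error. The local error is in the triad branch: $\{e_1,f_2,f_3,f_4\}$ is \emph{not} a maximal fan, because a $4$-element fan admits the swapped ordering $(e_1,f_4,f_2,f_3)$, which extends by $f_1$ via the triad $\{f_1,f_2,f_3\}$ to the $5$-element fan $(e_1,f_4,f_2,f_3,f_1)$. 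This is repairable --- that same $5$-element fan properly contains $F_2$, contradicting the maximality of $F_2$ directly --- but your stated reasoning (appealing to \cref{intersecting_fans} for two maximal $4$-fans sharing three elements) does not stand as written.

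The serious gap is the ``whole set'' branch, i.e.\ when $D^*=\{e_1,e_3,e_4,f_2,f_4\}$, which is the generic case. Repeated cocircuit elimination inside $F_1\cup F_2$ cannot close it: for instance, eliminating $f_2$ between $D^*$ and $\{e_2,e_4,f_2,f_4\}$ yields a cocircuit contained (after orthogonality with $\{f_2,f_3,f_4\}$) in $F_1$, and that cocircuit may simply be the already-known triad $\{e_1,e_2,e_3\}$, giving no new information and no contradiction. More fundamentally, in this branch your argument uses neither the no-detachable-pair hypothesis nor $|E(M)|\ge 13$ in any substantive way, yet both are essential: the constraints you are manipulating live entirely inside $F_1\cup F_2$, and the structural hypotheses of the lemma are consistent, so no contradiction can be extracted without bringing in elements outside $F_1\cup F_2$ via the no-detachable-pair assumption. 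That is exactly what the paper's proof does: it splits on whether some element outside $F_1\cup F_2$ lies in a triangle or triad --- in which case \cref{even_fan_spike3,augmented_fan_petal,even_fan_spike1,even_fan_spike2} force $E(M)=F_1\cup F_2\cup F_3$ with $|E(M)|=12$, contradicting $|E(M)|\ge 13$ --- or not, in which case Bixby's Lemma together with \cref{contractable_circuit,deletable_cocircuit,circuit_so_deletable} produce circuits $\{e_1,e_3,f,g\}$, $\{f_1,f_3,f,g'\}$ and cocircuits $\{f_3,f_4,f,g\}$, $\{e_3,e_4,f,g'\}$ through outside elements, forcing $\lambda(F_1\cup F_2\cup\{f,g,g'\})\le 1$, again contradicting $|E(M)|\ge 13$. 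Your parenthetical ``alternative route'' gestures at this kind of machinery, but as it is not developed, the proposal does not establish the lemma.
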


\begin{proof}
	Suppose, to the contrary, that $M$ has no detachable pairs. First, assume there exists $e \notin F_1 \cup F_2$ such that $e$ is contained in a triangle or triad. Then \Cref{even_fan_spike3} implies that there is a $4$-element fan of $M$ that contains $e$. Let $F_3$ be a maximal fan containing $e$ with ordering $(g_1,g_2,\ldots,g_{|F_3|})$. By \Cref{intersecting_fans}, $F_3 \cap (F_1 \cup F_2) \subseteq \{g_1,g_{|F_3|}\}$. Hence, orthogonality with the circuit $\{e_1,e_2,f_1,f_2\}$ and the cocircuit $\{e_2,e_4,f_2,f_4\}$ implies that $F_3$ is disjoint from $F_1$ and $F_2$. Furthermore, by \Cref{augmented_fan_petal}, $F_3$ is not odd. Thus, without loss of generality, we may assume that $\{g_1,g_2,g_3\}$ is a triad and $\{g_{|F_3|-2},g_{|F_3|-1},g_{|F_3|}\}$ is a triangle.
    Note also that, by \cref{even_fan_spike_separating}, $\lambda(F_1 \cup F_2) = 2$.
	
    It follows from \cref{even_fan_spike1} that there is a $4$-element circuit~$C$ containing $\{e_1,g_1\}$, and, by orthogonality, $C$ also contains $e_2$ or $e_3$, and $g_2$ or $g_3$.
    Orthogonality with $\{e_2,e_4,f_2,f_4\}$ implies that $e_3 \in C$. Furthermore, if $|F_3| \geq 5$, then orthogonality implies that $g_2 \in C$, and if $|F_3| = 4$, then we may assume that $g_2 \in C$ up to the ordering of $F_3$. Thus, $C = \{e_1,e_3,g_1,g_2\}$. By \Cref{even_fan_spike1}, either $\{e_2,e_4,g_{|F_3|-1},g_{|F_3|}\}$ is a cocircuit, or $|F_3| = 4$ and $\{e_3,e_4,g_2,g_4\}$ is a cocircuit. The former case contradicts orthogonality with the circuit $\{e_1,e_2,f_1,f_2\}$, so the latter holds. Similarly, $M$ has a $4$-element circuit containing $\{f_1,g_1\}$, and, by orthogonality with $\{e_2,e_4,f_2,f_4\}$ and $\{e_3,e_4,g_2,g_4\}$, this circuit is $\{f_1,f_3,g_1,g_3\}$. But now $\lambda(F_1 \cup F_2 \cup F_3) \leq 1$, which implies $E(M) \leq |F_1 \cup F_2 \cup F_3| + 1$. \Cref{even_fan_spike2} implies that $E(M) = F_1 \cup F_2 \cup F_3$, so that $|E(M)| = 12$, a contradiction.
	
    Now we may assume, for all $x \notin F_1 \cup F_2$, that $x$ is not contained in a triangle or a triad. Let $f \notin F_1 \cup F_2$. Bixby's Lemma implies that either $M / f$ or $M \backslash f$ is $3$-connected. Up to duality, we may assume the former. Since $M$ has no detachable pairs, and $e_1 \in \cl^*(F_1-\{e_1\})$, the dual of \cref{contractable_circuit} implies that $M$ has a $4$-element circuit $C_1$ containing $\{e_1,f\}$, either $e_2$ or $e_3$, and an element $g \notin F_1$.  By orthogonality, $g \notin F_2-\{f_4\}$.  Moreover, if $g = f_4$, then $f \in \cl(F_1 \cup F_2)$, contradicting that $M/f$ is $3$-connected.
    By orthogonality with $\{e_2,e_4,f_2,f_4\}$, we have that $C_1 = \{e_1,e_3,f,g\}$. Similarly, $M$ has a $4$-element circuit $C_2 = \{f_1,f_3,f,g'\}$, for $g' \notin F_1 \cup F_2$.
	
    Suppose $g = g'$. Then circuit elimination implies that $M$ has a circuit contained in $\{e_1,e_3,f_1,f_3,f\}$. But $M / f$ is $3$-connected, and so $f \notin \cl(F_1 \cup F_2)$, which means that $\{e_1,e_3,f_1,f_3\}$ is a circuit of $M$. This implies that $\lambda(F_1 \cup F_2) \leq 1$, a contradiction. Thus, $g \neq g'$. Now, $g$ is not contained in a triad, so \Cref{circuit_so_deletable} implies that $M \backslash g$ is $3$-connected. Hence, as $M$ has no detachable pairs, \cref{deletable_cocircuit} implies that $M$ has a $4$-element cocircuit containing $\{f_4,g\}$, either $f_2$ or $f_3$, and an element that is not contained in $(F_1 - \{e_1\}) \cup F_2$. Moreover, if this element is $e_1$, then $g \in \cl^*(F_1 \cup F_2)$, contradicting that $M \backslash g$ is $3$-connected. Thus, by orthogonality, $M$ has a cocircuit $\{f_3,f_4,f,g\}$. Similarly, $M$ has a cocircuit $\{e_3,e_4,f,g'\}$. But now $\lambda(F_1 \cup F_2 \cup \{f,g,g'\}) \leq 1$, a contradiction since $|E(M)| \geq 13$. We conclude that $M$ has a detachable pair.
\end{proof}

\begin{lemma} \label{even_fan_spike}
    Let $M$ be a $3$-connected matroid with no detachable pairs such that $|E(M)| \geq 13$.
    Let $F_1$ and $F_2$ be disjoint maximal fans, each having even length, with $|F_1| \ge 4$ and $|F_2| \ge 2$. Let $F_1 = (e_1,e_2,\ldots,e_{|F_1|})$ and $F_2 = (f_1,f_2,\ldots,f_{|F_2|})$ such that $\{e_1,e_2,e_3\}$ is a triad, and
    either $|F_2| = 2$ or $\{f_1,f_2,f_3\}$ is a triad. If $\{e_1,e_2,f_1,f_2\}$ is a circuit and $\{e_{|F_1|-1},e_{|F_1|},f_{|F_2|-1},f_{|F_2|}\}$ is a cocircuit, then $M$ is an even-fan-spike.
\end{lemma}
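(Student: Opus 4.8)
The plan is to realise $M$ as a disjoint union of maximal even fans, together with at most two further elements, matching the hypotheses of \cref{even_fan_spike2}, and then to invoke that lemma. By hypothesis, the pair $\{F_1,F_2\}$ already carries the interconnection data that \cref{even_fan_spike2} requires: a $4$-element circuit through the triad-ends with two elements in each fan, and a $4$-element cocircuit through the triangle-ends with two elements in each fan. Let $\mathcal P=\{P_1,\dots,P_m\}$ be a maximal family of pairwise-disjoint maximal even fans of $M$ containing $F_1$ and $F_2$, each member of length at least two (and starting with a triad when its length is at least four), such that every pair of members carries such circuit/cocircuit data.

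First I would show $\mathcal P$ can be grown whenever some $e\in E(M)-\bigcup\mathcal P$ lies in a triangle or a triad. By \cref{even_fan_spike3} (applied with $F_1$ and $F_2$), such an $e$ lies in a $4$-element fan, so $e$ lies in a maximal fan $F'$ with $|F'|\ge 4$. Since each member of $\mathcal P$ is a maximal even fan, hence not of length five, no $F'\cup P_i$ is an $M(K_4)$-separator, so \cref{intersecting_fans} gives $F'\cap P_i\subseteq\{\text{ends of }F'\}$ for each $i$; chasing orthogonality of the extreme triangle or triad of $F'$ with the circuit and cocircuit linking $F_1$ and $F_2$ (and, for the other members, with the circuit and cocircuit linking $P_i$ to $F_1$) then forces $F'$ to be disjoint from every $P_i$. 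If $F'$ were odd, then, up to duality, \cref{augmented_fan_petal} applied to $F_1$ and $F'$ would make $M$ a quasi-triad-paddle with an augmented-fan petal; one checks this is incompatible with the presence of the disjoint even maximal fan $F_2$, so $F'$ is even. Finally, \cref{even_fan_spike1} applied to $F_1$ and $F'$ (with its case (i) excluded by \cref{even_fan_spike4}, as $M$ has no detachable pairs), followed by circuit and cocircuit elimination through $F_1$ to transfer the data to the remaining members of $\mathcal P$ --- including a length-two $F_2$, to which \cref{even_fan_spike1} does not directly apply --- supplies the linking data between $F'$ and each member. So $F'$ may be adjoined, and as $E(M)$ is finite, the family stabilises.

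Once $\mathcal P$ is maximal, every element of $E(M)-\bigcup\mathcal P$ lies in neither a triangle nor a triad, and by \cref{even_fan_spike_separating} we have $\lambda(\bigcup_{i\in J}P_i)=2$ for every nonempty $J\subseteq[m]$ and $\sqcap(P_i,P_j)=1$ for distinct $i,j$. It remains to show $|E(M)|\le|\bigcup\mathcal P|+2$. Suppose $f\in E(M)-\bigcup\mathcal P$; by Bixby's Lemma one of $M/f$, $M\backslash f$ is $3$-connected, and up to duality say $M/f$ is. As $e_1\in\cl^*(F_1-\{e_1\})$, the dual of \cref{contractable_circuit} produces a $4$-element circuit through $\{e_1,f\}$ with a second element in $\{e_2,e_3\}$ and an element $g\notin\bigcup\mathcal P$ (orthogonality with the linking cocircuits prevents $g$ from lying in another member and forces $g\neq f$), and a parallel circuit through $f$ comes from $F_2$ or another member. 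Then, exactly as in the final paragraph of the proof of \cref{even_fan_spike4}, applying \cref{circuit_so_deletable}, \cref{deletable_cocircuit}, and orthogonality to the elements so produced, one reaches either a detachable pair --- contradicting the hypothesis --- or the conclusion that $f$ and a further leftover element group into a length-two fan carrying the linking data with every member of $\mathcal P$, which can then be adjoined. Since each step removes two leftover elements or yields a contradiction, at most two elements remain outside $\bigcup\mathcal P$.

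Hence $M$, the (possibly enlarged) family $\mathcal P$, and the attached circuits and cocircuits satisfy the hypotheses of \cref{even_fan_spike2}, so $M$ is an even-fan-spike, as required. The chief obstacle is the third paragraph --- bounding the number of leftover elements: both the detachable-pair construction (requiring careful tracking of the $4$-element circuits and cocircuits produced along the way and repeated orthogonality arguments) and the recognition that leftover elements must be organised into length-two petals are the delicate points. A secondary technical matter is treating the case $|F_2|=2$ uniformly, since \cref{even_fan_spike1} is stated only for fans of length at least four, which is why the circuit/cocircuit-elimination transfer of the second paragraph is needed.
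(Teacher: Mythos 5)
Your proposal follows essentially the same route as the paper's proof: take a maximal collection of disjoint maximal even fans linked pairwise by $4$-element circuits at the triad ends and cocircuits at the triangle ends, show that a leftover element lying in a triangle or triad spawns a new even petal (via \cref{even_fan_spike3}, \cref{augmented_fan_petal}, \cref{even_fan_spike1} together with \cref{even_fan_spike4}, and circuit/cocircuit elimination to transfer the linking data), show that a leftover element in no triangle or triad pairs with a second leftover element into a length-two petal (via Bixby's Lemma, the dual of \cref{contractable_circuit}, \cref{circuit_so_deletable} and \cref{deletable_cocircuit}), and then finish with \cref{even_fan_spike2}. The one deviation is a local detail rather than a different method: the paper rules out the new circuit element lying in an existing petal by a closure argument (it would put the contracted element in $\cl(P_1\cup P_i)$, contradicting $3$-connectivity of the contraction) rather than by orthogonality with the linking cocircuits, which alone does not quite suffice when $|P_1|=4$.
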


\begin{proof}
	By the assumptions of the lemma, we may choose, for $m \geq 2$, disjoint subsets $P_1,P_2,\ldots,P_m$ of $M$ such that, for all $i \in [m]$, the set $P_i = (p^i_1,p^i_2,\ldots,p^i_{|P_i|})$ is a maximal fan with even length at least two such that either $|P_i| = 2$ or $\{p^i_1,p^i_2,p^i_3\}$ is a triad, and, for all $j \in [m] - \{i\}$, the set $C_{i,j} = \{p^i_1,p^i_2,p^j_1,p^j_2\}$ is a circuit, and the set $C^*_{i,j} = \{p^i_{|P_i|-1},p^i_{|P_i|},p^j_{|P_j|-1},p^j_{|P_j|}\}$ is a cocircuit. Let $P_1, P_2, \ldots, P_m$ be a maximal collection of such subsets with $P_1 = F_1$, so that $|P_1| \geq 4$.
	
    If $|E(M)| \leq |P_1 \cup P_2 \cup \cdots \cup P_m| + 2$, then the lemma follows from \Cref{even_fan_spike2}.  So we may assume that $|E(M)| \geq |P_1 \cup P_2 \cup \cdots \cup P_m| + 3$.
    First, suppose that there exists $e \notin P_1 \cup P_2 \cup \cdots \cup P_m$ such that $e$ is contained in a triangle or a triad. By \Cref{even_fan_spike3}, $e$ is contained in a $4$-element fan. Let $P'$ be a maximal fan containing $e$. By orthogonality with the circuits $C_{i,j}$ and the cocircuits $C^*_{i,j}$, the fan $P'$ is disjoint from $P_i$ for all $i \in [m]$. Furthermore, by \Cref{augmented_fan_petal}, $|P'|$ is not odd. By Lemmas \ref{even_fan_spike1} and \ref{even_fan_spike4}, there exists an ordering $(p'_1,p'_2,\ldots,p'_{|P'|})$ of $P'$ such that $\{p'_1,p'_2,p'_3\}$ is a triad and $\{p'_1,p'_2,p^1_1,p^1_2\}$ is a circuit and $\{p'_{|P'|-1},p'_{|P'|},p^1_{|P_1|-1},p^1_{|P_1|}\}$ is a cocircuit. For all $i \in [m]$, circuit elimination with $C_{1,i}$ implies that $\{p'_1,p'_2,p^i_1,p^i_2\}$ is a circuit, and cocircuit elimination with $C^*_{1,i}$ implies that $\{p'_{|P'|-1},p'_{|P'|},p^i_{|P_i|-1},p^i_{|P_i|}\}$ is a cocircuit. But choosing $P_{m+1} = P'$ contradicts the maximality of the collection $P_1, P_2, \ldots, P_m$.
	
    Now we may assume that every element of $E(M) - (P_1 \cup P_2 \cup \cdots \cup P_m)$ is not contained in a triangle or a triad. Let $e$ be such an element. By Bixby's Lemma, either $M / e$ or $M \backslash e$ is $3$-connected. Without loss of generality, we may assume the former. Since $M$ has no detachable pairs, the dual of \cref{contractable_circuit} implies that $M$ has a $4$-element circuit~$C$ containing $\{e,p^1_1\}$, either $p^1_2$ or $p^1_3$, and an element $e' \notin P_1$.  Suppose that $e' \in P_i$ for some $i \in \{2,3,\dotsc,m\}$.  Then $e \in \cl(P_1 \cup P_i)$, contradicting that $M/e$ is $3$-connected. So $e' \notin P_1 \cup P_2 \cup \dotsm \cup P_m$.
   Furthermore, $p^1_3 \notin C$ by orthogonality with $\{p^1_3,p^1_4,p^1_5\}$ if $|P_1| \geq 5$, or by orthogonality with $\{p^1_3,p^1_4,p^2_{|P_2|-1},p^2_{|P_2|}\}$ if $|P_1| = 4$. Thus, $\{e,e',p^1_1,p^1_2\}$ is a circuit.
	
   Since $e'$ is not contained in a triad, \Cref{circuit_so_deletable} implies that $M \backslash e'$ is $3$-connected. Therefore, by \cref{deletable_cocircuit}, $M$ has a $4$-element cocircuit~$C^*$ containing $\{e',p^1_{|P_1|}\}$, either $p^1_{|P_1|-2}$ or $p^1_{|P_1|-1}$, and an element that is not contained in $P_1$. As before, this element is also not contained in $P_i$ for $i \in \{2,3,\dotsc,m\}$, for otherwise $e' \in \cl^*(P_1 \cup P_i)$. Orthogonality with $\{p^1_{|P_1|-4},p^1_{|P_1|-3},p^1_{|P_1|-2}\}$ if $|P_1| \geq 5$, or with $\{p^1_1,p^1_2,p^2_1,p^2_2\}$ if $|P_1| = 4$, implies that $p^1_{|P_1|-1} \in C^*$. Orthogonality with $C$ implies that $e \in C^*$, so $C^* = \{e,e',p^1_{|P_1|-1},p^1_{|P_1|}\}$. Now, $\{e,e'\}$ is a maximal $2$-element fan and, for all $i \in [m]$, circuit and cocircuit elimination with $C_{1,i}$ and $C^*_{1,i}$ implies that $\{e,e',p^i_1,p^i_2\}$ is a circuit and $\{e,e',p^i_{|P_i|-1},p^i_{|P_i|}\}$ is a cocircuit. Choosing $P_{m+1} = \{e,e'\}$ contradicts the maximality of the collection $P_1, P_2, \ldots, P_m$.\end{proof}

\subsection*{Putting it together}
We now combine the lemmas in this section to prove \Cref{disjoint_fans_with_like_ends}.
Recall that $F_1 = (e_1,e_2,\ldots,e_{|F_1|})$ and $F_2 = (f_1,f_2,\ldots,f_{|F_2|})$ are disjoint maximal fans of $M$ such that $|F_1| \geq 4$ and $|F_2| \geq 3$.

\begin{proof}[Proof of \Cref{disjoint_fans_with_like_ends}.]
    Suppose that $\{e_1,e_2,e_3\}$ and $\{f_1,f_2,f_3\}$ are both triads, and $M$ does not have a detachable pair. If $|F_2| = 3$, then \Cref{even_fan_k3m} implies that $M$ is a hinged triad-paddle, so (iii) holds. Otherwise, $|F_2| \geq 4$. If either $F_1$ or $F_2$ is odd, then \Cref{augmented_fan_petal} implies that $M$ is a quasi-triad-paddle with an augmented-fan petal, so (iv) holds. Finally, if both $|F_1|$ and $|F_2|$ are even, then \cref{even_fan_spike1,even_fan_spike4,even_fan_spike} combine to show that $M$ is an even-fan-spike, so (ii) holds, which completes the proof.
\end{proof}

\section{Intersecting fans} \label{fans_intersecting}
For the remainder of the proof of \Cref{detachable_main}, we may assume that $M$ does not have disjoint maximal fans $F_1 = (e_1,e_2,\ldots,e_{|F_1|})$ and $F_2 = (f_1,f_2,\ldots,f_{|F_2|})$ such that $|F_1| \geq 4$, and $|F_2| \geq 3$, and $\{e_1,e_2,e_3\}$ and $\{f_1,f_2,f_3\}$ are both triads.
Similarly, if $M$ has disjoint maximal fans $F_1$ and $F_2$ satisfying these conditions except that $\{e_1,e_2,e_3\}$ and $\{f_1,f_2,f_3\}$ are both triangles, then $M^*$ is one of the matroids described in \Cref{disjoint_fans_with_like_ends}, so we may assume that this is not the case either. As a shorthand for these assumptions, we shall say $M$ has no \emph{disjoint maximal fans with like ends}. This section concerns $3$-connected matroids that have two fans $F_1$ and $F_2$ with non-empty intersection. In particular, we prove the following theorem.

\begin{theorem} \label{intersecting_fans_detachable}
    Let $M$ be a $3$-connected matroid such that $|E(M)| \geq 13$, and suppose that $M$ has no disjoint maximal fans with like ends. Let $F_1$ and $F_2$ be distinct maximal fans of $M$ such that $|F_1| \geq 4$ and $|F_2| \geq 3$, and $F_1 \cap F_2 \neq \emptyset$. Then one of the following holds:
	\begin{enumerate}
		\item $M$ has a detachable pair,
        \item $M$ is an even-fan-spike with partition $(F_1, \{f,x\}, \{f',x'\}$), where $|F_2|=3$, $F_2-F_1 = \{f,f'\}$ and $x,x' \in E(M)-(F_1 \cup F_2)$,
		\item $M$ is an even-fan-spike with tip and cotip,
		\item $M$ is an accordion, or
		\item $M$ or $M^*$ is an even-fan-paddle.
	\end{enumerate} 
\end{theorem}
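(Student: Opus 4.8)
The plan is to first determine how the two fans can overlap, and then to run a case analysis driven by the connectivity lemmas of \cref{lemmas} and the structural results of \cref{disjoint_fans}. Assume throughout that $M$ has no detachable pair; the goal is to show that one of (ii)--(v) holds. Since the hypothesis that $M$ has no disjoint maximal fans with like ends is self-dual, and each of (ii)--(v) is preserved under duality, I may assume that $\{e_1,e_2,e_3\}$ is a triad, where $F_1=(e_1,e_2,\ldots,e_{|F_1|})$; by \cref{fan_ends} the element $e_1$ then lies in no triangle, so any maximal fan meeting $F_1$ at $e_1$ has $e_1$ as a triad-type end, and the type of the end $e_{|F_1|}$ is determined by the parity of $|F_1|$. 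By \cref{intersecting_fans_helper} no internal element of $F_1$ lies in $F_2$ unless $|F_2|\ge 4$ and it is also internal in $F_2$, so by \cref{intersecting_fans} either $F_1\cap F_2\subseteq\{e_1,e_{|F_1|}\}$, or $F_1\cup F_2$ is an $M(K_4)$-separator of $M$ or $M^*$, in which case (by the remark following \cref{intersecting_fans}) $|F_1|=|F_2|=5$.

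I would dispose of the $M(K_4)$-separator case first. Here $X:=F_1\cup F_2$ satisfies $\lambda(X)=2$, $|X|=6$, $M|X\cong M(K_4)$, and $|E(M)-X|\ge 7$. Using the length-five fans inside $X$ to supply a deletion certificate and a contraction certificate contained in $X$, \cref{no_other_elements} forces every element of $E(M)-X$ into a maximal fan of length at least four attached to $X$ at an end; an orthogonality argument together with the paddle-recognition \cref{k3m_paddle} then forces $M$ or $M^*$ to be an even-fan-paddle. The only alternatives are that the attached fans produce two disjoint maximal fans with like ends (contrary to hypothesis), or that $X=E(M)$ (impossible, since $|E(M)|\ge 13$).

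In the main case $F_1\cap F_2\subseteq\{e_1,e_{|F_1|}\}$ I would split on $|F_1\cap F_2|$. If $F_1\cap F_2=\{e_1\}$ (the case $\{e_{|F_1|}\}$ being symmetric up to relabelling), then $F_2$ is a second fan branching off the triad-end $e_1$, and $\{e_1,f_2,f_3\}$ is a triad distinct from $\{e_1,e_2,e_3\}$. When $|F_2|=3$, the set $F_2=\{e_1,f,f'\}$ is a transversal triad; a deletion certificate supported on $F_1$ together with \cref{deletable_collection_contractable_el} and \cref{no_other_elements} shows that every remaining element pairs with $f$ or $f'$ to form a size-two petal and that there are exactly two such petals, so $M$ is the three-petal even-fan-spike of (ii). When $|F_2|\ge 4$, the fans through $e_1$ combine --- via \cref{deletable_collection}, \cref{no_other_elements}, and \cref{k3m_paddle} --- to force $M$ or $M^*$ to be an even-fan-paddle. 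If instead $|F_1\cap F_2|=2$, then $F_2-F_1$ is a path ``parallel'' to $F_1$ joining its two ends, with end-types forced by the parity of $|F_1|$; orthogonality shows that $F_1\cup F_2$, together with a bounded number of adjacent elements, forms a structure of path-width three whose two outer blocks are left- and right-hand fan-, quad-, or triangle/triad-type ends of the middle even fan. The accordion lemmas \cref{accordlhfan,accordlhtri,accordlhquad} and their duals verify the required local connectivities, whence $M$ is an accordion, or --- in the sub-case where the combined fan wraps around a common pair of elements playing the roles of tip and cotip --- $M$ is an even-fan-spike with tip and cotip. In every sub-case of the main case in which none of (ii)--(v) results, the exceptional configuration is cleared by exhibiting a detachable pair, typically via \cref{deletable_cocircuit,circuit_so_deletable,deletable_circuit_gives_cocircuit}.

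The main obstacle I anticipate is the two-shared-ends sub-case. There one must carefully track, using orthogonality and the deletion/contraction certificates, exactly which elements of $E(M)-(F_1\cup F_2)$ lie in a triangle or a triad; rule out spurious additional maximal fans (here the absence of disjoint maximal fans with like ends and \cref{intersecting_fans} are the key tools); and then match the attached structures precisely to the list of admissible left- and right-hand ends in the definition of an accordion, or else recognise the even-fan-spike with tip and cotip. Making this case analysis exhaustive --- so that no structural possibility is overlooked --- while simultaneously producing a detachable pair in each remaining non-structural subcase, is where the connectivity machinery of \cref{det_prelims,lemmas} is pushed hardest.
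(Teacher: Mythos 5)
Your proposal has two genuine gaps. First, the $M(K_4)$-separator case. You plan to place both a deletion certificate and a contraction certificate inside $X=F_1\cup F_2$ and invoke \cref{no_other_elements}, but every element of an $M(K_4)$-separator of $M$ lies in a triangle (and, dually, every element of an $M(K_4)$-separator of $M^*$ lies in a triad), so $X$ cannot contain a contraction certificate (respectively, a deletion certificate), and \cref{no_other_elements} is not applicable. Moreover, your claimed conclusion for this case --- that $M$ or $M^*$ is an even-fan-paddle --- is not what happens: the paper shows (\cref{odd_fan_triangle}, hence \cref{no_mk4}) that an $M(K_4)$-separator in a matroid with at least $13$ elements forces a detachable pair, via the deletion certificate $(e_1,\{e_2,e_3,e_4\},\{\{e_5,e\}\})$ and two applications of \cref{three_deletable}, which produce a deletable element outside $F\cup\{e\}$ and so contradict \cref{deletable_collection}. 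This case therefore lands in outcome (i), and your route to (v) does not go through.

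Second, and more seriously, your case analysis mis-locates the accordion outcome, which is the heart of the paper's proof. Accordions arise in the sub-case $F_1\cap F_2=\{e_1\}$ with $F_2$ odd of length $3$ or $5$: a length-five fan attached at $e_1$ gives a fan-type end (\cref{accordion1}), while a length-three fan attached at $e_1$ gives a triangle-type end, a quad-type end (when a second triangle through $e_1$ appears, \cref{accordion3}), or the exceptional even-fan-spike of outcome (ii) (\cref{accordion6}); distinguishing these requires the delicate analysis of the $4$-element cocircuit through $e_1$ carried out in \cref{accordion,accordion4}. Your proposal instead asserts that $|F_2|=3$ always yields (ii) and that $|F_2|\ge 4$ with a single shared end always yields an even-fan-paddle; both claims are false --- the even-fan-paddle conclusion needs both fans even (\cref{even_fan_paddle}), a length-five $F_2$ gives an accordion, and two intersecting length-five fans give a detachable pair (\cref{odd_fans_intersecting}). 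Conversely, in the two-shared-ends sub-case the matching of end types forces either both fans even, whence \cref{tip_cotip} gives the even-fan-spike with tip and cotip, or both odd of length five, whence a detachable pair; no accordion arises there, since in an accordion the attached structure meets the central even fan in a single end. Because the sub-case where accordions and exception (ii) actually occur is exactly the one you dispatch in a sentence, the proposal as written does not establish the theorem.
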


\subsection*{\texorpdfstring{$\mathbf{F_1}$}{F1} and \texorpdfstring{$\mathbf{F_2}$}{F2} are odd}
First, we consider the case where both $F_1$ and $F_2$ are odd. By \Cref{odd_fan}, we only need to consider when $F_1$ and $F_2$ have length three or five.
We handle the case where $\{|F_1|,|F_2|\}=\{3,5\}$ in \cref{odd_fan_no_triangle}, and the case where $|F_1|=|F_2|=5$ in \cref{odd_fans_intersecting}.

\begin{lemma} \label{odd_fan_triangle}
	Let $M$ be a $3$-connected matroid such that $|E(M)| \geq 13$. Let $F = (e_1,e_2,e_3,e_4,e_5)$ be a maximal fan of $M$, and suppose there exists $e \in E(M)-F$ such that $\{e_1,e_5,e\}$ is a triangle. Then $M$ has a detachable pair.
\end{lemma}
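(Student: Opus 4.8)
The plan is to assume $M$ has no detachable pair and derive a contradiction, working with the element $e$ and the fan $F$.

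First I would pin down the shape of $F$. Since $|E(M)|\ge 13$, the matroid $M$ is not a wheel or whirl (those have a unique maximal fan, of size equal to the ground set, which is at least $12$ here), so \cref{fan_ends} applies. As $e_1$ lies in the triangle $\{e_1,e_5,e\}$, \cref{fan_ends}(i) forbids $\{e_1,e_2,e_3\}$ from being a triad; hence $\{e_1,e_2,e_3\}$ and $\{e_3,e_4,e_5\}$ are triangles and $\{e_2,e_3,e_4\}$ is the unique triad of $F$. By \cref{fan_rank}, $r(F)=3$, and since $e\in\cl(\{e_1,e_5\})\subseteq\cl(F)$ we get $r(F\cup\{e\})=3$; then \cref{clconnectivity}, together with $3$-connectivity and $|F\cup\{e\}|=6\le|E(M)|-2$, forces $\lambda(F\cup\{e\})=2$ and $e\notin\cl^*(F)$. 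Orthogonality of any triad through $e$ with the triangle $\{e_1,e_5,e\}$, combined with \cref{fan_ends}(ii) applied to $e_1$ and to $e_5$, shows $e$ is in no triad; hence \cref{closure_deletable}, applied with $X=F$, gives that $M\backslash e$ is $3$-connected.

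Next I would record the structure supplied by the ``no detachable pair'' hypothesis. Because $r(F-\{e_1\})=r(F-\{e_5\})=r(F)=3$, the element $e$ lies in $\cl(\{e_1,e_5\})\cap\cl(F-\{e_1\})\cap\cl(F-\{e_5\})$, and these three sets have empty common intersection, so $(e,\{e_1,e_5\},\{F-\{e_1\},F-\{e_5\}\})$ is a deletion certificate contained in $F\cup\{e\}$; by \cref{deletable_collection}, $M\backslash x$ is not $3$-connected for any $x\notin F\cup\{e\}$. Also, since $F$ is a maximal fan of odd length five, \cref{odd_fan} supplies $z\notin F$ with $\{e_1,e_3,e_5,z\}$ a cocircuit, and since $z\in\cl^*(F)$ while $e\notin\cl^*(F)$, in fact $z\notin F\cup\{e\}$.

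The endgame is to produce a detachable pair of the form $\{e,e_1\}$ or $\{e,e_5\}$. In $M\backslash e$ the set $F$ is still a fan of length five with triangle-triples at both ends, and a maximal fan $\bar F$ of $M\backslash e$ containing $F$ can only extend $F$ at its ends (a fan of length at least five has an essentially unique ordering); moreover an extension at the $e_1$-end forces a new element $x\notin F$ with $\{x,e_1,e_2,e\}$ a cocircuit of $M$, since $e_1$ is in no triad of $M$. If $\bar F$ does not extend at one of the two ends, then that end is still a triangle-end of $\bar F$, so (as $M\backslash e$ is not a wheel or whirl, having a maximal fan whose length is not $|E(M\backslash e)|$) the remark after \cref{fan_ends} gives that $M\backslash e\backslash e_1$ or $M\backslash e\backslash e_5$ is $3$-connected, a detachable pair of $M$. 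The main obstacle — and the part I expect to require the most work — is the remaining case, where $\bar F$ extends at both ends, yielding cocircuits $\{x,e_1,e_2,e\}$ and $\{y,e_4,e_5,e\}$ of $M$ with distinct $x,y\notin F\cup\{e\}$ and $\bar F$ of odd length at least seven with both ends triad-ends. Here cocircuit elimination over $e$, controlled by orthogonality with the circuits $\{e_1,e_2,e_3\}$, $\{e_3,e_4,e_5\}$, $\{e_1,e_5,e\}$, forces a cocircuit $C^*$ with $F-\{e_3\}\subseteq C^*\subseteq(F-\{e_3\})\cup\{x,y\}$; combining this with the cocircuit $\{e_1,e_3,e_5,z\}$ and with the dual of \cref{odd_fan} applied to the long odd fan $\bar F$ of $M\backslash e$ should collapse $\lambda$ of a bounded set below $2$, contradicting $|E(M)|\ge 13$, once the degenerate possibility $\bar F=E(M\backslash e)$ (i.e.\ $M\backslash e$ a wheel or whirl) is disposed of separately by a small direct argument.
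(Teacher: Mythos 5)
Your setup is sound and essentially parallels the paper's: you correctly deduce that $\{e_1,e_2,e_3\}$ and $\{e_3,e_4,e_5\}$ are triangles, that $e$ lies in no triad, that $\lambda(F\cup\{e\})=2$, and your triple $(e,\{e_1,e_5\},\{F-\{e_1\},F-\{e_5\}\})$ is a legitimate deletion certificate, so \cref{deletable_collection} does forbid deletable elements outside $F\cup\{e\}$. The divergence, and the genuine gap, is in the endgame. In your remaining case you only know that $x,y$ (via the cocircuits $\{x,e_1,e_2,e\}$ and $\{y,e_4,e_5,e\}$) and $z$ (via $\{e_1,e_3,e_5,z\}$) lie in $\cl^*(F\cup\{e\})$; coclosure elements never decrease connectivity below $2$, so these facts, together with the eliminated cocircuit $C^*\subseteq\{e_1,e_2,e_4,e_5,x,y\}$, give $\lambda(F\cup\{e,x,y,z\})=2$ and nothing smaller. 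To ``collapse $\lambda$ below $2$'' you would need some element in both the closure and the coclosure of the relevant set, and no such element is produced; the claim that the configuration ``should collapse'' is exactly the part that needs a proof and is missing. Two further problems compound this: your appeal to (the dual of) \cref{odd_fan} inside $M\backslash e$ is not available, since that lemma's hypothesis is that the ambient matroid has no detachable pairs, and nothing guarantees this for $M\backslash e$; and the degenerate case where $M\backslash e$ is a wheel or whirl is deferred to an unspecified ``small direct argument''. (A smaller issue: the claim that a maximal fan of $M\backslash e$ containing $F$ extends only at the ends, with the new triple being exactly $\{x,e_1,e_2\}$, needs the orthogonality argument using $e\notin\cl^*(F)$ rather than uniqueness of fan orderings, though this part can be repaired.)

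For contrast, the paper avoids any analysis of $M\backslash e$ altogether: after setting up a deletion certificate inside $F\cup\{e\}$, it applies \cref{three_deletable} to the triad $\{e_2,e_3,e_4\}$ with $a,b,c=e_1,e_5,e$ to obtain three elements of $\cl^*(F\cup\{e\})$ lying in no triangle, and then the dual of \cref{three_deletable} to $F\cup\{e\}$ to obtain an element $g\notin F\cup\{e\}$ lying in $\cl(F\cup\{e,f,f',f''\})$ and in no triad; \cref{closure_deletable} then makes $M\backslash g$ $3$-connected, contradicting \cref{deletable_collection}. If you want to keep your route, the residual case (cocircuits $\{x,e_1,e_2,e\}$ and $\{y,e_4,e_5,e\}$) still requires an argument of comparable substance; as written, the contradiction is asserted rather than derived.
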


\begin{proof}
    Suppose, to the contrary, that $M$ has no detachable pairs. Since $e_1$ and $e_5$ are contained in the triangle $\{e_1,e_5,e\}$, it follows by \Cref{fan_ends} that $\{e_1,e_2,e_3\}$ and $\{e_3,e_4,e_5\}$ are triangles. Therefore, $e_1 \in \cl(\{e_2,e_3,e_4\})$ and $e_1 \in \cl(\{e_5,e\})$. Furthermore, $e_1$ is not contained in a triad. Hence, $(e_1,\{e_2,e_3,e_4\},\{\{e_5,e\}\})$ is a deletion certificate, and $\lambda(F \cup \{e\}) = 2$.  We complete the proof of the lemma by finding an element $x \notin F \cup \{e\}$ such that $M \backslash x$ is $3$-connected, a contradiction to \Cref{deletable_collection}.
	
    Now, $\{e_1,e_5,e\} \subseteq \cl(\{e_2,e_3,e_4\})$. Furthermore, each of $e_1$ and $e_5$ is not contained in a triad, and $e$ is also not contained in a triad, since orthogonality with $\{e_1,e_5,e\}$ implies that this triad contains either $e_1$ or $e_5$. Now, $|E(M)| \geq |\{e_2,e_3,e_4\}| + 7$, so \Cref{three_deletable} implies that $M$ has distinct elements $f,f',f'' \notin F \cup \{e\}$ such that $\{f,f',f''\} \subseteq \cl^*(F \cup \{e\})$ and none of $f$, $f'$, and $f''$ are contained in a triangle. Additionally, $|E(M)| \geq 13 = |F \cup \{e\}| + 7$ and, for all $y \in F \cup \{e\}$, we have that $y \in \cl((F \cup \{e\}) - \{y\})$. Hence, by the dual of \Cref{three_deletable}, there exist distinct elements $g, g', g''\not\in F\cup \{e, f, f', f''\}$ such that $\{g, g', g''\}\subseteq \cl(F\cup \{e, f, f', f''\})$ and none of $g, g', g''$ are contained in a triad. In particular, $M \backslash g$ is $3$-connected by \Cref{closure_deletable}, a contradiction.
\end{proof}

A consequence of \cref{odd_fan_triangle} is the following corollary, which implies that if a $3$-connected matroid has at least thirteen elements and no detachable pairs, then it has no $M(K_4)$-separators. 

\begin{corollary} \label{no_mk4}
	Let $M$ be a $3$-connected matroid such that $|E(M)| \geq 13$. If $M$ has an $M(K_4)$-separator, then $M$ has a detachable pair.
\end{corollary}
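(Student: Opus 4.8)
The plan is to reduce the statement directly to \cref{odd_fan_triangle}. Suppose $M$ has an $M(K_4)$-separator $X = \{a,b,c,x,y,z\}$, so that $\{x,y,z\}$ is a triad and $\{a,b,c\}$, $\{a,x,y\}$, $\{b,x,z\}$, $\{c,y,z\}$ are triangles of $M$. Since $\{a,x,y\}$ is a triangle, $\{x,y,z\}$ is a triad, and $\{c,y,z\}$ is a triangle, the sequence $(a,x,y,z,c)$ is a fan of length five; as fans of length at least five have a unique ordering up to reversal, its ends are $a$ and $c$. The element $b$ lies outside this fan, and $\{a,c,b\}$ is a triangle. Hence, if $(a,x,y,z,c)$ is a \emph{maximal} fan of $M$, then \cref{odd_fan_triangle} immediately produces a detachable pair, using only $|E(M)| \ge 13$. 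The entire content of the proof is therefore to verify that this fan is maximal.

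First I would record the connectivity bookkeeping needed below: since $a \in \cl(\{x,y\})$, $b \in \cl(\{x,z\})$ and $c \in \cl(\{y,z\})$ by the three triangles of the separator, we have $X \subseteq \cl(\{x,y,z\})$, so $r(X) = r(\{x,y,z\}) \le 3$. The key auxiliary fact is that $\{x,y,z\}$ is the only triad of $M$ contained in $X$. Indeed, if $T^* \ne \{x,y,z\}$ were a second triad with $T^* \subseteq X$, then $\{x,y,z\}$ and $T^*$ are distinct $3$-element cocircuits, so $|\{x,y,z\} \cup T^*| \le 6$ and, using submodularity of $r^*$ together with the fact that $M$ is cosimple (so nonempty sets of elements have positive corank), one gets $r^*(X) \le 4$. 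Combined with $r(X) \le 3$ this gives $\lambda(X) \le 1$, contradicting $3$-connectedness since $|X| = 6$ and $|E(M) - X| \ge 7$.

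With this in hand I would show $(a,x,y,z,c)$ does not extend on either end. On the $c$-end, the terminal triple of the fan is the triangle $\{y,z,c\}$, so any extension supplies a triad $\{z,c,v\}$ with $v \notin \{a,x,y,z,c\}$; if $v = b$ this triad lies in $X$, contradicting the auxiliary fact, and if $v \notin X$ then $\{z,c,v\}$ meets the triangle $\{b,x,z\}$ in exactly the single element $z$, violating orthogonality. Symmetrically, on the $a$-end the initial triple is the triangle $\{a,x,y\}$, so any extension supplies a triad $\{w,a,x\}$ with $w \notin \{a,x,y,z,c\}$; if $w = b$ we again contradict the auxiliary fact, and if $w \notin X$ then $\{w,a,x\}$ meets the triangle $\{a,b,c\}$ in exactly $a$, again violating orthogonality. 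Therefore $(a,x,y,z,c)$ is a maximal fan of length five, and \cref{odd_fan_triangle} finishes the proof.

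I expect the one genuinely non-routine step to be ruling out extension by the element $b$ of $X$ (equivalently, establishing the auxiliary fact that $X$ contains no triad other than $\{x,y,z\}$): unlike extension by an element outside $X$, this case is not excluded by orthogonality against the four triangles of the separator, so it relies on the connectivity estimate $\lambda(X) \le 1$. The remaining steps are straightforward fan manipulations.
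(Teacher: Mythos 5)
Your route is the paper's intended one: the paper states \cref{no_mk4} as an immediate consequence of \cref{odd_fan_triangle}, applied to the length-five fan $(a,x,y,z,c)$ inside the separator with $b$ as the element forming a triangle with its ends, and your auxiliary fact (a second triad inside $X$ would give $r(X)\le 3$, $r^*(X)\le 4$, hence $\lambda(X)\le 1$) is exactly the computation the paper itself performs inside the proof of \cref{intersecting_fans}. Your orthogonality checks for the triads $\{z,c,v\}$ and $\{w,a,x\}$ with $v,w\notin X$ are also correct.

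The one step that needs tightening is the passage from ``no single-element extension at either end'' to ``maximal''. By the paper's definition, maximality means no fan properly contains $F=\{a,x,y,z,c\}$, and a properly containing fan need not be obtained by appending one element to an end of the given ordering; for instance, nothing you have written rules out a containing fan ordered $(a,x,y,z,g,h,c)$, whose new triad is $\{z,g,h\}$ rather than a set of the form $\{z,c,v\}$. So the two cases you check do not literally exhaust the ways maximality could fail. The repair uses only the facts you already have: by orthogonality with the two separator triangles through each of $a$, $b$, $c$, any triad meeting $\{a,b,c\}$ lies inside $X$ and hence, by your auxiliary fact, does not exist; consequently the only triad of $M$ meeting $X$ at all is $\{x,y,z\}$. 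Now if $F'$ were a fan properly containing $F$, then $a$ and $c$ must be its two ends (internal elements of a fan lie in triads), every triad-triple of $F'$ meeting $X$ must equal $\{x,y,z\}$, so $x,y,z$ occupy consecutive positions, and the alternation of triples then forces $|F'|=5$, a contradiction. (Alternatively, assuming $M$ has no detachable pair, take a maximal fan $F'\supseteq F$; since $a$ and $c$ lie in no triad they are its ends, so $F'$ is odd with triangle ends, and \cref{odd_fan} forces $F'=F$, after which \cref{odd_fan_triangle} applies.) With this addition your argument is complete and coincides with the paper's.
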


\begin{lemma} \label{odd_fan_no_triangle}
	Let $M$ be a $3$-connected matroid such that $|E(M)| \geq 13$. Let $F_1$ be a maximal fan of $M$ with ordering $(e_1,e_2,e_3,e_4,e_5)$ such that $\{e_1,e_2,e_3\}$ is a triangle. If $M$ has a triangle $T$ that is not contained in a $4$-element fan, then $M$ has a detachable pair.
\end{lemma}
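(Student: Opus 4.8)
\textbf{Proof proposal for Lemma~\ref{odd_fan_no_triangle}.}
The plan is to argue by contradiction: assume $M$ has no detachable pair, and use $F_1$ together with the stray triangle $T$ to build a deletion certificate supported on a $3$-separating set $X$ with $|E(M)|\ge |X|+3$, then invoke \cref{deletable_collection} to reach a contradiction by locating a deletable element outside $X$. First I would record the shape of $F_1$: since $\{e_1,e_2,e_3\}$ is a triangle and $|F_1|=5$ is odd, \cref{fan_rank} and the parity observations before it give that $\{e_3,e_4,e_5\}$ is also a triangle and $\{e_2,e_3,e_4\}$ is a triad; by \cref{fan_ends}, neither $e_1$ nor $e_5$ lies in a triad. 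Also, $M$ is not a wheel or whirl (it has a maximal fan of odd length), so \cref{odd_fan} applies to $F_1$: there is some $z\in E(M)-F_1$ with $\{e_1,e_3,e_5,z\}$ a cocircuit. By \cref{odd_fan_triangle}, $z$ is \emph{not} such that $\{e_1,e_5,z\}$ is a triangle; more generally \cref{odd_fan_triangle} forbids a triangle on $\{e_1,e_5\}$ and any outside element, and \cref{no_mk4} rules out $M(K_4)$-separators, so any further maximal fan of length $\ge 4$ meets $F_1$ only in its ends.

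Next I would analyse the triangle $T$, which is not contained in a $4$-element fan. By Tutte's Triangle Lemma, at least two elements of $T$, say $t,t'$, are such that $M\backslash t$ and $M\backslash t'$ are $3$-connected; since $M$ has no detachable pair, $T$ must meet $F_1$ (otherwise we would have $F_1$ and a disjoint triangle — but more to the point, we need $T$ near $F_1$ to chain into a certificate). I would examine where $T$ meets $F_1$. Orthogonality of $T$ with the triad $\{e_2,e_3,e_4\}$, together with the fact that $e_1,e_5$ are not in triads, restricts the possibilities: if $T$ meets $F_1$ in an interior element it is forced (by \cref{unique_triangle}) to be one of the fan triangles, contradicting that $T$ is not in a $4$-element fan; hence $T\cap F_1\subseteq\{e_1,e_5\}$. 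So, up to the symmetry reversing $F_1$, $T=\{e_1,a,b\}$ for some $a,b\notin F_1$, with $\{a,b\}$ meeting no triad (else \cref{unique_triangle} applied to the fan $F_1\cup\{a\text{ or }b\}$ would be violated, or we would again land in a $4$-element fan). Now $e_1\in\cl(\{e_2,e_3,e_4\})$ and $e_1\in\cl(\{a,b\})$ and $e_1$ is in no triad, so
\[
\bigl(e_1,\{e_2,e_3,e_4\},\{\{a,b\}\}\bigr)
\]
is a deletion certificate contained in the set $X=F_1\cup\{a,b,z\}$. I would check $\lambda(X)=2$: the cocircuit $\{e_1,e_3,e_5,z\}$ gives $z\in\cl^*(F_1)$ so $\lambda(F_1\cup\{z\})=2$ by \cref{clconnectivity}, and $T$ gives $\{a,b\}\subseteq\cl(F_1)$ so adjoining $a,b$ keeps $\lambda$ at $2$ (using \cref{clconnectivity} and orthogonality to see neither is in $\cl^*$ of what precedes). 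Since $|X|\le 8$ and $|E(M)|\ge 13\ge|X|+3$ is not quite automatic when $|X|=8$, I would note $|X|\le 8$ gives $|E(M)|\ge|X|+4\ge|X|+3$, so \cref{deletable_collection} and \cref{deletable_collection1} apply.

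Finally, with $X$ carrying a deletion certificate and $\lambda(X)=2$, I would build a contraction certificate too. The triad $\{e_2,e_3,e_4\}$ has $e_3\in\cl^*(\{e_2,e_4\})$; combined with $e_3\in\cl^*(\{e_1,e_5,z\})$ (from the cocircuit $\{e_1,e_3,e_5,z\}$) and the fact that $e_3$ lies in no triangle would be false — $e_3$ is in the fan triangles — so instead I would use \cref{three_deletable} and its dual directly, as in the proof of \cref{odd_fan_triangle}: $\{e_1,e_5\}$ and either $a$ or $b$ lie in $\cl(\{e_2,e_3,e_4\})$ and in no triad (the triad-freeness of $a,b$ was established above, and $e_1,e_5$ by \cref{fan_ends}), so \cref{three_deletable} with $X'=\{e_2,e_3,e_4\}$ — after checking $x\in\cl(X'-\{x\})$ for each $x\in X'$, which holds as $X'$ is a triad hence a cocircuit, so $r^*$ is the relevant closure; but \cref{three_deletable} needs $x\in\cl^*(X'-\{x\})$, which is exactly the triad condition — produces $d,e,f$ outside in $\cl^*$ of the enlarged set, none in a triangle. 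Then the dual of \cref{three_deletable} applied to the now-larger $3$-separating set (which has size $\le 11$, so $|E(M)|\ge13$ gives room) produces elements $g,g',g''$ in no triad with $M\backslash g$ $3$-connected. Since $g\notin X$, this contradicts \cref{deletable_collection}. \emph{The main obstacle} is the case analysis pinning down exactly how $T$ sits relative to $F_1$ and verifying that $\{a,b\}$ avoids all triads — one must carefully rule out $T$ overlapping $F_1$ in an interior element and rule out $a$ or $b$ extending a fan, using \cref{unique_triangle}, \cref{fan_ends}, and the maximality hypothesis on $T$; the rest is the same closure-chasing and double application of \cref{three_deletable} as in \cref{odd_fan_triangle}.
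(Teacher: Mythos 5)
Your proposal has two genuine gaps, and they sit exactly where the real work of this lemma lies. First, you assert that ``$T$ must meet $F_1$'' and offer only the parenthetical remark that otherwise $M$ would have a disjoint triangle --- but that is not a contradiction, and nothing in the hypotheses forbids it. The disjoint case is the heart of the lemma: the paper disposes of it (and of the whole lemma) by applying the dual of \cref{disjoint_triads} to $F_1$ and $T$. Since $|F_1|=5$, outcomes (i), (iii), (iv) of that lemma are impossible, so $e_1\in T$; reversing the fan ordering gives $e_5\in T$ as well, whence $T=\{e_1,e_5,e\}$ with $e\notin F_1$, contradicting \cref{odd_fan_triangle}. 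You never invoke \cref{disjoint_triads}, and you supply no substitute for its circuit-building argument, so the case $T\cap F_1=\emptyset$ is simply unhandled.

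Second, your treatment of the case $T=\{e_1,a,b\}$ with $a,b\notin F_1$ does not close. The claim ``$T$ gives $\{a,b\}\subseteq\cl(F_1)$'' is false: the triangle only yields $b\in\cl(F_1\cup\{a\})$, not $a\in\cl(F_1)$, so $\lambda(F_1\cup\{a,b,z\})$ can be $3$ and your set $X$ need not be $3$-separating. The same error undermines the final step: \cref{three_deletable} with $X'=\{e_2,e_3,e_4\}$ requires \emph{three} elements of $\cl(X')-X'$ avoiding triads, and here only $e_1$ and $e_5$ are available --- $a$ and $b$ need not lie in $\cl(\{e_2,e_3,e_4\})=\cl(F_1)$. (This is precisely why the proof of \cref{odd_fan_triangle} works for a triangle through \emph{both} ends $e_1,e_5$ but does not transfer to a triangle through $e_1$ alone.) Note that this configuration is not refuted by soft closure-chasing at all; in the paper it is excluded only by the second application of the dual of \cref{disjoint_triads}, which forces $e_5\in T$. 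The parts of your case analysis pinning $T\cap F_1\subseteq\{e_1,e_5\}$ and showing $a,b$ avoid triads are fine, but without the disjoint case and with the flawed certificate/\cref{three_deletable} step, the proof does not go through.
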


\begin{proof}
	Suppose $M$ has no detachable pairs, and consider the dual of \Cref{disjoint_triads}. Since $|F_1| = 5$, we have that $F_1$ and $T$ do not satisfy \Cref{disjoint_triads}(i), (iii), or (iv). Hence, $e_1 \in T$. Furthermore, by reversing the ordering of $F_1$, \Cref{disjoint_triads} implies that $e_5 \in T$. Thus, $T = \{e_1,e_5,e\}$, for some $e \notin F$, contradicting \Cref{odd_fan_triangle}.
\end{proof}

\begin{lemma} \label{odd_fans_intersecting}
	Let $M$ be a $3$-connected matroid such that $|E(M)| \geq 13$. Let $F_1 = (e_1,e_2,e_3,e_4,e_5)$ and $F_2 = (f_1,f_2,f_3,f_4,f_5)$ be distinct maximal fans of $M$ such that $e_1 = f_1$. Then $M$ has a detachable pair.
\end{lemma}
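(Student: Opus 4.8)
The plan is to assume that $M$ has no detachable pair and derive a contradiction by showing that $F_1 \cup F_2$ is ``too small'', namely $\lambda(F_1 \cup F_2) \le 1$, contradicting $3$-connectivity. First I would set the stage. Since $M$ has two distinct maximal fans it is not a wheel or a whirl, and by \cref{no_mk4} it has no $M(K_4)$-separator. As $e_1 = f_1$ is an end of both $F_1$ and $F_2$, \cref{fan_ends} forces $\{e_1,e_2,e_3\}$ and $\{f_1,f_2,f_3\}$ to be both triangles or both triads; replacing $M$ by $M^*$ if necessary, I may assume both are triangles, so that $\{e_3,e_4,e_5\}$ and $\{f_3,f_4,f_5\}$ are triangles too, and $\{e_2,e_3,e_4\}$ and $\{f_2,f_3,f_4\}$ are triads. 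By \cref{intersecting_fans} and the absence of an $M(K_4)$-separator, $F_1 \cap F_2 \subseteq \{e_1,e_5\}$; then \cref{intersecting_fans_helper} (using $|F_2|=5$) shows that either $F_1 \cap F_2 = \{e_1\}$, or $F_1 \cap F_2 = \{e_1,e_5\}$ with $e_5 = f_5$ and with the triads $\{e_2,e_3,e_4\}$ and $\{f_2,f_3,f_4\}$ disjoint (otherwise \cref{intersecting_fans} would again produce an $M(K_4)$-separator). In both cases $\{e_2,e_3,e_4\}$ and $\{f_2,f_3,f_4\}$ are disjoint, and $\{e_2,e_3\}$ and $\{f_2,f_3\}$ are disjoint.

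Next I would exploit \cref{odd_fan}: applied to $F_1$ it yields a cocircuit $\{e_1,e_3,e_5,z_1\}$ with $z_1 \notin F_1$, and applied to $F_2$ a cocircuit $\{f_1,f_3,f_5,z_2\}$ with $z_2 \notin F_2$. Orthogonality of $\{e_1,e_3,e_5,z_1\}$ against the triangles $\{f_1,f_2,f_3\}$ and $\{f_3,f_4,f_5\}$ pins $z_1$ down: when $F_1 \cap F_2 = \{e_1\}$ it forces $z_1 = f_2$, and symmetrically $z_2 = e_2$, so that $\{e_1,e_3,e_5,f_2\}$ and $\{e_1,e_2,f_3,f_5\}$ are cocircuits; when $e_5 = f_5$ it forces $z_1 = f_3$ (and likewise $z_2 = e_3$), so that $\{e_1,e_3,e_5,f_3\}$ is a cocircuit. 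This orthogonality bookkeeping is the technical heart of the argument; the rest is rank arithmetic.

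Finally I would bound the two connectivities. Using that $e_1 \in \cl\{e_2,e_3\} \cap \cl\{f_2,f_3\}$, that $e_5 \in \cl\{e_3,e_4\}$ and $f_5 \in \cl\{f_3,f_4\}$, and (in the $e_5 = f_5$ case) that $\sqcap(F_1,F_2) \ge r(F_1 \cap F_2) = 2$, one collapses the rank to $r(F_1 \cup F_2) \le 5$ in the first case and $r(F_1 \cup F_2) \le 4$ in the second. Dually, the triads $\{e_2,e_3,e_4\}$ and $\{f_2,f_3,f_4\}$, being disjoint circuits of $M^*$, have $r^*(\{e_2,e_3,e_4\} \cup \{f_2,f_3,f_4\}) \le 4$, and the extra cocircuit(s) located above then absorb the remaining elements $e_5$, $e_1$ (and $f_5$, in the first case) of $F_1 \cup F_2$ into the cospan, giving $r^*(F_1 \cup F_2) \le 5$ in both cases. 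Hence $\lambda(F_1 \cup F_2) = r(F_1 \cup F_2) + r^*(F_1 \cup F_2) - |F_1 \cup F_2| \le 1$, where $|F_1 \cup F_2| = 9$ in the first case and $8$ in the second; since $|E(M)| \ge 13$ both sides of this partition have at least two elements, so $3$-connectivity is violated. I expect the only real delicacy to be the orthogonality case analysis that locates $z_1$ and $z_2$, together with making sure the rank collapses and cospan absorptions are applied to the right subsets in the right order; everything else is routine.
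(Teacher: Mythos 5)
Your proposal is correct and follows essentially the same route as the paper: assume no detachable pair, use \cref{no_mk4} to exclude the $M(K_4)$-separator outcome of \cref{intersecting_fans}, apply \cref{odd_fan} and orthogonality to locate the cocircuits $\{e_1,e_3,e_5,z\}$ and $\{f_1,f_3,f_5,z'\}$, and conclude $\lambda(F_1 \cup F_2) \le 1$, contradicting $3$-connectivity (the paper verifies this last inequality by closure/coclosure chasing rather than your explicit rank and corank arithmetic, but the content is identical). One small wording point: $e_1$ is not absorbed into the coclosure of $\{e_2,e_3,e_4,f_2,f_3,f_4\}$ by the available cocircuits—it must be added freely, after which $e_5$ (and $f_5$ in the first case) are absorbed—which is precisely why your stated bound is $r^*(F_1\cup F_2)\le 4+1=5$ and the computation goes through.
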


\begin{proof}
    Up to duality, we may assume that $\{e_1,e_2,e_3\}$ is a triangle. Since $e_1 = f_1$, \Cref{fan_ends} implies that $\{f_1,f_2,f_3\}$ is also a triangle. Now assume, towards a contradiction, that $M$ does not have a detachable pair. \Cref{no_mk4} implies that $F_1 \cup F_2$ is not an $M(K_4)$-separator, so either $F_1 \cap F_2 = \{e_1\} = \{f_1\}$ or $F_ 1 \cap F_2 = \{e_1,e_5\} = \{f_1,f_5\}$. By \Cref{odd_fan}, there exist $z,z' \in E(M)$ such that $\{e_1,e_3,e_5,z\}$ and $\{f_1,f_3,f_5,z'\}$ are cocircuits. By orthogonality with $\{f_1,f_2,f_3\}$, we have that $z \in \{f_2,f_3\}$, and by orthogonality with $\{e_1,e_2,e_3\}$, we have that $z' \in \{e_2,e_3\}$. 
	
    First, suppose $F_1 \cap F_2 = \{e_1\}$. Now $\lambda(F_1 \cup \{f_2,f_3,f_4\}) = 2$. But $f_5 \in \cl(F_1 \cup \{f_2,f_3,f_4\})$ and $f_5 \in \cl^*(F_1 \cup \{f_2,f_3,f_4\})$. Thus, $\lambda(F_1 \cup F_2) \leq 1$, a contradiction as $|E(M)|\ge 13$. Otherwise, if $F_1 \cap F_2 = \{e_1,e_5\}$, then $\lambda(F_1 \cup \{f_2,f_3\}) = 2$ and $f_4 \in \cl(F_1 \cup \{f_2,f_3\}) \cap \cl^*(F_1 \cup \{f_2,f_3\})$. Again, $\lambda(F_1 \cup F_2) \leq 1$, a contradiction. This completes the proof of the lemma.
\end{proof}

\subsection*{\texorpdfstring{$\mathbf{F_1}$}{F1} and \texorpdfstring{$\mathbf{F_2}$}{F2} are even and intersect at both ends}
Now, we may assume that at least one of $F_1$ and $F_2$ is even. In the next two subsections, we consider when $F_1$ and $F_2$ are both even. We first consider the case when $F_1$ and $F_2$ intersect at both ends.

\begin{lemma} \label{tip_cotip1}
	Let $M$ be a $3$-connected matroid with no detachable pairs. Let $F_1 = (e_1,e_2,\ldots,e_{|F_1|})$ and $F_2 = (f_1,f_2,\ldots,f_{|F_2|})$ be distinct maximal fans of $M$ with even length at least four. If $e_1 = f_1$ and $e_{|F_1|} = f_{|F_2|}$, then every element of $M$ is contained in a maximal fan of length at least four with ends $e_1$ and $e_{|F_1|}$.
\end{lemma}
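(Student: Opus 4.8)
Write $x=e_1=f_1$ and $y=e_{|F_1|}=f_{|F_2|}$. Since the conclusion is self-dual, by duality I would assume $\{e_1,e_2,e_3\}$ is a triad. As $F_1$ and $F_2$ are distinct maximal fans, $M$ is not a wheel or a whirl, so by \Cref{fan_ends} the element $x$ lies in no triangle; hence $\{f_1,f_2,f_3\}$, which contains $x$, is also a triad. Because $|F_1|$ and $|F_2|$ are even, the end-triples at $y$, namely $\{e_{|F_1|-2},e_{|F_1|-1},e_{|F_1|}\}$ and $\{f_{|F_2|-2},f_{|F_2|-1},f_{|F_2|}\}$, are triangles, and by \Cref{fan_ends} again $y$ lies in no triad. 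First I would show $F_1\cap F_2=\{x,y\}$: since $x,y$ are the ends of $F_1$, \Cref{intersecting_fans} gives either this equality or that $F_1\cup F_2$ is an $M(K_4)$-separator; the latter is impossible because a maximal fan contained in an $M(K_4)$-separator has length five, whereas $|F_1|$ is even (alternatively, invoke \Cref{no_mk4}).

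\textbf{The certificates and the main argument.}
Next I would establish $\lambda(F_1\cup F_2)=2$. By \Cref{not_just_fan} we have $|E(M)|\ge |F_i|+2$, so $\lambda(F_1)=\lambda(F_2)=2$ by \Cref{fan_rank}, and $\lambda(\{x,y\})\ge 2$ since $M$ is $3$-connected; submodularity of $\lambda$ then yields $\lambda(F_1\cup F_2)\le\lambda(F_1)+\lambda(F_2)-\lambda(F_1\cap F_2)\le 2$, with equality as long as at least two elements lie outside $F_1\cup F_2$. Now $F_1\cup F_2$ contains the deletion certificate $\bigl(y,\{e_{|F_1|-2},e_{|F_1|-1}\},\{\{f_{|F_2|-2},f_{|F_2|-1}\}\}\bigr)$ — condition~(ii) holds since $\{e_{|F_1|-2},e_{|F_1|-1}\}$ has connectivity $2$, condition~(iii) since $y$ lies on a triangle at each $y$-end, and condition~(iv) since $y$ lies in no triad — and, dually, the contraction certificate $\bigl(x,\{e_2,e_3\},\{\{f_2,f_3\}\}\bigr)$. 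Assuming $|E(M)|\ge |F_1\cup F_2|+3$, applying \Cref{no_other_elements} with $X=Y=F_1\cup F_2$ shows that every $g\in E(M)-(F_1\cup F_2)$ lies in a maximal fan $G$ of length at least four with ends in $F_1\cup F_2$. Since $G$ is distinct from $F_1$ and from $F_2$, \Cref{intersecting_fans} (again using parity to exclude the $M(K_4)$-separator outcome) forces $G\cap F_1\subseteq\{x,y\}$ and $G\cap F_2\subseteq\{x,y\}$, so the two distinct ends of $G$ both lie in $\{x,y\}$, i.e.\ they are $x$ and $y$. Since every element of $F_1$ (respectively $F_2$) already lies in the maximal fan $F_1$ (respectively $F_2$), which has ends $x$ and $y$, this settles the lemma in this case.

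\textbf{Small cases and the expected obstacle.}
It remains to treat $|E(M)|\le |F_1\cup F_2|+2$, and this is where I expect the real work to be. If $E(M)=F_1\cup F_2$ the conclusion is immediate. If exactly one or two elements lie outside $F_1\cup F_2$, I would argue directly: using $\lambda(F_1\cup F_2)\le 2$ together with Bixby's Lemma, \Cref{closure_deletable}, and \Cref{deletable_cocircuit} and their duals, one shows that any such outside element is either contained in a maximal fan of length at least four — whose ends are then forced to be $x$ and $y$ exactly as above — or else produces a detachable pair, contradicting the hypothesis. The delicate point throughout, and the step I expect to be the main obstacle, is the bookkeeping: guaranteeing enough elements outside the relevant $3$-separating sets so that the certificate lemmas (\Cref{no_other_elements} and its ingredients) apply, and completing the small-case analysis where that slack is unavailable.
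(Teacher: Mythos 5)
Your handling of the large case is correct and is essentially the paper's argument: build a deletion certificate and a contraction certificate inside $F_1\cup F_2$, apply \cref{no_other_elements}, and then use \cref{intersecting_fans} together with \cref{fan_ends} (and the parity of $|F_1|$ to exclude the $M(K_4)$-separator outcome) to force the ends of the new fans to be $e_1$ and $e_{|F_1|}$. Your small certificates at the two ends are valid, and your parity argument is the right way to rule out $M(K_4)$-separators here (note that \cref{no_mk4} is not available, since $|E(M)|\ge 13$ is not a hypothesis of this lemma).

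The genuine gap is the case $|E(M)|\le |F_1\cup F_2|+2$, which you acknowledge but do not prove, and the toolkit you name for it would not work: \cref{closure_deletable} and \cref{deletable_cocircuit} both require at least four elements outside the relevant $3$-separating set, which is precisely what fails in these cases. Moreover, your proposed dichotomy ("the outside element lies in a maximal fan with ends $e_1,e_{|F_1|}$, or else there is a detachable pair") cannot resolve the one-extra-element subcase by the first horn: a fan of length at least four through the single outside element with ends $e_1,e_{|F_1|}$ would need a second interior element taken from the interior of $F_1$ or $F_2$, which \cref{intersecting_fans} forbids; so that subcase must be shown to be impossible outright, and you give no argument producing the required contradiction. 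The paper's treatment of both small cases is concrete: circuit elimination on the two end-triangles sharing $e_1$, together with orthogonality, yields the circuit $\{e_2,e_3,f_2,f_3\}$, and dually one gets the cocircuit $\{e_{|F_1|-2},e_{|F_1|-1},f_{|F_2|-2},f_{|F_2|-1}\}$; hence $\lambda\bigl((F_1\cup F_2)-\{e_1,e_{|F_1|}\}\bigr)=2$. With one extra element $x$ this makes $\{e_1,e_{|F_1|},x\}$ a triangle or a triad, contradicting orthogonality with $\{e_1,e_2,e_3\}$ or with $\{e_{|F_1|-2},e_{|F_1|-1},e_{|F_1|}\}$, so the case cannot occur; with two extra elements $x,y$ it makes $\{e_1,x,y\}$ a triangle and $\{e_{|F_1|},x,y\}$ a triad, producing exactly the fan $(e_1,x,y,e_{|F_1|})$ needed. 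None of these steps appear in your proposal, so as written the proof is incomplete.
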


\begin{proof}
	Without loss of generality, assume that $\{e_1,e_2,e_3\}$ and $\{f_1,f_2,f_3\}$ are triangles, and $\{e_{|F_1|-2},e_{|F_1|-1},e_{|F_1|}\}$ and $\{f_{|F_2|-2},f_{|F_2|-1},f_{|F_2|}\}$ are triads. Clearly, the result holds if $E(M) = F_1 \cup F_2$. 
	
    Suppose that $E(M) = F_1 \cup F_2 \cup \{x\}$.
    By circuit elimination and orthogonality, $\{e_2,e_3,f_2,f_3\}$ is a circuit.  Similarly, $\{e_{|F_1|-2}, e_{|F_1|-1}, f_{|F_2|-2}, f_{|F_2|-1}\}$ is a cocircuit.  It follows that $\lambda((F_1 \cup F_2) - \{e_1,e_{|F_1|}\}) = 2$.  Thus, $\lambda(\{e_1,e_{|F_1|},x\}) = 2$, so $\{e_1,e_{|F_1|},x\}$ is either a triangle or a triad. This is a contradiction to orthogonality, since $\{e_1,e_2,e_3\}$ is a triangle and $\{e_{|F_1|-2},e_{|F_1|-1},e_{|F_1|}\}$ is a triad. 
	
    Next, suppose that $E(M) = F_1 \cup F_2 \cup \{x,y\}$. Since $\lambda((F_1 \cup F_2) - \{e_1\}) = 2$, we have that $\lambda(\{e_1,x,y\}) = 2$. Thus, $\{e_1,x,y\}$ is a triangle. Similarly, $\lambda(\{e_{|F_1|},x,y\}) = 2$, so $\{e_{|F_1|},x,y\}$ is a triad. Thus, $M$ has a maximal fan with ordering $(e_1,x,y,e_{|F_1|})$ and the lemma holds.
	
    Finally, suppose that $|E(M)| \geq |F_1 \cup F_2| + 3$. First note that $\lambda(F_1 \cup F_2) = 2$, $(e_1,F_1-\{e_1\},\{F_2-\{e_1\}\})$ is a deletion certificate, and $(e_{|F_1|},F_1-\{e_{|F_1|}\},\{F_2-\{e_{|F_1|}\}\})$ is a contraction certificate. Let $e \notin F_1 \cup F_2$. \Cref{no_other_elements} implies that $e$ is contained in a maximal fan $F_3$ of length at least four with ends in $F_1 \cup F_2$. \Cref{fan_ends} implies that the ends of $F_3$ are $e_1$ and $e_{|F_1|}$, completing the proof of the lemma.
\end{proof}

\begin{lemma} \label{tip_cotip}
	Let $M$ be a $3$-connected matroid with no detachable pairs. Let $F_1 = (e_1,e_2,\ldots,e_{|F_1|})$ and $F_2 = (f_1,f_2,\ldots,f_{|F_2|})$ be distinct maximal fans of $M$ with even length at least four. If $e_1 = f_1$ and $e_{|F_1|} = f_{|F_2|}$, then $M$ is an even-fan-spike with tip and cotip.
\end{lemma}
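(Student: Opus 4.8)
The plan is to build on \cref{tip_cotip1}, which already tells us that every element of $M$ lies in a maximal fan of length at least four with ends $e_1$ and $e_{|F_1|}$. Write $x = e_1 = f_1$ and $y = e_{|F_1|} = f_{|F_2|}$. We want to produce a spike-like anemone $\Phi = (P_1, P_2, \ldots, P_m)$ with $m \ge 3$ such that each $P_i \cup \{x,y\}$ is an even fan of length at least four with ends $x$ and $y$, together with the degenerate case where $m = 2$ and we instead produce a partition $(P,Q,\{x,y\})$ of the required form (matching the definition of a degenerate even-fan-spike with tip and cotip). First I would set up the petals: by \cref{tip_cotip1}, $E(M) - \{x,y\}$ is partitioned into the sets $P_i := F^{(i)} - \{x,y\}$ arising from the distinct maximal fans $F^{(i)}$ through each element, and since two distinct maximal fans of length $\ge 4$ meet only in their ends (here $\{x,y\}$) by \cref{intersecting_fans} and \cref{no_mk4} (no $M(K_4)$-separator, as $|E(M)|\ge 13$ and $M$ has no detachable pair), these $P_i$ are genuinely disjoint. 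Each $F^{(i)}$ is even of length $\ge 4$ with ends $x,y$, hence (up to reversing) $\{x,e^i_2,e^i_3\}$ is a triangle and $\{e^i_{|F^{(i)}|-2},e^i_{|F^{(i)}|-1},y\}$ is a triad, so $|P_i| \ge 2$ and $P_i$ is an even fan.

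Next I would establish the circuit/cocircuit structure linking the petals. For distinct $i,j$, apply circuit elimination to the triangles $\{x,e^i_2,e^i_3\}$ and $\{x,e^j_2,e^j_3\}$ and use orthogonality with the triads inside $F^{(i)}$ and $F^{(j)}$ (plus \cref{unique_triangle} to pin down which elements survive) to deduce $\{e^i_2,e^i_3,e^j_2,e^j_3\}$ is a circuit; dually $\{e^i_{|F^{(i)}|-2},e^i_{|F^{(i)}|-1},e^j_{|F^{(j)}|-2},e^j_{|F^{(j)}|-1}\}$ is a cocircuit. Together with $x$ being in the closure of each $P_i$ (it is in a triangle of $F^{(i)}$) and $y$ being in the coclosure of each $P_i$, this gives $x \in \cl(P_i)$, $y \in \cl^*(P_i)$, and hence $P_i \cup \{x,y\}$ is a fan of length $|P_i|+2 \ge 4$ with ends $x$ and $y$. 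It remains to verify $\Phi = (P_1,\ldots,P_m)$ is a spike-like anemone: I would run essentially the same submodularity computation as in \cref{even_fan_spike_separating} (treating the fans $F^{(i)}$, or the sets $P_i \cup \{x\}$, as the building blocks), showing $\lambda(\bigcup_{i\in J} P_i \cup \{x,y\}) = 2$ for every nonempty proper $J$, and $\sqcap(P_i \cup\{x,y\}, P_j\cup\{x,y\})$... actually more cleanly, showing $\bigcap_i \cl(P_i) = \{x\}$ and $\bigcap_i \cl^*(P_i) = \{y\}$ and that the anemone is spike-like by the rank count $\sqcap(P_i, P_j) = r(P_i)+r(P_j)-r(P_i\cup P_j)$; since $x\in\cl(P_i)\cap\cl(P_j)$ and the local connectivity is forced to be exactly $1$ by the $\lambda$ computation. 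This matches clause (i) or (ii) in the definition of "non-degenerate even-fan-spike with tip and cotip," or the degenerate partition when $m=2$.

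The case split $m \ge 3$ versus $m = 2$ needs care: when $m = 2$ we have only two petals $P_1 = F_1 - \{x,y\}$ and $P_2 = F_2 - \{x,y\}$, and we must land in the "degenerate even-fan-spike with tip and cotip" definition — i.e. $(P_1, P_2, \{x,y\})$ with both $P_i \cup \{x,y\}$ even fans of length $\ge 4$ with ends $x,y$ and the closure conditions $\cl(P_1)\cap\cl(P_2) = \{x\}$, $\cl^*(P_1)\cap\cl^*(P_2) = \{y\}$. The closure statements follow because any element in $\cl(P_1)\cap\cl(P_2)$ other than $x$ would, via \cref{clconnectivity} and $\lambda(P_i) = 2$, force a $\lambda$ drop contradicting $3$-connectivity (here I would use that $|E(M)| \ge 13$ so there is enough room); and $x \in \cl(P_i)$ because $x$ is in a triangle of the fan. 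I would also double-check that $m \ge 3$ when $E(M) \ne F_1 \cup F_2$: by \cref{tip_cotip1} there is then a third maximal fan $F_3$ through some element outside $F_1\cup F_2$, with the same ends, so $P_3 := F_3 - \{x,y\}$ is a third nonempty petal.

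The main obstacle I expect is the bookkeeping to confirm that the collection of maximal fans through the elements of $E(M)-\{x,y\}$ partitions into pairwise-disjoint petals with the correct alternating triangle/triad parity and the correct inter-petal circuits and cocircuits — in particular, ruling out that two of these fans could share an interior element (handled by \cref{intersecting_fans} together with \cref{no_mk4}, since an $M(K_4)$-separator would supply a maximal fan of length five, odd, contradicting our even-length hypothesis after invoking \cref{odd_fans_intersecting} or simply the no-detachable-pair assumption via \cref{no_mk4}) and verifying that all petals start with a triangle at $x$ and end with a triad at $y$ consistently, which is forced by \cref{fan_ends} applied at the common ends $x$ and $y$. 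Once the petal structure is pinned down, the spike-like anemone verification is a routine submodularity argument of the type already carried out in \cref{even_fan_spike_separating} and \cref{even_fan_spike2}.
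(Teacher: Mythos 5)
Your proposal is essentially the paper's proof: the key input is \cref{tip_cotip1}, the petals are the maximal fans with ends $x=e_1$ and $y=e_{|F_1|}$ (pairwise meeting only in $\{x,y\}$ by \cref{intersecting_fans}, the $M(K_4)$-separator alternative being impossible since such a separator contains only maximal fans of length five, which are odd), the case $E(M)=F_1\cup F_2$ is exactly the degenerate even-fan-spike with tip and cotip, and spike-likeness is verified by the same submodularity count as in \cref{even_fan_spike_separating}. The paper merely organises the bookkeeping as a maximal collection of disjoint petals rather than a direct partition, and it never needs your inter-petal circuits and cocircuits.

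Two things should be tightened. First, and concretely: you delete $x$ and $y$ from every petal, so your $\Phi=(P_1,\dotsc,P_m)$ partitions $E(M)-\{x,y\}$, not $E(M)$; since a spike-like anemone is a flower and flowers partition the ground set, $\Phi$ as you define it is not an anemone of $M$ and does not literally instantiate the definition of a non-degenerate even-fan-spike with tip and cotip. The paper fixes the convention at the outset by taking $P_1=F_1$, so the tip and cotip live in that petal, and the condition to be checked is then $\sqcap(F_1,P_j)=1$ rather than $\sqcap(P_i,P_j)=1$ for your trimmed petals; this follows from the same inequality $r(X\cup P')\le r(X)+r(P'\cup\{x\})-1=r(X)+r(P')-1$ that you gesture at, but it is a different statement, so either adopt that convention or add the short argument that absorbing $\{x,y\}$ into one petal leaves the local connectivities unchanged (for $m\ge 3$, the triad at the $y$-end of a third petal shows $y\notin\cl(P_1\cup P_j\cup\{x\})$). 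Second, the lemma carries no hypothesis $|E(M)|\ge 13$, so \cref{no_mk4} and your ``enough room'' remark in the degenerate case are not available as stated; your parenthetical even-length argument disposes of $M(K_4)$-separators without any size assumption, and the closure conditions $\cl(P)\cap\cl(Q)=\{x\}$ and $\cl^*(P)\cap\cl^*(Q)=\{y\}$ are consequences noted after the degenerate definition, not requirements, so they need not be verified at all.
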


\begin{proof}
    Assume, without loss of generality, that $\{e_1,e_2,e_3\}$ and $\{f_1,f_2,f_3\}$ are triangles, and $\{e_{|F_1|-2},e_{|F_1|-1},e_{|F_1|}\}$ and $\{f_{|F_2|-2},f_{|F_2|-1},f_{|F_2|}\}$ are triads. If $E(M) = F_1 \cup F_2$, then $M$ is a degenerate even-fan-spike with tip~$e_1$ and cotip~$e_{|F_1|}$. Otherwise, choose a maximal collection of disjoint subsets $P_1,P_2,\ldots,P_m$ of $E(M)$ with $P_1=F_1$ and $P_2=F_2-\{e_1,e_{|F_1|}\}$ such that
	\begin{enumerate}
        \item for all $i \in [m]$, the set $P_i \cup \{e_1,e_{|F_1|}\}$ is an even fan with ends $e_1$ and $e_{|F_1|}$,
        \item for each non-empty subset $J$ of $[m]$, we have that $\lambda(\bigcup_{i \in J} P_i) \le 2$, and
		\item for all distinct $i,j \in [m]$, we have that $\sqcap(P_i,P_j) = 1$.
	\end{enumerate}

    Suppose there exists an element $e \in E(M) - (P_1 \cup P_2 \cup \cdots \cup P_m)$. By \Cref{tip_cotip1}, $e$ is contained in a maximal fan $F_3$ of length at least four, with ends $e_1$ and $e_{|F_1|}$. Let $P' = F_3 - \{e_1,e_{|F_1|}\}$. Then $\lambda(P')=2$. Let $J$ be a non-empty subset of $[m]$, and let $X = \bigcup_{i \in J} P_i$. 
    By submodularity, $r(X \cup P') \leq r(X) + r(P' \cup \{e_1\}) - 1 = r(X) + r(P') - 1$. 
    Similarly, $r^*(X \cup P') \leq r^*(X) + r^*(P') - 1$. It follows that $\lambda(X \cup P') \leq \lambda(X) +\lambda(P')-2 = 2$. Furthermore, for all $i \in [m]$, we have that $r(P_i \cup P') = r(P_i) + r(P') - 1$, so $\sqcap(P_i,P') = 1$. Thus the disjoint subsets $P_1, P_2, \ldots, P_m, P'$ satisfy (i)--(iii), contradicting maximality. We deduce that $E(M) = P_1 \cup P_2 \cup \cdots \cup P_m$.
    Now $(P_1,P_2,\ldots,P_m)$ is a spike-like anemone, and so $M$ is an even-fan-spike with tip and cotip, thereby completing the proof of the lemma.
\end{proof}

\subsection*{\texorpdfstring{$\mathbf{F_1}$}{F1} and \texorpdfstring{$\mathbf{F_2}$}{F2} are even and intersect at one end}
Next, we consider the case where $F_1$ and $F_2$ are both even, and intersect in exactly one element.

\begin{lemma} \label{even_fan_paddle1}
	Let $M$ be a $3$-connected matroid with no detachable pairs.
	Let $F_1 = (e_1,e_2,\ldots,e_{|F_1|})$ and $F_2 = (f_1,f_2,\ldots,f_{|F_2|})$ be distinct maximal fans of $M$ with even length at least four such that $\{e_1,e_2,e_3\}$ and $\{f_1,f_2,f_3\}$ are triangles. If $e_1 = f_1$ and $e_{|F_1|} \neq f_{|F_2|}$, and $|E(M)| \leq |F_1 \cup F_2| + 2$, then $M$ is a degenerate even-fan-paddle.
\end{lemma}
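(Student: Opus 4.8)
The plan is to first determine the overlap of $F_1$ and $F_2$, then manufacture a deletion certificate and a contraction certificate from the two pairs of ends, use these together with \cref{deletable_collection}, \cref{contractable_collection} and \cref{no_other_elements} to pin down that there is exactly one element outside $F_1\cup F_2$, and finally read off the degenerate even-fan-paddle structure.

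First I would show $F_1\cap F_2=\{e_1\}$. Since $F_1$ has even length, $F_1\cup F_2$ is not an $M(K_4)$-separator (such a separator contains only $5$-element fans), so \cref{intersecting_fans} and its symmetric application give $F_1\cap F_2\subseteq\{e_1,e_{|F_1|}\}\cap\{f_1,f_{|F_2|}\}=\{e_1\}$, using $e_{|F_1|}\neq f_{|F_2|}$. Hence $|F_1\cup F_2|=|F_1|+|F_2|-1$, and $Z:=E(M)-(F_1\cup F_2)$ has at most two elements. As both fans are even with triangle-ends at $e_1$, the triples $\{e_1,e_2,e_3\}$, $\{e_1,f_2,f_3\}$ are triangles and $\{e_{|F_1|-2},e_{|F_1|-1},e_{|F_1|}\}$, $\{f_{|F_2|-2},f_{|F_2|-1},f_{|F_2|}\}$ are triads; by \cref{fan_ends}, $e_1$ lies in no triad (so $M\backslash e_1$ is $3$-connected by Tutte's Triangle Lemma) while $e_{|F_1|}$ and $f_{|F_2|}$ lie in no triangle.

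Next I would build the two certificates. Eliminating $e_1$ from the triangles $\{e_1,e_2,e_3\}$ and $\{e_1,f_2,f_3\}$, then applying orthogonality with the triads $\{e_2,e_3,e_4\}$ and $\{f_2,f_3,f_4\}$, shows $\{e_2,e_3,f_2,f_3\}$ is a circuit; since $e_1\in\cl(\{e_2,e_3\})\cap\cl(\{f_2,f_3\})$ and $e_1$ lies in no triad, $(e_1,\{e_2,e_3\},\{\{f_2,f_3\}\})$ is a deletion certificate. For a contraction certificate I would work near the terminal triads, which are disjoint so there is no direct elimination: using that $M\backslash e_1$ is $3$-connected and $\lambda(F_2)=2$, \cref{deletable_cocircuit} (or a careful orthogonality argument against the intermediate fan triples of $F_1$ and $F_2$) produces a $4$-element cocircuit meeting both terminal triads, whence a contraction certificate supported near $e_{|F_1|}$ and $f_{|F_2|}$. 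With both certificates in hand, \cref{deletable_collection}, \cref{contractable_collection} and \cref{no_other_elements} force every element of $Z$ into a maximal fan of length at least four with ends in $F_1\cup F_2$; combining this with \cref{intersecting_fans} and orthogonality against the circuit $\{e_2,e_3,f_2,f_3\}$ and the terminal cocircuit rules out $|Z|=2$, and a direct orthogonality argument between the two disjoint terminal triads rules out $Z=\emptyset$. Hence $Z=\{y\}$ for a single element $y$.

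Finally, with $Z=\{y\}$, I would pin down which triangles and cocircuits contain $y$ (orthogonality and circuit/cocircuit elimination against the structure already found) and then verify directly that the partition $(P_1,P_2,P_3)=(F_1-\{e_1\},F_2-\{e_1\},\{e_1,y\})$ is a degenerate even-fan-paddle: $P_1\cup\{e_1\}=F_1$ and $P_2\cup\{e_1\}=F_2$ are even fans of length at least four with $e_1$ as an end, $|P_3|=2$ and $m=3$, the paddle condition $\sqcap(P_i,P_j)=2$ holds (from \cref{fan_rank} together with the circuit $\{e_2,e_3,f_2,f_3\}$ and the triangles and cocircuits through $y$, or via \cref{k3m_paddle}-style reasoning), and the circuits required on the fan orderings are present. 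I expect the main obstacle to be the contraction certificate together with the exclusion of the cases $|Z|=2$ and $Z=\emptyset$: there is no convenient circuit or cocircuit elimination linking the two disjoint terminal triads, so one must instead exploit deletability of $e_1$, contractibility of $e_{|F_1|}$ and $f_{|F_2|}$, and orthogonality against every intermediate triple of both fans, and in the $|Z|=2$ case show that a putative extra fan through the two surplus elements would have a triad or triangle meeting $\{e_2,e_3,f_2,f_3\}$ or the terminal cocircuit in a single element.
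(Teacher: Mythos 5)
Your opening steps are fine ($F_1\cap F_2=\{e_1\}$, the circuit $\{e_2,e_3,f_2,f_3\}$, and the deletion certificate $(e_1,\{e_2,e_3\},\{\{f_2,f_3\}\})$), but the engine of your argument does not run in this regime. \cref{deletable_collection}, \cref{contractable_collection} and \cref{no_other_elements} all require a $3$-separating set $X$ containing the certificate with $|E(M)|\ge |X|+3$, while the hypothesis here is $|E(M)|\le |F_1\cup F_2|+2$; so $X=F_1\cup F_2$ is unavailable, and no smaller substitute can save you. Indeed, if you could apply \cref{no_other_elements} with sets avoiding $Z=E(M)-(F_1\cup F_2)$, you would conclude that every element of $Z$ lies in a $4$-element fan, which is false for genuine degenerate even-fan-paddles satisfying the hypotheses: in the cycle matroid of a degenerate multi-wheel whose two subdivided paths each have two internal vertices, $|F_1|=|F_2|=6$, $|E(M)|=|F_1\cup F_2|+1$, and the unique element of $Z$ lies in a triad with $e_{|F_1|}$ and $f_{|F_2|}$ but in no triangle, hence in no $4$-element fan. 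So your machinery would exclude exactly the outcome the lemma asserts, which shows the required sets/certificates cannot exist. Relatedly, the ``$4$-element cocircuit meeting both terminal triads'' you want for the contraction certificate does not exist when $|Z|\le 1$: orthogonality with the triangles of the fans forces such a cocircuit to consist of $e_{|F_1|}$, $f_{|F_2|}$ and two elements of $Z$. The correct dual object is a $4$-element \emph{circuit} $\{e_{|F_1|-1},e_{|F_1|},f_{|F_2|-1},f_{|F_2|}\}$, obtained from the dual of \cref{contractable_circuit} using that $M/e_{|F_1|}$ is $3$-connected; this circuit is what the paper's proof is built on. (Your route via ``$M\backslash e_1$ is $3$-connected'' is also unjustified, since the triangle $\{e_1,e_2,e_3\}$ meets the triad $\{e_2,e_3,e_4\}$, so Tutte's Triangle Lemma does not single out $e_1$ as deletable.)

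Beyond this, the three cases $|Z|=0,1,2$ are where the real work lies, and your sketch does not supply it. The case $Z=\emptyset$ is not settled by ``orthogonality between the two disjoint terminal triads'': the paper needs the peeling argument that $E(M)-\{e_{|F_1|},f_{|F_2|}\}$ is spanning and, stepping back through the triads of $F_2$, that $e_{|F_1|}\in\cl(F_1-\{e_{|F_1|}\})$, giving $\lambda(F_1)\le 1$. The case $|Z|=2$ is the delicate one: one shows $\{e_1,x,y\}$ is a triangle not in a $4$-element fan, gets a deletable element of $\{x,y\}$ from Tutte's Triangle Lemma, produces the cocircuit $\{e_1,e_2,f_2,x\}$ via \cref{deletable_cocircuit}, and then contradicts \cref{not_just_fan} because $M\backslash x$ would be a wheel or whirl even though $e_{|F_1|}$ is in no triangle; nothing in your plan replaces this. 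Finally, in the surviving case $|Z|=1$, verifying the degenerate even-fan-paddle structure requires $x\in\cl(F_1)\cap\cl(F_2)$ (using \cref{fan_closure} with the terminal circuit to rule out $x\in\cl^*(F_i)$) together with a submodularity computation giving the circuits $\{e_1,x,e_{|F_1|-1},e_{|F_1|}\}$ and $\{e_1,x,f_{|F_2|-1},f_{|F_2|}\}$; this, too, is absent from the proposal.
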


\begin{proof}
	The dual of \cref{contractable_circuit} and orthogonality implies that there is a $4$-element circuit $C$ of $M$ containing $\{e_{|F_1|},f_{|F_2|}\}$, and one of $\{e_{|F_1|-2},e_{|F_1|-1}\}$, and one of $\{f_{|F_2|-2},f_{|F_2|-1}\}$.
    By orthogonality, we may assume (up to swapping $e_2$ and $e_3$ when $|F_1|=4$, and $f_2$ and $f_3$ when $|F_2|=4$) that $C=\{e_{|F_1|-1},e_{|F_1|},f_{|F_2| - 1},f_{|F_2|}\}$.

	First, assume that $E(M) = F_1 \cup F_2$. Since $M$ is $3$-connected, the set $E(M) - \{e_{|F_1|},f_{|F_2|}\}$ is spanning, so \[e_{|F_1|} \in \cl((F_1 \cup F_2) - \{e_{|F_1|},f_{|F_2|}\}).\] Now, $f_{|F_2|-1} \in \cl(F_2 - \{f_{|F_2|-1},f_{|F_2|}\})$, so we have that $e_{|F_1|} \in \cl((F_1 \cup F_2) - \{e_{|F_1|},f_{|F_2|-1},f_{|F_2|}\})$. Orthogonality with the triad $\{f_{|F_2|-2},f_{|F_2|-1},f_{|F_2|}\}$ implies that \[e_{|F_1|} \in \cl((F_1 \cup F_2) - \{e_{|F_1|},f_{|F_2|-2},f_{|F_2|-1},f_{|F_2|}\}).\] Continuing in this way, we eventually see that $e_{|F_1|} \in \cl(F_1 - \{e_{|F_1|}\})$. But this means that $\lambda(F_1) = 1$, a contradiction. Thus $E(M)\neq F_1\cup F_2$.
	
    Next, assume that $E(M) = F_1 \cup F_2 \cup \{x\}$ with $x \notin F_1 \cup F_2$. Since $\lambda(F_1-\{e_1\}) = 2$, we also have that $\lambda(F_2 \cup \{x\}) = 2$. Thus, either $x \in \cl(F_2)$ or $x \in \cl^*(F_2)$. Due to the circuit~$C$, \cref{fan_closure} implies that $x \notin \cl^*(F_2)$, so $x \in \cl(F_2)$. Similarly, $x \in \cl(F_1)$.  Moreover, by submodularity, $r(\{e_1,x,e_{|F_1|-1},e_{|F_1|}\}) \le r(F_1 \cup \{x\}) + r(F_2 \cup \{x,e_{|F_1|-1},e_{|F_1|}\}) -r(M) \le 3$, and it follows that $\{e_1,x,e_{|F_1|-1},e_{|F_1|}\}$ is a circuit.  Similarly, $\{e_1,x,f_{|F_2|-1},f_{|F_2|}\}$ is a circuit.
    Hence $M$ is a degenerate even-fan-paddle with partition $(F_1-\{e_1\}, F_2-\{e_1\}, \{e_1,x\})$.
	
    Finally, assume that $E(M) = F_1 \cup F_2 \cup \{x,y\}$.
    Due to the circuit~$C$, we have $\lambda(F_1 \cup F_2) = 2$, so $\lambda((F_1 \cup F_2) - \{e_1\}) = 2$ and $\lambda(\{e_1,x,y\}) = 2$. Thus $\{e_1,x,y\}$ is a triangle. If $\{x,y\}$ is contained in a triad, then this triad contains either $e_{|F_1|}$ or $f_{|F_2|}$, which contradicts orthogonality with the circuit~$C$. Hence, $\{x,y\}$ is not contained in a triad, so $\{e_1,x,y\}$ is not contained in a $4$-element fan. By Tutte's Triangle Lemma, either $M \backslash x$ or $M \backslash y$ is $3$-connected. Without loss of generality, assume the former. \Cref{deletable_cocircuit} implies that $M$ has a $4$-element cocircuit~$C^*$ containing $\{e_1,x\}$, either $e_2$ or $e_3$, and either $f_2$ or $f_3$. If $|F_1| > 4$, then orthogonality implies that $e_2 \in C^*$, and if $|F_1| = 4$, then we may assume $e_2 \in C^*$ up to the ordering of $F_1$. Similarly, we may assume $f_2 \in C^*$, so that $C^* = \{e_2,e_1,f_2,x\}$. Now, $M \backslash x$ has a fan $(e_{|F_1|},e_{|F_1|-1},\ldots,e_2,e_1,f_2,f_3,\ldots,f_{|F_2|})$. Since $|E(M \backslash x)| = |F_1 \cup F_2| + 1$, \Cref{not_just_fan} implies that $M \backslash x$ is a wheel or a whirl. But $e_{|F_1|}$ is not contained in a triangle of $M$, so it is also not contained in a triangle of $M \backslash x$. This last contradiction completes the proof of the lemma.
\end{proof}

\begin{lemma} \label{even_fan_paddle2}
	Let $M$ be a $3$-connected matroid with no detachable pairs and no disjoint maximal fans with like ends. Let $F_1 = (e_1,e_2,\ldots,e_{|F_1|})$ and $F_2 = (f_1,f_2,\ldots,f_{|F_2|})$ be distinct maximal fans of $M$ with even length at least four such that $\{e_1,e_2,e_3\}$ and $\{f_1,f_2,f_3\}$ are triangles. Suppose $e_1 = f_1$ and $e_{|F_1|} \neq f_{|F_2|}$, and $|E(M)| \geq |F_1 \cup F_2| + 3$. Then, for all $x \notin F_1 \cup F_2$, the element $x$ is contained in a maximal fan of even length at least four with ends $e_1$ and $x^+$ such that $x^+ \notin F_1 \cup F_2$.
\end{lemma}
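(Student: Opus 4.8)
We first record the structure we need. Since $M$ has no detachable pairs and $|E(M)|\ge 13$, \cref{no_mk4} shows $M$ has no $M(K_4)$-separator, so \cref{intersecting_fans} gives $F_1\cap F_2=\{e_1\}$; moreover, as $F_1$ and $F_2$ are even with $\{e_1,e_2,e_3\}$ and $\{f_1,f_2,f_3\}$ triangles, the sets $\{e_{|F_1|-2},e_{|F_1|-1},e_{|F_1|}\}$ and $\{f_{|F_2|-2},f_{|F_2|-1},f_{|F_2|}\}$ are triads. Applying the dual of \cref{contractable_circuit} (with $M/e_{|F_1|}$ $3$-connected and $f_{|F_2|}\in\cl^*(F_2-\{f_{|F_2|}\})$ in no triangle) and then orthogonality with the triads of $F_1$ and $F_2$, we obtain a $4$-element circuit $C=\{e_{|F_1|-1},e_{|F_1|},f_{|F_2|-1},f_{|F_2|}\}$, after possibly relabelling $\{e_2,e_3\}$ or $\{f_2,f_3\}$ when a fan has length four. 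Also, eliminating $e_1$ between the triangles $\{e_1,e_2,e_3\}$ and $\{e_1,f_2,f_3\}$, and using \cref{no_segment} to exclude a circuit of size at most three inside $\{e_2,e_3,f_2,f_3\}$, we obtain a $4$-element circuit $\{e_2,e_3,f_2,f_3\}$. Together these two circuits force $\lambda(F_1\cup F_2)=2$. Write $X=F_1\cup F_2$; recall $|E(M)|\ge|X|+3$. Since $e_1\in\cl(F_1-\{e_1\})\cap\cl(F_2-\{e_1\})$, the element $e_1$ is in no triad (\cref{fan_ends}), and $F_1-\{e_1\}$, $F_2-\{e_1\}$ are disjoint with $\lambda(F_1-\{e_1\})=2$, so $(e_1,F_1-\{e_1\},\{F_2-\{e_1\}\})$ is a deletion certificate contained in $X$. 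Hence, by \cref{deletable_collection}, $M\backslash x$ is not $3$-connected for every $x\notin X$, and by \cref{deletable_collection1}, each such $x$ that lies in a triangle also lies in a triad.

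The crux, and the main obstacle, is to show that every $x\notin X$ lies in a triangle (hence, with the above, in both a triangle and a triad, and so in a maximal fan of length at least four); this is delicate because an even-fan-paddle has no cotip, so no contraction certificate is available and the case ``$M/x$ is $3$-connected'' must be dealt with by hand. Suppose $x\notin X$ lies in no triangle; then, as $M\backslash x$ is not $3$-connected, Bixby's Lemma forces $M/x$ to be $3$-connected. The dual of \cref{contractable_circuit}, applied with $e=x$, $f=e_{|F_1|}$ and the fan $F_1-\{e_{|F_1|}\}$, together with orthogonality, yields a $4$-element circuit $\{x,e_{|F_1|-1},e_{|F_1|},h\}$ of $M$; here $h\notin X$, since $h\in X$ would give $x\in\cl(X)$, whence $\lambda_{M/x}(X)=1$, contradicting $3$-connectivity of $M/x$. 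As $x$ lies in no triangle, every triangle and triad of $F_1$ survives in $M/x$, so $F_1\cup\{h\}$ is a fan of $M/x$ of odd length $|F_1|+1$; let $G\supseteq F_1\cup\{h\}$ be a maximal fan of $M/x$. A triad of $M/x$ through $e_1$ would be a triad of $M$ through $e_1$, which is impossible, so $G$ does not extend past $e_1$ and $e_1$ is an end of $G$; let $g$ be the other end. Applying \cref{circuit_create_deletable_elements} to the circuit $\{x,e_{|F_1|-1},e_{|F_1|},h\}$ and the element $x$ (noting $M/x$ is $3$-connected and not a wheel or whirl), the maximal fan of $M/x$ containing $\{e_{|F_1|-1},e_{|F_1|},h\}$, which is $G$, has its ends satisfying: either $\{\cdot,x\}$ lies in a triad of $M$ or $M\backslash(\cdot)$ is $3$-connected. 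For the end $g$, orthogonality with $C$ and with the triangles and triads of $F_1$ and $F_2$ shows $g\notin X$; then $M\backslash g$ being $3$-connected contradicts \cref{deletable_collection}, while $\{g,x\}$ lying in a triad is contradicted by orthogonality with $C$. Hence every $x\notin X$ lies in a maximal fan of length at least four.

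Finally, fix $x\notin X$ and a maximal fan $F_3$ of length at least four containing $x$. Let $x'$ be the triangle-end of $F_3$; since $M$ is not a wheel or whirl, $x'$ lies in no triad (\cref{fan_ends}) and $M\backslash x'$ is $3$-connected, so $x'\in X$ by \cref{deletable_collection}, and as $x'$ is in no triad it must be the unique element of $X$ in no triad, namely $x'=e_1$. If $F_3$ were disjoint from $F_1$ then, orienting both to begin with a triangle, they would be disjoint maximal fans with like ends -- impossible; so $F_3$ meets $F_1$, and as $M$ has no $M(K_4)$-separator, $\{e_1\}\subseteq F_3\cap F_1\subseteq\{e_1,e_{|F_1|}\}$. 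If $e_{|F_1|}\in F_3$ too, then $F_3$ and $F_1$ share both ends of $F_1$, so by \cref{tip_cotip} $M$ is an even-fan-spike with tip $e_1$ and cotip $e_{|F_1|}$; but then every maximal fan (in particular $F_2$) has $e_1$ and $e_{|F_1|}$ as its ends, forcing $f_{|F_2|}=e_{|F_1|}$, contrary to hypothesis. Hence $F_3\cap F_1=\{e_1\}$ with $e_1$ the triangle-end of $F_3$. If $F_3$ had odd length, then \cref{odd_fan} at $e_1$ would give a cocircuit $\{e_1,p_3,p_5,z\}$, where $(e_1,p_2,p_3,p_4,p_5)$ is a fan ordering of $F_3$ of length five; orthogonality with $\{e_1,e_2,e_3\}$ forces $z\in\{e_2,e_3\}\subseteq F_1$, and a connectivity count on $F_1\cup F_3$ as in the proof of \cref{odd_fans_intersecting} is contradictory. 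So $F_3$ is even; let $x^+$ be its triad-end, so $x^+\neq e_1$. If $x^+\in F_1$ then, being a triad-end distinct from $e_1$, $x^+=e_{|F_1|}$, contradicting $F_3\cap F_1=\{e_1\}$. If $x^+\in F_2$ then, since $e_1\in F_2\cap F_3$ and $M$ has no $M(K_4)$-separator, \cref{intersecting_fans} forces $F_3\cap F_2\subseteq\{e_1,f_{|F_2|}\}$, so $x^+=f_{|F_2|}$; then $F_3$ and $F_2$ share both ends of $F_2$, so \cref{tip_cotip} gives that $M$ is an even-fan-spike with tip $e_1$ and cotip $f_{|F_2|}$, whence $e_{|F_1|}=f_{|F_2|}$, a contradiction. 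Therefore $x^+\notin F_1\cup F_2$, as required.
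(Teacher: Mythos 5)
Your first stage (the circuit $C=\{e_{|F_1|-1},e_{|F_1|},f_{|F_2|-1},f_{|F_2|}\}$, $\lambda(F_1\cup F_2)=2$, the deletion certificate $(e_1,F_1-\{e_1\},\{F_2-\{e_1\}\})$, and \cref{deletable_collection,deletable_collection1}) matches the paper, but the crux of your argument has a genuine gap. You claim that if $x\notin F_1\cup F_2$ lies in no triangle, then, since $M\backslash x$ is not $3$-connected, Bixby's Lemma forces $M/x$ to be $3$-connected. Bixby's Lemma only says that $\si(M/x)$ or $\co(M\backslash x)$ is $3$-connected; since you have not ruled out $x$ lying in a triad, $\co(M\backslash x)$ may be $3$-connected even though $M\backslash x$ is not, and then nothing follows about $M/x$. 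The subcase ``$x$ in a triad but in no triangle'' is exactly the delicate configuration here -- a triad such as $\{x,e_{|F_1|},f_{|F_2|}\}$ is not excluded by orthogonality with $C$ -- and your proof never addresses it. The paper instead first applies \cref{deletable_collection_contractable_el} (with $y=e_{|F_1|}\in\cl^*(F_1-\{e_{|F_1|}\})$) to conclude that \emph{every} element outside $F_1\cup F_2$ lies in a triad, and then spends a separate argument eliminating the possibility that such a triad is $\{e,e_{|F_1|},f_{|F_2|}\}$ (using a second element $f$, the dual of \cref{no_segment}, and the no-like-ends hypothesis); your route has no substitute for this.

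Two further steps do not hold as written. First, even granting $M/x$ $3$-connected, your claim that the far end $g$ of the maximal fan $G\supseteq F_1\cup\{h\}$ of $M/x$ lies outside $F_1\cup F_2$ is unsupported: a triad $\{e_{|F_1|},h,f_{|F_2|}\}$ of $M$ meets $C$ in two elements, so orthogonality does not forbid it, and through it the fan can re-enter $F_1\cup F_2$, after which neither branch of the \cref{circuit_create_deletable_elements} dichotomy gives a contradiction. Second, in your last paragraph the rejection of an odd fan $F_3$ attached at $e_1$ does not follow ``as in \cref{odd_fans_intersecting}'': there both fans have length five and each supplies a cocircuit of the form $\{e_1,e_3,e_5,z\}$, and the contradiction $\lambda\le 1$ uses both; here $F_1$ is even and yields no such cocircuit, and the same count only gives $\lambda(F_1\cup F_3)\le 2$, which is no contradiction. (You also assume every length-$\ge 4$ maximal fan through $x$ has a triangle-end, overlooking odd fans with two triad-type ends, though that case can be dispatched by orthogonality with $C$.) The paper avoids both problems because, knowing the far end of any such fan lies outside $F_1\cup F_2$ and hence in a triad, evenness and the location of the ends are immediate. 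Finally, several lemmas you quote (\cref{no_mk4,no_segment,odd_fan_no_triangle,odd_fans_intersecting}) carry $|E(M)|\ge 13$ or $\ge 11$ hypotheses not present in this lemma, whose hypotheses only force $|E(M)|\ge 10$; this is a lesser issue, but worth noting.
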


\begin{proof}
    By the dual of \Cref{contractable_circuit}, $M$ has a $4$-element circuit~$C$ containing $\{e_{|F_1|},f_{|F_2|}\}$. By orthogonality, we may assume $C=\{e_{|F_1|-1},e_{|F_1|},f_{|F_2|-1},f_{|F_2|}\}$. Therefore $\lambda(F_1 \cup F_2) = 2$. Also $(e_1,F_1-\{e_1\},\{F_2-\{e_1\}\})$ is a deletion certificate. Furthermore, $e_{|F_1|} \in \cl^*(F_1-\{e_{|F_1|}\})$ and, for each $i \in [|F_1|]$, we have that $e_i \in \cl((F_1 \cup F_2) - \{e_i\})$. Hence, by \Cref{deletable_collection_contractable_el}, every element of $E(M) - (F_1 \cup F_2)$ is contained in a triad.
	
    Suppose $M$ has a maximal fan $F_3 = (g_1,g_2,\ldots,g_{|F_3|})$, distinct from $F_1$ and $F_2$, such that $|F_3| \geq 4$. Since $M$ has no disjoint maximal fans with like ends, and $F_1$ and $F_2$ are even, we have that $F_1 \cap F_3 \neq \emptyset$ and $F_2 \cap F_3 \neq \emptyset$. Furthermore, orthogonality with the circuit~$C$ implies that $e_{|F_1|} \notin F_3$ and $f_{|F_2|} \notin F_3$. Therefore, $e_1 \in F_3$, and so, without loss of generality, $e_1 = g_1$ and $\{g_1,g_2,g_3\}$ is a triangle. Furthermore, $g_{|F_3|} \notin F_1 \cup F_2$. This implies that $g_{|F_3|}$ is contained in a triad, so $\{g_{|F_3|-2},g_{|F_3|-1},g_{|F_3|}\}$ is a triad. Hence, $F_3$ has even length.
	
    Let $e \notin F_1 \cup F_2$. To complete the proof, it remains to show that $e$ is contained in a $4$-element fan. Suppose that $e$ is contained in a triad $T^*$ that is not contained in a $4$-element fan. Since $M$ has no disjoint maximal fans with like ends, we have that $F_1 \cap T^* \neq \emptyset$ and $F_2 \cap T^* \neq \emptyset$. Hence, $T^* = \{e,e_{|F_1|},f_{|F_2|}\}$. Now, let $f \notin F_1 \cup F_2 \cup \{e\}$. The element $f$ is contained in a triad $T_2^*$. If $T_2^*$ is not contained in a $4$-element fan, then $T_2^* = \{f,e_{|F_1|},f_{|F_2|}\}$. But this means that $r^*(\{e,f,e_{|F_1|},f_{|F_2|}\}) = 2$, which, by the dual of \cref{no_segment}, contradicts that $M$ has no detachable pairs. So there is a maximal fan $F$ with length at least four containing $T_2^*$. By the previous paragraph, $F$ has even length and ends $e_1$ and $f^+$, say, with $f^+ \notin F_1 \cup F_2$. Furthermore, $e$ is not contained in a $4$-element fan, so $e \notin F$. But now $F \cap T^* = \emptyset$, and $M$ has a pair of disjoint maximal fans with like ends. This contradiction completes the proof of the lemma.
\end{proof}

\begin{lemma} \label{even_fan_paddle}
	Let $M$ be a $3$-connected matroid with no detachable pairs and no disjoint maximal fans with like ends. Let $F_1 = (e_1,e_2,\ldots,e_{|F_1|})$ and $F_2 = (f_1,f_2,\ldots,f_{|F_2|})$ be distinct maximal fans of $M$ with even length at least four such that $\{e_1,e_2,e_3\}$ and $\{f_1,f_2,f_3\}$ are triangles. Suppose $e_1 = f_1$ and $e_{|F_1|} \neq f_{|F_2|}$. Then $M$ is an even-fan-paddle.
\end{lemma}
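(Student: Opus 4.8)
The plan is to split on the size of $E(M)$. If $|E(M)| \le |F_1 \cup F_2| + 2$, then \cref{even_fan_paddle1} immediately gives that $M$ is a degenerate even-fan-paddle, and we are done. So we may assume $|E(M)| \ge |F_1 \cup F_2| + 3$ and aim to show that $M$ is a non-degenerate even-fan-paddle with distinguished element $x = e_1$. Note that $M$ is not a wheel or a whirl, as it has a maximal fan of length at least four and $|E(M)| \ge 13$.

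First I would construct the petals. Let $\mathcal F$ be the collection of all maximal fans of $M$ of length at least four that have $e_1$ as an end and whose triple at that end is a triangle; then $F_1, F_2 \in \mathcal F$. By \cref{no_mk4}, $M$ has no $M(K_4)$-separator, so by \cref{intersecting_fans} any two members of $\mathcal F$ meet only in their ends. For a member $F_3 \in \mathcal F - \{F_1,F_2\}$, \cref{even_fan_paddle2} shows that the end of $F_3$ other than $e_1$ lies outside $F_1 \cup F_2$; combined with $e_{|F_1|} \ne f_{|F_2|}$, this yields $F_3 \cap F_1 = F_3 \cap F_2 = \{e_1\}$. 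Moreover, if two members $F^{(i)}, F^{(j)}$ of $\mathcal F$ shared a second common end $v$, then by \cref{tip_cotip1} every element of $M$ would lie in a maximal fan with ends $e_1$ and $v$; applying this to $e_{|F_1|}$, which by \cref{fan_ends} lies in no triangle and is therefore an end of its maximal fan, forces $e_{|F_1|} = v$, and symmetrically $f_{|F_2|} = v$, contradicting the hypothesis. Writing $\mathcal F = \{F^{(1)}, F^{(2)}, \dots, F^{(m)}\}$ with $F^{(1)} = F_1$ and $F^{(2)} = F_2$, the sets $P_i := F^{(i)} - \{e_1\}$ are therefore pairwise disjoint, and by \cref{even_fan_paddle2} together with \cref{fan_ends} every element of $E(M) - \{e_1\}$ lies in exactly one $P_i$. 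Since $m = 2$ would force $E(M) = F_1 \cup F_2$, hence $|E(M)| = |F_1 \cup F_2|$, we have $m \ge 3$.

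Next I would establish the far-end circuits. For each $i$, write $F^{(i)} = (e_1, e^i_2, \dots, e^i_{n_i})$; since $F^{(i)}$ has even length and starts with a triangle, its terminal triple $\{e^i_{n_i-2}, e^i_{n_i-1}, e^i_{n_i}\}$ is a triad, so $e^i_{n_i}$ lies in no triangle and $M/e^i_{n_i}$ is $3$-connected. Fix distinct $i,j$. Applying the dual of \cref{contractable_circuit} with $e = e^i_{n_i}$, $X = P_j - \{e^j_{n_j}\}$ (for which $\lambda(X) = 2$) and $f = e^j_{n_j}$ (which lies in $\cl^*(X)$ and in no triangle) yields a $4$-element circuit of $M$ through $\{e^i_{n_i}, e^j_{n_j}\}$ with a third element in $P_j$; orthogonality with the terminal triads of $F^{(i)}$ and of $F^{(j)}$ then pins this circuit down as $C_{i,j} = \{e^i_{n_i-1}, e^i_{n_i}, e^j_{n_j-1}, e^j_{n_j}\}$ (care is needed here to treat fans of length four, where the ordering is not unique, separately from longer fans).

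Finally I would verify the even-fan-paddle axioms for the partition $(P_1, \dots, P_{m-1}, Q_m)$, where $Q_m := P_m \cup \{e_1\} = F^{(m)}$, with distinguished element $x = e_1$. Conditions (i) and (ii) of the definition hold by construction, since each $P_i \cup \{e_1\} = F^{(i)}$ is an even fan of length at least four with $e_1$ as an end; condition (iii) is precisely the family $\{C_{i,j}\}$, reading each $F^{(i)}$ from its other end towards $e_1$. For the anemone and paddle conditions I would use the identity $\lambda(A \cup B) = \lambda(A) + \lambda(B) - \sqcap(A,B) - \sqcap^*(A,B)$, valid for disjoint $A,B$. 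By induction on $|S|$, for every nonempty $S \subseteq [m]$ we have $\lambda(\bigcup_{j \in S} P_j) \le 2$: in the inductive step, with $Y = \bigcup_{j \in S - k} P_j$ and $j_0 \in S - k$, the element $e_1$ lies in $\cl(P_{j_0}) \cap \cl(P_k)$ while $C_{j_0,k}$ forces a second dimension into $\cl(Y) \cap \cl(P_k)$ (since $e_1$ is not in the closure of the far end of $F^{(j_0)}$), so $\sqcap(Y, P_k) \ge 2$ and hence $\lambda(Y \cup P_k) \le 2$; then $3$-connectivity gives equality, which in turn forces $\sqcap(Y, P_k) = 2$. Taking $Y$ to be a single petal gives $\sqcap(P_i, P_k) = 2$ for distinct $i,k < m$, and since $e_1 \in \cl(P_m)$ this also yields $\sqcap(P_i, Q_m) = \sqcap(P_i, P_m) = 2$. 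Hence $(P_1,\dots,P_{m-1}, Q_m)$ is a paddle and $M$ is a non-degenerate even-fan-paddle, as required. I expect the main obstacle to be the bookkeeping in the middle steps: confirming via \cref{even_fan_paddle2} that $\mathcal F$ exhausts $E(M)$, and carrying out the orthogonality case analysis that determines the circuits $C_{i,j}$.
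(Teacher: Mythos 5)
Your proof is correct and follows essentially the same route as the paper's: the degenerate case via \cref{even_fan_paddle1}, then covering $E(M)$ by maximal even fans through $e_1$ via \cref{even_fan_paddle2}, pinning down the far-end circuits using the dual of \cref{contractable_circuit} plus orthogonality (with the length-four reordering caveat), and verifying the anemone/paddle conditions by local-connectivity computations; the paper merely packages the petals as a maximal disjoint collection and derives a contradiction from \cref{even_fan_paddle2}, rather than taking all fans through $e_1$ at once as you do. Two small repairs: $M$ is not a wheel or whirl because it has two \emph{distinct} maximal fans (not because $|E(M)| \ge 13$, which is not a hypothesis of this lemma), and for the same reason you should not invoke \cref{no_mk4}---instead observe that any maximal fan contained in an $M(K_4)$-separator has length five, so \cref{intersecting_fans} already yields that the members of your collection $\mathcal F$ meet only in their ends.
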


\begin{proof}
    If $|E(M)| \leq |F_1 \cup F_2| + 2$, then $M$ is a degenerate even-fan-paddle by \Cref{even_fan_paddle1}. So we may assume that $|E(M)| \geq |F_1 \cup F_2| + 3$. By the dual of \Cref{contractable_circuit}, $M$ has a $4$-element circuit containing $\{e_{|F_1|},f_{|F_2|}\}$.  It follows that $\lambda(F_1 \cup F_2)=2$ and $\sqcap(F_1,F_2)=2$. Thus, we may choose a maximal collection of disjoint subsets $P_1,P_2,\ldots,P_m$ of $E(M)$ with $P_1=F_1$ and $m \geq 2$ such that
	\begin{enumerate}
		\item for all $i \in [m]$, the set $P_i \cup \{e_1\}$ is a maximal fan with even length at least four and ordering $(p^i_1,p^i_2,\ldots,p^i_{|P_i|},e_1)$,
        \item for each non-empty subset $J$ of $[m]$, we have $\lambda(\bigcup_{i \in J} P_i) \le 2$, and
		\item for all distinct $i,j \in [m]$, we have $\sqcap(P_i,P_j) = 2$.
	\end{enumerate}
    Furthermore, for distinct $i,j \in [m]$, the dual of \cref{contractable_circuit} implies that $M$ has a circuit $C_{i,j}$ containing $\{p^i_1,p^j_1\}$, either $p^i_2$ or $p^i_3$, and either $p^j_2$ or $p^j_3$.
	
    Towards a contradiction, suppose that there exists an element $e\in E(M)-(P_1\cup P_2\cup \cdots \cup P_m)$. By \Cref{even_fan_paddle2}, the element $e$ is contained in a set~$P'$ such that $P' \cup \{e_1\}$ is a maximal fan with even length at least four and ordering $(p'_1,p'_2,\ldots,p'_{|P'|},e_1)$. Furthermore, by the dual of \cref{contractable_circuit}, for each $i \in [m]$ there is a circuit containing $\{p'_1,p^i_1\}$. Let $I$ be a non-empty subset of $[m]$, and let $X = \bigcup_{i \in I} P_i$. Now, $p'_1 \in \cl((X \cup P') - \{p'_1\})$ and $p'_1 \notin \cl(P' - \{p'_1\})$, and $p'_{|P'|} \in \cl((X \cup P') - \{p'_{|P'|}\})$ and $p'_{|P'|} \notin \cl(P' - \{p'_{|P'|}\})$. Thus, $r(X \cup P') \leq r(X) + r(P') - 2$. Since $r^*(X\cup P')\le r^*(X)+r^*(P')$, we deduce that $\lambda(X \cup P') \le 2$. In particular, when $X = P_i$ for $i \in [m]$, we have $\lambda(P_i \cup P')=2$, implying $r(P_i \cup P') = r(P_i) + r(P') - 2$, so $\sqcap(P_i,P') = 2$. Thus the disjoint subsets $P_1, P_2, \ldots, P_m, P'$ satisfy (i)--(iii), contradicting maximality. Therefore $E(M)=P_1\cup P_2\cup \cdots\cup P_m$, implying $(P_1, P_2, \dots, P_m)$ is a paddle.
	
	Assume $|P_i \cup \{e_{1}\}| = 4$, for all $i \in [m]$. 
    Then both $(p_1^i,p_2^i,p_3^i,e_1)$ and $(p_1^i,p_3^i,p_2^i,e_1)$ are fan orderings of $P_i \cup \{e_1\}$, for each $i \in [m]$, so the existence of the circuit $C_{i,j}$, for all distinct $i,j \in [m]$, implies that $M$ is an even-fan-paddle.
	
    Without loss of generality, we may now assume that $|P_1 \cup \{e_{1}\}| > 4$. If $|P_1 \cup \{e_1\}|=5$, then we let $p^1_5=e_1$ in what follows. By orthogonality with $\{p^1_3,p^1_4,p^1_5\}$, the circuit $C_{1,i}$ contains $p^1_2$, for all $i \in \{2,3,\ldots,m\}$. Furthermore, either $|P_i \cup \{e_1\}| > 4$ and $C_{1,i}$ contains $p^i_2$, or $|P_i \cup \{e_1\}| = 4$ and we may choose the ordering of $P_i \cup \{e_1\}$ such that $p^i_2 \in C_{1,i}$. Now, for any other $j \in [m]$, circuit elimination between $C_{1,i}$ and $C_{1,j}$ implies that $\{p^i_1,p^i_2,p^j_1,p^j_2\}$ is a circuit. Hence, $M$ is an even-fan-paddle, completing the proof.
\end{proof}

\subsection*{Exactly one of \texorpdfstring{$\mathbf{F_1}$}{F1} and \texorpdfstring{$\mathbf{F_2}$}{F2} is odd}
Finally, we consider the case where exactly one of $F_1$ and $F_2$ is odd, and show that the resulting matroid is either an accordion or an even-fan-spike with three petals.

\begin{lemma} \label{accordion1}
    Let $M$ be a $3$-connected matroid with no detachable pairs and no disjoint maximal fans with like ends, such that $|E(M)| \geq 8$. Let $F_1 = (e_1,e_2,\ldots,e_{|F_1|})$ be a maximal fan of $M$ with even length at least four such that $\{e_1,e_2,e_3\}$ is a triangle, and let $F_2 = (f_1,f_2,f_3,f_4,f_5)$ be a maximal fan of $M$ such that $e_1 = f_1$. Then $|E(M)| \geq |F_1 \cup F_2| + 2$, and $F_2 - \{e_1\}$ is a left-hand fan-type end of $F_1$ in $M$. 
\end{lemma}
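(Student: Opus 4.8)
The plan is to first pin down the fan structure of $M$ around the common end $e_1=f_1$, then produce the $4$-element cocircuit demanded by the definition of a left-hand fan-type end, and finally deduce $|E(M)|\ge|F_1\cup F_2|+2$ by fusing $F_1$ and $F_2$ into a single long fan inside a one-element deletion of $M$.

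First I would note that $M$ is not a wheel or a whirl, since it has a maximal fan of odd length (cf.\ the proof of \cref{odd_fan}); hence \cref{fan_ends} applies. As $e_1=f_1$ lies in the triangle $\{e_1,e_2,e_3\}$, part~(i) of \cref{fan_ends} forbids $\{f_1,f_2,f_3\}$ from being a triad, so it is a triangle, and then, since $|F_2|=5$, the set $\{f_3,f_4,f_5\}$ is a triangle and $\{f_2,f_3,f_4\}$ is a triad. Next I would show $F_1\cap F_2=\{e_1\}$: by \cref{intersecting_fans}, either $F_1\cap F_2\subseteq\{e_1,e_{|F_1|}\}$ or $F_1\cup F_2$ is an $M(K_4)$-separator, the latter being impossible since a maximal fan inside an $M(K_4)$-separator has length five whereas $|F_1|$ is even; moreover $e_{|F_1|}$, being the triad end of $F_1$, lies in no triangle by \cref{fan_ends}, while every element of $F_2$ does, so $e_{|F_1|}\notin F_2$. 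Thus $F_2-\{e_1\}=\{f_2,f_3,f_4,f_5\}$ is disjoint from $F_1$.

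I would then apply \cref{odd_fan} to $F_2$ to get $z\in E(M)-F_2$ with $\{f_1,f_3,f_5,z\}=\{e_1,f_3,f_5,z\}$ a cocircuit. Orthogonality with the triangle $\{e_1,e_2,e_3\}$ forces $z\in\{e_2,e_3\}$; when $|F_1|\ge 6$, orthogonality with the triangle $\{e_3,e_4,e_5\}$ excludes $z=e_3$, and when $|F_1|=4$ the orderings $(e_1,e_2,e_3,e_4)$ and $(e_1,e_3,e_2,e_4)$ of $F_1$ are interchangeable, so in every case we may take $z=e_2$. Hence $\{e_1,e_2,f_3,f_5\}$ is a $4$-element cocircuit, and, reading off the ordering $(f_1,f_2,f_3,f_4,f_5)$ of the length-five maximal fan $F_2$, this is precisely the statement that $F_2-\{e_1\}$ is a left-hand fan-type end of $F_1$.

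For $|E(M)|\ge|F_1\cup F_2|+2=|F_1|+6$, I would pass to $M\backslash f_5$, which is $3$-connected since $f_5$ is the triangle end of the maximal fan $F_2$ and $M$ is not a wheel or whirl. In $M\backslash f_5$ the cocircuit $\{e_1,e_2,f_3,f_5\}$ restricts to the triad $\{e_1,e_2,f_3\}$ (it yields a cocircuit of $M\backslash f_5$ of size at least three contained in the $3$-set $\{e_1,e_2,f_3\}$, as a $3$-connected matroid on at least four elements has no smaller cocircuit), while the triangle $\{e_1,f_2,f_3\}$, the triad $\{f_2,f_3,f_4\}$, and all triangles and triads of $F_1$ survive; hence $(e_{|F_1|},e_{|F_1|-1},\ldots,e_2,e_1,f_3,f_2,f_4)$ is a fan $F'$ of $M\backslash f_5$ of length $|F_1|+3\ge 7$. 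Since $e_{|F_1|}$ lies in no triangle of $M$, hence in none of $M\backslash f_5$, the matroid $M\backslash f_5$ is not a wheel or whirl, so \cref{not_just_fan} gives $|E(M\backslash f_5)|\ge|F'|+2=|F_1|+5$, and therefore $|E(M)|\ge|F_1|+6=|F_1\cup F_2|+2$. The main obstacle is this last step: one has to see that deleting the triangle end $f_5$ of $F_2$ collapses the cocircuit $\{e_1,e_2,f_3,f_5\}$ to the triad $\{e_1,e_2,f_3\}$, which fuses $F_1$ with $F_2-\{f_5\}$ into a single fan of length $|F_1|+3$, and that such a fan cannot sit inside a wheel or whirl because the triad end of $F_1$ is never contained in a triangle. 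Everything else is orthogonality bookkeeping together with \cref{fan_ends}, \cref{intersecting_fans}, and \cref{odd_fan}.
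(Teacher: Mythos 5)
Your proof is correct and follows essentially the same route as the paper's: deduce via \cref{fan_ends} that $\{f_1,f_2,f_3\}$ is a triangle, use \cref{intersecting_fans} (and the length-five observation) to get $F_1\cap F_2=\{e_1\}$, apply \cref{odd_fan} plus orthogonality (relabelling when $|F_1|=4$) to obtain the cocircuit $\{e_1,e_2,f_3,f_5\}$, and then bound $|E(M)|$ by exhibiting the long fan in $M\backslash f_5$ and invoking \cref{not_just_fan}. The only difference is that you spell out details the paper leaves implicit (the $3$-connectivity of $M\backslash f_5$, the explicit fan ordering, and why the cocircuit restricts to a triad), which is fine.
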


\begin{proof}
    By \Cref{fan_ends}, the set $\{f_1,f_2,f_3\}$ is a triangle.
    It follows from \cref{intersecting_fans} that $F_1 \cap F_2 = \{e_1\}$.
    \Cref{odd_fan} implies that there exists $z \notin F_2$ such that $\{f_1,f_3,f_5,z\}$ is a cocircuit. It now follows that $\lambda(F_1 \cup F_2)=2$. By orthogonality, and up to the ordering of $F_1$ if $|F_1| = 4$, we have that $z = e_2$. Hence $(F_1 \cup F_2) - \{f_5\}$ is a fan of $M \backslash f_5$. The element $e_{|F_1|}$ is not contained in a triangle of $M$, so it is also not contained in a triangle of $M \backslash f_5$. Thus $M \backslash f_5$ is not a wheel or a whirl, so \Cref{not_just_fan} implies that $|E(M \backslash f_5)| \geq |(F_1 \cup F_2) - \{f_5\}| + 2$, and thus $|E(M)| \geq |F_1 \cup F_2| + 2$.
    Thus $F_2 - \{e_1\}$ is a left-hand fan-type end of $F_1$ in $M$.
\end{proof}

\begin{lemma} \label{accordion2}
    Let $M$ be a $3$-connected matroid with no detachable pairs and no disjoint maximal fans with like ends. Let $F_1 = (e_1,e_2,\ldots,e_{|F_1|})$ be a maximal fan of $M$ with even length at least four such that $\{e_1,e_2,e_3\}$ is a triangle, and let $\{e_1,f_2,f_3\}$ be a triangle of $M$ that is not contained in a $4$-element fan, such that $\{e_1,e_2,f_2,f_3\}$ is a cocircuit.
    Then $|E(M)| \geq |F_1 \cup \{f_2,f_3\}| + 2$, and $\{f_2,f_3\}$ is a left-hand triangle-type end of $F_1$ in $M$. 
\end{lemma}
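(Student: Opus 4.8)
\textbf{Proof proposal for \cref{accordion2}.}

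The plan is to mirror the structure of the proof of \cref{accordion1}, using the fact that $\{f_2,f_3\}$ together with $\{e_1,e_2\}$ behaves dually to a left-hand fan-type end. First I would record the hypotheses: $\{e_1,f_2,f_3\}$ is a triangle not contained in a $4$-element fan, $\{e_1,e_2,f_2,f_3\}$ is a cocircuit, and $\{e_1,e_2,e_3\}$ is a triangle with $\{e_2,e_3,e_4\}$ a triad (since $F_1$ has even length at least four, one end is a triangle and the other a triad). Since $f_2,f_3 \notin F_1$: orthogonality of the triangle $\{e_1,f_2,f_3\}$ with the triad $\{e_2,e_3,e_4\}$, and with any triad $\{e_i,e_{i+1},e_{i+2}\}$ interior to $F_1$, forces $\{f_2,f_3\}$ to be disjoint from $F_1$ (if $|F_1|\ge 5$ this is immediate from orthogonality; if $|F_1|=4$ then $\{f_2,f_3\}\cap F_1 = \emptyset$ because $e_1$ is the only element of the triangle $\{e_1,e_2,e_3\}$ that could coincide, and $e_1\ne f_2,f_3$). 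So $F_1 \cup \{f_2,f_3\}$ has $|F_1|+2$ elements.

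Next I would establish the connectivity claim $\lambda(F_1 \cup \{f_2,f_3\}) = 2$: the cocircuit $\{e_1,e_2,f_2,f_3\}$ gives $f_3 \in \cl^*(F_1 \cup \{f_2\})$, so $\lambda(F_1 \cup \{f_2,f_3\}) \le \lambda(F_1 \cup \{f_2\}) \le \lambda(F_1) + 1 = 3$; and the triangle $\{e_1,f_2,f_3\}$ gives $f_3 \in \cl(F_1 \cup \{f_2\})$ as well, so by \cref{clconnectivity}, $\lambda(F_1 \cup \{f_2,f_3\}) = \lambda(F_1 \cup \{f_2\}) - 1 \le 2$, hence equals $2$ by $3$-connectivity (using $|E(M)|$ large enough, which follows from the size bound below). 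To get the size bound, I would contract $e_1$: in $M$, the element $e_1$ lies in the triangles $\{e_1,e_2,e_3\}$ and $\{e_1,f_2,f_3\}$, and it is not in a triad by \cref{fan_ends}, so Tutte's Triangle Lemma (or rather the fact that $e_1$ is an end of the maximal fan $F_1$) gives that $M\backslash e_1$ is $3$-connected. Actually the cleaner route, paralleling \cref{accordion1}: since $\{e_1,e_2,f_2,f_3\}$ is a cocircuit and $\{e_1,e_2,e_3\}$ is a triangle, the set $(F_1 \cup \{f_2,f_3\}) - \{f_3\}$ — or a suitable element — forms a fan of $M\backslash f_3$ (with ordering obtained by prepending $f_2$ to $F_1$, using that $\{f_2,e_1,e_2\}$-type relations make $(f_2, e_1, e_2, e_3, \ldots, e_{|F_1|})$ a fan of $M \backslash f_3$). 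The end $e_{|F_1|}$ is not in a triangle of $M$, hence not in a triangle of $M\backslash f_3$, so $M\backslash f_3$ is neither a wheel nor a whirl; \cref{not_just_fan} then gives $|E(M\backslash f_3)| \ge |(F_1 \cup \{f_2,f_3\}) - \{f_3\}| + 2 = |F_1| + 3$, i.e.\ $|E(M)| \ge |F_1 \cup \{f_2,f_3\}| + 2$.

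Finally, with the size bound and the cocircuit/triangle hypotheses in hand, the definition of a left-hand triangle-type end of $F_1$ is satisfied verbatim: $\{f_2,f_3\} \cup \{e_1\}$ is a triangle not contained in a $4$-element fan (given), $\{f_2,f_3\} \cup \{e_1,e_2\}$ is a cocircuit (given), and $F_1$ is a maximal fan of even length at least four starting with a triangle (given). So $X = \{f_2,f_3\}$ is a left-hand triangle-type end of $F_1$ in $M$, completing the proof. The one subtlety to handle carefully is checking that $(f_2, e_1, e_2, \ldots, e_{|F_1|})$ is genuinely a fan ordering of a fan in $M\backslash f_3$ — one must verify $\{f_2,e_1,e_2\}$ is a triad of $M\backslash f_3$ (it is, since $\{e_1,e_2,f_2,f_3\}$ is a cocircuit of $M$) and that $\{e_1,e_2,e_3\}$ remains a triangle — and then confirm the alternation continues along $F_1$. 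I expect this bookkeeping with the first few fan elements, and the case split on $|F_1| = 4$ versus $|F_1| \ge 5$ for the disjointness argument, to be the only real friction; everything else is a direct application of \cref{clconnectivity}, \cref{fan_ends}, \cref{not_just_fan}, and \cref{intersecting_fans}.
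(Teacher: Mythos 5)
Your route is essentially the paper's: prepend $f_2$ to $F_1$ to get a fan in $M\backslash f_3$ (using that $\{e_1,e_2,f_2,f_3\}$ is a cocircuit of $M$, so $\{e_1,e_2,f_2\}$ is a triad of $M\backslash f_3$), note $e_{|F_1|}$ lies in no triangle so the deletion is not a wheel or whirl, and invoke \cref{not_just_fan} to get $|E(M)|\ge|F_1\cup\{f_2,f_3\}|+2$; the triangle-type-end conclusion is then immediate from the definition. However, there is a genuine gap: \cref{not_just_fan} is stated only for $3$-connected matroids, and you never establish that $M\backslash f_3$ is $3$-connected. This is exactly what the paper's first step supplies: by Tutte's Triangle Lemma applied to the triangle $\{e_1,f_2,f_3\}$, if neither $M\backslash f_2$ nor $M\backslash f_3$ is $3$-connected then some triad contains $\{f_2,f_3\}$ or $\{f_2,e_1\}$; the latter is impossible since $e_1$ is an end of the maximal fan $F_1$ (so not in a triad, by \cref{fan_ends}), and the former would put $\{e_1,f_2,f_3\}$ in a $4$-element fan, contrary to hypothesis. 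Hence one of $M\backslash f_2$, $M\backslash f_3$ is $3$-connected, and since the hypotheses are symmetric in $f_2$ and $f_3$ one may assume it is $M\backslash f_3$. Without this, the size bound does not follow — note the lemma has no ambient hypothesis like $|E(M)|\ge 13$ to fall back on, and the required bound grows with $|F_1|$ in any case. Your earlier, abandoned remark that $M\backslash e_1$ is $3$-connected does not substitute for this, since the fan you enlarge lives in $M\backslash f_3$.

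The remaining content of your write-up is fine but partly superfluous: the disjointness of $\{f_2,f_3\}$ from $F_1$ does follow by orthogonality with the triads of $F_1$ together with the hypothesis that $\{e_1,f_2,f_3\}$ is in no $4$-element fan (the paper leaves this implicit), and your computation that $\lambda(F_1\cup\{f_2,f_3\})=2$ is not needed for the statement. You should also be explicit, as you flag, that $\{e_1,e_2,f_2\}$ is genuinely a triad (not a union containing a smaller cocircuit) of $M\backslash f_3$; this follows since $e_1$ is in no triad of $M$ and a triad containing $\{f_2,f_3\}$ or meeting $\{f_2,f_3\}$ and $\{e_1,e_2\}$ appropriately would violate the no-$4$-element-fan hypothesis or orthogonality.
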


\begin{proof}
    By Tutte's Triangle Lemma, either $M \backslash f_2$ or $M \backslash f_3$ is $3$-connected. Without loss of generality, we may assume the latter. The matroid $M \backslash f_3$ has a fan $F_1 \cup \{f_2\}$. Furthermore, $M \backslash f_3$ is not a wheel or a whirl, since $e_{|F_1|}$ is not contained in a triangle. Thus, by \Cref{not_just_fan}, we have that $|E(M)| \geq |F_1 \cup \{f_2,f_3\}| + 2$. Thus $\{f_2,f_3\}$ is a left-hand triangle-type end of $F_1$ in $M$.
\end{proof}

Let $F_1$ be a maximal fan of $M$ with ordering $(e_1,e_2,\ldots,e_{|F_1|})$, having even length at least four, such that $\{e_1,e_2,e_3\}$ is a triangle, where $M$ is $3$-connected.

In the next lemma, we aim to show that if $M$ has no detachable pairs, but has distinct triangles $\{e_1,f_2,f_3\}$ and $\{e_1, g_2, g_3\}$ that are not in $4$-element fans, and a cocircuit $\{e_1,e_2,f_2,g_2\}$, then $\{f_2,f_3,g_2,g_3\}$ is a left-hand quad-type end of $F_1$. However, there is one problematic case we need to consider.

Let $X \subseteq E(M) - F_1$ such that $|X|=4$ and $|E(M)| \ge |X \cup F_1| + 2$. 
If $|F_1| = 4$ and, for some $X=\{f_2,f_3,g_2,g_3\}$, the sets $\{e_1,f_2,f_3\}$ and $\{e_1, g_2, g_3\}$ are triangles of $M$, each not contained in a $4$-element fan, and $\{e_1,e_2,f_2,g_2\}$ and $\{e_1,e_3,f_3,g_3\}$ are cocircuits, then we say $X$ is a \emph{left-hand almost-quad-type end of $F_1$}.
We also say $X$ is a right-hand almost-quad-type end of $F_1$ in $M$ when $X$ is a left-hand almost-quad-type end of $F_1$ in $M^*$.

We will eventually, in \cref{accordion4}, rule out the possibility of almost-quad-type ends by considering both the left- and right-hand ends in conjunction.

\begin{lemma} \label{accordion3}
    Let $M$ be a $3$-connected matroid with no detachable pairs and no disjoint maximal fans with like ends, such that $|E(M)| \geq 11$. Let $F_1 = (e_1,e_2,\ldots,e_{|F_1|})$ be a maximal fan of $M$ with even length at least four such that $\{e_1, e_2, e_3\}$ is a triangle. Let $\{e_1,f_2,f_3\}$ and $\{e_1,g_2,g_3\}$ be distinct triangles of $M$, each not contained in a $4$-element fan, such that $\{e_1,e_2,f_2,g_2\}$ is a cocircuit. Then $|E(M)| \geq |F_1 \cup \{f_2,f_3,g_2,g_3\}| + 2$, and $\{f_2,f_3,g_2,g_3\}$ is either
	\begin{enumerate}
        \item a left-hand quad-type end of $F_1$ in $M$, or
        \item a left-hand almost-quad-type end of $F_1$ in $M$.
	\end{enumerate}
\end{lemma}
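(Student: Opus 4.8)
The plan is to follow the template established in \cref{accordion1,accordion2}: first use orthogonality and the hypotheses to pin down enough cocircuit/circuit structure, then invoke \cref{not_just_fan} to establish the size bound, and finally check the defining conditions of a (left-hand) quad-type or almost-quad-type end. First I would apply \cref{fan_ends}: since $\{e_1,e_2,f_2,g_2\}$ is a cocircuit meeting the triangle $\{e_1,f_2,f_3\}$, orthogonality forces $f_2$ or $f_3$ (actually the element we call $f_2$ after possibly relabelling within the triangle) to lie in the cocircuit, and similarly for $\{e_1,g_2,g_3\}$; I would set up the labelling so that $f_2,g_2$ are the cocircuit elements. Orthogonality of $\{e_1,e_2,f_2,g_2\}$ with the triad $\{e_{|F_1|-2},e_{|F_1|-1},e_{|F_1|}\}$ (a triad since $\{e_1,e_2,e_3\}$ is a triangle and $|F_1|$ is even) and the intermediate triads/triangles of $F_1$ shows the cocircuit cannot reach beyond $e_2$ unless $|F_1|=4$; more precisely, orthogonality with $\{e_3,e_4,e_5\}$ when $|F_1|\ge 6$ rules out $e_3$ appearing, forcing $e_2\in\{e_1,e_2,f_2,g_2\}$ as claimed. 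I would then note that $\{f_2,f_3,g_2,g_3\}$ has rank at most $3$ (it lies in $\cl(\{e_1,f_2,g_2\})$ via the two triangles, together with $f_3\in\cl\{e_1,f_2\}$, $g_3\in\cl\{e_1,g_2\}$), hence $\lambda(F_1\cup\{f_2,f_3,g_2,g_3\})\le 3$, and $r$ and $r^*$ computations via \cref{fan_rank} give $\lambda(F_1\cup\{f_2,f_3,g_2,g_3\})=2$.

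Next I would produce the second cocircuit. Since $\{e_1,f_2,f_3\}$ and $\{e_1,g_2,g_3\}$ are triangles not contained in $4$-element fans, Tutte's Triangle Lemma gives an element in each that is deletable; combined with $\lambda(F_1\cup\{f_2,f_3,g_2,g_3\})=2$ and the fact that $e_1\in\cl(F_1-\{e_1\})$, I would use \cref{deletable_cocircuit} (with $X=F_1-\{e_1\}$ or $X=(F_1\cup\{f_2,g_2\})-\{e_1\}$ as appropriate) to obtain a $4$-element cocircuit $C^*$ containing $\{e_1,?\}$ and landing on $e_2$ or $e_3$ and on $f_3$ or $g_3$. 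The goal is $C^* = \{e_1,e_3,f_3,g_3\}$ when $|F_1|\ge 6$ (forced by orthogonality with the triads further along $F_1$), which gives outcome (i), a left-hand quad-type end (after verifying $\{f_2,f_3,g_2,g_3\}$ is a quad: it is a circuit since it has rank $3$ and contains the two triangles spanning a common line through $e_1$, wait — one checks rank $3$, four elements, and cocircuit-ness from the two cocircuits $\{e_1,e_2,f_2,g_2\}$ and $\{e_1,e_3,f_3,g_3\}$ by cocircuit elimination eliminating $e_1$). When $|F_1|=4$ the orthogonality obstruction disappears and $C^*$ may instead involve $e_2$ rather than $e_3$, i.e.\ $C^*=\{e_1,e_2,f_3,g_3\}$ or some variant; chasing this case carefully should land exactly in the definition of a left-hand almost-quad-type end, outcome (ii). For the size bound, as in \cref{accordion1,accordion2}: after deleting a suitable end element of one of the two small triangles, $M$ (minus that element) has $(F_1\cup\{f_2,f_3,g_2,g_3\})$ minus an element as a large fan; since $e_{|F_1|}$ is not in any triangle, the reduced matroid is not a wheel or whirl, so \cref{not_just_fan} forces $|E(M)|\ge|F_1\cup\{f_2,f_3,g_2,g_3\}|+2$.

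The main obstacle I anticipate is the bookkeeping in the $|F_1|=4$ case: with $|F_1|=4$ the fan ordering is not unique ($e_2$ and $e_3$ can be swapped), so the hypotheses $\{e_1,e_2,f_2,g_2\}$-cocircuit and the derived second cocircuit interact ambiguously, and one must check whether the configuration can always be massaged into a genuine quad-type end or genuinely requires the almost-quad definition. I expect that when the two derived cocircuits are $\{e_1,e_2,f_2,g_2\}$ and $\{e_1,e_3,f_3,g_3\}$ we get (i), and when they ``cross'' — e.g.\ $\{e_1,e_2,f_2,g_2\}$ together with a cocircuit of the form $\{e_1,e_3,f_3,g_3\}$ obtained only after relabelling, or a cocircuit hitting $e_2$ with both $f_3$ and $g_3$ — we are forced into (ii). Keeping orthogonality with \emph{every} triangle and triad of $F_1$ in play (using \cref{unique_triangle} to prevent spurious extra triangles through $\{f_2,f_3\}$ or $\{g_2,g_3\}$, which would contradict the ``not in a $4$-element fan'' hypothesis) is what nails down which case occurs. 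A secondary subtlety is ensuring $f_3\ne g_3$ and more generally that $\{f_2,f_3,g_2,g_3\}$ genuinely has four elements — this follows because if, say, $f_3=g_3$ then the two triangles $\{e_1,f_2,f_3\}$ and $\{e_1,g_2,f_3\}$ share the pair $\{e_1,f_3\}$, forcing (via a circuit of size $\le 2$ or \cref{unique_triangle}-type reasoning) a parallel pair or a $4$-element fan, contradicting $3$-connectivity or the hypotheses.
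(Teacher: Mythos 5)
The central gap is in how you obtain the second cocircuit through $f_3$ and $g_3$. Tutte's Triangle Lemma applied to $\{e_1,f_2,f_3\}$ only guarantees that at least two of its three elements are deletable; those two could be $e_1$ and $f_2$, and in that case your application of \cref{deletable_cocircuit} (with $e=f_2$, $f=e_1$, $X=F_1-\{e_1\}$) can simply return the cocircuit $\{e_1,e_2,f_2,g_2\}$ you already have, so nothing forces a cocircuit meeting $\{f_3,g_3\}$. The paper's proof turns on showing directly that $M\backslash f_3$ is $3$-connected, via a cyclic $3$-separation $(X,\{f_3\},Y)$ with $F_1\subseteq X$ (dual of \cref{fan_vert_sep}) and repeated use of \cref{fcl_vert_sep} to drive $f_2,g_2,g_3$ into $Y$ and reach the contradiction $f_3\in\cl(Y\cup\{e_1\})$; only then does \cref{deletable_cocircuit} give a cocircuit containing $\{e_1,f_3\}$, one of $e_2,e_3$, and one of $g_2,g_3$, after which the possibility $g_2\in C^*$ is eliminated with a deletion-certificate argument (\cref{deletable_collection}). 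Relatedly, your preliminary assertion $\lambda(F_1\cup\{f_2,f_3,g_2,g_3\})=2$ does not follow from the hypotheses: the closure/coclosure additions you describe only yield $\lambda\le 3$, and equality with $2$ is in effect part of what the lemma is proving.

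Two further steps fail as written. The size bound cannot be obtained as in \cref{accordion1,accordion2}: a single deletion (say of $g_2$, which does create the triad $\{e_1,e_2,f_2\}$ in $M\backslash g_2$) only produces a fan of length $|F_1|+1$, so \cref{not_just_fan} gives $|E(M)|\ge |F_1|+4=|F_1\cup\{f_2,f_3,g_2,g_3\}|$, two short of the required bound, while deleting $f_3$ or $g_3$ creates no new triad at all; the paper instead rules out $|E(M)|=|F_1\cup\{f_2,f_3,g_2,g_3\}|$ and $|E(M)|=|F_1\cup\{f_2,f_3,g_2,g_3\}|+1$ by direct arguments (in the first case $\{f_2,f_3,g_2,g_3\}$ would be a cocircuit and $e_{|F_1|}\in\cl^*(\{f_2,f_3,g_2,g_3\})$ contradicts orthogonality; in the second the extra element would form a triangle or triad with $\{e_{|F_1|-1},e_{|F_1|}\}$). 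Finally, you have the two outcomes interchanged, and the orthogonality mechanism backwards: when $|F_1|\ge 6$ the triangle $\{e_3,e_4,e_5\}$ excludes $e_3$ from the second cocircuit, so that cocircuit is $\{e_1,e_2,f_3,g_3\}$, and circuit/cocircuit elimination then shows $\{f_2,f_3,g_2,g_3\}$ is a quad, giving outcome (i); the pattern $\{e_1,e_3,f_3,g_3\}$ can occur only when $|F_1|=4$ and is by definition the almost-quad-type end, outcome (ii) — the opposite of what you assert.
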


\begin{proof}
    By \cref{no_segment}, $r(\{e_1,f_2,f_3,g_2,g_3\}) = 3$ and, in particular, the elements $f_2,f_3,g_2,g_3$ are distinct.
    We claim that $M \backslash f_3$ is $3$-connected. Suppose, to the contrary, that $M \backslash f_3$ is not $3$-connected. The element $f_3$ is not contained in a triad, so $M$ has a cyclic $3$-separation $(X,\{f_3\},Y)$. By the dual of \cref{fan_vert_sep}, we may assume that $F_1 \subseteq X$. If $f_2 \in X$, then $f_3 \in \cl(X)$, a contradiction. Furthermore, by the dual of \cref{fcl_vert_sep}, we have that $f_2 \notin \cl(X)$ and $f_2 \notin \cl^*(X)$. This implies that $g_2 \in Y$. In turn, $g_3 \in Y$, since $g_2 \notin \cl(X)$. But now $e_1 \in \cl(Y)$, so $M$ has a cyclic $3$-separation $(X-\{e_1\},\{f_3\},Y \cup \{e_1\})$ and $f_3 \in \cl(Y \cup \{e_1\})$. Thus, $\lambda(Y \cup \{e_1,f_3\}) < 2$, a contradiction. Thus, $M \backslash f_3$ is $3$-connected. By \cref{deletable_cocircuit} and orthogonality, $M$ has a $4$-element cocircuit $C^*$ containing $\{e_1,f_3\}$, either $e_2$ or $e_3$, and either $g_2$ or $g_3$. If $g_2 \in C^*$, then, by cocircuit elimination, $M$ has a cocircuit contained in $\{e_1,e_2,e_3,f_2,f_3\}$. But then $\lambda(\{e_1,e_2,e_3,f_2,f_3\}) = 2$, and $(e_1,\{e_2,e_3\},\{\{f_2,f_3\}\})$ is a deletion certificate. This contradicts \Cref{deletable_collection} since, by Tutte's Triangle Lemma, either $M \backslash g_2$ or $M \backslash g_3$ is $3$-connected. Hence, $g_3 \in C^*$. Furthermore, if $e_3 \in C^*$, then, by orthogonality, $|F_1| = 4$.
	
    Suppose $E(M) = F_1 \cup \{f_2,f_3,g_2,g_3\}$.  Then $\lambda(\{f_2,f_3,g_2,g_3\})=2$, so, as $r(\{f_2,f_3,g_2,g_3\}) = 3$, the set $\{f_2,f_3,g_2,g_3\}$ contains a cocircuit.  Since $\{f_2,f_3\}$ and $\{g_2,g_3\}$ are each not contained in a triad, $\{f_2,f_3,g_2,g_3\}$ is a cocircuit.  But $e_{|F_1|} \in \cl^*(\{f_2,f_3,g_2,g_3\})$, which implies, by orthogonality, that $\{f_2,f_3\}$ or $\{g_2,g_3\}$ is contained in a triad with $e_{|F_1|}$, a contradiction.
    Next, suppose $E(M) = F_1 \cup \{f_2,f_3,g_2,g_3,x\}$ for some element $x\not\in F_1\cup \{f_2, f_3, g_2, g_3\}$.
    We have $|F_1| \ge 4$, since $|E(M)| \geq 9$, so $\lambda(F_1-\{e_{|F_1|-1},e_{|F_1|}\}) = 2$ and, by repeatedly applying \cref{clconnectivity}, 
    $\lambda((F_1-\{e_{|F_1|-1},e_{|F_1|}\}) \cup \{f_2,f_3,g_2,g_3\}) = 2$. Therefore, $\lambda(\{e_{|F_1|-1},e_{|F_1|},x\}) = 2$, so $\{e_{|F_1|-1},e_{|F_1|},x\}$ is either a triangle or a triad. But this contradicts either the maximality of $F_1$ or orthogonality. Hence, $|E(M)| \geq |F_1 \cup \{f_2,f_3,g_2,g_3\}| + 2$.
	
    Now, if $C^*=\{e_1,e_3,f_3,g_3\}$, then $\{f_2,f_3,g_2,g_3\}$ is an almost-quad-type end of $F_1$.  So suppose that $C^*=\{e_1,e_2,f_3,g_3\}$. Then, by cocircuit elimination and orthogonality, $\{f_2,f_3,g_2,g_3\}$ is a cocircuit.  By circuit elimination and orthogonality, $\{f_2,f_3,g_2,g_3\}$ is also a circuit.  So $\{f_2,f_3,g_2,g_3\}$ is a quad-type end of $F_1$.
\end{proof}

\begin{lemma} \label{accordion4}
    Let $M$ be a $3$-connected matroid with no detachable pairs and no disjoint maximal fans with like ends, such that $|E(M)| \geq 13$. Let $F_1$ be a maximal fan of $M$ with even length at least four. Let $G \subseteq E(M)-F_1$ be a left-hand fan-type, triangle-type, quad-type, or almost-quad-type end of $F_1$. 
    Then $M$ is an accordion.
\end{lemma}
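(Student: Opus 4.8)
The plan is to run a case analysis on the type of the left-hand end $G$, in each case first recording a deletion certificate inside $X := G \cup F_1$, then determining the structure of $H := E(M) - X$ and identifying it as a right-hand end of $F_1$ of the appropriate type. Since $G$ is a left-hand end of $F_1$, the fan ordering $(e_1,e_2,\ldots,e_{|F_1|})$ must have $\{e_1,e_2,e_3\}$ a triangle (otherwise \cref{fan_ends} would put $e_1$ in no triangle, contrary to $G$ attaching a triangle through $e_1$), and since $|F_1|$ is even, $\{e_{|F_1|-2},e_{|F_1|-1},e_{|F_1|}\}$ is then a triad. As $E(M)\neq F_1$, the matroid $M$ is not a wheel or whirl, so \cref{fan_ends} applies: $e_1$ lies in no triad and $e_{|F_1|}$ lies in no triangle. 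Finally, \cref{accordion1,accordion2,accordion3} give $|E(M)| \ge |X| + 2$, and $\lambda(X) = 2$ in each case.

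First I would build the deletion certificate. In every case $e_1 \in \cl(\{e_2,e_3\})$ via the triangle $\{e_1,e_2,e_3\}$, and $e_1 \in \cl(T)$ for each triangle $T \subseteq G \cup \{e_1\}$ through $e_1$; so $(e_1,\{e_2,e_3\},\{\{g_2,g_3\}\})$ is a deletion certificate contained in $X$ in the fan-type and triangle-type cases (with $\{e_1,g_2,g_3\}$ the attaching triangle), and $(e_1,\{e_2,e_3\},\{\{a_1,a_2\},\{b_1,b_2\}\})$ works in the quad-type and almost-quad-type cases, using that $\{e_2,e_3\}$ is disjoint from $G$, that $\lambda(\{e_2,e_3\})=2$, and that $e_1$ is in no triad. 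With this certificate in hand, for any $h \in H$ with $|H| \ge 3$ we get from \cref{deletable_collection} that $M\backslash h$ is not $3$-connected; since $M$ has no $2$-element cocircuits and no coloops, Bixby's Lemma then forces $\co(M\backslash h)$ to be $3$-connected, which means $M\backslash h$ has a nontrivial series class and hence $h$ lies in a triad of $M$. The cases $|H| = 2$ (the only other possibility, since $\lambda(X)=2$ forces $|H|\ge 2$) are small enough to analyse directly: there $\lambda((G\cup F_1)-\{e_{|F_1|}\})$ is easily computed and orthogonality with $\{e_{|F_1|-2},e_{|F_1|-1},e_{|F_1|}\}$ shows $H \cup \{e_{|F_1|}\}$ is a triad, giving a right-hand triad-type end.

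Next I would pin down $H$ in the large cases. Knowing that every element of $H$ lies in a triad and that $\lambda(H)=2$, and using \cref{deletable_collection1} to control which elements of $H$ lie in triangles, I would test the triads through $H$ for orthogonality against the circuits supported in $X$ — the triangles of $F_1$ and of $G\cup\{e_1\}$, together with the $4$-circuits produced in \cref{accordion1,accordion2,accordion3} — to show that these triads and the triad $\{e_{|F_1|-2},e_{|F_1|-1},e_{|F_1|}\}$ interlock so that either $H \cup \{e_{|F_1|}\}$ is a maximal $5$-fan with end $e_{|F_1|}$ (a right-hand fan-type end), or $H$ is a quad whose elements pair with $e_{|F_1|}$ through $4$-circuits (a right-hand quad-type end). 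I expect this to proceed, as in the proofs of \cref{even_fan_spike,even_fan_paddle,tip_cotip}, by taking a largest family of disjoint ``fan-like'' pieces on the right and showing nothing is left over, possibly after another application of \cref{no_other_elements} once the structure on the right has supplied a contraction certificate. This yields the accordion partition $(G, F_1, H)$ whenever $G$ is of fan-, triangle-, or quad-type.

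The main obstacle, and the final step, is excluding the almost-quad-type case. Here $|F_1|=4$ and $G=\{f_2,f_3,g_2,g_3\}$ with $\{e_1,e_2,f_2,g_2\}$ and $\{e_1,e_3,f_3,g_3\}$ cocircuits; the plan is to carry out the analysis above on the right-hand side and show that, whichever right-hand end $H$ emerges, the configuration is too symmetric: combining the two $4$-cocircuits through $e_1$ with the structure of $H$ (via cocircuit and circuit elimination together with orthogonality) produces a rank-$2$ set of four elements, contradicting \cref{no_segment}, or a pair of disjoint maximal fans with like ends, contrary to hypothesis, or an outright detachable pair. This is the ``considering both ends in conjunction'' anticipated before \cref{accordion3}; because the contradiction cannot be obtained from $G\cup F_1$ alone, one must first determine the right-hand end, and the delicate part is tracking the orthogonality constraints imposed simultaneously by both ends.
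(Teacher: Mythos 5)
The decisive flaw is your treatment of $H$: from \cref{deletable_collection} you correctly get that $M\backslash h$ is not $3$-connected for $h\in H$, but Bixby's Lemma does not then ``force $\co(M\backslash h)$ to be $3$-connected'' — it says that $\si(M/h)$ or $\co(M\backslash h)$ is $3$-connected, and nothing rules out the first horn. In the relevant situation (by \cref{deletable_collection1}, an element of $H$ outside every triad is also outside every triangle) the correct conclusion is that $M/h$ is $3$-connected, which does not put $h$ in a triad at all. So your claim that \emph{every} element of $H$ lies in a triad is unsupported; the paper establishes only the weaker fact that \emph{some} triad meets $H$, and even that needs a further argument (the dual of \cref{contractable_circuit} to produce a $4$-element circuit through $e_{|F_1|}$ and a second element $f$ of $H$, then \cref{circuit_so_deletable} to make $M\backslash f$ $3$-connected and contradict \cref{deletable_collection}). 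Without this your whole right-hand analysis has no valid starting point.

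Beyond that, the step you describe as ``testing the triads through $H$ for orthogonality \dots so that they interlock'' is precisely where the proof's content lies, and it is not carried out. The paper splits on whether the triad meeting $H$ lies in a $4$-element fan; uses the no-disjoint-like-ends hypothesis together with \cref{tip_cotip1,even_fan_paddle2,odd_fan,odd_fan_no_triangle,odd_fans_intersecting} to force that fan or triad through $e_{|F_1|}$; and then the duals of \cref{accordion1,accordion2,accordion3} identify a subset $H'\subseteq H$ as a right-hand fan-, triad-, quad- or \emph{almost-quad}-type end — the last two outcomes are missing from your list. The almost-quad possibilities (on both sides) are then eliminated by a one-line orthogonality check of a circuit containing $\{e_{|F_1|-1},e_{|F_1|}\}$ and two elements of $H'$ against the cocircuit $\{e_1,e_3,f_3,g_3\}$, not by producing a rank-$2$ set, disjoint like-ended fans, or a detachable pair as you speculate; and the final equality $E(M)=F_1\cup G\cup H'$ is obtained from a contraction certificate plus \cref{no_other_elements} and the exclusion of leftover odd fans, which your sketch also omits. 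Since the one step you argue in detail rests on a misreading of Bixby's Lemma and the identification of the right-hand end is only a plan, the proposal has a genuine gap.
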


\begin{proof}
    Let $F_1 = (e_1,e_2,\ldots,e_{|F_1|})$ such that $\{e_1,e_2,e_3\}$ is a triangle. First, observe that $(e_1,\{e_2,e_3\},\{G\})$ is a deletion certificate, and $\lambda(G \cup \{e_1,e_2,e_3\}) = 2$. Let $H = E(M) - (F_1 \cup G)$, so $|H| \geq 2$, by definition.

    Suppose $|H| = 2$. Now $\lambda((F_1 \cup G) - \{e_{|F_1|}\}) = 2$, which implies that $H \cup \{e_{|F_1|}\}$ is a triad, as $e_{|F_1|}$ is not contained in a triangle. Furthermore $|F_1| > 4$, since $|E(M)| \geq 11$, which implies that $G$ is not an almost-quad-type end of $F_1$. Now $\lambda((F_1 \cup G) - \{e_{|F_1|-1},e_{|F_1|}\}) = 2$, and so $\lambda(H \cup \{e_{|F_1|-1},e_{|F_1|}\}) = 2$. Thus, either $e_{|F_1|-1} \in \cl(H \cup \{e_{|F_1|}\})$ or $e_{|F_1|-1} \in \cl^*(H \cup \{e_{|F_1|}\})$. In the latter case, $r^*(H \cup \{e_{|F_1|-1},e_{|F_1|}\}) = 2$, contradicting the dual of \Cref{no_segment}. Hence, since $e_{|F_1|}$ is not contained in a triangle, it follows that $H \cup \{e_{|F_1|-1},e_{|F_1|}\}$ is a circuit. By the dual of \Cref{accordion2}, the set $H$ is a right-hand triad-type end of $F_1$. So $M$ is an accordion. Hence, we may assume that $|H| \geq 3$.
	
    We next show that there is a triad of $M$ that meets $H$. Suppose this is not the case, that is, no element of $H$ is contained in a triad. Let $e \in H$. By \Cref{deletable_collection1}, the element $e$ is also not contained in a triangle. Furthermore, by \cref{deletable_collection}, $M \backslash e$ is not $3$-connected. Thus, by Bixby's Lemma, $M / e$ is $3$-connected. If $|E(M)| = |F_1 \cup G| + 3$, then, since $\lambda(H) = 2$, we have that $H$ is either a triangle or a triad. But no element of $H$ is contained in a triad or a triangle, a contradiction. So $|E(M)| \geq |F_1 \cup G| + 4$. Thus, the dual of \cref{contractable_circuit} implies that $M$ has a $4$-element circuit $C$ containing $\{e,e_{|F_1|}\}$, and either $e_{|F_1|-1}$ or $e_{|F_1|-2}$, and an element $f \in H-\{e\}$. But $f$ is not contained in a triad, so \Cref{circuit_so_deletable} implies that $M \backslash f$ is $3$-connected. This contradiction to \cref{deletable_collection} implies that $M$ has a triad that meets $H$.
	
    We consider two cases, depending on whether there is a triad that meets $H$, and is contained in a $4$-element fan. First, suppose $T^*$ is a triad that meets $H$, and is contained in a $4$-element fan.  Let $F_2$ be a maximal fan containing $T^*$. Since $M$ has no disjoint maximal fans with like ends, we have that $F_1 \cap F_2 \neq \emptyset$. By \cref{tip_cotip1,even_fan_paddle2}, the fan $F_2$ is odd. Therefore, by \Cref{odd_fan}, $|F_2| = 5$. Suppose that $e_1 \in F_2$.  By \cref{intersecting_fans,no_mk4}, $e_1$ is an end of $F_2$.  Thus, $M$ has a detachable pair, either by \cref{odd_fans_intersecting} when $G$ is a fan-type end, or by \cref{odd_fan_no_triangle} otherwise.  From this contradiction, we deduce that $e_1 \notin F_2$.  Thus, by \cref{intersecting_fans}, $e_{|F_1|} \in F_2$, where $e_{|F_1|}$ is an end of $F_2$. Let $H' = F_2 - \{e_{|F_1|}\}$.
    Now, the dual of \Cref{accordion1} implies that $H'$ is a right-hand fan-type end of $F_1$, where $H' \subseteq H$.

    Next, we consider the case where no triad that meets $H$ is contained in a $4$-element fan, and show that there is a set $H' \subseteq H$ that is a right-hand triad-type, quad-type, or almost-quad-type end of $F_1$.
    Let $T^*$ be a triad that meets $H$ and is not contained in a $4$-element fan.
    We have $F_1 \cap T^* \neq \emptyset$, which implies that $e_{|F_1|} \in T^*$. Let $T^* = \{e_{|F_1|},f_2,f_3\}$. Since $T^* \cap H \neq \emptyset$, we have $f_2,f_3 \in H$, by orthogonality. By Tutte's Triangle Lemma, we may assume that $M / f_2$ is $3$-connected. By the dual of \cref{deletable_cocircuit} and orthogonality, $M$ has a $4$-element circuit~$C$ containing $\{f_2,e_{|F_1|}\}$, either $e_{|F_1|-1}$ or $e_{|F_1|-2}$, and an element $e \in H$.
    If $|F_1| > 4$, then orthogonality with $\{e_{|F_1|-4},e_{|F_1|-3},e_{|F_1|-2}\}$ implies that $e_{|F_1|-1} \in C$. If $|F_1| = 4$, then, regardless of which type of left-hand end $G$ is, we have that $e_{|F_1|-2} = e_2 \in \cl^*(G \cup \{e_1\})$ and so, by orthogonality, $e_{|F_1|-1} \in C$. If $e = f_3$, then the dual of \Cref{accordion2} implies that $H' = \{f_2,f_3\}$ is a right-hand triad-type end of $F_1$. Suppose $e \neq f_3$. If $e$ is not contained in a triad, then \Cref{circuit_so_deletable} implies that $M \backslash e$ is $3$-connected, contradicting \cref{deletable_collection}. Thus, there is a triad $T_2^*$ of $M$ containing $e$. Note that, as $T_2^*$ meets $H$, it is not contained in a $4$-element fan. Furthermore, $T_2^* \cap F_1 \neq \emptyset$, so $e_{|F_1|} \in T_2^*$. Now, the dual of \Cref{accordion3} implies that $H' = (T^* \cup T_2^*) - \{e_{|F_1|}\}$ is a right-hand quad-type or almost-quad-type end of $F_1$.
	
    In either case, we have a set $H' \subseteq H$ that is a right-hand fan-type, triad-type, quad-type, or almost-quad-type end of $F_1$. Also note that there is a circuit of $M$ containing $\{e_{|F_1|-1},e_{|F_1|}\}$ and two elements of $H'$. Thus, by orthogonality, $G$ is not a left-hand almost-quad-type end of $F_1$. Similarly, $H'$ is not a right-hand almost-quad-type end of $F_1$. Now, $M$ has a contraction certificate $(e_{|F_1|},\{e_{|F_1|-1},e_{|F_1|-2}\},\{H'\})$ and $\lambda(H' \cup \{e_{|F_1|-2},e_{|F_1|-1},e_{|F_1|}\}) = 2$. Combined with the deletion certificate $(e_1,\{e_2,e_3\},\{G\})$, \Cref{no_other_elements} implies that every element of $E(M) - (F_1 \cup G \cup H')$ is contained in a $4$-element fan.
    Suppose $F$ is a maximal fan of $M$ with length at least four that is not contained in $F_1 \cup G \cup H'$.
    Then $F$ contains either $e_1$ or $e_{|F_1|}$, so, by \cref{tip_cotip1,even_fan_paddle2}, $F$ is odd. But now $F$ meets a maximal fan of odd length, which is contained in either $G \cup \{e_1\}$ or $H' \cup \{e_{|F_1|}\}$. This contradicts \cref{odd_fan_no_triangle} or \cref{odd_fans_intersecting}, so $E(M) = F_1 \cup G \cup H'$. Thus, $M$ is an accordion.
\end{proof}

\begin{lemma} \label{accordion5}
    Let $M$ be a $3$-connected matroid with no detachable pairs and no disjoint maximal fans with like ends. Let $F_1 = (e_1,e_2,\ldots,e_{|F_1|})$ be a maximal fan of $M$ with even length at least four such that $\{e_1,e_2,e_3\}$ is a triangle. Let $T = \{e_1,f_2,f_3\}$ be a triangle of $M$ and let $T^* = \{e_{|F_1|},g_2,g_3\}$ be a triad of $M$, such that $T \cap T^* = \emptyset$ and neither $T$ nor $T^*$ is contained in $F_1$, and let $e \in E(M) - (F_1 \cup \{f_2,f_3,g_2,g_3\})$ such that $\{e_1,e_2,f_2,e\}$ is a cocircuit. If $M$ has an element $x \neq e$ such that $x$ is not contained in a triangle or a triad and $M \backslash x$ is $3$-connected, then $x \in \cl^*(F_1 \cup \{f_2,f_3\})$ and $x \in \cl(F_1 \cup \{g_2,g_3\})$.
\end{lemma}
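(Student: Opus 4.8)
\textbf{Proof proposal for \cref{accordion5}.}

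The plan is to show that if $M\backslash x$ is $3$-connected for an element $x$ that is not in a triangle or a triad and $x\neq e$, then $x$ must lie in $\cl^*(F_1\cup\{f_2,f_3\})$ and in $\cl(F_1\cup\{g_2,g_3\})$. The key observation is that the hypotheses assemble a deletion certificate and a contraction certificate that $x$ is forced to interact with. First I would record the structural data: by \cref{fan_ends}, $\{f_1,f_2,f_3\}=\{e_1,f_2,f_3\}$ being a triangle through the end $e_1$ of $F_1$ is consistent with $\{e_1,e_2,e_3\}$ being a triangle, and $e_{|F_1|}$ is not in a triangle. Since $\{e_1,e_2,f_2,e\}$ is a cocircuit and $\{e_1,f_2,f_3\}$ is a triangle, we have $f_3\in\cl(F_1\cup\{f_2\})$ and so $\lambda(F_1\cup\{f_2,f_3\})=2$ by \cref{clconnectivity}; symmetrically (dually), using the triad $T^*=\{e_{|F_1|},g_2,g_3\}$ and a circuit through $e_{|F_1|}$ and the fan $F_1$, one gets $\lambda(F_1\cup\{g_2,g_3\})=2$ as well. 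Then $(e_1,\{e_2,e_3\},\{\{f_2,f_3\}\})$ is a deletion certificate contained in $X:=F_1\cup\{f_2,f_3\}$, because $e_1\in\cl(\{e_2,e_3\})$, $e_1\in\cl(\{f_2,f_3\})$, $\lambda(\{e_2,e_3\})=\lambda$ of a pair in a triangle—actually we want the first block to have $\lambda=2$; instead use $X_1=F_1-\{e_1\}$ with $\lambda(F_1-\{e_1\})=2$, $X_2=\{f_2,f_3\}$, noting $e_1\in\cl(F_1-\{e_1\})\cap\cl(\{f_2,f_3\})$ and $e_1$ is in no triad by \cref{fan_ends}; and dually $(e_{|F_1|},F_1-\{e_{|F_1|}\},\{\{g_2,g_3\}\})$ is a contraction certificate contained in $Y:=F_1\cup\{g_2,g_3\}$.

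With those certificates in place, the argument is: since $x\notin X$ and $M\backslash x$ is $3$-connected, \cref{deletable_cocircuit} (applied with $X_1=F_1-\{e_1\}$, whose $\lambda$ is $2$, and $f=e_1$—or more directly, chasing the certificate as in the proof of \cref{deletable_collection}) produces a $4$-element cocircuit $C^*$ of $M$ containing $\{x,e_1\}$; orthogonality of $C^*$ with the triangle $\{e_1,e_2,e_3\}$ and with the triangle $\{e_1,f_2,f_3\}$, together with the fact that the relevant intersections $F_1-\{e_1\}$, $\{f_2,f_3\}$ of the certificate blocks have empty common intersection, forces $C^*\subseteq\{x,e_1\}\cup(F_1-\{e_1\})\cup\{f_2,f_3\}$ with one element in $F_1-\{e_1\}$ and one in $\{f_2,f_3\}$—hence $x\in\cl^*(F_1\cup\{f_2,f_3\})$. (The only obstruction to re-running \cref{deletable_collection} verbatim is that we need $x\notin X$, which holds, and $|E(M)|\ge|X|+3$; the latter follows because $H=E(M)-(F_1\cup G)$ has size at least $2$ in the ambient accordion setup, and in any case $|E(M)|\ge13$ dwarfs $|F_1\cup\{f_2,f_3\}|$ once we also know $g_2,g_3,e\notin X$.) Dually, since $x$ is in no triangle and $M/x$ is \emph{not} $3$-connected by \cref{contractable_collection} applied to the contraction certificate in $Y$—wait, we only know $M\backslash x$ is $3$-connected, not the status of $M/x$; instead apply the dual of the cocircuit-chasing argument: as $x\notin Y$, run the dual of \cref{deletable_cocircuit}/\cref{deletable_collection} with the contraction certificate to get a $4$-element circuit $C$ of $M$ containing $\{x,e_{|F_1|}\}$, one element of $F_1-\{e_{|F_1|}\}$ and one of $\{g_2,g_3\}$, using orthogonality of $C$ with the triad $\{e_{|F_1|-2},e_{|F_1|-1},e_{|F_1|}\}$ and the triad $T^*=\{e_{|F_1|},g_2,g_3\}$; this yields $x\in\cl(F_1\cup\{g_2,g_3\})$.

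I would present it by first establishing the two $\lambda=2$ facts and the two certificates as a short preliminary paragraph, then invoking \cref{deletable_collection} (and its dual) essentially as black boxes after checking $x$ avoids each certificate—this is where the distinctness hypotheses $x\neq e$ and $x\notin F_1\cup\{f_2,f_3,g_2,g_3\}$ (the latter since $x$ is in no triangle/triad, while every element of $\{f_2,f_3,g_2,g_3\}$ lies in $T$ or $T^*$, and every element of $F_1$ lies in a triangle or triad) are used. The main obstacle I anticipate is bookkeeping the orthogonality case analysis so that $C^*$ (resp.\ $C$) cannot ``escape'' into elements outside $F_1\cup\{f_2,f_3\}$ (resp.\ $F_1\cup\{g_2,g_3\}$): one must use that $e\notin\{f_2,f_3\}$ so the cocircuit $\{e_1,e_2,f_2,e\}$ does not collapse the connectivity, and that the triad/triangle structure of $F_1$ pins down which $e_i$ can appear. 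A secondary point to be careful about is the case $|F_1|=4$, where $F_1$ has two fan orderings and one cannot appeal to orthogonality with $\{e_3,e_4,e_5\}$; there one instead notes $e_2\in\cl^*(\{f_2,f_3\}\cup\{e_1\})$ via the cocircuit $\{e_1,e_2,f_2,e\}$ only after identifying $e$, or argues directly that the forced element of $F_1-\{e_1\}$ in $C^*$ is $e_2$ using the triangle $\{e_1,e_2,e_3\}$ and the triangle $\{e_1,f_2,f_3\}$ simultaneously. This is routine but needs to be written out.
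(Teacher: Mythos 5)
Your first half is essentially the paper's: apply \cref{deletable_cocircuit} with the deletable element $x$ and $f=e_1$ (taking $X=\{e_2,e_3\}$ or $F_1-\{e_1\}$), then use orthogonality with the triangles $\{e_1,e_2,e_3\}$ and $\{e_1,f_2,f_3\}$ to pin the resulting $4$-element cocircuit inside $\{x,e_1\}\cup\{e_2,e_3\}\cup\{f_2,f_3\}$, giving $x\in\cl^*(F_1\cup\{f_2,f_3\})$. That part is fine (and note you do not need, and have not actually established, $\lambda(F_1\cup\{f_2,f_3\})=2$ or $\lambda(F_1\cup\{g_2,g_3\})=2$; neither follows from the stated hypotheses, since nothing puts $f_2$ in $\cl^*(F_1)$ or $g_2$ in $\cl(F_1)$).

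The second half has a genuine gap, and you half-noticed it yourself. To produce a $4$-element circuit through $\{e_{|F_1|},x\}$ you must apply the dual of \cref{deletable_cocircuit} (equivalently \cref{contractable_circuit}), whose hypothesis is that $M/x$ is $3$-connected — and the lemma only assumes $M\backslash x$ is $3$-connected. Your substitute, ``run the dual of the cocircuit-chasing argument with the contraction certificate,'' is not an available move: the certificate machinery (\cref{deletable_collection}, \cref{contractable_collection}) only yields the negative conclusion that deleting/contracting elements outside the certificate-carrying $3$-separating set is \emph{not} $3$-connected; it never manufactures a circuit through $x$, and in any case its hypotheses ($\lambda=2$ for the ambient set) are not met here. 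The missing idea — and the reason the element $e$ and the cocircuit $\{e_1,e_2,f_2,e\}$ appear in the hypotheses at all — is a direct proof that $M/x$ \emph{is} $3$-connected: assume a vertical $3$-separation $(X,\{x\},Y)$, use \cref{fan_vert_sep} to place $F_1\subseteq X$, use orthogonality with the cocircuit found in the first half to rule out $\{f_2,f_3\}\subseteq X$, use \cref{fcl_vert_sep} together with the cocircuit $\{e_1,e_2,f_2,e\}$ and the fact that $x$ lies in no triangle to force $\{e,f_2,f_3\}\subseteq Y$, and then roll $F_1$ across to $Y$ element by element to contradict $x\notin\cl^*(Y\cup F_1)$. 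Only after this can one invoke the dual of \cref{deletable_cocircuit} and orthogonality with the triads $\{e_{|F_1|-2},e_{|F_1|-1},e_{|F_1|}\}$ and $\{e_{|F_1|},g_2,g_3\}$ to conclude $x\in\cl(F_1\cup\{g_2,g_3\})$. As written, your proposal does not prove the second conclusion.
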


\begin{proof}
    \Cref{deletable_cocircuit} implies that $M$ has a $4$-element cocircuit containing $\{e_1,x\}$, and either $e_2$ or $e_3$, and either $f_2$ or $f_3$, so $x \in \cl^*(F_1 \cup \{f_2,f_3\})$. Now, suppose $M / x$ is not $3$-connected. Then $M$ has a vertical $3$-separation $(X,\{x\},Y)$, and we may assume, by \cref{fan_vert_sep}, that $F_1 \subseteq X$. If $\{f_2,f_3\} \subseteq X$, then $x \in \cl^*(X)$, contradicting orthogonality. This implies, by \cref{fcl_vert_sep}, that $f_2 \notin \cl(X)$ and, as $x$ is not contained in a triangle, $f_2 \notin \cl^*(X)$, from which it follows that $\{e,f_2,f_3\} \subseteq Y$. But now $e_1 \in \cl(Y)$, and $e_2 \in \cl^*(Y \cup \{e_1\})$. Repeating in this way, $(X - F_1,\{x\},Y \cup F_1)$ is a vertical $3$-separation of $M$. However, $x \in \cl^*(Y \cup F_1)$, a contradiction. Hence, $M / x$ is $3$-connected, so, by the dual of \cref{deletable_cocircuit}, $M$ has a $4$-element circuit containing $\{e_{|F_1|}, x\}$, and either $e_{|F_1|-2}$ or $e_{|F_1|-1}$, and either $g_2$ or $g_3$. Thus, $x \in \cl(F_1 \cup \{g_2,g_3\})$ as desired.
\end{proof}

\begin{lemma} \label{accordion6}
    Let $M$ be a $3$-connected matroid with no detachable pairs and no disjoint maximal fans with like ends, such that $|E(M)| \geq 13$. Let $F_1 = (e_1,e_2,\ldots,e_{|F_1|})$ be a maximal fan of $M$ with even length at least four such that $\{e_1,e_2,e_3\}$ is a triangle, and suppose every $4$-element fan of $M$ is contained in $F_1$. Let $\{e_1,f_2,f_3\}$ be a triangle of $M$ such that $M \backslash f_2$ is $3$-connected. Furthermore, let $e \in E(M) - (F_1 \cup \{f_2,f_3\})$ such that $\{e_1,e_2,f_2,e\}$ is a cocircuit and $e$ is not contained in a triangle. Then $M$ is an even-fan-spike with partition $(F_1, \{e,f_2\}, \{f_3,z\})$, for some $z \notin F_1 \cup \{e,f_2,f_3\}$.
\end{lemma}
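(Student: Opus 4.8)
Throughout, $M$ has no detachable pairs (as assumed), and since $e$ lies in no triangle, $M$ is neither a wheel nor a whirl; write $\rho=|F_1|\geq 4$, which is even, so that $\{e_{\rho-2},e_{\rho-1},e_\rho\}$ is a triad. The plan is to exhibit the partition of a non-degenerate even-fan-spike with $P_1=F_1$ and $P_2=\{e,f_2\}$, the third petal $P_3=\{f_3,z\}$ being supplied (together with the new element $z$) by \cref{even_fan_spike2}.

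I would first deal with the preliminaries. By \cref{fan_ends}, $e_1$ lies in no triad and $M\backslash e_1$ is $3$-connected; hence $(e_1,\{e_2,e_3\},\{\{f_2,f_3\}\})$ is a deletion certificate, contained in $F_1\cup\{f_2,f_3\}$. Next, $\{f_2,f_3\}$ lies in no triad: by \cref{no_segment} (using $|E(M)|\geq 11$) the triangle $\{e_1,f_2,f_3\}$ is the only triangle through $\{f_2,f_3\}$, so a triad $\{f_2,f_3,w\}$ would, by orthogonality, make $(e_1,f_2,f_3,w)$ a $4$-element fan not contained in $F_1$, contrary to hypothesis. The same reasoning shows that $\{e,f_2\}$ is a maximal fan of length two (it cannot extend to a triangle since $e$ is in no triangle, and a triad $\{e,f_2,w\}$ would force $w\in\{e_1,f_3\}$ by orthogonality with $\{e_1,f_2,f_3\}$, both of which have just been excluded); along the way one also checks that $e$ lies in no triad of $M$ (much as for $\{e,f_2\}$, combining orthogonality with \cref{unique_triangle}).

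The substance of the proof is to produce $z$ together with the circuits and cocircuits tying the petals together. The cocircuit linking the triangle-ends of $F_1$ and $\{e,f_2\}$ is the given $\{e_1,e_2,f_2,e\}$; what remains is a $4$-element circuit $\{e_{\rho-1},e_\rho,e,f_2\}$ linking the triad-ends, a cocircuit $\{e_1,e_2,f_3,z\}$ and a circuit $\{e_{\rho-1},e_\rho,f_3,z\}$ through a common new element $z$, and the assertion $E(M)=F_1\cup\{e,f_2,f_3,z\}$. Since $e$ lies in neither a triangle nor a triad, Bixby's Lemma gives that $M/e$ or $M\backslash e$ is $3$-connected; in the first case the dual of \cref{contractable_circuit}, applied with $M/e_\rho$ $3$-connected (\cref{fan_ends}) and a $3$-separating set built from $F_1$ together with orthogonality against the triads of $F_1$, yields the circuit $\{e_{\rho-1},e_\rho,e,f_2\}$, and in the second case $e$ is a deletable element outside $\{e_1,f_2,f_3\}$ and \cref{deletable_cocircuit} yields a cocircuit through $f_3$; in either case one bootstraps, using the $3$-connectivity of $M\backslash f_2$ (so that $\{e_1,e_2,e\}$ is a triad and $(e,e_1,\ldots,e_\rho)$ is an odd maximal fan of $M\backslash f_2$), to obtain all the remaining circuits and cocircuits through a single $z\notin F_1\cup\{e,f_2,f_3\}$, and — after verifying $\lambda(F_1\cup\{f_2,f_3\})=2$ so that \cref{deletable_collection,deletable_collection1,deletable_collection_contractable_el} apply to $X=F_1\cup\{f_2,f_3\}$ — to show that no element of $E(M)$ lies outside $F_1\cup\{e,f_2,f_3,z\}$.

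With $P_1=F_1$ ordered so that $\{e_\rho,e_{\rho-1},e_{\rho-2}\}$ is its first triple, $P_2=\{e,f_2\}$, circuit $C_{1,2}=\{e_{\rho-1},e_\rho,e,f_2\}$, cocircuit $C^*_{1,2}=\{e_1,e_2,f_2,e\}$, and $|E(M)|=|F_1\cup\{e,f_2\}|+2$, the hypotheses of \cref{even_fan_spike2} are met; since $f_3\in E(M)-(F_1\cup\{e,f_2\})$, its conclusion (i) cannot hold, so conclusion (ii) holds and $M$ is a non-degenerate even-fan-spike with partition $(F_1,\{e,f_2\},\{f_3,z\})$, as required. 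I expect the obstacle to be the middle step: proving $\lambda(F_1\cup\{f_2,f_3\})=2$ and that there is exactly one element of $E(M)$ outside $F_1\cup\{e,f_2\}$, i.e., ruling out further petals — this is precisely where the hypotheses that $M\backslash f_2$ is $3$-connected and that every $4$-element fan of $M$ lies in $F_1$ are essential, and it calls for the same careful orthogonality, circuit/cocircuit elimination, and collection-lemma bookkeeping that pervades \cref{disjoint_fans}, whereas the preliminaries and the final appeal to \cref{even_fan_spike2} are comparatively routine.
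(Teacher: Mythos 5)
There is a genuine gap, and it sits exactly where the real work of this lemma lies. In your ``first case'' you assert that the dual of \cref{contractable_circuit}, ``together with orthogonality against the triads of $F_1$'', yields the circuit $\{e_{|F_1|-1},e_{|F_1|},e,f_2\}$. What that lemma actually produces is a $4$-element circuit containing $\{e_{|F_1|},e\}$, one of $e_{|F_1|-1},e_{|F_1|-2}$, and some element $f\notin F_1\cup\{e\}$; orthogonality pins down only the two elements inside $F_1$ and says nothing about whether the fourth element is $f_2$, $f_3$, or an entirely new element. Proving that $f=f_2$ — equivalently, ruling out extra elements so that $\lambda(F_1\cup\{e,f_2,f_3\})=2$ and $|E(M)|\le|F_1\cup\{e,f_2\}|+2$, which is what \cref{even_fan_spike2}(ii) needs — is the entire substance of the paper's proof: one first shows every triad not contained in $F_1$ contains $e_{|F_1|}$ together with $e$ or $f$, so at most two elements outside $F_1\cup\{f_2,f_3,e,f\}$ lie in triads; one then uses the deletion certificate with \cref{deletable_collection_contractable_el} to force $|E(M)|\le|X|+2$ for any $3$-separating $X\supseteq F_1\cup\{f_2,f_3,e,f\}$; and one eliminates in turn $f=f_3$, $f$ in no triad, and $f$ in a triad (the last requiring a further element $h$, the set $Z$, an element $g$ in neither a triangle nor a triad, and an appeal to \cref{accordion5} or its dual), all hinging on $|E(M)|\ge13$. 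You explicitly flag this middle step as ``the obstacle'' but give no argument for it beyond ``one bootstraps'', so the proposal defers rather than proves the heart of the lemma; the endgame via \cref{even_fan_spike2}(ii) matches the paper, but only after the hard part is done.

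Separately, your preliminary claim that ``$e$ lies in no triad of $M$'' is not justified by the tools you cite: orthogonality with the known triangles plus \cref{unique_triangle} does not exclude a triad of the form $\{e,e_{|F_1|},x\}$ with $x\notin F_1\cup\{f_2,f_3\}$ — a triad not contained in $F_1$ cannot be disjoint from $F_1$ (no disjoint maximal fans with like ends), and by \cref{intersecting_fans,fan_ends} it must contain $e_{|F_1|}$, so triads through $e$ of exactly this shape are precisely what the paper's proof has to contend with; the paper never claims, and never needs, that $e$ avoids triads. You only use the claim to upgrade Bixby's Lemma, and that case split is avoidable anyway: since $M\backslash f_2$ is $3$-connected, $\{e_1,e_2\}$ lies in a triangle and $e$ lies in no triangle, the dual of \cref{circuit_so_deletable} gives $M/e$ $3$-connected outright, which is how the paper begins. (A small bookkeeping slip as well: in the final configuration two elements, $f_3$ and $z$, lie outside $F_1\cup\{e,f_2\}$, not one.)
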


\begin{proof}
    Since $M \backslash f_2$ is $3$-connected, the dual of \cref{circuit_so_deletable} implies that $M / e$ is $3$-connected. Therefore, by the dual of \cref{deletable_cocircuit}, $M$ has a $4$-element circuit $C$ containing $\{e_{|F_1|},e\}$, and one of $e_{|F_1|-2}$ and $e_{|F_1|-1}$, and an element $f \notin F_1 \cup \{e\}$. If $T^*$ is a triad of $M$ that is not contained in $F_1$, then, since $T^*$ is not contained in a $4$-element fan and $M$ has no disjoint maximal fans with like ends, we have that $T^* \cap F_1 \neq \emptyset$. Hence, $e_{|F_1|} \in T^*$, and, by orthogonality with $C$, either $e \in T^*$ or $f \in T^*$. Now, every triad of $M$ that is not contained in $F_1$ contains either $\{e_{|F_1|},e\}$ or $\{e_{|F_1|},f\}$. It follows, by \Cref{no_segment}, that there are at most two elements of $E(M) - (F_1 \cup \{f_2,f_3,e,f\})$ contained in triads.
	
    The strategy for this proof is to find a set $X$ with $F_1 \cup \{f_2,f_3,e,f\} \subseteq X$ and $\lambda(X) = 2$. Then $(e_1,F_1-\{e_1\},\{\{f_2,f_3\}\})$ is a deletion certificate contained in $X$, and $e_{|F_1|} \in \cl^*(F_1 - \{e_{|F_1|}\})$, and, for all $i \in [|F_1|]$, we have that $e_i \in \cl(X)-\{e_i\}$. Hence, if $|E(M)| \geq |X| + 3$, then \cref{deletable_collection_contractable_el} implies that every element of $E(M) - X$ is contained in a triad. But $E(M)-X$ has at most two elements contained in triads, so $|E(M)| \leq |X| + 2$.
	
    We set about finding such a set $X$. Suppose $f \neq f_2$. Orthogonality with the cocircuit $\{e_1,e_2,f_2,e\}$ implies that $e_2 \in C$, so $|F_1| = 4$. If $f = f_3$, then $\lambda(F_1 \cup \{f_2,f_3,e\}) = 2$, and so, letting $X=F_1 \cup \{f_2,f_3,e\}$, we have $|E(M)| \leq |X| + 2 = 9$, a contradiction. So $f \neq f_3$.

    Assume $f$ is not contained in a triad. Then \cref{circuit_so_deletable} implies that $M \backslash f$ is $3$-connected. Hence, by \cref{deletable_cocircuit}, $M$ has a $4$-element cocircuit containing $\{e_1,f\}$, either $e_2$ or $e_3$, and either $f_2$ or $f_3$. Now $\lambda(F_1 \cup \{f_2,f_3,e,f\}) = 2$, so $|E(M)| \le 10$, again a contradiction.
	
    Next, assume that $f$ is contained in a triad $T^*$. As each triad that is not contained in $F_1$ contains $e_{|F_1|}$, the triad $T^*$ contains $e_4$. If $e \in T^*$, then $\lambda(F_1 \cup \{f_2,f_3,e,f\})=2$, so $|E(M)| \le 10$, a contradiction. Now, by orthogonality, $T^*=\{f,e_4,h\}$ for some $h \notin F_1 \cup \{f_2,f_3,e,f\}$. Let $Z = F_1 \cup \{f_2,f_3,f,h\}$. Then $|Z \cup \{e\}| = 9$, and at most two elements of $E(M)-(Z \cup \{e\})$ are contained in triads.  So there exists an element $g \notin Z \cup \{e\}$ such that $g$ is not contained in a triad. If $g$ is contained in a triangle $T$, then $T$ contains $e_1$, since $M$ has no disjoint maximal fans with like ends. But $e \notin T$ since $e$ is not contained in a triangle, $e_2 \notin T$ by orthogonality, and $f_2 \notin T$ as otherwise $r(\{e_1,f_2,f_3,g\})=2$, a contradiction to \cref{no_segment}. Now $T$ intersects the cocircuit $\{e_1,e_2,f_2,e\}$ in a single element, which contradicts orthogonality. We deduce that $g$ is not contained in a triangle or a triad. By Bixby's Lemma, either $M \backslash g$ or $M / g$ is $3$-connected. Then, either \cref{accordion5} or its dual implies that $g \in \cl(Z)$ and $g \in \cl^*(Z)$. So $\lambda(Z \cup \{e,g\}) = 2$, implying $|E(M)| \le |Z \cup \{e,g\}| + 2$, a contradiction since $|E(M)| \geq 13 = |Z \cup \{e,g\}| + 3$.
	
    It now follows that $f = f_2$. This means that $\lambda(F_1 \cup \{f_2,f_3,e\}) = 2$, and so $|E(M)| \leq |F_1 \cup \{f_2,f_3,e\}| + 2$. Let $H =E(M) - (F_1 \cup \{e,f_2\})$, so $|H| \le 3$. We have that $\lambda(F_1 \cup \{e,f_2\}) \le 2$, so $\lambda(H) \le 2$. If $|H| = 3$, then $H$ is a triangle or a triad disjoint from $F_1$, a contradiction. Now, $|H| \le 2$, so $M$ is an even-fan-spike with partition $(F_1,\{e,f_2\},H)$, by \Cref{even_fan_spike2}(ii). 
\end{proof}

\begin{lemma} \label{accordion}
    Let $M$ be a $3$-connected matroid with no detachable pairs and no disjoint maximal fans with like ends, such that $|E(M)| \geq 13$. Suppose $M$ has a unique maximal fan $F_1$ having even length at least four, and let $F_1 = (e_1,e_2,\ldots,e_{|F_1|})$. Let $F_2 = (f_1,f_2,\ldots, f_{|F_2|})$ be a maximal fan of $M$ with odd length at least three such that $f_1 = e_1$. Then either $M$ is an accordion, or $|F_2|=3$ and $M$ is an even-fan-spike with partition $(F_1, \{e,f_2\}, \{f_3,z\})$ for some distinct $e,z \notin F_1 \cup F_2$.
\end{lemma}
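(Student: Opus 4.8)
The plan is to reduce everything to the accordion lemmas \Cref{accordion1}--\Cref{accordion6}. Up to replacing $M$ by $M^*$ — note that accordions and even-fan-spikes are self-dual, with the partition reversed, so the class of matroids in the conclusion is closed under duality — we may assume $\{e_1,e_2,e_3\}$ is a triangle. Then $f_1=e_1$ lies in a triangle, hence in no triad by \Cref{fan_ends}, so $\{f_1,f_2,f_3\}$ is also a triangle. Since $M$ has no detachable pairs and $|E(M)|\ge 13$, it is neither a wheel nor a whirl; by \Cref{no_mk4} it has no $M(K_4)$-separator, so \Cref{intersecting_fans} together with \Cref{fan_ends} gives $F_1\cap F_2=\{e_1\}$.

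First I would dispose of the case where $M$ has a maximal fan $F_3\neq F_1$ with $|F_3|\ge 4$. By the uniqueness of $F_1$, such an $F_3$ is odd, hence $|F_3|=5$ by \Cref{odd_fan}; since $M$ has no disjoint maximal fans with like ends, $F_3$ meets $F_1$, so by \Cref{intersecting_fans} and \Cref{no_mk4} it shares exactly one of $e_1,e_{|F_1|}$ with $F_1$, and that common element is an end of $F_3$. Then \Cref{accordion1} (if the common end is $e_1$) or its dual (if it is $e_{|F_1|}$) shows that $F_3$ with its common end deleted is a left- or right-hand fan-type end of $F_1$, and \Cref{accordion4} (or its dual) gives that $M$ is an accordion. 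In particular this settles the case $|F_2|=5$, so from now on $|F_2|=3$, the set $F_2=\{e_1,f_2,f_3\}$ is a maximal triangle, and every $4$-element fan of $M$ is contained in $F_1$ (such a fan, if maximal, has length $4$ and hence equals $F_1$; otherwise it lies in a maximal fan of length $\ge 5$, which must be $F_1$ by the case assumption).

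Next I would analyse $F_2$ and produce a cocircuit. Orthogonality shows that a triad meeting $\{f_2,f_3\}$ must contain the whole pair (it cannot contain $e_1$), and such a triad $\{f_2,f_3,t\}$ would make $\{e_1,f_2,f_3,t\}$ a $4$-element fan, contradicting the maximality of $F_2$; hence neither $f_2$ nor $f_3$ lies in a triad, so $F_2$ meets no triad, and Tutte's Triangle Lemma lets us assume (after possibly swapping $f_2$ and $f_3$) that $M\backslash f_2$ is $3$-connected. Applying \Cref{deletable_cocircuit} with $X=\{e_2,e_3\}$, $e=f_2$ and $f=e_1\in\cl(X)$ (note $e_1$ is in no triad), we obtain a $4$-element cocircuit $C^*=\{e_1,f_2,g,h\}$ with $g\in\{e_2,e_3\}$ and $h\notin\{e_2,e_3\}$. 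Orthogonality with the circuit $\{e_1,e_2,e_3\}$ and, when $|F_1|\ge 6$, with the successive triangles $\{e_3,e_4,e_5\},\{e_5,e_6,e_7\},\ldots$ of $F_1$, together with the freedom to swap $e_2$ and $e_3$ when $|F_1|=4$, and cocircuit elimination against the triads of $F_1$, narrows the possibilities to: either $C^*=\{e_1,e_2,f_2,f_3\}$, or $C^*=\{e_1,e_2,f_2,h\}$ with $h\notin F_1\cup\{f_2,f_3\}$. (The intermediate possibilities, where $h$ is an interior or end element of $F_1$, are eliminated by cocircuit elimination against the triads of $F_1$, which would produce a triad through $f_2$, or a triad through $\{e_1,e_2\}$ together with a later element of $F_1$ meeting a triangle of $F_1$ in a single element, contradicting orthogonality; these cases I expect to be the fiddliest part.) If $C^*=\{e_1,e_2,f_2,f_3\}$, then \Cref{accordion2} shows $\{f_2,f_3\}$ is a left-hand triangle-type end of $F_1$, so $M$ is an accordion by \Cref{accordion4}.

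Finally, suppose $C^*=\{e_1,e_2,f_2,h\}$ with $h\notin F_1\cup\{f_2,f_3\}$. I would first observe that circuit elimination from $\{e_1,e_2,e_3\}$ and $\{e_1,f_2,f_3\}$, together with \Cref{no_segment} to rule out triangles inside it, makes $\{e_2,e_3,f_2,f_3\}$ a $4$-element circuit, and use this — with a short sub-case analysis according to whether $\{e_2,e_3,f_2,f_3\}$ is also a cocircuit (a quad) — to show that $X:=\{e_1,e_2,e_3,f_2,f_3\}$ satisfies $\lambda(X)=2$. Since $X$ contains the deletion certificate $(e_1,\{e_2,e_3\},\{\{f_2,f_3\}\})$ and every $4$-element fan of $M$ lies in $F_1$ while $h\notin F_1$, \Cref{deletable_collection1} then forces $h$ to lie in no triangle (were $h$ in a triangle it would be in a triad, hence in a $4$-element fan, a contradiction). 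All hypotheses of \Cref{accordion6} now hold with $e=h$, giving that $M$ is an even-fan-spike with partition $(F_1,\{h,f_2\},\{f_3,z\})$ for some $z\notin F_1\cup\{h,f_2,f_3\}$, which is the desired conclusion. The main obstacle throughout is the orthogonality bookkeeping in the third paragraph — pinning down the shape of $C^*$, in particular ruling out the cocircuits landing inside $F_1$ — and, in the last step, establishing the $3$-separation $\lambda(X)=2$ (equivalently, controlling whether $\{e_2,e_3,f_2,f_3\}$ is a quad); everything else is a direct appeal to the accordion lemmas.
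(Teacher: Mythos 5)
There is a genuine gap in your final paragraph. In the case $C^*=\{e_1,e_2,f_2,h\}$ with $h\notin F_1\cup\{f_2,f_3\}$ you claim that $\lambda(\{e_1,e_2,e_3,f_2,f_3\})=2$ and then use \cref{deletable_collection1} to force $h$ to avoid every triangle, landing always in \cref{accordion6} and the even-fan-spike outcome. Both steps fail. Take an accordion whose left-hand end is a quad-type end $\{a_1,a_2,b_1,b_2\}$, with $F_2=\{e_1,a_1,a_2\}$ (so $f_2=a_1$, $f_3=a_2$) and $|F_1|\ge 6$: here the cocircuit $\{e_1,e_2,a_1,b_1\}$ realises your case with $h=b_1$, and $h$ \emph{does} lie in the triangle $\{e_1,b_1,b_2\}$. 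Moreover, for $X=\{e_1,e_2,e_3,a_1,a_2\}$ the complement contains $\{b_1,b_2\}$ and $\{e_4,\dotsc,e_{|F_1|}\}$, whose closure picks up $e_1$ (via the triangle $\{e_1,b_1,b_2\}$), then $e_3$ and $e_2$, hence spans $E(M)-\{a_1,a_2\}$ and therefore all of $M$; so $\lambda(X)=r(X)+r(E(M)-X)-r(M)=3$, not $2$, and \cref{deletable_collection1} is not applicable. This matroid satisfies every hypothesis of the lemma and of your case (no detachable pairs, unique maximal even fan, no disjoint maximal fans with like ends), yet it is an accordion, not an even-fan-spike; your argument would wrongly exclude it. The missing ingredient is exactly the paper's treatment of the subcase where $h$ lies in a triangle (necessarily through $e_1$): there \cref{accordion3} shows $(F_2\cup T)-\{e_1\}$ is a left-hand quad-type or almost-quad-type end, and \cref{accordion4} then yields an accordion; \cref{accordion6} is reserved for the subcase where $h$ lies in no triangle.

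A secondary point: your derivation of the dichotomy for $C^*$ from \cref{deletable_cocircuit} with $X=\{e_2,e_3\}$ leaves open shapes such as $C^*=\{e_1,e_2,f_2,e_{|F_1|}\}$, which orthogonality with the triangles of $F_1$ and naive cocircuit elimination do not dispatch; this is why the paper first notes $|E(M)-F_1|\ge 3$ and applies \cref{deletable_cocircuit} with $X=F_1-\{e_1\}$, so that the fourth cocircuit element lies outside $F_1$ automatically and orthogonality with $\{e_1,e_2,e_3\}$ (and $\{e_3,e_4,e_5\}$ when $|F_1|\ge 6$) pins down the remaining element. The earlier parts of your argument — the duality reduction, the disposal of any second maximal fan of length at least four via \cref{accordion1} and its dual together with \cref{accordion4}, and the triangle-type case via \cref{accordion2} — are sound and agree with the paper.
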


\begin{proof}
    Assume that $\{e_1,e_2,e_3\}$ is a triangle.
Since $F_2$ is odd, it follows by \Cref{odd_fan} that $|F_2|\le 5$. Also $F_1 \cap F_2 \neq \emptyset$. If $|F_2|=5$, then, by \Cref{accordion1}, the set $G = F_2 - \{e_1\}$ is a left-hand fan-type end of $F_1$, and $|E(M)| \geq |F_1 \cup G| + 2$. Thus, $M$ is an accordion by \Cref{accordion4}.
	
We may now assume $|F_2|=3$ and that every $4$-element fan of $M$ is contained in $F_1$. Without loss of generality, we may also assume that $M \backslash f_2$ is $3$-connected by Tutte's Triangle Lemma.
Note that $e_{|F_1|} \notin \cl(F_1 - e_{|F_1|})$, so $e_{|F_1|} \in \cl^*( E(M)-F_1)$, and thus $|E(M)-F_1| \ge 3$.  Hence, by \cref{contractable_circuit} and orthogonality, there is some $e \notin F_1$ such that either $\{e_1,e_2,f_2,e\}$ is a $4$-element cocircuit of $M$, or $|F_1|=4$ and $\{e_1, e_3, f_2, e\}$ is a $4$-element cocircuit of $M$. If $e = f_3$, then $\{f_2,f_3\}$ is a left-hand triangle-type end of $F_1$ and $|E(M)| \geq |F_1 \cup \{f_2,f_3\}| + 2$, by \Cref{accordion2}. Again, $M$ is an accordion by \Cref{accordion4}.
	
Finally, suppose $e \neq f_3$. If $e$ is not contained in a triangle, then $M$ is an even-fan-spike with partition $(F_1, \{e,f_2\}, \{f_3,z\})$ for some $z \notin F_1 \cup \{e,f_2,f_3\}$, by \Cref{accordion6}. Otherwise, $e$ is contained in a triangle $T$, which contains $e_1$. By \cref{accordion3}, $(F_2 \cup T) - \{e_1\}$ is a left-hand quad-type or almost-quad-type end of $F_1$ and $|E(M)| \geq |F_1 \cup F_2 \cup T| + 2$. Therefore, $M$ is an accordion, by \Cref{accordion4}.
\end{proof}

\subsection*{Putting it together}
\begin{proof} [Proof of \Cref{intersecting_fans_detachable}.]
    Let $(e_1,e_2,\ldots,e_{|F_1|})$ and $(f_1,f_2,\ldots,f_{|F_2|})$ be orderings of $F_1$ and $F_2$ respectively. If $M$ has an $M(K_4)$-separator, then $M$ has a detachable pair by \Cref{no_mk4}, so (i) holds.  Thus we may assume that $F_1\cup F_2$ is not an $M(K_4)$-separator in $M$, and, dually, not an $M(K_4)$-separator in $M^*$. By \Cref{intersecting_fans}, we may also assume that $e_1 = f_1$, and up to duality, that $\{e_1,e_2,e_3\}$ is a triangle (noting that the outcomes in (ii)--(v) are self-dual). By \Cref{fan_ends}, the set $\{f_1,f_2,f_3\}$ is also a triangle. First suppose $F_1$ and $F_2$ are both odd. By \Cref{odd_fan}, $|F_1|=5$ and $|F_2| \in \{3,5\}$. Then \cref{odd_fan_no_triangle,odd_fans_intersecting} imply that $M$ has a detachable pair, so (i) holds. Next, suppose $M$ has distinct maximal fans having even length at least four.  Let $F_1$ and $F_2$ be such fans. If $F_1 \cap F_2 = \{e_1,e_{|F_1|}\}$, then $M$ is an even-fan-spike with tip and cotip by \Cref{tip_cotip}, so (iii) holds. If $F_1 \cap F_2 = \{e_1\}$, then $M$ is an even-fan-paddle by \Cref{even_fan_paddle}, so (v) holds. Finally, we may assume that $M$ has a unique maximal fan with even length at least four.  Without loss of generality, this fan is $F_1$, whereas $F_2$ is odd. Then \Cref{accordion} implies that either (ii) or (iv) holds.
\end{proof}

\section{Remaining \texorpdfstring{$4$}{4}-element fan cases} \label{fans}
We may now assume that $M$ has no disjoint maximal fans with like ends, and no distinct fans with non-empty intersection. This means that if $F_1 = (e_1,e_2,\ldots,e_{|F_1|})$ and $F_2 = (f_1,f_2,\ldots,f_{|F_2|})$ are distinct maximal fans such that $|F_1| \geq 4$ and $|F_2| \geq 3$, then $F_1$ and $F_2$ are disjoint and either $\{e_1,e_2,e_3\}$ and $\{e_{|F_1|-2},e_{|F_1|-1},e_{|F_1|}\}$ are both triangles and $\{f_1,f_2,f_3\}$ and $\{f_{|F_2|-2},f_{|F_2|-1},f_{|F_2|}\}$ are both triads, or vice versa. To refer to this assumption, we say that $M$ has \emph{no distinct maximal fans with like ends}. The goal of this section is to consider the case in which $M$ has a $4$-element fan, but no distinct maximal fans with like ends, and prove the following:

\begin{theorem} \label{single_fans}
    Let $M$ be a $3$-connected matroid with no distinct maximal fans with like ends, such that $|E(M)| \geq 13$, and suppose that $M$ has a maximal fan with length at least four. Then one of the following holds:
	\begin{enumerate}
		\item $M$ has a detachable pair,
		\item $M$ is a wheel or a whirl,
        \item $M$ is an even-fan-spike, or
		\item $M$ or $M^*$ is a quasi-triad-paddle with a co-augmented-fan petal.
	\end{enumerate}
\end{theorem}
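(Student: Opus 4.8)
\textbf{Proof plan for \cref{single_fans}.}
The plan is to fix a maximal fan $F$ of $M$ with $|F| \ge 4$ and work through cases according to the parity of $|F|$, using the running hypothesis that $M$ has no distinct maximal fans with like ends. First I would dispose of the case where $M$ is a wheel or a whirl, so that \cref{not_just_fan} applies and $|E(M)| \ge |F|+2$; thereafter $F$ has a unique pair of ends. If $|F|$ is odd with $|F| \ge 5$, then \cref{odd_fan} and its dual force $|F|=5$ and give a cocircuit $\{e_1,e_3,e_5,z\}$ (or dually a circuit); I would then argue, as in \cref{odd_fan_no_triangle} and \cref{odd_fans_intersecting}, that $z$ must lie in $F$, contradiction, so $M$ has a detachable pair and (i) holds. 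The substantive work is therefore when every maximal fan of $M$ of length at least $4$ has even length. Up to duality, fix such an even fan $F = (e_1,\dots,e_{|F|})$ with $\{e_1,e_2,e_3\}$ a triangle and $\{e_{|F|-2},e_{|F|-1},e_{|F|}\}$ a triad.

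Next I would set up the certificate machinery of \cref{lemmas}. The triangle $\{e_1,e_2,e_3\}$ together with the fact that $e_1$ is not in a triad (by \cref{fan_ends}) gives that $(e_1, F-\{e_1\}, \{\dots\})$ can be made into a deletion certificate once we locate a $4$-element circuit meeting $F$ near $e_1$; dually at the triad end $e_{|F|}$ we obtain a contraction certificate. Indeed, by \cref{contractable_circuit} (applied, using that $M/f$ or $M\backslash f$ is $3$-connected for a suitable element) there is a $4$-element circuit through $\{e_1, \cdot\}$ and a $4$-element cocircuit through $\{e_{|F|}, \cdot\}$ reaching outside $F$; orthogonality with the consecutive triangles/triads of $F$ pins down their intersection with $F$. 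With both a deletion certificate and a contraction certificate in hand, \cref{no_other_elements} tells me that every element of $E(M)-(F \cup \text{(certificates)})$ lies in a maximal fan of length at least four with ends in those certificates --- and every such fan is even (a new odd fan would meet $F$ at $e_1$ or $e_{|F|}$ and contradict \cref{odd_fan_no_triangle}, or it would be disjoint and give disjoint maximal fans with like ends). This forces all the extra structure to be organised around $e_1$ and $e_{|F|}$.

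From here I would split into the two natural sub-cases determined by where the $4$-element circuit through $e_1$ lands. If that circuit is $\{e_1, e_i, f_2, f_3\}$ with $\{e_1,f_2,f_3\}$ a triangle, and the dual cocircuit at $e_{|F|}$ is $\{e_{|F|}, e_j, g_2, g_3\}$ with $\{e_{|F|}, g_2, g_3\}$ a triad, I would build, element by element (mirroring \cref{even_fan_spike2}, \cref{tip_cotip}, and the accordion lemmas \cref{accordion5}, \cref{accordion6}), either: an even-fan-spike structure $(F, \{e, f\}, \{e', f'\})$, giving (iii); or the $4$-element circuits at $e_1$ are all parallel-style quads/augmented structures glued to the triad end, in which case the disjoint-maximal-fan analysis of \cref{disjoint_fans} is unavailable (no two disjoint fans with like ends) so the only consistent global picture is that $E(M)$ partitions into $F$ together with a single co-augmented fan affixed to various triads --- i.e.\ $M$ or $M^*$ is a quasi-triad-paddle with a co-augmented-fan petal, giving (iv). The key arithmetic is to show that the collection of disjoint ``petal'' fans built by repeated certificate applications must exhaust $E(M)$: I would take a maximal such collection, use \cref{even_fan_spike_separating}-type submodularity to show each new petal keeps $\lambda$ at $2$, and derive a contradiction from an element outside all petals via \cref{deletable_collection}, \cref{contractable_collection}, and \cref{no_segment}.

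\textbf{Main obstacle.} The hard part will be the bookkeeping in the last paragraph: distinguishing the even-fan-spike outcome from the co-augmented-fan-petal outcome, since both arise from a single even fan $F$ plus triads, and the difference hinges on whether the circuits attaching to $e_1$ are ``spike-like'' ($\sqcap = 1$ between petals) or ``paddle-like'' with the extra augmented edge ($\sqcap = 2$). Ruling out the almost-quad-type degeneracies and confirming that no odd fan or disjoint like-ended fan can intrude --- thereby forcing exactly one of the two structures --- requires careful orthogonality arguments and repeated appeals to $|E(M)| \ge 13$ to close off the small cases, exactly as in \cref{accordion6}. Everything else (the odd-fan case, the wheel/whirl reduction, the certificate set-up) is routine given the lemmas already established.
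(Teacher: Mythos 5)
Your case split is miscalibrated, and the error is fatal. You dismiss the odd case by claiming that when the long fan is odd, a contradiction (hence a detachable pair) follows ``as in \cref{odd_fan_no_triangle} and \cref{odd_fans_intersecting}''. But those lemmas require a second triangle not contained in a $4$-element fan, or a second $5$-element fan sharing an end, and neither need exist here: if $F=(e_1,\dots,e_5)$ is the unique maximal fan of length at least four and its ends lie in triangles, then the no-like-ends hypothesis forces every \emph{triangle} of $M$ into $F$, but triads disjoint from $F$ are perfectly compatible with that hypothesis, and nothing forces the element $z$ of the cocircuit $\{e_1,e_3,e_5,z\}$ into $F$ (indeed \cref{odd_fan} places it outside $F$). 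This is exactly the situation in which outcome (iv) arises: a quasi-triad-paddle with a co-augmented-fan petal contains a maximal fan of length five and has no detachable pairs, so no argument can yield (i) there. The paper spends \cref{special_circuit_so_deletable,coaugmented1,coaugmented2,coaugmented3,coaugmented} building precisely this structure from the cocircuit $\{e_1,e_3,e_5,z\}$ together with the triads disjoint from $F\cup\{z\}$.

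Having discarded the odd case, you then try to recover outcome (iv) inside the even case, which cannot work: when the unique maximal fan of length at least four is even, its two end-triples are one triangle and one triad, so the no-like-ends hypothesis excludes \emph{every} other maximal fan of length at least three, i.e.\ every triangle and triad of $M$ lies in $F$; in particular $M$ (or $M^*$) cannot be a quasi-triad-paddle with a co-augmented-fan petal, since that matroid has a $5$-element fan and triad petals outside it. The paper's \cref{simple_even_fan_spike1,simple_even_fan_spike} show the even case yields only a wheel, a whirl, or an even-fan-spike, and the delicate point there is not a spike-versus-paddle dichotomy but eliminating an exceptional $|F|=4$ configuration using three elements outside $F$ and $|E(M)|\ge 13$; your proposed ``triangle $\{e_1,f_2,f_3\}$ outside $F$'' sub-case cannot even occur. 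Finally, the case of two distinct maximal fans of length at least four (necessarily both of length five, one with triangle ends and one with triad ends, which the hypothesis allows) is handled in the paper by \cref{unlike_odd_fans}; in your plan it is silently folded into the broken odd-case argument, so it too is left unproved.
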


\subsection*{Two fans of length at least four}
First, we consider the case where $M$ has two distinct maximal fans each with length at least four.

\begin{lemma} \label{unlike_odd_fans}
    Let $M$ be a $3$-connected matroid with no distinct maximal fans with like ends, such that $|E(M)| \geq 13$. Let $F_1$ and $F_2$ be distinct maximal fans of $M$ each with length at least four. Then $M$ has a detachable pair.
\end{lemma}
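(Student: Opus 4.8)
The plan is to suppose that $M$ has no detachable pair and derive a contradiction.

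First I would record the structure forced by the hypotheses. Since $F_1$ and $F_2$ are distinct maximal fans, $M$ is not a wheel or a whirl. Because any even fan of length at least four has a fan ordering beginning with a triangle and also one beginning with a triad, the presence of any even maximal fan of length at least four together with any other maximal fan of length at least three would give disjoint maximal fans with like ends; so neither $F_1$ nor $F_2$ is even, and both have odd length. By \cref{odd_fan} and its dual, $|F_1| = |F_2| = 5$, and, after reversing orderings if necessary, we may write $F_1 = (e_1,e_2,e_3,e_4,e_5)$ with $\{e_1,e_2,e_3\}$ a triangle and $F_2 = (f_1,f_2,f_3,f_4,f_5)$ with $\{f_1,f_2,f_3\}$ a triad: the two fans have opposite end types, or they would have like ends. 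By \cref{odd_fan} there is an element $z \notin F_1$ with $\{e_1,e_3,e_5,z\}$ a cocircuit, and orthogonality with the circuits of $F_2$ gives $z \notin F_2$; dually there is a circuit $\{f_1,f_3,f_5,w\}$ with $w \notin F_1 \cup F_2$.

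Next I would pin down all the small circuits and cocircuits of $M$. If $M$ had a triangle not contained in a $4$-element fan, then \cref{odd_fan_no_triangle}, applied with $F_1$, would give a detachable pair; dually, using $F_2$, every triad of $M$ lies in a $4$-element fan. By \cref{no_mk4} there is no $M(K_4)$-separator, so by \cref{intersecting_fans} distinct maximal fans of length at least four meet only in ends; with the hypothesis on like ends this forces $F_1$ and $F_2$ to be the only maximal fans of $M$ of length at least four. As every triangle and triad lies in a $4$-element fan, and hence in $F_1 \cup F_2$, the triangles of $M$ are exactly $\{e_1,e_2,e_3\}$, $\{e_3,e_4,e_5\}$, $\{f_2,f_3,f_4\}$, and the triads are exactly $\{e_2,e_3,e_4\}$, $\{f_1,f_2,f_3\}$, $\{f_3,f_4,f_5\}$. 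In particular, every element of $Y = E(M) - (F_1 \cup F_2)$ lies in no triangle and in no triad of $M$; moreover $z, w \in Y$, and since $|E(M)| \ge 13$ we have $|Y| \ge 3$.

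The remaining work is to obtain a contradiction from the existence of $Y$. Let $e \in Y$. Since $e$ lies in no triangle and no triad, Bixby's Lemma gives that $M \backslash e$ or $M / e$ is $3$-connected; suppose the former (the other case is dual). Because $r(\{e_1,e_2,e_3\}) = 2$ and, by \cref{no_segment}, $M$ has no rank-$2$ subset of size four, $\cl(\{e_2,e_3\}) = \{e_1,e_2,e_3\}$; as $e_1$ lies in no triad, \cref{deletable_cocircuit} then produces a $4$-element cocircuit $\{e,e_1,g,h\}$ with $g \in \{e_2,e_3\}$ and $h \notin \{e_2,e_3\}$. Orthogonality of this cocircuit with the three triangles of $M$, with the circuit $\{f_1,f_3,f_5,w\}$, and with the circuit $\{e_1,e_2,e_4,e_5\}$ obtained by eliminating $e_3$ between the two triangles of $F_1$, severely restricts $g$ and $h$; in each surviving case, cocircuit elimination against $\{e_1,e_3,e_5,z\}$, using that the only triads are the three listed, yields further $4$-element cocircuits which in turn violate orthogonality with one of the triangles of $F_1$, or produce a rank-$2$ or corank-$2$ set of size four, contradicting \cref{no_segment}. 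In the branch where the second element $h$ again lies in $Y$, one reapplies \cref{deletable_cocircuit} to a newly produced deletable element and iterates; the accumulated family of cocircuits then forces $\lambda$ of a proper subset of $E(M)$ below $2$, contradicting $3$-connectivity. Hence no element of $Y$ is deletable and, dually, none is contractible, which is impossible.

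The step I expect to be the main obstacle is this final orthogonality-and-elimination analysis, and especially the branch in which the new cocircuit element stays inside $Y$: controlling the iteration there, and making the accumulated cocircuits collapse the connectivity, is the delicate part. A secondary point needing care is the second paragraph, namely verifying that $F_1$ and $F_2$ are the only maximal fans of length at least four, so that the six listed sets really are all the triangles and triads of $M$.
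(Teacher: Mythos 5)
Your reduction to two $5$-element fans of opposite types, and the observation that every triangle and triad lies in $F_1\cup F_2$, are fine and agree with the paper. But the step where you locate $z$ and $w$ is wrong, and the error inverts the structure of the whole argument. Orthogonality of the cocircuit $\{e_1,e_3,e_5,z\}$ with the unique circuit of $M$ contained in $F_2$ (the triangle $\{f_2,f_3,f_4\}$) only rules out $z\in\{f_2,f_3,f_4\}$; nothing excludes $z\in\{f_1,f_5\}$, and likewise your $w$ (the paper's $z'$) may lie in $\{e_1,e_5\}$. In fact the paper proves that, when $M$ has no detachable pair, $z$ \emph{must} lie in $\{f_1,f_5\}$ and $z'$ in $\{e_1,e_5\}$. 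Eliminating the alternative --- which is exactly your assumed configuration ``$z,w\in Y$'' --- is the bulk of its proof: it produces circuits $\{z,e_1,f_1,f_2\}$ and $\{z,e_5,f_4,f_5\}$ and cocircuits $\{z',f_1,e_1,e_2\}$ and $\{z',f_5,e_4,e_5\}$, deduces $\lambda(F_1\cup F_2\cup\{z,z'\})\le 1$ so that $|E(M)|\le 13$, and then carries out a delicate analysis of the exactly-$13$-element case. Once $z\in\{f_1,f_5\}$ and $z'\in\{e_1,e_5\}$ are known, the finish is quick: these give a contraction certificate and a deletion certificate on $F_1\cup F_2$, and \cref{no_other_elements} together with the absence of further $4$-element fans forces $E(M)=F_1\cup F_2$, contradicting $|E(M)|\ge 13$. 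So what you treat as an immediate consequence of orthogonality is precisely the hard case, and the configuration you discard is the one that actually has to be handled.

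Second, even granting your placement of $z,w$ outside $F_1\cup F_2$, the final paragraph is not a proof. The assertions that orthogonality ``severely restricts'' $g$ and $h$, that ``in each surviving case'' cocircuit elimination leads to a violation of orthogonality or of \cref{no_segment}, and that in the branch $h\in Y$ an unspecified iteration of \cref{deletable_cocircuit} makes ``the accumulated family of cocircuits'' drive $\lambda$ of a proper subset below $2$, are exactly the computations that would need to be carried out, and you acknowledge yourself that this is the expected obstacle. As it stands, the proposal assembles the correct starting data (the two fans, the cocircuit through $z$, the circuit through $w$, and the classification of all triangles and triads), but the one new structural claim it adds ($z\notin F_2$, $w\notin F_1\cup F_2$) is false, and the essential case analysis that should produce the contradiction is left undone.
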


\begin{proof}
    Suppose, towards a contradiction, that $M$ does not have a detachable pair. Let $(e_1,e_2,\ldots,e_{|F_1|})$ be an ordering of $F_1$, and $(f_1,f_2,\ldots,f_{|F_2|})$ be an ordering of $F_2$. Since $M$ has no distinct maximal fans with like ends, we may assume that $\{e_1,e_2,e_3\}$ and $\{e_{|F_1|-2},e_{|F_1|-1},e_{|F_1|}\}$ are triangles, and $\{f_1,f_2,f_3\}$ and $\{f_{|F_2|-2},f_{|F_2|-1},f_{|F_2|}\}$ are triads. This implies that $F_1$ and $F_2$ are odd, so, by \Cref{odd_fan}, we have that $|F_1| = 5$ and $|F_2| = 5$. Furthermore, by the same lemma, there exists $z \notin F_1$ such that $\{e_1,e_3,e_5,z\}$ is a cocircuit, and there exists $z' \notin F_2$ such that $\{f_1,f_3,f_5,z'\}$ is a circuit. By orthogonality, $z\neq z'$.
	
    Since $M$ has no distinct maximal fans with like ends, every triangle or triad of $M$ is contained in $F_1$ or $F_2$. By orthogonality, this means that $z$ is not contained in a triangle. Since $z \in \cl^*(F_1)$, it follows by the dual of \cref{closure_deletable} that $M / z$ is $3$-connected. Similarly, $M \backslash z'$ is $3$-connected. We next show that $z \in \{f_1,f_5\}$. Suppose this is not the case. The dual of \cref{contractable_circuit} implies that $M$ has a $4$-element circuit $C_1$ containing $\{z,f_1\}$. By orthogonality with $\{f_1,f_2,f_3\}$ and $\{e_1,e_3,e_5,z\}$, the circuit $C_1$ contains $f_2$ and either $e_1$ or $e_5$. Without loss of generality, assume that $C_1 = \{z,e_1,f_1,f_2\}$. Also, again by the dual of \cref{contractable_circuit}, $M$ has a $4$-element circuit $C_2$ containing $\{z,f_4,f_5\}$ and either $e_1$ or $e_5$. If $e_1 \in C_2$, then circuit elimination implies $M$ has a circuit contained in $\{f_1,f_2,f_4,f_5\}$, a contradiction. So $C_2 = \{z,e_5,f_4,f_5\}$.
	
	Also, orthogonality with $\{e_1,e_3,e_5,z\}$ implies that $z' \notin \{e_1,e_5\}$. Hence, \Cref{deletable_cocircuit} and orthogonality implies that $M$ has cocircuits $C_1^* = \{z',f_1,e_1,e_2\}$ and $C_2^* = \{z',f_5,e_4,e_5\}$. But now $\lambda(F_1 \cup F_2 \cup \{z,z'\}) \leq 1$, so
$$|E(M)| \leq |F_1 \cup F_2 \cup \{z,z'\}| + 1 = 13.$$
But $|E(M)| \geq 13$, so $E(M) = F_1 \cup F_2 \cup \{z,z',x\}$, for some $x \notin F_1 \cup F_2 \cup \{z,z'\}$. As $\lambda(F_1 \cup \{z\}) = 2$ and $\lambda(F_2 \cup \{z'\}) = 2$, this implies that either $x \in \cl(F_1 \cup \{z\})$ and $x \in \cl(F_2 \cup \{z'\})$, or $x \in \cl^*(F_1 \cup \{z\})$ and $x \in \cl^*(F_2 \cup \{z'\})$. Up to duality, we may assume the former, in particular, $x \in \cl(F_2 \cup \{z'\})$. But $z' \in \cl(F_2)$, so $x \in \cl(F_2)$, and $\lambda(F_2 \cup \{x\}) = 2$. Thus $\lambda(F_1 \cup \{z,z'\}) = 2$. The cocircuits $C_1^*$ and $C_2^*$ imply that $\lambda(F_1 \cup \{z,z',f_1,f_5\}) = 2$, and the circuit $\{f_1,f_3,f_5,z'\}$ implies that $\lambda(F_1 \cup \{z,z',f_1,f_3,f_5\}) = 2$. Thus, $\lambda(\{f_2,f_4,x\}) = 2$, which implies by orthogonality that $\{f_2,f_4,x\}$ is a triad. But now $x \in \cl(F_2) \cap \cl^*(F_2)$, a contradiction.
	
Thus, $z \in \{f_1,f_5\}$. Dually, $z' \in \{e_1,e_5\}$. Then $(\{z\},F_1,F_2-\{z\})$ is a contraction certificate and $(\{z'\},F_1-\{z'\},F_2)$ is a deletion certificate and $\lambda(F_1 \cup F_2) = 2$. Since $|E(M)| \geq 13$, \Cref{no_other_elements} implies that every element of $M$ that is not contained in $F_1 \cup F_2$ is contained in a $4$-element fan. But $M$ has no distinct maximal fans with like ends, so $M$ has no other $4$-element fans.  Hence $E(M)=F_1 \cup F_2$, contradicting that $|E(M)| \ge 13$.
\end{proof}

\subsection*{Even fan of length at least four}
Next, we consider the case where $M$ has an even fan of length at least four, and show that $M$ is an even-fan-spike. In this case, as $M$ has no distinct maximal fans with like ends, we may also assume that $M$ has no other triangles or triads.

\begin{lemma} \label{simple_even_fan_spike1}
    Let $M$ be a $3$-connected matroid with no detachable pairs such that $|E(M)|\ge 13$. Let $F = (e_1,e_2,\ldots,e_{|F|})$ be a maximal fan of $M$ with even length at least four such that $\{e_1,e_2,e_3\}$ is a triangle. Suppose every triangle or triad of $M$ is contained in $F$. Let $e \notin F$ such that $M \backslash e$ is $3$-connected. Then either $M$ is an even-fan-spike, or $|F| = 4$ and there exist (not necessarily distinct) elements $f,g,h \in E(M) - F$ such that
	\begin{enumerate}
        \item for some $i \in \{2,3\}$, the set $\{e_1,e_i,e,f\}$ is a cocircuit, and $\{e_i,e_4,f,g\}$ is a circuit, and
		\item $\lambda(F \cup \{e,f,g,h\}) = 2$.
	\end{enumerate}
\end{lemma}

\begin{proof}
    Since every triangle or triad is contained in $F$, we have $|E(M)| \ge |F| + 4$.
    By \Cref{deletable_cocircuit} and orthogonality with $\{e_1,e_2,e_3\}$, there exists $f \notin F \cup \{e\}$ such that $C^*=\{e_1,e_i,e,f\}$ is a cocircuit of $M$ for some $i \in \{2,3\}$. Now, $f$ is not contained in a triangle, so the dual of \cref{circuit_so_deletable} implies that $M / f$ is $3$-connected. Thus, by the dual of \cref{contractable_circuit}, $M$ has a $4$-element circuit $C = \{e_{|F|},e_j,f,g\}$ for some $g \notin F \cup \{f\}$ and $j \in \{|F|-2,|F|-1\}$. If $e_j \neq e_i$, then orthogonality with $C^*$ implies that $g = e$. Furthermore, either $|F| > 4$ and orthogonality implies that $C^* = \{e_1,e_2,e,f\}$ and $C = \{e_{|F|-1},e_{|F|},e,f\}$, or $|F| = 4$ and we may choose an ordering of $F$ such that $C^* = \{e_1,e_2,e,f\}$ and $C = \{e_3,e_4,e,f\}$. In either case, \Cref{even_fan_spike} implies that $M$ is an even-fan-spike, as desired. Hence, $e_j = e_i$, which implies that $|F| = 4$.
	
    If $g = e$, then $\lambda(F \cup \{e,f\}) = 2$, and the result holds. Otherwise, \Cref{circuit_so_deletable} implies that $M \backslash g$ is $3$-connected. Thus, by \cref{deletable_cocircuit} again, $M$ has a $4$-element cocircuit $C_2^*$ containing $\{e_1,g\}$, either $e_2$ or $e_3$, and an element $h \notin F \cup \{g\}$. If $h \in \{e,f\}$, then $\lambda(F \cup \{e,f,g\}) = 2$, as desired. So assume that $h\not\in \{e, f\}$. Then orthogonality with $C$ implies that $C_2^* = \{e_1,e_i,g,h\}$, and, by the dual of \cref{circuit_so_deletable} again, $M / h$ is $3$-connected. Now, $M$ has a $4$-element circuit $C_2$ containing $\{e_4,h\}$ and either $e_2$ or $e_3$. If $e_i \in C_2$, then orthogonality with $C^*$ implies that either $e \in C_2$ or $f \in C_2$, and if $e_i \notin C_2$, then orthogonality with $C_2^*$ implies that $g \in C_2$. In either case, $\lambda(F \cup \{e,f,g,h\}) = 2$, completing the proof.
\end{proof}

\begin{lemma} \label{simple_even_fan_spike}
    Let $M$ be a $3$-connected matroid with no detachable pairs such that $|E(M)| \geq 13$. Let $F$ be a maximal fan of $M$ with even length at least four. If every triangle or triad of $M$ is contained in $F$, then $M$ is either a wheel, a whirl or an even-fan-spike.
\end{lemma}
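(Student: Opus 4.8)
The plan is to apply \cref{simple_even_fan_spike1} repeatedly to build up an even-fan-spike structure, with the bulk of the work being the analysis of the ``$|F|=4$'' exceptional case that the previous lemma leaves open. First, if $M$ is a wheel or a whirl we are done, so assume not; then $|E(M)| \ge |F| + 2$. Fix an ordering $F = (e_1,\dots,e_{|F|})$ with $\{e_1,e_2,e_3\}$ a triangle, so $\{e_{|F|-2},e_{|F|-1},e_{|F|}\}$ is a triad. By \cref{fan_ends}, \cref{odd_fan} (since $F$ is even, this just says $e_1$ lies in no triad and $e_{|F|}$ lies in no triangle), and Tutte's Triangle Lemma, $M \backslash e_1$ is $3$-connected. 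So there is an element $e := e_1 \notin$\dots wait---$e_1 \in F$, so instead we use the element $e \notin F$ produced by \cref{deletable_cocircuit}. Concretely: since every triangle or triad of $M$ lies in $F$, we have $|E(M)| \ge |F| + 4$; since $M \backslash e_1$ is $3$-connected and $e_1 \in \cl^*(F - \{e_1\})$, some $4$-element cocircuit of $M$ meets $F - \{e_1\}$, yielding (via \cref{deletable_cocircuit} applied with $X = F - \{e_1\}$, after first producing a deletable element outside $F$ using Bixby together with \cref{deletable_collection}-type reasoning) an element $e \in E(M) - F$ with $M \backslash e$ $3$-connected. Then \cref{simple_even_fan_spike1} applies.

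\emph{Main case.} If \cref{simple_even_fan_spike1} returns that $M$ is an even-fan-spike, we are done. Otherwise $|F| = 4$ and we obtain (not necessarily distinct) $f,g,h \notin F$ with $\{e_1,e_i,e,f\}$ a cocircuit, $\{e_i,e_4,f,g\}$ a circuit, and $\lambda(F \cup \{e,f,g,h\}) = 2$, for some $i \in \{2,3\}$; relabel so $i = 2$. The strategy now mirrors \cref{even_fan_spike}: I would show $\{e,f\}$ behaves like a $2$-element petal, i.e.\ produce a partition exhibiting $M$ as a (degenerate or non-degenerate) even-fan-spike with $F$ and $\{e,f\}$ among the petals. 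Set $X = F \cup \{e,f,g,h\}$, so $\lambda(X) = 2$ and $|X| \le 8$, whence $|E(M) - X| \ge 5$. The deletion certificate $(e_1, \{e_2,e_3\}, \{\{f,\,\cdot\,\},\dots\})$---more precisely $(e_1, F - \{e_1\}, \{\{e,f\}\})$ after checking $e_1 \in \cl(\{e,f\})$? no: instead use that $e_1 \in \cl(\{e_2,e_3\})$ and $e_1 \in \cl^*$ of the cocircuit minus $e_1$---combined with the dual contraction certificate at $e_4$, lets us invoke \cref{no_other_elements}: every element of $E(M) - X$ lies in a maximal fan of length $\ge 4$ with ends in $X$. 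But any such fan is distinct from $F$, so has ``unlike ends''; since $M$ has no distinct maximal fans with like ends, these fans start with a triad and end with a triad---contradicting that $F$ is the only even fan unless the fan is odd, in which case \cref{odd_fan} forces length $5$ and the cocircuit it produces contradicts orthogonality with our circuit/cocircuit through $F$. This should force $E(M) = X$, so $|E(M)| \le 8 < 13$, a contradiction---meaning the exceptional case of \cref{simple_even_fan_spike1} cannot actually occur here, and $M$ is always an even-fan-spike.

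\emph{Obstacle.} The delicate point is verifying that the sets assembled as ``petals'' genuinely form a spike-like anemone rather than merely a flower, and that the circuit/cocircuit conditions in the definition of even-fan-spike hold with the right orderings; this is exactly where \cref{even_fan_spike_separating}, \cref{fan_closure}, and \cref{even_fan_spike2} do the heavy lifting, so I would reduce to them as quickly as possible. A secondary subtlety is that $f,g,h$ may coincide with each other or with $e$, so each application of \cref{deletable_cocircuit}, \cref{circuit_so_deletable}, and the dual of \cref{contractable_circuit} must be guarded by a ``if this new element is old, then $\lambda$ of a small set is $2$ and we finish early'' clause---the same bookkeeping as in the proof of \cref{simple_even_fan_spike1}. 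I expect the whole argument to run: reduce to the exceptional case of \cref{simple_even_fan_spike1}; set $X = F \cup \{e,f,g,h\}$; build a deletion and a contraction certificate inside $X$; apply \cref{no_other_elements} and rule out all external fans using the ``no distinct maximal fans with like ends'' hypothesis plus \cref{odd_fan}, \cref{odd_fan_no_triangle}, \cref{odd_fans_intersecting}, and \cref{augmented_fan_petal}; conclude $E(M) = X$, contradicting $|E(M)| \ge 13$; hence $M$ is an even-fan-spike, a wheel, or a whirl.
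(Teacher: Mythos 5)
Your overall frame (reduce to the exceptional $|F|=4$ outcome of \cref{simple_even_fan_spike1}, then contradict $|E(M)|\ge 13$) is right, but the engine you propose for the contradiction has a genuine gap. Your pivotal step is to exhibit a deletion certificate and a contraction certificate inside $X=F\cup\{e,f,g,h\}$ so that \cref{no_other_elements} applies. You never construct them, and the data the statement of \cref{simple_even_fan_spike1} actually provides --- the cocircuit $\{e_1,e_i,e,f\}$, the circuit $\{e_i,e_4,f,g\}$, and $\lambda(X)=2$ --- does not obviously yield either one. For a deletion certificate at $e_1$ you need two sets with empty intersection each having $e_1$ in its \emph{closure}; you have $e_1\in\cl(\{e_2,e_3\})$, but the cocircuit through $e_1$ gives coclosure, not closure, and no circuit through $e_1$ avoiding $\{e_2,e_3\}$ is available. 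The same obstruction blocks a contraction certificate: the only coclosure facts in sight are $e_4\in\cl^*(\{e_2,e_3\})$ and $e\in\cl^*(\{e_1,e_i,f\})$, with no second disjoint set for either element. Your own hedging (``$(e_1,F-\{e_1\},\{\{e,f\}\})$ after checking $e_1\in\cl(\{e,f\})$? no \dots'') is exactly this hole. Without the certificates, \cref{no_other_elements} cannot be invoked, and the rest of your argument has nothing to stand on. (Incidentally, the detour through ``no distinct maximal fans with like ends'', \cref{odd_fan}, \cref{odd_fans_intersecting}, and \cref{augmented_fan_petal} is unnecessary here: the hypothesis that every triangle or triad lies in $F$ already implies that no element outside $F$ lies in any fan of length at least three, so the conclusion of \cref{no_other_elements} would immediately force $E(M)=X\cup Y$.)

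The paper closes the exceptional case by a different, certificate-free device: it applies \cref{simple_even_fan_spike1} a second time to an element $e'\notin F\cup\{e,f,g,h\}$ (showing the index there must be the other value of $i$), and then to a third element $e''$ outside the at most $12$-element set $F\cup\{e,f,g,h,e',f',g',h'\}$ --- this is precisely where $|E(M)|\ge 13$ enters. Orthogonality between the new cocircuit $\{e_1,e_i,e'',f''\}$ (or, dually, the new circuit through $e_4$) and the previously obtained circuit $\{e_2,e_4,f,g\}$ and cocircuits forces $f''$ into one of the earlier sets, so $e''$ lies in $\cl^*$ or $\cl$ of a $3$-separating set, contradicting the $3$-connectivity of $M\backslash e''$ or $M/e''$; since Bixby's Lemma guarantees one of these is $3$-connected, the exceptional configuration cannot exist. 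If you wish to salvage your route you would have to mine the \emph{proof} (not the statement) of \cref{simple_even_fan_spike1} for additional circuits and cocircuits to certify $e_1$ and $e_4$, and it is not clear even that suffices; the repeated-application scheme is the cleaner fix. Finally, your opening manoeuvre for producing $e\notin F$ with $M\backslash e$ $3$-connected is tangled (\cref{deletable_cocircuit} produces a cocircuit from a deletable element, not the reverse): it is enough to take any $e\notin F$, note it lies in no triangle or triad, apply Bixby's Lemma, and use the self-duality of the hypotheses and of the conclusion to assume $M\backslash e$ is $3$-connected.
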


\begin{proof}
    If $E(M) = F$, then $M$ is a wheel or a whirl by \Cref{not_just_fan}. Otherwise, let $e \in E(M) - F$, and suppose that $M$ is not an even-fan-spike. By Bixby's Lemma, either $M \backslash e$ or $M / e$ is $3$-connected and so, up to duality, we may assume that $M \backslash e$ is $3$-connected. By \Cref{simple_even_fan_spike1}, $|F|=4$ and there exists $f,g,h \notin F$ and an ordering $(e_1,e_2,e_3,e_4)$ of $F$ such that $\{e_1,e_2,e,f\}$ is a cocircuit and $\{e_2,e_4,f,g\}$ is a circuit, and $\lambda(F \cup \{e,f,g,h\}) = 2$. 
	
	Now, let $e' \notin F \cup \{e,f,g,h\}$. Then either $M\backslash e'$ or $M/e'$ is $3$-connected.
    Assume that $M\backslash e'$ is $3$-connected.  By \cref{simple_even_fan_spike1}, there exists $f',g',h' \notin F$ and $i \in \{2,3\}$ such that $\{e_1,e_i,e',f'\}$ is a cocircuit and $\{e_i,e_4,f',g'\}$ is a circuit and $\lambda(F \cup \{e',f',g',h'\}) = 2$. Furthermore, if $i = 2$, then orthogonality implies that $f' \in \{f,g\}$. But now $e' \in \cl^*(F \cup \{e,f,g,h\})$, which contradicts the fact that $M \backslash e'$ is $3$-connected. So $i = 3$.
	
    Since $|E(M)| \geq 13$, there exists $e'' \notin F \cup \{e,f,g,h,e',f',g',h'\}$ such that either $M\backslash e''$ or $M/e''$ is $3$-connected.  As in the previous paragraph, if $M\backslash e''$ is $3$-connected, then $M$ has a $4$-element cocircuit $\{e_1, e_3, e'', f''\}$, where $f''\not\in F$. But then orthogonality with the circuit $\{e_3,e_4,f',g'\}$ implies $f''\in \{f', g'\}$, and so $e'' \in \cl^*(F \cup \{e',f',g',h'\})$, a contradiction. On the other hand, if $M/e''$ is $3$-connected, then, by the dual of \Cref{simple_even_fan_spike1}, $M$ has a $4$-element circuit $C$ containing $\{e_4, e'', f''\}$, where $f''\not\in F$, and either $e_2$ or $e_3$. If $e_2\in C$, then orthogonality with the cocircuit $\{e_1,e_2,e,f\}$ implies that $f''\in \{e, f\}$, and so $e''\in \cl(F\cup \{e, f, g, h\})$, and if $e_3\in C$, then orthogonality with the cocircuit $\{e_1,e_3,e',f'\}$ implies that $f''\in \{e', f'\}$, and so $e''\in \cl(F\cup \{e', f', g', h'\})$. Again, each case is a contradiction, and so $M\backslash e'$ is not $3$-connected. An analogous argument applies in the case that $M/e'$ is $3$-connected.
\end{proof}

\subsection*{Odd fan of length at least five}
Finally, we consider the case where $M$ has an odd fan of length at least five. By \Cref{odd_fan}, this fan has length five. The next lemma is similar to \Cref{circuit_so_deletable} and will be useful in this subsection.

\begin{lemma} \label{special_circuit_so_deletable}
	Let $M$ be a $3$-connected matroid. Let $F = (e_1,e_2,e_3,e_4,e_5)$ be a maximal fan of $M$ such that $\{e_1,e_2,e_3\}$ is a triangle, and let $z \in E(M) - F$ such that $\{e_1,e_3,e_5,z\}$ is a cocircuit. If $M$ has a circuit $\{e_1,z,e,f\}$ such that $M / e$ is $3$-connected and $f$ is not contained in a triad, then $M \backslash f$ is $3$-connected.
\end{lemma}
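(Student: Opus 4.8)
The plan is to argue by contradiction along the lines of \cref{circuit_so_deletable}. Suppose $M\backslash f$ is not $3$-connected. Since $f$ is not contained in a triad, $M$ then has a cyclic $3$-separation $(P,\{f\},Q)$, and the strategy is to arrange the circuit $C=\{e_1,z,e,f\}$ so that $e_1$ and $z$ lie in $P$, conclude from orthogonality that $e$ lies in $Q$, and then use $e\in\cl(P\cup\{f\})$ to force $\lambda_{M/e}(P\cup\{f\})=1$, contradicting the $3$-connectivity of $M/e$.

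Before doing this I would clear two preliminaries. First, since a wheel or whirl has a cyclic ordering of its ground set in which consecutive triples alternate between triangles and triads, the ground set of a wheel or whirl is itself a fan; as $z\in E(M)-F$ makes $F$ a proper subset of $E(M)$, the maximality of $F$ shows that $M$ is neither a wheel nor a whirl, so \cref{fan_ends} is available. Second, I would rule out $f\in F$: recalling that $\{e_1,e_2,e_3\}$ is a triangle, the set $\{e_2,e_3,e_4\}$ is a triad, so, as $f$ is not in a triad and $f\neq e_1$, the only possibility would be $f=e_5$; but $\{e_3,e_4,e_5\}$ is a triangle, whence $M\backslash e_5$ is $3$-connected by \cref{fan_ends} and Tutte's Triangle Lemma, contradicting our assumption. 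So $f\notin F$, and the dual of \cref{fan_vert_sep} lets us assume $F\subseteq P$; since $\{e_1,e_3,e_5,z\}$ is a cocircuit we have $z\in\cl^*(\{e_1,e_3,e_5\})\subseteq\cl^*(F)\subseteq\cl^*(P)$, so the dual of \cref{fcl_vert_sep} further lets us assume $z\in P$.

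The endgame is then routine. If $e\in P$, then $C-\{f\}\subseteq P$ gives $f\in\cl(P)$, which together with $f\in\cl^*(Q)$ contradicts orthogonality; hence $e\in Q$, and $C-\{e\}\subseteq P\cup\{f\}$ gives $e\in\cl(P\cup\{f\})$. As $\lambda(P)=2$ and $f\in\cl^*(P)$, \cref{clconnectivity} yields $\lambda(P\cup\{f\})\le 2$, and the $3$-connectivity of $M$ (both $P\cup\{f\}$ and $Q$ have at least two elements) forces $\lambda(P\cup\{f\})=2$; then \cref{delconnectivity} gives $\lambda_{M/e}(P\cup\{f\})=1$. Since $|P\cup\{f\}|\ge 4$ and $|Q-\{e\}|\ge 2$, this contradicts the $3$-connectivity of $M/e$. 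The one genuinely new point, and the part I expect to require the most care, is the case $f\in F$: unlike in \cref{circuit_so_deletable} the circuit here meets the fan, so one must verify that $f=e_5$ is the only overlap and that this case is subsumed by the end-of-fan behaviour — which is exactly what forces the preliminary observation that $M$ is not a wheel or a whirl.
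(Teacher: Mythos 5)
Your proposal is correct and follows essentially the same route as the paper's proof: a cyclic $3$-separation $(P,\{f\},Q)$, the dual of \cref{fan_vert_sep} to place $F$ (and then $z$, via coclosure) in $P$, orthogonality to force $e \in Q$, and then $e \in \cl(P \cup \{f\})$ to contradict the $3$-connectivity of $M/e$. The extra work you do — ruling out $f \in F$ via the end $e_5$ and observing $M$ is not a wheel or whirl — just fills in the paper's unproved remark that $e,f \notin F$, and your size bounds from the definition of a cyclic $3$-separation correctly dispose of the small-side case the paper treats separately.
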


\begin{proof}
    Suppose $M \backslash f$ is not $3$-connected, and note that $e,f \notin F$. Since $f$ is not contained in a triad, $M$ has a cyclic $3$-separation $(X,\{f\},Y)$ such that $F \subseteq X$ by the dual of \Cref{fan_vert_sep}. Now, $z \in \cl^*(F)$, so we may assume that $z \in X$. If $e \in X$, then $f \in \cl(X)$, a contradiction. Therefore, $e \in Y$, and $e \in \cl(X \cup \{f\})$, which contradicts the fact that $M / e$ is $3$-connected unless $r(Y)=2$ and $|Y|=2$. But then, in the exceptional case, $Y\cup \{f\}$ is a triad, a contradiction.
\end{proof}

\begin{lemma} \label{coaugmented1}
	Let $M$ be a $3$-connected matroid such that $|E(M)| \geq 11$.
    Let $F$ be a maximal fan of $M$ with length five. Suppose every triangle or triad of $M$ is contained in $F$. Then $M$ has a detachable pair.
\end{lemma}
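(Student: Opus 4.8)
The plan is to suppose that $M$ has no detachable pair and derive a contradiction. Since $M$ has a maximal fan of odd length, $M$ is not a wheel or a whirl, so \cref{not_just_fan} gives $|E(M)| \ge |F|+2$ and, more importantly, we may apply the lemmas of \cref{lemmas} freely. Up to duality (the hypotheses and the conclusion are self-dual under matroid duality), write $F=(e_1,e_2,e_3,e_4,e_5)$ with $\{e_1,e_2,e_3\}$ a triangle, so that $\{e_2,e_3,e_4\}$ is a triad and $\{e_3,e_4,e_5\}$ is a triangle. By \cref{odd_fan} there is an element $z\in E(M)-F$ such that $\{e_1,e_3,e_5,z\}$ is a cocircuit, and by orthogonality, together with the hypothesis that every triangle and triad of $M$ lies in $F$, the element $z$ is contained in no triangle and no triad. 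Using $\lambda(F)=2$ (\cref{fan_rank}) and \cref{clconnectivity,delconnectivity} one checks that $z\in\cl^*(F)\setminus\cl(F)$; then, as $z$ is in no triad, $M\setminus z$ has no series pair and is not $3$-connected while $M/z$ is simple, so Bixby's Lemma forces $M/z$ to be $3$-connected (and $F$ stays a maximal length-five fan in $M/z$). Finally, since $e_1$ and $e_5$ are the ends of the maximal fan $F$, each lying in a triangle but not in a triad, Tutte's Triangle Lemma gives that $M\setminus e_1$ and $M\setminus e_5$ are $3$-connected.

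With these $3$-connected minors in hand, I would first observe that $\cl(\{e_3,e_4\})=\{e_3,e_4,e_5\}$ and, dually, that the various small closures and coclosures dictated by the structure of $F$ meet $E(M)$ only inside $F$ — for otherwise one obtains a rank-$2$ (or corank-$2$) set of size four and \cref{no_segment}, which needs only $|E(M)|\ge 11$, produces a detachable pair. Granting this, I would repeatedly apply \cref{deletable_cocircuit} and its dual \cref{contractable_circuit}, together with \cref{circuit_so_deletable} and the purpose-built \cref{special_circuit_so_deletable}, to track the $4$-element cocircuits through $e_1$ and $e_5$ and the $4$-element circuits through $z$ that the absence of detachable pairs forces. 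Orthogonality with the triangles and triads of $F$ and with $\{e_1,e_3,e_5,z\}$ confines each newly produced element either to $F\cup\{z\}$ or to one of a bounded number of further elements, and in each case one records that the new element lies in the closure or the coclosure of the set built so far. This produces a set $X$ of bounded size with $\lambda(X)=2$ containing both a deletion certificate (built from the triangle $\{e_1,e_2,e_3\}$) and a contraction certificate (built from the triad $\{e_2,e_3,e_4\}$, or from a cocircuit through $z$).

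At that point \cref{no_other_elements} applies: every element of $E(M)-X$ lies in a maximal fan of length at least four with ends in $X$. But since every triangle and triad of $M$ lies in $F$, any such fan is contained in $F$, and as $F$ is maximal of length five the only candidate is $F$ itself, which is contained in $X$; hence $E(M)=X$, and the size bound on $X$ forces $|E(M)|\le 10$, contradicting $|E(M)|\ge 11$. The main obstacle is precisely this middle step: there is no single structural shortcut, because a priori the cocircuit $\{e_1,e_3,e_5,z\}$ can be augmented by several distinct $4$-element circuits and cocircuits, and each branch must be either carried along inside $X$ or eliminated. The most delicate points are keeping the elements produced by successive applications of \cref{deletable_cocircuit} from escaping the closure or coclosure of the current set — which is what stops $|X|$ from exceeding $10$ — and handling the subcase in which contracting $z$ or deleting $e_1$ or $e_5$ creates a maximal fan longer than $F$, which must be ruled out using \cref{intersecting_fans}, \cref{unlike_odd_fans}, and \cref{not_just_fan}.
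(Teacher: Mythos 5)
Your opening moves are correct and coincide with the paper's: reduce by duality so that $\{e_1,e_2,e_3\}$ is a triangle, invoke \cref{odd_fan} to get $z\notin F$ with $\{e_1,e_3,e_5,z\}$ a cocircuit, and note that $z$ lies in no triangle or triad so that $z\in\cl^*(F)\setminus\cl(F)$. But everything after that is a plan rather than a proof, and the plan's centre is precisely where the work lies. The paper resolves the ``middle step'' you flag as the main obstacle by a dichotomy you never formulate: whether some $e\notin F\cup\{z\}$ has $M/e$ $3$-connected. If yes, the dual of \cref{contractable_circuit} and orthogonality give a circuit $C=\{e_1,z,e,f\}$ (up to symmetry) with $f\notin F\cup\{z\}$, so $(e_1,F-\{e_1\},\{\{z,e,f\}\})$ is a deletion certificate; \cref{special_circuit_so_deletable} then makes $M\backslash f$ $3$-connected, \cref{deletable_cocircuit} produces a cocircuit containing $\{e_5,f\}$ which by orthogonality with $C$ is $\{e_4,e_5,z,f\}$ or $\{e_4,e_5,e,f\}$, whence $\lambda(F\cup\{z,e,f\})=2$; and \cref{deletable_collection_contractable_el} (applied with $Y=F$, $y=z$) forces every element outside the $8$-element set $F\cup\{z,e,f\}$ to lie in a triad, which is impossible because all triads lie in $F$ and $|E(M)|\ge 11$. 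If no, then Bixby gives $M\backslash e$ $3$-connected for every such $e$, \cref{deletable_cocircuit} yields a cocircuit $\{e_1,e_2,e,f'\}$, and the dual of \cref{circuit_so_deletable} makes $M/f'$ $3$-connected, contradicting the case hypothesis. None of these specific chains appears in your write-up, and they are not routine: \cref{special_circuit_so_deletable} exists in the paper exactly to make the first branch close.

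Your proposed endgame is also the wrong tool, and would add a further unaddressed difficulty even if the tracking were done. \Cref{no_other_elements} requires exhibiting both a deletion certificate and a contraction certificate; here a contraction certificate is not readily available, since every element of the triad $\{e_2,e_3,e_4\}$ lies in a triangle (so cannot head a contraction certificate), and $z$ is known to lie in the coclosure of only one suitable $3$-separating set, namely $F$ (any second cocircuit through $z$ that arises meets $F$, so the disjointness condition fails without further sets). The paper avoids this by using \cref{deletable_collection_contractable_el}, whose hypotheses need only the deletion certificate together with the pair $(Y,y)=(F,z)$, and whose conclusion (``every element outside is in a triad'') contradicts the hypothesis of the lemma immediately, with no counting of $|X|$ needed. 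Finally, your fallback citation of \cref{unlike_odd_fans} is unavailable in this setting: that lemma assumes $|E(M)|\ge 13$ and that $M$ has no distinct maximal fans with like ends, neither of which you have here. So the toolbox is right, but the argument as proposed has a genuine gap at its core and a concluding step that would not go through as stated.
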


\begin{proof}
    Suppose, to the contrary, that $M$ has no detachable pairs. Let $F = (e_1,e_2,e_3,e_4,e_5)$. By duality, we may assume that $\{e_1,e_2,e_3\}$ is a triangle. By \Cref{odd_fan}, there exists $z \in E(M) - F$ such that $\{e_1,e_3,e_5,z\}$ is a cocircuit, so $z \in \cl^*(F)$. Let $e \notin F \cup \{z\}$. Suppose $M / e$ is $3$-connected. Then, by the dual of \cref{contractable_circuit}, $M$ has a $4$-element circuit $C$ containing $\{e,z\}$, either $e_1$ or $e_5$, and an element $f \notin F \cup \{z\}$. Without loss of generality, $C = \{e_1,z,e,f\}$. Note that $(e_1,F-\{e_1\},\{\{z,e,f\}\})$ is a deletion certificate.
	
    By \Cref{special_circuit_so_deletable}, the matroid $M \backslash f$ is $3$-connected. Now, by \cref{deletable_cocircuit}, $M$ has a $4$-element cocircuit $C^*$ containing $\{e_5,f\}$, and, by orthogonality with $C$, either $C^* = \{e_4,e_5,z,f\}$ or $C^* = \{e_4,e_5,e,f\}$. In either case, $\lambda(F \cup \{z,e,f\}) = 2$. Furthermore, $z \in \cl^*(F)$ and, for all $x \in F \cup \{z\}$, we have that $x \in \cl(F \cup \{z,e,f\})$. Since $|E(M)| \geq 11$, \Cref{deletable_collection_contractable_el} implies that every element of $E(M) - (F \cup \{z,e,f\})$ is contained in a triad, a contradiction.
    We deduce that $M/e$ is not $3$-connected.
	
    Thus, by Bixby's Lemma, $M \backslash e$ is $3$-connected, and, furthermore, for all $x \in E(M) - (F \cup \{z\})$, the matroid $M / x$ is not $3$-connected. Now, by \cref{deletable_cocircuit} once more, $M$ has a $4$-element cocircuit $\{e_1,e_2,e,f'\}$, where $f' \notin F \cup \{z\}$. But then the dual of \Cref{circuit_so_deletable} implies that $M / f'$ is $3$-connected, a contradiction. This completes the proof of the lemma.
\end{proof}

\begin{lemma} \label{coaugmented2}
    Let $M$ be a $3$-connected matroid with no detachable pairs such that $|E(M)| \geq 13$. Let $F = (e_1,e_2,e_3,e_4,e_5)$ be a maximal fan of $M$ such that $\{e_1,e_2,e_3\}$ is a triangle, and every triangle of $M$ is contained in $F$. Let $z \in E(M) - F$ such that $\{e_1,e_3,e_5,z\}$ is a cocircuit. Then $M$ has a triad that is disjoint from $F \cup \{z\}$.
\end{lemma}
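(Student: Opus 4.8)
The plan is to show that in such a matroid $M$ there must be a triad outside $F \cup \{z\}$, and that this triad avoids $z$ as well. First I would set up the ambient situation: $F = (e_1,e_2,e_3,e_4,e_5)$ is maximal with $\{e_1,e_2,e_3\}$ a triangle, so $\{e_3,e_4,e_5\}$ is also a triangle and $\{e_2,e_3,e_4\}$ is a triad; the hypothesis gives a cocircuit $\{e_1,e_3,e_5,z\}$ with $z \notin F$, and every triangle of $M$ lies in $F$. By \cref{coaugmented1} we may assume $M$ has some element outside $F \cup \{z\}$ (indeed $|E(M)| \ge 13$ forces this), and since not every triangle or triad of $M$ can lie in $F$ (else \cref{coaugmented1} gives a detachable pair), there is a triad $T^*$ not contained in $F$. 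The goal reduces to showing such a $T^*$ can be chosen disjoint from $F \cup \{z\}$.

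The key steps: observe that $(e_1, F - \{e_1\}, \{\{e_1,e_3,e_5,z\}\})$-type data gives a deletion certificate, namely $(e_1, \{e_2,e_3\}, \{\{e_1,e_3,e_5,z\}\})$ has $e_1 \in \cl(\{e_2,e_3\})$ and $e_1 \in \cl(\{e_3,e_5,z\})$ (from the triangle and from the cocircuit used as a dependent set in $M^*$ — actually $e_1 \in \cl(\{e_3,e_5\})$ via the triangles $\{e_1,e_2,e_3\}$ and $\{e_3,e_4,e_5\}$, so $e_1 \in \cl(F - \{e_1, e_2\})$), together with $\lambda(\{e_2,e_3\}) = 2$ or better, the set $F$ itself with $\lambda(F) = 2$. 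So $F$ contains a deletion certificate $(e_1, F-\{e_1\}, \{\cdot\})$. Then I would use \cref{deletable_collection1}: any element outside $F$ contained in a triangle would have to be in a triad, but since every triangle lies in $F$, no element outside $F$ is in a triangle. Dually one might hope for a contraction certificate, but here we only have the deletion side for free; the cocircuit $\{e_1,e_3,e_5,z\}$ does not immediately give a contraction certificate. Instead, I would argue directly: take a triad $T^*$ not contained in $F$. By orthogonality with the triangles $\{e_1,e_2,e_3\}$ and $\{e_3,e_4,e_5\}$, if $T^*$ meets $F$ it must meet each of these triangles in at least two elements or avoid them, which constrains $T^* \cap F$ heavily — the only way $T^*$ can meet $F$ without being contained in it is if $T^* \cap F \subseteq \{e_2, e_4\}$ or similar small intersections, and in fact a careful orthogonality analysis (using that $T^*$ is a cocircuit and $\{e_1,e_2,e_3\}$, $\{e_3,e_4,e_5\}$ are circuits, so $|T^* \cap \{e_1,e_2,e_3\}| \ne 1$ and $|T^* \cap \{e_3,e_4,e_5\}| \ne 1$) shows $T^* \cap F$ is either empty, $\{e_3\}$ (impossible since then $T^*$ meets $\{e_1,e_2,e_3\}$ in exactly $e_3$), or must contain $e_2$ and $e_4$. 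If $\{e_2,e_4\} \subseteq T^*$ then $T^* = \{e_2,e_4,w\}$ for some $w$; but $\{e_2,e_3,e_4\}$ is already a triad and by \cref{unique_triangle}-type reasoning (its cocircuit analogue) a pair like $\{e_2,e_4\}$ in a length-$5$ maximal fan lies in the unique triad $\{e_2,e_3,e_4\}$, forcing $w = e_3$ and $T^* \subseteq F$, a contradiction. So any triad not contained in $F$ is disjoint from $F$.

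It remains to handle $z$: I must produce a triad disjoint from $F$ that also avoids $z$. Suppose every triad not contained in $F$ passes through $z$. Since $M$ has at least $13$ elements and only the triangles inside $F$ and triads through $z$ (plus those in $F$), I would look for an element $x \notin F \cup \{z\}$ with $M \backslash x$ $3$-connected — such $x$ exists because $x$ is in no triangle (all triangles are in $F$), so by Bixby's Lemma either $\si(M/x)$ or $\co(M \backslash x)$ is $3$-connected; if $M/x$ is $3$-connected we can run a deletion-certificate argument via \cref{deletable_collection}, and if $M \backslash x$ is $3$-connected outright we apply \cref{deletable_cocircuit} with $X = F - \{e_1\}$ (noting $e_1 \in \cl(F-\{e_1\})$ and $\lambda(F-\{e_1\}) = 2$ or $\lambda(F) = 2$) to get a $4$-element cocircuit $\{e_1, x, g, h\}$ with $g \in F$, $h \notin F$. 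Orthogonality with the triangle $\{e_1,e_2,e_3\}$ pins $g \in \{e_2,e_3\}$; further orthogonality with $\{e_1,e_3,e_5,z\}$ constrains things, and by varying $x$ over the (at least three, since $|E(M)| \ge 13$ and $|F \cup \{z\}| = 6$) choices, cocircuit elimination among these $4$-element cocircuits will eventually produce a cocircuit contained in a small set giving a triad disjoint from $F \cup \{z\}$, or else force $\lambda$ of a set of size $\le |E(M)| - 2$ to drop below $2$, a contradiction. The main obstacle I anticipate is this last part — managing the bookkeeping of the $4$-element cocircuits through $z$ and showing they cannot all be forced to reuse $z$, which likely requires carefully choosing two or three deletable elements outside $F \cup \{z\}$ and applying cocircuit elimination, together with the dual of \cref{circuit_so_deletable} or \cref{deletable_circuit_gives_cocircuit} to convert circuit information back into the needed triad. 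I would structure the final contradiction around \cref{deletable_collection} or \cref{deletable_collection_contractable_el} applied to a set of the form $F \cup \{z\} \cup \{\text{a few more elements}\}$ whose connectivity is $2$, concluding that every remaining element is in a triad and hence (by the orthogonality analysis above) that triad is disjoint from $F$, and then checking one such triad avoids $z$.
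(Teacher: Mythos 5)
Your opening moves match the paper: use \cref{coaugmented1} to get a triad $T^*$ not contained in $F$, and then force $T^*$ to be disjoint from $F$ (so that, under the contradiction hypothesis, $z \in T^*$). Two caveats there. First, your side claim that $e_1 \in \cl(\{e_3,e_5\})$, hence $e_1 \in \cl(F-\{e_1,e_2\})$, is false: it would make $\{e_1,e_3,e_4,e_5\}$ a rank-$2$ set, contradicting \cref{no_segment}. In fact every circuit of $M$ through $e_1$ that lies inside $F$ contains $e_2$ (the only ones are $\{e_1,e_2,e_3\}$ and $\{e_1,e_2,e_4,e_5\}$), so $F$ alone does \emph{not} contain a deletion certificate for $e_1$; the paper has to manufacture the external circuit $\{e_1,z,e,g\}$ precisely to supply the second closure set. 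Second, your orthogonality case analysis is off: the case you treat, $T^*\cap F=\{e_2,e_4\}$, is already killed by orthogonality with the triangle $\{e_1,e_2,e_3\}$, while the cases that actually survive the two triangle orthogonality checks, namely $T^*\cap F=\{e_1,e_2\}$ and $\{e_4,e_5\}$, are not addressed; they are excluded by \cref{fan_ends} (the ends $e_1,e_5$ lie in no triad), which is how the paper argues, so the conclusion is salvageable but not by the reasoning you give.

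The serious gap is the second half. Having supposed every triad not contained in $F$ contains $z$, you never use the one concrete object that supposition hands you, the triad $T^*=\{z,e,f\}$ with $e,f\notin F$, and your proposed finish (find deletable elements outside $F\cup\{z\}$, collect $4$-element cocircuits through $e_1$, and hope cocircuit elimination produces a triad avoiding $F\cup\{z\}$ or a connectivity violation) is not carried out and, as sketched, has no mechanism forcing the contradiction; you flag this yourself. The paper's route is different and essential: $(z,F,\{\{e,f\}\})$ is a contraction certificate; since every triangle lies in $F$, the triad $\{z,e,f\}$ is in no $4$-element fan, so Tutte's Triangle Lemma makes $M/e$ (say) $3$-connected; the dual of \cref{contractable_circuit} then yields a circuit $\{e_1,z,e,g\}$, giving the deletion certificate $(e_1,F-\{e_1\},\{\{z,e,g\}\})$; and in each of the three cases ($g=f$; $g$ in no triad, handled via \cref{special_circuit_so_deletable} and \cref{deletable_cocircuit}; $g$ in a triad $\{z,g,h\}$) one exhibits a $3$-separating set containing both certificates, whereupon \cref{no_other_elements} forces every remaining element into a $4$-element fan, which is impossible because such a fan would contain a triangle outside $F$. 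Without this (or some equally concrete replacement for your "bookkeeping" step), the proposal does not prove the lemma.
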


\begin{proof}
    Suppose that every triad of $M$ meets $F \cup \{z\}$. By \Cref{coaugmented1}, the matroid $M$ has a triad $T^*$ not contained in $F$. Now, $T^* \cap (F \cup \{z\}) \neq \emptyset$. By \cref{fan_ends,intersecting_fans}, we have that $T^*$ and $F$ are disjoint. Thus, $z \in T^*$, so let $T^* = \{z,e,f\}$, where $e, f\not\in F$. Note that $(z,F,\{\{e,f\}\})$ is a contraction certificate. Since $T^*$ is not contained in a $4$-element fan, it follows by Tutte's Triangle Lemma that either $M / e$ or $M / f$ is $3$-connected. We may assume that $M / e$ is $3$-connected. By the dual of \Cref{contractable_circuit}, there is a $4$-element circuit $\{e_i,z,e,g\}$ of $M$, for some $i \in \{1,5\}$ and $g \notin F \cup \{e,z\}$. Assume, without loss of generality, that $i = 1$.

    Now, $(e_1,F-\{e_1\},\{\{z,e,g\}\})$ is a deletion certificate. If $g = f$, then $\lambda(F \cup \{z,e,f\}) = 2$, and $F \cup \{z,e,f\}$ contains both a deletion and a contraction certificate, which contradicts \Cref{no_other_elements}. Hence, $g \neq f$.
    Suppose $g$ is not contained in a triad. \Cref{special_circuit_so_deletable} implies that $M \backslash g$ is $3$-connected. Thus, by \cref{deletable_cocircuit}, $M$ has a $4$-element cocircuit containing $\{e_4,e_5,g\}$ and an element of $\{e,z,e_1\}$. Now $\lambda(F \cup \{z,e,f,g\}) = 2$, again contradicting \Cref{no_other_elements}.
	Thus $g$ is contained in a triad of $M$. This triad contains $z$, so $M$ has a triad $\{z,g,h\}$, for some $h \notin F \cup \{z,e,f,g\}$.

    The dual of \cref{contractable_circuit} implies that $M$ has a $4$-element circuit $C$ containing $\{e,h\}$, and an element of $\{z,g\}$.
    By orthogonality, if $z \in C$, then $C$ also contains one of $e_1$ and $e_5$; and if $z \notin C$, then $C=\{e,f,g,h\}$.
    But now, in either case, $\lambda(F \cup \{z,e,f,g,h\}) = 2$, a contradiction to \Cref{no_other_elements}, thereby completing the proof of the lemma.
\end{proof}

\begin{lemma} \label{coaugmented3}
    Let $M$ be a $3$-connected matroid with no detachable pairs such that $|E(M)| \geq 13$. Let $F = (e_1,e_2,e_3,e_4,e_5)$ be a maximal fan of $M$ such that $\{e_1,e_2,e_3\}$ is a triangle and every triangle of $M$ is contained in $F$. Let $z \in E(M) - F$ such that $\{e_1,e_3,e_5,z\}$ is a cocircuit, and let $T^*$ be a triad of $M$ disjoint from $F\cup \{z\}$. Then 
	\begin{enumerate}
		\item $T^* = \{a,b,c\}$ such that $\{e_1,z,a,b\}$ and $\{e_5,z,b,c\}$ are circuits, and
		\item every element of $E(M) - (F \cup T^* \cup \{z\})$ is contained in a triad that is disjoint from $F \cup T^* \cup \{z\}$.
	\end{enumerate} 
\end{lemma}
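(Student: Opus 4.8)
The plan is to mirror the structure used in the earlier ``affixed-fan'' lemmas (e.g.\ \cref{even_fan_k3m1,augmented_fan_petal1,accordion6}), namely: first use orthogonality and the dual of \cref{contractable_circuit} to locate the circuits forcing $T^*$ into position relative to $F$ and $z$, yielding (i); then build a deletion certificate inside $F\cup T^*\cup\{z\}$ and a contraction certificate inside the same set, and apply \cref{no_other_elements} (or, where only one certificate is available, \cref{deletable_collection_contractable_el} or its dual) to control the remaining elements, yielding (ii). Throughout we use that every triangle of $M$ lies in $F$, and that $M$ has no distinct maximal fans with like ends, to pin down where triads can sit.

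For part (i): since $T^*$ is disjoint from $F\cup\{z\}$, I would first argue via Tutte's Triangle Lemma that, for some labelling $T^*=\{a,b,c\}$, both $M/a$ and $M/c$ are $3$-connected (as $T^*$ is not contained in a $4$-element fan — otherwise a maximal fan through $T^*$ would, by \cref{fan_ends,intersecting_fans}, be disjoint from $F$, contradicting ``no distinct maximal fans with like ends'' once we check its parity via \cref{coaugmented1}-style reasoning, or it would meet $\{e_1,e_5\}$, forcing an $M(K_4)$-separator ruled out by \cref{no_mk4}). Then $a\in\cl^*(T^*-\{a\})$, so the dual of \cref{contractable_circuit} gives a $4$-element circuit through $\{a,?\}$ meeting $T^*$ and an element outside; by orthogonality with $\{e_1,e_3,e_5,z\}$ and with the triangles and triads inside $F$, this circuit must be $\{e_1,z,a,b\}$ (after fixing the labelling of $a,b$, and of $e_1$ versus $e_5$). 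Repeating the same argument starting from $c$, and using circuit elimination with $\{e_1,z,a,b\}$ to rule out a circuit $\{e_1,z,b,c\}$ or $\{e_1,z,a,c\}$ reusing $e_1$ (which would force a circuit inside $\{e_1\}\cup T^*$ or inside $T^*\cup\{z\}$ contradicting $3$-connectivity or orthogonality with the triad $T^*$), we get $\{e_5,z,b,c\}$, establishing (i). A symmetric use of \cref{deletable_cocircuit} may be needed to confirm the two ends $e_1,e_5$ are the correct attachment points and that $|E(M)|\ge|F\cup T^*\cup\{z\}|$, which is automatic here since $|E(M)|\ge13=|F\cup T^*\cup\{z\}|+5$.

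For part (ii): from (i) we have $r((F\cup T^*\cup\{z\}))$ small enough that $\lambda(F\cup T^*\cup\{z\})=2$ (the cocircuit $\{e_1,e_3,e_5,z\}$ and circuits $\{e_1,z,a,b\}$, $\{e_5,z,b,c\}$ each drop connectivity appropriately — this is a short submodularity computation along the lines of \cref{even_fan_spike_separating}). Then $(e_1,F-\{e_1\},\{\{z,a,b\}\})$ is a deletion certificate contained in $F\cup T^*\cup\{z\}$, and $(z,F,\{\{a,b,c\}\})$ is a contraction certificate, while every $e_i$ and every element of $T^*\cup\{z\}$ lies in the closure of the remaining elements. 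Given an arbitrary $g\notin F\cup T^*\cup\{z\}$, \cref{no_other_elements} puts $g$ in a maximal $4$-element fan with ends in $F\cup T^*\cup\{z\}$; since every triangle of $M$ lies in $F$ and $g\notin F$, this fan has all its triangles inside $F$ and hence, being a fan on new elements, must be a fan whose relevant ends force a triad through $g$. I would then argue this triad is disjoint from $F\cup T^*\cup\{z\}$: orthogonality with the circuits from (i) and with $\{e_1,e_3,e_5,z\}$, together with \cref{fan_ends,intersecting_fans} and ``no distinct maximal fans with like ends'', rules out the triad meeting $F$ or $z$; and if it met $T^*$ it would (by orthogonality with $\{e_1,z,a,b\}$ or $\{e_5,z,b,c\}$) force $g$ or another new element into $\cl^*$ and $\cl$ of $F\cup T^*\cup\{z\}$, giving $\lambda(F\cup T^*\cup\{z\}\cup\{g,\dots\})\le1$, which is impossible since $|E(M)|\ge13$.

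\textbf{Main obstacle.} The hardest part is expected to be the orthogonality bookkeeping in (i) that simultaneously fixes \emph{both} the labelling of $T^*$ as $\{a,b,c\}$ and the identification of $e_1$ (versus $e_5$) as the end of $F$ that pairs with $z$ and $b$; the two circuits $\{e_1,z,a,b\}$ and $\{e_5,z,b,c\}$ must share exactly the element $b$, and establishing that the shared element is the ``middle'' element of the triad — rather than having two circuits each hitting $\{e_1,e_5\}$ in the same end, or hitting $T^*$ in the same end-element — requires careful circuit elimination against the triad $T^*$ and the cocircuit $\{e_1,e_3,e_5,z\}$, plus the observation (from $|E(M)|\ge13$) that no further coincidences force the configuration to collapse.
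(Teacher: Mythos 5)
Your part (i) is essentially the paper's argument (Tutte's Triangle Lemma applied to $T^*$, the dual of \cref{contractable_circuit}, orthogonality, then circuit elimination to force the two circuits to attach at different ends of $F$), though two details should be tidied: the reason no element of $T^*$ lies in a triangle (hence $T^*$ is in no $4$-element fan and two of its elements are contractible) is simply that every triangle of $M$ lies in $F$ and $T^*\cap F=\emptyset$ — the hypothesis ``no distinct maximal fans with like ends'' is not part of this lemma — and in your application of the dual of \cref{contractable_circuit} the roles are muddled: you should take the contractible element to be $a\in T^*$ (resp.\ $c$), with $X=F$ and $f=z\in\cl^*(F)$, exactly as in \cref{coaugmented2}; with $f=a\in\cl^*(T^*-\{a\})$ you never identify the contractible element outside $T^*$ that the lemma needs. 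The genuine gap is in (ii). Your contraction certificate $(z,F,\{T^*\})$ is not valid: the definition requires $z\in\cl^*(T^*)$ as well as $z\in\cl^*(F)$, and nothing gives $z\in\cl^*(\{a,b,c\})$ — in the target structure (a quasi-triad-paddle with a co-augmented-fan petal, e.g.\ $M(K'_{3,m})$) $z$ is not in the coclosure of any triad petal. So \cref{no_other_elements} cannot be invoked at this stage; and even if it could, its conclusion is not ``a triad through $g$'': it would place $g$ in a maximal fan of length at least four, which, since every triangle lies in $F$ and such a fan meets $F$ in at most its ends, is an outright contradiction rather than the statement you want. The correct one-sided tool is \cref{deletable_collection_contractable_el}, applied with the deletion certificate $(e_1,F-\{e_1\},\{T^*\cup\{z\}\})$, $Y=F$ and $y=z$; that is what yields that every element outside $F\cup T^*\cup\{z\}$ lies in a triad.

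Your disjointness argument then fails at the crucial case. Orthogonality does \emph{not} rule out a triad of the form $T_2^*=\{z,b,g\}$ with $g$ outside $F\cup T^*\cup\{z\}$ (it meets each of the circuits $\{e_1,z,a,b\}$ and $\{e_5,z,b,c\}$ in two elements), and in fact a careful orthogonality check shows this is the \emph{only} way such a triad can meet $F\cup T^*\cup\{z\}$. Your proposed contradiction ``$g\in\cl\cap\cl^*$ of $F\cup T^*\cup\{z\}$, so $\lambda\le 1$'' does not follow, because only $g\in\cl^*(F\cup T^*\cup\{z\})$ is available from the triad; there is no circuit putting $g$ in the closure of that set. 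The paper resolves this case differently: once $T_2^*=\{z,b,g\}$ exists, $(z,\{b,g\},\{F\})$ \emph{is} a valid contraction certificate (now $z\in\cl^*(\{b,g\})$), $\lambda(F\cup T^*\cup\{z,g\})=2$, and \cref{no_other_elements} combined with ``every triangle of $M$ is contained in $F$'' gives a contradiction, since the at least three elements remaining outside $F\cup T^*\cup\{z,g\}$ would each have to lie in a fan of length at least four. Without an argument of this kind your proof of (ii) does not close.
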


\begin{proof}
    Using Tutte's Triangle Lemma, the dual of \Cref{contractable_circuit}, and orthogonality, it follows that we may label $T^* = \{a,b,c\}$ such that there are circuits $C_1 = \{e_i,z,a,b\}$ and $C_2 = \{e_j,z,b,c\}$, for some $i,j \in \{1,5\}$. If $i = j$, then circuit elimination and orthogonality with $\{e_1,e_3,e_5,z\}$ implies that $M$ has a circuit contained in $T^*$. This is a contradiction, so $i \neq j$, proving~(i).
	
    Now, $(e_1,F-\{e_1\},\{T^*\cup\{z\}\})$ is a deletion certificate. Furthermore, $\lambda(F \cup T^* \cup \{z\}) = 2$, with $z \in \cl^*(F)$ and, for all $x \in F \cup \{z\}$, we have that $x \in \cl((F \cup T^* \cup \{z\})-\{x\})$. Hence, by \Cref{deletable_collection_contractable_el}, every element of $E(M) - (F \cup T^* \cup \{z\})$ is contained in a triad. Let $e \in E(M) - (F \cup T^* \cup \{z\})$, let $T_2^*$ be a triad containing $e$, and suppose $T_2^* \cap (F \cup T^* \cup \{z\}) \neq \emptyset$. Then, by orthogonality, $T_2^* = \{z,b,e\}$. But now $(z,\{b,e\},\{F\})$ is a contraction certificate, and $\lambda(F \cup T^* \cup \{e,z\}) = 2$. This contradicts \cref{no_other_elements}, since every triangle of $M$ is contained in $F$. Therefore, $T_2^*$ is disjoint from $F \cup T^* \cup \{z\}$, establishing (ii).
\end{proof}

\begin{lemma} \label{coaugmented}
    Let $M$ be a $3$-connected matroid with no detachable pairs such that $|E(M)| \geq 13$. Let	$F = (e_1,e_2,\ldots,e_{|F|})$ be a maximal fan of $M$ with odd length at least five such that $\{e_1,e_2,e_3\}$ is a triangle and every triangle of $M$ is contained in $F$. Then $M$ is a quasi-triad-paddle with a co-augmented-fan petal.
\end{lemma}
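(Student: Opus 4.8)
The plan is to assemble the structural conclusion from \Cref{coaugmented1,coaugmented2,coaugmented3}, which between them already isolate the co-augmented-fan geometry; the remaining work is to collect the triads lying off $F$ into the petals of a triad-paddle and to identify the resulting deletion as $M(K_{3,m-1})$. First, by \Cref{odd_fan} we have $|F|=5$, so write $F=(e_1,e_2,e_3,e_4,e_5)$ with $\{e_1,e_2,e_3\}$ a triangle, and \cref{odd_fan} also supplies an element $z\in E(M)-F$ with $\{e_1,e_3,e_5,z\}$ a cocircuit; thus $z\in\cl^*(F)$, and \Cref{clconnectivity} together with $3$-connectivity and $|E(M)|\ge 13$ forces $z\notin\cl(F)$, so $\lambda(F\cup\{z\})=2$ and, using \Cref{fan_rank}, $r(F\cup\{z\})=4$. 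Set $P_m:=F\cup\{z\}$.

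Next, by \Cref{coaugmented2} the matroid $M$ has a triad disjoint from $P_m$; let $\mathcal T$ be the set of all such triads. Since every triangle of $M$ lies in $F$, no member of $\mathcal T$ meets a triangle, so each is a maximal fan, and by \Cref{coaugmented3}(i) each $T\in\mathcal T$ admits a labelling $T=\{a,b,c\}$ with $\{e_1,z,a,b\}$ and $\{e_5,z,b,c\}$ circuits; in other words $P_m$ is a co-augmented fan affixed to every $T\in\mathcal T$. I would then show the members of $\mathcal T$ are pairwise disjoint: if $T,T'\in\mathcal T$ shared an element, then applying circuit elimination to a suitable pair among the four affixing circuits of $T$ and $T'$ and using orthogonality with $\{e_1,e_2,e_3\}$, $\{e_3,e_4,e_5\}$ and $\{e_1,e_3,e_5,z\}$ to delete all the $e_i$'s and $z$, one produces a triangle disjoint from $F$, a contradiction. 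Write $\mathcal T=\{T_1,\dots,T_{m-1}\}$ with $T_i=\{a_i,b_i,c_i\}$.

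I would then verify that $(P_m,T_1,\dots,T_{m-1})$ partitions $E(M)$ and $m\ge 3$. If some $e\in E(M)-P_m$ lay outside every $T_i$, then $e\notin P_m\cup T_1$, so by \Cref{coaugmented3}(ii) $e$ lies in a triad disjoint from $P_m\cup T_1$, hence in $\mathcal T$, a contradiction; and since $|E(M)|\ge 13>|P_m\cup T_1|$ there is an element off $P_m\cup T_1$, which lies in a member of $\mathcal T$ distinct from $T_1$, so $|\mathcal T|\ge 2$. For the rank conditions: $T_i$ is a cocircuit, so $E(M)-T_i$ is a hyperplane, whence $\cl(P_m)\cap T_i=\emptyset$, and the affixing circuits give $a_i,c_i\in\cl(P_m\cup\{b_i\})$, so $r(P_m\cup T_i)=r(P_m)+1$. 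For distinct $i,j$, circuit elimination from $\{e_1,z,a_i,b_i\}$ and $\{e_1,z,a_j,b_j\}$ (with the same orthogonality bookkeeping) yields that $\{a_i,b_i,a_j,b_j\}$ is a circuit, and similarly $\{b_i,c_i,b_j,c_j\}$ is a circuit; a short rank count then gives $r(T_i\cup T_j)=r(T_j)+1=4$. By \Cref{k3m_paddle}, taking $P_m$ as the distinguished block (with $|P_m|=6\ge 2$), the partition $(P_m,T_1,\dots,T_{m-1})$ is a paddle of $M$, hence so is $(T_1,\dots,T_{m-1},P_m)$.

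It remains to show $M\backslash P_m\cong M(K_{3,m-1})$. Applying \Cref{k3m_paddle} inside $M\backslash P_m$ (with $T_1$ as distinguished block) shows $(T_1,\dots,T_{m-1})$ is a paddle of $M\backslash P_m$; combined with the circuits $\{a_i,b_i,a_j,b_j\}$ and $\{b_i,c_i,b_j,c_j\}$ and a further circuit $\{a_i,c_i,a_j,c_j\}$ obtained by one more circuit-elimination step, one identifies $M\backslash P_m$ with the cycle matroid of $K_{3,m-1}$ (the $T_i$ being the triads at the degree-three vertices, and $a_i,b_i,c_i$ the edges to the three degree-$(m-1)$ vertices), exactly as the $M(K_{3,m})$ structure is extracted in \Cref{even_fan_k3m,augmented_fan_petal}. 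Since $m\ge 3$, $M\backslash P_m$ is a triad-paddle, $(T_1,\dots,T_{m-1},P_m)$ is a paddle, and $P_m$ is a co-augmented fan affixed to each $T_i$, we conclude that $M$ is a quasi-triad-paddle with a co-augmented-fan petal. The main obstacle I anticipate is this last step — pinning $M\backslash P_m$ down as a genuine triad-paddle rather than merely a paddle whose petals happen to be triads — with the pairwise-disjointness argument for $\mathcal T$ being the other spot where careful orthogonality work is required.
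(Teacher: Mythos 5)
Your overall architecture is the same as the paper's: use \cref{odd_fan} to get $|F|=5$ and the cocircuit $\{e_1,e_3,e_5,z\}$, use \cref{coaugmented2,coaugmented3} to find the triads off $F\cup\{z\}$ together with the affixing circuits, and then assemble the paddle via \cref{k3m_paddle}. However, there is a genuine gap at the step you yourself flag as the main obstacle: identifying $M\backslash P_m$ with $M(K_{3,m-1})$. Your two circuits $\{a_i,b_i,a_j,b_j\}$ and $\{b_i,c_i,b_j,c_j\}$ do follow by elimination and orthogonality, but the ``one more circuit-elimination step'' yielding $\{a_i,c_i,a_j,c_j\}$ does not. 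Eliminating $b_i$ from the two known circuits only gives a circuit $C\subseteq\{a_i,c_i\}\cup T_j$; orthogonality with the triads $T_i$ and $T_j$ forces $\{a_i,c_i\}\subseteq C$ and $|C\cap T_j|\in\{2,3\}$, and while the possibilities $\{a_i,c_i,a_j,b_j\}$ and $\{a_i,c_i,b_j,c_j\}$ can be excluded by a further elimination against $\{a_i,b_i,a_j,b_j\}$ or $\{b_i,c_i,b_j,c_j\}$, the five-element outcome $C=\{a_i,c_i\}\cup T_j$ cannot. That outcome corresponds to $M|(T_i\cup T_j)$ being the relaxation of $M(K_{2,3})$ along the circuit-hyperplane $\{a_i,c_i,a_j,c_j\}$, which is consistent with every constraint you have invoked (the two 4-circuits, the affixing circuits, the cocircuit $\{e_1,e_3,e_5,z\}$, and both triads being cocircuits). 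So elimination alone does not pin down $M\backslash P_m$ as a genuine triad-paddle, which is exactly what the definition of a quasi-triad-paddle requires.

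The paper closes this gap by applying \cref{disjoint_triads} to the pair of disjoint triads: since every triangle lies in $F$, neither triad meets a triangle, so neither is contained in a $4$-element fan, outcomes (i)--(iii) of that lemma fail, and outcome (iv) gives $M|(T_i^*\cup T_j^*)\cong M(K_{2,3})$ outright, supplying the missing third circuit. Note that the proof of \cref{disjoint_triads} in the $|F|=3$ case manufactures that circuit using Tutte's Triangle Lemma and the dual of \cref{contractable_circuit}, i.e.\ it genuinely uses the no-detachable-pairs hypothesis rather than circuit elimination; and \cref{even_fan_k3m,augmented_fan_petal}, which you cite as the model for extracting the $M(K_{3,m})$ structure, themselves obtain it through \cref{disjoint_triads}. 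With that citation in place your argument goes through; as a minor remark, your pairwise-disjointness step is also cleaner done directly by orthogonality (a triad disjoint from $F\cup\{z\}$ that meets $T_i$ must, by orthogonality with the two affixing circuits of $T_i$, contain all of $T_i$) rather than by producing a triangle off $F$ via elimination.
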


\begin{proof}
    By \Cref{odd_fan}, we have that $|F| = 5$ and there exists $z \notin F$ such that $\{e_1,e_3,e_5,z\}$ is a cocircuit. By \Cref{coaugmented2}, there exists a triad $T_1^*$ disjoint from $F \cup \{z\}$, and by \Cref{coaugmented3}(i), we have that $T_1^* = \{a^1,b^1,c^1\}$ such that $\{e_1,z,a^1,b^1\}$ and $\{e_5,z,b^1,c^1\}$ are circuits. Let $e \notin F \cup \{z\} \cup T_1^*$. By \Cref{coaugmented3}(ii), there is a triad $T_2^*$ containing $e$, which is disjoint from $F \cup T_1^* \cup \{z\}$. By \Cref{coaugmented3}(i), $T_2^* = \{a^2,b^2,c^2\}$ such that $\{e_1,z,a^2,b^2\}$ and $\{e_5,z,b^2,c^2\}$ are circuits. Furthermore, \Cref{disjoint_triads} implies that $M | (T_1^* \cup T_2^*) \cong M(K_{2,3})$. 
	
    It follows that there is a partition $(P_1,P_2,\ldots,P_m)$ of $E(M)$, with $m \geq 3$, such that $P_1 = F \cup \{z\}$ and $M \backslash P_1 \cong M(K_{3,m-1})$ and, for all $i \in \{2,3,\ldots,m\}$, the set $P_i = \{a^i,b^i,c^i\}$ is a triad such that $\{e_1,z,a^i,b^i\}$ and $\{e_5,z,b^i,c^i\}$ are circuits, so $P_1$ is a co-augmented-fan petal affixed to $P_i$.
    By \Cref{k3m_paddle}, we have that $(P_1,P_2,\ldots,P_m)$ is a paddle of $M$, so $M$ is a quasi-triad-paddle with a co-augmented-fan petal, as required.
\end{proof}

\subsection*{Putting it together}
\begin{proof} [Proof of \Cref{single_fans}]
    Let $F$ be a maximal fan of $M$ with length at least four. If $M$ has a maximal fan $G$, distinct from $F$, with length at least four, then \cref{unlike_odd_fans} implies that $M$ has a detachable pair. So we may assume that every $4$-element fan of $M$ is contained in $F$. If $F$ has even length, then every triangle or triad of $M$ is contained in $F$, and \Cref{simple_even_fan_spike} implies that $M$ is a wheel, a whirl, or an even-fan-spike. Otherwise, $F$ is odd. Up to duality, we may assume that the ends of $F$ are contained in triangles. This means that every triangle of $M$ is contained in $F$, so the theorem follows from \Cref{coaugmented}.
\end{proof}

\section{No \texorpdfstring{$4$}{4}-element fans} \label{no_fans}
Lastly, we assume that $M$ has no $4$-element fans. In this section, we establish the following theorem, which together with Theorems~\ref{disjoint_fans_with_like_ends}, \ref{intersecting_fans_detachable}, and \ref{single_fans} completes the proof of \Cref{detachable_main}.

\begin{theorem} \label{no_4_element_fans}
	Let $M$ be a $3$-connected matroid with no $4$-element fans such that $|E(M)| \geq 13$. Then one of the following holds:
	\begin{enumerate}
		\item $M$ has a detachable pair,
		\item $M$ is a spike,
		\item $M$ or $M^*$ is a triad-paddle,
		\item $M$ or $M^*$ is a hinged triad-paddle,
		\item $M$ is a tri-paddle-copaddle, or
        \item $M$ or $M^*$ is a quasi-triad-paddle with a quad or near-quad petal.
	\end{enumerate}
\end{theorem}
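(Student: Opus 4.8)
The plan is to assume $M$ has no detachable pairs and show it falls into one of the families (ii)--(vi). Since $M$ has no $4$-element fans, every triangle is disjoint from every triad (otherwise their union would be a $4$-element fan or contain one). The first step is a case analysis on whether $M$ has a triangle, a triad, or neither. If $M$ has neither a triangle nor a triad, then by Bixby's Lemma and \Cref{deletable_collection,contractable_collection} applied to suitable certificates---or more directly, by \Cref{detpairs_notriangles}---$M$ is a spike, giving~(ii). (A spike with $|E(M)| \ge 13$ has no $4$-element fans and no detachable pairs.) So we may assume, up to duality, that $M$ has a triad $T^*$; since $M$ has no $4$-element fans, $T^*$ is a maximal fan of length three.

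The main work is the case where $M$ has a triad. The strategy is to build a paddle structure. Fix a triad $T^* = \{a,b,c\}$; since it is not contained in a $4$-element fan, Tutte's Triangle Lemma (dual form) lets us contract two of its elements while preserving $3$-connectivity, and $(a, \{b,c\}, \{\dots\})$-type certificates become available. Using \Cref{disjoint_triads}, analyse how $T^*$ interacts with other triads of $M$: either two triads meet in a single common element (the ``hinge''), or $M \mid (T_1^* \cup T_2^*) \cong M(K_{2,3})$, or one is affixed to the other via $4$-element circuits. Repeatedly applying \Cref{disjoint_triads} and \Cref{deletable_collection_contractable_el} (to force every element outside the current union into a triad), one shows $E(M)$ has a partition $(P_1,\dots,P_m)$ with each $P_i$ a triad and each pair $P_i \cup P_j$ spanning just one more rank, so that \Cref{k3m_paddle} applies and $(P_1,\dots,P_m)$ is a paddle. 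This handles the ``all triads, nothing else'' subcase, yielding a triad-paddle~(iii). If in addition there is a triangle, it must be disjoint from all the triads; one then shows (again via \Cref{disjoint_triads} and its dual, together with \Cref{no_segment} to rule out rank-$2$ lines of size $\ge 4$) that the triangles themselves form a copaddle meeting the triad-paddle correctly, giving a tri-paddle-copaddle~(v), or that a single element is a ``hinge'' attaching all triads, giving a hinged triad-paddle~(iv). The remaining possibility is that the non-triad part $P_m$ is a $4$-element set that is a circuit-and-cocircuit (a quad) or a near-quad, affixed to each $P_i$ via $4$-element circuits as in the definitions of \cref{exceptionalmatroids}; here \Cref{quad_detachable} and \Cref{circuit_so_deletable} control deletability, and one checks that $(P_1,\dots,P_m)$ is still a paddle by \Cref{k3m_paddle}, giving~(vi).

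The hard part will be the bookkeeping needed to pin down the structure of the ``last petal'' $P_m$---i.e., proving that when $M$ has a triad but the triads do not partition $E(M)$, the leftover elements form exactly a quad, a near-quad, an $M(K_4)$-remnant (hinge), or a copaddle of triangles, with the precise circuit/cocircuit incidences demanded by the definitions. This requires carefully chaining circuit and cocircuit elimination with orthogonality, while repeatedly invoking \Cref{deletable_collection}, \Cref{contractable_collection}, \Cref{deletable_collection_contractable_el}, and \Cref{no_segment} to exclude any element that does not fit the pattern. A secondary obstacle is verifying that each identified family genuinely has no detachable pairs when $|E(M)| \ge 13$, though for most of these the relevant claim has already been asserted in \cref{exceptionalmatroids}, so it suffices to cite those observations. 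Finally, one must confirm the outcomes are mutually exclusive and that passing to the dual (where $M$ has a triangle but no triad) produces precisely the dual families $M^*$ listed in~(iii), (iv), and~(vi).
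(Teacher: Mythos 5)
Your overall frame (split on the presence of triangles/triads, quote \cref{detpairs_notriangles} for the no-triangle-no-triad case, then build paddles out of triads via \cref{disjoint_triads} and \cref{k3m_paddle}, using the certificate lemmas to force structure on leftover elements) matches the spirit of the paper, but there are two genuine gaps that your sketch would not survive.

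First, you have misread the role of two triads meeting in one element. In a hinged triad-paddle the triad petals $P_1,\dots,P_m$ are pairwise \emph{disjoint}; the ``hinge'' $x$ lies outside all of them, with $x\in\cl(P_m)$. Two triads sharing an element is not a building block of any of the target families: the paper shows this configuration is impossible for a matroid with no detachable pairs, no $4$-element fans and $|E(M)|\ge 13$. Concretely, one proves that no element outside $T_1^*\cup T_2^*$ can be contracted keeping $3$-connectivity, that $T_1^*$ and $T_2^*$ are the only triads, and that at most one element lies in neither a triangle nor a triad; combining this with the dual statement for triangles forces $|E(M)|\le 11$, a contradiction. Your plan instead tries to absorb this case into outcome (iv), where \cref{k3m_paddle} (which needs disjoint triad petals) cannot even be invoked, so the case analysis would break down rather than close.

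Second, two substantial portions of the argument are missing entirely. (a) The case where $M$ has exactly one triad and no triangle (and the case of exactly one triangle and exactly one triad): here none of the paddle machinery applies, and the paper needs a separate chain of lemmas to show $M$ must have a detachable pair; your sketch never considers a matroid with a unique triad. (b) In the disjoint-triads case, the decision between a quad/near-quad petal, a hinged triad-paddle, and a triad-paddle/tri-paddle-copaddle is driven by a specific dichotomy --- whether there exists an element $e$ not in a triad with $M/e$ $3$-connected --- and the quad/near-quad structure is then extracted from explicit $4$-element circuits through $e$ and the triads together with a $4$-element cocircuit $\{e,f,g,h\}$. You assert that ``the remaining possibility is that the non-triad part $P_m$ is a quad or near-quad'' but give no mechanism for proving the leftover part has exactly four elements with those circuit/cocircuit incidences; this is the core of the theorem, and labelling it ``bookkeeping'' leaves the proof incomplete.
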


\subsection*{Intersecting triads}
First, we consider the case where $M$ has two triads $T^*_1$ and $T^*_2$ with non-empty intersection. Suppose that $M$ has no detachable pairs. Using \cref{no_segment}, $|T^*_1 \cap T^*_2| = 1$.  \Cref{intersecting_plus_contractable} handles the case where there is an element~$e \notin T^*_1 \cup T^*_2$ such that $M/e$ is $3$-connected.  When there is no such element, \cref{intersecting_two_elements} handles the case where there are at least two elements not contained in a triangle or a triad. Together with \cref{intersecting_no_other_triads}, which shows that $T^*_1$ and $T^*_2$ are the only two triads of $M$, and duality, these bound $|E(M)|$.

Note that the next lemma applies even when $M$ has $4$-element fans.

\begin{lemma} \label{intersecting_triads_contractable}
    Let $M$ be a $3$-connected matroid. Let $T_1^* = \{t,a_1,a_2\}$ and $T_2^* = \{t,b_1,b_2\}$ be triads of $M$ such that $|T^*_1\cap T^*_2|=1$. Let $e \in E(M)-(T_1^* \cup T_2^*)$ such that $M / e$ is $3$-connected and $\{e,t,a_1,b_1\}$ is a circuit of $M$. Then $\si(M / a_2)$ is $3$-connected and $\si(M / b_2)$ is $3$-connected.
\end{lemma}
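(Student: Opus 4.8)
The plan is to exploit the symmetry of the hypotheses: interchanging $(a_1,a_2,T_1^*)$ with $(b_1,b_2,T_2^*)$ fixes the circuit $\{e,t,a_1,b_1\}$ and leaves all other data unchanged, so it suffices to prove that $\si(M/a_2)$ is $3$-connected; the statement for $b_2$ then follows at once.

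Suppose for a contradiction that $\si(M/a_2)$ is not $3$-connected. By \cref{basic_vertsep}, $M$ has a vertical $3$-separation $(X,\{a_2\},Y)$, so $a_2\in\cl(X)\cap\cl(Y)$, $r(X),r(Y)\ge 3$, and $\lambda(X)=\lambda(Y)=2$. Since $\lambda(X\cup\{a_2\})=\lambda(E(M)-(X\cup\{a_2\}))=\lambda(Y)=2=\lambda(X)$ and $a_2\in\cl(X)$, \cref{clconnectivity} forces $a_2\notin\cl^*(X)$; symmetrically $a_2\notin\cl^*(Y)$. As $T_1^*=\{t,a_1,a_2\}$ is a cocircuit, $a_2\in\cl^*(\{t,a_1\})$, so $\{t,a_1\}$ cannot be contained in $X$ and, symmetrically, cannot be contained in $Y$; since $(X,\{a_2\},Y)$ and $(Y,\{a_2\},X)$ describe the same separation, we may assume $t\in X$ and $a_1\in Y$. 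Moreover, if $a_1\in\cl(X)$ then \cref{fcl_vert_sep}(i) yields the vertical $3$-separation $(X\cup\{a_1\},\{a_2\},Y-\{a_1\})$, and the argument just given, applied to this separation, shows $a_2\notin\cl^*(X\cup\{a_1\})$; but $\{t,a_1\}\subseteq X\cup\{a_1\}$ gives $a_2\in\cl^*(X\cup\{a_1\})$, a contradiction, so $a_1\notin\cl(X)$. The symmetric argument (moving $t$ into the $Y$-side via \cref{fcl_vert_sep}(i)) shows $t\notin\cl(Y)$.

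With this foundation in place, I would bring in the circuit $C=\{e,t,a_1,b_1\}$, the remaining triad $T_2^*=\{t,b_1,b_2\}$, and the $3$-connectivity of $M/e$. One checks that $C\cap T_1^*=\{t,a_1\}$ and $C\cap T_2^*=\{t,b_1\}$, and then analyses the positions of $e$, $b_1$ and $b_2$ relative to $(X,Y)$. If $e$ and $b_1$ both lie in $X$, then $C$ forces $a_1\in\cl(X)$, contradicting the above; the remaining cases are closed using two tools. First, \cref{fcl_vert_sep} (together with its dual and the co-closure facts $a_2\notin\cl^*(X)\cup\cl^*(Y)$) is applied repeatedly to slide elements of $C$ and of $T_2^*$ across the separation, each time either forcing $\{t,a_1\}$ or $\{t,b_1\}$ onto one side — whence $a_2\in\cl^*$ of that side via $T_1^*$ or $T_2^*$, a contradiction — or producing a cocircuit inside $X$ or inside $Y$ that contains $a_2$, again contradicting $a_2\notin\cl^*(X)\cup\cl^*(Y)$. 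Second, \cref{delconnectivity} is used to convert a set $Z$ with $e\notin Z$, $e\in\cl(Z)$ and $\lambda_M(Z)\le 2$ into a set with $\lambda_{M/e}(Z)\le 1$ and both $Z$ and $E(M/e)-Z$ of size at least two, contradicting the $3$-connectivity of $M/e$; the required $Z$ is built from $X$ or $Y$ by adjoining the one or two elements of $C$ spanned by $e$. The statement for $b_2$ then follows by the symmetry noted at the outset.

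The main obstacle is exactly this last step: the bookkeeping of the case analysis on where $e$, $b_1$, and $b_2$ sit relative to $(X,\{a_2\},Y)$, and carefully tracking how $\lambda$ behaves under contraction of $e$ (via \cref{delconnectivity} and \cref{clconnectivity}) so that the $3$-connectivity of $M/e$ can actually be invoked. A secondary subtlety is that $a_2$ may itself lie in a triangle of $M$ — equivalently, in a $4$-element fan, since any such triangle together with $T_1^*$ forms one — so \cref{fan_vert_sep} is not available with $a_2$ as the contracted element, and those sub-cases must be settled by the direct closure and orthogonality arguments above rather than by sweeping a fan to one side of the separation.
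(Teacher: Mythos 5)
Your preliminary reductions are correct and consistent with the paper's setup: the symmetry reducing to $a_2$, the deduction via \cref{clconnectivity} that $a_2\notin\cl^*(X)\cup\cl^*(Y)$, the conclusion that $t$ and $a_1$ lie on opposite sides of the separation, and the facts $a_1\notin\cl(X)$, $t\notin\cl(Y)$ obtained through \cref{fcl_vert_sep}(i). But the proof stops exactly where the real work begins. The case analysis on the positions of $e$, $b_1$, $b_2$ is only described, not carried out, and you yourself identify this bookkeeping as the main obstacle; as written this is a plan rather than a proof. Worse, one of the two mechanisms you propose for closing the remaining cases is wrong as stated: forcing $\{t,b_1\}$ onto one side of the separation places $b_2$ (not $a_2$) in the coclosure of that side via $T_2^*$, which does not conflict with $a_2\notin\cl^*(X)\cup\cl^*(Y)$. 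In particular, in the sub-case $a_1,e,b_1,b_2\in Y$ and $t\in X$ (exactly the configuration your $\{t,b_1\}$ step is meant to handle), neither of your tools produces a contradiction without further argument: the only set one readily gets is $Y\cup\{a_2,t\}$ with $\lambda\le 2$, and since $e$ already lies in $Y$ this does not feed your $\lambda_{M/e}$ argument, nor is $e\in\cl(X\cup\{a_1,a_2\})$ available because $b_1\notin X$. You would also need to verify, when you do invoke the $3$-connectivity of $M/e$, that both sides of the resulting separation of $M/e$ have at least two elements; this is never addressed.

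For comparison, the paper sidesteps the case analysis entirely: using \cref{fcl_vert_sep,fan_vert_sep} it first normalises the vertical $3$-separation $(X,\{a_2\},Y)$ so that $T_2^*\subseteq X$ and $X\cup\{a_2\}$ is closed; then $a_1\in Y$ (else $a_2\in\cl^*(X)$), then $e\in Y$ (else the circuit $\{e,t,a_1,b_1\}$ gives $a_1\in\cl(X)$), and finally $\lambda(X\cup\{a_1,a_2\})=2$ together with $e\in\cl(X\cup\{a_1,a_2\})$ contradicts the $3$-connectivity of $M/e$. Your closing remark that $a_2$ may lie in a triangle, so that \cref{fan_vert_sep} cannot be applied verbatim with $a_2$ as the contracted element, is a fair observation about that normalisation; but having declined to use it, the burden falls on you to supply the alternative case-by-case argument, and the proposal does not do so. The gap is therefore genuine: the heart of the proof — deriving the contradiction with the $3$-connectivity of $M/e$ in all placements of $e$, $b_1$, $b_2$ — is missing, and the sketched route for the missing cases does not work as described.
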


\begin{proof}
    We prove that $\si(M / a_2)$ is $3$-connected. The proof that $\si(M / b_2)$ is $3$-connected follows by symmetry. Suppose $\si(M / a_2)$ is not $3$-connected. Then $M$ has a vertical $3$-separation $(X,\{a_2\},Y)$. By \cref{fcl_vert_sep,fan_vert_sep}, we may assume that $T_2^* \subseteq X$ and $X \cup \{a_2\}$ is closed. Then $a_1 \in Y$, as otherwise $a_2 \in \cl^*(X)$. This further implies that $e \in Y$, as otherwise $a_1 \in \cl(X)$. Now $\lambda(X \cup \{a_1,a_2\}) = 2$. But $e \in \cl(X \cup \{a_1,a_2\})$, which contradicts the fact that $M / e$ is $3$-connected, since $|Y - \{a_1\}| \geq 2$.
\end{proof}

\begin{lemma} \label{intersecting_plus_contractable}
    Let $M$ be a $3$-connected matroid with no $4$-element fans such that $|E(M)| \geq 12$. Let $T_1^*$ and $T_2^*$ be triads of $M$ with $|T_1^* \cap T_2^*| = 1$, and let $e \in E(M)-(T_1^* \cup T_2^*)$ such that $M / e$ is $3$-connected. Then $M$ has a detachable pair.
\end{lemma}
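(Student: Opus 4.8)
The plan is to assume that $M$ has no detachable pair and derive a contradiction. Write $T_1^*\cap T_2^*=\{t\}$, say $T_1^*=\{t,a_1,a_2\}$ and $T_2^*=\{t,b_1,b_2\}$. The first step is to observe that, since $M$ has no $4$-element fan, orthogonality forces none of $t,a_1,a_2,b_1,b_2$ to lie in a triangle of $M$: any triangle meeting $T_1^*$ must, by orthogonality with $T_1^*$, contain at least two elements of $T_1^*$, and such a triangle extends $T_1^*$ to a $4$-element fan; the same argument with $T_2^*$ handles $b_1,b_2$. In particular $\si(M/x)=M/x$ for each of these five elements.

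The second step produces the hypothesis of \cref{intersecting_triads_contractable}. As $M/e$ is $3$-connected and $M$ has no detachable pair, $M/e/x$ fails to be $3$-connected for every $x\ne e$; in particular this holds for $x=t$ and $x=a_1$. Since $T_1^*$ is a triad of $M/e$, Tutte's Triangle Lemma (applied in $M/e$) produces a triangle of $M/e$ containing $\{t,a_1\}$ or $\{t,a_2\}$; as $t$ lies in no triangle of $M$, adjoining $e$ gives a $4$-element circuit $\{e,t,a_i,x\}$ of $M$, and orthogonality with $T_2^*$ forces $x\in\{b_1,b_2\}$. After relabelling, $\{e,t,a_1,b_1\}$ is a circuit of $M$, so \cref{intersecting_triads_contractable} gives that $\si(M/a_2)$ and $\si(M/b_2)$ — and hence, by the first step, $M/a_2$ and $M/b_2$ — are $3$-connected.

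I would then check that $(t,\{a_1,a_2\},\{\{b_1,b_2\}\})$ is a contraction certificate: the sets $\{a_1,a_2\},\{b_1,b_2\}$ are disjoint, $t\in\cl^*(\{a_1,a_2\})\cap\cl^*(\{b_1,b_2\})$ by the triads, $\lambda(\{a_1,a_2\})=2$ (as $\{a_1,a_2\}$ is independent and lies in the triad $T_1^*$), and $t$ lies in no triangle. The goal is now to exhibit a set $X$ with $T_1^*\cup T_2^*\subseteq X$, $e\notin X$, $\lambda(X)=2$ and $|E(M)|\ge|X|+3$: then $X$ contains this contraction certificate, so \cref{contractable_collection} forces $M/e$ to be not $3$-connected, the desired contradiction. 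The natural choice is $X=T_1^*\cup T_2^*$ (five elements, excluding $e$), so the crux is to show $\lambda(T_1^*\cup T_2^*)=2$. Here $r^*(T_1^*\cup T_2^*)\le 3$ by submodularity, while $r^*(T_1^*\cup T_2^*)\ge 3$ by \cref{no_segment} applied in $M^*$ (four coplanar elements of $M^*$ would yield a detachable pair of $M^*$, hence of $M$); so $\lambda(T_1^*\cup T_2^*)=r(T_1^*\cup T_2^*)-2$, and since $3$-connectivity gives $r(T_1^*\cup T_2^*)\ge 4$, it suffices to show $T_1^*\cup T_2^*$ is dependent. Applying Tutte's Triangle Lemma to $T_2^*$ in $M/e$ in the same way produces a second $4$-element circuit $\{e,t,b_k,a_l\}$; if it differs from $\{e,t,a_1,b_1\}$, then strong circuit elimination (removing $e$) gives a circuit inside $\{t,a_1,b_1,a_l,b_k\}\subseteq T_1^*\cup T_2^*$, so we are done.

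The hard part is the degenerate sub-case in which this procedure only ever returns the circuit $\{e,t,a_1,b_1\}$: then no triangle of $M/e$ contains $\{t,a_2\}$ or $\{t,b_2\}$, and running the remaining orderings of Tutte's Triangle Lemma on $T_1^*$ and $T_2^*$ shows $M/e$ has triangles $\{t,a_1,b_1\}$, $\{a_1,a_2,w\}$, $\{b_1,b_2,w'\}$, which together with $T_1^*,T_2^*$ make $(a_2,a_1,t,b_1,b_2)$ a $5$-element fan of $M/e$, whence $\lambda(T_1^*\cup T_2^*)=3$ and the previous argument breaks. This case needs a separate treatment: after first dealing with the possibility that $M/e$ is a wheel or a whirl, one argues that this fan cannot be maximal in $M/e$ (by \cref{fan_ends}, $a_2$ and $b_2$ would then lie in no triangle of $M/e$, contrary to the triangles $\{a_1,a_2,w\}$ and $\{b_1,b_2,w'\}$), so it extends; the extension propagates the intersecting-triad structure back into $M$, and combining this with $M/a_2$ and $M/b_2$ being $3$-connected, repeated use of orthogonality, circuit elimination and \cref{no_segment}, and enlarging $X$ to absorb $w$ and $w'$ before invoking \cref{contractable_collection}, forces a contradiction with $|E(M)|\ge 12$. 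In every case $M$ has a detachable pair.
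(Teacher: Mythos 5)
Your first steps are sound and essentially parallel the paper's: orthogonality plus the no-$4$-element-fan hypothesis shows no element of $T_1^*\cup T_2^*$ lies in a triangle; $(t,\{a_1,a_2\},\{\{b_1,b_2\}\})$ is a contraction certificate; and a $4$-element circuit $\{e,t,a_1,b_1\}$ exists (the paper gets it from the dual of \cref{contractable_circuit}, you get it from Tutte's Triangle Lemma in $M/e$ — both fine), after which \cref{intersecting_triads_contractable} gives that $M/a_2$ and $M/b_2$ are $3$-connected. Your ``non-degenerate'' branch is also correct: a second circuit $\{e,t,b_k,a_l\}$ distinct from $\{e,t,a_1,b_1\}$ yields, by circuit elimination, a circuit inside $T_1^*\cup T_2^*$, whence $\lambda(T_1^*\cup T_2^*)=2$ and \cref{contractable_collection} contradicts the $3$-connectivity of $M/e$.

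The genuine gap is the ``degenerate sub-case'', which you only sketch — and it is not a corner case but the whole lemma. Indeed, the paper's first move is to apply \cref{contractable_collection} to conclude $\lambda(T_1^*\cup T_2^*)>2$, i.e.\ $T_1^*\cup T_2^*$ is independent whenever $M$ has no detachable pair; so your non-degenerate branch is an immediate contradiction and all of the substance lies exactly where you stop. Your proposed route does not close it: in this case $w,w'\in\cl(T_1^*\cup T_2^*)$ (otherwise elimination again gives the dependence you want), so $T_1^*\cup T_2^*\cup\{w,w'\}$ still has rank $5$, and with no additional cocircuit its corank is only bounded by $5$, giving $\lambda\le 3$ rather than $\lambda=2$; ``enlarging $X$ to absorb $w$ and $w'$ before invoking \cref{contractable_collection}'' therefore needs precisely the extra cocircuit whose construction is the hard part. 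The paper produces it by using the $3$-connectivity of $M/a_2$ and $M/b_2$ with the dual of \cref{contractable_circuit} to obtain circuits $\{f,t,a_2,b_2\}$, $\{g,t,a_2,b_1\}$, $\{h,t,a_1,b_2\}$ through three new elements, then splitting on whether $g$ or $h$ lies in a triad, using \cref{circuit_so_deletable}, \cref{intersecting_triads_contractable} applied inside $M\backslash g$, and, in the triad case, building a deletion certificate and invoking \cref{no_other_elements}. None of this (nor a workable substitute) appears in your sketch; you also leave unaddressed the possibilities that $M/e$ is a wheel or whirl and that $w=w'$, both of which your fan-extension argument presupposes are excluded. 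As it stands, the proof is incomplete at its main step.
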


\begin{proof}
    Suppose, towards a contradiction, that $M$ has no detachable pairs.  Let $T_1^* = \{t,a_1,a_2\}$ and $T_2^* = \{t,b_1,b_2\}$. Note that $(t,\{a_1,a_2\},\{\{b_1,b_2\}\})$ is a contraction certificate. Since $e \notin T_1^* \cup T_2^*$ and $M / e$ is $3$-connected, \Cref{contractable_collection} implies that $\lambda(T_1^* \cup T_2^*) > 2$. In particular, this means that $T_1^* \cup T_2^*$ is independent.
	
    By the dual of \cref{contractable_circuit}, there is a $4$-element circuit $C_1$ of $M$ containing $\{e,t\}$. By orthogonality, and without loss of generality, $C_1 = \{e,t,a_1,b_1\}$. By \Cref{intersecting_triads_contractable}, we have that $\si(M / a_2)$ is $3$-connected and, since $a_2$ is not contained in a triangle, $M / a_2$ is $3$-connected. This implies, by the dual of \Cref{contractable_circuit} and orthogonality, that $M$ has a $4$-element circuit $C_2$ containing $\{a_2,b_2\}$, an element of $\{t,a_1\}$, and an element of $\{t,b_1\}$. Furthermore, $C_2 \not \subseteq T_1^* \cup T_2^*$, and if $e \in C_2$, then circuit elimination between $C_1$ and $C_2$ implies that $M$ has a circuit contained in $T_1^* \cup T_2^*$. Therefore, $C_2 = \{f,t,a_2,b_2\}$ with $f \notin T_1^* \cup T_2^* \cup \{e\}$. Similarly, $M$ has a circuit $C_3$ containing $\{a_2,b_1\}$, and $C_3 = \{g,t,a_2,b_1\}$ with $g \notin T_1^* \cup T_2^* \cup \{e,f\}$. \Cref{intersecting_triads_contractable} also implies that $M / b_2$ is $3$-connected, so $M$ has a $4$-element circuit $C_4 = \{h,t,a_1,b_2\}$ with $h \notin T_1^* \cup T_2^* \cup \{e,f,g\}$.
	
    Now, $C_3 = \{g,t,a_2,b_1\}$ is a $4$-element circuit for which $\{t,b_1\}$ is contained in a triad, and $M / a_2$ is $3$-connected. \Cref{circuit_so_deletable} implies that either $g$ is contained in a triad, or $M \backslash g$ is $3$-connected. Symmetrically, either $h$ is contained in a triad or $M \backslash h$ is $3$-connected.
    
    First, suppose neither $g$ nor $h$ is contained in a triad of $M$. In $M / e$, the set $(a_2,a_1,t,b_1)$ is a fan, and $g \in \cl(\{a_2,a_1,t,b_1\})$. Since $g$ is not contained in a triad of $M / e$, \Cref{closure_deletable} implies that $M \backslash g / e$ is $3$-connected. Furthermore, $\{t,a_1,a_2\}$ and $\{t,b_1,b_2\}$ are triads of $M \backslash g$, and $\{e,t,a_1,b_1\}$ is a circuit of $M \backslash g$. Hence, by \Cref{intersecting_triads_contractable}, the matroid $M \backslash g / b_2$ is $3$-connected. Now, the element $h$ is contained in a circuit $\{h,t,a_1,b_2\}$ of $M \backslash g$ such that $M \backslash g / b_2$ is $3$-connected and $\{t,a_1\}$ is contained in a triad of $M \backslash g$. Since $M$ has no detachable pairs, \Cref{circuit_so_deletable} implies that $h$ is contained in a triad of $M \backslash g$. Since $h$ is not contained in a triad of $M$, this implies $M$ has a $4$-element cocircuit $C^*$ containing $\{g,h\}$. Orthogonality with $C_2$, $C_3$, and $C_4$ imply that $C^* \subseteq T_1^* \cup T_2^* \cup \{f,g,h\}$. But now $\lambda(T_1^* \cup T_2^* \cup \{f,g,h\}) = 2$, and $e \notin T_1^* \cup T_2^* \cup \{f,g,h\}$. This contradicts \Cref{contractable_collection}.
    
    Therefore, either $g$ or $h$ is contained in a triad of $M$. Without loss of generality, assume that $g$ is contained in a triad $T^*$. By orthogonality, $T^*$ is contained in $T_1^* \cup T_2^* \cup \{e,f,g,h\}$. If $e \notin T^*$, then $\lambda(T_1^* \cup T_2^* \cup \{f,g,h\}) = 2$, which contradicts \Cref{contractable_collection}. Hence, $e \in T^*$, so $T^* = \{g,e,b_1\}$. 
    
    Now, $\lambda(T_1^* \cup T_2^* \cup \{e,g\}) = 2$, and $f \in \cl(T_1^* \cup T_2^* \cup \{e,g\})$. We will show that $(f,T_1^* \cup T_2^* \cup \{e,g\}, \{\{g,b_1,b_2\},\{h,a_1,a_2\}\})$ is a deletion certificate. Circuit elimination between $C_2$ and $C_3$ implies that $M$ has a circuit contained in $\{f,g,a_2,b_1,b_2\}$. Orthogonality and the fact that $b_1$ and $b_2$ are not contained in triangles imply that this circuit is $\{f,g,b_1,b_2\}$. Similarly, circuit elimination between $C_2$ and $C_4$ implies that $\{f,h,a_1,a_2\}$ is a circuit of $M$. Hence, $f \in \cl(\{g,b_1,b_2\})$ and $f \in \cl(\{h,a_1,a_2\})$. Furthermore, if $f$ is contained in a triad, then this triad contains an element of $\{g,b_1,b_2\}$ and an element of $\{h,a_1,a_2\}$, in which case $f \in \cl(T_1^* \cup T_2^* \cup \{e,g,h\}) \cap \cl^*(T_1^* \cup T_2^* \cup \{e,g,h\})$, which contradicts that $\lambda(T_1^* \cup T_2^* \cup \{e,g,h\}) = 2$. Thus, $f$ is not contained in a triad, so $(f, T_1^* \cup T_2^* \cup \{e,g\}, \{\{g,b_1,b_2\},\{h,a_1,a_2\}\})$ is a deletion certificate. But now $T_1^* \cup T_2^* \cup \{e,f,g,h\}$ contains both a contraction certificate and a deletion certificate, and $\lambda(T_1^* \cup T_2^* \cup \{e,f,g,h\}) = 2$. This contradicts \Cref{no_other_elements}, since $M$ has no $4$-element fans, and completes the proof.
\end{proof}

\begin{lemma} \label{intersecting_no_other_triads}
	Let $M$ be a $3$-connected matroid with no detachable pairs and no $4$-element fans such that $|E(M)| \geq 12$. Let $T_1^*$ and $T_2^*$ be triads of $M$ such that $|T_1^* \cap T_2^*| = 1$. Then $M$ has no other triads.
\end{lemma}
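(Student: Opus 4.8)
The plan is to argue by contradiction: suppose $M$ has a third triad $T^*$ distinct from $T_1^*$ and $T_2^*$, and derive a detachable pair, contradicting the hypothesis. Write $T_1^* = \{t,a_1,a_2\}$ and $T_2^* = \{t,b_1,b_2\}$, so that $T_1^* \cup T_2^* = \{t,a_1,a_2,b_1,b_2\}$. The first step is to pin down how $T^*$ meets $T_1^* \cup T_2^*$. Since $M$ has no detachable pair, \cref{no_segment} applies to rule out any four elements of rank two; in particular $|T_1^* \cap T_2^*| = 1$ as stated, and more importantly, if $T^*$ met $T_1^*$ in two elements then $T^* \cup T_1^*$ would have at most four elements spanning a rank-$2$ cocircuit-closure, forcing a contradiction via orthogonality and \cref{no_segment} (or directly $\lambda(T_1^* \cup T^*) \le 1$). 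A short case analysis on $|T^* \cap (T_1^* \cup T_2^*)|$, using orthogonality of $T^*$ with the triads $T_1^*, T_2^*$ and with whatever $4$-element circuits pass through them, should reduce to one of a few configurations: either $T^*$ is disjoint from $T_1^* \cup T_2^*$, or $t \in T^*$, or $T^*$ contains exactly one of $a_1,a_2,b_1,b_2$.

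\textbf{Key steps.}
Next I would deal with each configuration. If $t \in T^*$ and $T^* \not\subseteq T_1^* \cup T_2^*$, then $T^* = \{t, x, y\}$ with $x \in \{a_1,a_2,b_1,b_2\}$ say (by orthogonality with one of $T_1^*, T_2^*$), or $T^*$ is disjoint from $\{a_1,a_2,b_1,b_2\}$; in the latter case $\{t,a_1,a_2,b_1,b_2\}$ together with $T^*$ yields $\lambda$-small sets and a deletion/contraction certificate as in the proof of \cref{intersecting_plus_contractable}. The main tool throughout is that $(t,\{a_1,a_2\},\{\{b_1,b_2\}\})$ is a contraction certificate, and $\lambda(T_1^* \cup T_2^*) = 2$ whenever $T_1^* \cup T_2^*$ fails to be independent --- but by \cref{contractable_collection}, if $T_1^* \cup T_2^*$ has $\lambda$-value $2$ then every element outside it is contained in a triangle, and since $M$ has no $4$-element fans, one can then invoke \cref{contractable_collection_deletable_el} or Bixby's Lemma to produce a deletable element outside the certificate. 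The case where $T^*$ is disjoint from $T_1^* \cup T_2^*$ is the richest: here one contracts $t$ (noting $\si(M/t)$ behaviour via Bixby), or more likely one first establishes, using the dual of \cref{contractable_circuit} and \cref{intersecting_triads_contractable}, that there are $4$-element circuits threading $t$ through pairs of $\{a_1,a_2,b_1,b_2\}$ and through $T^*$, and then a cocircuit/circuit elimination argument packs everything into a set $Z$ with $\lambda(Z) = 2$ containing both a contraction and a deletion certificate, contradicting \cref{no_other_elements} since $M$ has no $4$-element fans.

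\textbf{Main obstacle.}
The hard part will be the bookkeeping in the configuration where $T^*$ is disjoint from $T_1^* \cup T_2^*$: one needs to generate enough $4$-element circuits and cocircuits (via \cref{contractable_circuit}, \cref{deletable_cocircuit}, \cref{circuit_so_deletable}, and \cref{intersecting_triads_contractable}) to trap all the relevant elements inside a single $3$-separating set, while carefully tracking which new elements coincide and which are genuinely new, and ensuring the certificate-defining element lies outside. This mirrors closely --- and can partly reuse --- the machinery in the proof of \cref{intersecting_plus_contractable}; indeed I expect the cleanest route is to observe that if $M$ had a third triad then, after possibly dualising, we could find an element $e \notin T_1^* \cup T_2^*$ with $M/e$ $3$-connected (using Bixby on an element of $T^*$, since a triad not meeting a triangle has a contractible element by Tutte's Triangle Lemma, and $M$ has no $4$-element fans so $T^*$ with any triangle would be problematic), and then \cref{intersecting_plus_contractable} applies directly to give a detachable pair. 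The residual difficulty is handling the case where no such $e$ exists, i.e. every element outside $T_1^* \cup T_2^*$ lies in a triangle or triad and no contraction preserves $3$-connectivity --- here one argues via \cref{contractable_collection1} and duality that the matroid is essentially exhausted by these structures, forcing $|E(M)| < 12$, the final contradiction.
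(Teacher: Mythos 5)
Your reduction and your ``cleanest route'' do work for every configuration you list, but the case analysis omits the one configuration that is genuinely hard, and it is the only one that survives the easy arguments. Writing $T_1^*=\{t,a_1,a_2\}$ and $T_2^*=\{t,b_1,b_2\}$, nothing rules out a third triad of the form $T_3^*=\{a_i,b_j,c\}$ with exactly one new element $c$: orthogonality constrains circuits against cocircuits, not cocircuits against cocircuits, and the dual of \cref{no_segment} only forbids a triad sharing \emph{two} elements with a single one of $T_1^*,T_2^*$. In that configuration your main argument breaks: Tutte's Triangle Lemma only guarantees that at most one element of $T_3^*$ is non-contractible, and that exceptional element may well be $c$, so the two contractible elements can be $a_i$ and $b_j$, both inside $T_1^*\cup T_2^*$; moreover \cref{intersecting_plus_contractable} cannot be applied to any of the pairwise intersection elements, since each of them lies in two of the three triads, so no pair of triads certifies a contradiction. (The ``after possibly dualising'' remark does not help either: the dual of \cref{intersecting_plus_contractable} requires two triangles meeting in one element, which you are not given.)

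Your fallback for the residual case --- that every element outside $T_1^*\cup T_2^*$ lies in a triangle or triad, and a count via \cref{contractable_collection1} forces $|E(M)|<12$ --- is not a viable substitute. First, the residual case is not that one; it is exactly the omitted configuration above. Second, \cref{contractable_collection1} needs a $3$-separating set containing a contraction certificate, and $\lambda(T_1^*\cup T_2^*)$ need not equal $2$ (the proof of \cref{intersecting_plus_contractable} shows it exceeds $2$ whenever a contractible element exists outside). Third, the count ``at most five elements in triads, at most five in triangles, at most one in neither'' that appears at the end of the proof of \cref{no_4_element_fans} relies on the present lemma, so invoking it here would be circular. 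The paper closes the gap differently: after reducing (exactly as in your easy cases) to triads $T_1^*=\{a_1,b_1,a_2\}$, $T_2^*=\{a_2,b_2,a_3\}$, $T_3^*=\{a_3,b_3,a_1\}$, it applies \cref{intersecting_plus_contractable} three times to conclude that none of $M/b_1$, $M/b_2$, $M/b_3$ is $3$-connected, then takes a vertical $3$-separation $(X,\{b_3\},Y)$ with $T_1^*\subseteq X$ and $X$ coclosed (via \cref{fcl_vert_sep,fan_vert_sep}), forces $a_3,b_2\in Y$, and shows $b_2\in\cl^*(X\cup\{b_3,a_3\})$ with $|Y-\{a_3,b_2\}|\ge 2$, whence $M/b_2$ is $3$-connected after all --- a contradiction. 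Some argument of this kind is required for the omitted configuration, and your outline does not supply one.
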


\begin{proof}
    Suppose $M$ has a triad $T_3^*$ that is distinct from $T_1^*$ and $T_2^*$. If $|T_3^* - (T_1^* \cap T_2^*)| \geq 2$, then Tutte's Triangle Lemma implies that there exists $x \in T_3^* - (T_1^* \cap T_2^*)$ such that $M / x$ is $3$-connected, a contradiction to \Cref{intersecting_plus_contractable}. Thus, $|T_3^* - (T_1^* \cap T_2^*)| = 1$. By the dual of \Cref{no_segment}, we have that $|T_1^* \cap T_3^*| = 1$ and $|T_2^* \cap T_3^*| = 1$. This means that we can label the elements of $T_1^*$, $T_2^*$, and $T_3^*$ such that $T_1^* = \{a_1,b_1,a_2\}$, and $T_2^* = \{a_2,b_2,a_3\}$, and $T_3^* = \{a_3,b_3,a_1\}$. By \Cref{intersecting_plus_contractable}, none of $M / b_1$, $M / b_2$, and $M / b_3$ are $3$-connected.  So, by \cref{fcl_vert_sep,fan_vert_sep}, $M$ has a vertical $3$-separation $(X,\{b_3\},Y)$ such that $T_1^* \subseteq X$ and $X$ is coclosed. This implies $a_3 \in Y$, as otherwise $b_3 \in \cl^*(X)$, and, in turn, $b_2 \in Y$, as otherwise $a_3 \in \cl^*(X)-X$. But now $\lambda(X \cup \{b_3,a_3\}) = 2$, and $b_2 \in \cl^*(X \cup \{b_3,a_3\})$. Since no triad of $M$ intersects $T^*_2$ in two elements, $|Y - \{b_2,a_3\}| \geq 2$, and so $M / b_2$ is $3$-connected, a contradiction.
\end{proof}

The next lemma will be useful throughout this section.

\begin{lemma} \label{annoying_elements}
    Let $M$ be a $3$-connected matroid with no detachable pairs. Suppose that, for all $x \in E(M)$, if $x$ is not contained in a triad, then $M / x$ is not $3$-connected. Suppose there exist distinct $e, f \in E(M)$ such that neither $e$ nor $f$ is contained in a triangle or a triad. Then
    \begin{enumerate}
        \item there is a $4$-element cocircuit~$C^*$ containing $\{e,f\}$, and
        \item there is a triad $T^*$ such that $T^* \cap C^* = \{g\}$ for some $g \in E(M)-\{e,f\}$, and $M / g$ is $3$-connected.
    \end{enumerate}
\end{lemma}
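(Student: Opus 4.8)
The plan is to use Bixby's Lemma together with the hypotheses to force cocircuit structure around $e$ and $f$. First, since $e$ is not in a triad, the hypothesis gives that $M/e$ is not $3$-connected; since $e$ is not in a triangle either, $\si(M/e)$ is not $3$-connected, so by Bixby's Lemma $\co(M\backslash e)$ is $3$-connected, and as $e$ is not in a triad, $M\backslash e$ itself is $3$-connected. Now apply \cref{deletable_cocircuit}: we need a set $X$ with $f\in\cl(X)-(X\cup\{e\})$, $\lambda(X)=2$, and $f$ not in a triad. Take $X=E(M)-\{e,f\}$; then $\lambda(X)=\lambda(\{e,f\})\le 2$, and in fact $\lambda(X)=2$ (a $k$-separation argument: if $\lambda(\{e,f\})\le 1$, then $M$ has a $2$-separation since $|E(M)|\ge 4$, as $M$ is $3$-connected and not tiny — here I would want $|E(M)|$ large enough, which follows since $M$ has two elements in no triangle or triad, forcing $|E(M)|$ to be at least a small constant; alternatively note $\{e,f\}$ is $3$-separating and if it were $2$-separating then, since $e$ is not a coloop and $|E(M)|\ge 4$, we contradict $3$-connectivity). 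Also $f\in\cl(E(M)-\{e,f\})$ automatically, since $f$ is not a coloop. Then \cref{deletable_cocircuit} (applied with this $X$ and with $\lambda(X)=2$) yields a $4$-element cocircuit $C^*=\{e,f,g,h\}$ with $g\in X$, $h\notin X$; but $h\notin X$ means $h\in\{e,f\}$, which is impossible since $C^*$ has four distinct elements — so actually I should apply \cref{deletable_cocircuit} more carefully, perhaps with $X$ chosen so that the conclusion directly gives $C^*\ni\{e,f\}$. The cleanest route: since $M\backslash e$ is $3$-connected and $f$ is not in a triad of $M$ (hence not in a triad of $M\backslash e$ — any such triad would extend to a cocircuit of $M$ containing $f$, but could be a $4$-element cocircuit; need care), we argue that $M\backslash e\backslash f$ is not $3$-connected (else detachable pair), so $M\backslash e$ has a cyclic $3$-separation $(P,\{f\},Q)$, giving $f\in\cl^*_{M\backslash e}(P)\cap\cl^*_{M\backslash e}(Q)$; since $f$ is not in a triad of $M$, $f$ must be in a $4$-element cocircuit of $M\backslash e$, or in a triad of $M\backslash e$ that lifts to a $4$-cocircuit of $M$ through $e$. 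Either way $M$ has a $4$-element cocircuit $C^*$ containing $\{e,f\}$, establishing (i). This matches the structure of the proof of \cref{deletable_circuit_gives_cocircuit}, and I expect to mirror that argument.

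For part (ii), write $C^*=\{e,f,g,h\}$ with $g,h\notin\{e,f\}$. The goal is to find among $g,h$ an element lying in a triad $T^*$ meeting $C^*$ in exactly that one element, with $M/$ that element $3$-connected. I would first show $g$ (say) is in a triad: if $g$ were in no triad, the hypothesis gives $M/g$ not $3$-connected, so (by Bixby, as in the first paragraph applied to $g$ — but we need $g$ not in a triangle too) $M\backslash g$ is $3$-connected; but then $M\backslash g\backslash e$ or similar would need to be analysed — actually since $g\in C^*$, deleting $g$ removes an element from the cocircuit, and I would use \cref{circuit_so_deletable}'s dual or a direct orthogonality argument to contradict ``no detachable pairs.'' The point is that if neither $g$ nor $h$ is in a triad, then both $M\backslash g$ and $M\backslash h$ are $3$-connected, and using the cocircuit $C^*$ together with $e,f$ lying in no triangle, one of $M\backslash g\backslash f$ or $M\backslash h\backslash f$ (or a contraction) is $3$-connected, a contradiction. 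So WLOG $g$ is in a triad $T^*$. By the dual of \cref{no_segment} (or directly, since $M$ has no $4$-element fans when that hypothesis is in force — though this lemma is stated without that hypothesis, so I must be careful) we get $|T^*\cap C^*|=1$, i.e.\ $T^*\cap C^*=\{g\}$: orthogonality of $T^*$ with any circuit through $g$ rules out $T^*$ sharing two elements with $C^*$ unless that creates a segment or small separator. Finally, $M/g$ is $3$-connected: $g$ is in a triad $T^*$ not in a triangle (if $g$ were in a triangle $T$, orthogonality of $T$ with $C^*$ forces $T$ to meet $\{e,f,h\}$, but $e,f$ are in no triangle, so $h\in T$; then one checks $g$ is in a $4$-element fan or derives a contradiction). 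I would then invoke Tutte's Triangle Lemma (dual form) on $T^*$ to get that $M/g$ or $M/g'$ is $3$-connected for some $g'\in T^*$; combined with the structural constraints, and swapping $g$ within $T^*$ if necessary (replacing $g$ by another element of $T^*$ still in $C^*$ — but $T^*\cap C^*=\{g\}$, so actually we need $M/g$ specifically, which requires $T^*$ not to meet a triangle at $g$), conclude $M/g$ is $3$-connected.

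The main obstacle I anticipate is part (ii): showing that one of $g,h$ lies in a triad $T^*$ that both meets $C^*$ in a single element and has the ``$M/g$ is $3$-connected'' property simultaneously. The difficulty is that the triad through $g$ might itself meet a triangle (so $g$ is an end of a $4$-element fan and one only gets $M\backslash g$ $3$-connected, not $M/g$), or the triad might share two elements with $C^*$. Ruling out the first case needs a careful orthogonality argument using that $e$ and $f$ are in no triangle, forcing any triangle near the cocircuit $C^*$ to behave rigidly; ruling out the second needs the dual of \cref{no_segment} or an ad hoc count, and since the lemma is stated in full generality (no ``no $4$-element fans'' hypothesis), I would need to handle the $4$-element-fan case separately — likely by showing that if $g$ is in a $4$-element fan then the other end of that fan, or an element adjacent to it, can be deleted giving a detachable pair, again using that $e,f$ are in no triangle. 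I would also need $|E(M)|$ bounded below by a modest constant throughout; this should follow from the existence of $e,f$ in no triangle or triad, but I would state it explicitly at the start.
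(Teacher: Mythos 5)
Your part~(i) is in the right spirit but, as written, hinges on a false dichotomy: from a cyclic $3$-separation $(P,\{f\},Q)$ of $M\backslash e$ you cannot conclude that $f$ lies in a $4$-element cocircuit of $M\backslash e$, and even if it did, such a cocircuit would lift to a cocircuit of $M$ that either avoids $e$ or has five elements -- neither is a $4$-element cocircuit of $M$ through $\{e,f\}$. Only the ``triad of $M\backslash e$'' branch gives what you want. The workable argument (the paper's, and the mirror of \cref{deletable_circuit_gives_cocircuit} that you gesture at) is: suppose $\{e,f\}$ lies in no $4$-element cocircuit, so $f$ is in no triangle or triad of $M\backslash e$; Bixby's Lemma in $M\backslash e$ then makes $M\backslash e\backslash f$ or $M\backslash e/f$ $3$-connected; the first is a detachable pair, and the second, via the dual of \cref{contract_then_delete} (using that $f$ is in no triangle), forces $M/f$ to be $3$-connected, contradicting the hypothesis. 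So (i) is salvageable once you route the contradiction through \cref{contract_then_delete} rather than through the separation directly.

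Part~(ii) is where the genuine gap lies, and you flag it yourself without closing it: you try to first place some $g\in C^*-\{e,f\}$ in a triad and then deduce that $M/g$ is $3$-connected, but Tutte's Triangle Lemma only yields contractibility of \emph{some} element of that triad, and since you need $T^*\cap C^*=\{g\}$ you cannot trade $g$ for another element of $T^*$ inside $C^*$; your tools for $|T^*\cap C^*|=1$ also do not apply (``orthogonality'' between $T^*$ and $C^*$ is vacuous, as both are cocircuits, and the dual of \cref{no_segment} needs two triads, whereas $C^*$ has four elements). The missing idea is to run the logic in the opposite direction, using the hypothesis as a one-way implication: since $M/e$ is not $3$-connected, take a vertical $3$-separation $(X,\{e\},Y)$; no side contains three elements of $C^*$ (else $e$ lies in its coclosure), so exactly one element $g$ of $C^*$ lies in, say, $Y$; then $g\in\cl^*(X\cup\{e\})$, so $\co(M\backslash g)$ is not $3$-connected and Bixby gives $\si(M/g)$ $3$-connected; a triangle through $g$ is impossible, since by orthogonality with $C^*$ and the fact that $e,f$ avoid triangles it would contain $h$, whence the dual of \cref{circuit_so_deletable} makes $M/f$ $3$-connected, a contradiction. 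Hence $M/g$ is $3$-connected, and the hypothesis, in contrapositive, now forces $g$ into a triad -- the triad is a consequence, not the starting point. The equality $T^*\cap C^*=\{g\}$ is then proved with the same separation: if $T^*\supseteq\{g,h\}$, then either $g\in\cl^*(X)$ and moving $g$ across puts $e$ in the coclosure of one side, or the third element of $T^*$ lies in $Y$ and moving $h$ across shows $f\in\cl^*(Y\cup\{e,h\})$, so $M\backslash f$ is not $3$-connected, contradicting that $M\backslash f$ is $3$-connected (which follows from Bixby, as $f$ is in no triangle or triad and $M/f$ is not $3$-connected). Note finally that this route needs no lower bound on $|E(M)|$ and no hypothesis about $4$-element fans, so the caveats at the end of your plan are not real obstacles.
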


\begin{proof}
    Towards a contradiction, suppose $\{e,f\}$ is not contained in a $4$-element cocircuit of $M$. Neither $M / e$ nor $M / f$ is $3$-connected, so Bixby's Lemma implies that $M \backslash e$ and $M \backslash f$ are $3$-connected. Since $f$ is not contained in a triangle or triad of $M$, and $\{e,f\}$ is not contained in a $4$-element cocircuit of $M$, we have that $f$ is not contained in a triangle or triad of $M \backslash e$. Hence, as $M \backslash e \backslash f$ is not $3$-connected, we have that $M \backslash e / f$ is $3$-connected. But then the dual of \cref{contract_then_delete} implies that $M / f$ is $3$-connected, a contradiction. Therefore, $M$ has a $4$-element cocircuit $C^*$ containing $\{e,f\}$.
	
	Now, $M / e$ is not $3$-connected, so $M$ has a vertical $3$-separation $(X, \{e\}, Y)$. If $|C^* \cap X| = 3$, then $e \in \cl^*(X)$, a contradiction. Likewise, $|C^* \cap Y| \neq 3$. Hence, without loss of generality, we may assume that $|C^* \cap X| = 2$ and $|C^* \cap Y| = 1$.
    Let $g$ be the unique element of $C^* \cap Y$. Then $g \in \cl^*(X \cup \{e\})$, so $\co(M \backslash g)$ is not $3$-connected. Thus, $\si(M / g)$ is $3$-connected. Suppose $g$ is contained in a triangle $T$.
    Now, by orthogonality and since neither $e$ nor $f$ is contained in a triangle, $C^*=\{e,f,g,h\}$ with $h \in T$. But now the dual of \cref{circuit_so_deletable} implies that $M / f$ is $3$-connected, a contradiction. This means that $M / g$ is $3$-connected, and so $g$ is contained in a triad $T^*$, and $g \notin \{e,f\}$.
	
    Let $C^*=\{e,f,g,h\}$ and suppose $|T^* \cap C^*| \ge 2$. Then $T^* \cap C^* = \{g,h\}$, so that $|T^* \cap X| \geq 1$. If $g \in \cl^*(X)$, then, by \Cref{fcl_vert_sep}, $(X \cup \{g\},\{e\},Y-\{g\})$ is a vertical $3$-separation of $M$, and $e \in \cl^*(X \cup \{g\})$, a contradiction. Thus, $g \notin \cl^*(X)$, so $|T^* \cap Y| = 2$ and $T^* \cap X = \{h\}$. This means that $(X-\{h\},\{e\},Y \cup \{h\})$ is a vertical $3$-separation. But $f \in \cl^*(Y \cup \{h,e\})$, so $M \backslash f$ is not $3$-connected, a contradiction. We deduce that $T^* \cap C^* = \{g\}$, which completes the proof of the lemma.
\end{proof}

\begin{lemma} \label{intersecting_two_elements}
    Let $M$ be a $3$-connected matroid with no $4$-element fans such that $|E(M)| \geq 12$. Suppose that
    \begin{enumerate}
        \item for all $x \in E(M)$, if $x$ is not contained in a triad, then $M / x$ is not $3$-connected,
        \item $M$ has triads $T_1^*$ and $T_2^*$ with $|T_1^* \cap T_2^*| = 1$, and
        \item there are distinct elements $e,f \in E(M)$, each of which is not contained in a triangle or a triad.
    \end{enumerate}
    Then $M$ has a detachable pair.
\end{lemma}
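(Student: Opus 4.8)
The plan is to argue by contradiction, so suppose $M$ has no detachable pairs. First I would record the basic structure coming from the hypotheses: writing $T_1^* = \{t,a_1,a_2\}$ and $T_2^* = \{t,b_1,b_2\}$, \cref{intersecting_plus_contractable} forces every element outside $T_1^* \cup T_2^*$ with $M/x$ $3$-connected to not exist, and \cref{intersecting_no_other_triads} tells us $T_1^*$ and $T_2^*$ are the only triads of $M$. In particular, $e$ and $f$ lie outside $T_1^* \cup T_2^*$, and the contraction-hypothesis applies to every element not in $T_1^* \cup T_2^*$. Now I would invoke \cref{annoying_elements} with the pair $\{e,f\}$: this produces a $4$-element cocircuit $C^* = \{e,f,g,h\}$ and a triad $T^*$ meeting $C^*$ in a single element $g \in E(M)-\{e,f\}$ with $M/g$ $3$-connected. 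But $M/g$ being $3$-connected and $g \notin T_1^* \cup T_2^*$ would contradict \cref{intersecting_plus_contractable} unless $g \in T_1^* \cup T_2^*$. Since $T^*$ is a triad of $M$, and $T_1^*, T_2^*$ are the only triads, $T^* \in \{T_1^*, T_2^*\}$, so $g \in T_1^* \cup T_2^*$ as needed; the element $g$ plays the role of the "contractible triad element'' here. This is the first key reduction: $C^*$ meets $T_1^* \cup T_2^*$ in exactly the element $g$, and $M/g$ is $3$-connected.

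Next I would exploit that $M/g$ is $3$-connected together with the fact that $g$ sits in $T_1^*$ (or $T_2^*$; without loss of generality say $g = a_1$). Then $T_1^* \cup T_2^*$ contains a contraction certificate (namely $(t,\{a_1,a_2\},\{\{b_1,b_2\}\})$), and since $M/a_1$ is $3$-connected, \cref{intersecting_triads_contractable} and the dual of \cref{contractable_circuit} can be used to manufacture $4$-element circuits through $t$ much as in the proof of \cref{intersecting_plus_contractable}. The goal is to show that $h$, the fourth element of the cocircuit $C^*$, together with $e$, $f$, $g$, the elements of $T_1^* \cup T_2^*$, and perhaps a bounded number of further elements obtained from \cref{annoying_elements} or \cref{deletable_collection}/\cref{contractable_collection}, generates a set $X$ with $\lambda(X) = 2$ containing both a deletion certificate and a contraction certificate. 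Once such an $X$ is in hand, \cref{no_other_elements} forces every element of $E(M)-X$ to lie in a maximal fan of length at least four; since $M$ has no $4$-element fans, $E(M) = X$, which bounds $|E(M)|$ below $13$ (indeed below $12$, since $|X|$ will be at most $8$ or so), contradicting $|E(M)| \ge 12$. The bookkeeping mirrors the analogous passages in \cref{intersecting_plus_contractable} and \cref{coaugmented2}.

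Concretely, I expect the argument to run as follows. Since $e$ and $f$ are not in triangles or triads and $M/e$, $M/f$ are not $3$-connected, Bixby's Lemma gives that $M\backslash e$ and $M\backslash f$ are $3$-connected. Using \cref{annoying_elements}(i) we have $C^* = \{e,f,g,h\}$ with $g$ (wlog $=a_1$) contractible; by orthogonality with $T_1^*$ and $T_2^*$, $h$ is constrained. Then, applying \cref{deletable_cocircuit} or \cref{contractable_circuit} to the $3$-connected minor $M/a_1$ (and to $M\backslash e$, $M\backslash f$), I would extract a small collection of $4$-element circuits and cocircuits among $\{e,f,h,t,a_2,b_1,b_2\}$ plus at most one or two new elements, and then combine them via circuit/cocircuit elimination and orthogonality to close up a $3$-separating set $X$. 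The candidate is $X = T_1^* \cup T_2^* \cup \{e,f,h\}$ or $X = T_1^* \cup T_2^* \cup \{e,f,g',h'\}$ for a pair of auxiliary elements; in either case $\lambda(X) = 2$ follows from the established relations, and $X$ visibly contains the contraction certificate $(t,\{a_1,a_2\},\{\{b_1,b_2\}\})$ and a deletion certificate built from $C^*$ (using that $e$ is not in a triad, so $C^*$ witnesses $e \in \cl^*(X-\{e\})$, hence dually a circuit through $e$ after moving to $M^*$ — more precisely, the deletion certificate comes from a $4$-element circuit through one of $e,f,h$ whose existence is forced as in \cref{circuit_so_deletable} and \cref{deletable_circuit_gives_cocircuit}).

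\textbf{Main obstacle.} The hard part will be pinning down exactly which elements are needed to form the closed-up $3$-separating set $X$ and verifying $\lambda(X)=2$ without inadvertently forcing a $4$-element fan or a contradiction too early. The difficulty is that \cref{annoying_elements} only gives one cocircuit and one triad, so several applications — possibly to a minor such as $M\backslash e$, exactly as in the proof of \cref{intersecting_plus_contractable} where one passes to $M\backslash g$ — may be required to generate enough circuits and cocircuits, and each new element introduced must be shown not to lie in a triad (else it would be in $T_1^*$ or $T_2^*$ by \cref{intersecting_no_other_triads}, collapsing things) and not to make $M/\cdot$ $3$-connected (else \cref{intersecting_plus_contractable} applies). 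Carefully tracking these case distinctions — which of $e,f,h$ ends up in the certificates, and whether $g = a_1$ or $g = b_1$ or $g = t$ (the last being impossible since $t$ is in two triads and $C^*$ meets $T_1^* \cup T_2^*$ in one element) — is where the real work lies, but it is routine given the machinery of \cref{no_other_elements,deletable_collection,contractable_collection,circuit_so_deletable}.
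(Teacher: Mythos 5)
Your opening moves coincide with the paper's: assume no detachable pair, apply \cref{annoying_elements} to get a cocircuit $C^*\supseteq\{e,f\}$ and a triad $T^*$ with $T^*\cap C^*=\{g\}$ and $M/g$ $3$-connected, and conclude from \cref{intersecting_no_other_triads} that $T^*\in\{T_1^*,T_2^*\}$. After that, however, your plan has a genuine gap. You propose to build a set such as $X=T_1^*\cup T_2^*\cup\{e,f,h\}$ with $\lambda(X)=2$ containing both a deletion and a contraction certificate and then finish with \cref{no_other_elements}; but you never derive $\lambda(X)=2$, and the relations actually available do not come close to giving it. Inside any such $X$ you have only the cocircuits $T_1^*$, $T_2^*$, $C^*$ and at best one $4$-element circuit through $t$ coming from the dual of \cref{contractable_circuit} applied to $g$ (or to an element of $T_1^*$ if $g=t$); this bounds $r^*(X)$ by about $5$ but leaves $r(X)$ as large as $7$, so $\lambda(X)=2$ is unobtainable from what you have. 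The analogous bookkeeping in \cref{intersecting_plus_contractable} works only because there a contractible element \emph{outside} $T_1^*\cup T_2^*$ generates four circuits of the form $\{\ast,t,a_i,b_j\}$, and the present lemma's hypothesis deliberately removes that engine (every element outside the two triads fails to be contractible); the tools you cite (\cref{deletable_cocircuit}, \cref{deletable_collection}, \cref{contractable_collection}) all presuppose an already-known $3$-separating set containing the relevant elements, so your construction is circular. The paper's actual proof sidesteps this: since $M/e$ is not $3$-connected and $e$ is in no triangle, there is a vertical $3$-separation $(X,\{e\},Y)$; using \cref{fan_vert_sep,fcl_vert_sep} one arranges $T_1^*\cup T_2^*\cup\{h\}\subseteq X$, where $h$ is the fourth element of the circuit $C=\{a_i,b_j,t,h\}$, and then the unique element $y\in C^*\cap Y$ satisfies $y\in\cl^*(X\cup\{e\})$ and lies in no triangle, so $M/y$ is $3$-connected while $y$ lies outside $T_1^*\cup T_2^*$, contradicting the hypothesis together with \cref{intersecting_no_other_triads}. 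That separation argument, not a bounded-size $3$-separator, is the missing idea.

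Two smaller slips: your parenthetical ruling out $g=t$ is incorrect — $g=t$ is perfectly consistent with $T^*\cap C^*=\{g\}$, and the paper handles this case by using Tutte's Triangle Lemma to contract an element of $T_1^*-\{t\}$ before invoking the dual of \cref{contractable_circuit}; and your claim that $C^*$ meets $T_1^*\cup T_2^*$ in exactly the element $g$ is not justified (the fourth element of the cocircuit could a priori lie in the other triad) and is never needed in the paper's argument.
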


\begin{proof}
    Suppose $M$ has no detachable pairs. By \Cref{annoying_elements}, there exists a $4$-element cocircuit $C^*$ containing $\{e,f\}$, and a triad $T^*$ such that $C^* \cap T^* = \{g\}$, for some $g \notin \{e,f\}$, where $M / g$ is $3$-connected. By \Cref{intersecting_no_other_triads}, we have that $T^* = T_1^*$ or $T^* = T_2^*$. Without loss of generality, take $T^* = T_1^*$. Let $T_1^* = \{t,a_1,a_2\}$ and $T_2^* = \{t,b_1,b_2\}$. If $g = t$, then, since either $M / a_1$ or $M / a_2$ is $3$-connected by Tutte's Triangle Lemma, the dual of \cref{contractable_circuit} implies that $M$ has a $4$-element circuit containing $t$ and either $a_1$ or $a_2$. If $g\neq t$, then, since $M / g$ is $3$-connected, the dual of \cref{contractable_circuit} implies that $M$ has a $4$-element circuit containing $\{g,t\}$. In either case, by orthogonality $M$ has a $4$-element circuit $C = \{a_i,b_j,t,h\}$ for some element $h \in E(M)$ and $i,j \in \{1,2\}$ such that $g \in C$. Since $g \in C \cap C^*$, orthogonality implies that $|C \cap C^*| \geq 2$.
	
    Neither $M / e$ nor $M / f$ is $3$-connected, so $M$ has a vertical $3$-separation $(X,\{e\},Y)$. We may assume that $h \neq e$, for if $h = e$, then we can instead apply the argument that follows to a vertical $3$-separation $(X',\{f\},Y')$. We show that there is such a vertical $3$-separation in which $T_1^* \cup T_2^* \cup \{h\} \subseteq X$. By \Cref{fan_vert_sep}, we may assume that $T_1^* \subseteq X$. Furthermore, by \Cref{fcl_vert_sep}, we may assume that $|X\cap \{b_1, b_2\}|\neq1$. If $\{b_1,b_2\} \subseteq X$, then $h \in \cl(X)$, and the desired outcome follows. So assume $\{b_1,b_2\} \subseteq Y$. If $h \in X$, then $b_j \in \cl(X)$ and the desired outcome follows; whereas if $h \in Y$, then $t \in \cl^*(Y)$ and $a_i \in \cl(Y \cup \{t\})$, so, after interchanging the roles of $X$ and $Y$, we again obtain the desired outcome. Thus, we may assume that $T_1^* \cup T_2^* \cup \{h\} \subseteq X$.
	
	Now, $C \subseteq X$, so $|C^* \cap X| \geq 2$. If $|C^* \cap X| = 3$, then $e \in \cl^*(X)$, a contradiction. So $|C^* \cap X| = 2$, and there exists a unique element $y$ in $C^* \cap Y$. But $y \in \cl^*(X \cup \{e\})$, and $y$ is not contained in a triangle since such a triangle would contain a second element of $C^*$ and none of $e$, $f$, or $g$ are contained in a triangle. This means that $M / y$ is $3$-connected. However, now $y$ is contained in a triad distinct from $T_1^*$ and $T_2^*$, a contradiction to \Cref{intersecting_no_other_triads}, thereby completing the proof of the lemma.
\end{proof}

\subsection*{Disjoint triads}
We next move on to the case where $M$ has two disjoint triads. 
When $M$ has an element $e$, not contained in a triad, such that $M/e$ is $3$-connected, the case is handled by \cref{quad_petal}.  When no such element~$e$ exists, but there is some element that is not in a triangle or triad, this is handled by \cref{not_quad_petal}.  Finally, \cref{k3m} handles the case where every element of $M$ is in either a triangle or a triad.

\begin{lemma} \label{quad_thingy_deletable}
	Let $M$ be a $3$-connected matroid. Let $T^* = \{a_1,a_2,a_3\}$ be a triad of $M$, and let $e,f,g,h$ be distinct elements of $E(M) - T^*$ such that $\{a_1,a_2,e,f\}$ and $\{a_2,a_3,e,g\}$ are circuits, and $\{e,f,g,h\}$ is a cocircuit, and $h$ is not contained in a triangle. Then $M / h$ is $3$-connected.
\end{lemma}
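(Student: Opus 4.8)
## Proof Proposal for Lemma~\ref{quad_thingy_deletable}

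The plan is to argue by contradiction. Suppose $M/h$ is not $3$-connected. Since $h$ is not contained in a triangle, \cref{basic_vertsep} tells us that $M$ has a vertical $3$-separation $(X,\{h\},Y)$. The idea is to show that, after suitable rearrangement, the triad $T^*$ together with enough of the circuit structure can be forced onto one side of the separation, and then the cocircuit $\{e,f,g,h\}$ violates orthogonality with respect to the resulting separation of $M$.

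First I would use the cocircuit $C^* = \{e,f,g,h\}$ to locate $e$, $f$, $g$. Since $h \in \cl(X) \cap \cl(Y)$, orthogonality of $C^*$ with the separating sets $X \cup \{h\}$ and $Y \cup \{h\}$ forces the three elements $e,f,g$ to not all lie on one side; more precisely, $C^* - \{h\}$ meets both $X$ and $Y$. Next I would bring the triad $T^*$ into play: by \cref{fan_vert_sep} (applied with the triad $T^*$ as the fan $F$, using that $h$ is not in a triangle and $h \notin T^*$), we may assume $T^* \subseteq X$. Now the two circuits $\{a_1,a_2,e,f\}$ and $\{a_2,a_3,e,g\}$ each contain two elements of $T^* \subseteq X$; I would use \cref{fcl_vert_sep}(i) to pull elements into $X$ along these circuits. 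Specifically, if $e \in X$ then $f \in \cl(X \cup \{a_1,a_2\}) = \cl(X)$ modulo some care, so we can move $f$ and $g$ into $X$ as well; and if $e \in Y$, then since $a_1, a_2 \in X$, orthogonality of the circuit $\{a_1,a_2,e,f\}$ with the cocircuit separating $X$ from $Y \cup \{h\}$ forces $f$ to the same side as $e$, and similarly $g$. Tracking these cases, the aim is to conclude that all of $e, f, g$ end up on the same side of the (possibly modified) vertical $3$-separation.

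Once $\{e,f,g\}$ lies entirely in $X$ (after the modifications, with $X \cup \{h\}$ closed or nearly so), we have $C^* - \{h\} = \{e,f,g\} \subseteq X$, so $h \in \cl^*(X)$. Combined with $h \in \cl(X)$, this gives $\lambda(X \cup \{h\}) \le 1$, contradicting the $3$-connectivity of $M$ — provided the remaining side $Y - (\text{moved elements})$ still has at least two elements, which we get from $|E(M)|$ being large enough relative to $T^* \cup \{e,f,g,h\}$ (this is implicit in the vertical $3$-separation hypothesis, since $r(Y) \ge 3$). If instead the analysis forces $\{e,f,g\}$ into $Y$, the symmetric argument with $X$ and $Y$ interchanged gives $h \in \cl^*(Y)$ and the same contradiction.

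The main obstacle I anticipate is the bookkeeping in the middle step: carefully handling which of $e,f,g$ can be moved across the separation using \cref{fcl_vert_sep}, keeping track of closure versus coclosure (we only get closure moves for free, since $h$ being triangle-free gives us \cref{fcl_vert_sep}(ii) only in the dual direction), and ensuring that after all moves the element $h$ stays as the "guts" element of a genuine vertical $3$-separation of $M$ with both sides still large. In particular one must be careful that moving $f$ or $g$ does not inadvertently drag $h$ to one side. A clean way to organize this is to first replace $(X,\{h\},Y)$ by a vertical $3$-separation in which $X$ is closed and $T^* \subseteq X$, then do a short case analysis on whether $e \in X$ or $e \in Y$, handling each circuit in turn; each case should close in two or three lines by orthogonality once the setup is fixed.
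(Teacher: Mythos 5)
Your proposal follows essentially the same route as the paper's proof: take a vertical $3$-separation $(X,\{h\},Y)$ with $T^* \subseteq X$ (via \cref{fan_vert_sep}), use the two circuits to pull $e$, $f$, $g$ into $\cl(X)$ and absorb them with \cref{fcl_vert_sep}, and then the cocircuit $\{e,f,g,h\}$ puts $h$ in the coclosure of one side of the separation, contradicting $3$-connectivity. The only caveat is that your appeal to ``orthogonality with the cocircuit separating $X$ from $Y \cup \{h\}$'' is not literally valid (a $3$-separation is not a cocircuit); the cleaner bookkeeping is the paper's: if $\{e,f,g\}$ meets $X$ then all three lie in $\cl(X)$ by the two circuits, and otherwise $\{e,f,g\} \subseteq Y$ gives $h \in \cl^*(Y)$ directly.
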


\begin{proof}
	Suppose $M / h$ is not $3$-connected. Then $M$ has a vertical $3$-separation $(X,\{h\},Y)$ such that $T^* \subseteq X$. If $\{e,f,g\} \cap X \neq \emptyset$, then $\{e,f,g\} \subseteq \cl(X)$, so $(X \cup \{e,f,g\},\{h\},Y-\{e,f,g\})$ is a vertical $3$-separation. However, $h \in \cl^*(X \cup \{e,f,g\})$, a contradiction. Otherwise, $\{e,f,g\} \subseteq Y$, which means that $h \in \cl^*(Y)$, another contradiction. Therefore, $M / h$ is $3$-connected.
\end{proof}

\begin{lemma} \label{quad_petal1}
	Let $M$ be a $3$-connected matroid with no detachable pairs and no $4$-element fans such that $|E(M)| \geq 13$. Let $T_1^*$ and $T_2^*$ be disjoint triads of $M$, and let $e$ be an element of $M$ such that $e$ is not contained in a triangle or a triad and $M / e$ is $3$-connected. Then
	\begin{enumerate}
		\item there exists $X \subseteq E(M)$ such that $e \in X$ and $X$ is either a quad or near-quad affixed to $T_1^*$ and $T_2^*$, and
		\item every element of $E(M) - X$ is contained in a triad.
	\end{enumerate}
\end{lemma}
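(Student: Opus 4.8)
The plan is to exploit the machinery built up in the earlier sections, in particular Lemmas~\ref{deletable_cocircuit}, \ref{circuit_so_deletable}, \ref{quad_thingy_deletable}, and the various ``certificate'' lemmas (\ref{deletable_collection_contractable_el}, \ref{no_other_elements}), to pin down the structure around $T_1^*$, $T_2^*$ and $e$. First I would set $T_1^*=\{a_1,a_2,a_3\}$ and $T_2^*=\{b_1,b_2,b_3\}$, and observe that $(e, T_1^*\cup T_2^*,\ldots)$ cannot already be a contraction certificate, since $M/e$ is $3$-connected and $e\notin T_1^*\cup T_2^*$; combined with \cref{contractable_collection} (applied with $X=T_1^*\cup T_2^*$, after checking $(t,\ldots)$-style contraction certificates exist in $T_1^*$ and $T_2^*$ separately), this forces $\lambda(T_1^*\cup T_2^*)>2$, so $T_1^*\cup T_2^*$ is independent of rank~$6$. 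Next, since $M/e$ is $3$-connected and $e\notin T_i^*$, the dual of \cref{contractable_circuit} (i.e.\ \cref{deletable_cocircuit} applied to $M^*$) together with orthogonality with $T_1^*$ and $T_2^*$ produces a $4$-element circuit $C_1$ through $e$ meeting each of $T_1^*$, $T_2^*$ in exactly one element; say $C_1=\{e,a_1,b_1,f\}$ for some $f$. The first goal is then to show $f\notin T_1^*\cup T_2^*$ (else $\lambda(T_1^*\cup T_2^*\cup\{e\})\le1$) and to bootstrap further circuits.

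The core of the argument is to repeat this kind of extraction. Using \cref{intersecting_triads_contractable} I would deduce that $\si(M/a_i)$ and $\si(M/b_j)$ are $3$-connected for the ``other'' legs; since $M$ has no $4$-element fans, elements not in triangles give actual $3$-connected contractions. Applying the dual of \cref{contractable_circuit} repeatedly I expect to obtain $4$-element circuits $\{e,a_2,b_2,g\}$ (or the like), forcing a small configuration: either $\{e,f,g,h\}$ is a $4$-element cocircuit (the ``quad'' / ``near-quad'' case) or some of the new elements coincide with $e$ or with each other, collapsing $\lambda(T_1^*\cup T_2^*\cup\{e,f,g,\ldots\})$ down to~$2$. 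When a $4$-element cocircuit $C^*$ containing $e$ emerges, \cref{quad_thingy_deletable} (or its dual) shows the remaining element of $C^*$ is deletable/contractable, which lets me either obtain a detachable pair (contradiction) or conclude that $X:=\{e\}\cup(\text{the three new elements})$ is a quad or near-quad, and that $X$ is affixed to \emph{both} $T_1^*$ and $T_2^*$ in the sense of the definitions in \cref{exceptionalmatroids} — this last part uses circuit elimination among the extracted $4$-element circuits plus orthogonality to show each element of $X$ lies in a $4$-element circuit with two elements of $T_i^*$, establishing~(i). For part~(ii), once $X$ is identified, $\lambda(F:=T_1^*\cup T_2^*\cup X)$ is computed to be~$2$ (the $4$-element circuits and cocircuits guarantee the rank drops), $F$ contains both a deletion certificate (using the cocircuit on $X$ together with the triad legs) and the various closure conditions, so \cref{deletable_collection_contractable_el} — or, combining with a contraction certificate, \cref{no_other_elements} — forces every element of $E(M)-F$ into a triad (no $4$-element fans exist, so ``in a fan of length $\ge4$'' is impossible, leaving only ``in a triad'').

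The main obstacle I anticipate is the bookkeeping of cases in the circuit-extraction step: when a newly produced element from the dual of \cref{contractable_circuit} lands in $T_1^*\cup T_2^*$, or equals $e$, or coincides with a previously extracted element, the configuration degenerates in several ways, and each degeneracy has to be shown either impossible (via a $\lambda\le1$ contradiction, which needs $|E(M)|\ge13$ to bite) or to already give $\lambda(F)=2$ with enough closure structure to invoke the certificate lemmas. There is also a delicate point in distinguishing the ``quad'' outcome ($X$ is both a circuit and a cocircuit) from the ``near-quad'' outcome ($X$ is a cocircuit containing a triangle on three of its elements), which will hinge on whether one of the extracted circuits is actually a triangle; \cref{fan_ends} and the no-$4$-element-fans hypothesis are what prevent this from spiralling into further sub-cases. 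I would organise the proof so that the ``clean'' case (all extracted elements distinct and outside $T_1^*\cup T_2^*$) is done first, yielding the quad/near-quad directly, and then dispatch each degeneracy by showing it reduces to an already-handled situation or contradicts $3$-connectivity via the element count.
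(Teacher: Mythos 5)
There is a genuine gap, and it sits at the very start of your construction: you have mis-modelled what two \emph{disjoint} triads look like in this setting. You assert that $\lambda(T_1^*\cup T_2^*)>2$, so that $T_1^*\cup T_2^*$ is independent of rank $6$, by importing the contraction-certificate argument from the intersecting-triads situation. That argument does not transfer: a contraction certificate needs a single element $t$ lying in $\cl^*$ of two \emph{disjoint} sets, and in \cref{intersecting_plus_contractable} that element was precisely the common element of the two triads. With $T_1^*$ and $T_2^*$ disjoint there is no such anchor, so \cref{contractable_collection} cannot be invoked the way you propose, and in fact the conclusion is false: since $M$ has no $4$-element fans, \cref{disjoint_triads} applies and forces $M|(T_1^*\cup T_2^*)\cong M(K_{2,3})$, so $r(T_1^*\cup T_2^*)=4$ and $\lambda(T_1^*\cup T_2^*)=2$, with circuits $\{a_i,a_j,b_i,b_j\}$ for all distinct $i,j$. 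Your proposal never uses \cref{disjoint_triads}, yet these $M(K_{2,3})$ circuits are indispensable later: they drive the orthogonality arguments that locate the new elements $f,g,h$, and circuit elimination with them is exactly how one shows the petal $X$ is affixed to $T_2^*$ as well as $T_1^*$.

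A second, related error: the $4$-element circuits you extract, such as $\{e,a_1,b_1,f\}$ ``meeting each of $T_1^*$, $T_2^*$ in exactly one element,'' cannot exist — a circuit cannot meet the cocircuit $T_1^*$ in a single element. The circuits the dual of \cref{contractable_circuit} actually yields have the form $\{a_1,a_2,e,f\}$ and $\{a_2,a_3,e,g\}$ with $f,g\notin T_1^*\cup T_2^*$ (two elements in one triad, none in the other, by orthogonality with both triads). From there the correct route shows $f,g$ lie in no triad (via \cref{intersecting_no_other_triads}), hence $M\backslash f$ and $M\backslash g$ are $3$-connected, a cocircuit $\{e,f,g,h\}$ is produced, and the quad/near-quad dichotomy is governed by whether $h$ lies in a triangle (which is then forced to be $\{f,g,h\}$), not by whether one of the extracted circuits is a triangle. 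Your sketch of part (ii) via a deletion certificate and \cref{deletable_collection_contractable_el} is in the right spirit, but it cannot be salvaged until the construction of $X$ is rebuilt on the $M(K_{2,3})$ foundation rather than on the independence claim.
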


\begin{proof}
    By \Cref{disjoint_triads}, $M|(T_1^* \cup T_2^*) \cong M(K_{2,3})$, so we may assume that $T_1^* = \{a_1,a_2,a_3\}$ and $T_2^* = \{b_1,b_2,b_3\}$ such that, for all distinct $i,j \in \{1,2,3\}$, the set $\{a_i,a_j,b_i,b_j\}$ is a circuit. Furthermore, the dual of \cref{contractable_circuit} implies that $M$ has a $4$-element circuit $C_1$ containing $\{e,a_1\}$, $a_2$ or $a_3$, and some $f \notin T_1^*$. By orthogonality, and without loss of generality, $C_1 = \{a_1,a_2,e,f\}$ with $f \notin T_1^* \cup T_2^*$. Similarly, $M$ has a $4$-element circuit $C_2$ containing $\{a_2,e\}$, either $a_1$ or $a_3$, and some $g \notin T^*_1\cup T_2^*$. By circuit elimination, orthogonality, and since $e$ is not in a triangle, $C_2 = \{a_2,a_3,e,g\}$. Note that $g \neq f$, for otherwise $e \in \cl(T^*_1)$ by circuit elimination, which contradicts the fact that $M/e$ is $3$-connected.
	
    Suppose $f$ is contained in a triad. Since $e$ is not contained in a triad, orthogonality implies this triad is $\{a_1,b_1,f\}$. But this contradicts \Cref{intersecting_no_other_triads}. Thus, $f$ (and similarly $g$) is not contained in a triad of $M$. Now, $M \backslash f$ (and similarly $M \backslash g$) is $3$-connected by \Cref{circuit_so_deletable}. By \Cref{deletable_circuit_gives_cocircuit}, there is a $4$-element cocircuit $C^*$ of $M$ containing either $\{e,f\}$ or $\{f,g\}$. We prove that there is a $4$-element cocircuit of $M$ containing $\{f,g\}$, so suppose that $\{e,f\} \subseteq C^*$. If $C^*$ also contains $g$, then we have the desired result. Otherwise, orthogonality with $C_2$ implies that either $a_2 \in C^*$ or $a_3 \in C^*$. It follows that $C^* = \{a_i,b_i,e,f\}$ for some $i \in \{2,3\}$, so $\lambda(T_1^* \cup T_2^* \cup \{e,f\}) = 2$. In particular, as $M\backslash f$ is $3$-connected, $\lambda_{M \backslash f}(T_1^* \cup T_2^* \cup \{e\}) = 2$ and $g \in \cl(T_1^* \cup T_2^* \cup \{e\})$. Thus, since $M \backslash f \backslash g$ is not $3$-connected, the element $g$ is contained in a triad of $M \backslash f$, and thus $\{f,g\}$ is contained in a $4$-element cocircuit of $M$.
	
    In all cases, there is a $4$-element cocircuit of $M$ containing $\{f,g\}$. Suppose $e$ is not contained in this cocircuit. Then orthogonality with $C_1$ and $C_2$ implies that $M$ has a cocircuit $\{a_2,b_2,f,g\}$. But now $\lambda(T_1^* \cup T_2^* \cup \{f,g\}) = 2$ and $(a_2,\{a_1,a_3\},\{\{b_2,f,g\}\})$ is a contraction certificate. By \Cref{contractable_collection}, this is a contradiction, since $M / e$ is $3$-connected. It follows that $M$ has a cocircuit $\{e,f,g,h\}$ with $h \notin T_1^* \cup T_2^* \cup \{e,f,g\}$. Let $X=\{e,f,g,h\}$.
	
    First assume that $h$ is contained in a triangle $T$. By orthogonality, and since $e$ is not contained in a triangle, $T$ contains an element of $\{f,g\}$. Suppose $T$ contains exactly one of $f$ and $g$. Say $f \in T$ but $g \notin T$, so that $T = \{f,h,x\}$ for some $x \notin T_1^* \cup T_2^* \cup X$. Then \Cref{deletable_cocircuit} implies that $M$ has a $4$-element cocircuit $D^*$ containing $\{g,x\}$ and an element of $\{f,h\}$. By orthogonality with $C_2$, we have that $e \in D^*$, so either $D^* = \{e,f,g,x\}$ or $D^* = \{e,g,h,x\}$. But, in both cases, cocircuit elimination with $X$ implies that $M$ has a cocircuit contained in $\{f,g,h,x\}$, which is a contradiction to orthogonality. It follows by symmetry that $T = \{f,g,h\}$. Due to the cocircuit $X$, the triangle $T$, and the circuits $C_1$ and $C_2$, it now follows that $X$ is a near-quad affixed to $T_1^*$.  Furthermore, by circuit elimination and orthogonality, $\{b_1,b_2,e,f\}$ and $\{b_2,b_3,e,g\}$ are circuits, so $X$ is also a near-quad affixed to $T_2^*$. Now, $(f,\{g,h\},\{\{e,a_1,a_2\}\})$ is a deletion certificate, and $\lambda(T_1^* \cup X) = 2$. Additionally, $e \in \cl^*(\{f,g,h\})$. Thus, by \Cref{deletable_collection_contractable_el}, every element of $E(M) - (T_1^* \cup X)$ is contained in a triad, and so the lemma holds when $h$ is contained in a triangle.
	
    Now assume that $h$ is not contained in a triangle. By \Cref{quad_thingy_deletable}, the matroid $M / h$ is $3$-connected. Also, by the dual of \cref{contractable_circuit}, $M$ has a $4$-element circuit containing $\{a_2,h\}$, an element of $\{a_1,a_3\}$, and, by orthogonality, an element of $\{e,f,g\}$. Circuit elimination with either $\{a_1,a_2,e,f\}$ or $\{a_2,a_3,e,g\}$ implies that $X$ is a quad.
	
    If either $f$ or $g$ is contained in a triangle $T$, then $T=\{f, g, z\}$, where $z\not\in T^*_1\cup T^*_2\cup \{e, f, g, h\}$. By \Cref{quad_detachable}, $M\backslash e$ is $3$-connected, so, by \cref{deletable_cocircuit}, $M$ has a $4$-element cocircuit containing $\{z, e\}$, an element in $\{f,g\}$, and an element not in $\{f,g\}$, but this contradicts orthogonality. Therefore neither $f$ nor $g$ is contained in a triangle.
    Let $x$ be an arbitrary element of the quad $X$.  We have that $M / x$ is $3$-connected, by the dual of \Cref{quad_detachable}. Hence, by the dual of \cref{contractable_circuit} and orthogonality, $M$ has a $4$-element circuit $C'$ containing $\{a_1,x\}$, an element of $\{a_2,a_3\}$, and an element $x' \in X - \{x\}$. Similarly, $M$ has a $4$-element circuit containing $x$ and the unique element of $T_1^* - C'$, and another element of $T_1^*$, and an element $x'' \in X - \{x\}$. Note that $x' \neq x''$ since $x \notin \cl(T_1^*)$.  It follows that $X$ is a quad affixed to $T_1^*$ and, similarly, $X$ is a quad affixed to $T_2^*$.
    Now, $$(e,\{f,g,h\},\{\{f,a_1,a_2\},\{g,a_2,a_3\}\})$$ is a deletion certificate, and $a_1 \in \cl^*(\{a_2,a_3\})$. By \Cref{deletable_collection_contractable_el}, every element of $E(M) - (T_1^* \cup X)$ is contained in a triad, so the lemma also holds when $h$ is not contained in a triangle.
\end{proof}

\begin{lemma} \label{quad_petal}
    Let $M$ be a $3$-connected matroid with no detachable pairs and no $4$-element fans such that $|E(M)| \geq 13$. Let $T_1^*$ and $T_2^*$ be disjoint triads of $M$, and let $e$ be an element of $M$ such that $e$ is not contained in a triad, and $M / e$ is $3$-connected. Then $M$ is a quasi-triad-paddle with a quad or near-quad petal.
\end{lemma}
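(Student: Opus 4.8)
The plan is to build the partition of $E(M)$ that exhibits $M$ as a quasi-triad-paddle with a quad (or near-quad) petal, and then apply \cref{k3m_paddle}. Since $M/e$ is $3$-connected, no triangle of $M$ contains $e$ (such a triangle would be a parallel pair in $M/e$), so, together with the hypothesis that $e$ lies in no triad, I may apply \cref{quad_petal1} to the disjoint triads $T_1^*$, $T_2^*$ and the element $e$. This yields a $4$-element set $X = \{e,f,g,h\}$ with $e \in X$ that is a quad, or a near-quad, affixed to both $T_1^*$ and $T_2^*$, and such that every element of $E(M) - X$ lies in a triad. The proof of \cref{quad_petal1} also records that none of $e$, $f$, $g$ lies in a triad; and since $X$ is a circuit (quad case) or $X - \{e\}$ is a triangle (near-quad case), orthogonality forces any triad through $h$ to meet $\{e,f,g\}$, so $h$ lies in no triad either. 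Hence \emph{no element of $X$ lies in a triad}, and consequently every triad of $M$ is contained in $E(M) - X$.

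Next I describe the triad structure of $E(M) - X$. Every element of $E(M) - X$ lies in a triad, which is disjoint from $X$ by the previous paragraph. Any two distinct triads $T^*, T'^*$ contained in $E(M) - X$ are disjoint: otherwise $|T^* \cap T'^*| = 1$ by \cref{no_segment}, and since $e \notin T^* \cup T'^*$ with $e$ in no triad and $M/e$ $3$-connected, \cref{intersecting_plus_contractable} would produce a detachable pair. Thus $E(M) - X$ is partitioned into triads $P_1, P_2, \dots, P_{m-1}$, with $P_1 = T_1^*$ and $P_2 = T_2^*$; and $3(m-1) = |E(M)| - 4 \ge 9$, so $m \ge 4$. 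For all distinct $i, j$, the triads $P_i$ and $P_j$ are disjoint and, as $M$ has no $4$-element fans, not contained in $4$-element fans, so \cref{disjoint_triads} gives $M|(P_i \cup P_j) \cong M(K_{2,3})$.

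The key step is to promote "$X$ is affixed to $T_1^*$ and $T_2^*$" to "$X$ is affixed to every $P_i$". For $i \ge 3$, I re-apply \cref{quad_petal1} to the disjoint triads $P_1$ and $P_i$ and the same element $e$, obtaining a $4$-element set $X_i \ni e$ that is a quad (resp. near-quad) affixed to $P_1$ and $P_i$, with every element of $E(M) - X_i$ in a triad. Since no element of $X$ lies in a triad, $X \subseteq X_i$, and equality of cardinalities forces $X_i = X$; thus $X$ is a quad (resp. near-quad) affixed to $P_i$, and each affixing circuit the proof produces has exactly two elements in the relevant triad. I expect this \emph{uniqueness} observation — that $X$ is precisely the set of elements of $M$ lying in no triad, hence independent of which pair of triads is fed to \cref{quad_petal1} — to be the crux of the argument.

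Finally I verify the hypotheses of \cref{k3m_paddle} for the partition $(X, P_1, \dots, P_{m-1})$ of $E(M)$: here $|X| = 4 \ge 2$, each $P_i$ is a triad, and it remains to check that $r(P_a \cup Q) = r(Q) + 1$ for each triad $P_a$ and each other part $Q$. A triad has rank $3$, and $X$ has rank $3$ (a $4$-element circuit in the quad case; in the near-quad case, rank $2$ would contradict \cref{no_segment}). For two triads $P_a, P_b$, the isomorphism $M|(P_a \cup P_b) \cong M(K_{2,3})$ gives $r(P_a \cup P_b) = 4 = r(P_b) + 1$. For a triad $P_a$ and the petal $X$, the affixing circuits for $P_a$ together with $X$ being a $4$-point circuit (quad case) or having a triangle as $X - \{x\}$ (near-quad case) give $r(P_a \cup X) \le r(P_a) + 1 = 4$; and if $r(P_a \cup X) = r(X)$, then $P_a \subseteq \cl(X)$, so, writing $H = E(M) - P_a$ for the hyperplane complementary to the cocircuit $P_a$, we have $\cl(X) \cup H = E(M)$, and submodularity yields $r(X) \le r(\cl(X) \cap H) \le r(\cl(X)) + r(H) - r(M) = r(X) - 1$, a contradiction; hence $r(P_a \cup X) = 4 = r(X) + 1$. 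Thus \cref{k3m_paddle} shows $(X, P_1, \dots, P_{m-1})$ is a paddle of $M$; and since each $P_i$ is a triad of $M \backslash X$ with $M \backslash X$ restricting to $M(K_{2,3})$ on each $P_i \cup P_j$, one deduces that $M \backslash X \cong M(K_{3,m-1})$ is a triad-paddle with partition $(P_1, \dots, P_{m-1})$. Therefore $M$ is a quasi-triad-paddle with $X$ as its quad (resp. near-quad) petal, as required.
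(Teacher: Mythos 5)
Your proposal is correct and takes essentially the same route as the paper: apply \cref{quad_petal1}, observe that the remaining elements are covered by pairwise disjoint triads (via \cref{no_segment} and \cref{intersecting_plus_contractable}), re-apply \cref{quad_petal1} to conclude that the same set $X$ is a quad or near-quad affixed to every triad, invoke \cref{disjoint_triads} for the $M(K_{2,3})$ structure, and finish with \cref{k3m_paddle}. The only difference is that you spell out two steps the paper leaves terse — the identification $X_i = X$ through the observation that no element of $X$ lies in a triad, and the explicit check of the rank hypothesis of \cref{k3m_paddle} — and both of these verifications are sound.
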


\begin{proof}
    By \Cref{quad_petal1}, there exists $X \subseteq E(M)$ such that $X$ is a quad or near-quad affixed to $T_1^*$ and $T_2^*$, and, for all $x \notin X \cup T_1^* \cup T_2^*$, the element $x$ is contained in a triad $T^*$. By orthogonality, $T^*$ is disjoint from $X \cup T_1^* \cup T_2^*$. Hence, by another application of \cref{quad_petal1}, $X$ is a quad or near-quad (respectively) affixed to $T^*$, and \cref{disjoint_triads} implies that $M | (T_1^* \cup T_2^* \cup T^*) \cong M(K_{3,3})$. It follows that $E(M)$ can be partitioned into $P_1,P_2,P_3,\ldots,P_m$ such that $P_1 = X$ and $M \backslash P_1 \cong M(K_{3,m-1})$ and, for all $i \in \{2,3,\ldots,m\}$, the set $P_i$ is a triad and $X$ is a quad or near-quad (respectively) affixed to $P_i$. \Cref{k3m_paddle} implies that $(P_1,P_2,\ldots,P_m)$ is a paddle of $M$, so $M$ is a quasi-triad-paddle, as required.
\end{proof}

\begin{lemma} \label{not_quad_petal1}
    Let $M$ be a $3$-connected matroid with no $4$-element fans such that $|E(M)| \geq 12$. Suppose that, for all $x \in E(M)$, if $x$ is not contained in a triad, then $M / x$ is not $3$-connected. Let $T_1^*$ and $T_2^*$ be disjoint triads of $M$, and let $e$ and $f$ be distinct elements of $E(M)$ that are not contained in a triangle or a triad. Then $M$ has a detachable pair.
\end{lemma}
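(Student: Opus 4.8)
The plan is to argue by contradiction, so suppose $M$ has no detachable pairs. Since $e$ and $f$ lie in no triad, the hypothesis gives that $M/e$ and $M/f$ are not $3$-connected; as neither lies in a triangle, $\si(M/e)=M/e$ and $\si(M/f)=M/f$, so Bixby's Lemma makes $M\backslash e$ and $M\backslash f$ $3$-connected. Next I would apply \cref{annoying_elements} (its hypotheses hold: no element outside a triad is contractible while staying $3$-connected, and $e,f$ lie in no triangle or triad) to obtain a $4$-element cocircuit $C^*=\{e,f,g,h\}$ together with a triad $T^*$ satisfying $T^*\cap C^*=\{g\}$ and $M/g$ $3$-connected; in particular $g$ lies in no triangle, since $M/g$ is $3$-connected and hence simple. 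I would then pin down the triad structure of $M$: two distinct triads cannot meet in two elements, as then those four elements would have corank $2$ and \cref{no_segment} (applied dually) would give a detachable pair; and if some two distinct triads meet in exactly one element, then all the hypotheses of \cref{intersecting_two_elements} are satisfied, again producing a detachable pair. Hence all triads of $M$ are pairwise disjoint. In particular $T^*$ is either disjoint from, or equal to, each of $T_1^*$ and $T_2^*$, and after relabelling I may assume $g\in T_1^*$ with $T_2^*$ a triad disjoint from $T_1^*$.

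Applying \cref{disjoint_triads} to the maximal fan $T_1^*$ and the triad $T_2^*$ (both disjoint, each of size three, neither in a $4$-element fan) leaves only outcome (iv): $M|(T_1^*\cup T_2^*)\cong M(K_{2,3})$. So I may write $T_1^*=\{a_1,a_2,a_3\}$ and $T_2^*=\{b_1,b_2,b_3\}$ with $\{a_i,a_j,b_i,b_j\}$ a circuit for all distinct $i,j$, and $g=a_1$. Orthogonality of the cocircuit $C^*$ with the circuits $\{a_1,a_2,b_1,b_2\}$ and $\{a_1,a_3,b_1,b_3\}$, together with the fact that $e,f\notin T_1^*\cup T_2^*$ (as they lie in no triad), forces $h=b_1$, so $C^*=\{e,f,a_1,b_1\}$.

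From here I would follow the certificate and circuit-chasing strategy used in \cref{quad_petal1}. The triad $T_1^*$ and the cocircuit $C^*$ immediately supply the contraction certificate $(a_1,\{a_2,a_3\},\{\{e,f,b_1\}\})$: the two sets are disjoint, $\lambda(\{a_2,a_3\})=2$ (it is independent and coindependent), $a_1\in\cl^*(\{a_2,a_3\})$ and $a_1\in\cl^*(\{e,f,b_1\})$, and $a_1$ lies in no triangle. Using that $M/a_1$ is $3$-connected, together with \cref{circuit_so_deletable}, \cref{deletable_cocircuit}, \cref{quad_thingy_deletable}, and orthogonality against the $M(K_{2,3})$-circuits, I would locate further deletable and contractible elements, extract a matching deletion certificate, and simultaneously track the connectivity so as to build a set $X$ with $\lambda(X)=2$, of size at most $|E(M)|-3$, containing both a deletion certificate and a contraction certificate; here the circuits $\{a_i,a_j,b_i,b_j\}$ together with one or two elements discovered in the chase are what bring $\lambda(T_1^*\cup T_2^*\cup\{e,f\})$ down to $2$. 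Then \cref{no_other_elements} forces every element of $E(M)-X$ into a $4$-element fan; since $M$ has no $4$-element fans, $E(M)=X$, contradicting $|E(M)|\ge 12$. (An alternative for the finish, in the spirit of \cref{intersecting_two_elements}, is to take a vertical $3$-separation $(X,\{e\},Y)$ of $M$ (which exists since $M/e$ is not $3$-connected), push $T_1^*\cup T_2^*$ into $X$ using \cref{fan_vert_sep} and \cref{fcl_vert_sep} and the fact that $e$ lies in no triangle, deduce $f\in\cl^*(X\cup\{e\})$ from $C^*$, and conclude that $M/f$ is $3$-connected, contradicting the hypothesis since $f$ lies in no triad.)

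The main obstacle is this final step. Unlike in \cref{quad_petal1}, where the contractible element lies outside every triad, here $g=a_1$ lies in the triad $T_1^*$, so the local circuit/cocircuit structure around $T_1^*$ and the orthogonality arguments must be reworked; one must also handle the degenerate sub-cases in which the elements discovered during the chase coincide with some $a_i$, some $b_i$, or each other, and the connectivity bookkeeping has to be tight enough that the set $X$ produced is small enough for $E(M)=X$ to genuinely contradict $|E(M)|\ge 12$.
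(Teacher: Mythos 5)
Your opening reductions are sound and essentially parallel the paper's: \cref{annoying_elements} gives the cocircuit $C^*=\{e,f,g,h\}$ and the triad $T^*$ with $M/g$ $3$-connected; the dual of \cref{no_segment} and \cref{intersecting_two_elements} let you assume all triads are pairwise disjoint (the paper instead rules out $C^*\cap(T_1^*\cup T_2^*)=\emptyset$ via the $M(K_{3,3})$ restriction, but your relabelling works too); and \cref{disjoint_triads} plus orthogonality pins down $C^*=\{e,f,a_1,b_1\}$, matching the paper's $C^*=\{a_i,b_i,e,f\}$. The problem is entirely in the finish. Your primary route is only a strategy statement — you yourself flag that the circuit-chasing, the degenerate subcases, and the connectivity bookkeeping "must be reworked" — so there is no argument there to check.

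Your parenthetical alternative contains a concrete error: you cannot "push $T_1^*\cup T_2^*$ into $X$". \cref{fan_vert_sep} places a single fan (here one triad) on one side, and \cref{fcl_vert_sep} only moves an element of $Y$ across when it lies in $\cl(X)$ (or in $\cl^*(X)$, given that $e$ is in no triangle); there is no reason an element of $T_2^*$ satisfies this when $T_2^*\subseteq Y$. In fact the configuration you want is impossible, and the dichotomy you invoke is exactly how the paper proves that: if $T_1^*\cup T_2^*\subseteq X$, then either $f\in X$, whence $e\in\cl(X)\cap\cl^*(X)$ and $\lambda(Y)\le 1$, or $f\in Y$, whence $f\in\cl^*(X\cup\{e\})$ and $M\backslash f$ is not $3$-connected — both contradicting facts that are true in $M$ (Bixby's Lemma and $e,f$ lying in no triad make $M\backslash e$ and $M\backslash f$ $3$-connected). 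So this argument only shows $T_2^*\subseteq Y$ in any vertical $3$-separation $(X,\{e\},Y)$ with $T_1^*\subseteq X$; it cannot serve as the final contradiction. (Your step "deduce $f\in\cl^*(X\cup\{e\})$" also silently assumes $f\in Y$; the case $f\in X$ needs separate treatment.) The genuinely hard case — $T_1^*\subseteq X$, $T_2^*\subseteq Y$ — is where the paper does its remaining work: using $C^*=\{a_i,b_i,e,f\}$ and the $M(K_{2,3})$ circuits it shows $\lambda(X\cup\{e\}\cup T_2^*)<2$, hence $Y=T_2^*\cup\{z\}$ for a single element $z$, and then $z\in\cl(T_2^*)$ contradicts orthogonality with $C^*$ while $z\in\cl^*(T_2^*)$ contradicts the dual of \cref{no_segment}. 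That case is exactly what your proposal omits, so the proof as written has a genuine gap at its final step.
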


\begin{proof}
    Suppose that $M$ has no detachable pairs. By \Cref{disjoint_triads}, we may assume that $T_1^* = \{a_1,a_2,a_3\}$ and $T_2^* = \{b_1,b_2,b_3\}$ such that, for all distinct $i,j \in \{1,2,3\}$, the set $\{a_i,a_j,b_i,b_j\}$ is a circuit. By \Cref{annoying_elements}, there exists a $4$-element cocircuit $C^*$ containing $\{e,f\}$, and there exists a triad $T^*$ such that $C^* \cap T^* = \{g\}$, where $M / g$ is $3$-connected.
	
    Suppose that $C^*$ and $T_1^* \cup T_2^*$ are disjoint. This means that $g \notin T_1^* \cup T_2^*$, so $T^* \neq T_1^*$ and $T^* \neq T_2^*$. Therefore, \cref{no_segment,intersecting_no_other_triads} imply that $T^*$ is disjoint from $T_1^*$ and $T_2^*$, so, by \cref{disjoint_triads}, $M | (T^* \cup T_1^* \cup T_2^*) \cong M(K_{3,3})$. In particular, $M$ has a $4$-element $C$ containing $g$, another element of $T^*$, and two elements of $T_1^*$. But $C^* \cap T^* = \{g\}$ and $C^* \cap T_1^* = \emptyset$, so $|C \cap C^*|=1$, a contradiction to orthogonality. 
	
    Thus $C^* \cap (T_1^* \cup T_2^*) \neq \emptyset$. Orthogonality implies that $C^* = \{a_i,b_i,e,f\}$, for some $i \in \{1,2,3\}$. Since $e$ is not contained in a triangle and $M / e$ is not $3$-connected, $M$ has a vertical $3$-separation $(X, \{e\}, Y)$. We may assume, by \cref{fcl_vert_sep}, that $T_1^* \subseteq X$. If $T_2^* \subseteq X$, then either $f \in X$, which means $e \in \cl^*(X)$, and so $M \backslash e$ is not $3$-connected, or $f \in Y$, which means $f \in \cl^*(X \cup \{e\})$, and so $M \backslash f$ is not $3$-connected. Either case is a contradiction, so, by \Cref{fcl_vert_sep}, $T_2^* \subseteq Y$. Now, either $f \in X$ or $f \in Y$. We may assume, without loss of generality, the former. It follows that $\lambda(X \cup \{e\} \cup T_2^*) < 2$, which implies that $|Y - T_2^*| = 1$. Hence, $Y = T_2^* \cup \{z\}$, for some element $z \in E(M) - (T_1^* \cup T_2^* \cup \{e,f\})$. Since $\lambda(Y) = 2$, either $z \in \cl(T_2^*)$ or $z \in \cl^*(T_2^*)$. If $z \in \cl(T_2^*)$, then $T_2^* \cup \{z\}$ is a $4$-element circuit, which contradicts orthogonality with $C^*$. Otherwise, $r^*(T_2^* \cup \{z\}) = 2$, contradicting \Cref{no_segment}. We deduce that $M$ has a detachable pair.
\end{proof}

\begin{lemma} \label{not_quad_petal2}
	Let $M$ be a $3$-connected matroid with no detachable pairs and no $4$-element fans such that $|E(M)| \geq 12$. Let $T_1^*$ and $T_2^*$ be disjoint triads of $M$, let $T$ be a triangle of $M$, and let $e$ be an element of $E(M)$ that is not contained in a triangle or a triad. Then there exists $f \in E(M)$ such that $f$ is not contained in a triangle or a triad and $M / f$ is $3$-connected.
\end{lemma}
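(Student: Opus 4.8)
The plan is as follows. Since $e$ lies in no triangle and no triad, $\si(M/e)=M/e$ and $\co(M\backslash e)=M\backslash e$, so by Bixby's Lemma either $M/e$ or $M\backslash e$ is $3$-connected. If $M/e$ is $3$-connected then $e$ is the required element, so I may assume $M\backslash e$ is $3$-connected and $M/e$ is not. Now suppose, for a contradiction, that no element of $M$ lying in no triangle and no triad is contractible to a $3$-connected matroid. If $x$ lies in a triangle then $M/x$ has a parallel pair, so $M/x$ is not $3$-connected; together with the supposition this shows $M/x$ is not $3$-connected for every $x \in E(M)$ not contained in a triad. Hence, if $M$ had a second element $e'\neq e$ lying in no triangle and no triad, then \cref{not_quad_petal1} would produce a detachable pair, contradicting the hypothesis. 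So $e$ is the unique element of $M$ lying in no triangle and no triad; note also that, since $M$ has no $4$-element fan, no triangle meets a triad, so the elements of $T$ lie in $T$ but in no triad.

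Next I would exploit the triangle $T$. As $T$ meets no triad, Tutte's Triangle Lemma gives two elements of $T$, say $t_1$ and $t_2$, with $M\backslash t_1$ and $M\backslash t_2$ both $3$-connected. Fix $i\in\{1,2\}$. Since $M/e$ is not $3$-connected, a short connectivity computation shows $M/e\backslash t_i$ is not $3$-connected either: a $2$-separation of $M/e$ is inherited by $M/e\backslash t_i$ unless $\{t_i,x\}$ is a parallel or a series pair of $M/e$ for some $x$, and neither can occur because $M$ is $3$-connected and $e$ lies in no triangle. As $e$ lies in no triangle of $M\backslash t_i$ and $M\backslash t_i\backslash e$ is not $3$-connected ($M$ has no detachable pair), Bixby's Lemma applied in $M\backslash t_i$ forces $e$ into a triad of $M\backslash t_i$. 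Since $e$ is in no triad of $M$, this yields a $4$-element cocircuit $D_i$ of $M$ with $\{e,t_i\}\subseteq D_i$, and by orthogonality with $T$ the cocircuit $D_i$ contains a second element of $T$.

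The remaining and main step is to combine the two $4$-element cocircuits $D_1$ and $D_2$ with the structure on $T_1^*\cup T_2^*$, which is a copy of $M(K_{2,3})$ by \cref{disjoint_triads} (as $T_1^*$ and $T_2^*$ are disjoint and, having no $4$-element fans, $M$ realises outcome~(iv) of that lemma). Using orthogonality together with cocircuit elimination (and, dually, circuit elimination), one pins down a bounded set $Z$ containing $T\cup\{e\}\cup D_1\cup D_2$, possibly enlarged by a few further elements forced by the two disjoint triads, so that $\lambda(Z)=2$ and $Z$ contains both a deletion certificate (built from $e$ and the triangle $T$) and a contraction certificate. Then \cref{no_other_elements} forces every element of $E(M)-Z$ to lie in a $4$-element fan, contradicting that $M$ has no $4$-element fans together with $|E(M)|\ge 12$; alternatively one directly exhibits a deletable element of $E(M)-Z$, contradicting \cref{deletable_collection}.

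I expect the last paragraph to be the hard part: the orthogonality-and-elimination bookkeeping needed to locate $Z$ and to verify the certificate conditions splits into several sub-cases according to the positions of the extra elements $d_1,d_2$ (the fourth elements of $D_1,D_2$) relative to $T$, $T_1^*$ and $T_2^*$, including the degenerate possibility $D_i=T\cup\{e\}$. Handling those sub-cases uniformly, rather than the initial Bixby/Tutte reductions, is where the real work lies.
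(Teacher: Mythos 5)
Your preliminary reductions are fine as far as they go: Bixby's Lemma to assume $M\backslash e$ is $3$-connected and $M/e$ is not, the observation that under your contradiction hypothesis every element outside a triad is non-contractible, the use of \cref{not_quad_petal1} to make $e$ unique, Tutte's Triangle Lemma on $T$, the persistence of a $2$-separation of $M/e$ in $M/e\backslash t_i$, and the resulting $4$-element cocircuits $D_i\supseteq\{e,t_i\}$ meeting $T$ twice. But the proof stops exactly where the lemma actually has to be proved. The entire final step -- locating a $3$-separating set $Z$, verifying that it contains both a deletion and a contraction certificate, and invoking \cref{no_other_elements} or \cref{deletable_collection} -- is only gestured at, and you say yourself that ``the real work lies'' there. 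As it stands this is a genuine gap, and it is not clear the plan goes through cheaply: a contraction certificate needs its sets $X_i$ to be pairwise intersection-free with $\lambda(X_1)=2$ or $X_1\cup\{e\}$ a quad, and the natural candidates $D_1-\{e\}$, $D_2-\{e\}$ need not satisfy either condition (they may share an element of $T$, and $D_i$ need not be a quad); the sub-cases with $d_i\in T_1^*\cup T_2^*$ or $D_i=T\cup\{e\}$ are precisely the unhandled content.

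The paper's proof shows that none of this machinery is needed, and in fact your cocircuit $D_i$ already has the right shape to finish directly. After reducing to the case that $M\backslash e$ is $3$-connected, the paper applies \cref{deletable_cocircuit} (with $X$ two elements of $T$) to get a cocircuit $C^*=\{e,f,g,h\}$ with $g,h\in T$ and $f\notin T$, and then shows the fourth element $f$ is the required element: $f$ lies in no triangle, because a triangle through $f$ would meet $T$ in two elements (contradicting \cref{no_segment}) or in one element (contradicting the dual of \cref{intersecting_plus_contractable}, since $M\backslash e$ is $3$-connected and $e$ avoids both triangles); and $f$ lies in no triad, because any triad through $f$ is disjoint from one of $T_1^*,T_2^*$ (by \cref{no_segment} and \cref{intersecting_no_other_triads}), so \cref{disjoint_triads} yields a $4$-element circuit meeting $C^*$ in only $f$ (as $e,g,h$ lie in no triad), violating orthogonality. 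The dual of \cref{circuit_so_deletable} then gives that $M/f$ is $3$-connected, with no proof by contradiction, no uniqueness of $e$, and no certificates. To repair your write-up, the shortest route is to drop the global contradiction framework and prove directly that the fourth element of one of your cocircuits $D_i$ (in the case $|D_i\cap T|=2$) lies in no triangle or triad, handling the case $D_i=T\cup\{e\}$ separately; what you have submitted does not yet do this.
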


\begin{proof}
	If $M / e$ is $3$-connected, then clearly the result holds. Therefore suppose that $M \backslash e$ is $3$-connected. By \Cref{deletable_cocircuit}, there is a $4$-element cocircuit $C^* = \{e,f,g,h\}$ such that $\{g,h\} \subseteq T$ and $f \notin T$.
	
    Suppose $f$ is contained in a triangle $T'$. By orthogonality, $T'$ contains an element of $\{e,g,h\}$. Furthermore, $e$ is not contained in a triangle, so $|T \cap T'| \in \{1,2\}$. But $|T \cap T'| \neq 2$ by \cref{no_segment}, and, as $M \backslash e$ is $3$-connected, $|T \cap T'| \neq 1$ by the dual of \Cref{intersecting_plus_contractable}. So $f$ is not contained in a triangle.

    Next, suppose $f$ is contained in a triad $T^*$. If $T^*$ meets $T_1^*$, then, by \cref{no_segment,intersecting_no_other_triads}, we have that $T^* = T_1^*$. Similarly, if $T^*$ meets $T_2^*$, then $T^* = T_2^*$. This means that $T^*$ is disjoint from at least one of $T_1^*$ and $T_2^*$. By \Cref{disjoint_triads}, there is a $4$-element circuit $C$ of $M$ containing $f$ and another element of $T^*$ and two elements of either $T_1^*$ or $T_2^*$. But $e$ is not contained in a triad, and $g$ and $h$ are not contained in triads since $M$ has no $4$-element fans. Therefore, $C$ intersects $C^*$ in one element, a contradiction. It now follows that $f$ is contained in neither a triangle nor a triad. \Cref{circuit_so_deletable} implies that $M / f$ is $3$-connected, as desired.
\end{proof}

\begin{lemma} \label{not_quad_petal}
    Let $M$ be a $3$-connected matroid with no detachable pairs and no $4$-element fans such that $|E(M)| \geq 12$. Suppose that, for all $x \in E(M)$, if $x$ is not contained in a triad, then $M / x$ is not $3$-connected. Let $T_1^*$ and $T_2^*$ be disjoint triads of $M$, and let $e$ be an element of $M$ that is not contained in a triangle or a triad. Then $M$ is a hinged triad-paddle.
\end{lemma}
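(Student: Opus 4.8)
\textbf{Proof plan for \cref{not_quad_petal}.}
The hypotheses give us two disjoint triads $T_1^*$ and $T_2^*$, and an element $e$ in neither a triangle nor a triad; moreover we are in the ``contractible'' regime where an element not in a triad fails to give a $3$-connected contraction. The plan is to show that $E(M)$ partitions into triads (with at most one extra element), which via \cref{k3m_paddle} yields a paddle whose petals are triads, and then to identify the resulting structure as a hinged triad-paddle using the ``affixing'' characterisations from \cref{disjoint_triads}. First I would argue that $M$ cannot contain a triangle at all: if $M$ had a triangle $T$, then \cref{not_quad_petal2} would produce an element $f$ in neither a triangle nor a triad with $M/f$ $3$-connected; but $f$ is not in a triad, contradicting the standing hypothesis that such an element has $M/f$ not $3$-connected. (Equivalently, once we know $M$ has no triangles, $e$ being in no triangle is automatic, and the hypothesis ``$M/x$ not $3$-connected for $x$ not in a triad'' combined with Bixby's Lemma forces every such $x$ to have $M\backslash x$ $3$-connected.)

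Next I would control the elements outside $T_1^*\cup T_2^*$. Using \cref{disjoint_triads} we may coordinatise $T_1^*\cup T_2^*$ so that $M|(T_1^*\cup T_2^*)\cong M(K_{2,3})$, giving circuits $\{a_i,a_j,b_i,b_j\}$. I would then take an arbitrary $x\notin T_1^*\cup T_2^*$ and show $x$ lies in a triad: since $M$ has no triangle, if $x$ were in no triad then Bixby would make $M\backslash x$ $3$-connected, and we could feed $x$ (and a second such element, which must exist because $|E(M)|\ge 12$ while $|T_1^*\cup T_2^*|=6$) into \cref{not_quad_petal1} to get a detachable pair, a contradiction. So every element of $E(M)-(T_1^*\cup T_2^*)$ is in a triad. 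For such a triad $T^*$, orthogonality with the circuits of $M(K_{2,3})$, together with \cref{no_segment} and \cref{intersecting_no_other_triads}, forces $T^*$ to be disjoint from $T_1^*\cup T_2^*$; and then by \cref{disjoint_triads} again, $M|(T^*\cup T_i^*)\cong M(K_{2,3})$ for $i=1,2$. Iterating, $E(M)$ is a disjoint union of triads $P_1,\dots,P_m$ together with possibly one leftover element $x$; but a single leftover element $x$ would have $\lambda(\{x\})\le 2$ fail or would need to attach to some $P_i$, and since $M$ has no $4$-element fan and no triangle, the only way is that $x\in\cl(P_m)$ for some petal, giving the ``hinge'' element. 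I would then set $P_m$ to be a petal containing the hinge and use \cref{k3m_paddle} to conclude $(P_1,\dots,P_{m-1},P_m\cup\{x\})$ — or $(P_1,\dots,P_m)$ if there is no hinge — is a paddle.

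Finally, to match the definition of a hinged triad-paddle I would verify its clauses: that $x\in\cl(P_m)$ but $P_m\cup\{x\}$ is not a $4$-element fan (which holds because $M$ has no $4$-element fans), and that for each $i\in[m-1]$ either $P_i\cup\{x\}$ is a $4$-element-fan affixed to $P_m$ or $M|(P_i\cup P_m)\cong M(K_{2,3})$. This dichotomy is exactly the content of \cref{disjoint_triads}(iii)--(iv) applied to the triads $P_i$ and $P_m$ once we know $P_i$ and $P_m$ are disjoint and not meeting each other's triads, with the degenerate case $e_1\in T^*$ and the $|F|=3$ case ruled out by the coordinatisation above. I expect the main obstacle to be the bookkeeping around the hinge element $x$: precisely locating it, showing there is at most one such element, and showing it lies in the closure of exactly one petal $P_m$ (and that the affixing condition holds for every other petal simultaneously) — this is where one must carefully combine orthogonality with the $M(K_{2,3})$ circuits, the absence of $4$-element fans, and \cref{no_segment} to exclude the configurations that would instead be quad- or near-quad-petal paddles.
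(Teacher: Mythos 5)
Your plan follows the paper's route (eliminate triangles via \cref{not_quad_petal2}, use \cref{not_quad_petal1} to bound the elements avoiding triads, make all triads pairwise disjoint via \cref{no_segment} and \cref{intersecting_no_other_triads}, and assemble the $M(K_{3,m})$ structure from \cref{disjoint_triads}), but two steps do not hold up as written. First, your claim that \emph{every} element of $E(M)-(T_1^*\cup T_2^*)$ lies in a triad is false: the hypothesised element $e$ lies in no triad and is certainly outside $T_1^*\cup T_2^*$. Relatedly, your justification for producing the second element needed in \cref{not_quad_petal1} --- ``must exist because $|E(M)|\ge 12$'' --- is not valid, since all the remaining elements could a priori lie in triads; the correct second element is simply $e$ itself. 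The right statement (and the one the paper proves) is that every element of $E(M)-\{e\}$ lies in a triad, so the ``leftover'' element of your partition is neither optional nor unidentified: it is exactly $e$, and it always exists.

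Second, the decisive step --- that the leftover element attaches to a petal, i.e.\ $e\in\cl(P_i)$, so that the structure is a \emph{hinged} triad-paddle --- is exactly what your plan does not prove. The sentence ``a single leftover element $x$ would have $\lambda(\{x\})\le 2$ fail or would need to attach to some $P_i$'' is not an argument ($\lambda(\{x\})\le 1$ for every element here), and deferring the point to careful bookkeeping leaves the crux open: a priori $e$ could lie in the closure of no petal, and then the requirement $x\in\cl(P_m)$ in the definition of a hinged triad-paddle would fail. The paper closes this with a short, specific argument: from $M\backslash e\cong M(K_{3,m})$ and $M\backslash(P_i\cup\{e\})\cong M(K_{3,m-1})$ it deduces $\lambda(P_i\cup\{e\})=2$; since $e\in\cl^*(P_i)$ would force $r^*(P_i\cup\{e\})=2$ on four elements, contradicting the dual of \cref{no_segment}, it follows that $e\in\cl(P_i)$ for every $i\in[m]$, after which the paddle condition (via \cref{k3m_paddle}) and the remaining clauses of the definition (no $4$-element fans, and $M|(P_i\cup P_m)\cong M(K_{2,3})$) are immediate. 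Some argument of this kind is needed to complete your plan.
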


\begin{proof}
    By \Cref{not_quad_petal2}, the matroid $M$ has no triangles. If there exists $f \neq e$ such that $f$ is not contained in a triangle or a triad, then \Cref{not_quad_petal1} implies that $M$ has a detachable pair. So every element of $E(M) - \{e\}$ is contained in a triad. Furthermore, by \cref{no_segment,intersecting_no_other_triads}, there are no distinct triads of $M$ with a non-empty intersection. Therefore, by \cref{disjoint_triads}, $M \backslash e \cong M(K_{3,m})$ and $E(M) - \{e\}$ has a partition $(P_1,P_2,\ldots,P_m)$ such that, for all $i \in [m]$, the set $P_i$ is a triad. Additionally, for all $i \in [m]$, we have $M \backslash (P_i \cup \{e\}) \cong M(K_{3,m-1})$; therefore, $\lambda(E(M) - (P_i \cup \{e\})) = 2$, so $\lambda(P_i \cup \{e\}) = 2$. By \Cref{no_segment}, we have that $e \notin \cl^*(P_i)$, so $e \in \cl(P_i)$, for each $i \in [m]$. It follows that $M$ is a hinged triad-paddle.
\end{proof}

\begin{lemma} \label{k3m}
    Let $M$ be a $3$-connected matroid with no detachable pairs and no $4$-element fans, such that $|E(M)| \geq 12$. Let $T_1^*$ and $T_2^*$ be disjoint triads of $M$, and suppose that every element of $M$ is contained in a triangle or a triad. Then $M$ is either a triad-paddle or a tri-paddle-copaddle.
\end{lemma}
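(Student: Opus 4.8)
The plan is to leverage the previously established structural lemmas about disjoint triads, together with \cref{disjoint_triads} and \cref{k3m_paddle}, to show that the whole matroid decomposes into a paddle (and possibly also a copaddle) of triads and triangles. First I would set up notation: by \cref{disjoint_triads} applied to $T_1^*$ and $T_2^*$ (which applies since $M$ has no $4$-element fans, so neither triad is contained in a $4$-element fan, and they are disjoint), we get $M|(T_1^* \cup T_2^*) \cong M(K_{2,3})$. Label $T_1^* = \{a_1,a_2,a_3\}$ and $T_2^* = \{b_1,b_2,b_3\}$ so that $\{a_i,a_j,b_i,b_j\}$ is a circuit for all distinct $i,j$.

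Next I would classify the remaining elements. Take any $e \in E(M) - (T_1^* \cup T_2^*)$. By hypothesis $e$ is in a triangle or a triad. If $e$ is in a triad $T^*$, then orthogonality with the circuits $\{a_i,a_j,b_i,b_j\}$ forces $T^*$ to be disjoint from $T_1^* \cup T_2^*$ (it cannot meet either in exactly one element by \cref{intersecting_no_other_triads} and \cref{no_segment}, and cannot meet in two elements without creating a circuit inside a triad); then \cref{disjoint_triads} gives $M|(T_1^* \cup T^*) \cong M(K_{2,3})$ as well. Dually, if $e$ is in a triangle $T$, then $T$ is disjoint from any triad of $M$, and by applying the dual statements we find that the triangles of $M$ are pairwise ``disjoint-like'' with respect to triads. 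The key dichotomy is whether $M$ has any triangles at all beyond $T_1^*,T_2^*$'s structure: if $M$ has no triangles, then every element is in a triad, all triads are pairwise disjoint (by \cref{intersecting_no_other_triads} and \cref{no_segment}), so $E(M)$ partitions into triads $(P_1,\dots,P_m)$ with $M|(P_i \cup P_j) \cong M(K_{2,3})$ for all $i,j$; \cref{k3m_paddle} then gives that this is a paddle, and one checks $M \cong M(K_{3,m})$, so $M$ is a triad-paddle. Dually, if $M$ has no triads — but wait, we are given $T_1^*,T_2^*$, so this cannot happen; instead the dual case is handled by the symmetric hypothesis, giving that $M^*$ could be a triad-paddle only if $M$ has no triads, which is excluded here.

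The substantive case is when $M$ has both the two disjoint triads and at least one triangle $T$. Here I would show that the triangles also come in a disjoint family $(Q_1,\dots,Q_t)$ and that, setting $P = P_1 \cup \dots \cup P_s$ (the union of all triads, each of size three) and analysing local connectivity, we get $\sqcap(P_i,P_j)=2$ for triads (paddle behaviour) while $\sqcap(Q_i,Q_j)=0$ for triangles (copaddle behaviour), and $\sqcap(P_i,Q_j)=1$. Concretely, for each triangle $Q_i$, orthogonality with the circuits among the triads forces $Q_i$ to be disjoint from $\bigcup P_j$; dually each triad is disjoint from $\bigcup Q_j$. Then $E(M) = P_1 \cup \dots \cup P_s \cup Q_1 \cup \dots \cup Q_t$, and I would verify, using \cref{k3m_paddle} applied to $M$ with the partition $(P_1,\dots,P_s, Q_1 \cup \dots \cup Q_t)$ — noting $P_i$ is a triad and $r(P_i \cup P_j) = r(P_j)+1$ and $r(P_i \cup (\bigcup Q)) = r(\bigcup Q)+1$ — that $(P_1,\dots,P_s,Q_1\cup\dots\cup Q_t)$ is a paddle, and dually applied to $M^*$ that $(Q_1,\dots,Q_t, P_1\cup\dots\cup P_s)$ is a paddle of $M^*$, i.e. a copaddle of $M$. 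That is exactly the definition of a tri-paddle-copaddle.

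The main obstacle I anticipate is the bookkeeping in the mixed case: establishing rigorously that no element escapes either the union of triads or the union of triangles, and that the triads are pairwise disjoint and the triangles are pairwise disjoint, all while keeping orthogonality arguments straight against the many $4$-element circuits $\{a_i,a_j,b_i,b_j\}$ and their extensions. A secondary subtlety is ruling out that a triangle and a triad share the configuration of a $4$-element fan (impossible by hypothesis) or that some element lies in both a triangle and a triad (which would put it in a $4$-element fan, again excluded) — these need to be invoked carefully to force the clean partition. Once the partition into disjoint triads and disjoint triangles is in hand, the rank computations and the two applications of \cref{k3m_paddle} are routine, and the case split (no triangles $\Rightarrow$ triad-paddle; triangles present $\Rightarrow$ tri-paddle-copaddle) closes the lemma.
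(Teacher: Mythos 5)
Your overall skeleton (all triads pairwise disjoint, their union $X$ restricting to $M(K_{3,s})$, triangles disjoint from triads, then assembling the paddle and copaddle via \cref{k3m_paddle} and its dual) is the same as the paper's, but there is a genuine gap in the middle: you never rule out the case in which exactly one triangle lies outside $X$. Your mixed case jumps from ``$M$ has a triangle'' to ``the triangles form a disjoint family $(Q_1,\dots,Q_t)$'' and then to a tri-paddle-copaddle, but that conclusion needs $t \ge 2$: the definition of a tri-paddle-copaddle requires $s,t \ge 2$, and the copaddle side (equivalently $M/X \cong M^*(K_{3,t})$, or your dual application of \cref{k3m_paddle}) cannot even be set up with a single triangle. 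Excluding $E(M) = X \cup T$ for a lone triangle $T$ is a substantive step in the paper's proof: one shows $\lambda(T_1^* \cup T) = 2$ and that every $x \in T$ lies in $\cl(T_1^*)$, then Tutte's Triangle Lemma together with \cref{deletable_cocircuit} produces a $4$-element cocircuit $\{y,z,a_i,b_i\}$ with $y,z \in T$, and since $|E(M)| \ge 12$ forces $|X| \ge 9$ there is a third triad $\{c_1,c_2,c_3\} \subseteq X$ whose circuit $\{a_i,a_j,c_i,c_j\}$ meets that cocircuit in exactly one element, violating orthogonality. Nothing in your proposal supplies this argument, and without it the lemma's conclusion does not follow.

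Relatedly, the pairwise disjointness of the triangles is not mere bookkeeping. The dual of \cref{intersecting_no_other_triads} only yields that two triangles meeting in one element are the \emph{only} triangles, which pins down $E(M) = X \cup T \cup T'$ with $|T \cup T'| = 5$; the paper then needs a separate argument (choose $x \in T - T'$ with $M \backslash x$ $3$-connected by Tutte's Triangle Lemma, and apply \cref{deletable_cocircuit} twice to find cocircuits inside $T \cup T'$, contradicting $3$-connectivity) to eliminate this configuration. Once both cases are disposed of — so that there are at least two triangles, pairwise disjoint and disjoint from all triads — your concluding step (two applications of \cref{k3m_paddle}, or equivalently $M|X \cong M(K_{3,s})$ and $M/X \cong M^*(K_{3,t})$) does finish the proof; the gap is the missing case analysis, not the assembly.
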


\begin{proof}
    By the dual of \cref{no_segment}, any two triads of $M$ intersect in at most one element.  Moreover, by \cref{intersecting_no_other_triads} and since $M$ has two disjoint triads, it follows that all the triads in $M$ are pairwise disjoint. By \cref{disjoint_triads}, we may assume that $T_1^* = \{a_1,a_2,a_3\}$ and $T_2^* = \{b_1,b_2,b_3\}$ such that, for all distinct $i,j \in \{1,2,3\}$, the set $\{a_i,a_j,b_i,b_j\}$ is a circuit. Moreover, if $X$ is precisely the set of elements of $M$ contained in a triad, then $M | X \cong M(K_{3,s})$ for some $s \geq 2$. If $E(M) = X$, then $M \cong M(K_{3,m})$, so $M$ is a triad-paddle as required. Therefore, suppose there exists a triangle~$T$ disjoint from $X$. 
	
    We first consider the case where $E(M) = X \cup T$. Now $\lambda(X - T_1^*) = 2$, and so $\lambda(T_1^* \cup T) = 2$. Suppose there exists $z \in T$ such that $z \in \cl^*(T_1^* \cup (T-\{z\}))$. Then there is a cocircuit $C^*$ of $M$ contained in $T_1^* \cup T$ that contains $z$ and an element of $T_1^*$. But orthogonality with the circuits of the form $\{a_i,a_j,b_i,b_j\}$ implies that $T_1^* \subseteq C^*$, a contradiction. Since $\lambda(T_1^* \cup T) = 2$, it follows that $x \in \cl(T_1^*)$ for all $x \in T$.
    In particular, there exist distinct elements $y,z \in T$ such that, by Tutte's Triangle Lemma and \cref{deletable_cocircuit}, there is a $4$-element cocircuit~$C^*$ of $M$ containing $\{y,z\}$ and an element in $T_1^*$.  By orthogonality with circuits of the form $\{a_i,a_j,b_i,b_j\}$, we have $C^*=\{y,z,a_i,b_i\}$ for some $i \in [3]$.
    Since $|E(M)| \geq 10$, there exists a triad $\{c_1,c_2,c_3\} \subseteq X$, distinct from $T_1^*$ and $T_2^*$, such that $\{a_i,a_j,c_i,c_j\}$ is a circuit for any $j \in [3]-\{i\}$, a contradiction to orthogonality.
	
    It now follows that $M$ has triangle $T'$ distinct from $T$. Suppose $T$ meets $T'$. By \cref{no_segment} and the dual of \cref{intersecting_no_other_triads}, $|T \cap T'| = 1$ and there are no other elements of $M$ contained in a triangle, so $E(M) = X \cup T \cup T'$. Tutte's Triangle Lemma implies that there exists an element $x \in T - T'$ such that $M \backslash x$ is $3$-connected. So, for $y \in T'$, \Cref{deletable_cocircuit} implies that there is a $4$-element cocircuit $C_1^*$ of $M$ containing $\{x,y\}$ and an element in $T'-\{y\}$. Orthogonality implies that $C_1^* \subseteq T \cup T'$. 
    
    Let $z$ be the unique element of $(T \cup T') - C_1^*$. Note that either $C_1^*$ contains a triangle or $C_1^*$ is a quad. Hence, $\lambda(C_1^*) = 2$. Furthermore, $z \in \cl(C_1^*)$. If $z \in T'$, then, as before, \Cref{deletable_cocircuit} implies that there is a $4$-element cocircuit of $M$ which contains $\{x,z\}$. Furthermore, by orthogonality, this cocircuit is a subset of $T \cup T'$. But now $z \in \cl(C_1^*)$ and $z \in \cl^*(C_1^*)$, contradicting the $3$-connectivity of $M$. Hence, $z \in T - T'$. Since $z \in \cl(C_1^*)$, the matroid $M \backslash z$ is $3$-connected. \Cref{deletable_cocircuit} implies that there is a $4$-element cocircuit of $M$ which contains $\{y,z\}$, and this cocircuit is a subset of $T \cup T'$. Again, $z \in \cl^*(C_1^*)$, a contradiction.
	
    So any two triangles of $M$ are disjoint.  Thus $E(M) - X$ can be partitioned into disjoint triangles, and, by the dual of \cref{disjoint_triads}, we have that $M / X \cong M^*(K_{3,t})$, for some $t \geq 2$. Therefore $M$ is a tri-paddle-copaddle.
\end{proof}

\subsection*{One triad and at most one triangle}
The final case we need to consider is when $M$ has exactly one triad and at most one triangle.  The case where $M$ has one triangle is handled in \cref{triad_triangle}, whereas the case where $M$ has no triangles is handled in \cref{one_triad}.

\begin{lemma} \label{triad_triangle1}
	Let $M$ be a $3$-connected matroid with no detachable pairs and no $4$-element fans such that $|E(M)| \geq 11$. Let $T^*$ be a triad of $M$ and let $T$ be a triangle of $M$ such that $M$ has no other triads or triangles. Let $e \in E(M) - (T \cup T^*)$ such that $M / e$ is $3$-connected. Then there is a labelling $T^* = \{a_1,a_2,a_3\}$ and $T = \{b_1,b_2,b_3\}$ such that $\{a_1,a_2,e,b_1\}$ and $\{a_2,a_3,e,b_3\}$ are circuits.
\end{lemma}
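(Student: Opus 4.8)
The plan is to exploit the dual of \cref{contractable_circuit} to produce $4$-element circuits through $e$ meeting $T^*$, and then use orthogonality with $T$, together with circuit elimination, to pin down their structure.

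First I would observe that, since $M/e$ is $3$-connected and $e \in \cl^*(T^* - \{x\})$ for any $x \in T^*$, the dual of \cref{contractable_circuit} applied with $X = T^* - \{x\}$ yields a $4$-element circuit $C_1$ of $M$ containing $e$ and two elements of $T^*$ and an element $y_1 \notin T^*$. Orthogonality between $C_1$ and the triad $T^*$ forces $|C_1 \cap T^*| = 2$; and since $e$ is not in a triangle (else $T$ would not be the unique triangle, or $e$ would be in $T$, a contradiction) we get $C_1 = \{a_i, a_j, e, y_1\}$ with $\{i,j\} \subset \{1,2,3\}$ after labelling $T^* = \{a_1,a_2,a_3\}$. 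Now I would argue $y_1 \notin T^*$ and that $y_1$ is not contained in a triad (the only triad is $T^*$, and orthogonality with $C_1$ would then force $y_1 \in \{a_1,a_2,a_3\}$), so $y_1$ lies in no triad; by orthogonality $y_1$ must lie in the triangle $T$, since otherwise — using that $M$ has no $4$-element fans — we would try to delete $y_1$ and run into trouble. More carefully: the plan is to show $y_1 \in T$ by considering $\co(M \backslash y_1)$ via Bixby's Lemma combined with \cref{circuit_so_deletable}; since $\{a_i,a_j\}$ is in a triad, if $M/e$ is $3$-connected and $y_1$ is in no triad then $M \backslash y_1$ is $3$-connected, and then \cref{deletable_circuit_gives_cocircuit} or a direct orthogonality/closure argument forces $y_1$ into a triangle, hence $y_1 \in T$.

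Next I would produce a second such circuit $C_2$ using a different pair from $T^*$ — concretely, choose $X = T^* - \{a_i\}$ so that the resulting circuit $C_2 = \{a_j, a_k, e, y_2\}$ shares exactly one element $a_j$ of $T^*$ with $C_1$. The same argument gives $y_2 \in T$, and $y_1 \neq y_2$: if $y_1 = y_2$ then circuit elimination between $C_1$ and $C_2$ (eliminating $a_j$) gives a circuit contained in $\{a_i, a_k, e, y_1\}$; but $e$ cannot be in this circuit by orthogonality with $T^*$ (the triad $T^*$ would meet it in one element), so we'd get a circuit inside $T^* \cup \{y_1\}$ of size at most $3$ avoiding $e$, which after orthogonality with $T^*$ and \cref{no_segment} (or simply $3$-connectivity ruling out a dependent triad) is a contradiction. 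Having $y_1, y_2 \in T$ distinct, I would relabel: set $\{a_1,a_2\} = C_1 \cap T^*$, $\{a_2,a_3\} = C_2 \cap T^*$ (so $a_2$ is the shared element), and set $b_1 = y_1$, $b_3 = y_2$, and let $b_2$ be the remaining element of $T$. Then $C_1 = \{a_1,a_2,e,b_1\}$ and $C_2 = \{a_2,a_3,e,b_3\}$, which is exactly the claimed conclusion.

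The main obstacle I anticipate is establishing rigorously that $y_1$ (and $y_2$) must lie in the triangle $T$ rather than being some generic element outside $T \cup T^*$. The circuit $C_1$ only tells us $y_1 \notin T^*$; to force $y_1 \in T$ I expect to need a careful combination of: (i) $y_1$ is in no triad (only $T^*$ exists, and orthogonality with $C_1$ blocks it), (ii) hence by \cref{circuit_so_deletable} $M \backslash y_1$ is $3$-connected, so $\{e, y_1\}$ is not detachable only because $M$ has no detachable pairs, forcing via Bixby plus the dual of \cref{contract_then_delete} that $M/y_1$ is $3$-connected too, and (iii) then pinning down where $y_1$ sits using \cref{contractable_collection}-type arguments or a fresh application of the dual of \cref{contractable_circuit} to $y_1$, whose output circuit must (by orthogonality, since $M$'s only triangle is $T$ and only triad is $T^*$, and there are no $4$-element fans) collapse into $T$. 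Getting this chain tight without a detachable pair appearing is the delicate part; the rest is orthogonality bookkeeping and one circuit elimination.
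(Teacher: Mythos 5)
The decisive step of this lemma is precisely the one you defer: showing that the fourth elements $y_1,y_2$ of the two circuits actually lie in the triangle $T$. Your sketch of that step does not work. From ``$y_1$ is in no triad'' and \cref{circuit_so_deletable} you correctly get that $M\backslash y_1$ is $3$-connected, but the ensuing chain --- ``so by Bixby plus the dual of \cref{contract_then_delete}, $M/y_1$ is $3$-connected too, and then orthogonality/closure or \cref{deletable_circuit_gives_cocircuit} forces $y_1$ into a triangle'' --- is not a valid inference. Having no detachable pairs says nothing about single-element contractions of $y_1$; Bixby's Lemma gives nothing new here (since $y_1$ is in no triad, $\co(M\backslash y_1)=M\backslash y_1$, which you already know is $3$-connected); the dual of \cref{contract_then_delete} needs a $3$-connected minor of the form $M\backslash x / y_1$, which you do not have; and \cref{deletable_circuit_gives_cocircuit} produces a cocircuit, not a triangle. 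Deletability of $y_1$ simply does not force $y_1$ into a triangle. The paper closes this gap with a genuine argument by contradiction: assuming $f\notin T$, it applies \cref{deletable_cocircuit} twice (with the triangle $T$ playing the role of $X$) to obtain $4$-element cocircuits containing $\{b_1,b_2,f\}$ and $\{b_2,b_3,f\}$, then splits on whether the other new element $g$ lies in $T$; the case $g\in T$ is eliminated by exhibiting a deletion certificate and invoking \cref{deletable_collection_contractable_el}, and the case $g\notin T$ is eliminated by orthogonality with the circuit $\{a_2,a_3,e,g\}$ (which forces both cocircuits to contain $a_1$) followed by cocircuit elimination, producing a cocircuit inside $T$ --- a contradiction. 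None of this machinery, or any substitute for it, appears in your proposal, so the central claim $y_1,y_2\in T$ is unproved.

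Two smaller points. First, your invocation of the dual of \cref{contractable_circuit} is garbled: the hypothesis is $x\in\cl^*(T^*-\{x\})$ with $x$ not in a triangle (not ``$e\in\cl^*(T^*-\{x\})$''), and with $X=T^*-\{a_i\}$ the coclosure element is $a_i$, so the resulting circuit necessarily contains $a_i$; your claimed $C_2=\{a_j,a_k,e,y_2\}$ avoiding $a_i$ is not what the lemma gives, and in general you must apply the lemma with each element of $T^*$ as the coclosure element and observe that two of the resulting traces on $T^*$ are distinct $2$-subsets (hence share exactly one element) to reach the labelling in the statement. Second, in your $y_1\neq y_2$ argument the orthogonality claim is off: a circuit inside $\{a_i,a_k,e,y_1\}$ containing $e$ can meet $T^*$ in two elements, so orthogonality alone does not exclude $e$; the paper instead eliminates $f=g$ itself, obtaining a circuit inside $T^*\cup\{e\}$, and rules out both possibilities (a circuit inside the triad, or $e\in\cl(T^*)$ contradicting that $M/e$ is $3$-connected). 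These are repairable, but the missing proof that $f,g\in T$ is a genuine gap.
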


\begin{proof}
By the dual of \cref{contractable_circuit}, there is a labelling $T^*=\{a_1, a_2, a_3\}$ such that $M$ has $4$-element circuits $\{a_1,a_2,e,f\}$ and $\{a_2,a_3,e,g\}$ for some $f,g \notin T^* \cup \{e\}$.
Note that $f \neq g$, for otherwise, by circuit elimination, $e \in \cl(T^*)$, which contradicts that $M/e$ is $3$-connected.
Now, $f$ and $g$ are not contained in triads, so, by \Cref{circuit_so_deletable}, we have that $M \backslash f$ and $M \backslash g$ are both $3$-connected.

Suppose $f \notin T$. Then, by \Cref{deletable_cocircuit}, there is a labelling $T=\{b_1, b_2, b_3\}$ such that $M$ has a $4$-element cocircuit $C_1^*$ containing $\{b_1,b_2,f\}$ and a $4$-element cocircuit $C_2^*$ containing $\{b_2,b_3,f\}$. Orthogonality implies that $C_1^*$ and $C_2^*$ each contain an element of $\{a_1,a_2,e\}$. If $g \in T$, then $\lambda(T^* \cup T \cup \{e,f\}) = 2$ and $(g,T-\{g\},\{\{a_2,a_3,e\}\})$ is a deletion certificate. But $a_1 \in \cl^*(\{a_2,a_3\})$, and, for all $i \in \{1,2,3\}$, we have that $a_i \in \cl((T^*-\{a_i\}) \cup T \cup \{e,f\})$. This contradicts \Cref{deletable_collection_contractable_el}. We deduce that $g \notin T$, so orthogonality with $\{a_2,a_3,e,g\}$ implies that $C_1^* = \{b_1,b_2,f,a_1\}$ and $C_2^* = \{b_2,b_3,f,a_1\}$. Cocircuit elimination implies that $M$ has a cocircuit contained in $\{b_1,b_2,b_3,f\}$ and so, by orthogonality, $M$ has a cocircuit contained in $\{b_1,b_2,b_3\}$. This contradiction implies that $f \in T$ and, similarly, $g \in T$. The lemma now follows.
\end{proof}

\begin{lemma} \label{triad_triangle_new}
	Let $M$ be a $3$-connected matroid with no detachable pairs and no $4$-element fans such that $|E(M)| \geq 11$. Let $T$ be a triangle of $M$, and let $T^*$ be a triad of $M$ such that $M$ has no other triangles or triads.
    There is at most element $e \in E(M) - (T \cup T^*)$ such that $M \backslash e$ is not $3$-connected.
\end{lemma}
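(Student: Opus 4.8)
# Proof Proposal for \texorpdfstring{Lemma~\ref{triad_triangle_new}}{the final lemma}

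The plan is to argue by contradiction: suppose there exist two distinct elements $e, e' \in E(M) - (T \cup T^*)$ such that both $M \backslash e$ and $M \backslash e'$ fail to be $3$-connected, and derive a contradiction, most likely by producing a detachable pair via the deletion-certificate machinery of \cref{deletable_collection}. First I would record the basic structural facts: since $M$ has no $4$-element fans, and $M$ has exactly one triangle $T$ and one triad $T^*$, orthogonality forces $|T \cap T^*| \le 1$, and in fact (applying the dual of \cref{no_segment} or direct orthogonality) we may assume $T$ and $T^*$ are either disjoint or meet in a single element; I would handle the disjoint case as the main case and note that the intersecting case is similar but slightly shorter. For any element $x \notin T \cup T^*$, the element $x$ lies in no triangle and no triad, so Bixby's Lemma gives that either $M \backslash x$ or $M / x$ is $3$-connected.

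The heart of the argument is to use an element $e''$ with $M / e''$ $3$-connected (which exists: take $e''$ to be one of $e, e'$ if the corresponding $M / \cdot$ is $3$-connected, and otherwise both $M\backslash e$ and $M\backslash e'$ being non-$3$-connected is exactly the hypothesis we want to contradict — so I need to be careful here and instead pick $e''$ to be \emph{any} element of $E(M) - (T \cup T^*)$ outside a small controlled set, using that $|E(M)| \ge 11$ guarantees enough room). Given such an $e''$ with $M/e''$ $3$-connected, I would invoke \cref{triad_triangle1} to obtain the labelling $T^* = \{a_1, a_2, a_3\}$, $T = \{b_1, b_2, b_3\}$ with $\{a_1, a_2, e'', b_1\}$ and $\{a_2, a_3, e'', b_3\}$ circuits. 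From these two circuits, circuit elimination and orthogonality with $T^*$ pin down the local structure around $T \cup T^* \cup \{e''\}$; in particular $\lambda(T \cup T^* \cup \{e''\}) = 2$ (the two $4$-element circuits together with the triad force this), and I can then check that $(b_1, T - \{b_1\}, \{\{a_1, a_2, e''\}\})$ is a deletion certificate — or more precisely that $e''$ (not in a triad) together with $\{a_1,a_2,b_1\}$ and $\{a_2,a_3,b_3\}$ furnishes a deletion certificate $(e'', \{b_1,b_2,b_3\}, \{\dots\})$ contained in the set $T \cup T^* \cup \{e''\}$ of $\lambda$-value $2$. Then \cref{deletable_collection} says that for \emph{every} $y \notin T \cup T^* \cup \{e''\}$, the matroid $M \backslash y$ is not $3$-connected; this is almost the conclusion, but flipped — it shows that the set of ``good'' deletions is contained inside $T \cup T^* \cup \{e''\}$, whence there is at most one element outside $T \cup T^*$ (namely $e''$) with $M\backslash(\cdot)$ $3$-connected. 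I would then reconcile the choice of $e''$ with the two hypothesized elements $e, e'$ to finish.

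The delicate part — and the step I expect to be the main obstacle — is securing an element $e''$ \emph{outside} $T \cup T^*$ with $M/e''$ $3$-connected in the first place, because the hypothesis of the lemma is stated purely in terms of deletions. If both $M \backslash e$ and $M\backslash e'$ are non-$3$-connected, Bixby forces $M/e$ and $M/e'$ to be $3$-connected, so $e'' = e$ works immediately. Thus the contradiction hypothesis actually \emph{hands me} the contractible element for free, and the argument closes: apply \cref{triad_triangle1} to $e'' = e$, build the deletion certificate inside $X := T \cup T^* \cup \{e\}$ with $\lambda(X) = 2$ and $|E(M)| \ge |X| + 3$ (since $|E(M)| \ge 11 \ge 7 + 3$), and conclude by \cref{deletable_collection} that $M \backslash e'$ is not $3$-connected only if $e' \in X$ — but $e' \notin T \cup T^*$ by hypothesis, so $e' = e$, contradicting $e \ne e'$. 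The remaining care is: (a) verifying the deletion-certificate conditions precisely (that $e$ is in no triad — true since $e \notin T^*$ and $T^*$ is the only triad; that $e \in \cl(\{a_1,a_2,b_1\})$ via the circuit $\{a_1,a_2,e,b_1\}$, and $e \in \cl(\{a_2,a_3,b_3\})$ via $\{a_2,a_3,e,b_3\}$, and that these two ``$X_i$'' sets together with a third have empty common intersection — I would take $X_1 = \{a_1,a_2,b_1,a_3,b_3\}$ or similar with $\lambda(X_1) = 2$, and two further sets $\{a_1,a_2,b_1\}$, $\{a_2,a_3,b_3\}$ whose intersection with $X_1$'s complement of $e$ is empty), and (b) checking $\lambda(T \cup T^* \cup \{e\}) = 2$ from the two $4$-element circuits and the triad by submodularity. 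Both are routine orthogonality/rank computations of the style already used repeatedly in the section, so I do not anticipate genuine difficulty beyond bookkeeping.
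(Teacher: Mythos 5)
There is a genuine gap, and it sits exactly where you predicted the argument would be "routine bookkeeping." Your final step misreads \cref{deletable_collection}: that lemma concludes that $M\backslash x$ is \emph{not} $3$-connected for every $x\in E(M)-X$; it does not say that $M\backslash e'$ fails to be $3$-connected \emph{only if} $e'\in X$. So even if you had a deletion certificate inside $X=T\cup T^*\cup\{e\}$, the conclusion would be that every element outside $X$ is non-deletable, which is entirely consistent with the existence of your second element $e'$ and is in fact the opposite of the statement you are trying to prove (namely that all but at most one element outside $T\cup T^*$ \emph{is} deletable). No contradiction is obtained, so the proof does not close.

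The certificate construction itself is also unjustified. Since $M\backslash e$ is not $3$-connected, only $M/e$ is available, so \cref{triad_triangle1} gives $T\subseteq\cl(T^*\cup\{e\})$ but not its dual; from this one only gets $r(T\cup T^*\cup\{e\})\le 4$, and since the only cocircuit known inside the set is $T^*$, the best bound is $\lambda(T\cup T^*\cup\{e\})\le 3$, not $2$. (In the paper, $\lambda(T\cup T^*\cup\{g\})=2$ is derived only in \cref{triad_triangle}, for an element $g$ with \emph{both} $M/g$ and $M\backslash g$ $3$-connected, using \cref{triad_triangle1} and its dual.) Your candidate sets $X_1$ fare no better: $\{a_1,a_2,b_1\}$ need not be $3$-separating, and $\{a_1,a_2,b_1,e\}$ cannot be a quad because a cocircuit meeting the triangle $T$ in the single element $b_1$ would violate orthogonality. (A small additional point: orthogonality together with the absence of $4$-element fans forces $T\cap T^*=\emptyset$, so there is no intersecting case to treat.) The paper's proof proceeds quite differently: it applies \cref{triad_triangle1} to the second non-deletable element $f$ as well (Bixby gives $M/f$ $3$-connected), takes a cyclic $3$-separation $(X,\{f\},Y)$ with $T^*\subseteq X$ via the dual of \cref{fan_vert_sep}, uses $T\subseteq\cl(T^*\cup\{f\})$ to force $T\subseteq Y$ and then $e\in Y$, and finally observes $e\in\cl(T\cup T^*)\subseteq\cl(X\cup\{f\})$, contradicting that $M/e$ is $3$-connected. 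Some argument of this connectivity type, exploiting the circuits attached to both $e$ and $f$, is needed; the deletion-certificate machinery points in the wrong direction here.
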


\begin{proof}
	Let $e \in E(M) - (T \cup T^*)$ such that $M \backslash e$ is not $3$-connected. By Bixby's Lemma, $M / e$ is $3$-connected. Hence, by \Cref{triad_triangle1}, we may assume that $T^* = \{a_1,a_2,a_3\}$ and $T = \{b_1,b_2,b_3\}$ such that $\{a_1,a_2,e,b_1\}$ and $\{a_2,a_3,e,b_3\}$ are circuits. Now suppose there exists $f \in E(M) - (T \cup T^* \cup \{e\})$ such that $M \backslash f$ is not $3$-connected. This means that $M$ has a cyclic $3$-separation $(X, \{f\}, Y)$. By \Cref{fan_vert_sep}, we may assume that $T^* \subseteq X$. Furthermore, $M / f$ is $3$-connected, so \Cref{triad_triangle1} implies that $T \subseteq \cl(T^* \cup \{f\})$. If $T \subseteq \cl(X)$, then $f \in \cl(X)$, a contradiction. Therefore, we may assume $T \subseteq Y$. Since $T \not\subseteq \cl(X)$, we have that $e \in Y$. But $e \in \cl(T^* \cup T) \subseteq \cl(T^* \cup \{f\}) \subseteq \cl(X \cup \{f\})$. This contradicts the fact that $M / e$ is $3$-connected, and completes the proof.
\end{proof}

\begin{lemma} \label{triad_triangle}
	Let $M$ be a $3$-connected matroid with no $4$-element fans such that $|E(M)| \geq 11$. Let $T$ be a triangle of $M$ and let $T^*$ be a triad of $M$ such that $M$ has no other triangles or triads. Then $M$ has a detachable pair.
\end{lemma}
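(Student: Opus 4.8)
The plan is to assume $M$ has no detachable pair and force $|E(M)|\le 10$, contradicting the hypothesis. First I would record that $T$ and $T^*$ are disjoint: a circuit and a cocircuit cannot meet in exactly one element; a triangle and a triad meeting in two elements $x,y$, say $T=\{x,y,z\}$ and $T^*=\{x,y,w\}$, would give the $4$-element fan $(z,x,y,w)$, which is forbidden; and a set that is both a triangle and a triad has connectivity $1$, which is impossible in a $3$-connected matroid with at least four elements. Hence $T\cap T^*=\emptyset$ and $|T\cup T^*|=6$. Since $M$ is $3$-connected with $|E(M)|\ge 11$, we have $\lambda(T)=\lambda(T^*)=2$, so (using $\lambda(X)=r(X)+r^*(X)-|X|$) the triangle $T$ has $r(T)=2$ and $r^*(T)=3$, while the triad $T^*$ has $r(T^*)=3$ and $r^*(T^*)=2$. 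Subadditivity of rank then gives $r(T\cup T^*)\le 5$ and $r^*(T\cup T^*)\le 5$, so $\lambda(T\cup T^*)\le 4$.

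Call $e\in E(M)-(T\cup T^*)$ \emph{good} if both $M\backslash e$ and $M/e$ are $3$-connected. By \cref{triad_triangle_new}, at most one element of $E(M)-(T\cup T^*)$ has $M\backslash e$ not $3$-connected; applying \cref{triad_triangle_new} to $M^*$ -- whose triad is $T$, whose triangle is $T^*$, and which has no detachable pair and no $4$-element fan, since both notions are self-dual -- shows at most one element has $M/e$ not $3$-connected. Hence, writing $W$ for the set of non-good elements outside $T\cup T^*$, we have $|W|\le 2$, and so there are $k:=|E(M)|-6-|W|\ge 3$ good elements.

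The crux is that every good element lies in $\cl(T\cup T^*)\cap\cl^*(T\cup T^*)$. If $e$ is good, then $M/e$ is $3$-connected, so \cref{triad_triangle1} supplies a $4$-element circuit of $M$ of the form $\{a_1,a_2,e,b_1\}$ with $a_1,a_2\in T^*$ and $b_1\in T$, giving $e\in\cl(T\cup T^*)$; dually, applying \cref{triad_triangle1} to $M^*$ (using that $M\backslash e$ is $3$-connected) produces a $4$-element cocircuit of $M$ meeting $T\cup T^*$ in three elements, giving $e\in\cl^*(T\cup T^*)$. Now enumerate the good elements as $e_1,\dots,e_k$ and adjoin them to $T\cup T^*$ one at a time: at each step the new element lies in the closure and the coclosure of the current set (by monotonicity of $\cl$ and $\cl^*$), so \cref{clconnectivity} yields $\lambda(T\cup T^*\cup\{e_1,\dots,e_k\})=\lambda(T\cup T^*)-k\le 4-k$.

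Finally, set $Z=T\cup T^*\cup\{e_1,\dots,e_k\}$, so that $E(M)-Z=W$ and $\lambda(Z)=\lambda(W)$. Since $M$ is $3$-connected, $W$ contains no loop, coloop, parallel pair, or series pair, so $\lambda(W)$ equals $0$, $1$, or $2$ according as $|W|$ equals $0$, $1$, or $2$. Combining $\lambda(Z)\le 4-k$ with $k=|E(M)|-6-|W|$ gives $|E(M)|\le 10+|W|-\lambda(W)$, and this right-hand side equals $10$ in each of the three cases -- the desired contradiction, so $M$ has a detachable pair. I expect no genuine obstacle here: the substantial work is already done in \cref{triad_triangle1,triad_triangle_new}, and the only points needing care are the rank and corank bookkeeping for $T\cup T^*$ and checking that the dual applications of those two lemmas are legitimate.
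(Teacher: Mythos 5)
Your proof is correct, and it closes the argument by a different mechanism than the paper, although both rest on the same two preparatory lemmas: \cref{triad_triangle_new} and its dual bound the number of elements outside $T\cup T^*$ that fail to be deletable or contractible, and \cref{triad_triangle1} and its dual place every ``good'' element in $\cl(T\cup T^*)\cap\cl^*(T\cup T^*)$. The paper extracts only a single good element $g$, observes that $\lambda(T\cup T^*\cup\{g\})=2$, builds a deletion certificate and a contraction certificate inside this $7$-element set, and invokes \cref{no_other_elements} to force every remaining element into a $4$-element fan, contradicting the hypothesis that there are none. You instead use all the good elements at once: since each lies in both the closure and the coclosure of $T\cup T^*$, repeated application of \cref{clconnectivity} gives $\lambda(T\cup T^*\cup\{e_1,\dots,e_k\})=\lambda(T\cup T^*)-k\le 4-k$, and comparing this with $\lambda(W)=|W|$ for the at most two leftover elements yields $|E(M)|\le 10$, contradicting $|E(M)|\ge 11$. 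Your route is more self-contained, bypassing the certificate machinery of \cref{lemmas} entirely, at the price of the explicit rank bookkeeping for $\lambda(T\cup T^*)$ and the exact step-by-step drop in connectivity; the paper's route gets its contradiction from a single element via the general-purpose \cref{no_other_elements}, which it has already set up for use throughout the paper. One immaterial slip: a $3$-element set that is both a circuit and a cocircuit can occur in a $3$-connected matroid on four elements (e.g.\ $U_{2,4}$); ruling it out requires the complement to have at least two elements, which of course holds here since $|E(M)|\ge 11$, so your disjointness claim for $T$ and $T^*$ stands.
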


\begin{proof}
	By \Cref{triad_triangle_new}, there is at most one element $e \in E(M) - (T \cup T^*)$ such that $M \backslash e$ is not $3$-connected. Dually, there is at most one element $f \in E(M) - (T \cup T^*)$ such that $M / f$ is not $3$-connected. Therefore, there exists $g \in E(M) - (T \cup T^*)$ such that both $M / g$ and $M \backslash g$ are $3$-connected. By \Cref{triad_triangle1}, we have that $T \in \cl(T^* \cup \{g\})$, and, by the dual of \Cref{triad_triangle1}, we have that $T^* \in \cl^*(T \cup \{g\})$. This means that $\lambda(T \cup T^* \cup \{g\}) = 2$. Let $a \in T^*$, and let $b \in T$. Now, $(a,T^*-\{a\},\{T \cup \{g\}\})$ is a contraction certificate, and $(b,T-\{b\},\{T^* \cup \{g\}\})$ is a deletion certificate. This contradicts \Cref{no_other_elements}, and the lemma follows.
\end{proof}

\begin{lemma} \label{one_triad1}
    Let $M$ be a $3$-connected matroid with no triangles. Let $T^* = \{a_1,a_2,a_3\}$ be a triad of $M$ such that $M$ has no other triads, and let $e$ and $f$ be distinct elements of $E(M) - T^*$ such that $\{a_1,a_2,e,f\}$ is a circuit. Suppose there exists a set $X$ with $T^* \cup \{e,f\} \subseteq X \subseteq E(M)$, such that $\lambda(X) = 2$, the set $X$ contains a contraction certificate, and $|E(M)| \geq |X| + 3$. Then $M$ has a detachable pair.
\end{lemma}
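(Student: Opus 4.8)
The plan is to derive a contradiction from the assumption that $M$ has no detachable pairs, by finding an element $x \in E(M) - X$ such that $M \backslash x$ is $3$-connected, contradicting \cref{contractable_collection}. The key observation is that $M$ has no triangles, so by \cref{contractable_collection1}, every element of $E(M) - X$ that is contained in a triad is also contained in a triangle; since there are no triangles, no element of $E(M) - X$ lies in a triad. The matroid also has no $4$-element fans (this is a standing assumption in this section), but more importantly, since $X$ contains a contraction certificate, \cref{contractable_collection} tells us $M/x$ is not $3$-connected for every $x \in E(M)-X$. So by Bixby's Lemma, $M \backslash x$ is $3$-connected for all $x \in E(M) - X$, and such $x$ exists because $|E(M)| \geq |X| + 3$.

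Now I would exploit the circuit $C = \{a_1, a_2, e, f\}$ together with the triad $T^* = \{a_1, a_2, a_3\}$: this is a $4$-element circuit containing $\{a_1, a_2\}$, which is contained in the triad $T^*$. The element $a_3$ is not contained in any triangle (as $M$ has none) and $M/a_3$ may or may not be $3$-connected; if $M/a_3$ is $3$-connected then I can apply \cref{circuit_so_deletable} or its variants. Actually, the cleaner route is: pick $x \in E(M) - X$ with $M \backslash x$ $3$-connected. Since $M$ has no triangles and no $4$-element fans, $x$ is contained in neither a triangle nor a triad. Applying \cref{deletable_circuit_gives_cocircuit} to the circuit $C = \{a_1,a_2,e,f\}$ — noting $\{a_1,a_2\}$ lies in the triad $T^*$, we need one of $e, f$ not in a triad (true, as no element of $E(M)-X$... wait, $e,f \in X$) — the hypothesis of \cref{deletable_circuit_gives_cocircuit} requires $e$ not in a triad and $f$ in neither a triangle nor a triad. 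Since $M$ has no triangles, $f$ is not in a triangle; and if we have arranged (or can deduce from the structure of the certificates) that $e,f$ avoid the unique triad $T^*$ or avoid triads altogether, this applies, yielding a $4$-element cocircuit of $M$ containing $x$ and either $e$ or $f$. This cocircuit $\{x, e, g, h\}$ or $\{x, f, g, h\}$ then places $x \in \cl^*(X)$ (since $e, f \in X$ and orthogonality with the contraction certificate's sets and with $T^*$ forces the remaining elements into $X$), giving $\lambda_{M \backslash x}(X) = 1$, which contradicts the $3$-connectivity of $M \backslash x$ as $|E(M \backslash x)| \geq |X| + 2$.

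The main obstacle I anticipate is the orthogonality bookkeeping to show that the $4$-element cocircuit produced by \cref{deletable_circuit_gives_cocircuit} actually lies within $X \cup \{x\}$, i.e. that its two "free" elements land in $X$. This requires using both the contraction certificate structure (the sets $X_i$ with $e \in \cl^*(X_i)$ — wait, here it is a contraction certificate so some element is in the coclosure of the $X_i$) and possibly the fact that $T^*$ is the \emph{only} triad of $M$, so any triad encountered must equal $T^*$. Concretely, since $x$ is not in a triad, the cocircuit containing $\{x, e\}$ (say) is a genuine $4$-element cocircuit, not an extension of $T^*$; orthogonality of this cocircuit with the circuit $C$ and with the circuits/structure certifying $\lambda(X) = 2$ should pin its remaining elements into $X$, though one may need to split into cases according to whether the cocircuit meets $T^*$. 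A secondary subtlety is verifying that the hypotheses of \cref{deletable_circuit_gives_cocircuit} are met — specifically that $e$ is not in a triad and $f$ is in neither a triangle nor a triad — which should follow from $M$ having no triangles together with either $e, f \notin T^*$ or, if one of them is in $T^*$, a direct argument; this is the step I would pin down first, as the rest is routine.

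\begin{proof}[Proof sketch]
Suppose $M$ has no detachable pairs. Since $M$ has no triangles and $X$ contains a contraction certificate with $\lambda(X) = 2$ and $|E(M)| \ge |X|+3$, \cref{contractable_collection1} implies that no element of $E(M)-X$ lies in a triad, and \cref{contractable_collection} implies $M/x$ is not $3$-connected for all $x \in E(M)-X$. By Bixby's Lemma, $M \backslash x$ is $3$-connected for every $x \in E(M)-X$; such an $x$ exists since $|E(M)| \ge |X|+3$. As $M$ has no triangles, $f$ is in no triangle; one checks, using that $T^*$ is the unique triad, that $e$ is in no triad and $f$ is in no triad (handling separately the case where one of $e,f$ lies in $T^*$). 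Applying \cref{deletable_circuit_gives_cocircuit} to the circuit $C=\{a_1,a_2,e,f\}$, whose pair $\{a_1,a_2\}$ lies in $T^*$, yields a $4$-element cocircuit of $M$ containing $x$ together with $e$ or $f$. Orthogonality of this cocircuit with $C$, with $T^*$, and with the local structure certifying $\lambda(X)=2$ forces its remaining two elements into $X$, so $x \in \cl^*(X)$ and hence $\lambda_{M\backslash x}(X) = 1$. Since $|E(M\backslash x)| \ge |X|+2$, this contradicts the $3$-connectivity of $M \backslash x$.
\end{proof}
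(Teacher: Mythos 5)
Your setup matches the paper's: since $M$ has no triangles and $T^*\subseteq X$ is its only triad, no element of $E(M)-X$ lies in a triangle or triad, so \cref{contractable_collection} together with Bixby's Lemma gives that $M\backslash x$ is $3$-connected for every $x\in E(M)-X$; and \cref{deletable_circuit_gives_cocircuit}, applied to the circuit $C=\{a_1,a_2,e,f\}$ (its hypotheses do hold, since $e,f\notin T^*$ and $T^*$ is the unique triad), yields a $4$-element cocircuit $C^*$ containing $x$ and one of $e,f$. The gap is your final step. You want to force the remaining two elements of $C^*$ into $X$, so that $x\in\cl^*(X)$ and $\lambda_{M\backslash x}(X)=1$. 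But orthogonality with $C$ only forces \emph{one} further element of $C^*$ into $C\subseteq X$; nothing available pins down the fourth element ($X$ is just an abstract $3$-separating set containing a contraction certificate, with no circuits or cocircuits attached to it that you can play orthogonality against). In fact the fourth element \emph{cannot} lie in $X$: since $M\backslash x$ is already known to be $3$-connected, $x\notin\cl^*(X)$, so $C^*$ must contain an element $y\notin X\cup\{x\}$. Thus the contradiction you aim for is unreachable, and the case $y\notin X$ — which is the only case that actually occurs — is exactly what your sketch leaves unaddressed.

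The paper's proof does the extra work on this $y$: first it notes $|E(M)|\ge|X|+4$ (otherwise $E(M)-X$ would be a triangle or triad); then, since $C^*$ meets the circuit $C$ in two elements of $X$, the set $C^*-\{x\}$ is a triad of the $3$-connected matroid $M\backslash x$ with two elements in $X$, so $y\in\cl^*_{M\backslash x}(X)$; because $M$ (hence $M\backslash x$) has no triangles, this gives that $M\backslash x/ y$ is $3$-connected, and the dual of \cref{contract_then_delete} (again using that $M$ has no triangles) upgrades this to $M/y$ being $3$-connected — contradicting \cref{contractable_collection}, since $y\notin X$. This second half is where the hypotheses ``no triangles'' and ``no other triads'' do their real work, and it is missing from your proposal, so the argument as written has a genuine gap.
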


\begin{proof}
    Note that $|E(M)| \geq |X| + 4$, as otherwise $E(M)-X$ is a triangle or a triad. Now, suppose $M$ has no detachable pairs, and let $x \notin X$. By \Cref{contractable_collection}, the matroid $M / x$ is not $3$-connected, so $M \backslash x$ is $3$-connected. \Cref{deletable_circuit_gives_cocircuit} implies that $M$ has a $4$-element cocircuit $C^*$ containing $x$ and either $e$ or $f$. Since $x \notin \cl^*(X)$, there exists $y \in C^*$ with $y \notin X \cup \{x\}$. Since $\{a_1, a_2, e, f\}$ is a circuit, it follows by orthogonality that $y \in \cl^*_{M \backslash x}(X)$. Therefore, as $|E(M \backslash x)| \geq |X| + 3$, the matroid $M \backslash x / y$ is $3$-connected. But then the dual of \Cref{contract_then_delete} implies that $M / y$ is $3$-connected, a contradiction to \Cref{contractable_collection}.  Hence $M$ has a detachable pair, as required.
\end{proof}

\begin{lemma} \label{one_triad2}
    Let $M$ be a $3$-connected matroid with no triangles such that $|E(M)| \geq 10$. Let $T^* = \{a_1,a_2,a_3\}$ be a triad of $M$ such that $M$ has no other triads, and let $e,f,g$ be distinct elements of $E(M) - T^*$ such that $\{a_1,a_2,e,f\}$ and $\{a_2,a_3,e,g\}$ are circuits, and $\{e,f,g\}$ is contained in a $4$-element cocircuit $C^*$. Then $M$ has a detachable pair.
\end{lemma}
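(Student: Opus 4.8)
The plan is to assume that $M$ has no detachable pair and derive a contradiction, ultimately by exhibiting a $3$-separating set that carries both a contraction certificate and (in effect) a deletion certificate, and then invoking \cref{one_triad1} or \cref{no_other_elements}. Write $C^* = \{e,f,g,h\}$; since $e,f,g$ are distinct and lie outside $T^*$, the element $h$ is genuinely new. First I would dispose of the possibility $h \in T^*$: cocircuit elimination between $C^*$ and $T^*$ produces a cocircuit inside a $5$-element subset of $T^* \cup \{e,f,g\}$, and since $T^*$ is the only triad and $M$ has no triangles (hence no fans of length at least four, in particular no $4$-element fans), this cocircuit cannot have size $2$ or $3$, and one pushes this to a contradiction or to a forbidden second triad. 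With $h \notin T^*$, apply circuit elimination to the circuits $\{a_1,a_2,e,f\}$ and $\{a_2,a_3,e,g\}$; orthogonality with $T^*$ forces the resulting circuit to contain both or neither of $a_1,a_3$, and the no-triangles hypothesis rules out a $3$-element outcome, so $M$ has a further $4$-element (or $5$-element) circuit contained in $T^* \cup \{e,f,g\}$. Iterating this and using orthogonality with $C^*$ pins down how $\{e,f,g\}$ hangs off $T^*$.

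Next, set $X := T^* \cup \{e,f,g,h\}$, so $|X| = 7$ and $|E(M)| \ge 10 = |X| + 3$. Using \cref{clconnectivity} and submodularity together with the circuits just obtained and the cocircuit $C^*$, I would compute $\lambda(X) = 2$; in the tightest version this amounts to showing that $T^* \cup \{e,f,g\}$ spans no more than $T^*$ and that $h$ lies in that rank-$3$ flat (so $\{e,f,g,h\}$ is a quad), while in a looser configuration one adjoins a single extra element produced by \cref{deletable_cocircuit} and \cref{circuit_so_deletable} (legitimate since $M$ has no detachable pair). Within $X$, the element $a_1$ satisfies $a_1 \in \cl^*(\{a_2,a_3\})$ and lies in no triangle, and to complete a contraction certificate $(a_1,\{a_2,a_3\},\{\cdot\})$ one needs a cocircuit through $a_1$ meeting only $\{e,f,g,h\}$; this is manufactured from the fact that suitable single-element deletions $M\backslash y$ are $3$-connected (via \cref{circuit_so_deletable} and \cref{deletable_circuit_gives_cocircuit}) combined with \cref{deletable_cocircuit} and orthogonality. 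Since $T^* \cup \{e,f\} \subseteq X$, $\lambda(X) = 2$, $X$ contains a contraction certificate, and $|E(M)| \ge |X| + 3$, \cref{one_triad1} supplies a detachable pair, contradicting our assumption. (If the bookkeeping instead forces $X = E(M)$ or $|E(M)| \le |X| + 2$, that alone contradicts $|E(M)| \ge 10$; and if one also obtains a deletion certificate inside a $3$-separating set, \cref{no_other_elements} throws any leftover element into a $4$-element fan, of which $M$ has none.)

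The main obstacle I expect is establishing $\lambda(T^* \cup \{e,f,g,h\}) = 2$: the hypotheses only force $\{e,f\}$ and $\{e,g\}$ to attach to $T^*$ via $4$-circuits, and it is not immediate that $e \in \cl(T^*)$ or that $h \in \cl(\{e,f,g\})$. Handling the cases where this configuration is ``loose'' — where one must use \cref{deletable_cocircuit}, \cref{circuit_so_deletable}, and \cref{deletable_circuit_gives_cocircuit} to produce the extra circuit or cocircuit that either tightens the set to connectivity $2$ or directly furnishes the certificate — is the technical core of the argument, and also where the exact constant $10$ in the hypothesis gets used, since $|T^* \cup \{e,f,g,h\}| + 3 = 10$.
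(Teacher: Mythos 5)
Your overall architecture (a $3$-separating set carrying certificates, then \cref{one_triad1} or \cref{no_other_elements}) is close in spirit to the paper, but the pivotal step in your main case does not go through. With $C^* = \{e,f,g,h\}$ and $h \notin T^*$, you plan to place a \emph{contraction} certificate $(a_1,\{a_2,a_3\},\{X_2\})$ inside $X = T^* \cup \{e,f,g,h\}$, which requires a cocircuit through $a_1$ contained in $\{a_1\} \cup \{e,f,g,h\}$. Nothing in the hypotheses or in \cref{circuit_so_deletable}, \cref{deletable_circuit_gives_cocircuit}, or \cref{deletable_cocircuit} manufactures such a cocircuit: those lemmas produce cocircuits through elements one shows to be deletable, together with an element \emph{outside} the $3$-separating set, not a cocircuit through $a_1$ confined to $X$. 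The only cocircuits you are handed are $T^*$ and $C^*$, which are disjoint, so neither \cref{one_triad1} nor \cref{no_other_elements} (which likewise needs a contraction certificate) can be fed. The paper sidesteps this entirely: since $M$ has no triangles, \cref{quad_thingy_deletable} gives that $M/h$ is $3$-connected, and the dual of \cref{contractable_circuit} yields a $4$-element circuit through $\{a_2,h\}$ meeting $\{a_1,a_3\}$ and $\{e,f,g\}$; this simultaneously settles your flagged obstacle $\lambda(X)=2$ and, via circuit elimination, shows $\{e,f,g,h\}$ is a quad. Then $(e,\{f,g,h\},\{\{a_1,a_2,f\},\{a_2,a_3,g\}\})$ is a \emph{deletion} certificate, and \cref{deletable_collection_contractable_el} --- which needs only a deletion certificate plus the conditions $a_1 \in \cl^*(\{a_2,a_3\})$ and $a_i \in \cl(X-\{a_i\})$ --- forces every element of $E(M)-X$ into a triad, contradicting that $T^* \subseteq X$ is the unique triad and $|E(M)| \ge 10 > |X|$.

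Your treatment of the case $h \in T^*$ is also only a sketch ("one pushes this to a contradiction or to a forbidden second triad"): cocircuit elimination between $C^*$ and $T^*$ gives a cocircuit of size $4$ or $5$, which is not by itself forbidden. Ironically, this is exactly the case where your \cref{one_triad1} plan works cleanly: here $\lambda(T^* \cup \{e,f,g\}) = 2$ (two cocircuits inside the set, and $f,g \in \cl(T^* \cup \{e\})$), and the unique element $x \in C^* \cap T^*$ gives a genuine contraction certificate $(x, T^*-\{x\}, \{C^*-\{x\}\})$ because $C^*$ itself passes through $x$; \cref{one_triad1} then finishes. So the correct division of labour is the reverse of what you propose: \cref{one_triad1} for $C^* \subseteq T^* \cup \{e,f,g\}$, and a deletion certificate with \cref{deletable_collection_contractable_el} when $h$ is a new element.
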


\begin{proof}
    Suppose $M$ does not have a detachable pair. If $C^* \subseteq T^* \cup \{e,f,g\}$, then $\lambda(T^* \cup \{e,f,g\}) = 2$, and there is a unique element $x \in C^* \cap T^*$, so $(x, T^*-\{x\}, \{C^*-\{x\}\})$ is a contraction certificate. But this contradicts \Cref{one_triad1}. Thus, there exists $h \notin T^* \cup \{e,f,g\}$ such that $C^* = \{e,f,g,h\}$. By \Cref{quad_thingy_deletable}, the matroid $M / h$ is $3$-connected. Therefore, by the dual of \cref{contractable_circuit} and orthogonality, there is a $4$-element circuit $C$ of $M$ containing $\{a_2,h\}$, an element of $\{a_1,a_3\}$, and an element of $\{e,f,g\}$. Now $\lambda(T^* \cup \{e,f,g,h\}) = 2$. Furthermore, by circuit elimination with $\{a_1,a_2,e,f\}$ if $a_1 \in C$ or with $\{a_2,a_3,e,g\}$ if $a_3 \in C$, there is a circuit of $M$ contained in $\{e,f,g,h\}$. This implies that $\{e,f,g,h\}$ is a quad, so $(e,\{f,g,h\},\{\{a_1,a_2,f\},\{a_2,a_3,g\}\})$ is a deletion certificate. But $a_1 \in \cl^*(\{a_2,a_3\})$ and, for all $i \in \{1,2,3\}$, we have that $a_i \in \cl((T^* \cup \{e,f,g,h\})-\{a_i\})$, which contradicts \Cref{deletable_collection_contractable_el}.  So $M$ has a detachable pair, thereby completing the proof of the lemma.
\end{proof}

\begin{lemma} \label{one_triad3}
    Let $M$ be a $3$-connected matroid with no triangles such that $|E(M)| \geq 12$. Let $T^* = \{a_1,a_2,a_3\}$ be a triad of $M$ such that $M$ has no other triads, and let $e,f,g,h$ be distinct elements of $E(M) - T^*$ such that $\{a_1,a_2,e,f\}$ and $\{a_2,a_3,e,g\}$ are circuits, and $M$ has a cocircuit $C^*$ such that $h \in C^*$ and $|C^* \cap \{e,f,g\}| = 2$ and $|C^* \cap T^*| = 1$. Then $M$ has a detachable pair.
\end{lemma}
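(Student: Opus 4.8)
The statement is another step in the long chain of lemmas analysing the petal $P_m$ of a quasi-triad-paddle when $M$ has exactly one triad. We have circuits $\{a_1,a_2,e,f\}$ and $\{a_2,a_3,e,g\}$ witnessing that $\{e,f,g\}$ sits over $T^*$ in the expected way, plus a cocircuit $C^*$ with $h\in C^*$, $|C^*\cap\{e,f,g\}|=2$ and $|C^*\cap T^*|=1$. I would argue by contradiction: assume $M$ has no detachable pair and derive one. The approach is to pin down $C^*$ to a $4$-element cocircuit of a convenient shape, use it to build a $3$-separating set $X$ containing $T^*\cup\{e,f,g,h\}$ that carries a contraction certificate, and then invoke \cref{one_triad1} (with the roles of $e$ and $f$, or the analogous circuit, supplying the hypothesis $\{a_1,a_2,e,f\}$ being a circuit) to conclude that $M$ has a detachable pair — the desired contradiction. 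Since $M$ has no triangles, any element not in $T^*$ is not in a triangle, which simplifies all applications of \cref{circuit_so_deletable}, \cref{deletable_cocircuit} and \cref{contractable_circuit}, and Bixby's Lemma repeatedly hands us that either $M\backslash x$ or $M/x$ is $3$-connected.

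First I would determine $|C^*|$ and which two of $e,f,g$ it meets. By orthogonality with the circuits $\{a_1,a_2,e,f\}$ and $\{a_2,a_3,e,g\}$, the cocircuit $C^*$ cannot meet exactly one of $\{a_1,a_2,e,f\}$ or of $\{a_2,a_3,e,g\}$; combining this with $|C^*\cap T^*|=1$ and $|C^*\cap\{e,f,g\}|=2$ forces, up to relabelling, a small number of shapes for $C^*$ — essentially $C^*=\{a_i, y, z, h\}$ with $\{y,z\}\subseteq\{e,f,g\}$. If $C^*\subseteq T^*\cup\{e,f,g\}$ (i.e.\ $h\in T^*\cup\{e,f,g\}$, the degenerate reading), then $\lambda(T^*\cup\{e,f,g\})=2$ and the unique element of $C^*\cap T^*$ gives a contraction certificate $(a_i,T^*-\{a_i\},\{C^*-\{a_i\}\})$, and \cref{one_triad1} applies directly; so I may assume $h\notin T^*\cup\{e,f,g\}$ and $|C^*|=4$. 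Next, knowing $h\notin T^*\cup\{e,f,g\}$ I would show $M/h$ is $3$-connected (either via \cref{quad_thingy_deletable} if the configuration matches a near-quad/quad pattern, or by a short vertical-separation argument like the one in \cref{quad_thingy_deletable}, using that $\{e,f,g\}$ lies in the closure of $T^*\cup\{h\}$-type arguments and that $M$ has no triangles so $h$ is not in a triangle). From $M/h$ being $3$-connected, the dual of \cref{contractable_circuit} plus orthogonality produces a $4$-element circuit $C$ through $\{a_2,h\}$, an element of $\{a_1,a_3\}$, and an element of $\{e,f,g\}$, exactly as in \cref{one_triad2}.

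With $C$ and $C^*$ in hand, $\lambda(T^*\cup\{e,f,g,h\})=2$: the two given circuits and $C$ show $\{f,g,h\}\subseteq\cl(T^*\cup\{e\})$ up to small adjustments, and $C^*$ controls the corank. I would then check that $T^*\cup\{e,f,g,h\}$ carries a contraction certificate — the unique element $a_i$ of $C^*\cap T^*$ lies in $\cl^*(T^*-\{a_i\})$, and $C^*-\{a_i\}$ together with $C-\{a_i\}$ (or whichever $T^*$-element $C$ uses) give the two sets $X_1,X_2$ whose intersection with respect to $a_i$ is empty; alternatively, as in \cref{one_triad2}, circuit elimination shows $\{e,f,g,h\}$ is a quad and then a deletion certificate appears and \cref{deletable_collection_contractable_el} is contradicted directly. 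In the non-quad case, set $X=T^*\cup\{e,f,g,h\}$; since $|E(M)|\ge 12 = |X|+5 \ge |X|+3$, \cref{one_triad1} (whose hypotheses — no triangles, $T^*$ the only triad, $\{a_1,a_2,e,f\}$ a circuit, $X$ with $\lambda(X)=2$ and a contraction certificate, $|E(M)|\ge|X|+3$) are all met, yields a detachable pair.

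The main obstacle I anticipate is the bookkeeping in the first step: orthogonality allows several relabellings of how $C^*$ sits relative to $\{e,f,g\}$ and $T^*$, and one must verify in each that either the degenerate case (giving a contraction certificate inside $T^*\cup\{e,f,g\}$) or the generic case (giving $h$ fresh, $M/h$ $3$-connected, and the circuit $C$) goes through — in particular, checking that $C$ genuinely uses an element of $\{e,f,g\}$ and not one already accounted for, so that $\lambda(T^*\cup\{e,f,g,h\})=2$ rather than merely $\le 2$ with the set being too small. A secondary care point is ensuring $M/h$ is $3$-connected without circularity; if \cref{quad_thingy_deletable} does not literally apply to the shape of $C^*$, the self-contained vertical-separation argument (push $T^*$ into one side, note $\{e,f,g\}$ all fall on one side by closure, contradict $h\in\cl^*$ of that side) is the fallback, and it is short because $M$ has no triangles.
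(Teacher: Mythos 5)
There is a genuine gap at the heart of your argument: the claim that, once $M/h$ is shown to be $3$-connected, the dual of \cref{contractable_circuit} plus orthogonality yields a circuit $C$ through $h$, two elements of $T^*$, \emph{and an element of $\{e,f,g\}$}, ``exactly as in \cref{one_triad2}''. In \cref{one_triad2} that last conclusion comes from orthogonality of $C$ with the cocircuit $\{e,f,g,h\}$, which there is disjoint from $T^*$; here the cocircuit is $C^*=\{a_i,y,z,h\}$ with $a_i\in T^*$, so a circuit through $h$ and two triad elements can meet $C^*$ in $\{h,a_i\}$ and need not touch $\{e,f,g\}$ at all. Consequently your key step ``with $C$ and $C^*$ in hand, $\lambda(T^*\cup\{e,f,g,h\})=2$'' is not established. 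Indeed the correct dichotomy is: either $h\in\cl(T^*\cup\{e,f,g\})$, in which case $\lambda(T^*\cup\{e,f,g,h\})=2$ and \cref{one_triad1} finishes immediately (this is the easy half); or $h\notin\cl(T^*\cup\{e,f,g\})$, and then the fourth elements of the circuits through $h$ and pairs from $T^*$ are forced to be \emph{new} elements $f',g'\notin T^*\cup\{e,f,g,h\}$. Your proposal stops exactly where the real work begins: one must show $M\backslash f'$ is $3$-connected (\cref{circuit_so_deletable}), extract a $4$-element cocircuit $D^*$ through $f'$ and $h$ or $g'$ (\cref{deletable_circuit_gives_cocircuit}), exclude $\{f',g',h\}\subseteq D^*$ via \cref{one_triad2}, force $D^*$ back into $T^*\cup\{e,f,g\}$ by orthogonality, and only then obtain $\lambda(T^*\cup\{e,f,g,h,f',g'\})=2$ and apply \cref{one_triad1} to this $9$-element set. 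That final application is also the only place the hypothesis $|E(M)|\ge 12$ is genuinely needed; the fact that your argument would go through with $|E(M)|\ge 10$ is a warning sign that the hard case has been lost. Your alternative ``circuit elimination shows $\{e,f,g,h\}$ is a quad'' likewise does not transfer, since here $\{e,f,g,h\}$ is not known to be a cocircuit.

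Two smaller points. First, your fallback vertical-separation argument for the $3$-connectivity of $M/h$ only treats the case where $\{e,f,g\}$ can be pushed to the side containing $T^*$ (where $h\in\cl^*(X)$ via $C^*$ gives the contradiction); the case $\{e,f,g\}\subseteq Y$ is not covered by ``$h\in\cl^*$ of that side'', because $a_i\in C^*$ lies on the other side. Handling it requires the additional analysis showing $\lambda(Y\cup\{h\}\cup T^*)<2$, hence $|X-T^*|\le 1$, and then ruling out the single leftover element using orthogonality with $C^*$ and the dual of \cref{no_segment}. Second, your assertion that orthogonality ``forces'' $|C^*|=4$ is not justified (orthogonality bounds nothing about $|C^*|$); the lemma is applied, and should be read, with $C^*$ a $4$-element cocircuit, so this is a presentational rather than mathematical issue, but it should be stated as an assumption on $C^*$ rather than derived.
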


\begin{proof}
    Suppose $M$ does not have a detachable pair. Let $a_i$ be the unique element of $C^* \cap T^*$. Then $(a_i,T^*-\{a_i\},\{C^*-\{a_i\}\})$ is a contraction certificate.
    To begin with, we observe that if $h \in \cl(T^* \cup \{e,f,g\})$, then $\lambda(T^*\cup \{e, f, g, h\})=2$, in which case, by \Cref{one_triad1}, $M$ has a detachable pair.  So $h \notin \cl(T^* \cup \{e,f,g\})$.

    Next, we show that $M / h$ is $3$-connected. Suppose not. Then $M$ has a vertical $3$-separation $(X,\{h\},Y)$, and, without loss of generality, $T^* \subseteq X$. If $|\{e,f,g\} \cap X| \geq 1$, then $\{e,f,g\} \subseteq \cl(X)$, in which case we may assume that $\{e,f,g\} \subseteq X$. This implies that $h \in \cl^*(X)$, a contradiction. Thus, $\{e,f,g\} \subseteq Y$. But $a_i \in \cl^*(Y \cup \{h\})$ so $\lambda(Y \cup \{h,a_i\}) = 2$. Furthermore, the circuits $\{a_1,a_2,e,f\}$ and $\{a_2,a_3,e,g\}$ imply that $\lambda(Y \cup \{h\} \cup T^*) < 2$, and so $|X-T^*| \le 1$. If $|X-T^*|=0$, then $h\in \cl(T^*)$, a contradiction. So $|X-T^*|=1$. Let $z$ be the unique element of $X - T^*$. Then either $z \in \cl(T^*)$ or $z \in \cl^*(T^*)$. But the former case implies that $T^* \cup \{z\}$ is a circuit, which contradicts orthogonality with $C^*$, and the latter case implies that $r^*(T^* \cup \{z\}) = 2$, which contradicts the dual of \cref{no_segment}. Thus, $M / h$ is $3$-connected.
	
    Choose $j,k$ such that $\{i,j,k\}=\{1,2,3\}$.  By the dual of \cref{contractable_circuit}, and since $h \notin \cl(T^* \cup \{e,f,g\})$, there are circuits $\{a_i,a_j,h,f'\}$ and $\{a_i,a_k,h,g'\}$ for some $f',g' \in E(M) - (T^* \cup \{e,f,g,h\})$.  Furthermore, $f' \neq g'$, for otherwise, by circuit elimination, $h \in \cl(T^* \cup \{e,f,g\})$. \Cref{circuit_so_deletable} implies that $M \backslash f'$ is $3$-connected, and, in turn, \cref{deletable_circuit_gives_cocircuit} implies that $M$ has a $4$-element cocircuit $D^*$ containing either $\{f',h\}$ or $\{f',g'\}$. By \Cref{one_triad2}, the cocircuit $D^*$ does not contain $\{f',g',h\}$, so orthogonality with $\{a_i,a_j,h,f'\}$ and $\{a_i,a_k,h,g'\}$ implies that $D^*$ contains an element of $T^*$. Now, orthogonality with $\{a_1,a_2,e,f\}$ or $\{a_2,a_3,e,g\}$ implies that $D^*$ contains another element of $T^* \cup \{e,f,g\}$, so $\lambda(T^* \cup \{e,f,g,h,f',g'\}) = 2$.  But, as $|E(M)| \ge 12$, this contradicts \Cref{one_triad1}.  We deduce that $M$ has a detachable pair.
\end{proof}

\begin{lemma} \label{one_triad4}
    Let $M$ be a $3$-connected matroid with no triangles such that $|E(M)| \geq 12$. Let $T^*$ be a triad of $M$, and suppose $M$ has no other triads. Let $e \notin T^*$ such that $M / e$ is $3$-connected. Then $M$ has a detachable pair.
\end{lemma}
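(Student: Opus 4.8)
The plan is to use the circuit structure around the triad $T^*$ to build a set with connectivity $2$ that contains both a contraction certificate and either a deletion certificate or a long fan, and thereby derive a contradiction from the earlier lemmas. Write $T^* = \{a_1,a_2,a_3\}$. First I would apply the dual of \cref{contractable_circuit} to the $3$-connected matroid $M/e$ and the triad $T^*$ (so $a_i \in \cl^*(T^* - \{a_i\})$), noting that $|E(M)| \ge |T^*| + \ge 3$: this yields, after relabelling, a $4$-element circuit $\{a_1,a_2,e,f\}$ with $f \notin T^* \cup \{e\}$; applying it again with the element $a_3$ (and using circuit elimination plus orthogonality, together with the fact that $M$ has no triangles, to force the two circuits to be distinct) gives a second $4$-element circuit $\{a_2,a_3,e,g\}$ with $g \notin T^* \cup \{e,f\}$. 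Here $f \ne g$ because otherwise circuit elimination would put $e \in \cl(T^*)$, contradicting that $M/e$ is $3$-connected. Observe that $(a_2, T^* - \{a_2\}, \{\{a_1,e,f\},\{a_3,e,g\}\})$ — or more simply $(a_2,\{a_1,a_3\},\{\{e,f\},\{e,g\}\})$ after checking $e \in \cl^*$ of the relevant sets via orthogonality — is a contraction certificate, so any set $X$ with $T^* \cup \{e,f,g\} \subseteq X$, $\lambda(X)=2$, and $|E(M)| \ge |X|+3$ will let us invoke \cref{one_triad1} to produce a detachable pair; this is the engine of the whole argument.

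Next I would analyse $f$ and $g$. Since $M$ has no triangles, $f$ is not in a triangle; and $f$ is not in a triad since $T^*$ is the only triad and $f \notin T^*$. So by \cref{circuit_so_deletable} (applied with the $4$-element circuit $\{a_1,a_2,e,f\}$, whose pair $\{a_1,a_2\}$ lies in the triad $T^*$, and using $M/e$ $3$-connected) we get $M \backslash f$ is $3$-connected, and likewise $M \backslash g$ is $3$-connected. Now apply \cref{deletable_circuit_gives_cocircuit} to, say, $\{a_1,a_2,e,f\}$ and the element $x \coloneqq g$: this gives a $4$-element cocircuit of $M$ containing $g$ and either $e$ or $f$. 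If this cocircuit contains all of $\{e,f,g\}$, then \cref{one_triad2} immediately gives a detachable pair. So I may assume the cocircuit contains exactly two of $\{e,f,g\}$ together with one further element $h$; orthogonality with $\{a_1,a_2,e,f\}$ and $\{a_2,a_3,e,g\}$ constrains $h$. If $h \in T^*$, i.e. the cocircuit meets $T^*$, then \cref{one_triad3} applies and we are done. Otherwise $h \notin T^* \cup \{e,f,g\}$, and we have a $4$-element cocircuit $C^* \subseteq \{e,f,g,h\}$ — the setup of \cref{quad_thingy_deletable} — which forces $M/h$ to be $3$-connected. At this point one can run the same circuit-extraction argument on the element $h$ (using the dual of \cref{contractable_circuit} with $T^*$ and $M/h$) to produce $4$-element circuits $\{a_i,a_j,h,f'\}$ and either reach $\lambda(T^* \cup \{e,f,g,h\})=2$ — invoking \cref{one_triad1} — or, as in the proof of \cref{one_triad3}, chase further cocircuits until the accumulated set $T^* \cup \{e,f,g,h,f',g'\}$ is $3$-separating, again contradicting \cref{one_triad1} since $|E(M)| \ge 12$.

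I expect the bookkeeping in the last case — when $C^* = \{e,f,g,h\}$ and we must iterate — to be the main obstacle: one has to ensure the newly produced elements $f',g'$ are genuinely distinct from the earlier ones and from $T^*$, keep track of which pairs lie in which circuits/cocircuits for orthogonality, and verify at each stage that the growing set still has connectivity exactly $2$ and that $M$ has enough elements left ($|E(M)| \ge |X|+3$) for \cref{one_triad1} to bite. Much of this mirrors the arguments already carried out in \cref{one_triad2,one_triad3}, so I would lean on those proofs for the routine orthogonality/elimination steps rather than redoing them, and present the lemma as a fairly short deduction: extract the two circuits through $e$, verify the contraction certificate, then split into the three cases (cocircuit through $\{e,f,g\}$; cocircuit meeting $T^*$; cocircuit $= \{e,f,g,h\}$) handled by \cref{one_triad2}, \cref{one_triad3}, and a final direct argument via \cref{quad_thingy_deletable} and \cref{one_triad1} respectively.
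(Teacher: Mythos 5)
Your opening matches the paper: the two circuits $\{a_1,a_2,e,f\}$ and $\{a_2,a_3,e,g\}$ from the dual of \cref{contractable_circuit}, the observation that $f\neq g$ (else $e\in\cl(T^*)$), the deletability of $f$ and $g$ via \cref{circuit_so_deletable}, and the cocircuit from \cref{deletable_circuit_gives_cocircuit}, dispatched by \cref{one_triad2} when it contains all of $\{e,f,g\}$. But two things go wrong after that. First, your ``engine'' is not valid: a contraction certificate $(a_2,X_1,\{X_2,\dots\})$ requires $a_2\in\cl^*(X_i)$ for every $i$, and the circuits $\{a_1,a_2,e,f\}$, $\{a_2,a_3,e,g\}$ only give $a_2\in\cl(\{a_1,e,f\})$ etc., not coclosure. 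Neither $(a_2,T^*-\{a_2\},\{\{a_1,e,f\},\{a_3,e,g\}\})$ nor $(a_2,\{a_1,a_3\},\{\{e,f\},\{e,g\}\})$ is a contraction certificate, so you cannot yet invoke \cref{one_triad1}. In the paper a valid certificate only appears at the very end, built from two \emph{cocircuits} through a common element of $T^*$.

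Second, your case analysis of the cocircuit $C^*$ miscounts and misses the decisive case. If $C^*$ contains exactly two of $\{e,f,g\}$, it has \emph{two} further elements, and orthogonality forces it to meet $T^*$; but \cref{one_triad3} applies only when $|C^*\cap T^*|=1$ and the fourth element $h$ lies outside $T^*\cup\{e,f,g\}$. The case $|C^*\cap T^*|=2$ is therefore untouched by your argument, and it is exactly the case the paper has to work for: one rules out $\{f,g\}\subseteq C^*$ because then $e\in\cl(T^*\cup\{f,g\})$ contradicts $M/e$ being $3$-connected, so $\{e,f\}\subseteq C^*$; then a second cocircuit $D^*$ through $g$ (again with $|D^*\cap T^*|=2$ by \cref{one_triad2,one_triad3}) is produced, cocircuit elimination plus \cref{one_triad2} shows $C^*\cap T^*\neq D^*\cap T^*$, and the common element $a_i$ yields the genuine contraction certificate $(a_i,T^*-\{a_i\},\{C^*-\{a_i\},D^*-\{a_i\}\})$ with $\lambda(T^*\cup\{e,f,g\})=2$, contradicting \cref{one_triad1}. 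Your third case, a cocircuit $C^*=\{e,f,g,h\}$ handled via \cref{quad_thingy_deletable}, is redundant --- that is precisely the $\{e,f,g\}\subseteq C^*$ situation already settled by \cref{one_triad2} --- so the iteration you sketch there does not substitute for the missing $|C^*\cap T^*|=2$ analysis.
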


\begin{proof}
    Suppose $M$ does not have a detachable pair. By the dual of \Cref{contractable_circuit}, there are $4$-element circuits $\{a_1,a_2,e,f\}$ and $\{a_2,a_3,e,g\}$ for some labelling $T^* = \{a_1,a_2,a_3\}$ and elements $f,g \notin T^* \cup \{e\}$. Furthermore, $f \neq g$, as $e \notin \cl(T^*)$. By \Cref{circuit_so_deletable}, $M \backslash f$ is $3$-connected. By \Cref{deletable_circuit_gives_cocircuit}, there is a $4$-element cocircuit $C^*$ of $M$ containing either $\{e,f\}$ or $\{f,g\}$.
    By \Cref{one_triad2}, $\{e,f,g\} \not\subseteq C^*$. Therefore, $C^*$ contains an element of $T^*$, by orthogonality. Furthermore, \Cref{one_triad3} implies that $|C^* \cap T^*| \neq 1$. Therefore, $|C^* \cap T^*| = 2$. If $\{f,g\} \subseteq C^*$, then $\lambda(T^* \cup \{f,g\}) = 2$. But $e \in \cl(T^* \cup \{f,g\})$, which contradicts the fact that $M / e$ is $3$-connected. So $\{e,f\} \subseteq C^*$ and $g \notin C^*$.

    \Cref{circuit_so_deletable} implies that $M \backslash g$ is $3$-connected, so, by \Cref{deletable_circuit_gives_cocircuit}, there is a $4$-element cocircuit $D^*$ of $M$ containing $g$ and either $e$ or $f$. Again, \cref{one_triad2,one_triad3} imply that $|D^* \cap T^*| = 2$. If $C^* \cap T^* = D^* \cap T^*$, then cocircuit elimination implies that $\{a_i,e,f,g\}$ is a cocircuit for some $i\in \{1, 2, 3\}$, a contradiction to \Cref{one_triad2}. Therefore, there is a unique element $a_i$ that is contained in both $C^* \cap T^*$ and $D^* \cap T^*$. Thus, $(a_i,T^*-\{a_i\},\{C^*-\{a_i\},D^*-\{a_i\}\})$ is a contraction certificate and $\lambda(T^* \cup \{e,f,g\}) = 2$, which contradicts \Cref{one_triad1}. Hence $M$ has a detachable pair.
\end{proof}

\begin{lemma} \label{one_triad}
    Let $M$ be a $3$-connected matroid with no triangles and precisely one triad, such that $|E(M)| \geq 12$. Then $M$ has a detachable pair.	
\end{lemma}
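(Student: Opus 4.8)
Assume $M$ has no detachable pairs, towards a contradiction. Since $M$ has precisely one triad $T^*$, every other element fails to lie in a triad or a triangle. Bixby's Lemma then tells us that for each such element $x$, either $M\backslash x$ or $M/x$ is $3$-connected; and since $x\notin T^*$ implies $x$ is not in a triangle, neither $\si(M/x)$ nor $\co(M\backslash x)$ collapses for a ``local'' reason. The plan is to show that in fact $M/x$ is $3$-connected for some $x\notin T^*$, and then invoke \cref{one_triad4} to obtain the desired contradiction. So the first task is to rule out the possibility that $M/x$ is not $3$-connected for every $x\in E(M)-T^*$.

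Suppose, then, that $M/x$ is not $3$-connected for all $x\in E(M)-T^*$; equivalently (by Bixby's Lemma and the absence of triangles outside $T^*$), $M\backslash x$ is $3$-connected for all such $x$. Pick distinct $e,f\in E(M)-T^*$; since $M$ has no triangles, neither $e$ nor $f$ lies in a triangle or a triad, so \cref{annoying_elements} applies with the standing hypothesis ``if $x$ is not in a triad then $M/x$ is not $3$-connected''. This yields a $4$-element cocircuit $C^*=\{e,f,g,h\}$ and a triad $\tilde T^*$ with $\tilde T^*\cap C^*=\{g\}$ and $M/g$ is $3$-connected. But $M$ has only one triad, so $\tilde T^*=T^*$ and $g\in T^*$; and then $M/g$ is $3$-connected with $g\notin E(M)-T^*$\,---\,wait, $g\in T^*$, so this does not immediately contradict our supposition. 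Instead, with $g\in T^*$ and $M/g$ $3$-connected, apply the dual of \cref{contractable_circuit} to find a $4$-element circuit through $g$ and a second element of $T^*$; by orthogonality this produces, for a suitable relabelling $T^*=\{a_1,a_2,a_3\}$, $4$-element circuits of the form $\{a_1,a_2,e',f'\}$ and similar. The goal of this step is to massage the structure until we can exhibit an element of $E(M)-T^*$ that can be contracted while preserving $3$-connectivity, or else directly build the configuration of \cref{one_triad2} or \cref{one_triad3}, each of which already gives a detachable pair. Concretely: since $M/g$ is $3$-connected and $M$ has no triangles, the dual of \cref{contractable_circuit} forces two $4$-element circuits $C_1,C_2$ through $g$ pairing $g$ with two elements of $T^*$ and two outside elements $f_1,f_2\notin T^*$; these outside elements are not in triads, so $M\backslash f_1$, $M\backslash f_2$ are $3$-connected, and \cref{deletable_circuit_gives_cocircuit} yields a $4$-element cocircuit through $\{g,f_1\}$ or $\{f_1,f_2\}$, etc. Iterating, orthogonality pins the cocircuit to meet $T^*$, and we land in the hypotheses of \cref{one_triad2} (if the cocircuit contains all of $\{e',f',g'\}$) or \cref{one_triad3} (if it meets $T^*$ in exactly one element and contains two of $\{e',f',g'\}$), both of which contradict the assumption that $M$ has no detachable pairs.

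Having eliminated that case, we know there exists $e\in E(M)-T^*$ with $M/e$ $3$-connected, and \cref{one_triad4} immediately furnishes a detachable pair, contradicting our assumption. This completes the proof.

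The main obstacle I expect is the bookkeeping in the case analysis of the first step: managing the various $4$-element circuits and cocircuits produced by repeated applications of \cref{contractable_circuit}, \cref{circuit_so_deletable}, and \cref{deletable_circuit_gives_cocircuit}, and using orthogonality with $T^*$ and with the already-constructed circuits to force each new cocircuit either into $T^*\cup\{\text{outside elements}\}$ (so that $\lambda$ of a small set is $2$ and a certificate argument via \cref{one_triad1} applies) or into exactly the configuration of \cref{one_triad2}/\cref{one_triad3}. The size bound $|E(M)|\ge 12$ will be used precisely to guarantee enough room for the certificate lemmas (\cref{one_triad1} needs $|E(M)|\ge |X|+3$) to bite; one should check that the accumulated sets $T^*\cup\{e,f,g,\dots\}$ never exhaust the ground set. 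Everything else is a direct citation of the preceding lemmas.
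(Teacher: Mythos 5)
Your overall framing (reduce to the situation ``no detachable pairs and $M/x$ not $3$-connected for every $x\notin T^*$'', derive a contradiction there, then finish with \cref{one_triad4}) matches the paper's logic, but the heart of the argument --- deriving that contradiction --- does not go through as you describe. After \cref{annoying_elements} hands you the cocircuit $C^*$ and the contractible element $g$, uniqueness of the triad forces $g\in T^*$, and at that point your ``concretely'' step fails: the dual of \cref{contractable_circuit} cannot be applied with $g$ as the contraction element to produce a $4$-element circuit through $g$ and a second element of $T^*$. That lemma needs a set $X$ with $g\notin X$, $\lambda(X)=2$ (or $\lambda(X\cup\{g\})=2$), $|E(M)|\ge|X|+4$, and an element $f\in\cl^*(X)-(X\cup\{g\})$ not in a triangle. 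With $T^*$ the unique triad, the natural choice $X=T^*-\{g\}$ has $\cl^*(X)\subseteq T^*$ (any other element of $\cl^*(X)$ would create a second triad), so no admissible $f$ exists, and $X=E(M)-T^*$ violates the cardinality condition. Consequently the claimed iteration landing in \cref{one_triad2} or \cref{one_triad3} is unsupported; note also that those lemmas require circuits $\{a_1,a_2,e,f\}$ and $\{a_2,a_3,e,g\}$ through a common element \emph{outside} $T^*$, a configuration you never produce when your only known contractible element lies inside $T^*$.

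What the missing idea is: the contradiction must come from the \emph{position} of the cocircuit relative to a vertical $3$-separation, not from more circuit/cocircuit generation. The paper chooses $e\notin\cl(T^*)$, takes a vertical $3$-separation $(X,\{e\},Y)$ with $T^*\subseteq X$ (possible since $M/e$ is not $3$-connected and $M$ has no triangles, via \cref{fan_vert_sep}), picks $f\in X-T^*$ (nonempty because $e\notin\cl(T^*)$), and applies \cref{annoying_elements} to this particular pair. Since $C^*$ meets $T^*\subseteq X$ in exactly one element and contains $f\in X$, one shows $|C^*\cap X|=2$ (if it were $3$ then $e\in\cl(X)\cap\cl^*(X)$, contradicting $3$-connectivity), so the fourth element of $C^*$ lies in $Y$, hence outside $T^*$, and lies in $\cl^*(X\cup\{e\})$; this forces it to be contractible, contradicting the consequence of \cref{one_triad4}. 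Your proposal never exploits the choice $e\notin\cl(T^*)$ or the separation $(X,\{e\},Y)$, and without that geometric step the bookkeeping you anticipate does not close the argument.
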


\begin{proof}
    Let $T^*$ be the unique triad of $M$, and suppose $M$ does not have a detachable pair. By \Cref{one_triad4}, for all $x \notin T^*$, we have that $M / x$ is not $3$-connected. Let $e \in E(M)-\cl(T^*)$. Then there is a vertical $3$-separation $(X,\{e\},Y)$ of $M$ such that $T^* \subseteq X$. Since $e \notin \cl(T^*)$, there exists an element $f \in X - T^*$. By \Cref{annoying_elements}, there is a $4$-element cocircuit $C^*$ of $M$ containing $\{e,f\}$ and exactly one element of $T^*$. Now $|C^* \cap X| \in \{2,3\}$. If $|C^* \cap X| = 3$, then $e \in \cl^*(X)$, a contradiction. So $|C^* \cap X| = 2$. But there is a unique element $g$ of $C^* \cap Y$, and $g \in \cl^*(X \cup \{e\})$, so $M / g$ is $3$-connected, a contradiction.  We deduce that $M$ has a detachable pair.
\end{proof}

\subsection*{Putting it together}
We now prove \cref{no_4_element_fans}.

\begin{proof}[Proof of \Cref{no_4_element_fans}]
    Suppose $M$ does not have a detachable pair. If $M$ has no triangles or triads whatsoever, then $M$ is a spike by \Cref{detpairs_notriangles}. If $M$ has exactly one triad and no triangles, or $M$ has exactly one triangle and no triads, then $M$ has a detachable pair by \cref{one_triad} or its dual. If $M$ has exactly one triangle and exactly one triad, then $M$ has a detachable pair by \Cref{triad_triangle}.
    Thus $M$ either has two distinct triangles, or two distinct triads.

    Suppose that $M$ has disjoint triads $T_1^*$ and $T_2^*$. If there exists an element $e \in E(M)$ such that $e$ is not contained in a triad and $M / e$ is $3$-connected, then, by \cref{quad_petal}, $M$ is a quasi-triad-paddle with a quad or near-quad petal. Otherwise, no such element $e$ exists, and thus, for all $x \in E(M)$, if $x$ is not contained in a triad, then $M / x$ is not $3$-connected. If $M$ has an element that is not contained in a triangle or a triad, then $M$ is a hinged triad-paddle by \Cref{not_quad_petal}. If every element of $M$ is contained in a triangle or a triad, then, by \Cref{k3m}, $M$ is either a triad-paddle or a tri-paddle-copaddle.
    Thus we may assume that $M$ has no disjoint triads and, dually, no disjoint triangles.
	
    We may also assume, up to duality, that $M$ has distinct triads $T_1^*$ and $T_2^*$.  Then $T_1^*$ meets $T_2^*$. By the dual of \cref{no_segment}, $|T^*_1\cap T^*_2|=1$ and, by \Cref{intersecting_no_other_triads}, there are no other triads of $M$.  Thus, $M$ has exactly five elements that are contained in a triad. Since $M$ has no pair of disjoint triangles, \cref{no_segment} and the dual of \Cref{intersecting_no_other_triads} imply that there are at most five elements of $M$ that are contained in a triangle.  Moreover, \cref{intersecting_plus_contractable} implies that, for all $x \in E(M)-(T^*_1 \cup T^*_2)$, the matroid $M / x$ is not $3$-connected. Hence, by \cref{intersecting_two_elements}, $M$ has at most one element that is not contained in a triangle or a triad. But now $|E(M)| \leq 11$. This contradiction completes the proof of the theorem.
\end{proof}

%And that's it!
\Cref{detachable_main} now follows by combining \cref{disjoint_fans_with_like_ends,intersecting_fans_detachable,single_fans,no_4_element_fans}.

\begin{proof}[Proof of \cref{detachable_main}]
    If $M$ has disjoint maximal fans $F_1$ and $F_2$ with like ends, where $|F_1| \ge 4$ and $|F_2| \ge 3$, then, by \cref{disjoint_fans_with_like_ends}, either (i), (iv), (vi), or (viii)(a) holds.
    Otherwise, $M$ has no disjoint maximal fans $F_1$ and $F_2$ with like ends, where $|F_1| \ge 4$ and $|F_2| \ge 3$.
    Suppose that $M$ has distinct maximal fans $F_1$ and $F_2$ with $|F_1| \ge 4$ and $|F_2| \ge 3$, where $F_1 \cap F_2 \neq \emptyset$. By \cref{intersecting_fans_detachable}, either (i), (iii), (iv), or (v) holds.
    Now we may assume that if $M$ has distinct maximal fans $F_1$ and $F_2$ with $|F_1| \ge 4$ and $|F_2| \ge 3$, then $F_1$ and $F_2$ are disjoint and, up to duality, both ends of $F_1$ are triangles, and both ends of $F_2$ are triads.
    Then, if $M$ has a maximal fan with length at least four, \cref{single_fans} implies that either (i), (ii), (iv), or (viii)(b) holds.
    Finally, we may assume that $M$ has no $4$-element fans.
    Then, by \cref{no_4_element_fans}, either (i), (iv), (vi), (vii), (viii)(c), or (viii)(d) holds.
\end{proof}

\section{Proof of \texorpdfstring{\cref{detachable_graph}}{Theorem 1.3}}
\label{graph_proof}

It remains only to prove \cref{detachable_graph}. The following lemma, whose proof is straightforward and omitted, will be useful.

\begin{lemma} \label{build_fan_graph}
    Let $G$ be a simple $3$-connected graph such that $M(G)$ has a fan $F = (e_1,e_2,\ldots,e_{|F|})$ where $|F| \geq 4$ and $\{e_{|F|-2},e_{|F|-1},e_{|F|}\}$ is a triad.
    Let $G' = G /e_{|F|-2}\backslash e_{|F|-1}$, and let $h$ be the vertex of $G'$ that is incident to $e_{|F|-3}$ but not $e_{|F|}$.
    Then $G$ can be constructed from $G'$ by subdividing the edge $e_{|F|}$ to introduce a vertex $x$, and adding an edge incident with $x$ and $h$.
\end{lemma}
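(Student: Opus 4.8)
\textbf{Proof plan for \cref{build_fan_graph}.}
The plan is to work directly with the graph structure implied by the fan and reverse-engineer the construction. First I would set up notation: write $F = (e_1, e_2, \ldots, e_{|F|})$ with $\{e_{|F|-2}, e_{|F|-1}, e_{|F|}\}$ a triad of $M(G)$; by the alternating structure of a fan, $\{e_{|F|-3}, e_{|F|-2}, e_{|F|-1}\}$ is a triangle of $M(G)$. Triangles of a graphic matroid are triangles of the graph (i.e., $3$-cycles), and triads are minimal edge cuts of size three; since $G$ is $3$-connected, a triad of $M(G)$ is exactly the set of three edges incident to a degree-$3$ vertex. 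So there is a vertex $v$ of degree exactly $3$ in $G$ incident precisely with $e_{|F|-2}, e_{|F|-1}, e_{|F|}$. The triangle $\{e_{|F|-3}, e_{|F|-2}, e_{|F|-1}\}$ then gives the local picture: $e_{|F|-2}$ and $e_{|F|-1}$ share the vertex $v$, and their other endpoints, call them $u$ (the endpoint of $e_{|F|-2}$) and $w$ (the endpoint of $e_{|F|-1}$), are joined by the edge $e_{|F|-3}$. The edge $e_{|F|}$ is the third edge at $v$; let $t$ be its other endpoint. Thus locally $G$ looks like: $v$ adjacent to $u$, $w$, $t$; and $u$ adjacent to $w$ via $e_{|F|-3}$.

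Next I would identify $h$ and describe $G' = G/e_{|F|-2}\backslash e_{|F|-1}$ explicitly. Contracting $e_{|F|-2} = uv$ identifies $u$ and $v$ into a single vertex, which I will call $v'$; then deleting $e_{|F|-1}$ removes the edge from $v'$ to $w$. In $G'$, the vertex $v'$ is incident with $e_{|F|}$ (now joining $v'$ to $t$), with $e_{|F|-3}$ (now joining $v'$ to $w$), and with whatever other edges were at $u$ in $G$. The vertex $h$ is defined as the vertex of $G'$ incident with $e_{|F|-3}$ but not $e_{|F|}$: since $e_{|F|-3}$ in $G'$ joins $v'$ and $w$, and $e_{|F|}$ is incident with $v'$ and $t$ (and $t \neq w$, since otherwise $G$ would have a multi-edge or a low-degree obstruction contradicting $3$-connectivity and simplicity), we get $h = w$. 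So I would verify carefully that $h = w$, handling the degenerate possibility $t = w$ by noting it contradicts simplicity or the degree/connectivity hypotheses.

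Now the reconstruction is purely combinatorial. Starting from $G'$, subdivide the edge $e_{|F|}$ (which joins $v'$ and $t$ in $G'$): this introduces a new vertex $x$ on $e_{|F|}$, splitting it into an edge from $v'$ to $x$ and an edge from $x$ to $t$; I would relabel the piece incident with $t$ as $e_{|F|}$, keeping the convention, and note the piece from $v'$ to $x$ plays the role of $e_{|F|-2}$. Then add an edge incident with $x$ and $h = w$; this edge plays the role of $e_{|F|-1}$. I would then check that the resulting graph is isomorphic to $G$: the new vertex $x$ corresponds to the old vertex $v$ (degree $3$, incident with the three edges now labeled $e_{|F|-2}$, $e_{|F|-1}$, $e_{|F|}$), the vertex $v'$ splits back conceptually into $u$ (though $u$ and the rest of the contracted structure were never merged with anything outside $\{u,v\}$, so $v'$ simply is $u$ again once $v$ is reborn as $x$), and all other vertices and edges are untouched. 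The edge $e_{|F|-3}$ still joins $u$ and $w$, restoring the triangle $\{e_{|F|-3}, e_{|F|-2}, e_{|F|-1}\}$ and the triad at $x$.

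The main obstacle is not conceptual but bookkeeping: one must carefully track which edge of the subdivided pair retains the label $e_{|F|}$ and confirm that $h$ is genuinely $w$ rather than $t$, which requires invoking simplicity and $3$-connectivity of $G$ (to rule out $t=w$, and to ensure $v$ really has degree exactly $3$ so that the triad is a vertex-star). Because the paper states the proof is straightforward and omitted, the expected writeup is just the above identification of the local structure followed by the obvious check, and I would keep it short, flagging the simplicity/$3$-connectivity uses explicitly.
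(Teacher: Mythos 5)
The paper omits its own proof of this lemma, and your reconstruction is the natural argument it has in mind; the bookkeeping (identifying $h=w$, ruling out $t=w$ by simplicity, and reversing the contraction/deletion by a subdivision plus an added edge) is all fine. However, there is one genuine flaw in your justification of the key structural step. You assert that, because $G$ is $3$-connected, a triad of $M(G)$ is \emph{exactly} the set of edges at a degree-$3$ vertex. That general claim is false: a triad of $M(G)$ is only a minimal $3$-edge cut (bond), and a $3$-connected simple graph can have nontrivial $3$-bonds --- in the triangular prism $K_3 \,\square\, K_2$ the three ``vertical'' edges form a triad of the cycle matroid that is not a vertex star. So you cannot get the degree-$3$ vertex $v$ for free from $3$-connectivity alone.

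The conclusion you need is still true, but it must be derived from the fan structure, i.e.\ from the triangle $\{e_{|F|-3},e_{|F|-2},e_{|F|-1}\}$ sharing two edges with the triad. Since those two triad edges lie in a common triangle, they share a vertex $v$, and the triangle has one vertex ($v$) on one side $A$ of the bond and two ($u,w$) on the other side. If the third triad edge $e_{|F|}$ were not incident with $v$, say $e_{|F|}=ab$ with $a\in A-\{v\}$, then every $A$--$B$ edge meets $\{v,a\}$; one checks $|A|\ge 3$ (otherwise $a$ has degree at most $2$), so deleting $\{v,a\}$ separates $A-\{v,a\}$ from $B$, contradicting $3$-connectivity. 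Hence $e_{|F|}$ is incident with $v$, and then $A=\{v\}$ (any further vertex of $A$ would make $v$ a cutvertex), so the triad is the star of the degree-$3$ vertex $v$ after all. With this patch inserted in place of the false general claim, the rest of your proof goes through as written.
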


\noindent We also remind the reader that a matroid is graphic if and only if it has no minor isomorphic to $U_{2, 4}$, $F_7$, $F^*_7$, $M^*(K_5)$, and $M^*(K_{3, 3})$~\cite{Tutte1959}.

\begin{proof}[Proof of \cref{detachable_graph}]
    Let $G$ be a simple $3$-connected graph with no detachable pairs such that $|E(G)| \geq 13$. Then $M(G)$ is a $3$-connected matroid with no detachable pairs, and thus $M(G)$ is one of the matroids listed in \Cref{detachable_main}. 
    If $M(G)$ is a wheel, then $G$ is a wheel, whereas $M(G)$ is not a whirl, as a whirl is not graphic, as it has a $U_{2,4}$-minor.

    Next suppose that $M(G)$ is an accordion. Then there is a partition $(L,F,R)$ of $E(M)$ such that $(e_1,e_2,\ldots,e_{|F|})$ is a fan ordering of $F$, where $F$ is even, $|F| \ge 4$, and $\{e_1,e_2,e_3\}$ a triangle. We will show that in this case $G$ is a mutant wheel. After contracting $e_{|F|-2}$ and deleting $e_{|F|-1}$ in $G$, the set $F-\{e_{|F|-2},e_{|F|-1}\}$ is a fan of length $|F|-2$ in the cycle matroid. Repeating in this way, let
$$G' = G / \{e_{|F|-2},e_{|F|-4},\dotsc,e_2\} \backslash \{e_{|F|-1},e_{|F|-3},\dotsc,e_3\}.$$

First assume $L=\{g_2, g_3\}$ is a left-hand triangle-type end of $F$. If $R$ is a right-hand fan-type end, then there is a labelling $R=\{h_2, h_3, h_4, h_5\}$ such that $(e_{|F|}, h_2, h_3, h_4, h_5)$ is a fan ordering of $R\cup \{e_{|F|}\}$.  Then $(M(G')/h_4)|\{e_1, g_2, g_3, h_5\}\cong U_{2, 4}$, by the dual of \cref{accordlhfan}, so $M(G')$, and therefore $M(G)$, is not graphic. So $R$ is not a right-hand fan-type end. If $R$ is a right-hand quad-type end, then, by the dual of \cref{accordlhquad}, there is a labelling $R=\{c_1, c_2, d_1, d_2\}$ such that $\sqcap(\{d_1, d_2\}, L)=1$. Then $(M(G')/d_2)|\{e_1, g_2, g_3, d_1\}\cong U_{2, 4}$, so $M(G')$, and thus $M(G)$, is not graphic. If $R=\{h_2, h_3\}$ is a right-hand triad-end of $F$, then $(M(G')/h_3)|\{e_1, g_2, g_3, h_2\}\cong U_{2, 4}$, by the dual of \Cref{accordlhtri}, and so again $M(G)$ is not graphic. Hence $L$ is not a left-hand triangle-end of $M(G)$ and, dually, $R$ is not a right-hand triad-type end of $F$.

Now assume $L$ is a left-hand quad-type end of $F$. Suppose $R$ is a right-hand fan-type end with labelling $R=\{h_2, h_3, h_4, h_5\}$ such that $(e_{|F|}, h_2, h_3, h_4, h_5)$ is a fan ordering of $R\cup \{e_{|F|}\}$. Note that the restriction of $M(G') / h_5$ to $L \cup \{e_1, h_4\}$ is isomorphic to $M(K_4)$, where $e_1$ and $h_4$ correspond to non-adjacent edges of the $K_4$. By contracting $h_2$ from $M(G') / h_5$, the resulting matroid retains the $M(K_4)$ restriction, and has a triangle $\{e_1,e_{|F|},h_4\}$. Thus, $M(G')$ has a minor isomorphic to the Fano matroid $F_7$, the non-Fano matroid $F^-_7$, or the matroid $F^=_7$ obtained from $F^-_7$ by relaxing a circuit-hyperplane. On the other hand, if $R$ is a right-hand quad-type end, then, by \cref{accordlhquad}, there is a labelling $R=\{c_1, c_2, d_1, d_2\}$ such that $\sqcap(\{c_1,c_2\}, L)=\sqcap(\{d_1,d_2\}, L)=1$, and $M(G')/\{d_2, e_{|F|}\}$ also has a minor isomorphic to one of $F_7$, $F^-_7$, and $F^=_7$. Since $F_7$ is neither graphic nor cographic, and each of $F^-_7$ and $F^=_7$ has a $U_{2, 4}$ minor, it follows that $L$ is not a left-hand quad-type end of $F$ and, dually, $R$ is not a right-hand quad-type end of $F$.

Lastly, assume $L=\{g_2, g_3, g_4, g_5\}$ is a left-hand fan-type end of $F$ and $(e_1, g_2, g_3, g_4, g_5)$ is fan ordering of $L\cup \{e_1\}$, and $R=\{h_2, h_3, h_4, h_5\}$ is a right-hand fan-type end and $(e_{|F|}, h_2, h_3, h_4, h_5)$ is a fan ordering of $R\cup \{e_{|F|}\}$. If $r_{M(G')}(\{g_3, g_4, g_5, e_{|F|}, h_2\})=4$, then $(M(G')/\{h_2, h_4\})|\{e_1, e_{|F|}, g_5, h_5\}\cong U_{2, 4}$, so $M(G)$ is not graphic. Thus $r_{M(G')}(\{g_3, g_4, g_5, e_{|F|}, h_2\})=3$ and $G'$ is a mutant wheel with edges labelled as shown in \cref{mutant}. \Cref{build_fan_graph} now implies that $G$ is a mutant wheel.

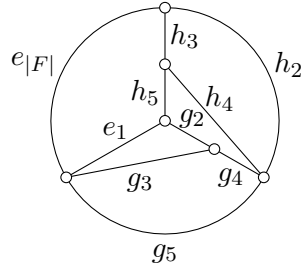
\begin{figure}
\begin{center}
\begin{tikzpicture}
\draw (160:1.5) arc (160:520:1.5);
\draw (0:0) -- (90:0.75) node [midway, left, xshift=2] {$h_5$};
\draw (90:0.75) -- (90:1.5) node [midway, right, xshift=-2] {$h_3$};
\draw (0:0) -- (-30:0.75) node [midway, above, xshift=1, yshift=-1] {$g_2$};
\draw (-30:0.75) -- (-30:1.5) node [midway, below, yshift=2, xshift=-3] {$g_4$};
\draw (0:0) -- (-150:1.5) node [midway, above] {$e_1$};

\draw (-210:1.5) node[left] {$e_{|F|}$};
\draw (-90:1.5) node[below] {$g_5$};
\draw (30:1.5) node[right] {$h_2$};

\draw (90:0.75) -- (-30:1.5) node [midway, above, xshift=2] {$h_4$};
\draw (-30:0.75) -- (-150:1.5) node [midway, below] {$g_3$};

\draw[fill=white] (0:0) circle (2pt);
\draw[fill=white] (90:1.5) circle (2pt);
\draw[fill=white] (-30:1.5) circle (2pt);
\draw[fill=white] (-150:1.5) circle (2pt);

\draw[fill=white] (90:0.75) circle (2pt);
\draw[fill=white] (-30:0.75) circle (2pt);
\end{tikzpicture}
\end{center}
\caption{The graph $G'$ if $L=\{g_2, g_3, g_4, g_5\}$ is a left-hand fan-type end, $R=\{h_2, h_3, h_4, h_5\}$ is a right-hand fan-type end, and $r_{M(G')}(\{g_3, g_4, g_5, e_{|F|}, h_2\})=3$.}
\label{mutant}
\end{figure}

Next suppose that $M(G)$ is an even-fan-spike (without a tip and cotip), with partition $\Phi$. Assume $M(G)$ is a non-degenerate even-fan-spike, so $\Phi = (P_1,P_2,\ldots,P_m)$, with $m \geq 3$, such that $P_i$ is an even fan of length at least two for all $i \in [m]$. Let $P_i$ have fan ordering $(p_1^i,p_2^i,\ldots,p_{|P_i|}^i)$ such that either $|P_i| = 2$ or $\{p_1^i,p_2^i,p_3^i\}$ is a triad. Observe that if $|P_i| > 2$, then $M(G) / p_3^i \backslash p_2^i$ is a non-degenerate even-fan-spike with partition $\Phi = (P_1,P_2,\ldots, P_i-\{p_3^i,p_2^i\},\ldots,P_m)$. Furthermore, if $|P_i| = 2$ and $m \geq 4$, then $M(G) / p_1^i \backslash p_2^i$ is a non-degenerate even-fan-spike with partition $\Phi = (P_1,P_2,\ldots,P_{i-1},P_{i+1},\ldots,P_m)$.,

Say $m = 3$. Since $|E(M)| \geq 13$, there exists $i \in [m]$ such that $|P_i| > 2$. Without loss of generality, assume that $|P_1| > 2$. It follows that $M(G)$ has a minor $N$ that is an even-fan-spike with partition $\Phi = (P_1,P_2,P_3)$ such that $|P_1| = 4$ and $|P_2| = |P_3| = 2$. But $N / p_1^1$ is isomorphic to a rank-$3$ spike with tip, which is either non-binary or isomorphic to $F_7$. Either case implies $M(G)$ is not graphic, a contradiction. So $m \geq 4$, in which case $M(G)$ has a minor isomorphic to a tipless rank-$4$ spike. Contracting any element of this rank-$4$ spike produces a rank-$3$ spike with tip, again a contradiction. Hence, $M(G)$ is a degenerate even-fan-spike.

Let $(P,Q)$ be the partition of the degenerate even-fan-spike, where $P = (p_1,p_2,\ldots,p_{|P|})$ and $Q = (q_1,q_2,\ldots,q_{|Q|})$ are even fans such that $\{p_1,p_2,p_3\}$ and $\{q_1,q_2,q_3\}$ are both triads. Let $G'=G/p_{|P|-1} \backslash p_{|P|-2} / p_{|P|-3}\backslash p_{|P|-4} \cdots / p_5 \backslash p_4$, and let $G'' = G' / q_{|Q|-1} \backslash q_{|Q|-2} / q_{|Q|-3} \backslash q_{|Q|-4} \cdots / q_5 \backslash q_4$. Since $M(G)$ does not have a $U_{2,4}$-minor, it follows that $G''$ is isomorphic to the rank-$4$ wheel. \Cref{build_fan_graph} now implies that $G$ is a warped wheel. To illustrate, a warped wheel with $|P|=6=|Q|$ is shown in \cref{warped}(A).

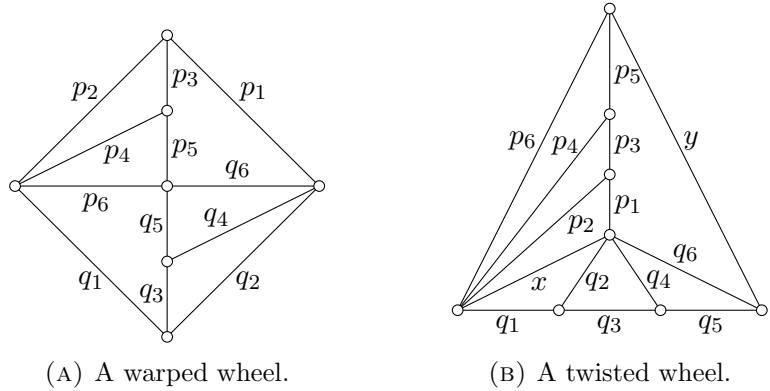
\begin{figure}
\center
	\begin{subfigure} {0.45\textwidth}
		\centering
		\begin{tikzpicture}
		\coordinate (a) at (-2,0);
		\coordinate (b) at (0,0);
		\coordinate (c) at (2,0);
		
		\coordinate (d1) at (0,2);
		\coordinate (d2) at (0,1.25);
		\coordinate (d3) at (0,1);
		\coordinate (d4) at (0,0.25);
		\coordinate (d5) at (0,1);
		
		\coordinate (e1) at (0,-2);
		\coordinate (e2) at (0,-1.25);
		\coordinate (e3) at (0,-1);
		\coordinate (e4) at (0,-0.25);
		\coordinate (e5) at (0,-1);
		
		\draw (a) -- (d1) node [midway, above, xshift=-2] {$p_2$};
		\draw (a) -- (e1) node [midway, below] {$q_1$};
		
		\draw (c) -- (d1) node [midway, above, xshift=4] {$p_1$};
		\draw (c) -- (e1) node [midway, below, xshift=2] {$q_2$};

		\draw (a) -- (b) node [midway, below, xshift=2, yshift=1] {$p_6$};
		\draw (c) -- (b) node [midway, above, xshift=-2, yshift=-1] {$q_6$};
		\draw (a) -- (d5) node [midway, below, xshift=10, yshift=5] {$p_4$};
		\draw (c) -- (e5) node [midway, above, xshift=-10, yshift=-5] {$q_4$};
		\draw (d1) -- (d5) node [midway, right, xshift=-2, yshift=-2] {$p_3$};
		\draw (d5) -- (b) node [midway, right, xshift=-2] {$p_5$};
		\draw (e1) -- (e5) node [midway, left, xshift=3, yshift=2] {$q_3$};
		\draw (e5) -- (b) node [midway, left, xshift=3] {$q_5$};
		
		\draw[fill=white] (a) circle (2pt);
		\draw[fill=white] (b) circle (2pt);
		\draw[fill=white] (c) circle (2pt);
		\draw[fill=white] (d1) circle (2pt);
		\draw[fill=white] (e1) circle (2pt);
		\draw[fill=white] (e5) circle (2pt);
		\draw[fill=white] (d5) circle (2pt);
		\end{tikzpicture}
	\subcaption{A warped wheel.}
	\end{subfigure}
	\begin{subfigure} {0.45\textwidth}
		\centering
		\begin{tikzpicture}
		\coordinate (a) at (-2,0);
		\coordinate (b) at (2,0);
		\coordinate (c) at (0,4);
		\coordinate (d) at (0,1);
		
		\coordinate (ac4) at (0,2.6);
		\coordinate (ac5) at (0,1.8);
		
		\coordinate (bd4) at ($(a)!0.333!(b)$);
		\coordinate (bd5) at ($(a)!0.667!(b)$);		
		
		\draw (d) -- (a) node [midway, below, xshift=2, yshift=2] {$x$};
		\draw (d) -- (b) node [midway, above] {$q_6$};
		
		\draw (a) -- (c) node [midway, above, xshift=-4] {$p_6$};
		\draw (c) -- (b) node [midway, above, xshift=2] {$y$};

		\draw (d) -- (bd4) node [midway, below, xshift=5, yshift=4] {$q_2$};
		\draw (d) -- (bd5) node [midway, above, xshift=9, yshift=-10] {$q_4$};
		\draw (a) -- (ac4) node [midway, above, xshift=12, yshift=18] {$p_4$};
		\draw (a) -- (ac5) node [midway, below, xshift=18, yshift=14] {$p_2$};
		\draw (c) -- (ac4) node [midway, right, xshift=-2, yshift=-5] {$p_5$};
		\draw (ac4) -- (ac5) node [midway, right, xshift=-2] {$p_3$};
		\draw (ac5) -- (d) node [midway, right, xshift=-2] {$p_1$};
		\draw (a) -- (bd4) node [midway, below, yshift=2] {$q_1$};
		\draw (bd4) -- (bd5) node [midway, below, yshift=2] {$q_3$};
		\draw (bd5) -- (b) node [midway, below, yshift=2] {$q_5$};
		
		\draw[fill=white] (a) circle (2pt);
		\draw[fill=white] (b) circle (2pt);
		\draw[fill=white] (c) circle (2pt);
		\draw[fill=white] (d) circle (2pt);
		
		\draw[fill=white] (ac4) circle (2pt);
		\draw[fill=white] (ac5) circle (2pt);
		\draw[fill=white] (bd4) circle (2pt);
		\draw[fill=white] (bd5) circle (2pt);
		\end{tikzpicture}
	\subcaption{A twisted wheel.}
	\end{subfigure}	
\caption{(A) A warped wheel. (B) A twisted wheel.}
\label{warped}
\end{figure}

Now suppose that $M(G)$ is an even-fan-spike with tip $x$ and cotip $y$. Then $M(G) \backslash x / y$ is an even-fan-spike. Therefore, $G \backslash x / y$ is a warped wheel. A routine check shows that $G$ is a twisted wheel. A twisted wheel with $|P|=6=|Q|$ is shown in \cref{warped}(B).

Next suppose that, for $M \in \{M(G),M^*(G)\}$, the matroid $M$ is an even-fan-paddle. First, assume $M$ is non-degenerate with partition $\Phi$. Then $\Phi=(P_1,P_2,\ldots,P_m)$, with $m \geq 3$, and there is an element $x \in P_m$ such that $P_i \cup \{x\}$ is an even fan of length at least four for all $i \in [m]$. It is easily checked that when $M=M(G)$, the graph $G$ is a multi-wheel and, furthermore, $M^*$ is not graphic, since $M|(P_1 \cup P_2 \cup P_m)$ has a $M(K_{3,3})$-minor.  So $M^*(G)$ is not an even-fan-paddle.
Now assume $M$ is degenerate with partition $(P_1, P_2, \{x, y\})$, where $P_1\cup \{x\}$ and $P_2\cup \{x\}$ are even fans of length at least four. If $M=M(G)$, then $G$ is a degenerate multi-wheel. If $M=M^*(G)$, then $G$ is a stretched wheel.

If $M(G)$ is a triad-paddle, then $G\cong K_{3, m}$, where $m\ge 5$ as $|E(M(G))|\ge 13$. Note that $M^*(K_{3,m})$ is not graphic, for $m\ge 3$, so $M^*(G)$ is not a triad-paddle. Now suppose that $M \in \{M(G),M^*(G)\}$ is a hinged triad-paddle with partition $(P_1, P_2, \ldots, P_m, \{x\})$, for some $m\ge 3$. Then, as $P_i$ is a triad but $P_i\cup \{x\}$ is not a $4$-element fan, for $i \in \{1,2\}$, it follows that $M$ has a $U_{2, 4}$-minor, a contradiction.

Suppose now that $M \in \{M(G),M^*(G)\}$ is a tri-paddle-copaddle with partition $(P_1, P_2, \ldots, P_s, Q_1, Q_2, \ldots, Q_t)$, for some $s,t\ge 2$. Then, by considering $M \backslash (P_3 \cup \dotsm \cup P_{s}) / (Q_3 \cup \dotsm \cup Q_{t})$, it is easily checked that $M$ has either a $U_{2, 4}$-minor or both a $M(K_{3, 3})$- and $M^*(K_{3, 3})$-minor, contradicting that $M$ is graphic or cographic.

Lastly, suppose that $M \in \{M(G),M^*(G)\}$ is a quasi-triad-paddle with an augmented-fan, co-augmented-fan, quad, or near-quad petal. Then $M$ has a $M(K_{3,3})$-minor, so $M = M(G)$.
If $M(G)$ has an augmented-fan petal or a co-augmented-fan petal, then it is easily seen that $G$ is isomorphic to $K^a_{3, m}$ or $K^b_{3, m}$, respectively.
It remains to consider when $M(G)$ has a quad or near-quad petal.
Let $(P_1, P_2, \ldots, P_m)$ be the partition of the quasi-triad-paddle, for $m\ge 3$.
Then, by considering $M(G)|(P_1\cup P_2\cup P_m)$, it is easily checked that $M(G)$ has a minor isomorphic to either $U_{2, 4}$ or $F_7$, contradicting that $M(G)$ is graphic. This completes the proof of \Cref{detachable_graph}.
\end{proof}

\section*{Acknowledgements}
We thank the referees for their thorough reading of the paper, and their helpful comments.

\bibliographystyle{plain}
\bibliography{detach.bib}

\begin{thebibliography}{10}

\bibitem{bixby}
R.E.\ Bixby.
\newblock A simple theorem on 3-connectivity.
\newblock {\em Linear Algebra and its Applications}, 45:123--126, 1982.

\bibitem{almostfrag1}
N.\ Brettell, B.\ Clark, J.\ Oxley, C.\ Semple, and G.\ Whittle.
\newblock Excluded minors are almost fragile.
\newblock {\em Journal of Combinatorial Theory, Series B}, 140:263--322, 2020.

\bibitem{almostfrag2}
N.\ Brettell, J.\ Oxley, C.\ Semple, and G.\ Whittle.
\newblock Excluded minors are almost fragile~{II}: Essential elements.
\newblock {\em Journal of Combinatorial Theory, Series B}, 163:272--307, 2023.

\bibitem{2regexminors}
N.\ Brettell, J.\ Oxley, C.\ Semple, and G.\ Whittle.
\newblock The excluded minors for 2- and 3-regular matroids.
\newblock {\em Journal of Combinatorial Theory, Series B}, 163:133--218, 2023.

\bibitem{BP23}
N.\ Brettell and R.\ Pendavingh.
\newblock Computing excluded minors for classes of matroids representable over
  partial fields.
\newblock {\em Electronic Journal of Combinatorics}, 31:P3.20, 2024.

\bibitem{BS14}
N.\ Brettell and C.\ Semple.
\newblock A splitter theorem relative to a fixed basis.
\newblock {\em Annals of Combinatorics}, 18:1--20, 2014.

\bibitem{detpairs1}
N.\ Brettell, G.\ Whittle, and A.\ Williams.
\newblock {$N$}-detachable pairs in 3-connected matroids~{I}: Unveiling {$X$}.
\newblock {\em Journal of Combinatorial Theory, Series B}, 141:295--342, 2020.

\bibitem{detpairs2}
N.\ Brettell, G.\ Whittle, and A.\ Williams.
\newblock {$N$}-detachable pairs in 3-connected matroids~{II}: Life in {$X$}.
\newblock {\em Journal of Combinatorial Theory, Series B}, 149:222--271, 2021.

\bibitem{detpairs3}
N.\ Brettell, G.\ Whittle, and A.\ Williams.
\newblock {$N$}-detachable pairs in 3-connected matroids~{III}: The theorem.
\newblock {\em Journal of Combinatorial Theory, Series B}, 153:223--290, 2022.

\bibitem{quaternary}
J.F.\ Geelen, A.M.H.\ Gerards, and A.\ Kapoor.
\newblock The excluded minors for {GF}(4)-representable matroids.
\newblock {\em Journal of Combinatorial Theory, Series B}, 79:247--299, 2000.

\bibitem{nearreg}
R.\ Hall, D.\ Mayhew, and S.H.M.\ {van Zwam}.
\newblock The excluded minors for near-regular matroids.
\newblock {\em European Journal of Combinatorics}, 32:802--830, 2011.

\bibitem{oxley!!!}
J.\ Oxley.
\newblock {\em Matroid Theory}.
\newblock Oxford University Press, second edition, 2011.

\bibitem{deltay}
J.\ Oxley, C.\ Semple, and D.\ Vertigan.
\newblock Generalized {$\Delta$-$Y$} exchange and $k$-regular matroids.
\newblock {\em Journal of Combinatorial Theory, Series B}, 79:1--65, 05 2000.

\bibitem{flowers}
J.\ Oxley, C.\ Semple, and G.\ Whittle.
\newblock The structure of the 3-separations of 3-connected matroids.
\newblock {\em Journal of Combinatorial Theory, Series B}, 92:257--293, 2004.

\bibitem{oxleywufans}
J.\ Oxley and H.\ Wu.
\newblock On the structure of 3-connected matroids and graphs.
\newblock {\em European Journal of Combinatorics}, 21:667--688, 2000.

\bibitem{seymoursplitter}
P.D.\ Seymour.
\newblock Decomposition of regular matroids.
\newblock {\em Journal of Combinatorial Theory, Series B}, 28:305--359, 1980.

\bibitem{tansplitter}
J.J.-M.\ Tan.
\newblock {\em Matroid $3$-connectivity}.
\newblock PhD thesis, Carleton University, 1981.

\bibitem{Tutte1959}
W.T.\ Tutte.
\newblock Matroids and graphs.
\newblock {\em Transactions of the American Mathematical Society}, 90:527--552,
  1959.

\bibitem{wheelsandwhirls}
W.T.\ Tutte.
\newblock Connectivity in matroids.
\newblock {\em Canadian Journal of Mathematics}, 18:1301–1324, 1966.

\bibitem{whittlestabs}
G.\ Whittle.
\newblock Stabilizers of classes of representable matroids.
\newblock {\em Journal of Combinatorial Theory, Series B}, 77:39--72, 1999.

\bibitem{williamsdetpairs}
A.\ Williams.
\newblock {\em Detachable Pairs in $3$-Connected Matroids}.
\newblock PhD thesis, Victoria University of Wellington, 2015.

\end{thebibliography}
\end{document}